\newcounter{braid}
\newcounter{strands}
\def\cross{%
  \@ifnextchar^{\message{Got sup}\cross@sup}{\cross@sub}}
\def\cross@sup^#1_#2{\render@cross{#2}{#1}}
\def\cross@sub_#1{\@ifnextchar^{\cross@@sub{#1}}{\render@cross{#1}{1}}}
\def\cross@@sub#1^#2{\render@cross{#1}{#2}}
\def\render@cross#1#2{
  \def\strand{#1}
  \def\crossing{#2}
  \pgfmathsetmacro{\cross@y}{-\value{braid}*\braid@h}
  \pgfmathtruncatemacro{\nextstrand}{#1+1}
  \foreach \thread in {1,...,\value{strands}}
  {
    \pgfmathsetmacro{\strand@x}{\thread * \braid@w}
    \ifnum\thread=\strand
    \pgfmathsetmacro{\over@x}{\strand * \braid@w + .5*(1 - \crossing) * \braid@w}
    \pgfmathsetmacro{\under@x}{\strand * \braid@w + .5*(1 + \crossing) * \braid@w}
    \draw[braid] \pgfkeysvalueof{/tikz/braid start} +(\under@x pt,\cross@y pt) to[out=-90,in=90] +(\over@x pt,\cross@y pt -\braid@h);
    \draw[braid] \pgfkeysvalueof{/tikz/braid start} +(\over@x pt,\cross@y pt) to[out=-90,in=90] +(\under@x pt,\cross@y pt -\braid@h);
    \else
    \ifnum\thread=\nextstrand
    \else
     \draw[braid] \pgfkeysvalueof{/tikz/braid start} ++(\strand@x pt,\cross@y pt) -- ++(0,-\braid@h);
    \fi
   \fi
  }
  \stepcounter{braid}
}
\tikzset{braid/.style={double=\pgfkeysvalueof{/tikz/braid colour},double distance=1pt,line width=2pt,white}}
\newcommand{\braid}[2][]{%
  \begingroup
  \pgfkeys{/tikz/strands=2}
  \tikzset{#1}
  \pgfkeysgetvalue{/tikz/braid width}{\braid@w}
  \pgfkeysgetvalue{/tikz/braid height}{\braid@h}
  \setcounter{braid}{0}
  \let\sigma=\cross
  #2
  \endgroup
}
\newtheorem{theorem}{Theorem}[section]
\newtheorem{proposition}[theorem]{Proposition}
\newtheorem{lemma}[theorem]{Lemma}
\newtheorem{corollary}[theorem]{Corollary}
\theoremstyle{definition}
\newtheorem{definition}[theorem]{Definition}
\newtheorem{assumption}[theorem]{Assumption}
\newtheorem{remark}[theorem]{Remark}
\newtheorem{example}[theorem]{Example}
\newtheorem{claim}[theorem]{Claim}
\def\Z{\mathbb{Z}}
\def\Pi{\mathbb{P}^{\infty}}
\def\Zpk{\mathbb{Z}/p^{k}}
\def\Zpk1{\mathbb{Z}/p^{k-1}}
\def\sl2{\widetilde{SL_{2}(\Z)}}
\DeclareMathOperator{\Gal}{Gal}
\DeclareMathOperator{\Spec}{Spec}
\DeclareMathOperator{\End}{End}
\DeclareMathOperator{\Sym}{Sym}
\DeclareMathOperator{\Fil}{Fil}
\DeclareMathOperator{\Hom}{Hom}
\DeclareFontFamily{U}{wncy}{}
    \DeclareFontShape{U}{wncy}{m}{n}{<->wncyr10}{}
    \DeclareSymbolFont{mcy}{U}{wncy}{m}{n}
    \DeclareMathSymbol{\Sh}{\mathord}{mcy}{"58} 
\author{Daniel J. Kriz}
\title{A New $p$-adic Maass-Shimura Operator and Supersingular Rankin-Selberg $p$-adic $L$-functions}
\begin{document}

\begin{abstract}We give a construction of a new $p$-adic Maass-Shimura operator defined on an affinoid subdomain of the preperfectoid $p$-adic universal cover $\mathcal{Y}$ of a modular curve $Y$. We define a new notion of $p$-adic modular forms as sections of a certain sheaf $\mathcal{O}_{\Delta}$ of ``nearly rigid functions" which transform under the action of subgroups of  the Galois group $\mathrm{Gal}(\mathcal{Y}/Y)$ by $\mathcal{O}_{\Delta}^{\times}$-valued weight characters. This extends Katz's notion of $p$-adic modular forms as functions on the Igusa tower $Y^{\mathrm{Ig}}$; indeed we may recover Katz's theory by restricting to a natural $\mathbb{Z}_p^{\times}$-covering $\mathcal{Y}^{\mathrm{Ig}}$ of $Y^{\mathrm{Ig}}$, viewing $\mathcal{Y}^{\mathrm{Ig}} \subset \mathcal{Y}$ as a sublocus. Our $p$-adic Maass-Shimura operator sends $p$-adic modular forms of weight $k$ to forms of weight $k + 2$. Its construction comes from a relative Hodge decomposition with coefficients in $\mathcal{O}_{\Delta}$ defined using Hodge-Tate and Hodge-de Rham periods arising from Scholze's Hodge-Tate period map and the relative $p$-adic de Rham comparison theorem. By studying the effect of powers of the $p$-adic Maass-Shimura operator on modular forms, we construct a continuous $p$-adic $L$-function which satisfies an ``approximate" interpolation property with respect to the the algebraic parts of central critical $L$-values of anticyclotomic Rankin-Selberg families on $GL_2 \times GL_1$ over imaginary quadratic fields $K/\mathbb{Q}$, including the ``supersingular" case where $p$ is not split in $K$. Finally we establish a new $p$-adic Waldspurger formula which, in the case of a newform, relates the formal logarithm of a Heegner point to a special value of the $p$-adic $L$-function.
\end{abstract}

\maketitle
\tableofcontents

\section{Introduction}

\subsection{Previous constructions and Katz's theory of $p$-adic modular forms on the ordinary locus}

Let us start by giving a brief account of Katz and Bertolini-Darmon-Prasanna/Liu-Zhang-Zhang's construction of $p$-adic $L$-functions over imaginary quadratic fields $K$ in which $p$ splits in $K$.  The splitting assumption of Katz allows one to make use of his theory of $p$-adic modular forms in order to construct his and Bertolini-Darmon-Prasanna/Liu-Zhang-Zhang's $p$-adic $L$-functions, now colloquially known as the \emph{Katz} and \emph{BDP/LZZ $p$-adic $L$-functions}, respectively. Namely, the $p$-adic $L$-functions over $K$ which Katz, Bertolini-Darmon-Prasanna and Liu-Zhang-Zhang, construct is a linear functional on the space of ($p$-adic) modular forms, which is obtained by evaluating $p$-adic differential operators applied to modular forms at ordinary CM points associated with $K$. This means the CM points belong to the \emph{ordinary locus} $Y^{\mathrm{ord}} \subset Y$, which is the affinoid subdomain of (the rigid analytification of) $Y$ obtained by removing all points which reduce to supersingular points on the special fiber (this latter locus being isomorphic to a finite union of rigid analytic open unit discs). Here, the ordinariness assumption is crucial in order to establish nice analytic properties of the $p$-adic $L$-function, namely that ($p$-adic) modular forms have local coordinates in neighborhoods of CM points with respect to which the differential operators alluded to above have a nice, clearly analytic expression. In Katz's setting, one views $p$-adic modular forms as functions on a pro\'{e}tale cover $Y^{\mathrm{Ig}} \rightarrow Y^{\mathrm{ord}}$ using an explicit trivialization of the Hodge bundle. Then on $Y^{\mathrm{Ig}}$, he defines a differential operator $\theta$ called the \emph{Atkin-Serre operator}, which sends $p$-adic modular forms of weight $k$ to forms of weight $k+2$, and the nice coordinates are provided by Serre-Tate coordinates. One can express 
$$\theta = (1+T)\frac{d}{dT}$$
in terms of the Serre-Tate coordinate $T$, and using this expression one can show easily that the family
$$\{\theta^j f\}_{j \in \mathbb{Z}_{\ge 0}}$$
for a given $p$-adic modular form $f$ gives rise to a ``nearly-analytic" function of $j$: after applying a certain Hecke operator known as \emph{$p$-stabilization} to $f$ (which corresponds to removing an Euler factor in the $p$-adic $L$-function), one can show that
$$\theta^jf^{(p)},$$
where $f^{(p)}$ denotes the $p$-stabilization, is an analytic function (valued in the space of $p$-adic modular forms) of $j \in \mathbb{Z}_p^{\times}$. 

One could also use coordinates provided by $q$-expansions, if one compactifies all modular curves under our consideration; we stick to the open modular curve in this article in order to avoid boundary issues occurring at cusps, which present bigger technical issues when defining the pro\'{e}tale topology later). 

The key property of $Y^{\mathrm{Ig}} \rightarrow Y^{\mathrm{ord}}$ which allows one to construct the differential operator $\theta$ is the existence of the \emph{unit root splitting} of the Hodge filtration on $Y^{\mathrm{Ig}}$. Namely, one can find sections of the relative de Rham cohomology $\mathcal{H}_{\mathrm{dR}}^1(\mathcal{A})|_{Y^{\mathrm{Ig}}}$ which are \emph{horizontal} with respect to the algebraic Gauss-Manin connection 
$$\nabla : \mathcal{H}_{\mathrm{dR}}^1(\mathcal{A}) \rightarrow \mathcal{H}_{\mathrm{dR}}^1(\mathcal{A}) \otimes_{\mathcal{O}_Y} \Omega_Y^1$$
(here a section $\alpha$ being horizontal means that $\nabla(\alpha) = 0$), and which are also eigenvectors for the canonical (Frobenius-linear) Frobenius endomorphism 
$$F : Y^{\mathrm{Ig}} \rightarrow Y^{\mathrm{Ig}}$$
over $W$. (The reason for the terminology ``unit root" is because one of the eigenvalues for the $F$ is a $p$-adic unit, since we restrict to a covering of the ordinary locus $Y^{\mathrm{ord}}$.) The unit root splitting is a functorial, $F$-equivariant splitting of the Hodge filtration, which allows one to then define the differential operator $\theta$. This uses a standard formalism of Katz which produces such a differential (weight-raising) operator, whenever a splitting of the Hodge filtration with nice properties (e.g. $\mathrm{Gal}(Y^{\mathrm{Ig}}/Y^{\mathrm{ord}})$-equivariance) exists.

Another key property of the unit root splitting is that for CM elliptic curves $A$, which by the theory of complex multiplication always have models over $\overline{\mathbb{Q}}$, it is induced by the splitting of $H_{\mathrm{dR}}^1(A)$ defined over $\overline{\mathbb{Q}}$ given by the eigendecomposition under the CM action. This CM splitting over $\overline{\mathbb{Q}}$ also gives rise to the real-analytic Hodge decomposition over $\mathbb{C}$ from classical Hodge theory, which in that setting gives rise to the real-analytic Maass-Shimura operator $\frak{d}$ sending nearly holomorphic modular forms of weight $k$ to nearly holomorphic forms of weight $k+2$. The consequence is that after normalizing by appropriate ``canonical" periods $\Omega_p$ and $\Omega_{\infty}$, one can show that given an algebraic modular form $w$ of weight $k$, the values of $\theta^jw(y)/\Omega_p^{k+2j}$ and $\frak{d}^jw(y)/\Omega_{\infty}^{k+2j}$ at CM points $y$ belong to $\overline{\mathbb{Q}}$ and \emph{coincide}. This observation of Katz is essential to establishing interpolation properties of the Katz and BDP/LZZ $p$-adic $L$-functions, i.e. to relate them to critical values of complex $L$-functions for fibers $\mathbb{V}_x$ where $x \in \Sigma$ are algebraic points in the interpolation (Panchishkin) range. This is because such critical $L$-values can be expressed as period integrals over the CM torus (or finite sums over orbits of CM points) of $\frak{d}^jw$, and hence by the above discussion these can be related to such $p$-adic period sums of $\theta^jw$ over CM points, which themselves give rise to the Katz and BDP/LZZ $p$-adic $L$-functions. 

Let us elaborate on Serre-Tate coordinates and Katz's notion of $p$-adic modular forms, and expound on the above discussion. To fix ideas, suppose that a modular curve $Y$ represents a fine moduli space (for example, if its topological fundamental group as an analytic space over $\mathbb{C}$ is \emph{neat} in the sense that it has no torsion), and so it admits a universal object
$$\pi : \mathcal{A} \rightarrow Y.$$
The \emph{Hodge bundle} is then defined as 
$$\omega := \pi_*\Omega_{\mathcal{A}/Y}^1$$
and weight $k$ modular forms can be identified with sections of $\omega^{\otimes k}$. Katz's theory of modular forms comes from viewing modular forms as functions on a certain covering of a $p$-adic rigid analytic subdomain of $Y$, by trivializing $\omega$ on that cover. Specifically, the rigid analytic subdomain is the \emph{ordinary locus $Y^{\mathrm{ord}}$ of the modular curve $Y$}, a $p$-adic rigid space obtained from $Y$ by removing all points which under the moduli interpretation of $Y$ correspond to supersingular elliptic curves, and the cover on which we trivialize $\omega$ is the \emph{Igusa tower} $Y^{\mathrm{Ig}} \rightarrow Y^{\mathrm{ord}}$, which is a $\mathbb{Z}_p^{\times}$-cover or in other words,
$$\mathrm{Gal}(Y^{\mathrm{Ig}}/Y^{\mathrm{ord}}) \cong \mathbb{Z}_p^{\times}.$$

To obtain the trivialization of $\omega$, Katz uses the simple structure of the $p$-divisible groups of ordinary elliptic curves, namely that they are isomorphic over $W = W(\overline{\mathbb{F}}_p)$ to 
$$\mu_{p^{\infty}} \times \mathbb{Q}_p/\mathbb{Z}_p.$$ 
By the Weil pairing (or Cartier duality), such a trivialization for a given $p$-divisible group $A[p^{\infty}]$ of an ordinary elliptic curve $A$ is determined by fixing an isomorphism 
$$A[p^{\infty}]^{\text{\'{e}t}} \cong \mathbb{Q}_p/\mathbb{Z}_p.$$
In fact, $Y^{\mathrm{ord}}$ is exactly the cover of $Y^{\mathrm{ord}}$ defined over $W$ parametrizing such trivializations 
$$\alpha : \mathbb{Q}_p/\mathbb{Z}_p \xrightarrow{\sim} A[p^{\infty}]^{\text{\'{e}t}},$$
or equivalently (by the previous discussion), trivializations
$$\alpha : \mu_{p^{\infty}} \times \mathbb{Q}_p/\mathbb{Z}_p \xrightarrow{\sim} A[p^{\infty}]$$
of the entire $p$-divisible group. 

Let $A_0/\overline{\mathbb{F}}_p$ be an elliptic curve corresponding to a closed geometric point $y_0$ on the special fiber $Y_0^{\mathrm{ord}} = Y^{\mathrm{ord}}\otimes_W \overline{\mathbb{F}}_p$, and let $A/W$ denote any lift of it (i.e. with $A\otimes_W \overline{\mathbb{F}}_p \cong A_0$), corresponding to a point $y$ on $Y^{\mathrm{ord}}$. Formally completing $Y^{\mathrm{Ig}}$ along $y_0$ hence gives the formal moduli space $\hat{D}(y_0)$ of deformations of $A_0$ (with some level structure, which we will suppress for brevity). Since there is a canonical isomorphism 
$$A[p^{\infty}]^{\text{\'{e}t}} \cong A_0[p^{\infty}](\overline{\mathbb{F}}_p),$$
then a choice of trivialization
$$\alpha_0 : \mathbb{Q}_p/\mathbb{Z}_p \xrightarrow{\sim} A_0[p^{\infty}](\overline{\mathbb{F}}_p)$$
fixes $A[p^{\infty}]^{\text{\'{e}t}}$ in the formal neighborhood $\tilde{D}(\tilde{y}_0)$ of $\tilde{y}_0 = (A_0,\alpha_0)$ in $Y^{\mathrm{Ig}}$. Hence $\tilde{D}(\tilde{y}_0)$ is parametrized exactly by the connected component $A[p^{\infty}]^0$ of $A[p^{\infty}]$, and so there is an (in fact, canonical) isomorphism
$$\tilde{D}(\tilde{y}_0) \cong \hat{\mathbb{G}}_m.$$
The canonical coordinate $T$ on the torus gives rise to the \emph{Serre-Tate coordinate}, also denoted by $T$, on $\tilde{D}(\tilde{y}_0)$, and on the associated residue disc $\tilde{D}(\tilde{y}_0)\otimes_W W[1/p]$ (viewed as the rigid analytic generic fiber).

Katz also uses the description of formal neighborhoods on $Y^{\mathrm{Ig}}$ around closed points of the special fiber as being canonically isomorphic to $\hat{\mathbb{G}}_m$ to construct a global section of the Hodge bundle on the Igusa tower, which is often called the \emph{canonical differential}; in terms of the Serre-Tate coordinate, the canonical differential is just given by $dT/T$. Using tensorial powers of the canonical differential, modular forms, viewed as sections of powers $\omega^{\otimes k}$ of the Hodge-bundle $\omega$ restricted to $Y^{\mathrm{ord}}$, can be identified as functions on $Y^{\mathrm{Ig}}$. Since the canonical differential transforms by 
$$a^*\omega_{\mathrm{can}}^{\mathrm{Katz}} = a\omega_{\mathrm{can}}^{\mathrm{Katz}}$$
for $a \in \mathbb{Z}_p^{\times} \cong \mathrm{Gal}(Y^{\mathrm{Ig}}/Y^{\mathrm{ord}})$, the we can even identify a modular form of weight $k$, i.e. a section of $w\in \omega^{\otimes k}(Y^{\mathrm{ord}})$, as a function $f$ of \emph{weight $k$} on $Y^{\mathrm{Ig}}$, via 
$$w|_{Y^{\mathrm{Ig}}} = f \cdot \omega_{\mathrm{can}}^{\mathrm{Katz},\otimes k},$$
where weight $k \in \mathbb{Z}$ means that $f$ transforms as 
\begin{equation}\label{eq:Katzweightk}a^*f = a^{-k}f
\end{equation}
for $a \in \mathbb{Z}_p^{\times} = \Gal(Y^{\mathrm{Ig}}/Y^{\mathrm{ord}})$. Katz also uses this viewpoint to generalize modular forms to \emph{$p$-adic modular forms of weight $k \in \mathbb{Z}_p^{\times}$}, which are functions on $Y^{\mathrm{Ig}}$ which have weight $k \in \mathbb{Z}_p^{\times}$ in the same way as defined above. 

\subsection{Outline of our theory of $p$-adic analysis on the supersingular locus and construction of $p$-adic $L$-functions}

The key question addressed by this article is that of developing a satisfactory theory of $p$-adic analysis of modular forms on the supersingular locus of modular curves, and subsequently to construct ``supersingular" $p$-adic $L$-functions for Rankin-Selberg families $\mathbb{V}$ associated to automorphic representations $(\pi_w)_K \times \chi^{-1}$ for anticyclotomic characters $\chi$ over an imaginary quadratic field $K/\mathbb{Q}$, where $\pi_w$ is the automorphic representation of $GL_2(\mathbb{A}_{\mathbb{Q}})$ attached to a normalized new eigenform $w$ (i.e. a newform or Eisenstein series), $(\pi_w)_K$ denotes its base change to an automorphic representation of $GL_2(\mathbb{A}_K)$, and $\chi$ varies through a family of anticyclotomic Hecke characters over $K$. Here, ``supersingular" means that we assume that $p$ is inert or ramified in $K$. This is analogous, outside the splitting assumption on $p$, to the ``ordinary" setting in which Katz and Bertolini-Darmon-Prasanna/Liu-Zhang-Zhang construct their one-variable $p$-adic $L$-functions. In fact our theory addresses the ordinary and supersingular settings uniformly by working on an affinoid subdomain $\mathcal{Y}_x \subset \mathcal{Y}$ of the $p$-adic universal cover $\mathcal{Y}$ (defined below), where $\mathcal{Y}_x$ contains a natural cover $\mathcal{Y}^{\mathrm{Ig}}$ of $Y^{\mathrm{Ig}}$, and so we can recover the aforementioned one-variable $p$-adic $L$-functions in the ordinary case. 

One motivation for the construction of supersingular Rankin-Selberg $p$-adic $L$-functions is to develop special value formulas in the same style as those of Katz and BDP, where in the former case a special value of the Katz $p$-adic $L$-function is related to the $p$-adic logarithm of elliptic units attached to $K$, and in the latter case the special value formula is a ``$p$-adic Waldspurger formula" involving the $p$-adic formal logarithm of a Heegner point attached to $K$ (when a Heegner hypothesis holds for $K$ and level $N$ of $w$). Indeed, we succeed in proving such a formula in the case $p\nmid N$ in Section \ref{Waldspurgersection} and either $p > 2$ or $p$ is not ramified in $K$, though in future work we expect to remove both $p\nmid N$ and extend to $p = 2$ and ramified in $K$ (as well as relax the Heegner hypothesis on $N$, which would necessitate considering more general quaternionic Shimura curves than modular curves). Such formulas apply in cases not accessible by the previous formulas of Katz and BDP/LZZ, and upon removing the $p\nmid N$ assumption would apply for example to CM elliptic curves over $\mathbb{Q}$ where one considers $p$ as ramified in the CM field $K$ and one studies Heegner points (or mock Heegner points) attached to $K$. 

To develop a satisfactory theory of $p$-adic analysis on the supersingular locus, namely a notion of $p$-adic modular forms on the supsersingular locus $Y^{\mathrm{ss}}$ which behaves well with respect to some differential operator $d$ (say, there is a notion of weight which is raised by $d$, and given a $p$-adic modular form $f$, $d^jf$ or some stabilization $d^jf$ gives rise to some $p$-adic analytically well-behaved family), there are several technical difficulties which must be overcome. One of which is that there is no obvious canonical differential with which to trivialize $\omega$ over a cover in order to view modular forms as function on the cover (in the same way as $\omega_{\mathrm{can}}^{\mathrm{Katz}}$ does so for $\omega$ on $Y^{\mathrm{Ig}}\rightarrow Y^{\mathrm{ord}}$). It is also a difficulty that there is no ``canonical line" in the $p$-divisible group of a supersingular curve as there is for $\mu_{p^{\infty}} \subset A[p^{\infty}]$ when $A$ is ordinary. Hence there is no natural splitting of the Hodge filtration with which to define a differential operator $d$ analogous to the Atkin-Serre operator in the ordinary setting, and even if one were to construct such an operator, there is no obvious analogue of the Serre-Tate coordinate under which to locally express $p$-adic modular forms $f$ and study the analytic properties of $d^jf$. 

To overcome these difficulties, we use two key ingredients which in some sense generalize the strategy of Katz. The lack of unit root splitting, whose construction comes from a horizontal basis for the Gauss-Manin connection defined as sections of the relative \'{e}tale cohomology $\mathcal{H}_{\text{\'{e}t}}^1(\mathcal{A})$ over $Y^{\mathrm{Ig}}$ which are eigenvectors of the canonical Frobenius. This unit root splitting gives a splitting of the Hodge filtratration
$$0 \rightarrow \omega|_{Y^{\mathrm{ord}}} \rightarrow \mathcal{H}_{\mathrm{dR}}^1(\mathcal{A})|_{Y^{\mathrm{ord}}} \rightarrow \omega^{-1}|_{Y^{\mathrm{ord}}} \rightarrow 0$$
as an exact sequence of $\mathcal{O}_{Y^{\mathrm{ord}}}$-modules, where $\mathcal{O}_Y$ denote the rigid-analytic structure sheaf on $Y$. Note that unlike in the complex-analytic setting, we do not have to extend the sheaf of rigid functions (the analogue of holomorphic functions) to a large sheaf (of ``real analytic functions") in order to obtain the Hodge decomposition, as long as we restrict to $Y^{\mathrm{ord}} \subset Y$. 

As no unit root basis of exists outside of $Y^{\mathrm{ord}}$, we instead consider the moduli space of all horizontal bases of \'{e}tale cohomology. This moduli space is representable by the \emph{$p$-adic universal cover $\mathcal{Y}$} (which we define more explicitly in the next paragraph), and with universal object being given by $(\mathcal{A},\alpha_{\infty})$ where $\alpha_{\infty}$ is the universal full $p^{\infty}$-level structure. We then use a relative $p$-adic de Rham comparison theorem to view $\alpha_{\infty}$ as a universal horizontal basis for relative de Rham cohomology; unlike in the ordinary case, this comparison involves extending the structure sheaf to a certain period sheaf $\mathcal{O}\mathbb{B}_{\mathrm{dR},Y}^+$ (where this is reall a sheaf on the pro\'{e}tale site $Y_{\text{pro\'{e}t}}$) first constructed by Scholze in \cite{Scholze2}. From this horizontal ``framing" of the relative de-Rham cohomology $\mathcal{H}_{\mathrm{dR}}^1(\mathcal{A})$, we get a ``Hodge-de Rham period" measuring the position of the Hodge filtration and the ``Hodge-Tate period" measuring the position of the Hodge-Tate filtration, as considered by Scholze in loc. cit., and use these periods to construct a relative Hodge decomposition which we use as a substitute for the unit root splitting. This splitting is in fact defined over an ``intermediate period sheaf"
$$\mathcal{O}_{\Delta,\mathcal{Y}} := \mathcal{O}\mathbb{B}_{\mathrm{dR},\mathcal{Y}}^+/(t),$$ 
equipped with natural connection
$$\nabla : \mathcal{O}_{\Delta,\mathcal{Y}}  \rightarrow \mathcal{O}_{\Delta,\mathcal{Y}} \otimes_{\mathcal{O}_{\mathcal{Y}}} \Omega_{\mathcal{Y}}^1$$
which is $\mathbb{B}_{\mathrm{dR},\mathcal{Y}}^+/(t)$-linear, induced by the natural connection
$$\nabla : \mathcal{O}\mathbb{B}_{\mathrm{dR},\mathcal{Y}}^+ \rightarrow \mathcal{O}\mathbb{B}_{\mathrm{dR},\mathcal{Y}}^+ \otimes_{\mathcal{O}_{\mathcal{Y}}} \Omega_{\mathcal{Y}}^1$$
which is $\mathbb{B}_{\mathrm{dR},\mathcal{Y}}^+$-linear. Moreover, there is a natural map
$$\mathcal{O}_{\mathcal{Y}} \subset \mathcal{O}\mathbb{B}_{\mathrm{dR},\mathcal{Y}}^+ \overset{\mod t}{\twoheadrightarrow} \mathcal{O}_{\Delta,\mathcal{Y}}$$
which is in fact an inclusion compatible with connections, and such that its composition with the natural map 
$$\theta : \mathcal{O}_{\Delta,\mathcal{Y}} \twoheadrightarrow \hat{\mathcal{O}}_{\mathcal{Y}}$$
where $\hat{\mathcal{O}}_{\mathcal{Y}}$ is the $p$-adically completed structure sheaf on $\mathcal{Y}$ is the natural map
$$\mathcal{O}_{\mathcal{Y}} \rightarrow \hat{\mathcal{O}}_{\mathcal{Y}}.$$
Here $\theta$ is induced by the natural relative analogue 
$$\theta : \mathcal{O}\mathbb{B}_{\mathrm{dR},\mathcal{Y}}^+ \twoheadrightarrow \hat{\mathcal{O}}_{\mathcal{Y}}$$ of Fontaine's map $\theta : B_{\mathrm{dR}}^+ \twoheadrightarrow \mathbb{C}_p$. Here, $t$ is a global analogue of Fontaine's ``$2\pi i$" and is a global section of a period sheaf $\mathbb{B}_{\mathrm{dR},\mathcal{Y}}^+$ on $\mathcal{Y}$, which is itself a relative version of Fontaine's ring of periods $B_{\mathrm{dR}}^+$. We call $\mathcal{O}_{\Delta,\mathcal{Y}}$ ``intermediate" because it, in the sense above, lies in between $\mathcal{O}_{\mathcal{Y}}$ and $\mathcal{O}\mathbb{B}_{\mathrm{dR},\mathcal{Y}}^+$. In analogy with having to extend from holomorphic to real analytic functions on the complex universal cover $\mathcal{H}$ in order to define the complex-analytic Hodge decomposition, we view $\mathcal{O}_{\Delta,\mathcal{Y}}$ as a sheaf of ``$p$-adic nearly holomorphic (or rigid) functions on the $p$-adic universal cover $\mathcal{Y}$". 

The aforementioned moduli space parametrizing elliptic curves with full $p^{\infty}$-level structure is represented by a $GL_2(\mathbb{Z}_p)$-profinite-\'{e}tale cover $\mathcal{Y}$ of $Y$ (viewing the latter as an adic space over $\mathrm{Spa}(\mathbb{Q}_p,\mathbb{Z}_p)$) called the (preperfectoid) $p$-adic universal cover 
$$\mathcal{Y} = \varprojlim_i Y(p^i),$$
as considered by Scholze in \cite{Scholze2} and Scholze-Weinstein in \cite{ScholzeWeinstein}. Here $Y(p^i)$ is the modular curve obtained by adding full $p^i$-level structure to the moduli space represented by $Y$, and $\mathcal{Y}$ is an adic space over $\mathrm{Spa}(\mathbb{Q}_p,\mathbb{Z}_p)$ which is an object in the pro\'{e}tale site $Y_{\text{pro\'{e}t}}$. Here, the full universal $p^{\infty}$-level structure $\alpha_{\infty}$ is just a trivialization of the Tate module of $\mathcal{A}$
$$\alpha_{\infty} : \hat{\mathbb{Z}}_{p,\mathcal{Y}}^{\oplus 2} \xrightarrow{\sim} T_p\mathcal{A}|_{\mathcal{Y}},$$
here $\hat{\mathbb{Z}}_{p,\mathcal{Y}}$ is the ``constant sheaf" on $\mathcal{Y}$ associated with $\mathbb{Z}_p$, except that sections are continuous functions into $\mathbb{Z}_p$ where the latter has the $p$-adic (and not discrete) topology. Now let $\mathcal{O}_Y$ denote the pro\'{e}tale structure sheaf on $Y_{\text{pro\'{e}t}}$. Using the Hodge-de Rham comparison theorem of Scholze (\cite{Scholze}), we then have a natural inclusion
$$\mathcal{H}_{\mathrm{dR}}^1(\mathcal{A})\otimes_{\mathcal{O}_Y} \mathcal{O}\mathbb{B}_{\mathrm{dR},Y}^+ \overset{\iota_{\mathrm{dR}}}{\subset} \mathcal{H}_{\text{\'{e}t}}^1(\mathcal{A})\otimes_{\hat{\mathbb{Z}}_{p,Y}} \mathcal{O}\mathbb{B}_{\mathrm{dR},Y}^+$$
on $Y_{\text{pro\'{e}t}}$ compatible with filtrations (on the left, the convolution of the Hodge filtration on $\mathcal{H}_{\mathrm{dR}}^1(\mathcal{A})$ and the natural filtration on $\mathcal{O}\mathbb{B}_{\mathrm{dR},Y}^+$, and on the right is just the filtration on $\mathcal{O}\mathbb{B}_{\mathrm{dR},Y}^+$) and connections (on the left, the convolution of the Gauss-Manin connection on $\mathcal{H}_{\mathrm{dR}}^1(\mathcal{A})$ and the natural connection on $\mathcal{O}\mathbb{B}_{\mathrm{dR},Y}^+$ via the Leibniz rule, and on the right is just the connection on $\mathcal{O}\mathbb{B}_{\mathrm{dR},Y}^+$). Pulling back to $\mathcal{Y}$, we then have 
\begin{equation}\label{eq:comparison}\mathcal{H}_{\mathrm{dR}}^1(\mathcal{A})\otimes_{\mathcal{O}_Y} \mathcal{O}\mathbb{B}_{\mathrm{dR},Y}^+|_{\mathcal{Y}} \overset{\iota_{\mathrm{dR}}}{\hookrightarrow} \mathcal{H}_{\text{\'{e}t}}^1(\mathcal{A})\otimes_{\hat{\mathbb{Z}}_{p,Y}} \mathcal{O}\mathbb{B}_{\mathrm{dR},Y}^+|_{\mathcal{Y}} \underset{\sim}{\xrightarrow{\alpha_{\infty}^{-1}}}(\mathcal{O}\mathbb{B}_{\mathrm{dR},\mathcal{Y}}^+\cdot t^{-1})^{\oplus 2}
\end{equation}
where the last isomorphism uses the universal $p^{\infty}$-level structure $\alpha_{\infty}$ and the isomorphism 
$$\mathcal{H}_{\text{\'{e}t}}^1(\mathcal{A}) \cong T_pA(-1)$$
given by the Weil pairing. We also use the fact that there is a natural isomorphism
$$\hat{\mathbb{Z}}_{p,\mathcal{Y}}(-1)  = \hat{\mathbb{Z}}_{p,\mathcal{Y}}\cdot t^{-1},$$
as $t$ is a period for the cyclotomic character. 

We note that there is a natural sublocus $\mathcal{Y}^{\mathrm{Ig}} \subset \mathcal{Y}$ which parametrizes unit root splittings on ordinary elliptic curves, viewed as arithmetic $p^{\infty}$ level structures
$$\alpha : \mathbb{Z}_p(1) \oplus \mathbb{Z}_p \xrightarrow{\sim} T_pA,$$
\emph{together} with a trivialization
$$\mathbb{Z}_p(1) \cong \mathbb{Z}_p,$$
and it is clear that these two data are equivalent to a full $p^{\infty}$-level structure
$$\alpha : \mathbb{Z}_p \oplus \mathbb{Z}_p \xrightarrow{\sim} T_pA.$$
Thus, it is a natural $\mathbb{Z}_p^{\times}$-cover of the Igusa tower $Y^{\mathrm{Ig}}$, and a $\mathbb{Z}_p^{\times} \times \mathbb{Z}_p^{\times}$-cover of $Y^{\mathrm{ord}}$. In fact, one can show that our theory restricts to Katz's on $\mathcal{Y}^{\mathrm{Ig}}$, that $q_{\mathrm{dR}}$-expansions recover Serre-Tate expansions, and that the restriction of our canonical differential $\omega_{\mathrm{can}}|_{\mathcal{Y}^{\mathrm{Ig}}}$ coincides with $\omega_{\mathrm{can}}^{\mathrm{Katz}}$. 

Using (\ref{eq:comparison}), one sees that $\alpha_{\infty,1} := \alpha_{\infty}|_{\hat{\mathbb{Z}}_{p,\mathcal{Y}}\oplus \{0\}}$ and $\alpha_{\infty,2} := \alpha_{\infty}|_{\{0\}\oplus \hat{\mathbb{Z}}_{p,\mathcal{Y}}}$ are horizontal sections for the connection. Moreover, upon making the identification (via the Weil pairing)
$$T_p\mathcal{A}\cong \Hom(\mathcal{A}[p^{\infty}],\mu_{p^{\infty}}),$$
we get a natural map
$$HT_{\mathcal{A}} : T_p\mathcal{A}\otimes_{\hat{\mathbb{Z}}_{p,Y}} \mathcal{O}_Y \rightarrow \omega, \hspace{1cm} \alpha \mapsto \alpha^*\frac{dT}{T}$$
where $dT/T$ is the canonical invariant differential on $\mu_{p^{\infty}}$. (It is also sometimes customary to denote $HT_{\mathcal{A}} = d\log$, as $d\log T = dT/T$.) We then define the \emph{fake Hasse invariant} as
$$\frak{s} := HT_{\mathcal{A}}(\alpha_{\infty,2}),$$
and in fact we have that on the restriction to $\mathcal{Y}^{\mathrm{Ig}}$,
$$\frak{s}|_{\mathcal{Y}^{\mathrm{Ig}}} = \omega_{\mathrm{can}}^{\mathrm{Katz}}|_{\mathcal{Y}^{\mathrm{Ig}}}.$$
Consider the affinoid subdomain
$$\mathcal{Y}_x = \{\frak{s} \neq 0\} \subset \mathcal{Y}.$$
We note that $\frak{s} \in \omega(\mathcal{Y}_x)$ is a non-vanishing global section, i.e. a generator. Then let $\frak{s}^{-1} \in \omega^{-1}(\mathcal{Y}_x)$ the generator corresponding to $\frak{s}$ under Poincar\'{e} duality. The trivialization 
$$\omega|_{\mathcal{Y}^{\mathrm{Ig}}} \cong \mathcal{O}_{\mathcal{Y}^{\mathrm{Ig}}}$$
induced by $\frak{s}|_\mathcal{Y}^{\mathrm{Ig}} = \omega_{\mathrm{can}}^{\mathrm{Katz}}$, along with the universal unit root splitting on $\mathcal{Y}^{\mathrm{Ig}}$ given by $\alpha_{\infty}|_{\mathcal{Y}^{\mathrm{Ig}}}$, gives rise to a $p$-adic differential operator (the Atkin-Serre operator) $\theta_{\mathrm{AS}} : \mathcal{O}_{\mathcal{Y}^{\mathrm{Ig}}} \rightarrow \mathcal{O}_{\mathcal{Y}^{\mathrm{Ig}}}$ with nice $p$-adic analytic properties, seen using Serre-Tate coordinates. The key to these nice $p$-adic properties is the identity
$$\sigma(\omega_{\mathrm{can}}^{\mathrm{Katz},\otimes 2}) = d\log T$$
where $T$ is the Serre-Tate coordinate, and 
\begin{equation}\label{eq:KS1}\sigma : \omega^{\otimes 2} \xrightarrow{\sim} \Omega_Y^1
\end{equation}
is the Kodaira-Spencer isomorphism. 

By the above discussion, $\frak{s}$ seems like a natural candidate to extend Katz's idea of viewing $p$-adic modular forms (sections of $\omega$) as functions to the (non-Galois) covering $\mathcal{Y}_x \rightarrow Y$. However, unlike in Katz's situation on $\mathcal{Y}^{\mathrm{Ig}}$, in our situation the splitting of (a lift of) the Hodge filtration which we define and use will require extending coefficients from $\mathcal{O}_{\mathcal{Y}_x}$ to a larger sheaf $\mathcal{O}_{\Delta,\mathcal{Y}_x}$ (which can be viewed as ``the sheaf of nearly rigid functions", in analogy to extending the sheaf of holomorphic functions in order to define the Hodge decomposition in the complex analytic situation), and with respect to this splitting $\frak{s}$ will \emph{not} be the most convenient choice for trivializing $\omega \otimes_{\mathcal{O}_Y}\mathcal{O}_{\Delta,\mathcal{Y}_x}$. Instead, we will trivialize using the generator
$$\omega_{\mathrm{can}} := \frac{\frak{s}}{y_{\mathrm{dR}}} \in (\omega \otimes_{\mathcal{O}_Y} \mathcal{O}_{\Delta,\mathcal{Y}_x})(\mathcal{Y}_x)$$
where $y_{\mathrm{dR}} \in \mathcal{O}_{\Delta,\mathcal{Y}_x}(\mathcal{Y}_x)^{\times}$ is a certain $p$-adic period associated with $\frak{s}$. Hence this induces a trivialization
$$\omega\otimes_{\mathcal{O}_Y}\mathcal{O}_{\Delta,\mathcal{Y}_x} \cong \mathcal{O}_{\Delta,\mathcal{Y}_x}.$$
One can show that $y_{\mathrm{dR}} = 1$ on the sublocus $\mathcal{Y}^{\mathrm{Ig}} \subset \mathcal{Y}$, and so we have
\begin{equation}\label{eq:Katzcoincide}\omega_{\mathrm{can}}|_{\mathcal{Y}^{\mathrm{Ig}}} = \omega_{\mathrm{can}}^{\mathrm{Katz}}|_{\mathcal{Y}^{\mathrm{Ig}}}.
\end{equation}
In analogy with (\ref{eq:KS1}), we also have 
\begin{equation}\label{eq:KS2}\sigma(\omega_{\mathrm{can}}^{\otimes, 2}) = dz_{\mathrm{dR}}
\end{equation}
where $z_{\mathrm{dR}} = \mathbf{z}_{\mathrm{dR}} \pmod{t}$ for the de Rham fundamental period $\mathbf{z}_{\mathrm{dR}}$, which we describe in more detail below. We note that the analogy between (\ref{eq:KS1}) and (\ref{eq:KS2}), along with (\ref{eq:Katzcoincide}) suggests that $z_{\mathrm{dR}}$ provides the correct analogue of $\log T$. It is this observation which later leads to our notion of the $q_{\mathrm{dR}} = \exp(z_{\mathrm{dR}}-\theta(z_{\mathrm{dR}}))$-coordinate as an analogue (and extension) for the Serre-Tate coordinate $T$, and $q_{\mathrm{dR}}$-expansions as analogues (and extensions) of Serre-Tate $T$-expansions.

We can also use $\omega_{\mathrm{can}}$ to generalize Katz's notion of $p$-adic modular forms. Let $\mathcal{U} \subset \mathcal{Y}_x$ be a subadic space, let $\lambda : \mathcal{Y} \rightarrow Y$ denote the natural projection, and let $\lambda(\mathcal{U}) = U$. Then letting 
$$\Gamma = \mathrm{Gal}(\mathcal{U}/U) \subset \mathrm{Gal}(\mathcal{Y}/Y) = GL_2(\mathbb{Z}_p),$$
we have a natural map
$$\omega^{\otimes k}|_U(U) \overset{\lambda^*}{\hookrightarrow} \omega^{\otimes k}|_{\mathcal{U}}(\mathcal{U}) \hookrightarrow (\omega \otimes_{\mathcal{O}_Y} \mathcal{O}_{\Delta,\mathcal{Y}_x}|_{\mathcal{U}})^{\otimes k}(\mathcal{U}) \underset{\sim}{\xrightarrow{\omega_{\mathrm{can}}^{\otimes k}}} \mathcal{O}_{\Delta,\mathcal{Y}_x}|_{\mathcal{U}}(\mathcal{U}).$$
In fact, the image under this map consists of sections $f \in \mathcal{O}_{\Delta,\mathcal{Y}_x}|_{\mathcal{U}}(\mathcal{U})$ such that 
\begin{equation}\label{eq:transformation}\left(\begin{array}{ccc} a & b \\
c & d\\
\end{array}\right)^*f = (bc-ad)^{-k}(cz_{\mathrm{dR}}+a)^kf
\end{equation}
for any $\left(\begin{array}{ccc} a & b \\
c & d\\
\end{array}\right) \in \Gamma.$ In this situation, we say that $f$ has \emph{weight $k$ for $\Gamma$ on $\mathcal{U}$}. 
We note that when $\mathcal{U} = \mathcal{Y}^{\mathrm{Ig}}$ and so $U = Y^{\mathrm{ord}}$, and 
$$\Gamma = (\mathbb{Z}_p^{\times})^{\oplus 2} \subset GL_2(\mathbb{Z}_p)$$ 
the diagonal subgroup, and then (\ref{eq:transformation}) becomes
\begin{equation}\label{eq:transformationKatz}\left(\begin{array}{ccc} a & 0 \\
0 & d\\
\end{array}\right)^*f = (-d)^{-k}f.
\end{equation}
In particular, $f$ descends to a section in $\mathcal{O}_Y(Y^{\mathrm{Ig}})$ and we recover Katz's notion (\ref{eq:Katzweightk}) of a $p$-adic modular form of weight $k$. Our main interest, which is defining a satisfactory notion of $p$-adic modular form on the supersingular locus, will involve the case $\mathcal{U} = \mathcal{Y}^{\mathrm{ss}}, U = Y^{\mathrm{ss}}$ and $\Gamma = GL_2(\mathbb{Z}_p)$. 

Let us now elaborate on the construction of our splitting of (a lift of) the Hodge filtration alluded to above, which is crucial to the construction of the $p$-adic Maass-Shimura operator and its algebraicity properties. Unlike in Katz's theory, outside of $\mathcal{Y}^{\mathrm{Ig}}$, $\alpha_{\infty,1}$ and $\alpha_{\infty,2}$ do not generate either the Hodge or Hodge-Tate filtrations, and instead we are led to considering natural periods 
$$\mathbf{z}_{\mathrm{dR}}, \mathbf{z} \in \mathcal{O}\mathbb{B}_{\mathrm{dR},Y}^+(\mathcal{Y}_x),$$
where the \emph{Hodge-de Rham period} $\mathbf{z}_{\mathrm{dR}} \in \mathcal{O}\mathbb{B}_{\mathrm{dR},Y}^+(\mathcal{Y}_x)$ measures the position of the Hodge filtration
$$\omega|_{\mathcal{Y}_x} = \frak{s}\cdot \mathcal{O}_{\mathcal{Y}_x} \subset \mathcal{H}_{\mathrm{dR}}^1(\mathcal{A}) \otimes_{\mathcal{O}_Y} \mathcal{O}\mathbb{B}_{\mathrm{dR},\mathcal{Y}_x}^+ \overset{\iota_{\mathrm{dR}}}{\hookrightarrow} \mathcal{H}_{\text{\'{e}t}}^1(\mathcal{A}) \otimes_{\hat{\mathbb{Z}}_{p,Y}} \mathcal{O}\mathbb{B}_{\mathrm{dR},\mathcal{Y}_x}^+ \underset{\sim}{\xrightarrow{\alpha_{\infty}^{-1}}} (\mathcal{O}\mathbb{B}_{\mathrm{dR},\mathcal{Y}_x}^+\cdot t^{-1})^{\oplus 2},$$
and the \emph{Hodge-Tate period} $\mathbf{z} \in \mathcal{O}_Y(\mathcal{Y}_x) \subset \mathcal{O}\mathbb{B}_{\mathrm{dR},Y}^+(\mathcal{Y}_x)$ measures the position of the Hodge-Tate filtration
$$\omega^{-1}|_{\mathcal{Y}_x} = \frak{s}^{-1}\cdot \mathcal{O}_{\mathcal{Y}_x} \subset \mathcal{H}_{\text{\'{e}t}}^1(\mathcal{A}) \otimes_{\hat{\mathbb{Z}}_{p,Y}} \mathcal{O}\mathbb{B}_{\mathrm{dR},\mathcal{Y}_x}^+ \underset{\sim}{\xrightarrow{\alpha_{\infty}^{-1}}} (\mathcal{O}\mathbb{B}_{\mathrm{dR},\mathcal{Y}_x}^+\cdot t^{-1})^{\oplus 2}.$$
Using these periods, and recalling our notation $\mathcal{O}_{\Delta,\mathcal{Y}_x} = \mathcal{O}\mathbb{B}_{\mathrm{dR},\mathcal{Y}_x}^+/(t)$, one can construct a Hodge decomposition
\begin{equation}\label{eq:decomposition1}T_p\mathcal{A} \otimes_{\hat{\mathbb{Z}}_{p,Y}} \mathcal{O}_{\Delta,\mathcal{Y}_x} \xrightarrow{\sim} (\omega \otimes_{\mathcal{O}_Y}\mathcal{O}_{\Delta,\mathcal{Y}_x}) \oplus (\omega^{-1}\otimes_{\mathcal{O}_Y}\mathcal{O}_{\Delta,\mathcal{Y}_x}\cdot t)
\end{equation}
where the projection onto the first factor is given by $HT_{\mathcal{A}}$ (i.e. the inclusion of the first factor is a section of $HT_{\mathcal{A}}$), and so this gives a splitting of the Hodge-Tate filtration. Unfortunately, the kernel of this splitting is \emph{not horizontal} with respect to the Gauss-Manin connection, which is a crucial property of the unit root splitting in Katz's theory. Indeed, restricting to $\mathcal{Y}^{\mathrm{Ig}}$ to recover Katz's theory, we that $\mathbf{z} = \infty$ on $\mathcal{Y}^{\mathrm{Ig}}$ and so in particular is constant and hence annihilated by $\nabla$. However, in general we have
\begin{equation}\label{eq:Hodgeline}\omega^{-1} \otimes_{\mathcal{O}_Y} \mathcal{O}_{\Delta,\mathcal{Y}_x}\cdot t = \langle \alpha_{\infty,1} - \frac{1}{\mathbf{z}}\alpha_{\infty,2}\rangle \mathcal{O}_{\Delta,\mathcal{Y}_x},
\end{equation}
and the fact that $\nabla(\mathbf{z}) \neq 0$ on $\mathcal{Y}_x$ means that for any section $w \in (\Omega_Y^1 \otimes_{\mathcal{O}_Y} \mathcal{O}_{\Delta,\mathcal{Y}_x}^1)(\mathcal{Y}_x)$ and $f \in \mathcal{O}_{\Delta,\mathcal{Y}_x}(\mathcal{Y}_x)$, 
\begin{align*}\nabla_w(\omega_{\mathrm{can}}^{-1}\cdot f) &= \nabla_w\left(\langle \alpha_{\infty,1} - \frac{1}{\mathbf{z}}\alpha_{\infty,2}\rangle f\right)\\
& = \nabla_w\left(\langle \alpha_{\infty,1} - \frac{1}{\mathbf{z}}\alpha_{\infty,2}\rangle\right)f + \langle \alpha_{\infty,1} - \frac{1}{\mathbf{z}}\alpha_{\infty,2}\rangle\nabla_w(f) \\
&= -\nabla\left(\frac{1}{\mathbf{z}}\right)\alpha_{\infty,2} + \langle \alpha_{\infty,1} - \frac{1}{\mathbf{z}}\alpha_{\infty,2}\rangle\nabla_w(f) \not\in \langle \alpha_{\infty,1} - \frac{1}{\mathbf{z}}\alpha_{\infty,2}\rangle \mathcal{O}_{\Delta,\mathcal{Y}_x} \\
&= \omega^{-1} \otimes_{\mathcal{O}_Y} \mathcal{O}_{\Delta,\mathcal{Y}_x}\cdot t,
\end{align*}
and so (\ref{eq:Hodgeline}) (and hence the Hodge decomposition (\ref{eq:decomposition1})) is not horizontal with respect to $\nabla$. To remedy this, we instead replace (\ref{eq:decomposition1}) with another splitting
\begin{equation}\label{eq:decomposition2}T_p\mathcal{A} \otimes_{\hat{\mathbb{Z}}_{p,Y}} \mathcal{O}_{\Delta,\mathcal{Y}_x} \xrightarrow{\sim} (\omega \otimes_{\mathcal{O}_Y}\mathcal{O}_{\Delta,\mathcal{Y}_x}) \oplus \mathcal{L},
\end{equation}
where $\mathcal{L}$ is a free $\mathcal{O}_{\Delta,\mathcal{Y}_x}$-module of rank 1. Now the projection onto the first factor is \emph{not} given by $HT_{\mathcal{A}}$, but instead its kernel is \emph{horizontal} in the sense that
$$\nabla_w(\mathcal{L}) \subset \mathcal{L}$$
for any section $w$ of $\Omega_Y^1\otimes_{\mathcal{O}_Y}\mathcal{O}_{\Delta,\mathcal{Y}_x}$. Moreover, (\ref{eq:decomposition2}) recovers the usual Hodge-Tate decomposition upon applying the natural map $\theta : \mathcal{O}_{\Delta,\mathcal{Y}_x} \twoheadrightarrow \hat{\mathcal{O}}_{\mathcal{Y}_x}$ where $\hat{\mathcal{O}}_{\mathcal{Y}_x}$ denotes the $p$-adically completed structure sheaf (and $\theta$ is analogous to Fontaine's universal cover $\theta : B_{\mathrm{dR}}^+ \twoheadrightarrow \mathbb{C}_p$)
\begin{equation}\label{eq:decomposition3}T_p\mathcal{A} \otimes_{\hat{\mathbb{Z}}_{p,Y}} \hat{\mathcal{O}}_{\mathcal{Y}_x} \xrightarrow{\sim} (\omega \otimes_{\mathcal{O}_Y}\hat{\mathcal{O}}_{\mathcal{Y}_x}) \oplus (\omega^{-1}\otimes_{\mathcal{O}_Y}\hat{\mathcal{O}}_{\mathcal{Y}_x}(1)).
\end{equation}

Now we can define a $p$-adic Maass-Shimura operator $d$ with respect to the splitting (\ref{eq:decomposition2}). Since (\ref{eq:decomposition3}) recovers the relative Hodge-Tate decomposition, it is induced at CM points by the algebraic CM splitting, and so as in Katz's theory one can show (\emph{using} the horizontalness of (\ref{eq:decomposition2})) that for an algebraic modular form $w \in \omega^{\otimes k}(Y)$, writing
$$w|_{\mathcal{Y}_x} = f\cdot \omega_{\mathrm{can}}^{\otimes k},\; f \in \mathcal{O}_{\Delta,\mathcal{Y}_x}(\mathcal{Y}_x), \hspace{1cm} F\cdot (2\pi i dz)^{\otimes k},\; F \in \mathcal{O}^{\mathrm{hol}}(\mathcal{H}^+),$$
where $\mathcal{O}^{\mathrm{hol}}$ denotes the sheaf of (complex) holomorphic function and $\mathcal{H}^+ \rightarrow Y$ the complex universal cover (i.e. the complex upper half-plane), we have that the value
$$(\theta \circ d^j)f(y)/\Omega_p(y)^{k+2j}$$
at a CM point $y \in \mathcal{Y}_x$ is an algebraic number for an appropriate $p$-adic period $\Omega_p(y)$ (depending on $y$), and in fact is equal (in $\overline{\mathbb{Q}}$) to the algebraic number
$$\frak{d}^jF(y)/\Omega_{\infty}(y)^{k+2j}$$
at the same CM point $y \in \mathcal{Y}_x$ for an appropriate complex period $\Omega_{\infty}(y)$ (only depending on the image of $y$ under the natural projection $\mathcal{Y} \rightarrow Y$):
\begin{equation}\label{eq:algebraicnumber}(\theta \circ d^j)f(y)/\Omega_p(y)^{k+2j} = \frak{d}^jF(y)/\Omega_{\infty}(y)^{k+2j}.
\end{equation}
The key fact for proving this algebraicity is that the fiber $\mathcal{O}_{\Delta,\mathcal{Y}_x}(y)$ contains a unique copy of $\overline{\mathbb{Q}}_p$ by Hensel's lemma, and so the the composition $\overline{\mathbb{Q}}_p \subset \mathcal{O}_{\Delta,\mathcal{Y}_x}(y) \overset{\theta}{\twoheadrightarrow} \mathbb{C}_p$ is the natural inclusion; then since the specialization $\mathbf{z}(y) \in \overline{\mathbb{Q}}_p$, we have $\theta(\mathbf{z}(y)) = \mathbf{z}(y)$, and so $(\theta \circ d^j)f(y) = \theta(d^jf(y)) = d^jf(y)$, and this latter value is equal (after normalizing by periods) to $\frak{d}^jf(y)$ since both (\ref{eq:decomposition2}) and the complex analytic Hodge decomposition are both induced by the CM splitting at $y$. 

It is the algebraicity of $\theta \circ d^j$ at CM points, and moreover the fact that it is equal in value to complex Maass-Shimura derivatives, which makes it applicable to questions regarding interpolation of critical $L$-values and hence construction of $p$-adic $L$-functions. Ultimately, for the construction of the latter, it is necessary to understand the analytic behavior of $(\theta \circ d^j)f$ around CM points $y$, and here the framework for understanding such analytic properties is provided by $q_{\mathrm{dR}}$-expansions of modular forms, given by a $q_{\mathrm{dR}}$-expansion map 
\begin{equation}\label{eq:qdRexp}\omega^{\otimes k}|_{\mathcal{Y}_x} \underset{\sim}{\xrightarrow{\omega_{\mathrm{can}}^{\otimes k}}} y_{\mathrm{dR}}^k\mathcal{O}_{\mathcal{Y}_x} \overset{q_{\mathrm{dR}}-\mathrm{exp}}{\hookrightarrow} \hat{\mathcal{O}}_{\mathcal{Y}_x}\llbracket q_{\mathrm{dR}}-1\rrbracket \subset \mathcal{O}_{\Delta,\mathcal{Y}_x} \underset{\sim}{\xrightarrow{\omega_{\mathrm{can}}^{\otimes k}}} \omega^{\otimes k} \otimes_{\mathcal{O}_Y}\mathcal{O}_{\Delta,\mathcal{Y}_x}.
\end{equation}
A key fact is that on the supersingular locus $\mathcal{Y}^{\mathrm{ss}} \subset \mathcal{Y}_x$, (\ref{eq:qdRexp}) \emph{coincides} with the natural inclusion
\begin{equation}\label{eq:naturalinclusion}\omega|_{\mathcal{Y}^{\mathrm{ss}}} \hookrightarrow \omega\otimes_{\mathcal{O}_Y} \mathcal{O}\mathbb{B}_{\mathrm{dR},\mathcal{Y}^{\mathrm{ss}}}^+ \xrightarrow{\mod t} \omega\otimes_{\mathcal{O}_Y} \mathcal{O}_{\Delta,\mathcal{Y}^{\mathrm{ss}}},
\end{equation}
which is induced by the natural inclusion $\mathcal{O}_{\mathcal{Y}} \subset \mathcal{O}\mathbb{B}_{\mathrm{dR},\mathcal{Y}}^+ \xrightarrow{\mod t} \mathcal{O}_{\Delta,\mathcal{Y}}$. In fact, recalling that $\hat{\mathcal{O}}_Y$ denotes the $p$-adic completion of the structure sheaf $\mathcal{O}_Y$, we have a natural inclusion
$$\hat{\mathcal{O}}_{\mathcal{Y}}\llbracket q_{\mathrm{dR}}-1\rrbracket \subset  \mathcal{O}_{\Delta,\mathcal{Y}}$$
which is compatible with the natural connections on each sheaf, and which is in fact an \emph{equality} on $\mathcal{Y}^{\mathrm{ss}}$:
$$\hat{\mathcal{O}}_{\mathcal{Y}^{\mathrm{ss}}}\llbracket q_{\mathrm{dR}}-1\rrbracket = \mathcal{O}_{\Delta,\mathcal{Y}^{\mathrm{ss}}}.$$
Hence we see that, at least on the supersingular locus $\mathcal{Y}^{\mathrm{ss}}$, $q_{\mathrm{dR}}$ provides the correct coordinate when viewing a rigid modular form 
$$w \in \omega^{\otimes k}(\mathcal{Y}^{\mathrm{ss}}) \subset (\omega \otimes_{\mathcal{O}_Y} \mathcal{O}_{\Delta,\mathcal{Y}^{\mathrm{ss}}})^{\otimes k}(\mathcal{Y}^{\mathrm{ss}}) \underset{\sim}{\xrightarrow{\omega_{\mathrm{can}}^{\otimes k}}} \mathcal{O}_{\Delta,\mathcal{Y}^{\mathrm{ss}}}(\mathcal{Y}^{\mathrm{ss}})$$
as a ``nearly rigid function". The coordinate $q_{\mathrm{dR}} \in \mathcal{O}_{\Delta,\mathcal{Y}_x}(\mathcal{Y}_x)$ plays the role analogous to that of the Serre-Tate coordinate, and in fact the $q_{\mathrm{dR}}$-expansion of a modular form recovers the Serre-Tate expansion upon restricting to $\mathcal{Y}^{\mathrm{Ig}}$. 

In fact, one can write down an explicit formula for $\theta \circ d^j$ in terms of $q_{\mathrm{dR}}$-coordinates
\begin{equation}\label{eq:explicitformula}\theta \circ d^j = \sum_{i = 0}^j\binom{j}{i}\binom{j+k-1}{i}i!\left(-\frac{\theta(y_{\mathrm{dR}})}{\mathbf{z}}\right)^i\theta\circ \left(\frac{q_{\mathrm{dR}}d}{dq_{\mathrm{dR}}}\right)^{j-i}.
\end{equation}
On $\mathcal{Y}^{\mathrm{Ig}}$, as we noted before, $\mathbf{z} = \infty$ and so we have
$$(\theta \circ d^j)|_{\mathcal{Y}^{\mathrm{Ig}}} = \theta \circ \left(\frac{q_{\mathrm{dR}}d}{dq_{\mathrm{dR}}}\right)^j|_{\mathcal{Y}^{\mathrm{Ig}}} = \theta\circ \theta_{\mathrm{AS}}^j = \theta_{\mathrm{AS}}^j$$
where the last equality follows from the fact that 
$$\frac{dq_{\mathrm{dR}}}{q_{\mathrm{dR}}}|_{\mathcal{Y}^{\mathrm{Ig}}} = dz_{\mathrm{dR}}|_{\mathcal{Y}^{\mathrm{Ig}}} = dT|_{\mathcal{Y}^{\mathrm{Ig}}}$$
and that
$$\mathcal{O}_{\mathcal{Y}_x} \subset \mathcal{O}_{\Delta,\mathcal{Y}_x} \overset{\theta}{\twoheadrightarrow} \hat{\mathcal{O}}_{\mathcal{Y}_x}$$
is the natural completion map.
Hence, again restricting to $\mathcal{Y}^{\mathrm{Ig}} \subset \mathcal{Y}_x$, we recover Katz's theory.

In order to construct the $p$-adic $L$-function, we consider the image of a modular form $w \in \omega^{\otimes k}(Y)$ under the $q_{\mathrm{dR}}$-expansion map (\ref{eq:qdRexp}), and study the growth of the coefficients of its $q_{\mathrm{dR}}$-expansion around supersingular CM points $y$:
$$\omega^{\otimes k}|_{\mathcal{Y}^{\mathrm{ss}}} \xrightarrow{q_{\mathrm{dR}}-\mathrm{exp}} \hat{\mathcal{O}}_{\mathcal{Y}^{\mathrm{ss}}}\llbracket q_{\mathrm{dR}}-1\rrbracket \xrightarrow{\mathrm{stalk\;at\;}y} \hat{\mathcal{O}}_{\mathcal{Y}^{\mathrm{ss}},y}\llbracket q_{\mathrm{dR}}-1\rrbracket = \hat{\mathcal{O}}_{\mathcal{Y}^{\mathrm{ss}},y}\llbracket q_{\mathrm{dR}}^{1/p^b}-1\rrbracket$$
for any $b \in \mathbb{Q}$, where the last equality is just a formal change of variables. By the remarks above involving (\ref{eq:qdRexp}) and (\ref{eq:naturalinclusion}), since $y \in \mathcal{Y}^{\mathrm{ss}}$, we see that the above map 
coincides with the natural map 
$$\omega^{\otimes k}|_{\mathcal{Y}^{\mathrm{ss}}} \hookrightarrow \omega^{\otimes k} \otimes_{\mathcal{O}_Y} \mathcal{O}_{\Delta,\mathcal{Y}^{\mathrm{ss}}} \xrightarrow{\mathrm{stalk\;at\;}y} \omega^{\otimes k} \otimes_{\mathcal{O}_Y} \mathcal{O}_{\Delta,\mathcal{Y}^{\mathrm{ss}},y}$$
and hence is compatible with the natural connections on all sheaves; in particular, this compatibility shows that the formula (\ref{eq:explicitformula}) gives the action of the $p$-adic Maass-Shimura operator $\theta^j \circ d^j$. One of the main results of Section \ref{overconvergencesection} is that for appropriate $b \in \mathbb{Q}$ (depending on $y$, but we later show that $b$ is the same for all $y$ in the same CM orbit), one in fact has that the above map factors through
\begin{equation}\label{eq:goodqdRexp}\omega^{\otimes k}|_{\mathcal{Y}^{\mathrm{ss}}} \rightarrow \hat{\mathcal{O}}_{\mathcal{Y}^{\mathrm{ss}},y}^+\llbracket q_{\mathrm{dR}}^{1/p^b}-1\rrbracket [1/p]
\end{equation}
where $\hat{\mathcal{O}}_Y^+ \subset \hat{\mathcal{O}}_Y$ denotes the $p$-adically completed integral structure structure sheaf. Given $w \in \omega^{\otimes k}(Y)$, we can construct the $p$-adic $L$-function associated with $w$ by considering sums of the images $w(q_{\mathrm{dR}}^{1/p^b})_y$ of $w$ under (\ref{eq:goodqdRexp}) for various orbits of CM points $y$ on $\mathcal{Y}^{\mathrm{ss}}$, and then applying the Maass-Shimura operators $p^{bj}\theta \circ d^j$ (normalized by $p^{bj}$) using the formula (\ref{eq:explicitformula}). By the formula, we see that as long as $p$-adic valuations $|y_{\mathrm{dR}}(y)|, |\mathbf{z}(y)|$ of the specializations $y_{\mathrm{dR}}(y), \mathbf{z}(y)$ of the $p$-adic periods $y_{\mathrm{dR}}, \mathbf{z}$ satisfy certain bounds, then images in the stalks 
$$p^{bj}(\theta \circ d^j)w(q_{\mathrm{dR}}^{1/p^b})_y$$
``converge" to some $p$-adic continuous function in $j \in \mathbb{Z}/(p-1) \times \mathbb{Z}_p$, in a sense we now make more precise. Define the \emph{$p$-stabilization} by 
$$w^{\flat}(q_{\mathrm{dR}}^{1/p^b})_y = w(q_{\mathrm{dR}}^{1/p^b})_y - \frac{1}{p}\sum_{j = 0}^{p-1}w(\zeta_p^jq_{\mathrm{dR}}^{1/p^b})_y.$$
One can show directly from (\ref{eq:explicitformula}) that
$$p^{bj}(\theta \circ d^j)w^{\flat}(q_{\mathrm{dR}}^{1/p^b})_y$$
is a $p$-adic continuous function of $j \in \mathbb{Z}/(p-1) \times \mathbb{Z}_p$. Then for any $j_0 \in \mathbb{Z}_{\ge 0}$ we have
\begin{equation}\label{eq:convergence}\lim_{m \rightarrow \infty}p^{b(j_0+p^m(p-1))}(\theta \circ d^{j_0 + p^m(p-1)})w(q_{\mathrm{dR}}^{1/p^b})_y = p^{bj_0}(\theta \circ d^{j_0})w^{\flat}(q_{\mathrm{dR}}^{1/p^b})_y.
\end{equation}
It is by summing $p^{bj}(\theta \circ d^j)w^{\flat}(q_{\mathrm{dR}}^{1/p^b})_y$ against anticyclotomic Hecke characters $\chi$ evaluated at $y$ over appropriate orbits of the CM point $y$ (associated with an order of an imaginary quadratic field $K$) that we arrive at the construction of our $p$-adic continuous $L$-function. The expression (\ref{eq:convergence}) along with the algebraicity theorem (\ref{eq:algebraicnumber}) gives rise to an ``approximate interpolation property" which our $p$-adic $L$-function satisfies, namely that values of the $p$-adic $L$-function in a certain range are determined by \emph{limits} of algebraic normalizations of central critical $L$-values associated with the Rankin-Selberg family $(w,\chi)$. 

In fact, we expect in forthcoming work to make the above approximate interpolation property into an actual interpolation formula, see Remark (\ref{interpolationremark}). 

We end this outline with a few remarks on how we obtain the $p$-adic Waldspurger formula in Section \ref{Waldspurgersection} when $k = 2$. A key property of the $p$-adic Maass-Shimura operator $d^j$ is that it sends $p$-adic modular forms of weight $k$ in the sense of (\ref{eq:transformation}) to modular forms of weight $k+2j$. Hence the limit
$$\lim_{m \rightarrow \infty}p^{bp^m(p-1)}(\theta \circ d^{p^m(p-1)})w^{\flat}(q_{\mathrm{dR}})_y$$
converges to a $p$-adic modular form of weight 0 on some small affinoid neighborhood of $y$, for some subgroup $\Gamma \subset GL_2(\mathbb{Z}_p)$. Let $K_p$ denote the $p$-adic completion of $K$ (with respect to a fixed embedding $\overline{\mathbb{Q}} \hookrightarrow \overline{\mathbb{Q}}_p$) In fact, one can show that for on some affinoid $\mathcal{U} \supset \mathcal{Y}^{\mathrm{Ig}} \sqcup \mathcal{C}$, where $\mathcal{C} \subset \mathcal{Y}$ is a locus of CM points associated with $K$ such that $\mathrm{Gal}(\mathcal{C}/C) \cong \mathcal{O}_{K_p}^{\times} \subset GL_2(\mathbb{Z}_p)$ (induced by some embedding $K_p \hookrightarrow M_2(\mathbb{Z}_p)$; the subadic space $\mathcal{C}$ itself does not depend on this choice of embedding), the limit
\begin{equation}\label{eq:convergenceprimitive}G := \lim_{m \rightarrow \infty}p^{bp^m(p-1)}(\theta \circ d^j)w^{\flat}(q_{\mathrm{dR}})|_{\mathcal{U}}
\end{equation}
converges to a $p$-adic modular form of weight 0 on $\mathcal{U}$ for some $\Gamma$ with $(\mathbb{Z}_p^{\times})^{\oplus 2} \subset \Gamma \subset GL_2(\mathbb{Z}_p)$. In particular, by restriction it induces a rigid function on $\mathcal{Y}^{\mathrm{Ig}} \sqcup \mathcal{U}$, which is of weight 0 for $(\mathbb{Z}_p^{\times})^{\oplus 2}$ on $\mathcal{Y}^{\mathrm{Ig}}$ and of weight 0 for $\Gamma$ on $\mathcal{U}$. This means that $G$ descends to a section $G$ on an affinoid open $U \subset Y'$ for some finite cover $Y' \rightarrow Y$; here we use the fact that while $\mathcal{Y}^{\mathrm{Ig}} \subset \mathcal{Y}$ is not affinoid open, its image on any finite cover is isomorphic to a copy of the ordinary locus $Y^{\mathrm{ord}} \subset Y$, which being the complement of a finite union of residue discs, is an (admissible) affinoid open. In particular, $G$ is rigid on $U$, and one can show using Coleman's theory of integration that on $U \cap Y'^{\mathrm{ord}}$, it is equal to the formal logarithm $\log_{w^{\flat}}|_{U \cap Y'^{\mathrm{ord}}}$ for some $p$-stabilization of the newform $w$. (Here $p$-stabilization denotes the image of $w$ under some explicit Hecke operator at $p$.) Then the rigidity of $G$ on $U$ implies that $dG$ is a rigid 1-form on $U$, and so by the theory of Coleman integration the rigid primitive $G$ on $\mathcal{U}$ is unique up constant, which implies
\begin{equation}\label{eq:primitive2}G = \log_{w^{\flat}}|_U.
\end{equation}
Since the relevant special value of our $p$-adic $L$-function corresponds to evaluating (\ref{eq:convergenceprimitive}) on an orbit of the CM point $y$, one sees that we arrive at our $p$-adic Waldspurger formula by evaluating (\ref{eq:primitive2}) at an appropriate Heegner point. 

\subsection{Main Results}
We now finally state our main results. We adopt the notation of Section \ref{padicLfunctionsection}, and the reader should refer to there for precise definitions and assumptions.

We let $p$ denote a prime which is inert or ramified in $K$, and let $w$ be a new eigenform (i.e. a newform or Eisenstein series) of weight $k$ for $\Gamma_1(N)$ and nebentype $\epsilon$, where $p\nmid N \ge 3$. Let $A$ be a fixed elliptic curve with CM by an order $\mathcal{O}_c \subset \mathcal{O}_K$ of conductor $p \nmid c$ also with $(c,Nd_K) = 1$, let $\alpha : \mathbb{Z}_p^{\oplus 2} \xrightarrow{\sim} T_pA$ be a choice of full $p^{\infty}$-level structure as in Choice \ref{goodchoice}, let $y = (A,\alpha) \in \mathcal{C}(\overline{K}_p,\mathcal{O}_{\overline{K}_p})$ and let $\Omega_p(y)$ and $\Omega_{\infty}(y)$ be the associated periods as in Definition \ref{perioddefinition}, and also let
$$w|_{\mathcal{H}^+} = F\cdot (2\pi idz)^{\otimes k}.$$
Then for Hecke characters $\chi \in \Sigma$, in the notation of Section \ref{constructionsection}, we have that the values $L(F,\chi^{-1},0)$ are central critical. Given an algebraic Hecke character, let $\check{\chi}$ denote its $p$-adic avatar, and let $\mathbb{N}_K : \mathbb{A}_K^{\times} \rightarrow \mathbb{C}^{\times}$ denote the norm character, which has infinity type $(1,1)$. We collect our results into one Main Theorem. 
 
\begin{theorem}[Theorems \ref{thm:maintheorem}, \ref{approximateinterpolationproperty}, \ref{padicWaldspurgerformula}]We have a $p$-adic continuous function
$$\mathcal{L}_{p,\alpha}(w,\cdot) : \overline{\check{\Sigma}}_+ \rightarrow \mathbb{C}_p$$
such that $\mathcal{L}_{p,\alpha}(w,\cdot)$ satisfies the following: Suppose $\{\chi_j\} \subset \Sigma_+$ is a sequence of algebraic Hecke characters where $\chi_j$ has infinity type $(k+j,-j)$, such that $\check{\chi}_j \rightarrow \check{\chi} \in \overline{\check{\Sigma}}_+\setminus \check{\Sigma}_+$. Then when $d_K$ is odd, we have the ``approximate" interpolation property
\begin{equation}\begin{split}&\mathcal{L}_{p,\alpha}(w,\check{\chi})^2 \\
&= \lim_{\check{\chi}_j \rightarrow \check{\chi}}\left(\frac{1}{p^b\theta(\Omega_p)(A,t,\alpha)^2}\right)^{k+2j} i_{\infty}^{-1}\left(\frac{C(w,\chi_j,c)\sigma(w,\chi_j)^{-1}}{\Omega_{\infty}(A,t)^{2(k+2j)}}\cdot L(F,\chi_j^{-1},0)\right)
\end{split}
\end{equation}
where $b$ is as defined in (\ref{integralcorrection2}) for $y = (A,\alpha)$, $C(w,\chi_j,c)$, $\sigma(w,\chi_j)$, and the central $L$-values $L(F,\chi_j^{-1},0) = L((\pi_w)_K \times \chi_j^{-1},1/2)$ are as in Theorem \ref{BDPalgebraicity}. 

Finally, when $k = 2$, we have our ``$p$-adic Waldspurger formula": Suppose that either $p$ is inert or $p > 2$. For any character $\chi : \mathcal{C}\ell(\mathcal{O}_c) \rightarrow \overline{\mathbb{Q}}_p^{\times}$, we have
$$\mathcal{L}_{p,\alpha}(w,\check{\mathbb{N}}_K\chi) = \begin{cases} \frac{1}{p^2(p^2-1)}\log_{w^{\flat}}P_K(\chi) & \text{$p$ is inert in $K$}\\
\frac{1}{p^3(p-1)} \log_{w^{\flat}}P_K(\chi) & \text{$p$ is ramified in $K$}
\end{cases}.$$
where $P_K(\chi)$ is the Heegner point as defined in Section \ref{Waldspurgersection}.
\end{theorem}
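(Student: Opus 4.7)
\emph{Construction of the $p$-adic $L$-function.} The plan is to build $\mathcal{L}_{p,\alpha}(w,\cdot)$ directly from $q_{\mathrm{dR}}$-expansions at the supersingular CM point $y = (A,\alpha)$ and its $\mathcal{C}\ell(\mathcal{O}_c)$-orbit. Starting from (\ref{eq:qdRexp}) applied to $w \in \omega^{\otimes k}(Y)$ and passing to the stalk at $y$, the overconvergence result (\ref{eq:goodqdRexp}) (from Section~\ref{overconvergencesection}) realizes the $q_{\mathrm{dR}}$-expansion of $w$ in $\hat{\mathcal{O}}_{\mathcal{Y}^{\mathrm{ss}},y}^+\llbracket q_{\mathrm{dR}}^{1/p^b}-1\rrbracket[1/p]$ for an appropriate $b$ chosen uniformly across the orbit. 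I would then form the $p$-stabilization $w^{\flat}$ as in (\ref{eq:convergence}) and use the explicit formula (\ref{eq:explicitformula}) to verify that $p^{bj}(\theta\circ d^j)w^{\flat}(q_{\mathrm{dR}}^{1/p^b})_y$ depends $p$-adically continuously on $j \in \mathbb{Z}/(p-1)\times\mathbb{Z}_p$; the required convergence reduces to bounds on $|y_{\mathrm{dR}}(y)|$ and $|\mathbf{z}(y)|$ at supersingular CM points. Finally $\mathcal{L}_{p,\alpha}(w,\check{\chi})$ is defined as the finite sum of such normalized iterates against $\check{\chi}$ over $\mathcal{C}\ell(\mathcal{O}_c)\cdot y$.

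\emph{Approximate interpolation.} For algebraic $\chi_j$ of infinity type $(k+j,-j)$, each summand $p^{bj}(\theta\circ d^j)f(y')/(p^b\theta(\Omega_p)(A,t,\alpha))^{k+2j}$ equals, by the algebraicity theorem (\ref{eq:algebraicnumber}), the complex Maass-Shimura derivative $\frak{d}^j F(y')/\Omega_{\infty}(A,t)^{k+2j}$ at the corresponding complex CM point. Summing over $y'$ in the CM orbit and twisting by $\check{\chi}_j$ produces $\mathcal{L}_{p,\alpha}(w,\check{\chi}_j)$ on the $p$-adic side and, after squaring, the classical finite sum that Theorem~\ref{BDPalgebraicity} identifies with $C(w,\chi_j,c)\sigma(w,\chi_j)^{-1}L(F,\chi_j^{-1},0)/\Omega_{\infty}(A,t)^{2(k+2j)}$. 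Passing $\check{\chi}_j \to \check{\chi} \in \overline{\check{\Sigma}}_+ \setminus \check{\Sigma}_+$ and combining the convergence (\ref{eq:convergence}) with the continuity of $\mathcal{L}_{p,\alpha}(w,\cdot)$ yields the asserted approximate interpolation formula.

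\emph{The $p$-adic Waldspurger formula at $k=2$.} With $j_m = p^m(p-1) \to -1$ in $\mathbb{Z}_p$, the weight $k + 2j_m = 2 + 2p^m(p-1)$ converges to $0$, so by (\ref{eq:transformation}) the limit (\ref{eq:convergenceprimitive}) is a $p$-adic modular form of weight $0$ on an affinoid $\mathcal{U} \supset \mathcal{Y}^{\mathrm{Ig}} \sqcup \mathcal{C}$, invariant under a subgroup $\Gamma$ with $(\mathbb{Z}_p^{\times})^{\oplus 2} \subset \Gamma \subset GL_2(\mathbb{Z}_p)$. Since $\mathcal{Y}^{\mathrm{Ig}}$ maps to $Y^{\mathrm{ord}}$ on a suitable finite cover $Y'\to Y$ — and $Y^{\mathrm{ord}}$ is affinoid open, being the complement of a finite union of supersingular discs — this limit descends to a rigid function $G$ on an affinoid open $U \subset Y'$. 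On the ordinary part $U \cap Y'^{\mathrm{ord}}$ one identifies $G$ with $\log_{w^{\flat}}$ by restriction to $\mathcal{Y}^{\mathrm{Ig}}$ and Katz's ordinary theory (via (\ref{eq:Katzcoincide})), and Coleman's uniqueness of rigid primitives for the rigid $1$-form $dG$ extends this identification to all of $U$, yielding (\ref{eq:primitive2}). Evaluating at the Heegner point $P_K(\chi)$ and summing against $\chi$ over $\mathcal{C}\ell(\mathcal{O}_c)$ then gives the stated formula, with the prefactors $1/(p^2(p^2-1))$ versus $1/(p^3(p-1))$ reflecting the different values of $b$ and the distinct Euler factors introduced by the $p$-stabilization in the inert and ramified cases.

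\emph{Main obstacles.} I expect the hardest step to be the uniform overconvergence (\ref{eq:goodqdRexp}) across the CM orbit with a common $b$, together with the valuations of $y_{\mathrm{dR}}(y)$ and $\mathbf{z}(y)$ needed to make (\ref{eq:convergence}) literal; this is the substance of Section~\ref{overconvergencesection}. The interpolation step additionally requires a careful interchange of the two limits (the $m\to\infty$ limit defining $\mathcal{L}_{p,\alpha}$ and the $\chi_j \to \chi$ limit) and exact matching of the normalizations $C(w,\chi_j,c)$, $\sigma(w,\chi_j)$, $\theta(\Omega_p)$ between the $p$-adic and complex sides. For Waldspurger, the delicate point is the descent from $\mathcal{U}$ to a rigid function on $Y'$ and the appeal to Coleman uniqueness, which relies crucially on $\mathcal{Y}^{\mathrm{Ig}}$ mapping to an affinoid open on the finite cover; the ramified case further demands bookkeeping of the ramification contribution to the Euler-factor prefactor and the hypothesis $p > 2$.
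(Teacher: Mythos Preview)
Your overall strategy matches the paper's closely, and the construction and approximate interpolation parts are essentially right. However, the Waldspurger argument contains two errors, one minor and one more substantive.

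\textbf{Minor arithmetic slip.} You write $j_m = p^m(p-1) \to -1$ in $\mathbb{Z}_p$, but in fact $p^m(p-1) \to 0$. To land at weight $0$ when $k = 2$ you need $j \to -1$, i.e.\ $j_m = -1 + p^m(p-1)$ (this is precisely the instance $j_0 = -1$ of (\ref{limitisstabilization})). In the paper's actual proof one simply works with $\theta_2^{-1}$ directly, using the continuous extension furnished by Theorem~\ref{keycorollary}.

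\textbf{The prefactors.} Your explanation that the constants $1/(p^2(p^2-1))$ and $1/(p^3(p-1))$ arise from ``the different values of $b$ and the distinct Euler factors introduced by the $p$-stabilization'' is not correct. Under the hypothesis ``$p$ inert or $p>2$'', Propositions~\ref{CMproposition1}, \ref{CMproposition2} and \ref{CMtorussameperiods} give $b = 0$ in \emph{both} the inert and ramified cases, so $b$ plays no role, and the $p$-stabilization does not introduce a case-dependent Euler factor here. What the paper actually does is first prove $\mathcal{L}_{p,\alpha}(w,\check{\mathbb{N}}_K\chi) = \log_{w^{\flat}} P_{K,\alpha}(\chi)$ for a \emph{single} level structure $\alpha$; then, crucially, use Proposition~\ref{choiceofalpha} (the $\alpha$-dependence is through an $\mathcal{O}_{K_p}^{\times}$-valued weight character, which is trivial at weight $0$) to show the left side is independent of $\alpha$; and finally average the right side over a set of $\alpha'$ with $\alpha' \bmod p^2$ running through $\Gal(K_{p^2}/K)$. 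The constants are exactly $\#\Gal(K_{p^2}/K)$, which equals $p^2(p^2-1)$ in the inert case and $p^3(p-1)$ in the ramified case. You should add this independence-of-$\alpha$ step and the Galois averaging to your argument; without it the passage from $P_{K,\alpha}(\chi)$ to the traced Heegner point $P_K(\chi)$, and hence the appearance of these specific constants, is unexplained.
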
 

In Proposition \ref{choiceofalpha}, we show that a different choice of $\alpha$ as in Choice \ref{goodchoice} amounts to multiplying $\mathcal{L}_{p,\alpha}(w,\cdot)$ by a $\mathcal{O}_{K_p}^{\times}$-valued character. 

Again, we stress that in forthcoming work we remove assumptions such as $p\nmid N$, the Heegner hypothesis, and prove analogues of the above theorem in greater generality. Moreover, we replace the ``approximate" interpolation property in the above theory with an exact interpolation property. This expectation is based on the following heuristic argument: $p$-adic interpolation of $L$-values is usually achieved upon removing the Euler factors at $p$ from the relevant $L$-values. In the case where $p$ is inert or ramified in $K$, the Euler factor at $p$ of $L(F,\chi^{-1},0)$ is \emph{constant}:
$$1-a_p(F)\chi^{-1}(p) + \epsilon(p)p^{k-1}\chi^{-1}(p^2) = 1-a_p(F)\epsilon^{-1}(p)p^{-k}\ + \epsilon^{-1}(p)p^{-k-1},$$
where $a_p(F)$ denotes the $p^{\mathrm{th}}$ fourier coefficient of $F$ (i.e. the Hecke eigenvalue at $p$ of $w$), for \emph{any} $\chi \in \Sigma$. Hence, removing the Euler factor from the interpolated $L$-values $L(F,\chi^{-1},0)$ amounts to multiplying by a constant (independent of $\chi \in \Sigma$), and so any $p$-adic continuous function which interpolates the $L$-values with Euler factor at $p$ removed $L^{(p)}(F,\chi^{-1},0)$ must also be equal to a constant times the limit of the values $L(F,\chi^{-1},0)$, which implies that by the approximate interpolation property that the $p$-adic continuous function interpolating the $L^{(p)}(F,\chi^{-1},0)$ should be equal to $\mathcal{L}_{p,\alpha}(w,\cdot)$ times a constant. This implies that $\mathcal{L}_{p,\alpha}(w,\cdot)$ is already continuous itself. We elaborate more on how we might actually prove this continuity in Remark \ref{interpolationremark}. We also expect to remove the assumption ``$d_K$ odd" assumption in the above approximate interpolation formula; we only make it here in order to use Theorem \ref{BDPalgebraicity}. 

\noindent\textbf{Acknowledgments.}\hspace{.5cm}This is a modified version of the author's Ph.D. thesis. The author thanks his advisers Shou-Wu Zhang and Christopher Skinner for helpful discussions and support throughout this project. This work was partially supported by the National Science Foundation under grant DGE 1148900.

\section{Preliminaries for the construction of the $p$-adic Maass-Shimura operator}

\subsection{The preperfectoid infinite level modular curve}\label{sec:review}
Henceforth, fix an algebraic closure $\overline{\mathbb{Q}}$ of $\mathbb{Q}$, and view all number fields (finite extensions of $\mathbb{Q}$) as embedded in $\overline{\mathbb{Q}}$. Henceforth, fix embeddings
\begin{equation}\label{fixedembedding}i_p : \overline{\mathbb{Q}} \hookrightarrow \overline{\mathbb{Q}}_p, \hspace{1cm} i_{\infty} : \overline{\mathbb{Q}} \hookrightarrow \mathbb{C}
\end{equation}
Let $\mathbb{C}_p$ denote the $p$-adic completion of $\overline{\mathbb{Q}}_p$. For every field $L \subset \overline{\mathbb{Q}}_p$, let $L_p$ denote the $p$-adic completion of $L$ in $\overline{\mathbb{Q}}_p$. Let $|\cdot|$ denote the unique $p$-adic valuation on $\mathbb{C}_p$ normalized with $|p| = 1/p$. 

Let $p$ be a prime. Fix a positive integer $N \ge 3$ which is coprime with $p$. Let $Y_1(N)$ denote the modular curve associated with $\Gamma_1(N)$. Let 
$$\mathcal{Y} := \varprojlim_n Y(\Gamma_1(N)\cap \Gamma(p^n))$$
denote the infinite level modular curve with $\Gamma_1(N) \cap \Gamma(p^{\infty})$ level structure, which can be viewed as an adic space defined over $\mathrm{Spa}(\mathbb{Q}_p,\mathbb{Z}_p)$. It is a \emph{preperfectoid space}, in the sense that upon base changing to a perfectoid field containing $\mathbb{Q}_p$ and $p$-adically completing the structure sheaf, one obtains the infinite level perfectoid modular curve constructed by Scholze in \cite{Scholze2}. One can view $\mathcal{Y}$ naturally as an adic space over $\mathrm{Spa}(\mathbb{Q}_p,\mathbb{Z}_p)$ (\cite{ScholzeWeinstein}) which is an object (in fact a profinite \'{e}tale object)  of the pro\'{e}tale site $Y_{\text{pro\'{e}t}}$ as defined in \cite{Scholze}. Given any extension $K/\mathbb{Q}_p$, we can form the base change $Y_K := Y\times_{\mathrm{Spa}(\mathbb{Q}_p,\mathbb{Z}_p)} \mathrm{Spa}(K,\mathcal{O}_K^+)$. Henceforth, given any separable extension $K/\mathbb{Q}_p$, we can define an object also denoted by $Y_K \in Y_{\text{pro\'{e}t}}$ via 
$$Y_K = \varprojlim_{\mathbb{Q}_p \subset L \subset K, \; \text{$L/\mathbb{Q}_p$ finite}}Y_{L_i}.$$
If $K \supset \mathbb{Q}_p$ is a perfectoid field, $Y_K$ is a perfectoid object in $Y_{\text{pro\'{e}t}}$. There is a natural equivalence of sites $Y_{\text{pro\'{e}t},K} \cong Y_{\text{pro\'{e}t}}/Y_K$, where the latter denotes $Y_{\text{pro\'{e}t}}$ localized at $Y_K$. 

It is a somewhat subtle point that we work on $\mathcal{Y}$ instead of its associated ``strong completion'' (in the sense of \cite{ScholzeWeinstein}) $\hat{\mathcal{Y}}$, which is a true perfectoid space after base change to a perfectoid field (and also a perfectoid space over $\mathrm{Spa}(\mathbb{Q}_p,\mathbb{Z}_p)$ in the sense of Kedlaya-Liu, \cite{KedlayaLiu}). The reason for this is that we need to work with differentials in $\Omega_{\mathcal{Y}}^1$ in order to define our differential operators (which arise from extensions of the Gauss-Manin connection $\nabla : \mathcal{H}_{\mathrm{dR}}^1(\mathcal{A}) \rightarrow \mathcal{H}_{\mathrm{dR}}(\mathcal{A}) \otimes_{\mathcal{O}_Y} \Omega_{\mathcal{Y}}^1$), and it is a fact that $\Omega_{\hat{\mathcal{Y}}}^1 = 0$, and more generally that all perfectoid spaces $X$ have $\Omega_X^i = 0$ for all $i \ge 1$ (for example, see \cite[Section 4.4]{Chojecki}). 

The adic space $\mathcal{Y}$ has the following moduli-theoretic interpretation: a $(\mathbb{C}_p,\mathcal{O}_{\mathbb{C}_p})$-point on $\mathcal{Y}$ corresponds to a triple $(A,t,\alpha)$ consisting of an elliptic curve $A/\mathbb{C}_p$, a $\Gamma_1(N)$ level structure $t \in A[N]$, and a trivialization $\alpha : \mathbb{Z}_p^{\oplus 2} \xrightarrow{\sim} T_pA$ of its Tate module. (This trivialization is equivalent to a $\Gamma(p^{\infty})$ Drinfeld level structure on $A$.) Then $\mathcal{Y}$ represents a (fine) moduli space with universal object given by $\mathcal{A}_{\infty} := \mathcal{A}\times_{Y_1(N)}\mathcal{Y}$, where $\mathcal{A} \rightarrow Y_1(N)$ is the universal elliptic curve with $\Gamma_1(N)$-level structure. We also will write
$$\mathcal{A}_{\infty} = (\mathcal{A},\alpha_{\infty})$$
where $\alpha_{\infty}$ denotes the universal $\Gamma(p^{\infty})$ level structure, which is equivalent to a trivialization 
$$\alpha_{\infty} : \hat{\mathbb{Z}}_{p,\mathcal{Y}}^{\oplus 2} \xrightarrow{\sim} T_p\mathcal{A}$$
of the universal Tate module $T_p\mathcal{A}$ (where $\hat{\mathbb{Z}}_{p,\mathcal{Y}}$ is the ``$p$-adic constant sheaf corresponding to $\mathbb{Z}_p$, to be defined in Section \ref{periodsheaves}). From now on, we will often suppress the $\Gamma_1(N)$-level structure $t$ in our notation for simplicity (and also because it doesn't affect any of the following).

\subsection{The pro\'{e}tale site, structure sheaves and period sheaves}\label{periodsheaves}
Let $Y_{\text{\'{e}t}}$ denote the \emph{small \'{e}tale site} of the adic space $Y$ over $\mathrm{Spa}(\mathbb{Q}_p,\mathbb{Z}_p)$. Hence, objects of $Y_{\text{\'{e}t}}$ are \'{e}tale maps $U \rightarrow Y$, and coverings are \'{e}tale coverings. The category pro-$Y_{\text{\'{e}t}}$ consists of functors $F : I \rightarrow Y_{\text{\'{e}t}}$ from small cofiltered index categories $I$ in which morphisms are given by
$$\Hom_{\mathrm{pro}-Y_{\text{\'{e}t}}}(F,G) := \varprojlim_J\varinjlim_I\Hom_{Y_{\text{\'{e}t}}}(F(i),G(j)).$$

Throughout this article, it will be necessary to work on the \emph{pro\'{e}tale site} $Y_{\text{pro\'{e}t}}$ as defined in \cite{Scholze}, whose objects form a full subcategory of pro-$Y_{\text{\'{e}t}}$. Hence, objects of $Y_{\text{pro\'{e}t}}$ are pro-\'{e}tale maps $U \rightarrow Y$, which means that $U$ can be written as $\varprojlim_i U_i$, where $U_i$ are objects of pro-$Y_{\text{\'{e}t}}$ such that each $U_i \rightarrow Y$ is an \'{e}tale map in pro-$Y_{\text{\'{e}t}}$ in the sense of loc. cit., and $U_i \rightarrow U_j$ is a \emph{finite} \'{e}tale map in pro-$Y_{\text{\'{e}t}}$ in the sense of loc. cit. for all large $i > j$. Coverings are given by pro\'{e}tale coverings. Given a $U \in Y_{\text{pro\'{e}t}}$, a presentation $U = \varprojlim_i U_i$ as above is called a \emph{pro\'{e}tale presentation}. It is clear that there is hence a natural projection of sites $\nu : Y_{\text{pro\'{e}t}} \rightarrow Y_{\text{\'{e}t}}$. Given any object $U \in Y_{\text{pro\'{e}t}}$, let $Y_{\text{pro\'{e}t}}$ denote the localized site, consisting of objects $V \in Y_{\text{pro\'{e}t}}$ with a map $V \rightarrow U$. It is shown in \cite{Scholze} that if $Y_K$ denotes the base change of $Y$ to $\mathrm{Spa}(K,\mathcal{O}_K)$, then there is a natural equivalence of sites $Y_{K,\text{pro\'{e}t}} \cong Y_{\text{pro\'{e}t}}/Y_K$. 

Let $K$ denote a perfectoid field over $\mathbb{Q}_p$. An \emph{affinoid perfectoid} is an object $U \in Y_{\text{pro\'{e}t}}/Y_K$ with a pro\'{e}tale presentation $\varprojlim_i U_i$ where $U_i = \mathrm{Spa}(R_i,R_i^+)$ is an affinoid such that if $R^+$ denotes the $p$-adic completion of $\varinjlim_i R_i^+$ and $R = R^+[1/p]$, then $(R,R^+)$ is a perfectoid $(K,\mathcal{O}_K$)-algebra. A perfectoid object is one with an open covering by affinoid perfectoids. We recall the key fact that affinoid perfectoids form a basis of $Y_{\text{pro\'{e}t}}$ (\cite{Scholze}). 

Given any sheaf $\mathcal{F}$ on $Y_{\text{pro\'{e}t}}$ and any object $U \in Y_{\text{pro\'{e}t}}$, we let $\mathcal{F}_U := \mathcal{F}|_U$ denote the induced sheaf on $Y_{\text{pro\'{e}t}}/U$. If the sheaf is written as $\mathcal{F}_Y$, we will often write $\mathcal{F}_U := \mathcal{F}_{Y}|_U$ for brevity. We let $\mathcal{O}_Y$ denote the structure sheaf on $Y_{\text{pro\'{e}t}}$ (which is defined to be the $\nu^*\mathcal{O}$, where $\mathcal{O}$ is the structure sheaf on $Y_{\text{\'{e}t}}$); similarly, let $\mathcal{O}_Y^+$ denote the interal structure sheaf. Let $\hat{\mathcal{O}}_Y^+ := \varprojlim_n \mathcal{O}_Y^+/p^n$ denote the integral $p$-adically completed structure sheaf, and let $\hat{\mathcal{O}}_Y := \hat{\mathcal{O}}_Y^+[1/p]$ denote the $p$-adically completed structure sheaf.

As they will be omnipresent throughout our discussion, we will recall construction of the period sheaves and $\mathcal{O}\mathbb{B}_{\mathrm{dR},Y}^+$ and $\mathcal{O}\mathbb{B}_{\mathrm{dR},Y}$ on $Y_{\text{pro\'{e}t}}$, defined as in \cite[Section 6]{Scholze}, as well as some of their key properties. Let $K$ denote any perfectoid field containing $\mathbb{Q}_p$, and let $Y_K$ denote the base change $Y\times_{\mathrm{Spa}(\mathbb{Q}_p),\mathrm{Spa}(\mathbb{Z}_p)} \mathrm{Spa}(K,\mathcal{O}_K)$. Since $Y_{K,\text{pro\'{e}t}} \cong Y_{\text{pro\'{e}t}}/Y_K$, and affinoid perfectoids form a basis of $Y_{K,\text{pro\'{e}t}}$, then it suffices to define the sections of the above period sheaves on affinoid perfectoid in $Y_{K,\text{pro\'{e}t}}$ for any perfectoid field $K$ containing $\mathbb{Q}_p$. Recall the tilted structure sheaf 
$$\hat{\mathcal{O}}_{Y_K^{\flat}}^+ := \varprojlim_{x \mapsto x^p}\mathcal{O}_{Y_K}^+/p = \varprojlim_{x \mapsto x^p}\hat{\mathcal{O}}_{Y_K}^+$$
and the period sheaves
$$\mathbb{A}_{\mathrm{inf},Y_K} := W(\hat{\mathcal{O}}_{Y_K^{\flat}}^+) \hspace{1cm} \mathbb{B}_{\mathrm{inf},Y_K} := \mathbb{A}_{\mathrm{inf}}[p^{-1}] \hspace{1cm} \mathcal{O}\mathbb{B}_{\mathrm{inf},Y_K} := \mathcal{O}_{Y_K}\otimes_{W(\kappa)}\mathbb{B}_{\mathrm{inf},Y_K}.$$
which are initially defined naturally on affinoid perfectoids (see \cite[Section 6]{Scholze}).  We have the natural projection $\theta : \mathbb{B}_{\mathrm{inf},Y_K} \twoheadrightarrow \hat{\mathcal{O}}_{Y_K}$; $\ker\theta$ is locally generated by some section $\xi$. We let $$\mathbb{B}_{\mathrm{dR},Y_K}^+ = \varprojlim_i \mathbb{B}_{\mathrm{inf},Y_K}/(\ker\theta)^i,\hspace{1cm} \mathbb{B}_{\mathrm{dR},Y_K} := \mathbb{B}_{\mathrm{dR},Y_K}^+[1/t]$$
where $t$ is any generator of $\ker(\theta)\mathbb{B}_{\mathrm{dR},Y_K}^+$. The projection $\theta : \mathbb{B}_{\mathrm{inf},Y_K} \twoheadrightarrow \hat{\mathcal{O}}_{Y_K}$ extends $\mathcal{O}_{Y_K}$-linearly to a projection $\theta : \mathcal{O}\mathbb{B}_{\mathrm{inf},Y_K} \twoheadrightarrow \hat{\mathcal{O}}_{Y_K}$. Then 
$$\mathcal{O}\mathbb{B}_{\mathrm{dR},Y_K}^+ := \varprojlim_n\mathcal{O}\mathbb{B}_{\mathrm{inf},Y_K}/(\ker\theta)^n \hspace{1cm} \mathcal{O}\mathbb{B}_{\mathrm{dR},Y_K} := \mathcal{O}\mathbb{B}_{\mathrm{dR},Y_K}^+[t^{-1}].$$
It is a fact that the composition
\begin{equation}\label{inclusion}\mathcal{O}_{Y_K} \rightarrow \mathcal{O}\mathbb{B}_{\mathrm{dR},Y_K}^+ \rightarrow \mathcal{O}\mathbb{B}_{\mathrm{dR},Y_K}^+/\ker\theta = \hat{\mathcal{O}}_{Y_K}
\end{equation}
is the natural inclusion $\mathcal{O}_{Y_K} \rightarrow \hat{\mathcal{O}}_{Y_K}$.
We recall that $\mathcal{O}\mathbb{B}_{\mathrm{dR},Y_K}^+$ is equipped with a canonical integrable $\mathbb{B}_{\mathrm{dR},Y}^+$-linear connection 
\begin{equation}\label{connection}\nabla : \mathcal{O}\mathbb{B}_{\mathrm{dR},Y_K}^+ \rightarrow \mathcal{O}\mathbb{B}_{\mathrm{dR},Y}^+\otimes_{\mathcal{O}_{Y_K}}\Omega_{Y_K}^1
\end{equation}
extending the trivial connection $d : \mathcal{O}_{Y_K} \rightarrow \Omega_{Y_K}^1$. Since the above connection is $\mathbb{B}_{\mathrm{dR},Y_K}^+$-linear, it extends to a connection 
$$\nabla : \mathcal{O}\mathbb{B}_{\mathrm{dR},Y_K} \rightarrow \mathcal{O}\mathbb{B}_{\mathrm{dR},Y_K}\otimes_{\mathcal{O}_{Y_K}}\Omega_{Y_K}^1.$$
The sheaf $\mathcal{O}\mathbb{B}_{\mathrm{dR},Y_K}$ has a natural filtration given by $\Fil^i\mathcal{O}\mathbb{B}_{\mathrm{dR},Y_K}^+ = (\ker\theta)^i\mathcal{O}\mathbb{B}_{\mathrm{dR},Y_K}^{+}$, and $\mathcal{O}\mathbb{B}_{\mathrm{dR},Y_K}$ has a filtration given by $\Fil^i\mathcal{O}\mathbb{B}_{\mathrm{dR},Y_K} = \sum_{j + j' = i, j, j' \in \mathbb{Z}}t^j\Fil^{j'}\mathcal{O}\mathbb{B}_{\mathrm{dR},Y_K}^+$.
By the previous discussion, we hence get all the analogous sheaves and maps on $Y_{\text{pro\'{e}t}}$.   

Henceforth, given an adic space $X$ with pro\'{e}tale site $X_{\text{pro\'{e}t}}$, let
$$\hat{\mathbb{Z}}_{p,X} = \varprojlim_n (\mathbb{Z}/p^n)_X$$
be an inverse limit of the usual constant sheaves $(\mathbb{Z}/p^n)_X$ on $X_{\text{pro\'{e}t}}$ (which are equal to those constant sheaves pulled back from the small \'{e}tale site $X_{\text{\'{e}t}}$); its sections correspond to continuous maps from $X_{\text{pro\'{e}t}}$ into $\mathbb{Z}_p$ where $\mathbb{Z}_p$ is given the \emph{$p$-adic topology} (and \emph{not} the discrete topology). We also define the \emph{Tate twist} $\hat{\mathbb{Z}}_{p,X}(1)$ by
$$\hat{\mathbb{Z}}_{p,X}(1) = \varprojlim_{n} (\mu_{p^n})_X$$
where $(\mu_{p^n})_X$ is the usual constant sheaf on $X_{\text{pro\'{e}t}}$. For any non-negative integer $n$, we let $\hat{\mathbb{Z}}_{p,X}(n) = \hat{\mathbb{Z}}_{p,X}(1)^{\otimes n}$ (where $\otimes$ is taken over $\hat{\mathbb{Z}}_{p,X}$), and we let $\hat{\mathbb{Z}}_{p,X}(-n) = \Hom_{\hat{\mathbb{Z}}_{p,X}}(\hat{\mathbb{Z}}_{p,X}(n),\hat{\mathbb{Z}}_{p,X})$. Given any $\hat{\mathbb{Z}}_{p,X}$-module $\mathcal{F}$ and $n \in \mathbb{Z}$, we let $\mathcal{F}(n) := \mathcal{F} \otimes_{\hat{\mathbb{Z}}_{p,X}}\hat{\mathbb{Z}}_{p,X}(n)$.

\subsection{Relative \'{e}tale cohomology and the Weil pairing}Henceforth, we will make free use of the canonical principal polarization
$$\mathcal{A} \cong \check{\mathcal{A}}$$
where $\check{\mathcal{A}}$ denotes the dual of an abelian variety $\mathcal{A}$. 

We have that
$$\mathcal{H}_{\text{\'{e}t}}^1(\mathcal{A}) := R^1\pi_*\hat{\mathbb{Z}}_{p,\mathcal{A}}$$
 is a $\hat{\mathbb{Z}}_{p,\mathcal{Y}}$-local system of rank 2 which gives the \emph{relative \'{e}tale cohomology} of the family $\pi : \mathcal{A} \rightarrow Y$. We denote 
$$T_p\mathcal{A} := \Hom_{\hat{\mathbb{Z}}_{p,Y}}(\mathcal{H}_{\text{\'{e}t}}^1(\mathcal{A}),\hat{\mathbb{Z}}_{p,Y})$$
which gives the \emph{relative Tate module} of the same family. Recall the Weil pairings given by, using the principal polarization $\mathcal{A} \cong \check{\mathcal{A}}$ on $\mathcal{A}$
\begin{equation}\label{truncatedWeil}\langle \cdot, \cdot\rangle_n : \mathcal{A}[p^n] \times \mathcal{A}[p^n] \rightarrow \mu_{p^n}
\end{equation}
which, taking the inverse limit over $n$, form a nondegenerate pairing
\begin{equation}\label{Weil}\langle \cdot,\cdot\rangle : T_p\mathcal{A} \times T_p\mathcal{A} \rightarrow \hat{\mathbb{Z}}_{p,Y}(1)
\end{equation} 
as an inverse limit of constant sheaves on $X_{\text{pro\'{e}t}}$. This induces a canonical isomorphism 
\begin{equation}\label{dualisomorphism}T_p\mathcal{A} \cong \mathcal{H}_{\text{\'{e}t}}^1(\mathcal{A})(1).
\end{equation}

\subsection{The $GL_2(\mathbb{Q}_p)$-action on $\mathcal{Y}$} \label{action}
\begin{definition}Henceforth, we denote the canonical homogeneous coordinates on $\mathbb{P}^1 = \mathbb{P}(\mathbb{Q}_p^2)$ by $x, y \in \mathcal{O}_{\mathbb{P}^1}(1)$. Henceforth, let 
$$\mathbb{P}_x^1 = \{x \neq 0\}, \hspace{1cm} \mathbb{P}_y^1 = \{y \neq 0\}$$
be the standard affine cover of $\mathbb{P}^1$. 
\end{definition}
Recall the right $GL_2(\mathbb{Q}_p)$-action on $\mathcal{Y}$, which acts on $(\mathbb{C}_p,\mathcal{O}_{\mathbb{C}_p})$-points $(A,\alpha)$ in the following way (as recalled in \cite[Section 2.2]{ChojeckiHansenJohansson}). Recall $GL_2(\mathbb{Q}_p)$ has a left action on $\mathbb{C}_p^{\oplus 2}$ in the standard way (viewing elements of the latter as column vectors), and thus if we denote the contragredient by $g^{\vee} := g^{-1}\det(g)$, we get a \emph{right} action $L \cdot g = g^{\vee}(L)$ on $\mathbb{Z}_p^{\oplus 2}$. Fix $n \in \mathbb{Z}$ such that $p^ng \in M_2(\mathbb{Z}_p)$ but $p^{n-1}g\not\in M_2(\mathbb{Z}_p)$. Then for all $m \in \mathbb{Z}_{> 0}$ sufficiently large, the kernel of $p^ng^{\vee} \pmod{p^m}$ stabilizes to some subgroup $H$ of $A[p^m]$. Then we put $(A,\alpha)\cdot g := (A,\alpha')$, where $\alpha'$ is defined as the composition
$$\mathbb{Z}_p^{\oplus 2} \xrightarrow{p^ng}\mathbb{Q}_p^{\oplus 2} \xrightarrow{\alpha} V_pA \xrightarrow{(\check{\phi}_*)^{-1}} V_p(A/H),$$
where $p^ng$ acts in the usual way on the left of $\mathbb{Z}_p^{\oplus 2}$ (viewing elements of the latter space as column vectors), where $\phi : A \rightarrow A/H$ is natural isogeny given by projection and $\phi_*$ is the induced map $T_pA \rightarrow T_p(A/H)$ (extended by linearity to $V_pA \rightarrow V_p(A/H)$). Note that if $g = \left(\begin{array}{cc} a & b \\ c & d\\ \end{array}\right) \in GL_2(\mathbb{Z}_p)$, then $(A,\alpha)\cdot g = (A,\alpha')$ where $\alpha'(e_1) = a\alpha(e_1) + c\alpha(e_2)$ and $\alpha'(e_2) = b\alpha(e_1) + d\alpha(e_2)$. 

In fact, one can check that the following diagram is commutative:
\begin{equation}
\begin{tikzcd}[column sep =small]\label{isogenydiagram}
     & \mathbb{Z}_p^{\oplus 2} \arrow{r}{\alpha} \arrow{d}{p^ng^{\vee}} & T_pA \arrow{d}{\phi_*}&\\
     & \mathbb{Z}_p^{\oplus 2} \arrow{r}{\alpha'} & T_p(A/H)&
\end{tikzcd}
\end{equation}
where in the arrow on the left $p^ng^{\vee}$ acts on $\mathbb{Z}_p^{\oplus 2}$ via left multiplication (viewing the latter as column vectors). 

One also has the standard \emph{right} $GL_2(\mathbb{Q}_p)$-action on $\mathbb{P}^1$, given by an element $g$ acting on lines $\mathcal{L}$ (viewed as being spanned by column vectors) via $\mathcal{L} \cdot g = g^{\vee}\mathcal{L}$. 

Finally, we remark that given a geometric point $y$ of $Y$, the automorphism group of the fiber of $y$ under the natural profinite \'{e}tale map $\mathcal{Y} \rightarrow Y$ is $GL_2(\mathbb{Z}_p)$, since $GL_2(\mathbb{Z}_p)$ acts simply transitively on this geometric fiber by changing the basis $\alpha : \mathbb{Z}_p^{\oplus 2} \xrightarrow{\sim} T_pA$. 

\subsection{The Hodge-Tate period and the Hodge-Tate period map}\label{HTsection}In this section, we define the Hodge-Tate period and Hodge-Tate period map $\pi_{\mathrm{HT}} : \mathcal{Y} \rightarrow \mathbb{P}^1$ following \cite{Scholze2} and \cite{CaraianiScholze}. However, as those latter works define $\pi_{\mathrm{HT}}$ on the \emph{perfectoid space} $\hat{\mathcal{Y}}$  associated with $\mathcal{Y}$ (i.e. the strong $p$-adic completion of $\mathcal{Y}$) rather than on $\mathcal{Y}$ itself, it will be necessary for us later to work on the \emph{preperfectoid} space $\mathcal{Y} \in Y_{\text{pro\'{e}t}}$ (since we will need to consider $\Omega_{\mathcal{Y}}^1$), we must define $\pi_{\mathrm{HT}}$ on $\mathcal{Y}$ as a map of adic spaces over $\mathrm{Spa}(\mathbb{Q}_p,\mathbb{Z}_p)$. One can also recover our definition of $\pi_{\mathrm{HT}}$ from the more general construction of period morphisms on (preperfectoid) moduli of $p$-divisible groups given by Scholze-Weinstein \cite{ScholzeWeinstein}. 

Recall the \emph{Hodge bundle} defined by 
$$\omega_{\mathcal{A}} := \pi_*\Omega_{\mathcal{A}/Y}^1.$$
Now, we define the \emph{Hodge-Tate map}
\begin{equation}\label{HTmap}HT_{\mathcal{A}} : T_p\mathcal{A} \rightarrow \omega_{\mathcal{A}}
\end{equation}
to be the following: the truncated Weil pairings (\ref{truncatedWeil}) are compatible in the sense that the following diagram commutes
\begin{equation}
\begin{tikzcd}[column sep =small]\label{isogenydiagram}
   \mathcal{A}[p^n]  & \times &\mathcal{A}[p^n] \arrow[hookrightarrow]{d}{}  \arrow{rr}{\langle \cdot, \cdot \rangle_n}&  &\mu_{p^n} \arrow[hookrightarrow]{d}&\\
   \mathcal{A}[p^{n+1}]  \arrow{u}{p}& \times & \mathcal{A}[p^{n+1}] \arrow{rr}{\langle \cdot, \cdot \rangle_{n+1}} & &\mu_{p^{n+1}},
\end{tikzcd}
\end{equation}
and hence they can be put together (taking an inverse limit along the left first vertical arrows, and a direct limit along the middle and right vertical arrows) to get a pairing
\begin{equation}\label{infinitepairing}T_p\mathcal{A} \times \mathcal{A}[p^{\infty}] \rightarrow \mu_{p^{\infty}}.
\end{equation}
Hence, we have a natural identification $T_p\mathcal{A} \cong \Hom_{p-\mathrm{div}}(\mathcal{A}[p^{\infty}],\mu_{p^{\infty}})$ (where the $\Hom$ is taken in the category of $p$-divisible groups over $\mathbb{Z}_p$). Using this identification, we define
\begin{equation}\label{HTmapdefinition}HT_{\mathcal{A}}(\alpha) = \alpha^*\frac{dT}{T} \in \pi_*\Omega_{\mathcal{A}[p^{\infty}]/Y}^1  = \pi_*\Omega_{\mathcal{A}/Y}^1 = \omega_{\mathcal{A}}.
\end{equation}
By nondegeneracy of the Weil pairing, we know that $HT_{\mathcal{A}}$ is not the zero map. Let $\mathcal{H}^{0,1} \subset T_p\mathcal{A}\otimes_{\hat{\mathbb{Z}}_{p,Y}}\mathcal{O}_Y$ denote its kernel. 

Pulling back \ref{HTmapdefinition} along the cover $\mathcal{Y} \rightarrow Y$, we get a section $HT_{\mathcal{A}}(\alpha_{\infty,2}) \in \omega_{\mathcal{A}}(\mathcal{Y})$. Let 
$$\mathcal{Y}_x = \{HT_{\mathcal{A}}(\alpha_{\infty,2}) \neq 0\} \subset \mathcal{Y}, \hspace{1cm} \mathcal{Y}_y := \{HT_{\mathcal{A}}(\alpha_{\infty,1}) \neq 0\}.$$
 Denote $\mathcal{O}_{\mathcal{Y}} := \mathcal{O}_{Y|\mathcal{Y}}$. We now define the \emph{Hodge-Tate period} $\mathbf{z} \in \mathcal{O}_{\mathcal{Y}}(\mathcal{Y}_y)$ via the relation
\begin{equation}\label{relation}HT_{\mathcal{A}}(\alpha_{2,\infty}) = \mathbf{z}\cdot HT_{\mathcal{A}}(\alpha_{1,\infty}).
\end{equation}
Since $HT_{\mathcal{A}}$ is not the zero map, we have that $\mathcal{Y}_x \cup \mathcal{Y}_y = \mathcal{Y}$. Using the trivialization $\alpha_{\infty}^{-1} : T_p\mathcal{A}|_{\mathcal{Y}} \xrightarrow{\sim} \hat{\mathbb{Z}}_{p,\mathcal{Y}}^{\oplus 2}$, we now see that the line 
\begin{equation}\label{line}\mathcal{H}^{0,1}|_{\mathcal{Y}} \subset T_p\mathcal{A}\otimes_{\hat{\mathbb{Z}}_p,\mathcal{Y}} \mathcal{O}_{\mathcal{Y}} \underset{\sim}{\xrightarrow{\alpha_{\infty}^{-1}}} \mathcal{O}_{\mathcal{Y}}^{\oplus 2}
\end{equation}
is the sub-$\mathcal{O}_{\mathcal{Y}}$-module obtained by gluing together the sheaves $\mathcal{H}^{0,1}|_{\mathcal{Y}_x} := (\alpha_{\infty,1} - \frac{1}{\mathbf{z}}\alpha_{\infty,2})\mathcal{O}_{\mathcal{Y}}$ and $\mathcal{H}^{0,1}|_{\mathcal{Y}_y} := (\mathbf{z}\cdot \alpha_{\infty,2} - \alpha_{\infty,2})\mathcal{O}_{\mathcal{Y}}$ along $\mathcal{Y}_x \cap \mathcal{Y}_y$ (on which affinoid subdomain the reader will easily check that the two sheaves coincide, by the definition of $\mathbf{z}$). The inclusion \ref{line} defines a map of adic spaces over $\mathrm{Spa}(\mathbb{Q}_p,\mathbb{Z}_p)$
$$\pi_{\mathrm{HT}} : \mathcal{Y} \rightarrow \mathbb{P}^1$$
which we call the \emph{Hodge-Tate period map}. It is straightforward to check that $\pi_{\mathrm{HT}}$ is equivariant with respect to the $GL_2(\mathbb{Q}_p)$-actions defined in Section \ref{action}. Furthermore, since $\mathcal{Y}$ has the same underlying topological space as its associated perfectoid space (once base-changed to any perfectoid field), the map $\pi_{\mathrm{HT}}$ is \'{e}tale on the supersingular locus
$$\mathcal{Y}^{\mathrm{ss}} := \varprojlim_n Y(\Gamma_1(N)\cap\Gamma(p^n))^{\mathrm{ss}}$$
and maps to the Drinfeld upper half-plane $\Omega = \mathbb{P}^1(\mathbb{C}_p) \setminus \mathbb{P}^1(\mathbb{Q}_p)$. (For the \'{e}taleness of $\pi_{\mathrm{HT}} : \mathcal{Y}^{\mathrm{ss}} \rightarrow \Omega \subset \mathbb{P}^1$, see also \cite[Remark 7.1.2 and Theorem 7.2.3]{ScholzeWeinstein}.) The stratification $\mathcal{Y} = \mathcal{Y}^{\mathrm{ord}}\sqcup \mathcal{Y}^{\mathrm{ss}}$ is in fact given by 
$$\mathcal{Y}^{\mathrm{ss}} = \pi_{\mathrm{HT}}^{-1}(\Omega), \hspace{1cm} \mathcal{Y}^{\mathrm{ord}} = \pi_{\mathrm{HT}}^{-1}(\mathbb{P}^1(\mathbb{Q}_p)).$$

Finally, we note that pulling back the coordinate $z = -x/y$ on $\mathbb{P}^1$, we also obtain the Hodge-Tate period via
\begin{equation}\label{zdefinition}\mathbf{z} = \pi_{\mathrm{HT}}^*z
\end{equation}
on $\mathcal{Y}$. Moreover,
$$HT_{\mathcal{A}}(\alpha_{\infty,1}) = \pi_{\mathrm{HT}}^*x, \hspace{1cm} HT_{\mathcal{A}}(\alpha_{\infty,2}) = \pi_{\mathrm{HT}}^*(-y).$$ 

By (\ref{isogenydiagram}), $\mathbf{z}$ satisfies the transformation property
\begin{equation}\label{ztransformationprop}
\begin{split}
\left(\begin{array}{ccc}a & b\\
c & d\\
\end{array}\right)^* \mathbf{z} = \frac{d\mathbf{z} + b}{c\mathbf{z} + a}
\end{split}
\end{equation}
for any $\left(\begin{array}{ccc}a & b\\
c & d\\
\end{array}\right) \in GL_2(\mathbb{Q}_p)$.

\subsection{The relative Hodge-Tate filtration}

By definition, $\mathcal{H}^{0,1}$ is the kernel of the map
\begin{equation}\label{HTmap1}\mathrm{HT}_{\mathcal{A}} : T_p\mathcal{A} \otimes_{\hat{\mathbb{Z}}_{p,Y}} \mathcal{O}_{\mathcal{Y}} \rightarrow \omega_{\mathcal{A}}\otimes_{\mathcal{O}_Y}\mathcal{O}_{\mathcal{Y}}.
\end{equation}
and extending by linearity to $\otimes_{\mathcal{O}_{\mathcal{Y}}} \mathcal{O}\mathbb{B}_{\mathrm{dR},\mathcal{Y}}^+$, then $\mathcal{H}^{0,1}\otimes_{\mathcal{O}_{\mathcal{Y}}}\mathcal{O}\mathbb{B}_{\mathrm{dR},\mathcal{Y}}^+$ is the kernel of the map
\begin{equation}\label{HTmap2}\mathrm{HT}_{\mathcal{A}} : T_p\mathcal{A} \otimes_{\hat{\mathbb{Z}}_{p,Y}} \mathcal{O}\mathbb{B}_{\mathrm{dR},\mathcal{Y}}^+ \rightarrow \omega_{\mathcal{A}}\otimes_{\mathcal{O}_Y}\mathcal{O}\mathbb{B}_{\mathrm{dR},\mathcal{Y}}^+
\end{equation}
which reduces modulo $\ker\theta$ (i.e. after extending by linearity to $\otimes_{\mathcal{O}\mathbb{B}_{\mathrm{dR},\mathcal{Y}}^+,\theta}\hat{\mathcal{O}}_{\mathcal{Y}}$) to the map
\begin{equation}\label{HTmap3}\mathrm{HT}_{\mathcal{A}} : T_p\mathcal{A} \otimes_{\hat{\mathbb{Z}}_{p,Y}} \hat{\mathcal{O}}_{\mathcal{Y}} \rightarrow \omega_{\mathcal{A}}\otimes_{\mathcal{O}_Y}\hat{\mathcal{O}}_{\mathcal{Y}}.
\end{equation}
Since the composition $\mathcal{O}_{\mathcal{Y}} \subset \mathcal{O}\mathbb{B}_{\mathrm{dR},\mathcal{Y}}^+ \xrightarrow{\theta} \hat{\mathcal{O}}_{\mathcal{Y}}$ is the natural $p$-adic completion map $\mathcal{O}_{\mathcal{Y}} \rightarrow \hat{\mathcal{O}}_{\mathcal{Y}}$, the map (\ref{HTmap3}) is just the $\otimes_{\mathcal{O}_Y}\hat{\mathcal{O}}_Y$-linear extension of (\ref{HTmap1}), and its kernel is given by $\mathcal{H}^{0,1}\otimes_{\mathcal{O}_{Y}}\hat{\mathcal{O}}_{\mathcal{Y}}$; in particular $$\mathcal{H}^{0,1}\otimes_{\mathcal{O}_{Y}}\hat{\mathcal{O}}_{\mathcal{Y}}|_{\mathcal{Y}_x} = (\alpha_{\infty,1} - \frac{1}{\hat{\mathbf{z}}}\cdot\alpha_{\infty,2})\hat{\mathcal{O}}_{\mathcal{Y}}$$
and 
$$\mathcal{H}^{0,1}\otimes_{\mathcal{O}_{Y}} \hat{\mathcal{O}}_{\mathcal{Y}}|_{\mathcal{Y}_y} = (\hat{\mathbf{z}}\cdot\alpha_{\infty,1} - \alpha_{\infty,2})\hat{\mathcal{O}}_{\mathcal{Y}}|_{\mathcal{Y}_y},$$ 
where $\hat{\mathbf{z}} \in \hat{\mathcal{O}}_{\mathcal{Y}}(\mathcal{Y})$ is the image of $\mathbf{z} \in \mathcal{O}_{\mathcal{Y}}(\mathcal{Y})$ under the natural map $\mathcal{O}_{\mathcal{Y}}(\mathcal{Y}) \rightarrow \hat{\mathcal{O}}_{\mathcal{Y}}(\mathcal{Y})$. By construction, $\hat{\mathbf{z}}$ is the (reciprocal of the) \emph{fundamental period} as defined in \cite[Definition 1.2]{ChojeckiHansenJohansson} (since it is also defined by the relation (\ref{relation}) for the extended Hodge-Tate map (\ref{HTmap3})). When no confusion arises, we will also often conflate $\hat{\mathbf{z}}$ and $\mathbf{z}$, as they are the same section under the natural $p$-adic completion map $\mathcal{O} \rightarrow \hat{\mathcal{O}}$. 

Caraiani and Scholze \cite{CaraianiScholze} define the \emph{relative Hodge filtration}
\begin{equation}\label{relativeHTfiltration1}0 \rightarrow \omega_{\mathcal{A}}^{-1}\otimes_{\mathcal{O}_Y}\hat{\mathcal{O}}_Y \rightarrow \mathcal{H}_{\text{\'{e}t}}^1(\mathcal{A})\otimes_{\hat{\mathbb{Z}}_{p,Y}} \hat{\mathcal{O}}_Y \rightarrow \omega_{\mathcal{A}}\otimes_{\mathcal{O}_Y} \hat{\mathcal{O}}_Y(-1) \rightarrow 0,
\end{equation}
or after twisting by $(1)$ and applying the isomorphism $\mathcal{H}_{\text{\'{e}t}}^1(\mathcal{A})(1) \cong T_p\mathcal{A}$ given by (\ref{dualisomorphism}), equivalently as
\begin{equation}\label{relativeHTfiltration2}0 \rightarrow \omega_{\mathcal{A}}^{-1}\otimes_{\mathcal{O}_Y}\hat{\mathcal{O}}_Y(1) \rightarrow T_p\mathcal{A} \otimes_{\hat{\mathbb{Z}}_{p,Y}} \hat{\mathcal{O}}_Y \rightarrow \omega_{\mathcal{A}}\otimes_{\mathcal{O}_Y} \hat{\mathcal{O}}_Y \rightarrow 0.
\end{equation}

Taking the specialization of (\ref{relativeHTfiltration2}) at a geometric point $Y = (A) \in Y(\overline{\mathbb{Q}}_p,\mathcal{O}_{\overline{\mathbb{Q}}_p})$, we obtain the usual Hodge-Tate exact sequence
\begin{equation}\label{absHTexactsequence}0 \rightarrow \mathrm{Lie}(A)(1) \xrightarrow{(HT_A)^{\vee}(1)} T_pA\otimes_{\mathbb{Z}_p}\mathbb{C}_p \xrightarrow{HT_A} \Omega_{A/\mathbb{C}_p}^1 \rightarrow 0.
\end{equation}

We have the following result on the integral Hodge-Tate complex.

\begin{theorem}[\cite{FarguesGenestierLafforgue}, Theorem II.1.1]\label{integraltheorem}Suppose $p > 2$. Taking an integral model $A/\mathcal{O}_{\mathbb{C}_p}$, we have a sequence
$$0 \rightarrow \mathrm{Lie}(A)(1) \xrightarrow{(HT_A)^{\vee}(1)} T_pA\otimes_{\mathbb{Z}_p}\mathcal{O}_{\mathbb{C}_p} \xrightarrow{HT_A} \Omega_{A/\mathcal{O}_{\mathbb{C}_p}}^1 \rightarrow 0$$
which is in general \emph{not} exact, but which is a complex with cohomology groups killed by $p^{1/(p-1)}$.
\end{theorem}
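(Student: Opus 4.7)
The plan is to reduce the statement to a claim about the $p$-divisible group $G := A[p^{\infty}]$ over $\mathcal{O}_{\mathbb{C}_p}$, and then analyze the integral defect of the Hodge-Tate sequence of $G$ using the formal logarithm. Since $T_pA = T_pG$, $\omega_A = \omega_G$, and $\mathrm{Lie}(A) = \mathrm{Lie}(G)$, and since the principal polarization $A \cong \check{A}$ identifies $G \cong \check{G}$, the map $(HT_A)^{\vee}(1)$ becomes (the dual twist of) the Hodge-Tate map of $\check{G}$; the sequence in question is thus built entirely from the Hodge-Tate map of $G$ and its Cartier-dual avatar. After inverting $p$, exactness of the resulting sequence is the classical rational Hodge-Tate decomposition of Tate, so only the integral content remains to be proved.

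First I would verify the complex property. The composition $HT_A \circ (HT_A)^{\vee}(1)$ is an $\mathcal{O}_{\mathbb{C}_p}$-linear map $\mathrm{Lie}(A)(1) \to \omega_A$, equivalently a global section of $\omega_A^{\otimes 2}(1)$. Using the Weil-pairing isomorphism $\det T_pA \cong \mathbb{Z}_p(1)$ to identify $(T_pA)^{\vee} \cong T_pA(-1)$, this section unwinds to an expression built from the alternating Weil pairing applied to two copies of $HT_A$; the alternating property then forces the composition to vanish on the nose, not merely up to a power of $p$. This establishes the complex property integrally.

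The substantive part is the bound on the cohomology at the middle term. Here I would use that $HT_G$ is, on an element $\alpha \in T_pG \cong \mathrm{Hom}(\check{G}, \hat{\mathbb{G}}_m)$, the pullback $\alpha^{*}(dT/T) \in \omega_G$. Dually, $(HT_A)^{\vee}(1)$ is induced by the formal logarithm $\log_{\check{G}^{\circ}}: \check{G}^{\circ}(\mathcal{O}_{\mathbb{C}_p}) \otimes \mathbb{Q}_p \xrightarrow{\sim} \mathrm{Lie}(\check{G}) \otimes \mathbb{C}_p$, whose integral image differs from the natural lattice $\mathrm{Lie}(\check{G}) \otimes \mathcal{O}_{\mathbb{C}_p}$ by a factor controlled by the radius of convergence $p^{-1/(p-1)}$ of $\log(1+x)$ on $\hat{\mathbb{G}}_m$. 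Translating, the cohomology of the integral Hodge-Tate complex is a subquotient of (a twist of) $\Omega^1_{\mathcal{O}_{\mathbb{C}_p}/\mathcal{O}_{\mathbb{Q}_p^{\mathrm{ur}}}}$, which by Fontaine's computation is killed by $p^{1/(p-1)}$.

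The main obstacle is obtaining the sharp constant $p^{1/(p-1)}$, rather than some weaker bound like $p^{2/(p-1)}$. This requires a careful comparison of three integral lattices: the Tate module $T_pG$ inside $\check{G}(\mathcal{O}_{\mathbb{C}_p})$ obtained via the inverse limit of torsion, the integral image of the formal logarithm applied to this lattice, and the natural lattice $\mathrm{Lie}(G) \subset \mathrm{Lie}(G) \otimes \mathbb{C}_p$. The hypothesis $p > 2$ enters to guarantee $p^{1/(p-1)} < 1$ strictly and to rule out the quadratic corrections appearing at $p = 2$ in both the logarithm series and the principal polarization setup, which would otherwise loosen the constant.
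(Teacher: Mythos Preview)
The paper does not prove this theorem at all: it is quoted as \cite{FarguesGenestierLafforgue}, Theorem II.1.1, and used as a black box. So there is no ``paper's own proof'' to compare against, and your proposal is really an attempt to reconstruct the argument from Fargues--Genestier--Lafforgue.

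Your outline is in the right spirit and captures the two essential ingredients: the complex property follows from the alternating nature of the Weil pairing (this is correct and clean), and the integral defect is governed by Fontaine's computation of the module of continuous K\"ahler differentials $\hat{\Omega}^1_{\mathcal{O}_{\overline{\mathbb{Q}}_p}/\mathcal{O}_K}$, whose torsion is annihilated precisely by elements of valuation $\ge 1/(p-1)$. The link you sketch via the formal logarithm is also the right mechanism: the Hodge--Tate map for $\mu_{p^{\infty}}$ sends a generator of $\mathbb{Z}_p(1)$ to $dT/T$, and the defect on the dual side is exactly the image of $\log : 1 + \mathfrak{m}_{\mathbb{C}_p} \to \mathbb{C}_p$, whose cokernel on the relevant lattice has the stated annihilator.

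Two points where your sketch is looser than a full proof would require. First, the passage from the $\mu_{p^{\infty}}$ case to a general (height-$2$, connected or not) $p$-divisible group is not automatic; Fargues handles this by a d\'evissage using the connected--\'etale sequence over $\mathcal{O}_{\mathbb{C}_p}$ together with an argument on the formal group side, and one must check that the bound does not degrade under extensions. Second, your explanation of the role of $p>2$ is not quite the operative one: the hypothesis is used in loc.\ cit.\ to ensure that the element realizing the annihilator (essentially $t/\xi$ in $A_{\mathrm{inf}}$ language, or equivalently the generator of the different-type ideal appearing in Fontaine's $\hat{\Omega}^1$) has the clean valuation $1/(p-1)$ rather than a correction term, not because of ``quadratic corrections in the polarization.'' These are refinements rather than gaps, but a complete argument would need to address them.
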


\begin{proposition}\label{HTarrows}The penultimate arrow in (\ref{relativeHTfiltration1}) is given by $HT_{\mathcal{A}}$ from (\ref{HTmap2}), and its kernel is given by the image of
\begin{equation}\label{HTinclusion}(HT_{\mathcal{A}})^{\vee}(1) : \omega_{\mathcal{A}}^{-1}\otimes_{\mathcal{O}_Y}\hat{\mathcal{O}}_Y(1) \hookrightarrow T_p\mathcal{A}\otimes_{\hat{\mathbb{Z}}_{p,Y}}\hat{\mathcal{O}}_Y.
\end{equation}
In particular, since $\mathcal{H}^{0,1}\otimes_{\mathcal{O}_{Y}}\hat{\mathcal{O}}_{Y}$ is the kernel of (\ref{HTmap2}), we have 
\begin{equation}\label{HTinclusion2}(HT_{\mathcal{A}})^{\vee}(1)(\omega_{\mathcal{A}}^{-1}\otimes_{\mathcal{O}_{Y}}\hat{\mathcal{O}}_{Y}(1))= \mathcal{H}^{0,1}\otimes_{\mathcal{O}_{Y}}\hat{\mathcal{O}}_{Y}
\end{equation}
and
\begin{equation}\label{HTinclusion3}(HT_{\mathcal{A}})^{\vee}(1)(\omega_{\mathcal{A}}^{-1}\otimes_{\mathcal{O}_{Y}}\mathcal{O}\mathbb{B}_{\mathrm{dR}Y}^+(1)) = \mathcal{H}^{0,1}\otimes_{\mathcal{O}_Y}\mathcal{O}\mathbb{B}_{\mathrm{dR},Y}^+.
\end{equation}
\end{proposition}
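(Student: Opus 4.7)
The plan is to break the proposition into three pieces: (i) identify the penultimate arrow in the Caraiani--Scholze sequence \rref{relativeHTfiltration1} with our $HT_{\mathcal{A}}$, (ii) deduce the kernel description from exactness of the relative Hodge--Tate sequence, and (iii) upgrade to the $\mathcal{O}\mathbb{B}_{\mathrm{dR},Y}^+$-coefficients statement by base change.

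First, I would unravel definitions to identify the two versions of the Hodge--Tate map. Caraiani--Scholze's sheaf-theoretic edge map
$\mathcal{H}_{\text{\'et}}^1(\mathcal{A})\otimes_{\hat{\mathbb{Z}}_{p,Y}}\hat{\mathcal{O}}_Y \to \omega_{\mathcal{A}}\otimes_{\mathcal{O}_Y}\hat{\mathcal{O}}_Y(-1)$
is constructed, at the level of stalks at a geometric point, as the Hodge--Tate edge map of the abelian fiber, which sends a class to its image under $d\log$ on the $p$-divisible group. Applying the Weil-pairing identification $\mathcal{H}_{\text{\'et}}^1(\mathcal{A})(1)\cong T_p\mathcal{A}$ of \rref{dualisomorphism}, together with the Cartier duality $T_p\mathcal{A}\cong \Hom_{p\text{-div}}(\mathcal{A}[p^\infty],\mu_{p^\infty})$ obtained from the truncated Weil pairings \rref{truncatedWeil}, this edge map becomes exactly $\alpha\mapsto \alpha^*(dT/T)$, which is our $HT_{\mathcal{A}}$ as defined in \rref{HTmapdefinition}. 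The verification is essentially a chase of the Weil-pairing conventions to ensure the two constructions land in $\omega_{\mathcal{A}}\otimes \hat{\mathcal{O}}_Y(-1)$ (resp.\ $\omega_{\mathcal{A}}\otimes \hat{\mathcal{O}}_Y$ after twist) with the same sign. This compatibility check is the step I expect to require the most care, but it is entirely formal once one fixes conventions.

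Next, by \rref{relativeHTfiltration1}--\rref{relativeHTfiltration2} being exact, the kernel of the surjection $HT_{\mathcal{A}}$ in \rref{HTmap3} is precisely the image of the injection $(HT_{\mathcal{A}})^{\vee}(1):\omega_{\mathcal{A}}^{-1}\otimes_{\mathcal{O}_Y}\hat{\mathcal{O}}_Y(1)\hookrightarrow T_p\mathcal{A}\otimes_{\hat{\mathbb{Z}}_{p,Y}}\hat{\mathcal{O}}_Y$. Since $\mathcal{H}^{0,1}\otimes_{\mathcal{O}_Y}\hat{\mathcal{O}}_Y$ was defined in Section~\ref{HTsection} as the kernel of the $\hat{\mathcal{O}}_Y$-linear extension $HT_{\mathcal{A}}$ from \rref{HTmap1}, and this $\hat{\mathcal{O}}_Y$-linear extension agrees with \rref{HTmap3} by the remark following \rref{HTmap3}, this yields \rref{HTinclusion2} directly.

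Finally, to obtain \rref{HTinclusion3}, I would argue that the Hodge--Tate short exact sequence remains exact after $\otimes_{\hat{\mathcal{O}}_Y}\mathcal{O}\mathbb{B}_{\mathrm{dR},Y}^+$. One verifies this on a basis of affinoid perfectoids in $Y_{\text{pro\'et}}$: on such a cover the Hodge--Tate sequence splits (as both $\omega_{\mathcal{A}}$ and $\omega_{\mathcal{A}}^{-1}$ become trivial and the sequence becomes a sequence of free $\hat{\mathcal{O}}_Y$-modules), so its tensor with $\mathcal{O}\mathbb{B}_{\mathrm{dR},Y}^+$ remains a split short exact sequence. Thus $\mathcal{H}^{0,1}\otimes_{\mathcal{O}_Y}\mathcal{O}\mathbb{B}_{\mathrm{dR},Y}^+$, defined as the kernel of \rref{HTmap2}, equals the image of $(HT_{\mathcal{A}})^{\vee}(1)$ on $\omega_{\mathcal{A}}^{-1}\otimes_{\mathcal{O}_Y}\mathcal{O}\mathbb{B}_{\mathrm{dR},Y}^+(1)$, proving \rref{HTinclusion3}. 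The identifications \rref{HTinclusion2} and \rref{HTinclusion3} are then compatible with each other via reduction modulo $\ker\theta$, consistent with the fact that $\theta$ sends \rref{HTmap2} to \rref{HTmap3}.
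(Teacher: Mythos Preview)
Your proposal is correct and reaches the same conclusions, but the paper's argument is organized differently and is somewhat slicker. Rather than unraveling the Caraiani--Scholze construction directly in step (i), the paper reduces everything to stalks at (profinite covers of) geometric points: both the penultimate arrow of \rref{relativeHTfiltration1} and $HT_{\mathcal{A}}$ specialize at a geometric fiber to the classical map $HT_A$ in the absolute Hodge--Tate sequence \rref{absHTexactsequence}, so by Nakayama's lemma they agree on geometric stalks, and since $Y_{\text{pro\'et}}$ has enough such points, they agree globally. The same stalk-wise argument identifies the kernel with the image of $(HT_{\mathcal{A}})^\vee(1)$. For \rref{HTinclusion3}, the paper again works on stalks and applies Nakayama's lemma using that $\ker\theta$ generates the maximal ideal of the stalk of $\mathcal{O}\mathbb{B}_{\mathrm{dR},Y}^+$, together with the already-established \rref{HTinclusion2} modulo $\ker\theta$.

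Your route---tracing conventions for (i) and using local splitting of the exact sequence of free modules for (iii)---is perfectly valid. The advantage of the paper's approach is that it avoids having to open up the Caraiani--Scholze edge map and verify sign/twist compatibilities; it simply invokes that both constructions recover the classical absolute Hodge--Tate map fiberwise. The advantage of yours is that step (iii) is immediate from flatness/local freeness without needing to think about the maximal ideal structure of stalks of $\mathcal{O}\mathbb{B}_{\mathrm{dR},Y}^+$.
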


\begin{proof}By (\ref{absHTexactsequence}), the penultimate arrow in (\ref{relativeHTfiltration1}), which is a map of locally finite free $\mathcal{O}_Y$-modules, specializes to $HT_A$ on geometric fibers. Hence by Nakayama's lemma, it specializes to $HT_{\mathcal{A}}$ on geometric stalks, and so the map must be $HT_{\mathcal{A}}$ since $Y_{\text{pro\'{e}t}}$ has enough points given by profinite covers of geometric points. Since the kernel of $HT_A$ in (\ref{absHTexactsequence}) is given by the image of $(HT_A)^{\vee}(1)$, the same stalk-wise argument works to show that the kernel of (\ref{HTmap2}) is given by the image of $(HT_{\mathcal{A}})^{\vee}(1)$. This immediately implies (\ref{HTinclusion2}). For the final claim, we again can check on stalks at profinite covers of geometric points: since $\mathcal{O}\mathbb{B}_{\mathrm{dR},Y}^+/(\ker\theta) \cong \hat{\mathcal{O}}_Y$, by (\ref{HTinclusion2}) and Nakayama's lemma (since the image of $\ker\theta$ in the stalk is the maximal ideal of the stalk of $\mathcal{O}\mathbb{B}_{\mathrm{dR},Y}^+$) we have (\ref{HTinclusion3}). 

\end{proof}

\subsection{The fake Hasse invariant}\label{split} Henceforth, we define the \emph{fake Hasse invariant} to be
\begin{equation}\label{fakeHasseinvariant}\frak{s} := HT_{\mathcal{A}}(\alpha_{\infty,2}) \in \Gamma(\mathcal{Y},\omega_{\mathcal{A}}),
\end{equation}
which is a nonvanishing section in $\omega_{\mathcal{A}}(\mathcal{Y}_y)$, and so trivializes $\omega_{\mathcal{A}}$ on $\mathcal{Y}_y$. It is equivalently obtained by pulling back the global section $y$ of $\mathcal{O}_{\mathbb{P}^1}(1)$ along $\pi_{\mathrm{HT}} : \mathcal{Y} \rightarrow \mathbb{P}^1$.

We will briefly address a formal model of $\mathcal{Y}$ corresponding to the compactification $X$ of $Y$. As per the results of \cite{Scholze2}, there is a pro\'{e}tale cover $\mathcal{X} \rightarrow X$ extending $\mathcal{Y} \rightarrow Y$, which also makes $\mathcal{X}$ a preperfectoid space over $\mathrm{Spa}(\mathbb{Q}_p,\mathbb{Z}_p)$ (the fact that the latter map is indeed defined over $\mathrm{Spa}(\mathbb{Q}_p,\mathbb{Z}_p)$ is explained in the ensuing remarks after \cite[Theorem 2.8]{ChojeckiHansenJohansson}). Let $\frak{X}$ be the formal model corresponding to $\mathcal{X}$ by applying \cite[Lemma II.1.1]{Scholze2} to the proper adic space $\mathcal{X}$. Then there is a line bundle $\omega_{\mathcal{A}}^+$ on $\frak{X}$ whose generic fiber is $\omega_{\mathcal{A}}$. In fact, by the same lemma in loc. cit., we have that
$$\frak{s} \in \Gamma(\frak{X},\omega_{\mathcal{A}}^+)$$
and in fact trivializes $\omega_{\mathcal{A}}^+$ on the adic-theoretic closure of $\mathcal{Y}_y$ in $\frak{X}$.

\subsection{Relative de Rham cohomology and the Hodge-de Rham filtration}
Recall the Poincar\'{e} sequence (of $\pi^{-1}\mathcal{O}_{Y}$-modules)
\begin{equation}\label{deRham}0 \rightarrow \mathcal{O}_{\mathcal{A}} \xrightarrow{d} \Omega_{\mathcal{A}/Y}^1 \rightarrow 0
\end{equation}
where $d : \mathcal{O}_{\mathcal{A}} \rightarrow \Omega_{\mathcal{A}/Y}^1$ is the tensorial exterior differential. Taking the first higher direct image under $\pi : \mathcal{A} \rightarrow Y$ of this (\ref{deRham}), we get the usual de Rham bundle $$\mathcal{H}_{\mathrm{dR}}^1(\mathcal{A}) := R^1\pi_{\mathrm{dR},*}\mathcal{O}_{\mathcal{A}} = R^1\pi_*(0 \rightarrow \mathcal{O}_{\mathcal{A}} \rightarrow \Omega_{\mathcal{A}/Y}^1 \rightarrow 0)$$
on $Y_{\text{pro\'{e}t}}$, which is a rank 2 vector bundle equipped with the usual \emph{Hodge-de Rham} filtration 
\begin{equation}\label{relativeHdRsequence}0 \rightarrow \omega_{\mathcal{A}} \rightarrow \mathcal{H}_{\mathrm{dR}}^1(\mathcal{A}/Y) \rightarrow \omega_{\mathcal{A}}^{-1} \rightarrow 0.
\end{equation}
and Gauss-Manin connection 
\begin{equation}\label{GMconnection}\nabla : \mathcal{H}_{\mathrm{dR}}^1(\mathcal{A}) \rightarrow \mathcal{H}_{\mathrm{dR}}^1(\mathcal{A})\otimes_{\mathcal{O}_Y} \Omega_Y^1
\end{equation}
which satisfies Griffiths transversality with respect to the Hodge-de Rham filtration.

We extend the filtration (\ref{relativeHdRsequence}) to $\mathcal{H}_{\mathrm{dR}}^1(\mathcal{A})\otimes_{\mathcal{O}_Y}\mathcal{O}\mathbb{B}_{\mathrm{dR},Y}^+$ by taking the convolution of the (decreasing) Hodge-de Rham filtration with the natural (decreasing) filtration on $\mathcal{O}\mathbb{B}_{\mathrm{dR},Y}^+$ as defined in Section \ref{periodsheaves}. That is, we define
$$\Fil^i(\mathcal{H}_{\mathrm{dR}}^1(\mathcal{A})\otimes_{\mathcal{O}_Y}\mathcal{O}\mathbb{B}_{\mathrm{dR},Y}^+) = \sum_{j+j' = i}\Fil^j\mathcal{H}_{\mathrm{dR}}^1(\mathcal{A})\otimes_{\mathcal{O}_Y}\Fil^{j'}\mathcal{O}\mathbb{B}_{\mathrm{dR},Y}^+.$$

 Since the natural connection on $\mathcal{O}\mathbb{B}_{\mathrm{dR},Y}^+$ extends $d : \mathcal{O}_Y \rightarrow \Omega_Y^1$, we can extend (\ref{GMconnection}) to $\mathcal{H}_{\mathrm{dR}}^1(\mathcal{A})$ by the Leibniz rule to get a connection
$$\nabla : \mathcal{H}_{\mathrm{dR}}^1(\mathcal{A})\otimes_{\mathcal{O}_{Y}}\mathcal{O}\mathbb{B}_{\mathrm{dR},Y}^+ \rightarrow \mathcal{H}_{\mathrm{dR}}^1(\mathcal{A})\otimes_{\mathcal{O}_{Y}}\mathcal{O}\mathbb{B}_{\mathrm{dR},Y}^+
\otimes_{\mathcal{O}_{Y}}\Omega_{Y}^1$$
of filtered locally free $\mathcal{O}\mathbb{B}_{\mathrm{dR},Y}^+$-modules with integrable connection satisfying Griffiths transversality with respect to the convolution filtration on $\mathcal{H}_{\mathrm{dR}}^1(\mathcal{A})\otimes_{\mathcal{O}_Y}\mathcal{O}\mathbb{B}_{\mathrm{dR},Y}^+$. Here, recall that given a locally free $\mathcal{O}$-module with decreasing filtration $\mathcal{M}$ and a connection $\nabla : \mathcal{M} \rightarrow \mathcal{M} \otimes_{\mathcal{O}} \Omega^1$, for $\nabla$ to satisfy Griffiths transversality means that
$$\nabla(\Fil^i\mathcal{M}) \subset \Fil^{i-1}\mathcal{M} \otimes_{\mathcal{O}} \Omega^1.$$

\subsection{Relative $p$-adic de Rham comparison theorem}
Scholze's relative de Rham comparison theorem (\cite[Theorem 8.8]{Scholze}) says that we have an isomorphism
\begin{equation}\label{comparison}T_p\mathcal{A}\otimes_{\hat{\mathbb{Z}}_{p,Y}}\mathcal{O}\mathbb{B}_{\mathrm{dR},Y} = \mathcal{H}_{\text{\'{e}t}}^1(\mathcal{A})\otimes_{\hat{\mathbb{Z}}_{p,Y}}\mathcal{O}\mathbb{B}_{\mathrm{dR},Y} \cong \mathcal{H}_{\mathrm{dR}}^1(\mathcal{A})\otimes_{\mathcal{O}_Y}\mathcal{O}\mathbb{B}_{\mathrm{dR},Y}
\end{equation}
of sheaves on $Y_{\text{pro\'{e}t}}$, compatible with filtrations and connections.
The compatibility with the connections implies that the \emph{horizontal} sections (i.e. sections $f$ with $\nabla(f) = 0$) of the Gauss-Manin connection \begin{equation}\begin{split}\nabla : \mathcal{H}_{\mathrm{dR}}^1(\mathcal{A}) \otimes_{\mathcal{O}_{Y}} \mathcal{O}\mathbb{B}_{\mathrm{dR},Y} \cong T_p\mathcal{A}\otimes_{\hat{\mathbb{Z}}_{p,Y}}\mathcal{O}\mathbb{B}_{\mathrm{dR},Y} &\rightarrow  T_p\mathcal{A}\otimes_{\hat{\mathbb{Z}}_{p,Y}}\mathcal{O}\mathbb{B}_{\mathrm{dR},Y}\otimes_{\mathcal{O}_{Y}}\Omega_{Y}^1\\
&\cong \mathcal{H}_{\mathrm{dR}}^1(\mathcal{A}) \otimes_{\mathcal{O}_{Y}} \mathcal{O}\mathbb{B}_{\mathrm{dR},Y}\otimes_{\mathcal{O}_{Y}}\Omega_{Y}^1
\end{split}
\end{equation}
are given by $T_p\mathcal{A}\otimes_{\hat{\mathbb{Z}}_{p,Y}}\mathbb{B}_{\mathrm{dR},Y}$.

\subsection{$\mathcal{O}\mathbb{B}_{\mathrm{dR}}^+$-modules and $\mathbb{B}_{\mathrm{dR}}^+$-local systems}
Recall that by \cite[Theorem 7.2]{Scholze}, there is an equivalence between the category of $\mathbb{B}_{\mathrm{dR}}^+$-local systems and $\mathcal{O}\mathbb{B}_{\mathrm{dR}}^+$-modules.  By \cite[Proposition 2.2.3]{CaraianiScholze} (see also \cite[Proposition 7.9]{Scholze}), we have that the $\mathbb{B}_{\mathrm{dR}}^+$-local systems
$$\mathbb{M} := (\mathcal{H}_{\text{\'{e}t}}^1(\mathcal{A})\otimes_{\hat{\mathbb{Z}}_{p,Y}}\mathcal{O}\mathbb{B}_{\mathrm{dR},Y}^+)^{\nabla = 0} = \mathcal{H}_{\text{\'{e}t}}^1(\mathcal{A})\otimes_{\hat{\mathbb{Z}}_{p,Y}}\mathbb{B}_{\mathrm{dR},Y}^+$$
and 
$$\mathbb{M}_0 := (\mathcal{H}_{\mathrm{dR}}^1(\mathcal{A})\otimes_{\mathcal{O}_Y}\mathcal{O}\mathbb{B}_{\mathrm{dR},Y}^+)^{\nabla = 0}$$
satisfy 
\begin{equation}\label{latticeinclusion}\mathbb{M}_0 \subset \mathbb{M}
\end{equation}
and thus, after tensoring with $\otimes_{\mathbb{B}_{\mathrm{dR},Y}}^+\mathcal{O}\mathbb{B}_{\mathrm{dR},Y}^+$, this induces an inclusion of $\mathcal{O}\mathbb{B}_{\mathrm{dR},Y}^+$-modules which we denote by $\iota_{\mathrm{dR}}$:
\begin{equation}\label{OBdRlocalsystem}\mathcal{H}_{\mathrm{dR}}^1(\mathcal{A})\otimes_{\mathcal{O}_Y}\mathcal{O}\mathbb{B}_{\mathrm{dR},Y}^+= \mathbb{M}_0\otimes_{\mathbb{B}_{\mathrm{dR},Y}^+} \mathcal{O}\mathbb{B}_{\mathrm{dR},Y}^+ \overset{\iota_{\mathrm{dR}}}{\subset} \mathbb{M}\otimes_{\mathbb{B}_{\mathrm{dR},Y}^+} \mathcal{O}\mathbb{B}_{\mathrm{dR},Y}^+ = \mathcal{H}_{\text{\'{e}t}}^1(\mathcal{A})\otimes_{\hat{\mathbb{Z}}_{p,Y}}\mathcal{O}\mathbb{B}_{\mathrm{dR},Y}^+.
\end{equation}
Using the isomorphism $\mathcal{H}_{\text{\'{e}t}}^1(\mathcal{A})\cong T_p\mathcal{A}(-1)$ furnished by (\ref{dualisomorphism}), this induces an inclusion 
\begin{equation}\label{OBdRlocalsystem2}\begin{split}\mathcal{H}_{\mathrm{dR}}^1(\mathcal{A})\otimes_{\mathcal{O}_Y}\mathcal{O}\mathbb{B}_{\mathrm{dR},Y}^+ (1) &= \mathbb{M}_0\otimes_{\mathbb{B}_{\mathrm{dR},Y}^+} \mathcal{O}\mathbb{B}_{\mathrm{dR},Y}^+(1) \\
&\overset{\iota_{\mathrm{dR}}}{\subset} \mathbb{M}\otimes_{\mathbb{B}_{\mathrm{dR},Y}^+} \mathcal{O}\mathbb{B}_{\mathrm{dR},Y}^+(1) = T_p\mathcal{A}\otimes_{\hat{\mathbb{Z}}_{p,Y}}\mathcal{O}\mathbb{B}_{\mathrm{dR},Y}^+,
\end{split}
\end{equation}
which, upon evaluating on $\mathcal{Y}\rightarrow Y$ and using $\hat{\mathbb{Z}}_{p,\mathcal{Y}}(1) \cong \hat{\mathbb{Z}}_{p,\mathcal{Y}}\cdot t$, induces
\begin{equation}\label{OBdRlocalsystem2}\begin{split}\mathcal{H}_{\mathrm{dR}}^1(\mathcal{A})\otimes_{\mathcal{O}_Y}\mathcal{O}\mathbb{B}_{\mathrm{dR},\mathcal{Y}}^+ \cdot t &= \mathbb{M}_0\otimes_{\mathbb{B}_{\mathrm{dR},Y}^+} \mathcal{O}\mathbb{B}_{\mathrm{dR},\mathcal{Y}}^+\cdot t \\
&\overset{\iota_{\mathrm{dR}}}{\subset} \mathbb{M}\otimes_{\mathbb{B}_{\mathrm{dR},Y}^+} \mathcal{O}\mathbb{B}_{\mathrm{dR},\mathcal{Y}}^+\cdot t = T_p\mathcal{A}\otimes_{\hat{\mathbb{Z}}_{p,Y}}\mathcal{O}\mathbb{B}_{\mathrm{dR},\mathcal{Y}}^+ \xrightarrow{\alpha_{\infty}^{-1}}\mathcal{O}\mathbb{B}_{\mathrm{dR},\mathcal{Y}}^{+,\oplus 2}.
\end{split}
\end{equation}

\section{Construction of the $p$-adic Maass-Shimura operator}

\subsection{A pro\'{e}tale local description of $\mathcal{O}\mathbb{B}_{\mathrm{dR}}^{(+)}$} Recall that for an affinoid $\mathcal{U} \subset \mathcal{Y}^{\mathrm{ss}}$ we have an \'{e}tale morphism of adic spaces $\pi_{\mathrm{HT}} : \mathcal{U} \rightarrow \Omega^2 \subset \mathbb{P}^1$. Using the $p$-adic exponential, this adic-locally defines an \'{e}tale morphism 
$$\mathcal{U} \rightarrow \mathbb{T} = \mathrm{Spa}(\mathbb{Q}_p\langle T,T^{-1}\rangle,\mathbb{Z}_p\langle T,T^{-1}\rangle),$$
and taking finite \'{e}tale covers 
$$\mathbb{T}^{1/p^m} = \mathrm{Spa}(\mathbb{Q}_p\langle T^{1/p^m},T^{-1/p^m}\rangle,\mathbb{Z}_p\langle T^{1/p^m},T^{-1/p^m}\rangle)$$
of $\mathbb{T}$ to extract $p^{\mathrm{th}}$-power roots of the $p$-adic exponential, we can extend to this to \'{e}tale morphisms $\mathcal{U}\rightarrow \mathbb{T}$ defined on larger \'{e}tale neighborhoods. In this section, we will deal with the more general situation where we have an arbitrary adic space $\mathcal{U}$ over $\mathrm{Spa}(\mathbb{Q}_p,\mathbb{Z}_p)$ with a fixed \'{e}tale-locally defined \'{e}tale morphism $\pi : \mathcal{U} \rightarrow \mathbb{T}$, and specialize in Section \ref{q'coordinates} to the situation described above.

Let $K/\mathbb{Q}_p$ denote a perfectoid field, with ring of integers $\mathcal{O}_K$. 
\begin{equation}\label{infinitytorus}\tilde{\mathbb{T}} = \varprojlim_m \mathbb{T}^{1/p^m} \in \mathbb{T}_{\text{pro\'{e}t}}.
\end{equation}
Letting $\tilde{\mathbb{T}}_K$ denote the base change to $\mathrm{Spa}(K,\mathcal{O}_K)$, this is an affinoid perfectoid object of $\mathbb{T}_{K,\text{pro\'{e}t}}$. Using our given \'{e}tale-locally defined \'{e}tale map $\mathcal{U} \rightarrow \mathbb{T}$, let 
$$\tilde{\mathcal{U}} = \mathcal{U}\times_{\mathbb{T}}\tilde{\mathbb{T}} \in \mathcal{U}_{\text{pro\'{e}t}}.$$
Let $\tilde{\mathcal{U}}_K \in \mathcal{U}_{K,\text{pro\'{e}t}}$ denote the base change to $\mathrm{Spa}(K,\mathcal{O}_K)$, so that $\tilde{\mathcal{U}}_K$ is an affinoid perfectoid object in $\tilde{\mathcal{U}}_{K,\text{pro\'{e}t}}$. The following proposition and its proof are copied almost verbatim from \cite{Scholze}.

\begin{proposition}[\cite{Scholze}, Proposition 6.10]\label{localdescription} We have a natural isomorphism of sheaves on the localized site $\mathcal{U}_{\text{pro\'{e}t}}/\tilde{\mathcal{U}}$
$$\mathbb{B}_{\mathrm{dR},\tilde{\mathcal{U}}}^+\llbracket X\rrbracket \xrightarrow{\sim} \mathcal{O}\mathbb{B}_{\mathrm{dR},\tilde{\mathcal{U}}}^+$$
sending $X \mapsto T\otimes 1 - 1\otimes[T^{\flat}]$. In particular, we have
$$\mathbb{B}_{\mathrm{dR},\tilde{\mathcal{U}}}^+\llbracket X\rrbracket [t^{-1}] \xrightarrow{\sim} \mathcal{O}\mathbb{B}_{\mathrm{dR},\tilde{\mathcal{U}}}$$
\end{proposition}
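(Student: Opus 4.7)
The plan is to adapt Scholze's proof of the analogous local description (from his paper on $p$-adic Hodge theory for rigid-analytic varieties) to the present preperfectoid setting. The argument proceeds by constructing the map, placing compatible filtrations on both sides, and carrying out a graded comparison on affinoid perfectoid pro-étale covers.

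To construct the map: on $\tilde{\mathcal{U}}$, the compatible system $(T, T^{1/p}, T^{1/p^2}, \ldots)$ of $p^n$-th roots of the coordinate pulled back along $\mathcal{U}\to\mathbb{T}$ defines a section $T^{\flat} \in \hat{\mathcal{O}}_{\tilde{\mathcal{U}}^{\flat}}^+$, whose Teichmüller lift $[T^{\flat}]$ lies in $\mathbb{A}_{\mathrm{inf},\tilde{\mathcal{U}}}$. Then $u := T\otimes 1 - 1\otimes [T^{\flat}]$ is a section of $\mathcal{O}\mathbb{B}_{\mathrm{inf},\tilde{\mathcal{U}}}$ with $\theta(u) = T - T = 0$, so $u$ lies in $\ker\theta$. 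Combined with the natural inclusion $\mathbb{B}_{\mathrm{dR},\tilde{\mathcal{U}}}^+ \hookrightarrow \mathcal{O}\mathbb{B}_{\mathrm{dR},\tilde{\mathcal{U}}}^+$, the assignment $X \mapsto u$ yields a $\mathbb{B}_{\mathrm{dR},\tilde{\mathcal{U}}}^+$-algebra morphism $\mathbb{B}_{\mathrm{dR},\tilde{\mathcal{U}}}^+[X] \to \mathcal{O}\mathbb{B}_{\mathrm{dR},\tilde{\mathcal{U}}}^+$; since the target is $\ker\theta$-adically complete and $u \in \ker\theta$, this extends uniquely to $\mathbb{B}_{\mathrm{dR},\tilde{\mathcal{U}}}^+\llbracket X\rrbracket$.

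Next, I would equip the source with the filtration whose $n$-th piece is $\sum_{a+b\ge n}(\ker\theta)^a X^b \mathbb{B}_{\mathrm{dR},\tilde{\mathcal{U}}}^+\llbracket X\rrbracket$ and the target with its $\ker\theta$-adic filtration; the map is filtration-preserving, and both sides are complete and separated. Since affinoid perfectoids $V$ pro-étale over $\tilde{\mathcal{U}}_K$ (for any perfectoid field $K/\mathbb{Q}_p$) form a basis of the localized site, it suffices to verify that the induced map on associated gradeds is an isomorphism on such $V$.

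The main obstacle is the graded computation on the target. The source's graded is easily identified with the bigraded $\hat{\mathcal{O}}_V$-polynomial algebra $\hat{\mathcal{O}}_V[t, X]$, where $t$ is a generator of $\ker\theta \subset \mathbb{B}_{\mathrm{dR}}^+$. For the target, I would show that the images $\bar{t}, \bar{u}$ of $t$ and $u$ in $\ker\theta/(\ker\theta)^2 \cdot \mathcal{O}\mathbb{B}_{\mathrm{dR},V}^+$ are $\hat{\mathcal{O}}_V$-linearly independent and together generate the associated graded as a polynomial algebra over $\hat{\mathcal{O}}_V$. Linear independence comes from the natural connection: $[T^{\flat}]$ is horizontal, so $\nabla(u) = dT \otimes 1$, while $\nabla(t) = 0$; and $dT$ generates $\Omega_{\mathcal{U}}^1$ because $\mathcal{U}\to\mathbb{T}$ is étale. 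The polynomial-algebra structure itself is established via the substitution $T = [T^{\flat}] + u$: any section of $\mathcal{O}\mathbb{B}_{\mathrm{inf},V}$ modulo $(\ker\theta)^{n+1}$ can be rewritten as a $\mathbb{B}_{\mathrm{inf},V}$-linear combination of monomials in $u$ of degree at most $n$, reducing the graded computation to the desired bigraded polynomial structure. Matching $t\leftrightarrow t$ and $X \leftrightarrow u$ then identifies the two gradeds, and hence the completed rings themselves; the second assertion follows by inverting $t$.
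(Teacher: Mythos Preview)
Your strategy differs from the paper's. The paper does not compare gradeds; instead it constructs the \emph{inverse} map directly. The key claim (following Scholze) is that there is a unique ring map $\mathcal{O}_{\tilde{\mathcal{U}}} \to \mathbb{B}_{\mathrm{dR},\tilde{\mathcal{U}}}^+\llbracket X\rrbracket$ sending $T \mapsto [T^{\flat}] + X$ and reducing to the natural inclusion modulo $\ker\theta$. This is proved on affinoid perfectoids $V$ by writing $\mathcal{O}_V$ as the $p$-adic completion of a finitely generated algebra $R_0$ \'etale over $W(\kappa)[1/p][T]$, then uniquely lifting $W(\kappa)[1/p][T] \to \mathbb{B}_{\mathrm{dR}}^+(V)\llbracket X\rrbracket$ to $R_0$ via Hensel's lemma, and extending to the completion. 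The resulting map $\mathcal{O}_V \otimes_{W(\kappa)} \mathbb{B}_{\mathrm{inf},V} \to \mathbb{B}_{\mathrm{dR}}^+(V)\llbracket X\rrbracket$ is then checked to give the inverse after $\ker\theta$-adic completion.

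Your graded-comparison approach is reasonable in outline, but the step you label ``the polynomial-algebra structure itself is established via the substitution $T = [T^{\flat}] + u$'' is exactly where the difficulty lies, and the substitution alone does not suffice. That substitution lets you rewrite \emph{polynomials in $T$} as $\mathbb{B}_{\mathrm{inf}}$-linear combinations of powers of $u$. But $\mathcal{O}_V$ is only \emph{\'etale} over $W(\kappa)[1/p]\langle T^{\pm 1}\rangle$, not polynomial: a general section $f \in \mathcal{O}_V$ satisfies a monic separable equation over the Tate algebra, and producing an element of $\mathbb{B}_{\mathrm{inf},V}[u]/(\ker\theta)^{n+1}$ representing $f\otimes 1$ requires solving that equation there. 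This is precisely a Hensel lifting step, and it is the actual content of the proposition. Your connection argument for linear independence of $\bar t,\bar u$ is fine, but it only gives injectivity on $\mathrm{gr}^1$; surjectivity of the graded map (equivalently, that $\xi$ and $u$ generate $\ker\theta$ modulo $(\ker\theta)^2$, and more generally that $\mathrm{gr}^\bullet(\mathcal{O}\mathbb{B}_{\mathrm{dR}}^+)$ is polynomial) is not established by what you wrote. Once you supply the Hensel argument to handle the \'etale extension, you have essentially reproduced the paper's proof inside your graded framework, so the graded detour does not buy anything.
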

\begin{proof}As in loc. cit., the key ingredient of the argument is:
\begin{claim}\label{claim} There is a \emph{unique} morphism $$\mathcal{O}_{\tilde{\mathcal{U}}} \rightarrow \mathbb{B}_{\mathrm{dR},\tilde{\mathcal{U}}}^+\llbracket X\rrbracket$$ 
sending $T \mapsto [T^{\flat}] + X$ and such that the resulting map
$$\mathcal{O}_{\tilde{\mathcal{U}}} \rightarrow \mathbb{B}_{\mathrm{dR},\tilde{\mathcal{U}}}^+\llbracket X\rrbracket/\ker \theta = \hat{\mathcal{O}}_{\tilde{\mathcal{U}}}$$
is the natural inclusion. 
\end{claim}
Once we have shown this claim, we get a natural map
$$(\mathcal{O}_{\mathcal{U}}\otimes_{W(\kappa)}W(\hat{\mathcal{O}}_{\mathcal{U}^{\flat}}^+))|_{\tilde{\mathcal{U}}} \rightarrow \mathbb{B}_{\mathrm{dR},\mathcal{U}}^+|_{\tilde{\mathcal{U}}}\llbracket X\rrbracket$$
which induces the inverse of the map in the statement upon passing to the $(\ker\theta)$-adic completion. 

In order to show the claim, we recall the following lemmas from \cite{Scholze}.
\begin{lemma}[\cite{Scholze}, Lemma 6.11]\label{lemma1} Let $(R,R^+)$ be a perfectoid affinoid $(k,\mathcal{O}_k)$-algebra, and let $S$ be a finitely generated $\mathcal{O}_K$-algebra. Then any morphism
$$f : S \rightarrow \mathbb{B}_{\mathrm{dR}}^+(R,R^+)\llbracket X\rrbracket$$
such that $\theta(f(S)) \subset R^+$ extends to the $p$-adic completion of $S$.
\end{lemma}

\begin{lemma}[cf. \cite{Scholze}, Lemma 6.12]\label{lemma2} Let $V = \mathrm{Spa}(R,R^+)$ be an affinoid adic space of finite type over $\mathrm{Spa}(W(\kappa)[p^{-1}],W(\kappa))$ with an \'{e}tale map $V \rightarrow \mathbb{T}$. Then there exists a finitely generated $W(\kappa)[T]$-algebra $R_0^+$, such that $R_0 = R_0^+[p^{-1}]$ is \'{e}tale over 
$$W(\kappa)[p^{-1}][T]$$
and $R^+$ is the $p$--adic completion of $R_0^+$.
\end{lemma}

\begin{proof}[Proof of Claim \ref{claim}]We again follow the argument in loc. cit. Note that we have a map
\begin{equation}\label{map}W(\kappa)[p^{-1}][T] \rightarrow \mathbb{B}_{\mathrm{dR},\mathcal{U}}^+|_{\tilde{\mathcal{U}}}\llbracket X\rrbracket
\end{equation}
sending $T$ to $[T^{\flat}] + X$. Now take any affinoid perfectoid $V \in \mathcal{U}_{\text{pro\'{e}t}}/\tilde{\mathcal{U}}_K$ writing it as a union of inverse limits of affinoid $V_i \in \mathcal{U}_{\text{pro\'{e}t}}$. Then $\mathcal{O}_{\mathcal{U}}(V) = \varinjlim_i\mathcal{O}_{\mathcal{U}}(V_i)$. Applying Lemma \ref{lemma2} to $V_i = \mathrm{Spa}(R_i,R_i^+) \rightarrow \mathcal{U} \rightarrow \mathbb{T}$, we have algebras $R_{i0}^+$ whose generic fibers $R_{i0}$ are \'{e}tale over $W(\kappa)[p^{-1}][T]$. By Hensel's lemma, the map (\ref{map}) \emph{uniquely} lifts to a map $R_{i0} \rightarrow \mathbb{B}_{\mathrm{dR},\mathcal{U}}^+(V)\llbracket X\rrbracket$ and in particular we get a unique lifting of $R_{i0}^+$. This map in turn extends by Lemma \ref{lemma1} to the $p$-adic completion $R_i^+ = \mathcal{O}_{\mathcal{U}}^+(V_i)$, and hence we get a unique lifting $\mathcal{O}_{\mathcal{U}}(V_i) \rightarrow \mathbb{B}_{\mathrm{dR},\mathcal{U}}^+\llbracket X\rrbracket$. Taking the direct limit, we get a unique lifting $\mathcal{O}_{\mathcal{U}}(V) \rightarrow \mathbb{B}_{\mathrm{dR},\mathcal{U}}^+\llbracket X\rrbracket$. Since $V$ was arbitrary and the affinoid perfectoids form a basis of the pro\'{e}tale site, and invoking the fact that $\mathcal{O}_{\mathcal{U}}$ is a sheaf on $\mathcal{U}_{\text{pro\'{e}t}}$ (and so satisfies the gluing axiom), we have proven the claim.
\end{proof}

\end{proof}

\subsection{The de Rham fundamental periods}\label{deRhamperiod}

Let
\begin{equation}\label{generator}t := \log[\epsilon] \in \Fil^1\mathbb{B}_{\mathrm{dR}}^+(\mathcal{Y})
\end{equation}
and
$$\epsilon := (\langle \alpha_{\infty,2},\alpha_{\infty,1}\rangle_0, \langle \alpha_{\infty,2},\alpha_{\infty,1}\rangle_1,\ldots) \in \hat{\mathbb{Z}}_p(1)(\mathcal{Y})$$
where $\langle \cdot, \cdot \rangle$ is the Weil pairing \ref{Weil} (viewed as a pairing $T_p\mathcal{A} \times T_p\mathcal{A} \rightarrow \hat{\mathbb{Z}}_{p,\mathcal{Y}}(1)$ via the universal principal polarization $\mathcal{A}\cong \check{\mathcal{A}}$), and $\langle \cdot, \cdot \rangle_n$ denotes its image under the projection $\hat{\mathbb{Z}}_{p,\mathcal{Y}}(1) \rightarrow \mu_{p^n}$. Hence $t$ is a generator (i.e. a non-vanishing global section) of $\Fil^1\mathbb{B}_{\mathrm{dR}}^+(\mathcal{Y})$ over $\mathbb{B}_{\mathrm{dR}}^+(\mathcal{Y})$.

Note also that $t$ gives a natural isomorphism
\begin{equation}\label{Tatetwist}\hat{\mathbb{Z}}_{p,\mathcal{Y}}(1) \cong \hat{\mathbb{Z}}_{p,\mathcal{Y}}\cdot t \hspace{.5cm} :
\hspace{.5cm} \gamma \mapsto \log[\gamma]
\end{equation}
of $\hat{\mathbb{Z}}_{p,\mathcal{Y}}$-modules on $\mathcal{Y}_{\text{pro\'{e}t}}$, and thus by the isomorphism (\ref{dualisomorphism}) induced by the Weil pairing, we have a natural isomorphism
\begin{equation}\label{etaleTateisomorphism}\mathcal{H}_{\text{\'{e}t}}^1(\mathcal{A}) \cong T_p\mathcal{A}(-1) \cong T_p\mathcal{A}\cdot t^{-1} \underset{\alpha_{\infty}^{-1}}{\xrightarrow{\sim}} \left(\hat{\mathbb{Z}}_{p,\mathcal{Y}}\cdot t^{-1}\right)^{\oplus 2}.
\end{equation} 

Using (\ref{Tatetwist}) and (\ref{latticeinclusion}), we write the global section of $\omega_{\mathcal{A}}$ given by the fake Hasse invariant $$\frak{s} := HT_{\mathcal{A}}(\alpha_2) \in \mathcal{H}_{\mathrm{dR}}^1(\mathcal{A})\otimes_{\mathcal{O}_{\mathcal{Y}}}\mathcal{O}\mathbb{B}_{\mathrm{dR}}^+(\mathcal{Y})\subset \mathcal{H}_{\text{\'{e}t}}^1(\mathcal{A})\otimes_{\hat{\mathbb{Z}}_{p,\mathcal{Y}}}\mathcal{O}\mathbb{B}_{\mathrm{dR},\mathcal{Y}}^+(\mathcal{Y}) {\xrightarrow{\sim}}(\mathcal{O}\mathbb{B}_{\mathrm{dR},\mathcal{Y}}^+\cdot t^{-1})^{\oplus 2}$$
as 
\begin{equation}\label{x'y'definition}\frak{s} =  \mathbf{x}_{\mathrm{dR}}\alpha_{\infty,1}/t + \mathbf{y}_{\mathrm{dR}}\alpha_{\infty,2}/t
\end{equation}
where $\mathbf{x}_{\mathrm{dR}},\mathbf{y}_{\mathrm{dR}} \in \mathcal{O}\mathbb{B}_{\mathrm{dR}}^+(\mathcal{Y})$. 

\begin{definition}We call $\mathbf{x}_{\mathrm{dR}},\mathbf{y}_{\mathrm{dR}} \in \mathcal{O}\mathbb{B}_{\mathrm{dR}}^+(\mathcal{Y})$ the \emph{de Rham fundamental periods}.
\end{definition}

\begin{remark}By the proof of Proposition 2.2.5 of \cite{CaraianiScholze}, we see that the map 
$$\omega_{\mathcal{A}}\otimes_{\mathcal{O}_Y}\mathcal{O}\mathbb{B}_{\mathrm{dR},\mathcal{Y}}^+ \rightarrow \mathcal{H}_{\mathrm{dR}}^1(\mathcal{A})\otimes_{\mathcal{O}_Y}\mathcal{O}\mathbb{B}_{\mathrm{dR},\mathcal{Y}}^+ \rightarrow \mathcal{H}_{\text{\'{e}t}}^1(\mathcal{A})\otimes_{\hat{\mathbb{Z}}_{p,Y}}\hat{\mathcal{O}}_{\mathcal{Y}} \xrightarrow{HT_{\mathcal{A}}} \omega_{\mathcal{A}}(-1)\otimes_{\mathcal{O}_Y} \mathcal{O}\mathbb{B}_{\mathrm{dR},Y}^+$$ 
obtained from reducing the right hand side of (\ref{OBdRlocalsystem}) modulo $\ker\theta$ is just the 0 map. Hence $\mathbf{x}_{\mathrm{dR}},\mathbf{y}_{\mathrm{dR}} \in \ker\theta \subset\mathcal{O}\mathbb{B}_{\mathrm{dR}}^+(\mathcal{Y})$.
\end{remark}

\subsection{$GL_2(\mathbb{Q}_p)$-transformation properties of the de Rham fundamental periods}
For any $\gamma \in GL_2(\mathbb{Q}_p)$, let 
$$p^n\gamma = \left(\begin{array}{ccc} a & b \\
c & d\\
\end{array}\right)$$
where $n \in \mathbb{Z}$ is such that $p^n\gamma \in M_2(\mathbb{Z}_p)$ but $p^{n-1} \not\in M_2(\mathbb{Z}_p)$. Retain the same notation as in Section \ref{action}, so that we have
\begin{align*}&\alpha_{\infty,1}\cdot \left(\begin{array}{ccc} a & b \\
c & d\\
\end{array}\right) = a(\check{\phi}_*)^{-1}(\alpha_{\infty,1}) + c(\check{\phi}_*)^{-1}(\alpha_{\infty,2})\\
& \alpha_{\infty,2}\cdot \left(\begin{array}{ccc} a & b \\
c & d\\
\end{array}\right) = b(\check{\phi}_*)^{-1}(\alpha_{\infty,1}) + d(\check{\phi}_*)^{-1}(\alpha_{\infty,2}).
\end{align*}
Note that when $\gamma \in GL_2(\mathbb{Z}_p)$, then $(\check{\phi}_*)^{-1}$ is just the identity. From the calculations
\begin{equation}\begin{split}\label{stransformationprop}\gamma^*\frak{s} = \left(\begin{array}{ccc} a & b \\
c & d\\
\end{array}\right)^*HT_{\mathcal{A}}(\alpha_{\infty,2}) &= HT_{\mathcal{A}}(b(\check{\phi}_*)^{-1}(\alpha_{\infty,1}) + d(\check{\phi}_*)^{-1}(\alpha_{\infty,2})) \\
&= (b\mathbf{z}^{-1} + d)HT_{\mathcal{A}}((\check{\phi}_*)^{-1}(\alpha_{\infty,2}))\\
&= (b\mathbf{z}^{-1} + d)(\check{\phi}^{-1})^*\frak{s}
\end{split}
\end{equation}
and
\begin{equation}\begin{split}&\gamma^*t= \left(\begin{array}{ccc} a & b \\
c & d\\
\end{array}\right)^*\log[(\langle \alpha_{\infty,2},\alpha_{\infty,1}\rangle_0,\ldots] \\
&= \log[(\langle a(\check{\phi}_*)^{-1}(\alpha_{\infty,1}) + c(\check{\phi}_*)^{-1}(\alpha_{\infty,2}),b(\check{\phi}_*)^{-1}(\alpha_{\infty,1}) + d(\check{\phi}_*)^{-1}(\alpha_{\infty,2})\rangle_0,\ldots)] \\
&= (bc-ad)\log[(\langle (\check{\phi}_*)^{-1}(\alpha_{\infty,2}),(\check{\phi}_*)^{-1}(\alpha_{\infty,1})\rangle_0,\ldots)] \\
&= (bc-ad)\log[(\langle \alpha_{\infty,2},\alpha_{\infty,1}\rangle_0,\ldots)] \\
&= (bc-ad)t,
\end{split}
\end{equation}
where the last equality follows from the functoriality of the Weil pairing, as follows. Given an isogeny $\phi : A \rightarrow A'$ with dual isogeny $\check{\phi} : \check{A'} \rightarrow \check{A}$, and the Weil pairings $\langle\cdot,\cdot\rangle$ and $\langle\cdot,\cdot\rangle'$ associated with $A$ and $A'$, we have
$$\langle \check{\phi}_*(x),y \rangle = \langle x, \phi_*(y) \rangle'.$$
Now by the construction of our $\phi$ (using our definition of $p^n\gamma$), we see that either $(\check{\phi}_*)^{-1}(\alpha_{\infty,1}) = \phi_*(\alpha_{\infty,1})$ or $(\check{\phi}_*)^{-1}(\alpha_{\infty,2}) = \phi_*(\alpha_{\infty,2})$, and hence 
\begin{align*}\langle (\check{\phi}_*)^{-1}(\alpha_{\infty,i}),(\check{\phi}_*)^{-1}(\alpha_{\infty,3-i})\rangle &= \langle \phi_*(\alpha_{\infty,i}),(\check{\phi}_*)^{-1}(\alpha_{\infty,3-i})\rangle \\
&= \langle \alpha_{\infty,i},\check{\phi}_*(\check{\phi}_*)^{-1}(\alpha_{\infty,3-i})\rangle \\
&= \langle \alpha_{\infty,i},\alpha_{\infty,3-i}\rangle
\end{align*}
for $i = 1$ or 2. 

We have:
\begin{proposition}Let $\gamma \in GL_2(\mathbb{Q}_p)$ and
$$p^n\gamma = \left(\begin{array}{ccc} a & b \\
c & d\\
\end{array}\right)$$
where $n \in \mathbb{Z}$ is such that $p^n\gamma \in M_2(\mathbb{Z}_p)$ but $p^{n-1} \not\in M_2(\mathbb{Z}_p)$.Then $\mathbf{x}_{\mathrm{dR}}/t$ and $\mathbf{y}_{\mathrm{dR}}/t$ satisfy the transformation laws
\begin{equation}\label{x'transformationprop}\gamma^*(\mathbf{x}_{\mathrm{dR}}/t) = (d\mathbf{x}_{\mathrm{dR}}-b\mathbf{y}_{\mathrm{dR}})(b\mathbf{z}^{-1}+d)(bc-ad)^{-1} = (b{\mathbf{z}_{\mathrm{dR}}}^{-1} + d)(b\mathbf{z}^{-1} + d)(bc-ad)^{-1}(\mathbf{x}_{\mathrm{dR}}/t),
\end{equation}
and
\begin{equation}\label{y'transformationprop}\gamma^*(\mathbf{y}_{\mathrm{dR}}/t) = (-c\mathbf{x}_{\mathrm{dR}} + a\mathbf{y}_{\mathrm{dR}})(b\mathbf{z}^{-1} + d)(bc-ad)^{-1} = (c{\mathbf{z}_{\mathrm{dR}}} + a)(b\mathbf{z}^{-1} + d)(bc-ad)^{-1}(\mathbf{y}_{\mathrm{dR}}/t),
\end{equation}
\end{proposition}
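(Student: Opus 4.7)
The plan is to pull back both sides of the defining identity (\ref{x'y'definition})
$$\frak{s} = \mathbf{x}_{\mathrm{dR}}\alpha_{\infty,1}/t + \mathbf{y}_{\mathrm{dR}}\alpha_{\infty,2}/t$$
by $\gamma^*$, and then use the transformation laws already established just above the statement — namely (\ref{stransformationprop}) for $\gamma^*\frak{s}$, the formulas from Section \ref{action} for $\gamma^*\alpha_{\infty,i}$, and the identity $\gamma^*t=(bc-ad)t$ — to reduce the problem to a $2\times 2$ linear system whose unknowns are precisely $\gamma^*(\mathbf{x}_{\mathrm{dR}}/t)$ and $\gamma^*(\mathbf{y}_{\mathrm{dR}}/t)$.

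Concretely, expanding the pulled-back right-hand side in the basis $(\check{\phi}_*)^{-1}\alpha_{\infty,1}, (\check{\phi}_*)^{-1}\alpha_{\infty,2}$ gives
$$\gamma^*\frak{s} = \bigl(a\gamma^*(\mathbf{x}_{\mathrm{dR}}/t)+b\gamma^*(\mathbf{y}_{\mathrm{dR}}/t)\bigr)(\check{\phi}_*)^{-1}\alpha_{\infty,1} + \bigl(c\gamma^*(\mathbf{x}_{\mathrm{dR}}/t)+d\gamma^*(\mathbf{y}_{\mathrm{dR}}/t)\bigr)(\check{\phi}_*)^{-1}\alpha_{\infty,2},$$
while (\ref{stransformationprop}) combined with the naturality of the universal de Rham periods $\mathbf{x}_{\mathrm{dR}}, \mathbf{y}_{\mathrm{dR}}, t$ under the isogeny $\phi$ rewrites the same expression as $(b\mathbf{z}^{-1}+d)\bigl(\mathbf{x}_{\mathrm{dR}}(\check{\phi}_*)^{-1}\alpha_{\infty,1}/t + \mathbf{y}_{\mathrm{dR}}(\check{\phi}_*)^{-1}\alpha_{\infty,2}/t\bigr)$. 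Equating coefficients in the free basis $(\check{\phi}_*)^{-1}\alpha_{\infty,i}$ produces the linear system
\begin{align*}
a\gamma^*(\mathbf{x}_{\mathrm{dR}}/t) + b\gamma^*(\mathbf{y}_{\mathrm{dR}}/t) &= (b\mathbf{z}^{-1}+d)(\mathbf{x}_{\mathrm{dR}}/t),\\
c\gamma^*(\mathbf{x}_{\mathrm{dR}}/t) + d\gamma^*(\mathbf{y}_{\mathrm{dR}}/t) &= (b\mathbf{z}^{-1}+d)(\mathbf{y}_{\mathrm{dR}}/t),
\end{align*}
and Cramer's rule (the determinant $\det\gamma=ad-bc=-(bc-ad)$ is invertible) immediately gives the first equalities of (\ref{x'transformationprop}) and (\ref{y'transformationprop}). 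To pass to the second equalities, one substitutes the Hodge-de Rham analogue of (\ref{relation}), $\mathbf{z}_{\mathrm{dR}} = -\mathbf{x}_{\mathrm{dR}}/\mathbf{y}_{\mathrm{dR}}$, which is the natural affine coordinate of the Hodge line $\frak{s}\cdot\mathcal{O}\mathbb{B}_{\mathrm{dR},\mathcal{Y}}^+$ in the basis $\alpha_{\infty,i}/t$; this yields the algebraic identities $d\mathbf{x}_{\mathrm{dR}}-b\mathbf{y}_{\mathrm{dR}} = (b\mathbf{z}_{\mathrm{dR}}^{-1}+d)\mathbf{x}_{\mathrm{dR}}$ and $-c\mathbf{x}_{\mathrm{dR}}+a\mathbf{y}_{\mathrm{dR}} = (c\mathbf{z}_{\mathrm{dR}}+a)\mathbf{y}_{\mathrm{dR}}$, which one plugs into the first equalities.

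The main obstacle, beyond the routine linear algebra, is the middle step for general $\gamma\in GL_2(\mathbb{Q}_p)\setminus GL_2(\mathbb{Z}_p)$: one must justify that $(\check{\phi}^{-1})^*\frak{s}$ expands in the basis $(\check{\phi}_*)^{-1}\alpha_{\infty,i}/t$ with the \emph{same} scalars $\mathbf{x}_{\mathrm{dR}}, \mathbf{y}_{\mathrm{dR}}, t$ appearing in the original expansion of $\frak{s}$. This amounts to the naturality of the universal construction of the de Rham periods and the cyclotomic period under the isogeny-induced identifications coming from $\phi$ and its dual $\check{\phi}$, which in turn follows from the functoriality of the Weil pairing (already used in the derivation of $\gamma^*t=(bc-ad)t$) and the naturality of the relative de Rham comparison isomorphism (\ref{comparison}). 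For $\gamma\in GL_2(\mathbb{Z}_p)$ this verification is trivial because $\phi=\mathrm{id}$, so the general case reduces to the $GL_2(\mathbb{Z}_p)$ case by the universal nature of the constructions.
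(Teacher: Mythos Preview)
Your proposal is correct and follows essentially the same approach as the paper: pull back the defining expansion $\frak{s}=\mathbf{x}_{\mathrm{dR}}\alpha_{\infty,1}/t+\mathbf{y}_{\mathrm{dR}}\alpha_{\infty,2}/t$ by $\gamma^*$, use the already-established transformation laws for $\frak{s}$, $\alpha_{\infty,i}$, and $t$, equate coefficients in the basis $(\check{\phi}_*)^{-1}\alpha_{\infty,i}$, and solve the resulting $2\times 2$ system. The only cosmetic difference is that the paper sets $X=\gamma^*\mathbf{x}_{\mathrm{dR}}$, $Y=\gamma^*\mathbf{y}_{\mathrm{dR}}$ and carries the $(bc-ad)^{-1}$ from $\gamma^*t$ separately, whereas you work directly with $\gamma^*(\mathbf{x}_{\mathrm{dR}}/t)$ and $\gamma^*(\mathbf{y}_{\mathrm{dR}}/t)$; your explicit remark on the isogeny-naturality step (that $(\check{\phi}^{-1})^*\frak{s}$ expands with the \emph{same} periods in the transported basis) is a point the paper uses but leaves implicit.
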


\begin{proof}Let 
$$X := \gamma^*\mathbf{x}_{\mathrm{dR}}, \hspace{2cm} Y := \gamma^*\mathbf{y}_{\mathrm{dR}}.$$
Then by definition, 
$$\frak{s} = HT_{\mathcal{A}}(\alpha_{\infty,2}) = \mathbf{x}_{\mathrm{dR}}\alpha_{\infty,1} t^{-1} + \mathbf{y}_{\mathrm{dR}}\alpha_{\infty,2} t^{-1}
$$
and so
\begin{align*}&\gamma^*\frak{s} \\
&= HT_{\mathcal{A}}\left(b\frac{(\check{\phi}_*)^{-1}(\alpha_{\infty,1})}{t} + d\frac{(\check{\phi}_*)^{-1}(\alpha_{\infty,2})}{t}\right) \\
&= (b\mathbf{z}^{-1}+d)\left(\mathbf{x}_{\mathrm{dR}}\frac{(\check{\phi}_*)^{-1}(\alpha_{\infty,1})}{t}+ \mathbf{y}_{\mathrm{dR}}\frac{(\check{\phi}_*)^{-1}(\alpha_{\infty,2})}{t}\right).
\end{align*}
From (\ref{stransformationprop}), we have
\begin{align*}&(b\mathbf{z}^{-1} + d)\left(\mathbf{x}_{\mathrm{dR}}\frac{(\check{\phi}_*)^{-1}(\alpha_{\infty,1})}{t} + \mathbf{y}_{\mathrm{dR}}\frac{(\check{\phi}_*)^{-1}(\alpha_{\infty,2})}{t}\right) = \gamma^*\frak{s} \\
&= X\left(a\frac{(\check{\phi}_*)^{-1}(\alpha_{\infty,1})}{t} + c\frac{(\check{\phi}_*)^{-1}(\alpha_{\infty,2})}{t}\right) (bc-ad)^{-1} \\
&+ Y\left(b\frac{(\check{\phi}_*)^{-1}(\alpha_{\infty,1})}{t} + d\frac{(\check{\phi}_*)^{-1}(\alpha_{\infty,2})}{t}\right)  (bc-ad)^{-1}\\
&= (aX + bY)(bc-ad)^{-1}\frac{(\check{\phi}_*)^{-1}(\alpha_{\infty,1})}{t} + (cX + dY)(bc-ad)^{-1}\frac{(\check{\phi}_*)^{-1}(\alpha_{\infty,2})}{t}
\end{align*}
which, by equating the coefficients of $\frac{(\check{\phi}_*)^{-1}(\alpha_{\infty,1})}{t^{-1}}$ and $\frac{(\check{\phi}_*)^{-1}(\alpha_{\infty,2})}{t^{-1}}$, implies that 
$$X = -(d\mathbf{x}_{\mathrm{dR}} - b\mathbf{y}_{\mathrm{dR}})(b\mathbf{z}^{-1} + d), \hspace{2cm} Y = -(-c\mathbf{x}_{\mathrm{dR}} + a\mathbf{y}_{\mathrm{dR}})(b\mathbf{z}^{-1} + d)$$
as desired.
\end{proof}

\subsection{The $p$-adic Legendre relation}

\begin{proposition}\label{compositionproposition}
The composition
$$HT_{\mathcal{A}}\circ \iota_{\mathrm{dR}} : \omega_{\mathcal{A}}\otimes_{\mathcal{O}_{Y}}\mathcal{O}\mathbb{B}_{\mathrm{dR},\mathcal{Y}}^+\cdot t \rightarrow \omega_{\mathcal{A}}\otimes_{\mathcal{O}_{Y}}\mathcal{O}\mathbb{B}_{\mathrm{dR},\mathcal{Y}}^+.$$
is the natural inclusion.
\end{proposition}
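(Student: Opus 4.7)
\emph{Proof proposal.} The plan is to work on $\mathcal{Y}_x$, where the Hodge bundle $\omega_\mathcal{A}$ is trivialized by the fake Hasse invariant $\frak{s} = HT_\mathcal{A}(\alpha_{\infty,2})$ (the case of $\mathcal{Y}_y$ will follow by a symmetric computation). Since both $HT_\mathcal{A}\circ \iota_{\mathrm{dR}}$ and the natural inclusion are $\mathcal{O}\mathbb{B}_{\mathrm{dR}}^+$-linear maps of free rank-one modules generated by $\frak{s}\cdot t$, it suffices to verify their agreement on this generator. By (\ref{x'y'definition}), $\iota_{\mathrm{dR}}(\frak{s}\cdot t) = \mathbf{x}_{\mathrm{dR}}\alpha_{\infty,1} + \mathbf{y}_{\mathrm{dR}}\alpha_{\infty,2}$ in $T_p\mathcal{A}\otimes \mathcal{O}\mathbb{B}_{\mathrm{dR}}^+$, and applying $HT_\mathcal{A}$ together with $HT_\mathcal{A}(\alpha_{\infty,1}) = \mathbf{z}^{-1}\frak{s}$, $HT_\mathcal{A}(\alpha_{\infty,2}) = \frak{s}$ from (\ref{relation}) yields
$$(HT_\mathcal{A}\circ \iota_{\mathrm{dR}})(\frak{s}\cdot t) = \left(\frac{\mathbf{x}_{\mathrm{dR}}}{\mathbf{z}} + \mathbf{y}_{\mathrm{dR}}\right)\frak{s}.$$
Comparing with the natural inclusion value $t\frak{s}$, the proposition is equivalent to the \emph{$p$-adic Legendre relation}
\begin{equation}\label{pleg}
\frac{\mathbf{x}_{\mathrm{dR}}}{\mathbf{z}} + \mathbf{y}_{\mathrm{dR}} = t
\end{equation}
on $\mathcal{Y}_x$ (with the analogous statement holding on $\mathcal{Y}_y$ by symmetry).

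To prove (\ref{pleg}), the plan is to exploit the compatibility of $\iota_{\mathrm{dR}}$ with the pairings arising from the universal principal polarization $\mathcal{A}\cong \check{\mathcal{A}}$: the Poincar\'{e} duality pairing $\langle\cdot,\cdot\rangle_P$ on $\mathcal{H}_{\mathrm{dR}}^1(\mathcal{A})$, valued in $\mathcal{O}_Y$, and the Weil pairing $\langle\cdot,\cdot\rangle_W$ on $T_p\mathcal{A}$, valued in $\hat{\mathbb{Z}}_{p,Y}(1) = \hat{\mathbb{Z}}_{p,Y}\cdot t$ with $\langle \alpha_{\infty,2}, \alpha_{\infty,1}\rangle_W = t$ by the definition (\ref{generator}). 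Under the extension of $\iota_{\mathrm{dR}}$ to $\mathcal{O}\mathbb{B}_{\mathrm{dR}}$-coefficients these pairings are identified up to the canonical Tate twist. Choose a local section $\eta \in \mathcal{H}_{\mathrm{dR}}^1\otimes \mathcal{O}\mathbb{B}_{\mathrm{dR}}^+$ lifting a generator of the Hodge quotient $\omega_\mathcal{A}^{-1}$ normalized so that $\langle \frak{s}, \eta\rangle_P = 1$, and write $\iota_{\mathrm{dR}}(\eta\cdot t) = \mathbf{p}\alpha_{\infty,1} + \mathbf{q}\alpha_{\infty,2}$. Pairing compatibility translates $\langle \frak{s}\cdot t, \eta\cdot t\rangle_P = t^2$ into the Weil-side identity $(\mathbf{x}_{\mathrm{dR}}\mathbf{q} - \mathbf{y}_{\mathrm{dR}}\mathbf{p})\cdot t = t^2$, giving $\mathbf{x}_{\mathrm{dR}}\mathbf{q} - \mathbf{y}_{\mathrm{dR}}\mathbf{p} = t$. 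The additional constraint that $\iota_{\mathrm{dR}}(\eta)$ reduces modulo $\ker\theta$ into the Hodge-Tate sub $\omega_\mathcal{A}^{-1}\otimes \hat{\mathcal{O}}$ (by the Caraiani-Scholze compatibility recalled before the definition of $\mathbf{x}_{\mathrm{dR}}, \mathbf{y}_{\mathrm{dR}}$, applied to $\eta$) pins down $\mathbf{p}, \mathbf{q}$ in terms of $\mathbf{z}$, and together with the previous identity extracts (\ref{pleg}).

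An alternative, conceptually cleaner route is a stalk-wise reduction: at a geometric point $(A,\alpha) \in \mathcal{Y}_x(\mathbb{C}_p, \mathcal{O}_{\mathbb{C}_p})$, the specializations of $\mathbf{x}_{\mathrm{dR}}, \mathbf{y}_{\mathrm{dR}}, \mathbf{z}$ recover classical de Rham and Hodge-Tate periods of the elliptic curve $A$, and (\ref{pleg}) becomes the familiar classical $p$-adic Legendre period relation between de Rham and \'{e}tale periods of an elliptic curve, established in the work of Colmez and Fontaine. Since the pro\'{e}tale site $Y_{\text{pro\'{e}t}}$ has enough points given by profinite covers of geometric points (as invoked in Proposition \ref{HTarrows}), the pointwise identity promotes to the sheaf identity on $\mathcal{Y}_x$. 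The main technical obstacle in the direct approach is bookkeeping the various Tate twists by $t$ in the compatibility between Poincar\'{e} and Weil pairings under $\iota_{\mathrm{dR}}$, which the stalk-wise reduction elegantly avoids.
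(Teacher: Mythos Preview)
Your reduction of the proposition on $\mathcal{Y}_x$ to the $p$-adic Legendre relation is correct; the paper records exactly this equivalence as Corollary \ref{Legendretheorem}, but in the reverse logical direction --- there the Legendre relation is \emph{deduced from} the proposition, not used to prove it.

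Your ``alternative route'' (b) is essentially the paper's approach, though you leave the substance unwritten. The paper reduces to geometric stalks (invoking enough points), identifies the stalk of $\mathcal{O}\mathbb{B}_{\mathrm{dR}}^+$ with $B_{\mathrm{dR}}^+\llbracket X\rrbracket$ via Proposition \ref{localdescription}, and shows that $\iota_{\mathrm{dR},y}$ is the $B_{\mathrm{dR}}^+\llbracket X\rrbracket$-linear extension of Colmez's comparison, described concretely via the $p$-adic period pairing $\langle\omega,\gamma\rangle_p = \lim_n p^n\int_{\gamma_n}\omega$. The verification that $HT_A\circ\iota_{\mathrm{dR},y} = \mathrm{id}$ is then a direct computation using a ``pullback formula'' coming from the self-duality of the Hodge--Tate sequence under the Weil pairing. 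Your phrase ``the familiar classical $p$-adic Legendre period relation'' gestures at this circle of ideas but does not carry it out; the paper does not cite a ready-made Legendre identity but rather verifies the composition directly at stalks.

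Your primary approach (a), however, has genuine gaps. First, within the paper's framework the compatibility of $\langle\cdot,\cdot\rangle_{\mathrm{Poin}}$ with $\langle\cdot,\cdot\rangle$ under $\iota_{\mathrm{dR}}$ is Proposition \ref{Weilextension}, whose proof \emph{uses} the present proposition (through Corollary \ref{WeilPoincarecomparison}); so you would need an independent source for pairing compatibility to avoid circularity. Second, the step ``pins down $\mathbf{p}, \mathbf{q}$ in terms of $\mathbf{z}$'' does not work as stated. The Caraiani--Scholze remark you invoke concerns the Hodge piece $\omega_{\mathcal{A}}$ (it gives $\theta(\mathbf{x}_{\mathrm{dR}}) = \theta(\mathbf{y}_{\mathrm{dR}}) = 0$), not an arbitrary lift $\eta$ of $\omega_{\mathcal{A}}^{-1}$; and even granting some constraint on $(\theta(\mathbf{p}),\theta(\mathbf{q}))$ proportional to $(1,-1/\mathbf{z})$, applying $\theta$ to your determinantal relation $\mathbf{x}_{\mathrm{dR}}\mathbf{q} - \mathbf{y}_{\mathrm{dR}}\mathbf{p} = t$ yields only $0 = 0$, since $\mathbf{x}_{\mathrm{dR}}, \mathbf{y}_{\mathrm{dR}}, t \in \ker\theta$. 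The two pieces of data you have assembled do not suffice to extract the Legendre relation, and you have not indicated what further input would close the argument.
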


\begin{proof}We are considering the composition
\begin{equation}\label{composition}\omega_{\mathcal{A}} \otimes_{\mathcal{O}_{\mathcal{Y}}} \mathcal{O}\mathbb{B}_{\mathrm{dR},\mathcal{Y}}^+\cdot t \subset \mathcal{H}_{\mathrm{dR}}^1(\mathcal{A})\otimes_{\mathcal{O}_Y}\mathcal{O}\mathbb{B}_{\mathrm{dR},\mathcal{Y}}^+\cdot t \overset{\iota_{\mathrm{dR}}}{\hookrightarrow} T_p\mathcal{A} \otimes_{\hat{\mathbb{Z}}_{p,Y}}\mathcal{O}\mathbb{B}_{\mathrm{dR},\mathcal{Y}}^+ \overset{HT_{\mathcal{A}}}{\twoheadrightarrow} \omega_{\mathcal{A}}\otimes_{\mathcal{O}_{Y}}\mathcal{O}\mathbb{B}_{\mathrm{dR},\mathcal{Y}}^+.
\end{equation}
Since $Y_{\text{pro\'{e}t}}/\mathcal{Y}$ has enough points given by profinite covers of geometric points, it suffices to check that the above map is the natural inclusion on stalks at profinite covers of geometric points. For all sheaves appearing in (\ref{composition}), the stalk at a profinite cover of a geometric point is just a profinite direct product of stalks at the base geometric point, so we are reduced to checking that (\ref{composition}) is the natural inclusion on stalks at geometric points. By Proposition \ref{localdescription}, the stalk of $\mathcal{O}\mathbb{B}_{\mathrm{dR},\mathcal{Y}}^+$ (on $Y_{\text{pro\'{e}t}}/\mathcal{Y}$) at a geometric point is isomorphic to $B_{\mathrm{dR}}^+\llbracket X\rrbracket $ for some indeterminate $X$, where $B_{\mathrm{dR}}^+$ is Fontaine's de Rham period ring. Taking the stalk (in the pro\'{e}tale site) of (\ref{composition}) at a geometric point $y = (A,\alpha) \in \mathcal{Y}(\overline{\mathbb{Q}}_p)$ (which by Proposition \ref{localdescription} also gives the data of a choice of embedding $\overline{\mathbb{Q}}_p \hookrightarrow B_{\mathrm{dR}}^+\llbracket X \rrbracket$ which modulo $X$ is equal to the natural embedding $\overline{\mathbb{Q}}_p \subset B_{\mathrm{dR}}^+$), we get
\begin{equation}\label{stalkcomposition}\begin{split}\omega_{\mathcal{A},y} \otimes_{\mathcal{O}_{Y,y}} B_{\mathrm{dR}}^+\llbracket X\rrbracket \cdot t \subset \mathcal{H}_{\mathrm{dR}}^1(\mathcal{A})_y\otimes_{\mathcal{O}_{Y,y}}\mathbb{B}_{\mathrm{dR}}^+\llbracket X\rrbracket \cdot t &\overset{\iota_{\mathrm{dR},y}}{\hookrightarrow} T_pA\otimes_{\mathbb{Z}_p}B_{\mathrm{dR}}^+\llbracket X\rrbracket \\
&\overset{HT_A}{\twoheadrightarrow} \omega_{\mathcal{A},y}\otimes_{\mathcal{O}_{Y,y}} B_{\mathrm{dR}}^+\llbracket X\rrbracket.
\end{split}
\end{equation}
Taking stalks at $y = (A,t)$ of the inclusion (\ref{latticeinclusion}), we get that there is a $B_{\mathrm{dR}}^+$-lattice 
\begin{equation}\label{stalk}M_0 \subset \mathcal{H}_{\mathrm{dR}}^1(\mathcal{A})_y\otimes_{\mathcal{O}_{Y,y}} B_{\mathrm{dR}}^+\llbracket X\rrbracket
\end{equation}
of rank 2 such that 
\begin{equation}\label{naturalinclusion}M_0\otimes_{B_{\mathrm{dR}}^+,i}B_{\mathrm{dR}}^+\llbracket X\rrbracket = \mathcal{H}_{\mathrm{dR}}^1(\mathcal{A})_y\otimes_{\mathcal{O}_{Y,y}} B_{\mathrm{dR}}^+\llbracket X\rrbracket,
\end{equation}
where $i : B_{\mathrm{dR}}^+\subset B_{\mathrm{dR}}^+\llbracket X\rrbracket$ is the natural inclusion, and also by (\ref{latticeinclusion}) we have
\begin{equation}\label{fiberinclusion}M_0 \subset T_pA\otimes_{\mathbb{Z}_p}B_{\mathrm{dR}}^+.
\end{equation}
Colmez has shown (\cite{Colmez}) that the $p$-adic de Rham comparison theorem
\begin{equation}\label{Colmezinclusion}H_{\mathrm{dR}}^1(A/\overline{\mathbb{Q}}_p)\otimes_{\overline{\mathbb{Q}}_p} B_{\mathrm{dR}}^+\cdot t \overset{\iota_0}{\hookrightarrow} T_pA\otimes_{\mathbb{Z}_p} B_{\mathrm{dR}}^+
\end{equation}
is induced by the inclusion 
\begin{align*}H_{\mathrm{dR}}^1(A/\overline{\mathbb{Q}}_p)\otimes_{\overline{\mathbb{Q}}_p} B_{\mathrm{dR}}^+ \subset \Hom_{\mathrm{B}_{\mathrm{dR}}^+}(T_pA\otimes_{\mathbb{Z}_p}B_{\mathrm{dR}}^+,B_{\mathrm{dR}}^+)& \cong H_{\text{\'{e}t}}^1(A,\mathbb{Z}_p) \otimes_{\mathbb{Z}_p} B_{\mathrm{dR}}^+ \\
&\cong T_pA\otimes_{\mathbb{Z}_p}B_{\mathrm{dR}}^+\cdot t^{-1}
\end{align*}
(where the last isomorphism is given by the Weil pairing (\ref{dualisomorphism})) induced by the non-degenerate $B_{\mathrm{dR}}^+$-bilinear ``$p$-adic Poincar\'{e} pairing''
\begin{equation}\label{padicperiodpairing}\langle\cdot,\cdot\rangle_p : (H_{\mathrm{dR}}^1(A/\overline{\mathbb{Q}}_p)\otimes_{\overline{\mathbb{Q}}_p}B_{\mathrm{dR}}^+\cdot t)\times(T_pA\otimes_{\mathbb{Z}_p}
B_{\mathrm{dR}}^+) \rightarrow B_{\mathrm{dR}}^+\cdot t
\end{equation}
defined as
$$\langle\omega,\gamma\rangle_p = \lim_{n \rightarrow \infty}p^n\int_{\gamma_n}\omega$$
where $\gamma = (\gamma_n) \in T_pA\otimes_{\mathbb{Z}_p}B_{\mathrm{dR}}^+$, and here by definition
$$\int_{\gamma_n}\omega := \int_{x_n}^{y_n}\omega$$
where $(x_n), (y_n)$ are any sequences ``born\'{e}es'' (see \cite[]{Colmez}) of elements of $A(B_{\mathrm{dR}}^+)$ such that $\theta(y_n - x_n) = \gamma_n$ for all $n$.

We claim that $\iota_{\mathrm{dR},y}$ in (\ref{stalkcomposition}) is given by the $\otimes_{B_{\mathrm{dR}}^+,i}B_{\mathrm{dR}}^+\llbracket X\rrbracket$-linear extension of the $B_{\mathrm{dR}}^+$-linear map $\iota_0$ in (\ref{Colmezinclusion}) restricted to $\Omega_{A/\overline{\mathbb{Q}}_p}^1\otimes_{\overline{\mathbb{Q}}_p}B_{\mathrm{dR}}^+\cdot t$. The claim is verified easily after noting the following observations: The natural inclusion $\mathbb{Z}_p \subset B_{\mathrm{dR}}^+\llbracket X\rrbracket$ factors through $\mathbb{Z}_p \subset B_{\mathrm{dR}}^+$, and so 
$$T_pA\otimes_{\mathbb{Z}_p}B_{\mathrm{dR}}^+\llbracket X\rrbracket = T_pA\otimes_{\mathbb{Z}_p}B_{\mathrm{dR}}^+\otimes_{B_{\mathrm{dR}}^+,i}B_{\mathrm{dR}}^+\llbracket X\rrbracket,$$
and (\ref{fiberinclusion}) is given by $\iota_0$ (since the relative comparison theorem specializes to the absolute comparison theorem; in fact, by the description of $\mathrm{gr}^0\mathbb{M}_0$ in \cite[p.45]{Scholze}, taking the stalk at $y$ of $\mathrm{gr}^0\mathbb{M}_0$ on $\mathcal{Y}$, we have that $M_0 = H_{\mathrm{dR}}^1(A/\overline{\mathbb{Q}}_p)\otimes_{\overline{\mathbb{Q}}_p}B_{\mathrm{dR}}^+$). Now the claim follows from (\ref{naturalinclusion}). 

Using the description of $\iota_{\mathrm{dR},y}$ given by our claim, we can now verify that (\ref{stalkcomposition}) is the natural inclusion. Let $w\in \omega_{\mathcal{A},y}\otimes_{\mathcal{O}_{Y,y}}B_{\mathrm{dR}}^+\llbracket X\rrbracket\cdot t$ and $\iota_{\mathrm{dR},y}(w) = x\alpha_1 + y\alpha_2$. Then 
$$HT_A(\iota_{\mathrm{dR},y}(w)) = x\alpha_1^*\frac{dT}{T} + y\alpha_2^*\frac{dT}{T}.$$
Since the Hodge-Tate exact sequence (\ref{absHTexactsequence}) is self-dual under the Weil pairing (\ref{Weil}) (see \cite[Proposition 4.10]{Scholze3}), we have the following ``pullback formula'' : for any $\gamma = (\gamma_n) \in \omega_{\mathcal{A},y}^{-1}\otimes_{\mathcal{O}_{Y,y}}B_{\mathrm{dR}}^+\llbracket X\rrbracket\cdot t \subset  T_pA \otimes_{\mathbb{Z}_p}B_{\mathrm{dR}}^+\llbracket X\rrbracket$, and any $\beta = (\beta_n) \in T_pA \cong \Hom_{\mathbb{Z}_p}(T_pA,\mathbb{Z}_p\cdot t)$ (using (\ref{dualisomorphism})), we have
$$\left\langle \beta^*\frac{dT}{T},\gamma\right\rangle_p = \lim_{n\rightarrow \infty}p^n\int_{\gamma_n}\beta^*\frac{dT}{T} = \lim_{n\rightarrow \infty}p^n\int_{\beta_n(\gamma_n)}\frac{dT}{T} = \langle \beta,\gamma\rangle$$
where the last equality holds since 
$$t = \lim_{n \rightarrow\infty}p^n\int_{\alpha_{1,n}}\alpha_2^*\frac{dT}{T} = \log[(\langle\alpha_{2,1},\alpha_{1,1}\rangle_1,\langle\alpha_{2,2},\alpha_{1,2}\rangle_2,\ldots\rangle)]$$
by the definition of $t$ (\ref{generator}). 

Now by direct calculation, we have
\begin{equation}\begin{split}\langle HT_A(\iota_{\mathrm{dR},y}(w)),\gamma\rangle_p = \left\langle x\alpha_1^*\frac{dT}{T} + y\alpha_2^*\frac{dT}{T},\gamma\right\rangle_p 
= \langle x\alpha_1 + y\alpha_2,\gamma\rangle &= \langle \iota_{\mathrm{dR},y}(w),\gamma\rangle \\
&= \langle w,\gamma\rangle_p
\end{split}
\end{equation}
where the last equality again follows from the self-duality of (\ref{absHTexactsequence}) under the Weil pairing (\ref{Weil}). Hence by the nondegeneracy of (\ref{padicperiodpairing}) we have $HT_A(\iota_{\mathrm{dR}},y(w)) = w$, as desired.
\end{proof}

We have the following ``$p$-adic Legendre'' relation of periods in $\mathcal{O}\mathbb{B}_{\mathrm{dR}}^+(\mathcal{Y}_x)$ associated with the fake Hasse invariant $\frak{s}$.

\begin{corollary}\label{Legendretheorem}We have the following identity of sections of $\mathcal{O}\mathbb{B}_{\mathrm{dR},Y}^+(\mathcal{Y}_x)$:
\begin{equation}\label{periodsrelation}\mathbf{x}_{\mathrm{dR}}/\mathbf{z} + \mathbf{y}_{\mathrm{dR}} = t
\end{equation}
where $\mathbf{z}$ is the Hodge-Tate period and $\mathbf{x}_{\mathrm{dR}}, \mathbf{y}_{\mathrm{dR}}, t \in \mathcal{O}\mathbb{B}_{\mathrm{dR}}^+(\mathcal{Y})$ are defined in Section \ref{deRhamperiod}.
\end{corollary}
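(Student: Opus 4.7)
The entire identity will follow from Proposition \ref{compositionproposition} by a short computation: the plan is to apply $HT_{\mathcal{A}}$ to both sides of the defining expression (\ref{x'y'definition}) of $\frak{s}$ after clearing the factor of $t^{-1}$, and then use the self-map-is-inclusion statement to read off (\ref{periodsrelation}).

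First I would rewrite (\ref{x'y'definition}) as $t\,\frak{s} = \mathbf{x}_{\mathrm{dR}}\,\alpha_{\infty,1} + \mathbf{y}_{\mathrm{dR}}\,\alpha_{\infty,2}$, which now lies in
$$(\omega_{\mathcal{A}}\otimes_{\mathcal{O}_Y}\mathcal{O}\mathbb{B}_{\mathrm{dR},\mathcal{Y}}^+\cdot t)\;\overset{\iota_{\mathrm{dR}}}{\hookrightarrow}\;T_p\mathcal{A}\otimes_{\hat{\mathbb{Z}}_{p,Y}}\mathcal{O}\mathbb{B}_{\mathrm{dR},\mathcal{Y}}^+,$$
the right-hand side being identified with $\mathcal{O}\mathbb{B}_{\mathrm{dR},\mathcal{Y}}^{+,\oplus 2}$ via $\alpha_\infty^{-1}$. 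Applying $HT_{\mathcal{A}}$ (extended $\mathcal{O}\mathbb{B}_{\mathrm{dR},\mathcal{Y}}^+$-linearly) to the right-hand expression gives
$$HT_{\mathcal{A}}(\iota_{\mathrm{dR}}(t\,\frak{s})) = \mathbf{x}_{\mathrm{dR}}\,HT_{\mathcal{A}}(\alpha_{\infty,1}) + \mathbf{y}_{\mathrm{dR}}\,HT_{\mathcal{A}}(\alpha_{\infty,2}).$$

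Next I would use the defining relation of the Hodge-Tate period (\ref{relation}). On the locus $\mathcal{Y}_x = \{\frak{s}\neq 0\}$, the section $\frak{s}=HT_{\mathcal{A}}(\alpha_{\infty,2})$ is a generator of $\omega_{\mathcal{A}}|_{\mathcal{Y}_x}$, and $1/\mathbf{z} := HT_{\mathcal{A}}(\alpha_{\infty,1})/HT_{\mathcal{A}}(\alpha_{\infty,2}) \in \mathcal{O}_{\mathcal{Y}}(\mathcal{Y}_x)$ is well-defined (this is exactly the reading of $\mathbf{z}$ on the chart $\mathcal{Y}_x$ coming from the Hodge-Tate period map). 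Substituting $HT_{\mathcal{A}}(\alpha_{\infty,1}) = \mathbf{z}^{-1}\frak{s}$ and $HT_{\mathcal{A}}(\alpha_{\infty,2})=\frak{s}$ yields
$$HT_{\mathcal{A}}(\iota_{\mathrm{dR}}(t\,\frak{s})) = \left(\frac{\mathbf{x}_{\mathrm{dR}}}{\mathbf{z}} + \mathbf{y}_{\mathrm{dR}}\right)\frak{s}.$$

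Finally, Proposition \ref{compositionproposition} identifies $HT_{\mathcal{A}}\circ\iota_{\mathrm{dR}}$ with the natural inclusion on $\omega_{\mathcal{A}}\otimes_{\mathcal{O}_Y}\mathcal{O}\mathbb{B}_{\mathrm{dR},\mathcal{Y}}^+\cdot t$, so the same section evaluated the ``other way'' is simply $t\,\frak{s}$. Equating gives $(\mathbf{x}_{\mathrm{dR}}/\mathbf{z}+\mathbf{y}_{\mathrm{dR}})\frak{s}=t\,\frak{s}$ on $\mathcal{Y}_x$, and dividing by the generator $\frak{s}$ (permissible over $\mathcal{Y}_x$) produces the Legendre relation (\ref{periodsrelation}). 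The only potentially tricky step is justifying the substitution $HT_{\mathcal{A}}(\alpha_{\infty,1})=\mathbf{z}^{-1}\frak{s}$ on the whole of $\mathcal{Y}_x$ rather than just on $\mathcal{Y}_x\cap\mathcal{Y}_y$, but this is immediate from the construction of $\mathbf{z}$ via pullback from $\mathbb{P}^1$ in Section \ref{HTsection}: on $\mathcal{Y}_x$ the coordinate $1/\mathbf{z}=\pi_{\mathrm{HT}}^*(x/(-y))$ is a global rigid function, so the identity $HT_{\mathcal{A}}(\alpha_{\infty,1})=(1/\mathbf{z})\frak{s}$ holds on all of $\mathcal{Y}_x$. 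Hence the argument is essentially a one-line consequence of Proposition \ref{compositionproposition}, with no real obstacle beyond unwinding the identifications.
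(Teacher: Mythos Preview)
Your proof is correct and follows essentially the same approach as the paper: both apply Proposition~\ref{compositionproposition} to $t\cdot\frak{s}$, compute $HT_{\mathcal{A}}(\iota_{\mathrm{dR}}(t\cdot\frak{s}))$ using the relation $HT_{\mathcal{A}}(\alpha_{\infty,1}) = \mathbf{z}^{-1}\frak{s}$, and then cancel the generator $\frak{s}$ on $\mathcal{Y}_x$. Your added remark justifying that $1/\mathbf{z}$ is globally defined on $\mathcal{Y}_x$ is a nice clarification the paper leaves implicit.
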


\begin{proof}Recall that by definition,
$$\iota_{\mathrm{dR}}(t\cdot\frak{s}) = \mathbf{x}_{\mathrm{dR}}\alpha_{\infty,1} + \mathbf{y}_{\mathrm{dR}}\alpha_{\infty,2}.$$
Applying Proposition \ref{compositionproposition} to $\frak{s}\cdot t \in \omega_{\mathcal{A}}(\mathcal{Y}_x)\cdot t = \omega_{\mathcal{A}}\cdot t(\mathcal{Y}_x)$ (the last equality following since $t \in \mathbb{B}_{\mathrm{dR}}^+(\mathcal{Y})$ is a global section), we see that
$$t\cdot\frak{s} = HT_{\mathcal{A}}(\iota_{\mathrm{dR}}(t\cdot\frak{s})) = \mathbf{x}_{\mathrm{dR}}/\mathbf{z}\cdot \frak{s} + \mathbf{y}_{\mathrm{dR}}\cdot\frak{s} = (\mathbf{x}_{\mathrm{dR}}/\mathbf{z} + \mathbf{y}_{\mathrm{dR}})\cdot \frak{s}$$
and the desired identity follows. 
\end{proof}

\begin{corollary}\label{WeilPoincarecomparison}For any section $w$ of $\omega_{\mathcal{A}}|_{\mathcal{Y}}$ and section $\gamma$ of $T_p\mathcal{A}|_{\mathcal{Y}}$, we have
$$\langle w,\gamma\rangle_p = \langle \iota_{\mathrm{dR}}(w),\gamma \rangle.$$
\end{corollary}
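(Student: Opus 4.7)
The plan is to reduce the identity to stalks at geometric points. Since the site $Y_{\text{pro\'{e}t}}/\mathcal{Y}$ has enough points given by profinite covers of geometric points—and both sides of the claimed equality are $\mathcal{O}\mathbb{B}_{\mathrm{dR},\mathcal{Y}}^+$-bilinear in $(w,\gamma)$—I would first note that it suffices to verify the identity stalk-wise at a geometric point $y = (A,\alpha) \in \mathcal{Y}(\overline{\mathbb{Q}}_p)$. By Proposition \ref{localdescription}, the stalk of $\mathcal{O}\mathbb{B}_{\mathrm{dR},\mathcal{Y}}^+$ at $y$ is $B_{\mathrm{dR}}^+\llbracket X \rrbracket$, and the relative map $\iota_{\mathrm{dR}}$ there specializes to the $B_{\mathrm{dR}}^+\llbracket X\rrbracket$-linear extension of Colmez's absolute de Rham comparison map $\iota_0$ recalled in the proof of Proposition \ref{compositionproposition}.

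With this reduction in hand, the argument is essentially tautological from the construction of $\iota_0$. As recalled in Proposition \ref{compositionproposition}, Colmez has shown that $\iota_0 : H_{\mathrm{dR}}^1(A/\overline{\mathbb{Q}}_p) \otimes B_{\mathrm{dR}}^+ \cdot t \hookrightarrow T_pA \otimes B_{\mathrm{dR}}^+$ is precisely the map adjoint to the $p$-adic Poincar\'{e} pairing $\langle\cdot,\cdot\rangle_p$ under the Weil-pairing identification $T_pA \otimes B_{\mathrm{dR}}^+\cdot t^{-1} \cong \Hom_{B_{\mathrm{dR}}^+}(T_pA \otimes B_{\mathrm{dR}}^+, B_{\mathrm{dR}}^+)$, which sends $\beta$ to the functional $\gamma \mapsto \langle \beta,\gamma\rangle$. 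Unpacking, the statement that $\iota_0$ is induced by $\langle\cdot,\cdot\rangle_p$ says exactly that $\langle \iota_0(w),\gamma\rangle = \langle w,\gamma\rangle_p$ for all $w$ and $\gamma$, which is the claim at the stalk. I would then extend $B_{\mathrm{dR}}^+\llbracket X\rrbracket$-linearly and reassemble across geometric stalks to obtain the identity globally on $\mathcal{Y}$.

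The main obstacle, which is really more a matter of careful bookkeeping than of a new idea, is tracking the placement of the $t$-factor and the Tate twist. In particular, in the corollary I would interpret $\iota_{\mathrm{dR}}(w)$ as shorthand for the image of $w\cdot t$ under $\iota_{\mathrm{dR}} : \omega_{\mathcal{A}}\otimes_{\mathcal{O}_Y}\mathcal{O}\mathbb{B}_{\mathrm{dR},\mathcal{Y}}^+\cdot t \hookrightarrow T_p\mathcal{A}\otimes_{\hat{\mathbb{Z}}_{p,Y}}\mathcal{O}\mathbb{B}_{\mathrm{dR},\mathcal{Y}}^+$, so that the Weil pairing of this image against $\gamma$ lands (after the identification $\hat{\mathbb{Z}}_{p,\mathcal{Y}}(1) \cong \hat{\mathbb{Z}}_{p,\mathcal{Y}}\cdot t$) in $\mathcal{O}\mathbb{B}_{\mathrm{dR},\mathcal{Y}}^+\cdot t$, matching the target of $\langle\cdot,\cdot\rangle_p$. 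This is the same convention already in force in Section \ref{deRhamperiod} and in the proof of Proposition \ref{compositionproposition}; with it fixed, the preceding paragraph gives the result, and no new argument beyond what was needed for Proposition \ref{compositionproposition} is required.
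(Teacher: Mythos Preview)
Your approach is correct and is a genuinely different route from the paper's. The paper does not reduce to stalks again; instead it works globally, invoking Proposition~\ref{compositionproposition} to write $w = HT_{\mathcal{A}}(\iota_{\mathrm{dR}}(w))$, expanding $\iota_{\mathrm{dR}}(w) = x\,\alpha_{\infty,1}/t + y\,\alpha_{\infty,2}/t$, and then applying the pullback formula $\langle \beta^*\frac{dT}{T},\gamma\rangle_p = \langle \beta,\gamma\rangle$ (established inside the proof of Proposition~\ref{compositionproposition}) term by term. Your argument instead goes straight to Colmez's description of $\iota_0$ as the map adjoint to $\langle\cdot,\cdot\rangle_p$ under the Weil-pairing identification, from which the identity is tautological at each geometric stalk; you then globalize. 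Your route is shorter and more conceptual, and it makes transparent why the corollary holds: it is essentially the defining property of $\iota_0$. The paper's route has the virtue of verifying the normalizations concretely via an explicit basis computation, which guards against the very sign and $t$-twist ambiguities you flag in your last paragraph. Both proofs ultimately rest on the same input from the proof of Proposition~\ref{compositionproposition}, namely the identification of $\iota_{\mathrm{dR},y}$ with the $B_{\mathrm{dR}}^+\llbracket X\rrbracket$-linear extension of Colmez's $\iota_0$.
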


\begin{proof}By Proposition \ref{compositionproposition}, we have $HT_{\mathcal{A}}\circ\iota_{\mathrm{dR}}(w) = w$, and so letting $\iota_{\mathrm{dR}}(w) = x\frac{\alpha_{\infty,1}}{t} + y\frac{\alpha_{\infty,2}}{t}$, we have
\begin{equation}\begin{split}&\langle w,\gamma\rangle_p \\
&= \langle HT_{\mathcal{A}}(\iota_{\mathrm{dR}}(w)),\gamma\rangle_p = \left\langle HT_{\mathcal{A}}\left(x\frac{\alpha_{\infty,1}}{t}+y\frac{\alpha_{\infty,2}}{t}\right),\gamma\right\rangle_p \\
&= x\lim_{n \rightarrow \infty}p^n\int_{\gamma_n}\left(\frac{\alpha_{\infty,1}}{t}\right)^*\frac{dT}{T} + y\lim_{n \rightarrow \infty}p^n\int_{\gamma_n}\left(\frac{\alpha_{\infty,2}}{t}\right)^*\frac{dT}{T}\\
&= x\left\langle \frac{\alpha_{\infty,1}}{t},\gamma\right\rangle + y\left\langle \frac{\alpha_{\infty,2}}{t},\gamma\right\rangle= \left\langle x\frac{\alpha_{\infty,1}}{t} + y\frac{\alpha_{\infty,2}}{t},\gamma\right\rangle = \langle \iota_{\mathrm{dR}}(w),\gamma\rangle.
\end{split}
\end{equation}
\end{proof}

\begin{corollary}\label{quotientsection}The map 
$$(HT_{\mathcal{A}})^{\vee} : \omega_{\mathcal{A}}^{-1}\otimes_{\mathcal{O}_Y}\mathcal{O}\mathbb{B}_{\mathrm{dR},\mathcal{Y}}^+ \hookrightarrow T_p\mathcal{A}\otimes_{\hat{\mathbb{Z}}_{p,Y}}\mathcal{O}\mathbb{B}_{\mathrm{dR},\mathcal{Y}}^+\cdot t^{-1}$$
is a section for the natural projection
$$T_p\mathcal{A}\otimes_{\hat{\mathbb{Z}}_{p,Y}}\mathcal{O}\mathbb{B}_{\mathrm{dR},\mathcal{Y}}^+\cdot t^{-1} \twoheadrightarrow (T_p\mathcal{A}\otimes_{\hat{\mathbb{Z}}_{p,Y}}\mathcal{O}\mathbb{B}_{\mathrm{dR},\mathcal{Y}}^+\cdot t^{-1})/\iota_{\mathrm{dR}}(\omega_{\mathcal{A}}\otimes_{\mathcal{O}_Y}\mathcal{O}\mathbb{B}_{\mathrm{dR},\mathcal{Y}}^+).$$
\end{corollary}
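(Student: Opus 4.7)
My plan is to deduce the corollary by combining the relative Hodge--Tate exact sequence with Proposition \ref{compositionproposition}, and then using the $p$-adic Legendre relation of Corollary \ref{Legendretheorem} to describe the quotient explicitly. First, I would twist the sequence (\ref{HTinclusion3}) from Proposition \ref{HTarrows} by $\hat{\mathbb{Z}}_{p,Y}(-1)\cong\hat{\mathbb{Z}}_{p,Y}\cdot t^{-1}$ (using (\ref{etaleTateisomorphism})) to get the short exact sequence
\[
0\to\omega_{\mathcal{A}}^{-1}\otimes_{\mathcal{O}_Y}\mathcal{O}\mathbb{B}_{\mathrm{dR},\mathcal{Y}}^+\xrightarrow{(HT_{\mathcal{A}})^{\vee}} T_p\mathcal{A}\otimes_{\hat{\mathbb{Z}}_{p,Y}}\mathcal{O}\mathbb{B}_{\mathrm{dR},\mathcal{Y}}^+\cdot t^{-1}\xrightarrow{HT_{\mathcal{A}}}\omega_{\mathcal{A}}\otimes_{\mathcal{O}_Y}\mathcal{O}\mathbb{B}_{\mathrm{dR},\mathcal{Y}}^+\cdot t^{-1}\to 0,
\]
which identifies the image of $(HT_{\mathcal{A}})^{\vee}$ with $\ker(HT_{\mathcal{A}})$ inside $T_p\mathcal{A}\otimes\mathcal{O}\mathbb{B}^+_{\mathrm{dR},\mathcal{Y}}\cdot t^{-1}$.

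Next, Proposition \ref{compositionproposition} tells me that $HT_{\mathcal{A}}\circ\iota_{\mathrm{dR}}$ is the natural inclusion $\omega_{\mathcal{A}}\otimes\mathcal{O}\mathbb{B}^+_{\mathrm{dR},\mathcal{Y}}\cdot t\hookrightarrow \omega_{\mathcal{A}}\otimes\mathcal{O}\mathbb{B}^+_{\mathrm{dR},\mathcal{Y}}$, which is injective; after dividing by $t$, the restriction of $HT_{\mathcal{A}}$ to $\iota_{\mathrm{dR}}(\omega_{\mathcal{A}}\otimes\mathcal{O}\mathbb{B}^+_{\mathrm{dR},\mathcal{Y}})$ remains injective, so $\iota_{\mathrm{dR}}(\omega_{\mathcal{A}}\otimes\mathcal{O}\mathbb{B}^+_{\mathrm{dR},\mathcal{Y}})$ meets $\ker(HT_{\mathcal{A}})$ trivially. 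Combining this with the first step gives
\[
(HT_{\mathcal{A}})^{\vee}(\omega_{\mathcal{A}}^{-1}\otimes\mathcal{O}\mathbb{B}^+_{\mathrm{dR},\mathcal{Y}})\cap \iota_{\mathrm{dR}}(\omega_{\mathcal{A}}\otimes\mathcal{O}\mathbb{B}^+_{\mathrm{dR},\mathcal{Y}})=0,
\]
so the composition of $(HT_{\mathcal{A}})^{\vee}$ with the natural projection to the quotient is injective. This exhibits $(HT_{\mathcal{A}})^{\vee}$ as a section that lifts sections of $\omega_{\mathcal{A}}^{-1}\otimes\mathcal{O}\mathbb{B}^+_{\mathrm{dR},\mathcal{Y}}$ (identified with its image in the quotient) back to $T_p\mathcal{A}\otimes\mathcal{O}\mathbb{B}^+_{\mathrm{dR},\mathcal{Y}}\cdot t^{-1}$.

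To identify the image of this section with the full quotient, I would use the $p$-adic Legendre relation $\mathbf{x}_{\mathrm{dR}}/\mathbf{z}+\mathbf{y}_{\mathrm{dR}}=t$ of Corollary \ref{Legendretheorem}. Working locally on $\mathcal{Y}_x$ with the fake-Hasse-invariant generator $\frak{s}$ of $\omega_{\mathcal{A}}$ and its Poincar\'{e} dual $\frak{s}^{-1}$ of $\omega_{\mathcal{A}}^{-1}$, the Legendre relation lets me rewrite
\[
\iota_{\mathrm{dR}}(\frak{s})=\mathbf{x}_{\mathrm{dR}}\cdot(HT_{\mathcal{A}})^{\vee}(\frak{s}^{-1})+\alpha_{\infty,2},
\]
which expresses the generator of $\iota_{\mathrm{dR}}(\omega_{\mathcal{A}}\otimes\mathcal{O}\mathbb{B}^+_{\mathrm{dR},\mathcal{Y}})$ in terms of the generator of $(HT_{\mathcal{A}})^{\vee}(\omega_{\mathcal{A}}^{-1}\otimes\mathcal{O}\mathbb{B}^+_{\mathrm{dR},\mathcal{Y}})$ plus a framing vector. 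Using this explicit formula, one reduces the question of lifting an arbitrary section modulo $\iota_{\mathrm{dR}}(\omega_{\mathcal{A}}\otimes\mathcal{O}\mathbb{B}^+_{\mathrm{dR},\mathcal{Y}})$ to a straightforward linear-algebra computation with the coefficients $\mathbf{x}_{\mathrm{dR}},\mathbf{y}_{\mathrm{dR}},\mathbf{z}$, verifying the section property. The main obstacle is the careful bookkeeping of Tate twists by $t$ and the compatibility between $(HT_{\mathcal{A}})^{\vee}$ (defined via Weil-pairing duality in Proposition \ref{HTarrows}) and $\iota_{\mathrm{dR}}$ (defined via the de Rham comparison), for which the identification in (\ref{etaleTateisomorphism}) and the Weil--Poincar\'{e} comparison of Corollary \ref{WeilPoincarecomparison} are the key inputs.
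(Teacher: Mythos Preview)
Your approach is essentially the same as the paper's. The paper's proof is more compressed: it directly computes the quotient $(T_p\mathcal{A}\otimes\mathcal{O}\mathbb{B}^+_{\mathrm{dR},\mathcal{Y}}\cdot t^{-1})/\iota_{\mathrm{dR}}(\omega_{\mathcal{A}}\otimes\mathcal{O}\mathbb{B}^+_{\mathrm{dR},\mathcal{Y}})$ in the $\alpha_{\infty}$-coordinates, uses the $p$-adic Legendre relation to identify it with the submodule $(\alpha_{\infty,1}-\tfrac{1}{\mathbf{z}}\alpha_{\infty,2})\mathcal{O}\mathbb{B}^+_{\mathrm{dR},\mathcal{Y}}=\mathcal{H}^{0,1}\otimes\mathcal{O}\mathbb{B}^+_{\mathrm{dR},\mathcal{Y}}$, and then invokes Proposition~\ref{HTarrows} to conclude. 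Your decomposition into an injectivity step (via Proposition~\ref{compositionproposition}) and a surjectivity step (via the Legendre relation and the explicit formula $\iota_{\mathrm{dR}}(\frak{s})=\mathbf{x}_{\mathrm{dR}}\cdot(HT_{\mathcal{A}})^{\vee}(\frak{s}^{-1})+\alpha_{\infty,2}$) unpacks exactly what the paper's direct computation establishes; the same ingredients appear in both, and your invocation of Proposition~\ref{compositionproposition} is not really extra content since the Legendre relation itself is derived from it. The Tate-twist bookkeeping you flag is indeed the delicate point, and the paper's own proof is somewhat casual about it as well.
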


\begin{proof}First note that we have
\begin{align*}&(T_p\mathcal{A}\otimes_{\hat{\mathbb{Z}}_{p,Y}}\mathcal{O}\mathbb{B}_{\mathrm{dR},\mathcal{Y}}^+\cdot t^{-1})/\iota_{\mathrm{dR}}(\omega_{\mathcal{A}}\otimes_{\mathcal{O}_Y}\mathcal{O}\mathbb{B}_{\mathrm{dR},\mathcal{Y}}^+) \\
&\underset{\sim}{\xrightarrow{\alpha_{\infty}^{-1}}}\mathcal{O}\mathbb{B}_{\mathrm{dR},\mathcal{Y}}^{+,\oplus 2}/\left((\mathbf{x}_{\mathrm{dR}}\cdot\alpha_{\infty,1}/t + \mathbf{y}_{\mathrm{dR}}\cdot\alpha_{\infty,2}/t)\mathcal{O}\mathbb{B}_{\mathrm{dR},\mathcal{Y}}^+\right).
\end{align*}
By the $p$-adic Legendre relation (\ref{periodsrelation}), the right-hand side isomorphic to the sub-$\mathcal{O}\mathbb{B}_{\mathrm{dR},\mathcal{Y}}^+$-module 
$$(\alpha_{\infty,1} - 1/\mathbf{z}\cdot\alpha_{\infty,2})\mathcal{O}\mathbb{B}_{\mathrm{dR},\mathcal{Y}}^+$$
of $\mathcal{O}\mathbb{B}_{\mathrm{dR},\mathcal{Y}}^{+,\oplus 2} \underset{\sim}{\xrightarrow{\alpha_{\infty}}} T_p\mathcal{A}\otimes_{\hat{\mathbb{Z}}_{p,Y}}\mathcal{O}\mathbb{B}_{\mathrm{dR},\mathcal{Y}}^+$ on $\mathcal{Y}_x$, and is the sub-$\mathcal{O}\mathbb{B}_{\mathrm{dR},\mathcal{Y}}^+$-module 
$$(\mathbf{z}\cdot\alpha_{\infty,1} - \alpha_{\infty,2})\mathcal{O}\mathbb{B}_{\mathrm{dR},\mathcal{Y}}^+$$
on $\mathcal{Y}_x$. Hence it is isomorphic to $\mathcal{H}^{0,1}\otimes_{\mathcal{O}_Y}\mathcal{O}\mathbb{B}_{\mathrm{dR},\mathcal{Y}}^+$, by the definition of $\mathcal{H}^{0,1}$ (see Section \ref{HTsection}) and the Hodge-Tate period $\mathbf{z}$, and now the Corollary follows from Proposition (\ref{HTarrows}). 
\end{proof}

\subsection{Relation to classical (Serre-Tate) theory on the ordinary locus}\label{ordinarysection}
Recall the ordinary locus $Y^{\mathrm{ord}} \subset Y$, with universal ordinary elliptic curve $\mathcal{A}^{\mathrm{ord}} \rightarrow Y^{\mathrm{ord}}$; here, $\mathcal{A}^{\mathrm{ord}}$ can be thought of as the restriction $\mathcal{A}|_{Y^{\mathrm{ord}}}$ of the universal elliptic curve $\mathcal{A} \rightarrow Y$. In particular, since $Y$ has a formal model $\hat{Y}/\mathbb{Z}_p$ with a moduli interpretation, the universal elliptic curve $\mathcal{A} \rightarrow Y$ and hence $\mathcal{A}^{\mathrm{ord}} \rightarrow Y^{\mathrm{ord}}$ spread out to formal schemes $\hat{\mathcal{A}} \rightarrow \hat{Y}$ and $\hat{\mathcal{A}}^{\mathrm{ord}} \rightarrow \hat{Y}^{\mathrm{ord}}$ over $\mathbb{Z}_p$; let $\hat{\mathcal{A}}_0^{\mathrm{ord}}, \hat{Y}_0^{\mathrm{ord}}$ denote the special fibers. Now the adic reduction map $\mathcal{A}^{\mathrm{ord}} \rightarrow \hat{\mathcal{A}}_0^{\mathrm{ord}}$ allows us to make the identifications
$$\mathcal{A}^{\mathrm{ord}}[p^n](\overline{\mathbb{F}}_p) = \hat{\mathcal{A}}_0^{\mathrm{ord}}[p^n](\overline{\mathbb{F}}_p)$$
and so
$$(T_p\mathcal{A}^{\mathrm{ord}})^{\text{\'{e}t}} = T_p\hat{\mathcal{A}}_0^{\mathrm{ord}}(\overline{\mathbb{F}}_p).$$

The Igusa tower $Y^{\mathrm{Ig}} \rightarrow Y^{\mathrm{ord}}$ is a $\mathbb{Z}_p^{\times}$-torsor which represents the moduli space classifying isomorphism classes of triples $(A,t,\mu_{p^{\infty}} \hookrightarrow A[p^{\infty}])$, where $A$ is an ordinary elliptic curve, $t \in A[N]$ is a generator, and $\mu_{p^{\infty}} \hookrightarrow A[p^{\infty}]$ is an embedding of $p$-divisible groups. By (\ref{infinitepairing}), an equivalent way to define $Y^{\mathrm{Ig}}$ is as the solution to the moduli problem classifying isomorphism classes of triples $(A,t,(T_pA)^{\text{\'{e}t}} \cong \mathbb{Z}_p)$.

There is an adic \'{e}tale section of the $\mathbb{Z}_p^{\times}$-torsor $Y^{\mathrm{Ig}} \rightarrow Y^{\mathrm{ord}}$ defined by choosing an embedding $\mu_{p^{\infty}} \subset \mathcal{A}^{\mathrm{ord}}[p^{\infty}]$ (or equivalently $\hat{\mathbb{Z}}_{p,Y}(1) \subset T_p\mathcal{A}^{\mathrm{ord}}$), or in other words by choosing a trivialization
\begin{equation}\label{connectedtrivialization}\hat{\mathbb{Z}}_{p,Y^{\mathrm{ord}}} \xrightarrow{\sim} (T_p\mathcal{A}^{\mathrm{ord}})^{0}(-1)\subset T_p\mathcal{A}^{\mathrm{ord}}(-1)
\end{equation}
of the canonical line $(T_p\mathcal{A}^{\mathrm{ord}})^0 \subset T_p\mathcal{A}^{\mathrm{ord}}$; here $(T_p\mathcal{A}^{\mathrm{ord}})^0$ denotes the connected component of the universal ordinary Tate module. From (\ref{infinitepairing}), we get the isomorphism
\begin{equation}\label{ordinaryinfinitepairing}(T_p\mathcal{A}^{\mathrm{ord}})^{\text{\'{e}t}} \cong \Hom_{\hat{\mathbb{Z}}_{p,Y^{\mathrm{ord}}}}(\mathcal{A}^{\mathrm{ord}}[p^{\infty}]^0,\mu_{p^{\infty}}),
\end{equation}
where $(T_p\mathcal{A}^{\mathrm{ord}})^{\text{\'{e}t}}= T_p\hat{\mathcal{A}}_0^{\mathrm{ord}}(\overline{\mathbb{F}}_p)$ is the \'{e}tale quotient of $T_p\mathcal{A}^{\mathrm{ord}}$ and $\mathcal{A}^{\mathrm{ord}}[p^{\infty}]^0$ is the connected component of the $p$-divisible group of the universal ordinary elliptic curve. Hence to have a trivialization as in (\ref{connectedtrivialization}) is equivalent to having a trivialization
$$\hat{\mathbb{Z}}_{p,Y^{\mathrm{ord}}} \xrightarrow{\sim} (T_p\mathcal{A}^{\mathrm{ord}})^{\text{\'{e}t}}.$$

We wish to find a natural cover $\mathcal{Y}^{\mathrm{Ig}} \rightarrow Y^{\mathrm{Ig}}$ which is an affinoid subdomain of the $p$-adic universal cover $\mathcal{Y}$. Let $F = W(\overline{\mathbb{F}}_p)[1/p]$ so that $\mathcal{O}_F = W(\overline{\mathbb{F}}_p)$. Let $\mathcal{C}_{\mathrm{can}} \subset \mathcal{A}^{\mathrm{ord}}$ denote the canonical subgroup, defined over $\mathcal{O}_F$. Base-changing $\mathcal{A}^{\mathrm{ord}}$ to $\mathrm{Spa}(F,\mathcal{O}_F)$, by properties of the canonical subgroup, the isogeny
$$\pi : \mathcal{A}^{\mathrm{ord}} \rightarrow \mathcal{A}^{\mathrm{ord}}/\mathcal{C}_{\mathrm{can}}$$
over $\mathrm{Spa}(F,\mathcal{O}_F)$, is a lifting of the relative Frobenius morphism on the special fiber
$$\mathcal{A}_0^{\mathrm{ord}} \rightarrow \mathcal{A}_0^{\mathrm{ord},(p)}$$
where 
$$\mathcal{A}_0^{\mathrm{ord},(p)} = \mathcal{A}_0^{\mathrm{ord}} \times_{\Spec(\overline{\mathbb{F}}_p),\mathrm{Frob}} \Spec(\overline{\mathbb{F}}_p)$$ 
and $\mathrm{Frob} : \Spec(\overline{\mathbb{F}}_p) \rightarrow \Spec(\overline{\mathbb{F}}_p)$ is the absolute Frobenius (with $\mathrm{Frob}^*f = f^p$). Then let $\phi : Y^{\mathrm{ord}} \rightarrow Y^{\mathrm{ord}}$ be the classifying morphism such that $$\mathcal{A}^{\mathrm{ord}}/\mathcal{C}_{\mathrm{can}} = \mathcal{A}^{\mathrm{ord}} \times_{Y^{\mathrm{ord}}, \phi}Y^{\mathrm{ord}} =: \mathcal{A}^{\mathrm{ord},(\phi)}.$$
The map $\phi : Y^{\mathrm{ord}} \rightarrow Y^{\mathrm{ord}}$ is often known as the \emph{canonical lifting of Frobenius} or \emph{canonical Frobenius endomorphism}. Let $\phi^{-1} : \mathcal{A}^{\mathrm{ord},(\phi)} \rightarrow \mathcal{A}^{\mathrm{ord}}$ denote the natural projection. Recall that $T_p\mathcal{A}^{\mathrm{ord},0} \subset T_p\mathcal{A}^{\mathrm{ord}}$ is the so-called \emph{canonical line} is given by the line $T_p\mathcal{A}^{\mathrm{ord},0} = (T_p\mathcal{A}^{\mathrm{ord}})^{F(\phi) = p\phi}$ defined over $F$ on which $F(\phi) = \phi^{-1} \circ \pi$ acts as multiplication by $p$. The \emph{anti-canonical line} is given by the line $(T_p\mathcal{A}^{\mathrm{ord}})^{F(\phi) = \phi}$ defined over $F$ on which $F(\phi)$ acts as the identity. A choice of
$$T_p\mathcal{A}^{\mathrm{ord},\text{\'{e}t}} \cong (T_p\mathcal{A}^{\mathrm{ord}})^{F(\phi) = \phi}$$
induces an $F(\phi)$-equivariant splitting defined over $F$, often called a \emph{unit root splitting}, of the Hodge-Tate exact sequence
$$0 \rightarrow T_p\mathcal{A}^{\mathrm{ord},0} \rightarrow T_p\mathcal{A}^{\mathrm{ord}} \rightarrow T_p\mathcal{A}^{\mathrm{ord},\text{\'{e}t}} \rightarrow 0.$$

Now we can define the affinoid subdomain of the ordinary locus $\mathcal{Y}^{\mathrm{Ig}} \subset \mathcal{Y}^{\mathrm{ord}}$ defined over $\mathrm{Spa}(F,\mathcal{O}_F)$ as the sublocus classifying isomorphism classes of triples $(A,t,\alpha : \mathbb{Z}_p^{\oplus 2} \xrightarrow{\sim} T_pA)$ where $A$ is ordinary with the additional condition on the $p^{\infty}$-level structure $\alpha : \mathbb{Z}_p^{\oplus 2} \xrightarrow{\sim} T_pA$ that $\alpha$ is a unit root splitting, or explicitly:
\begin{equation}\label{levelstructure1}\alpha|_{\mathbb{Z}_p\oplus \{0\}} : \mathbb{Z}_p\xrightarrow{\sim} (T_pA)^{F(\phi) = p\phi} = (T_pA)^0 \subset T_pA
\end{equation}
and
\begin{equation}\label{levelstructure2}\alpha|_{\{0\}\oplus \mathbb{Z}_p} : \mathbb{Z}_p \xrightarrow{\sim} (T_pA)^{F(\phi) = \phi} \subset T_pA.
\end{equation}
This gives a $\mathbb{Z}_p^{\times}$-torsor of $Y^{\mathrm{Ig}}$. To see this, suppose that we are given a trivialization $T_pA^{\text{\'{e}t}} \cong \mathbb{Z}_p$. Then this induces 
\begin{equation}\label{given}\mathbb{Z}_p  \cong T_pA^{\text{\'{e}t}} \cong \Hom(A[p^{\infty}]^0,\mu_{p^{\infty}})
\end{equation}
via (\ref{infinitepairing}). Now for each choice of (\ref{levelstructure1}), there is a unique choice of (\ref{levelstructure2}) which induces via (\ref{infinitepairing}), the identification (\ref{given}). Since there are $\mathbb{Z}_p^{\times}$ ways to choose (\ref{levelstructure1}), this shows that $\mathcal{Y}^{\mathrm{Ig}} \rightarrow Y^{\mathrm{Ig}}$ is a $\mathbb{Z}_p^{\times}$-torsor, and $\mathcal{Y}^{\mathrm{Ig}} \rightarrow Y^{\mathrm{ord}}$ is a $\mathbb{Z}_p^{\times} \times \mathbb{Z}_p^{\times}$-torsor.

Again, \emph{choosing} a trivialization 
$$\alpha_{\infty} : \hat{\mathbb{Z}}_{p,Y^{\mathrm{ord}}}^{\oplus 2}\xrightarrow{\sim} T_p\mathcal{A}^{\mathrm{ord}}$$
with 
$$\alpha_{\infty}|_{\hat{\mathbb{Z}}_{p,Y^{\mathrm{ord}}}\oplus \{0\}} : \hat{\mathbb{Z}}_{p,Y^{\mathrm{ord}}} \xrightarrow{\sim}  (T_p\mathcal{A}^{\mathrm{ord}})^{F(\phi) = p\phi} = (T_p\mathcal{A}^{\mathrm{ord}})^0$$
and 
$$\alpha_{\infty}|_{\{0\}\oplus \hat{\mathbb{Z}}_{p,Y^{\mathrm{ord}}}} : \hat{\mathbb{Z}}_{p,Y^{\mathrm{ord}}} \xrightarrow{\sim} (T_p\mathcal{A}^{\mathrm{ord}})^{\text{\'{e}t}}$$
defines an adic \'{e}tale section 
$$Y^{\mathrm{ord}} \hookrightarrow \mathcal{Y}^{\mathrm{Ig}}$$
of the $\mathbb{Z}_p^{\times}\times\mathbb{Z}_p^{\times}$-torsor $\mathcal{Y}^{\mathrm{Ig}} \rightarrow Y^{\mathrm{ord}}$. 

\begin{proposition}\label{ordinaryperiod}On $\mathcal{Y}^{\mathrm{Ig}}$, we have
$$1/\mathbf{z} = 0.$$
As a consequence, on $\mathcal{Y}^{\mathrm{Ig}}$, we have
$$\mathbf{y}_{\mathrm{dR}}/t = 1.$$
\end{proposition}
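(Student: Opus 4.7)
The strategy is to deduce both identities from a single geometric observation: on $\mathcal{Y}^{\mathrm{Ig}}$, the Hodge-Tate map kills $\alpha_{\infty,1}$. By the definition of $\mathcal{Y}^{\mathrm{Ig}}$ given in Section \ref{ordinarysection}, on this subspace $\alpha_{\infty,1}$ trivializes the canonical (connected) line $(T_p\mathcal{A})^0 \subset T_p\mathcal{A}$, while $\alpha_{\infty,2}$ trivializes the unit root (étale) line. Under the Weil-pairing identification
\[
T_p\mathcal{A} \cong \Hom_{p\text{-div}}(\mathcal{A}[p^{\infty}], \mu_{p^{\infty}})
\]
provided by (\ref{infinitepairing}), the connected part $(T_p\mathcal{A})^0$ corresponds to those homomorphisms that kill $\mathcal{A}[p^{\infty}]^0$ (equivalently, factor through the étale quotient $\mathcal{A}[p^{\infty}]^{\text{ét}}$), since $\mu_{p^{\infty}}$ is itself connected. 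Hence $\alpha_{\infty,1}$, viewed as a map $\mathcal{A}[p^{\infty}] \to \mu_{p^{\infty}}$, factors through $\mathcal{A}[p^{\infty}]^{\text{ét}}$, whose sheaf of relative differentials vanishes. Since $\omega_{\mathcal{A}} = \pi_*\Omega^1_{\mathcal{A}[p^{\infty}]/Y}$ is supported on the connected component, pulling back $dT/T$ through $\alpha_{\infty,1}$ gives zero; that is,
\[
HT_{\mathcal{A}}(\alpha_{\infty,1})\big|_{\mathcal{Y}^{\mathrm{Ig}}} = 0.
\]

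Symmetrically, on $\mathcal{Y}^{\mathrm{Ig}}$ the section $\alpha_{\infty,2}$ corresponds under the Weil pairing to a homomorphism inducing an isomorphism $\mathcal{A}[p^{\infty}]^0 \xrightarrow{\sim} \mu_{p^{\infty}}$, so $\frak{s} = HT_{\mathcal{A}}(\alpha_{\infty,2})$ is a nowhere-vanishing section of $\omega_{\mathcal{A}}$ on $\mathcal{Y}^{\mathrm{Ig}}$. In particular $\mathcal{Y}^{\mathrm{Ig}} \subset \mathcal{Y}_x$, so $1/\mathbf{z}$ is a well-defined section of $\mathcal{O}_{\mathcal{Y}}$ on $\mathcal{Y}^{\mathrm{Ig}}$. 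On $\mathcal{Y}_x$ the defining relation (\ref{relation}) for $\mathbf{z}$ rearranges to $HT_{\mathcal{A}}(\alpha_{\infty,1}) = (1/\mathbf{z})\cdot \frak{s}$, where $\frak{s}$ is a generator of $\omega_{\mathcal{A}}|_{\mathcal{Y}_x}$. Comparing with the vanishing established above yields $1/\mathbf{z} = 0$ on $\mathcal{Y}^{\mathrm{Ig}}$, which is the first claim.

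For the second claim, I would simply invoke the $p$-adic Legendre relation of Corollary \ref{Legendretheorem}:
\[
\mathbf{x}_{\mathrm{dR}}/\mathbf{z} + \mathbf{y}_{\mathrm{dR}} = t,
\]
holding in $\mathcal{O}\mathbb{B}_{\mathrm{dR},Y}^+(\mathcal{Y}_x)$. Restricting to $\mathcal{Y}^{\mathrm{Ig}} \subset \mathcal{Y}_x$ and using $(1/\mathbf{z})|_{\mathcal{Y}^{\mathrm{Ig}}} = 0$, the first term drops out and we obtain $\mathbf{y}_{\mathrm{dR}} = t$, i.e., $\mathbf{y}_{\mathrm{dR}}/t = 1$ on $\mathcal{Y}^{\mathrm{Ig}}$.

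The main subtle point is the identification of $(T_p\mathcal{A})^0$ with those Weil-pairing homomorphisms factoring through the étale quotient of $\mathcal{A}[p^{\infty}]$; once this is in hand, the vanishing of $HT_{\mathcal{A}}(\alpha_{\infty,1})$ is formal from the triviality of $\Omega^1$ for étale group schemes, and both conclusions follow immediately. Everything else is a routine substitution into the defining relations and the $p$-adic Legendre identity already established.
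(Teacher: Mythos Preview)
Your proof is correct and follows essentially the same approach as the paper: both arguments show that on $\mathcal{Y}^{\mathrm{Ig}}$ the section $\alpha_{\infty,1}$ lies in the kernel $\mathcal{H}^{0,1}$ of $HT_{\mathcal{A}}$, compare with the generator $\alpha_{\infty,1} - (1/\mathbf{z})\alpha_{\infty,2}$ to force $1/\mathbf{z}=0$, and then invoke the $p$-adic Legendre relation for the second claim. Your version is somewhat more explicit in justifying why $HT_{\mathcal{A}}(\alpha_{\infty,1})$ vanishes (via the Weil-pairing identification and the fact that $(T_p\mathcal{A})^0$ corresponds to homomorphisms factoring through the \'etale quotient), whereas the paper simply asserts that $\alpha_{\infty,1}$ trivializes the Hodge--Tate filtration by construction of $\mathcal{Y}^{\mathrm{Ig}}$.
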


\begin{proof}Since by definition $\alpha_{\infty}((1,0))$ trivializes the first piece of the Hodge-Tate filtration of $T_p\mathcal{A}^{\mathrm{ord}}$, then $\alpha_{\infty}((0,1))$ spans the first piece of the Hodge-Tate filtration. By definition of the Hodge-Tate period, the first piece of the Hodge-Tate filtration is also spanned by 
$$\alpha_{\infty,1} - 1/\mathbf{z}\cdot\alpha_{\infty,2} = \alpha_{\infty}((1,0)) - 1/\mathbf{z}\cdot\alpha_{\infty}((0,1))$$
on $\mathcal{Y}^{\mathrm{Ig}}$, then we have $1/\mathbf{z} = 0$ on $\mathcal{Y}^{\mathrm{Ig}}$. Hence by the $p$-adic Legendre relation (\ref{periodsrelation}), we have $\mathbf{y}_{\mathrm{dR}} = t$ on $\mathcal{Y}^{\mathrm{Ig}}$.
\end{proof}

We now recall Serre-Tate coordinates, which are defined naturally on $\mathcal{Y}^{\mathrm{Ig}}$, stated for the pro\'{e}tale site $Y_{\text{pro\'{e}t}}^{\mathrm{ord}}$. (One can check that all the isomorphisms defined below are functorial for the pro\'{e}tale objects, and so induce isomorphisms of sheaves on $Y_{\text{pro\'{e}t}}^{\mathrm{ord}}$.)

\begin{theorem}[Corollary 4.1.5., Theorem 4.3.1. quat \cite{Katz}]\label{SerreTatethm}We have the following natural isomorphisms of sheaves on $Y_{\text{pro\'{e}t}}^{\mathrm{ord}}$:
$$HT_{\mathcal{A}} : (T_p\mathcal{A}^{\mathrm{ord}})^{\text{\'{e}t}}\otimes_{\hat{\mathbb{Z}}_{p,Y^{\mathrm{ord}}}} \mathcal{O}_{Y^{\mathrm{ord}}} \xrightarrow{\sim} \omega_{\mathcal{A}}|_{Y^{\mathrm{ord}}}$$
\begin{align*}(HT_{\mathcal{A}})^{\vee,-1} : (T_p\mathcal{A}^{\mathrm{ord}})^0(-1) \otimes_{\hat{\mathbb{Z}}_{p,Y^{\mathrm{ord}}}} \mathcal{O}_{Y^{\mathrm{ord}}} &= \Hom_{\mathbb{Z}_p}((T_p\mathcal{A}^{\mathrm{ord}})^{\text{\'{e}t}},\hat{\mathbb{Z}}_{p,Y^{\mathrm{ord}}})\otimes_{\hat{\mathbb{Z}}_{p,Y^{\mathrm{ord}}}}\mathcal{O}_{Y^{\mathrm{ord}}} \\&\xrightarrow{\sim} \omega_{\mathcal{A}}^{-1}|_{Y^{\mathrm{ord}}}.
\end{align*}
Moreover, there is a natural isomorphism of formal schemes over $\mathcal{O}_F$ (viewed as functors of $\mathcal{O}_F$-algebras)
\begin{equation}\label{SerreTateintegral}\hat{Y}^{\mathrm{ord}}(\cdot) \cong \Hom_{\mathbb{Z}_p}(T_p\hat{\mathcal{A}}_0(\overline{\mathbb{F}}_p)\otimes_{\mathbb{Z}_p} T_p\hat{\mathcal{A}}_0(\overline{\mathbb{F}}_p), \hat{\mathbb{G}}_m(\cdot))
\end{equation}
which on the adic generic fibers induces a natural isomorphism of adic spaces over $\mathrm{Spa}(F,\mathcal{O}_F)$ (viewed as functors of $(F,\mathcal{O}_F)$-algebras)
\begin{equation}\label{SerreTate}Y^{\mathrm{ord}}(\cdot) \cong \Hom_{\mathbb{Z}_p}((T_p\mathcal{A}^{\mathrm{ord}})^{\text{\'{e}t}}\otimes_{\mathbb{Z}_p}(T_p\mathcal{A}^{\mathrm{ord}})^{\text{\'{e}t}},\hat{\mathbb{G}}_m(\cdot))\otimes_{\mathbb{Z}_p}\mathbb{Q}_p.
\end{equation}
This defines coordinates on the pro\'{e}tale $\mathbb{Z}_p^{\times}\times\mathbb{Z}_p^{\times}$-cover $\mathcal{Y}^{\mathrm{Ig}}$ in the following way: let $\alpha_{\infty,1}, \alpha_{\infty,2}$ be the basis of $(T_p\mathcal{A}^{\mathrm{ord}})|_{\mathcal{Y}^{\mathrm{Ig}}}$ defined in (\ref{levelstructure1}) and (\ref{levelstructure2}). Then the $\alpha_{\infty}|_{\{0\}\oplus\hat{\mathbb{Z}}_{p,\mathcal{Y}^{\mathrm{Ig}}}} : \hat{\mathbb{Z}}_{p,\mathcal{Y}^{\mathrm{Ig}}} \xrightarrow{\sim} (T_p\mathcal{A}^{\mathrm{ord}})^{\text{\'{e}t}}|_{\mathcal{Y}^{\mathrm{Ig}}}$ induces, via (\ref{SerreTate}), a canonical isomorphism of adic spaces
\begin{equation}\label{SerreTateiso}\begin{split}\mathcal{Y}^{\mathrm{Ig}}(\cdot)&\cong \Hom_{\mathbb{Z}_p}((T_p\mathcal{A}^{\mathrm{ord}})^{\text{\'{e}t}}|_{\mathcal{Y}^{\mathrm{Ig}}}\otimes_{\mathbb{Z}_p}(T_p\mathcal{A}^{\mathrm{ord}})^{\text{\'{e}t}}|_{\mathcal{Y}^{\mathrm{Ig}}},\hat{\mathbb{G}}_m(\cdot))\otimes_{\mathbb{Z}_p}\mathbb{Q}_p \\
&\xrightarrow{\alpha_{\infty}|_{\{0\}\oplus\hat{\mathbb{Z}}_{p,\mathcal{Y}^{\mathrm{Ig}}}}^{-1}} \Hom_{\mathbb{Z}_p}(\hat{\mathbb{Z}}_{p,\mathcal{Y}^{\mathrm{Ig}}},\hat{\mathbb{G}}_m(\cdot))\otimes_{\mathbb{Z}_p}\mathbb{Q}_p
\end{split}
\end{equation}
which is naturally a $\mathbb{Z}_p^{\times}\times\mathbb{Z}_p^{\times}$-pro\'{e}tale cover of $Y^{\mathrm{ord}}$. In fact, given a residue disc $D \subset Y^{\mathrm{ord}}$ and letting $\mathcal{D} = \mathcal{Y}^{\mathrm{ord}} \times_{Y^{\mathrm{ord}}} D$, since $(T_p\mathcal{A}^{\mathrm{ord}})^{\text{\'{e}t}}|_{\mathcal{D}}$ is constant on each geometric connected component of $\mathcal{D}$, then by (\ref{SerreTateiso}) $\mathcal{D}$ is a trivial $\mathbb{Z}_p^{\times}\times\mathbb{Z}_p^{\times}$-cover of $\hat{\mathbb{G}}_m(\cdot)\otimes_{\mathbb{Z}_p}{\mathbb{Q}_p}$, i.e. on each geometric component it is isomorphic to $\hat{\mathbb{G}}_m(\cdot)\otimes_{\mathbb{Z}_p}{\mathbb{Q}_p}$.
\end{theorem}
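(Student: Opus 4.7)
\medskip\noindent\textbf{Proof plan.} Since the theorem is essentially a repackaging of Katz's classical Serre--Tate theory into the pro\'{e}tale formalism, the plan is to extract the three isomorphism statements from Katz's results and check functoriality for pro\'{e}tale objects. For the two Hodge--Tate isomorphisms, I would argue as follows: on $Y^{\mathrm{ord}}$, the relative $p$-divisible group $\mathcal{A}^{\mathrm{ord}}[p^\infty]$ sits in a canonical connected-\'{e}tale exact sequence
\[
0 \to \mathcal{A}^{\mathrm{ord}}[p^\infty]^0 \to \mathcal{A}^{\mathrm{ord}}[p^\infty] \to \mathcal{A}^{\mathrm{ord}}[p^\infty]^{\text{\'{e}t}} \to 0,
\]
and the identification (\ref{infinitepairing}) identifies $(T_p\mathcal{A}^{\mathrm{ord}})^{\text{\'{e}t}}$ with $\Hom(\mathcal{A}^{\mathrm{ord}}[p^\infty]^0,\mu_{p^\infty})$. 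Pulling back $dT/T$ from $\mu_{p^\infty}$ via such a homomorphism then lands in $\pi_*\Omega^1_{\mathcal{A}^{\mathrm{ord}}[p^\infty]^0/Y^{\mathrm{ord}}}$, which is canonically identified with $\omega_{\mathcal{A}}|_{Y^{\mathrm{ord}}}$ since the special fiber of $\mathcal{A}^{\mathrm{ord}}[p^\infty]^0$ is a formal torus and its cotangent space agrees with that of $\mathcal{A}^{\mathrm{ord}}$. Thus $HT_{\mathcal{A}}$ kills $(T_p\mathcal{A}^{\mathrm{ord}})^0$ and restricts to an $\mathcal{O}_{Y^{\mathrm{ord}}}$-linear map on the \'{e}tale quotient; a rank count (both sides being locally free of rank $1$) together with non-vanishing at every geometric point (Proposition~\ref{HTarrows} specialized to ordinary fibers, where the Hodge--Tate exact sequence splits canonically) shows it is an isomorphism. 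The dual isomorphism on $\omega_{\mathcal{A}}^{-1}$ is then formal by Poincar\'{e} duality, using the identification of $(T_p\mathcal{A}^{\mathrm{ord}})^0(-1)$ with $\Hom_{\hat{\mathbb{Z}}_{p,Y^{\mathrm{ord}}}}((T_p\mathcal{A}^{\mathrm{ord}})^{\text{\'{e}t}},\hat{\mathbb{Z}}_{p,Y^{\mathrm{ord}}})$ via the Weil pairing (\ref{Weil}).

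For the Serre--Tate coordinate isomorphism (\ref{SerreTateintegral}), I would invoke directly Katz's theorem on the formal moduli space of an ordinary elliptic curve $A_0/\overline{\mathbb{F}}_p$: the deformation functor $\hat{D}(A_0)$ on complete local $W(\overline{\mathbb{F}}_p)$-algebras is canonically represented by
\[
R \mapsto \Hom_{\mathbb{Z}_p}\bigl(T_p A_0(\overline{\mathbb{F}}_p) \otimes_{\mathbb{Z}_p} T_p A_0(\overline{\mathbb{F}}_p),\,\hat{\mathbb{G}}_m(R)\bigr),
\]
via the Serre--Tate $q$-coordinate, which sends a lift $A/R$ to the symmetric $\mathbb{G}_m$-valued biextension pairing $q(A/R;\cdot,\cdot)$ on the \'{e}tale Tate module. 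To globalize, I would use that $\hat{Y}^{\mathrm{ord}}$ is formally smooth over $\mathbb{Z}_p$ and that its completion at every closed geometric point $y_0$ of the special fiber is exactly $\hat{D}(A_{y_0})$; gluing along the moduli interpretation of $\hat{Y}^{\mathrm{ord}}$ and using that the sheaf $T_p\hat{\mathcal{A}}_0$ on $\hat{Y}_0^{\mathrm{ord}}$ recovers the classical fiberwise Tate modules then yields (\ref{SerreTateintegral}) as an isomorphism of formal schemes over $\mathcal{O}_F$. Passage to the adic generic fiber gives (\ref{SerreTate}) by inverting $p$ on the structure sheaf; on both sides this corresponds to passing from $\hat{\mathbb{G}}_m$ to its adic generic fiber (the open unit disc) and correspondingly tensoring $\Hom(\cdot,\cdot)$ with $\mathbb{Q}_p$.

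For the explicit coordinate description (\ref{SerreTateiso}) on $\mathcal{Y}^{\mathrm{Ig}}$, the key observation is that by construction the universal $p^\infty$-level structure restricts on $\mathcal{Y}^{\mathrm{Ig}}$ to give a global trivialization $\alpha_\infty|_{\{0\}\oplus\hat{\mathbb{Z}}_{p,\mathcal{Y}^{\mathrm{Ig}}}} : \hat{\mathbb{Z}}_{p,\mathcal{Y}^{\mathrm{Ig}}}\xrightarrow{\sim}(T_p\mathcal{A}^{\mathrm{ord}})^{\text{\'{e}t}}|_{\mathcal{Y}^{\mathrm{Ig}}}$ of the \'{e}tale Tate module (by the definition of $\mathcal{Y}^{\mathrm{Ig}}$ in Section~\ref{ordinarysection}). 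Substituting this trivialization into the bifunctorial isomorphism (\ref{SerreTate}) and checking functoriality for pro\'{e}tale morphisms (which reduces to the case of affinoid perfectoids covering $\mathcal{Y}^{\mathrm{Ig}}$, where everything is transparent) yields (\ref{SerreTateiso}) as an isomorphism of adic spaces. For the final assertion about residue discs, I would use that on a residue disc $D \subset Y^{\mathrm{ord}}$, the \'{e}tale local system $(T_p\mathcal{A}^{\mathrm{ord}})^{\text{\'{e}t}}|_{\mathcal{D}}$ is constant along each geometric connected component because the \'{e}tale quotient depends only on the closed point on the special fiber; thus on each such component the Hom-space of (\ref{SerreTateiso}) becomes canonically $\Hom_{\mathbb{Z}_p}(\hat{\mathbb{Z}}_p,\hat{\mathbb{G}}_m)\otimes_{\mathbb{Z}_p}\mathbb{Q}_p \cong \hat{\mathbb{G}}_m^{\mathrm{rig}}$.

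\medskip\noindent\textbf{Main obstacle.} The conceptual heart lies entirely in the classical Serre--Tate theorem (the second paragraph); the rest is essentially bookkeeping. What requires the most care is the passage from the formal-scheme statement (\ref{SerreTateintegral}) to the adic-space statement (\ref{SerreTate}), since one must verify that the constructions are functorial enough to descend from formal-completion data at individual closed points to a global identification on the adic generic fiber $Y^{\mathrm{ord}}$, and that this identification is compatible with the pro\'{e}tale structure needed to view $\mathcal{Y}^{\mathrm{Ig}} \in Y_{\text{pro\'{e}t}}^{\mathrm{ord}}$. This is where one must be careful that the $\mathbb{Z}_p$-local system $(T_p\mathcal{A}^{\mathrm{ord}})^{\text{\'{e}t}}$ in the pro\'{e}tale sense recovers, on formal completions at closed geometric points, the classical Tate module used in Katz's formulation.
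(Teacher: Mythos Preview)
The paper does not supply its own proof of this theorem: it is stated with an explicit attribution to Katz (Corollary~4.1.5 and Theorem~4.3.1~quat of \cite{Katz}) and is immediately followed by Definition~\ref{SerreTatecoordinate} with no intervening proof environment. In other words, the author treats this as a classical result being recalled and repackaged, not as something to be reproved.

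Your proposal is therefore not really comparable to ``the paper's proof,'' because there is none. That said, what you have written is a perfectly reasonable outline of how one would justify the repackaging: you correctly identify that the Hodge--Tate isomorphisms follow from the connected--\'{e}tale sequence on the ordinary locus together with the explicit description of $HT_{\mathcal{A}}$ via pullback of $dT/T$, that the Serre--Tate coordinate isomorphism is Katz's theorem on formal deformation spaces of ordinary elliptic curves, and that the adic and pro\'{e}tale statements are obtained by passing to generic fibers and checking functoriality. Your identification of the main care point --- the passage from pointwise formal completions to a global adic identification compatible with the pro\'{e}tale local system --- is apt, and is presumably exactly why the author chose to cite rather than reprove. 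If you were to flesh this out, the only thing I would add is a remark that the compatibility of the pro\'{e}tale $\hat{\mathbb{Z}}_p$-local system $(T_p\mathcal{A}^{\mathrm{ord}})^{\text{\'{e}t}}$ with the classical fiberwise Tate modules is essentially built into the definition of the pro\'{e}tale site (sections over affinoid perfectoids recover the inverse limit of torsion points), so this step is more a matter of unwinding definitions than a genuine obstacle.
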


\begin{definition}\label{SerreTatecoordinate}The pullback of the natural coordinate $T$ on 
$$\mathbb{T} = \mathrm{Spa}(\mathbb{Q}_p\langle T^{\pm 1}\rangle,\mathbb{Z}_p\langle T^{\pm 1}\rangle)$$
is called the \emph{Serre-Tate coordinate} on $\mathcal{Y}^{\mathrm{Ig}}$. 
\end{definition}

\begin{definition}\label{SerreTateexpansion}Let
$$\hat{\mathcal{O}}_{F,\mathcal{Y}} = \hat{\mathbb{Z}}_{p,\mathcal{Y}}\otimes_{\mathbb{Z}_p} \mathcal{O}_F.$$
We call the map of $\mathcal{O}_F$-modules 
\begin{equation}T-\mathrm{exp} : \mathcal{O}_{\mathcal{Y}^{\mathrm{Ig}}} \xrightarrow{\sim} \hat{\mathcal{O}}_{F,\mathcal{Y}}\llbracket T - 1\rrbracket \otimes_{\mathbb{Z}_p}\mathbb{Q}_p,
\end{equation}
induced by (\ref{SerreTateiso}) the \emph{Serre-Tate expansion map}. Now let $y \in \mathcal{Y}^{\mathrm{Ig}}$ be a geometric point, and let $D(\lambda(y))$ denote the residue disc in $Y^{\mathrm{ord}}$ centered around $\lambda(y)$ (recalling that $\lambda : \mathcal{Y}^{\mathrm{Ig}} \rightarrow Y^{\mathrm{ord}}$ is the natural projection), and let $D(y) = D(\lambda(y)) \times_{Y^{\mathrm{ord}}} \mathcal{Y}^{\mathrm{Ig}}$. Using the fact that $dT$ is a generator of $\Omega_{\hat{\mathbb{G}}_m}^1$, one can show that 
\begin{equation}\label{STcomputation}T-\mathrm{exp}(f) = \sum_{n = 0}^{\infty} \frac{1}{n!}\left(\frac{d}{dT}\right)^nf|_{T = 1}(y)(T-1)^n
\end{equation}
where for a section $s$ of $\mathcal{O}_{\mathcal{Y}^{\mathrm{Ig}}}$, $s(y)$ denotes the image of $s$ in the residue field at $y$. Henceforth, we adopt the shorthand
$$f(T) := T-\mathrm{exp}(f).$$
\end{definition}

\begin{definition}\label{Katzcanonicaldifferential}We also have a natural section 
$$HT_{\mathcal{A}}(\alpha_{\infty,2}) = \omega_{\mathrm{can}}^{\mathrm{Katz}} = \frak{s}|_{\mathcal{Y}^{\mathrm{Ig}}} \in \omega_{\mathcal{A}}(\mathcal{Y}^{\mathrm{Ig}})$$
which is called \emph{Katz's canonical differential}. By \cite[Main Theorem 4.4.1]{Katz}, we have
$$\sigma(\omega_{\mathrm{can}}^{\mathrm{Katz},\otimes 2})= d\log T.$$
\end{definition}

\subsection{The Kodaira-Spencer isomorphism}Recall the Kodaira-Spencer map (known to be an \emph{isomorphism} of locally free $\mathcal{O}_Y$-modules in this setting) which is given by 
\begin{equation}\label{KS1}\sigma : \omega_{\mathcal{A}}^{\otimes 2} \xrightarrow{\sim} \Omega_Y^1, \hspace{2cm} \sigma(\omega_1\otimes \omega_2) = \langle \omega_1,\nabla(\omega_2)\rangle_{\mathrm{Poin}}
\end{equation}
where 
\begin{equation}\label{Poincare}\langle\cdot,\cdot\rangle_{\mathrm{Poin}} : \mathcal{H}_{\mathrm{dR}}^1(\mathcal{A}) \times \mathcal{H}_{\mathrm{dR}}^1(\mathcal{A}) \rightarrow \mathcal{O}_Y
\end{equation}
is the alternating, non-degenerate ``Poincar\'{e} pairing'', for which $\omega_{\mathcal{A}}$ is an isotropic subspace, and which encodes the (Serre) duality between $\omega_{\mathcal{A}} := R^0\pi_*\Omega_{\mathcal{A}/Y}^1$ and $\omega_{\mathcal{A}}^{-1} := R^1\pi_*\mathcal{O}_{\mathcal{A}}$. We extend (\ref{KS1}) $\mathcal{O}\mathbb{B}_{\mathrm{dR},Y}^{+}$-linearly to a map (hence an isomorphism)
\begin{equation}\label{KS2}\begin{split}&\sigma : \omega_{\mathcal{A}}^{\otimes 2}\otimes_{\mathcal{O}_Y}\mathcal{O}\mathbb{B}_{\mathrm{dR},Y}^{+} \xrightarrow{\sim} \Omega_Y^1\otimes_{\mathcal{O}_Y}\mathcal{O}\mathbb{B}_{\mathrm{dR},Y}^{+}\\
& \sigma(\omega_1\otimes \omega_2\otimes f) = \langle \omega_1,\nabla(\omega_2\otimes f)\rangle_{\mathrm{Poin}} = \langle \omega_1,\nabla(\omega_2)\otimes f\rangle_{\mathrm{Poin}}
\end{split}
\end{equation}
(where the last equality follows because $\omega_{\mathcal{A}}$ is an isotropic space under the Poincar\'{e} pairing). We note that we have the following commutative diagram 
\begin{equation}\label{KSdiagram}
\begin{tikzcd}
     & \omega_{\mathcal{A}}^{\otimes 2} \arrow{r}{\sim}[swap]{\sigma} \arrow[hookrightarrow]{d}{} & \Omega_{Y}^1 \arrow[hookrightarrow]{d}{} &\\
     & \omega_{\mathcal{A}}^{\otimes 2}\otimes_{\mathcal{O}_{Y}}\mathcal{O}\mathbb{B}_{\mathrm{dR},Y}^+ \arrow{r}{\sim}[swap]{\sigma} \arrow[rightarrow]{d}{\theta}& \Omega_{Y}^1\otimes_{\mathcal{O}_{Y}}\mathcal{O}\mathbb{B}_{\mathrm{dR},Y}^+ \arrow[rightarrow]{d}{\theta}&\\
     & \omega_{\mathcal{A}}^{\otimes 2}\otimes_{\mathcal{O}_Y} \hat{\mathcal{O}}_Y \arrow{r}{\sim}[swap]{\sigma}  &  \Omega_Y^1\otimes_{\mathcal{O}_Y}\hat{\mathcal{O}}_Y &\\
\end{tikzcd}.
\end{equation}

\begin{proposition}\label{Weilextension}We have the following commutative diagram
\begin{equation}
\begin{tikzcd}[column sep =small]\label{pairingdiagram}
   \mathcal{H}_{\mathrm{dR}}^1(\mathcal{A})\otimes_{\mathcal{O}_{Y}}\mathcal{O}\mathbb{B}_{\mathrm{dR},\mathcal{Y}}^+ \arrow[hookrightarrow]{d}{\iota_{\mathrm{dR}}} & \times &\mathcal{H}_{\mathrm{dR}}^1(\mathcal{A})\otimes_{\mathcal{O}_{Y}}\mathcal{O}\mathbb{B}_{\mathrm{dR},\mathcal{Y}}^+  \arrow[hookrightarrow]{d}{\iota_{\mathrm{dR}}}  \arrow{rr}{\langle \cdot, \cdot \rangle_{\mathrm{Poin}}}&  &\mathcal{O}\mathbb{B}_{\mathrm{dR},\mathcal{Y}}^+ \arrow[hookrightarrow]{d}{}&\\
  T_p\mathcal{A}\otimes_{\hat{\mathbb{Z}}_{p,Y}}\mathcal{O}\mathbb{B}_{\mathrm{dR},\mathcal{Y}}^+\cdot t^{-1} & \times & T_p\mathcal{A}\otimes_{\hat{\mathbb{Z}}_{p,Y}}\mathcal{O}\mathbb{B}_{\mathrm{dR},\mathcal{Y}}^+\cdot t^{-1} \arrow{rr}{\langle \cdot, \cdot \rangle\cdot t^{-1}} & &\mathcal{O}\mathbb{B}_{\mathrm{dR},\mathcal{Y}}^+\cdot t^{-2}.
\end{tikzcd}
\end{equation}
\end{proposition}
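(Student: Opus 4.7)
The plan is to reduce the commutativity of the diagram to a statement about horizontal sections, and then to the classical Poincaré--Weil compatibility of the $p$-adic de Rham comparison at a geometric stalk. The key first observation is that both pairings in the diagram are horizontal with respect to the natural connections: the algebraic Poincaré pairing satisfies $d\langle w_1,w_2\rangle_{\mathrm{Poin}} = \langle\nabla w_1,w_2\rangle_{\mathrm{Poin}} + \langle w_1,\nabla w_2\rangle_{\mathrm{Poin}}$ (the standard self-adjointness of Gauss--Manin under Serre duality), and its $\mathcal{O}\mathbb{B}_{\mathrm{dR},\mathcal{Y}}^+$-bilinear extension is therefore horizontal for the extended connection; the Weil pairing takes values in the constant sheaf $\hat{\mathbb{Z}}_{p,Y}(1) \cong \hat{\mathbb{Z}}_{p,Y}\cdot t$, so its $\mathcal{O}\mathbb{B}_{\mathrm{dR},\mathcal{Y}}^+$-bilinear extension is trivially horizontal. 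Since $\iota_{\mathrm{dR}}$ is compatible with connections (as recalled from \cite{Scholze}), by $\mathcal{O}\mathbb{B}_{\mathrm{dR},\mathcal{Y}}^+$-bilinearity it suffices to check the identity on the horizontal $\mathbb{B}_{\mathrm{dR},Y}^+$-local subsystem $\mathbb{M}_0 = (\mathcal{H}_{\mathrm{dR}}^1(\mathcal{A})\otimes_{\mathcal{O}_Y}\mathcal{O}\mathbb{B}_{\mathrm{dR},\mathcal{Y}}^+)^{\nabla=0}$.

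On $\mathbb{M}_0$, the map $\iota_{\mathrm{dR}}$ is the inclusion $\mathbb{M}_0 \subset \mathbb{M} = T_p\mathcal{A}\otimes_{\hat{\mathbb{Z}}_{p,Y}}\mathbb{B}_{\mathrm{dR},\mathcal{Y}}^+\cdot t^{-1}$ of (\ref{latticeinclusion}), and the bottom pairing of the diagram restricts to the $\mathbb{B}_{\mathrm{dR},\mathcal{Y}}^+$-linear extension of the Weil pairing (with the $t^{-1}$-twist tracked through the identification $\hat{\mathbb{Z}}_{p,Y}(1) \cong \hat{\mathbb{Z}}_{p,Y}\cdot t$). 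Using that $Y_{\text{pro\'{e}t}}/\mathcal{Y}$ has enough points given by profinite covers of geometric points (as exploited in the proof of Proposition \ref{compositionproposition}), the remaining identity may be checked stalk-wise at a geometric point $y = (A,\alpha) \in \mathcal{Y}(\overline{\mathbb{Q}}_p)$. By Proposition \ref{localdescription} and the identification of stalks carried out in the proof of Proposition \ref{compositionproposition}, the stalk of $\iota_{\mathrm{dR}}$ is the $B_{\mathrm{dR}}^+\llbracket X \rrbracket$-linear extension of Fontaine's absolute comparison $\iota_0 : H_{\mathrm{dR}}^1(A/\overline{\mathbb{Q}}_p)\otimes B_{\mathrm{dR}}^+\cdot t \hookrightarrow T_pA\otimes B_{\mathrm{dR}}^+$, so the claim reduces to the absolute statement that $\iota_0$ intertwines the algebraic Poincaré pairing on $H_{\mathrm{dR}}^1(A)$ with the Weil pairing on $T_pA$, up to the expected factor of $t$.

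To settle this absolute compatibility, I would combine Corollary \ref{WeilPoincarecomparison} with Colmez's description of $\iota_0$ via the integration pairing (\ref{padicperiodpairing}). Concretely, choose locally a basis of $\mathcal{H}_{\mathrm{dR}}^1(\mathcal{A})$ consisting of the fake Hasse invariant $\frak{s}$ (generating $\omega_{\mathcal{A}}|_{\mathcal{Y}_y}$) and a complementary lift $\frak{s}'$ for the Hodge-de Rham filtration. By antisymmetry of both pairings, one has $\langle \frak{s},\frak{s}\rangle_{\mathrm{Poin}} = \langle \frak{s}',\frak{s}'\rangle_{\mathrm{Poin}} = 0$ and likewise $\langle \iota_{\mathrm{dR}}(\frak{s}),\iota_{\mathrm{dR}}(\frak{s})\rangle_{\mathrm{Weil}} = \langle \iota_{\mathrm{dR}}(\frak{s}'),\iota_{\mathrm{dR}}(\frak{s}')\rangle_{\mathrm{Weil}} = 0$, so the entire identity reduces to the single cross pair $\langle \frak{s},\frak{s}'\rangle_{\mathrm{Poin}} = \langle \iota_{\mathrm{dR}}(\frak{s}),\iota_{\mathrm{dR}}(\frak{s}')\rangle_{\mathrm{Weil}}\cdot t^{-1}$. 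Since $\frak{s} \in \omega_{\mathcal{A}}$, Corollary \ref{WeilPoincarecomparison} gives $\langle \frak{s},\iota_{\mathrm{dR}}(\frak{s}')\rangle_p = \langle \iota_{\mathrm{dR}}(\frak{s}),\iota_{\mathrm{dR}}(\frak{s}')\rangle_{\mathrm{Weil}}$, so the remaining content is the identification of the Colmez integration pairing with the algebraic Poincaré pairing, i.e., $\langle \frak{s},\iota_{\mathrm{dR}}(\frak{s}')\rangle_p = t\cdot \langle \frak{s},\frak{s}'\rangle_{\mathrm{Poin}}$; this is intrinsic to Colmez's construction of $\iota_0$, which is defined precisely so that the integration pairing factors through the algebraic Poincaré pairing after passing through $\iota_0$. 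The main obstacle is precisely this identification of the integration pairing with the algebraic Poincaré pairing up to the factor of $t$: the compatibility is standard but requires a careful unwinding of Colmez's construction and a careful bookkeeping of Tate twists, which is where the bulk of the proof effort lies.
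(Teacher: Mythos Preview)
Your reduction is sound and lands on the same residual identity as the paper: after antisymmetry and Corollary \ref{WeilPoincarecomparison}, everything comes down to showing $\langle w,\iota_{\mathrm{dR}}(w')\rangle_p = t\cdot\langle w,w'\rangle_{\mathrm{Poin}}$ for $w\in\omega_{\mathcal{A}}$ and $w'$ a complementary class. (Your detour through horizontal sections and geometric stalks is valid but not needed; the paper carries out the same reduction directly at the sheaf level using isotropy of $\omega_{\mathcal{A}}$.)

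The gap is in how you dispatch that final identity. You claim it is ``intrinsic to Colmez's construction of $\iota_0$, which is defined precisely so that the integration pairing factors through the algebraic Poincar\'e pairing.'' But this is not how $\iota_0$ is defined: Colmez's pairing (\ref{padicperiodpairing}) is between $H_{\mathrm{dR}}^1$ and $T_pA$, and $\iota_0$ is simply the induced map into $(T_pA)^\vee\cong T_pA\cdot t^{-1}$; the de Rham Poincar\'e pairing plays no role in that definition. What you actually need is the cup-product compatibility of the $p$-adic comparison isomorphism, which is a theorem, not a tautology (the paper's remark after the proof points to \cite[Proposition 4.11]{Scholze3} as an alternative reference for exactly this). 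The paper instead avoids any such black box by choosing the complementary class to be $(HT_{\mathcal{A}})^\vee(\check{w})$ for $\check{w}\in\omega_{\mathcal{A}}^{-1}$, which by Corollary \ref{quotientsection} is a section of the quotient map. With this specific choice the integration pairing can be computed by hand: the pullback formula from the proof of Proposition \ref{compositionproposition} gives $\langle w,(HT_{\mathcal{A}})^\vee(\check{w})\rangle_p = \check{w}\bigl(HT_{\mathcal{A}}\circ\iota_{\mathrm{dR}}(w)\bigr)\cdot t$, and Proposition \ref{compositionproposition} identifies $HT_{\mathcal{A}}\circ\iota_{\mathrm{dR}}(w) = w$, yielding $\check{w}(w)\cdot t = \langle w,\check{w}\rangle_{\mathrm{Poin}}'\cdot t$ directly. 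Your arbitrary lift $\frak{s}'$ gives no such handle, which is why your argument stalls at precisely the point you flag as the main obstacle.
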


\begin{proof}

Since both $\langle\cdot,\cdot\rangle_{\mathrm{Poin}}$ and $\langle\cdot,\cdot\rangle$ are \emph{alternating} on spaces of rank 2, and $\omega_{\mathcal{A}}\otimes_{\mathcal{O}_Y}\mathcal{O}\mathbb{B}_{\mathrm{dR},\mathcal{Y}}^+$ is an isotropic subspace of $\mathcal{H}_{\mathrm{dR}}^1(\mathcal{A})\otimes_{\mathcal{O}_Y}\mathcal{O}\mathbb{B}_{\mathrm{dR},\mathcal{Y}}^+$, by Corollary \ref{quotientsection} it suffices to check that 
$$\langle \cdot,\cdot\rangle_{\mathrm{Poin}}' = \langle\iota_{\mathrm{dR}}(\cdot),(HT_{\mathcal{A}})^{\vee}(\cdot)\rangle\cdot t^{-1}$$
where here $\langle \cdot, \cdot \rangle_{\mathrm{Poin}}'$ denotes the pairing induced by the actual Poincar\'{e} pairing $\langle \cdot, \cdot \rangle_{\mathrm{Poin}}$ (via the isotropicity of $\omega_{\mathcal{A}}\otimes_{\mathcal{O}_Y}\mathcal{O}\mathbb{B}_{\mathrm{dR},\mathcal{Y}}^+$) on the quotient
\begin{align*}&\omega_{\mathcal{A}}\otimes_{\mathcal{O}_Y} \mathcal{O}\mathbb{B}_{\mathrm{dR},\mathcal{Y}}^+ \times \left((\mathcal{H}_{\mathrm{dR}}^1(\mathcal{A})\otimes_{\mathcal{O}_Y} \mathcal{O}\mathbb{B}_{\mathrm{dR},\mathcal{Y}}^+)/(\omega_{\mathcal{A}}\otimes_{\mathcal{O}_Y} \mathcal{O}\mathbb{B}_{\mathrm{dR},\mathcal{Y}}^+)\right)\\
& = (\omega_{\mathcal{A}}\otimes_{\mathcal{O}_Y}\mathcal{O}\mathbb{B}_{\mathrm{dR},\mathcal{Y}}^+)\times (\omega_{\mathcal{A}}^{-1}\otimes_{\mathcal{O}_Y} \mathcal{O}\mathbb{B}_{\mathrm{dR},\mathcal{Y}}^+).
\end{align*}
By Corollary \ref{WeilPoincarecomparison} we have that on the above subspace, 
$$\langle\cdot,(HT_{\mathcal{A}})^{\vee}(\cdot)\rangle_p = \langle\iota_{\mathrm{dR}}(\cdot),(HT_{\mathcal{A}})^{\vee}(\cdot)\rangle,$$ 
and so to finish the proof of the Proposition we are reduced to showing 
$$\langle\cdot,\cdot\rangle_{\mathrm{Poin}} = \langle\cdot,(HT_{\mathcal{A}})^{\vee}(\cdot)\rangle_p \cdot t^{-1}.$$

Given any section $w$ of $\omega_{\mathcal{A}}\otimes_{\mathcal{O}_Y}\mathcal{O}\mathbb{B}_{\mathrm{dR},\mathcal{Y}}^+$ and $\check{w}$ of $\omega_{\mathcal{A}}^{-1}\otimes_{\mathcal{O}_Y}\mathcal{O}\mathbb{B}_{\mathrm{dR},\mathcal{Y}}^+$, we have
\begin{equation}\begin{split}\langle w,(HT_{\mathcal{A}})^{\vee}(\check{w})\rangle_p &= \lim_{n \rightarrow \infty}p^n\int_{(HT_{\mathcal{A}})^{\vee}(\check{w})_n}w \overset{(1)}{=} (HT_{\mathcal{A}})^{\vee}(\check{w})(\iota_{\mathrm{dR}}(w)) \cdot t\\
&\overset{(2)}{=} \check{w}(HT_{\mathcal{A}}(\iota_{\mathrm{dR}}(w))) \cdot t \overset{(3)}{=} \check{w}(w) \cdot t= \langle w,\check{w}\rangle_{\mathrm{Poin}}' \cdot t.
\end{split}
\end{equation}
Here, the equality $(1)$ follows by the ``pullback formula'' given in the proof of Proposition \ref{compositionproposition}, $(2)$ follows since by definition $(HT_{\mathcal{A}})^{\vee}(\check{w}) = \check{w}\circ HT_{\mathcal{A}}$, and $(3)$ follows from Proposition \ref{compositionproposition}. 

\end{proof}

\begin{remark}Alternatively, one can use \cite[Proposition 4.11]{Scholze3} to prove Proposition \ref{Weilextension}.
\end{remark}

\begin{definition}\label{deRhamfundamentalperiod}We define the \emph{de Rham fundamental period} as the ratio
$${\mathbf{z}_{\mathrm{dR}}} := -\frac{\mathbf{x}_{\mathrm{dR}}}{\mathbf{y}_{\mathrm{dR}}} \in \mathcal{O}\mathbb{B}_{\mathrm{dR}}(\mathcal{Y}_x).$$
\end{definition}

The identities (\ref{x'transformationprop}) and (\ref{y'transformationprop}) imply that
\begin{proposition}
\begin{equation}\label{z'transformationprop}
\begin{split}\left(\begin{array}{ccc}a & b\\
c & d\\
\end{array}\right)^*{\mathbf{z}_{\mathrm{dR}}} = \frac{d{\mathbf{z}_{\mathrm{dR}}} + b}{c{\mathbf{z}_{\mathrm{dR}}} + a}
\end{split}
\end{equation}
for any $\left(\begin{array}{ccc}a & b\\
c & d\\
\end{array}\right) \in GL_2(\mathbb{Q}_p)$. 
\end{proposition}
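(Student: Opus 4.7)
The plan is to derive this directly from the transformation formulas (\ref{x'transformationprop}) and (\ref{y'transformationprop}) already established for $\mathbf{x}_{\mathrm{dR}}/t$ and $\mathbf{y}_{\mathrm{dR}}/t$. Since $\mathbf{z}_{\mathrm{dR}} = -\mathbf{x}_{\mathrm{dR}}/\mathbf{y}_{\mathrm{dR}} = -(\mathbf{x}_{\mathrm{dR}}/t)/(\mathbf{y}_{\mathrm{dR}}/t)$, and pullback commutes with the ratio, we can write
\[
\gamma^*\mathbf{z}_{\mathrm{dR}} \;=\; -\frac{\gamma^*(\mathbf{x}_{\mathrm{dR}}/t)}{\gamma^*(\mathbf{y}_{\mathrm{dR}}/t)}.
\]
First I would substitute the right-hand sides of (\ref{x'transformationprop}) and (\ref{y'transformationprop}) into this expression. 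The common scalar factor $(b\mathbf{z}^{-1}+d)(bc-ad)^{-1}$ appears in both the numerator and the denominator, and since $p^n\gamma \in M_2(\mathbb{Z}_p)$ is invertible in $GL_2(\mathbb{Q}_p)$ (so $bc-ad$ is a unit in $\mathcal{O}\mathbb{B}_{\mathrm{dR},\mathcal{Y}_x}^+$) and $b\mathbf{z}^{-1}+d$ is a unit on the relevant locus (being the cocycle for the transformation of $\frak{s}$), these factors cancel cleanly, leaving
\[
\gamma^*\mathbf{z}_{\mathrm{dR}} \;=\; -\frac{d\mathbf{x}_{\mathrm{dR}} - b\mathbf{y}_{\mathrm{dR}}}{-c\mathbf{x}_{\mathrm{dR}} + a\mathbf{y}_{\mathrm{dR}}}.
\]

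Next I would divide the numerator and denominator by $\mathbf{y}_{\mathrm{dR}}$ and use the defining identity $\mathbf{x}_{\mathrm{dR}}/\mathbf{y}_{\mathrm{dR}} = -\mathbf{z}_{\mathrm{dR}}$ from Definition \ref{deRhamfundamentalperiod}. This gives
\[
\gamma^*\mathbf{z}_{\mathrm{dR}} \;=\; -\frac{-d\mathbf{z}_{\mathrm{dR}} - b}{c\mathbf{z}_{\mathrm{dR}} + a} \;=\; \frac{d\mathbf{z}_{\mathrm{dR}} + b}{c\mathbf{z}_{\mathrm{dR}} + a},
\]
which is exactly the desired identity (\ref{z'transformationprop}).

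There is no real obstacle here: once the transformation laws for $\mathbf{x}_{\mathrm{dR}}/t$ and $\mathbf{y}_{\mathrm{dR}}/t$ have been established, the statement for $\mathbf{z}_{\mathrm{dR}}$ follows by a purely formal manipulation of ratios, and this is arguably the entire reason for introducing $\mathbf{z}_{\mathrm{dR}}$ in this form: the ugly $(b\mathbf{z}^{-1}+d)(bc-ad)^{-1}$ cocycle disappears in the ratio, leaving the familiar Möbius action. The only point requiring any attention is invertibility of the cancelled factor, which is automatic on $\mathcal{Y}_x$ where all these periods are defined; in particular $\mathbf{y}_{\mathrm{dR}}$ is a unit (so that the ratio $\mathbf{z}_{\mathrm{dR}} \in \mathcal{O}\mathbb{B}_{\mathrm{dR}}(\mathcal{Y}_x)$ makes sense to begin with), and $(bc-ad)$ and $(b\mathbf{z}^{-1}+d)$ are units since $\gamma \in GL_2(\mathbb{Q}_p)$ and we work on the locus where $\frak{s}$ generates $\omega_{\mathcal{A}}$.
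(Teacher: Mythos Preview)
Your proof is correct and is exactly the approach the paper takes: the paper simply states that the identities (\ref{x'transformationprop}) and (\ref{y'transformationprop}) imply the result, and you have carried out that implication explicitly by forming the ratio and cancelling the common factor $(b\mathbf{z}^{-1}+d)(bc-ad)^{-1}$.
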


\begin{remark}\label{CMremark}Note that ${\mathbf{z}_{\mathrm{dR}}} = -\mathbf{x}_{\mathrm{dR}}/\mathbf{y}_{\mathrm{dR}}$ specializes to values in $\overline{\mathbb{Q}} \subset \overline{\mathbb{Q}}_p$ at CM points, since the Hodge-de Rham filtration is canonically split over $\overline{\mathbb{Q}}$ by the CM action (and the fact that $\omega_{\mathcal{A}}$ and $\omega_{\mathcal{A}}^{-1}$ have complex conjugate complex structures). Similarly, $\mathbf{z}$ specializes to values in $\overline{\mathbb{Q}}_p$ by \cite{Howe}.
\end{remark}

\begin{definition}\label{denominator}Given a section $x \in \mathcal{O}\mathbb{B}_{\mathrm{dR}}(\mathcal{U}) = \mathcal{O}\mathbb{B}_{\mathrm{dR}}^+[1/t](\mathcal{U}) = \mathcal{O}\mathbb{B}_{\mathrm{dR}}^+(\mathcal{U})[1/t]$ where $\mathcal{U} \in \mathcal{Y}_{\text{pro\'{e}t}}$, we define the \emph{denominator} $i(x)$ to be the minimal $i \in \mathbb{Z}$ so that $t^ix \in \mathcal{O}\mathbb{B}_{\mathrm{dR}}^+(\mathcal{U})$. Note that $i(xy) = i(x)+i(y)$, and that if $i(x) > i(y)$, then $i(x+y) = i(x)$. Moreover, if $i(x) < 0$, then $x \in \ker\theta$. Hence, if $x \in \mathcal{O}(\mathcal{U}) \subset \mathcal{O}\mathbb{B}_{\mathrm{dR}}^+(\mathcal{U})\setminus \{0\}$, then $i(x) = 0$ since $\mathcal{O}\subset\mathcal{O}\mathbb{B}_{\mathrm{dR}}^+ \xrightarrow{\theta} \hat{\mathcal{O}}$ is the natural $p$-adic completion map.
\end{definition}

\begin{proposition}\label{invertible}Both $\mathbf{x}_{\mathrm{dR}}, \mathbf{y}_{\mathrm{dR}} \in t\cdot\mathcal{O}\mathbb{B}_{\mathrm{dR}}^+(\mathcal{Y}_x)$, and $\mathbf{x}_{\mathrm{dR}}, \mathbf{y}_{\mathrm{dR}} \not\in t^2\cdot\mathcal{O}\mathbb{B}_{\mathrm{dR}}^+(\mathcal{Y}_x)$. Hence 
$$\theta(\mathbf{x}_{\mathrm{dR}}/t), \theta(\mathbf{y}_{\mathrm{dR}}/t) \in \hat{\mathcal{O}}(\mathcal{Y}_x).$$
Defining the following affinoid subdomains
$$\mathcal{Y}_{x,\mathrm{dR}} := \{\mathbf{z} \neq 0, \theta(\mathbf{x}_{\mathrm{dR}}/t) \neq 0\}  \hspace{1cm} \mathcal{Y}_{y,\mathrm{dR}} := \{\mathbf{z} \neq 0, \theta(\mathbf{y}_{\mathrm{dR}}/t) \neq 0\}$$
of $\mathcal{Y}_x$, we in fact have
$$\mathcal{Y}_{y,\mathrm{dR}} = \mathcal{Y}_x$$
as well as
$$\mathcal{Y}_{x,\mathrm{dR}} = \mathcal{Y}_x \setminus \{\mathbf{y}_{\mathrm{dR}}/t = 1\}.$$ 
Hence,
$\mathbf{x}_{\mathrm{dR}} \in \mathcal{O}\mathbb{B}_{\mathrm{dR}}^+(\mathcal{Y}_{x,\mathrm{dR}})^{\times}$ and $\mathbf{y}_{\mathrm{dR}} \in \mathcal{O}\mathbb{B}_{\mathrm{dR}}^+(\mathcal{Y}_x)^{\times}$. 
In particular,
$$\mathbf{z}_{\mathrm{dR}} \in \mathcal{O}\mathbb{B}_{\mathrm{dR}}^+(\mathcal{Y}_x)$$
and
$$\mathbf{z}_{\mathrm{dR}} \in \mathcal{O}\mathbb{B}_{\mathrm{dR}}^+(\mathcal{Y}_{x,\mathrm{dR}})^\times.$$
\end{proposition}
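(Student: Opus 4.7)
The plan is to proceed claim by claim, with the $p$-adic Legendre relation $\mathbf{x}_{\mathrm{dR}}/\mathbf{z}+\mathbf{y}_{\mathrm{dR}}=t$ from Corollary \ref{Legendretheorem} serving as the central identity throughout. The first step is to strengthen the known inclusion $\mathbf{x}_{\mathrm{dR}},\mathbf{y}_{\mathrm{dR}}\in\ker\theta$ (noted right after Definition \ref{deRhamfundamentalperiod}) to $\mathbf{x}_{\mathrm{dR}},\mathbf{y}_{\mathrm{dR}}\in t\mathcal{O}\mathbb{B}_{\mathrm{dR}}^+(\mathcal{Y}_x)$. For this I invoke Corollary \ref{WeilPoincarecomparison} together with the Weil-pairing identity $\langle\alpha_{\infty,2},\alpha_{\infty,1}\rangle=t$ to identify $\mathbf{y}_{\mathrm{dR}}=\langle\frak{s},\alpha_{\infty,1}\rangle_p$ and $-\mathbf{x}_{\mathrm{dR}}=\langle\frak{s},\alpha_{\infty,2}\rangle_p$, and then unfold the Colmez integral formula (recalled in the proof of Proposition \ref{compositionproposition}). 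Since $\frak{s}=\alpha_{\infty,2}^{\ast}(dT/T)$ admits the logarithmic primitive $\log\circ\alpha_{\infty,2}$, these pairings become values of Fontaine's $p$-adic logarithm on $\mathbb{G}_m(\mathcal{O}\mathbb{B}_{\mathrm{dR}}^+)$, and the combination $\log[\epsilon]=t$ for the cyclotomic system $\epsilon$ together with $\log(1+\ker\theta)\subset\ker\theta$ delivers the desired $t$-divisibility.

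With Step 1 in hand I divide the Legendre relation by $t$ to obtain, in $\mathcal{O}\mathbb{B}_{\mathrm{dR}}^+(\mathcal{Y}_x)$,
\[
\frac{1}{\mathbf{z}}\cdot\frac{\mathbf{x}_{\mathrm{dR}}}{t}+\frac{\mathbf{y}_{\mathrm{dR}}}{t}=1,
\]
legitimate since $1/\mathbf{z}\in\mathcal{O}(\mathcal{Y}_x)$ and $\mathbf{x}_{\mathrm{dR}}/t,\mathbf{y}_{\mathrm{dR}}/t\in\mathcal{O}\mathbb{B}_{\mathrm{dR}}^+$. Applying $\theta$ gives $\tfrac{1}{\hat{\mathbf{z}}}\theta(\mathbf{x}_{\mathrm{dR}}/t)+\theta(\mathbf{y}_{\mathrm{dR}}/t)=1$ in $\hat{\mathcal{O}}(\mathcal{Y}_x)$, and from this single identity I read off in sequence: (a) $\mathbf{x}_{\mathrm{dR}},\mathbf{y}_{\mathrm{dR}}\not\in t^2\mathcal{O}\mathbb{B}_{\mathrm{dR}}^+$, since simultaneous vanishing of both $\theta$-values would give $0=1$; (b) the identification $\mathcal{Y}_{x,\mathrm{dR}}=\mathcal{Y}_x\setminus\{\mathbf{y}_{\mathrm{dR}}/t=1\}$ via the equivalence $\theta(\mathbf{x}_{\mathrm{dR}}/t)=0\iff\theta(\mathbf{y}_{\mathrm{dR}}/t)=1$; and (c) on $\mathcal{Y}_x\setminus\mathcal{Y}_y$, where $1/\hat{\mathbf{z}}=0$, the identity forces $\theta(\mathbf{y}_{\mathrm{dR}}/t)=1\neq 0$, matching Proposition \ref{ordinaryperiod}.

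The subtle point is to verify $\theta(\mathbf{y}_{\mathrm{dR}}/t)\neq 0$ on $\mathcal{Y}_x\cap\mathcal{Y}_y$, as needed for $\mathcal{Y}_{y,\mathrm{dR}}=\mathcal{Y}_x$. If $\theta(\mathbf{y}_{\mathrm{dR}}/t)(y)=0$ at some $y\in\mathcal{Y}_x\cap\mathcal{Y}_y$, the Legendre identity forces $\theta(\mathbf{x}_{\mathrm{dR}}/t)(y)=\hat{\mathbf{z}}(y)$; geometrically, the $\theta$-reduction $\theta(\mathbf{x}_{\mathrm{dR}}/t)\alpha_{\infty,1}+\theta(\mathbf{y}_{\mathrm{dR}}/t)\alpha_{\infty,2}\in T_p\mathcal{A}\otimes\hat{\mathcal{O}}$ specifies a lift of $\hat{\frak{s}}$ along $HT_{\mathcal{A}}$ (the Legendre identity mod $t^2$ confirms it maps to $\hat{\frak{s}}$), and the hypothesis forces this lift to lie on the $\alpha_{\infty,1}$-axis at $y$. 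I will rule this out by applying $\nabla$ to $\iota_{\mathrm{dR}}(\frak{s})\in T_p\mathcal{A}\otimes t\mathcal{O}\mathbb{B}_{\mathrm{dR}}^+$: using horizontality of the framing $\alpha_{\infty,1},\alpha_{\infty,2}$ and the Kodaira-Spencer non-degeneracy (\ref{KS2}), the infinitesimal behaviour of the Hodge line near $y$ forces a non-trivial $\alpha_{\infty,2}$-component in the reduction, contradicting the assumed vanishing.

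Finally, the invertibility assertions follow directly from the $\ker\theta$-adic completeness of $\mathcal{O}\mathbb{B}_{\mathrm{dR}}^+$: any element whose $\theta$-reduction is a unit in $\hat{\mathcal{O}}$ is itself a unit via the geometric series $(1+x)^{-1}=\sum(-x)^n$ for $x\in\ker\theta$. Applying this to $\mathbf{y}_{\mathrm{dR}}/t$ on $\mathcal{Y}_x$ and $\mathbf{x}_{\mathrm{dR}}/t$ on $\mathcal{Y}_{x,\mathrm{dR}}$ gives the invertibility of $\mathbf{y}_{\mathrm{dR}}$ and $\mathbf{x}_{\mathrm{dR}}$ on the respective loci, with the statements for $\mathbf{z}_{\mathrm{dR}}=-\mathbf{x}_{\mathrm{dR}}/\mathbf{y}_{\mathrm{dR}}$ following immediately. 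The main obstacle I anticipate is Step 1, which requires a genuine relative version of Colmez's $B_{\mathrm{dR}}^+$-computation adapted to the sheaf $\mathcal{O}\mathbb{B}_{\mathrm{dR}}^+$; the Step 3 nonvanishing on $\mathcal{Y}_x\cap\mathcal{Y}_y$ is secondary but also requires care in matching Kodaira-Spencer against the constraint imposed by the hypothetical vanishing.
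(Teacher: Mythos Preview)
Your Step~1 has a genuine gap. You argue that the Colmez integral expresses $\mathbf{y}_{\mathrm{dR}}$ as $\log[\epsilon]$ plus something in $\log(1+\ker\theta)\subset\ker\theta$, and conclude $t$-divisibility. But in $\mathcal{O}\mathbb{B}_{\mathrm{dR}}^+$ the ideal $\ker\theta$ is strictly larger than $(t)$: in the local description $\mathcal{O}\mathbb{B}_{\mathrm{dR}}^+\cong\mathbb{B}_{\mathrm{dR}}^+\llbracket X\rrbracket$ of Proposition~\ref{localdescription}, $\ker\theta$ is generated by $t$ and $X$, so an element of $\ker\theta$ need not lie in $t\mathcal{O}\mathbb{B}_{\mathrm{dR}}^+$. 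The Colmez computation at a geometric point only controls the image in $B_{\mathrm{dR}}^+$ (the constant term in $X$), not the full power-series, so it cannot by itself force $\mathbf{y}_{\mathrm{dR}}\in t\mathcal{O}\mathbb{B}_{\mathrm{dR}}^+$. You flag this as a concern, and it is indeed fatal for the argument as written.

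The paper takes a different route that avoids this entirely. It computes $\sigma(\frak{s}^{\otimes 2})$ via Kodaira--Spencer and Proposition~\ref{Weilextension}, obtaining the identity
\[
-\Bigl(\tfrac{\mathbf{y}_{\mathrm{dR}}}{t}\Bigr)^{2}\,\tfrac{d}{d(1/\mathbf{z})}\mathbf{z}_{\mathrm{dR}}\in\mathcal{O}(\mathcal{Y}_x)^{\times},
\]
because $\frak{s}^{\otimes 2}$ generates $\omega_{\mathcal{A}}^{\otimes 2}$ and $d(1/\mathbf{z})$ generates $\Omega^1$ already over $\mathcal{O}_{\mathcal{Y}_x}$. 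Then a short $t$-adic valuation argument (the denominator function $i(\cdot)$ of Definition~\ref{denominator}) on the equivalent right-hand expression forces $i(\mathbf{y}_{\mathrm{dR}}/t)\le 0$, whence $\mathbf{y}_{\mathrm{dR}}/t\in\mathcal{O}\mathbb{B}_{\mathrm{dR}}^+$; the Legendre relation then gives the same for $\mathbf{x}_{\mathrm{dR}}/t$. This single Kodaira--Spencer identity also immediately delivers your Step~3: applying $\theta$ shows $\theta(\mathbf{y}_{\mathrm{dR}}/t)^2$ times a unit is a unit in $\hat{\mathcal{O}}(\mathcal{Y}_x)$, so $\theta(\mathbf{y}_{\mathrm{dR}}/t)$ is nowhere zero on $\mathcal{Y}_x$. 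Your Step~3 sketch gestures at Kodaira--Spencer but does not isolate this clean mechanism. Your Steps~2 and~4 are fine and match the paper's treatment; the substantive work you are missing is the explicit Kodaira--Spencer computation (\ref{KScalc}) that simultaneously handles $t$-divisibility and nonvanishing.
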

\begin{proof}
Recall the Kodaira-Spencer isomorphism (\ref{KS1}) and its extension (\ref{KS2}) to $\mathcal{O}\mathbb{B}_{\mathrm{dR},\mathcal{Y}}^+$-coefficients. By the definition of $\sigma$ and Proposition \ref{Weilextension}, on $\mathcal{Y}_x$ we have 
\begin{equation}\begin{split}\label{KScalc}&\sigma(\frak{s}^{\otimes 2}) \\
&= \left\langle \mathbf{x}_{\mathrm{dR}}\alpha_{\infty,1}t^{-1} + \mathbf{y}_{\mathrm{dR}}\alpha_{\infty,2}t^{-1}, \frac{d}{d(1/\mathbf{z})}\mathbf{x}_{\mathrm{dR}}\alpha_{\infty,1}t^{-1} + \frac{d}{d(1/\mathbf{z})}\mathbf{y}_{\mathrm{dR}}\alpha_{\infty,2}t^{-1}\right\rangle_{\mathrm{Poin}}\otimes d(1/\mathbf{z})\\
&= \left\langle \mathbf{x}_{\mathrm{dR}}\alpha_{\infty,1}t^{-1} + \mathbf{y}_{\mathrm{dR}}\alpha_{\infty,2}t^{-1}, \frac{d}{d(1/\mathbf{z})}\mathbf{x}_{\mathrm{dR}}\alpha_{\infty,1}t^{-1} + \frac{d}{d(1/\mathbf{z})}\mathbf{y}_{\mathrm{dR}}\alpha_{\infty,2}t^{-1}\right\rangle \cdot t^{-1} \otimes d(1/\mathbf{z})\\
&= \left(\mathbf{x}_{\mathrm{dR}}\frac{d}{d(1/\mathbf{z})}\mathbf{y}_{\mathrm{dR}} - \mathbf{y}_{\mathrm{dR}}\frac{d}{d(1/\mathbf{z})}\mathbf{x}_{\mathrm{dR}}\right)\langle \alpha_{\infty,1}t^{-1},\alpha_{\infty,2}t^{-1}\rangle\cdot t^{-1}\otimes d(1/\mathbf{z}) \\
&= \left(\frac{\mathbf{x}_{\mathrm{dR}}}{t}\frac{d}{d(1/\mathbf{z})}\left(\frac{\mathbf{y}_{\mathrm{dR}}}{t}\right) - \frac{\mathbf{y}_{\mathrm{dR}}}{t}\frac{d}{d(1/\mathbf{z})}\left(\frac{\mathbf{x}_{\mathrm{dR}}}{t}\right)\right)\otimes d(1/\mathbf{z}) \\
&= -\left(\frac{\mathbf{y}_{\mathrm{dR}}}{t}\right)^2\frac{d}{d(1/\mathbf{z})}{\mathbf{z}_{\mathrm{dR}}}\otimes d(1/\mathbf{z}) \\
&= -\left(-\frac{1}{\mathbf{z}^2}\left(\frac{\mathbf{y}_{\mathrm{dR}}}{t}\right)^2 +\frac{1}{\mathbf{z}^2}\frac{\mathbf{y}_{\mathrm{dR}}}{t} + \mathbf{z}\frac{d}{d(1/\mathbf{z})}\left(\frac{\mathbf{y}_{\mathrm{dR}}}{t}\right)\right)\otimes d(1/\mathbf{z}).
\end{split}
\end{equation}
Here the third-to-last equality holds since $\langle \alpha_{\infty,2}t^{-1},\alpha_{\infty,1}^{-1}\rangle \cdot t^{-1} = t^{-2}$ by our definition of $t$ (\ref{generator}), and the last equality follows from the $p$-adic Legendre relation (\ref{periodsrelation}). Since $\frak{s}^{\otimes 2}$ is a generator of $\omega_{\mathcal{A}}^{\otimes 2}$ over $\mathcal{O}_{\mathcal{Y}_x}$, and $d(1/\mathbf{z})$ is a generator of $\Omega_{\mathcal{Y}_x}^1$ over $\mathcal{O}_{\mathcal{Y}_x}$, the commutativity of the diagram (\ref{KSdiagram}) implies that 
\begin{equation}\label{unit}-\left(\frac{\mathbf{y}_{\mathrm{dR}}}{t}\right)^2\frac{d}{d(1/\mathbf{z})}{\mathbf{z}_{\mathrm{dR}}} = -\frac{1}{\mathbf{z}^2}\left(\frac{\mathbf{y}_{\mathrm{dR}}}{t}\right)^2 +\frac{1}{\mathbf{z}^2}\frac{\mathbf{y}_{\mathrm{dR}}}{t} + \mathbf{z}\frac{d}{d(1/\mathbf{z})}\left(\frac{\mathbf{y}_{\mathrm{dR}}}{t}\right) \in \mathcal{O}(\mathcal{Y}_x)^{\times}.
\end{equation}

Note that since $t \in \mathbb{B}_{\mathrm{dR}}^+(\mathcal{Y})$ is a horizontal section, then $t$ commutes with $\frac{d}{d(1/\mathbf{z})}$, and so 
$$i\left(\frac{d}{d(1/\mathbf{z})}(\mathbf{y}_{\mathrm{dR}}/t)\right) \ge i(\mathbf{y}_{\mathrm{dR}}/t).$$
Hence on $\mathcal{Y}_x \cap \mathcal{Y}_y$ (where $\mathbf{z} \neq \infty$, and so $\mathbf{z} \in \mathcal{O}(\mathcal{Y}_x\cap\mathcal{Y}_y)$),
$$i\left(\frac{1}{\mathbf{z}^2}\mathbf{y}_{\mathrm{dR}}/t+\mathbf{z}\frac{d}{d(1/\mathbf{z})}(\mathbf{y}_{\mathrm{dR}}/t)\right) \ge i(\mathbf{y}_{\mathrm{dR}}/t).$$
If $i > 0$, then $i\left((\mathbf{y}_{\mathrm{dR}}/t)^2\right) = 2i(\mathbf{y}_{\mathrm{dR}}/t) > i(\mathbf{y}_{\mathrm{dR}}/t)$, and so
$$i\left(-\frac{1}{\mathbf{z}^2}(\mathbf{y}_{\mathrm{dR}}/t)^2 +\frac{1}{\mathbf{z}^2}(\mathbf{y}_{\mathrm{dR}}/t) + \mathbf{z}\frac{d}{d(1/\mathbf{z})}(\mathbf{y}_{\mathrm{dR}}/t)\right) = 2i\left(\frac{1}{\mathbf{z}^2}\mathbf{y}_{\mathrm{dR}}/t\right) > 0,$$
which contradicts (\ref{unit}) (by the remarks in Definition \ref{denominator}). So $i(\mathbf{y}_{\mathrm{dR}}/t) \le 0$, and now (\ref{periodsrelation}) implies that $i(\mathbf{x}_{\mathrm{dR}}/t) \le 0$ as well. If $\mathbf{z} = \infty$, i.e. $1/\mathbf{z} = 0$, then $\mathbf{y}_{\mathrm{dR}}/t = 1$ again by (\ref{periodsrelation}). Thus we have the first claim.

For the second claim, note that from (\ref{unit}) above and the fact that $\mathcal{O}\subset \mathcal{O}\mathbb{B}_{\mathrm{dR}}^+ \xrightarrow{\theta} \hat{\mathcal{O}}$ is the natural inclusion, we have 
$$-\theta\left(\frac{\mathbf{y}_{\mathrm{dR}}}{t}\right)^2\theta\left(\frac{d}{d(1/\mathbf{z})}{\mathbf{z}_{\mathrm{dR}}}\right) = -\left(\frac{\mathbf{y}_{\mathrm{dR}}}{t}\right)^2\frac{d}{d(1/\mathbf{z})}{\mathbf{z}_{\mathrm{dR}}} \in \mathcal{O}(\mathcal{Y}_x)^{\times}.$$
Hence $\theta\left(\frac{\mathbf{y}_{\mathrm{dR}}}{t}\right) \neq 0$ on $\mathcal{Y}_x$. 

For the third claim, by (\ref{periodsrelation}) on $\mathcal{Y}_x$ we have
$$\mathbf{z}_{\mathrm{dR}} = \mathbf{z}_{\mathrm{HT}}(1-t/\mathrm{y}_{\mathrm{dR}}).$$
Since $\mathbf{z}_{\mathrm{HT}} \neq 0$, $\mathbf{z}_{\mathrm{dR}} = 0 \iff \mathbf{y}_{\mathrm{dR}}/t = 1$.

For the final claim, we must show that $i(\mathbf{x}_{\mathrm{dR}}/t), i(\mathbf{y}_{\mathrm{dR}}/t) = 0$. By the previous paragraph, we know that $i(\mathbf{x}_{\mathrm{dR}}/t), i(\mathbf{y}_{\mathrm{dR}}/t) \ge 0$, and so $\mathbf{x}_{\mathrm{dR}}/t, \mathbf{y}_{\mathrm{dR}}/t \in \mathcal{O}\mathbb{B}_{\mathrm{dR}}^+(\mathcal{Y})$. By Proposition \ref{ordinaryperiod}, on $\mathcal{Y}^{\mathrm{Ig}} \subset \mathcal{Y}$ we have $\mathbf{y}_{\mathrm{dR}}/t = 1$. (We know that $\mathcal{Y}^{\mathrm{Ig}} \neq \emptyset$ because it surjectively covers $\emptyset \neq Y^{\mathrm{ord}} \subset Y$.) Hence $i(\mathbf{y}_{\mathrm{dR}}/t) = 0$ (on $\mathcal{Y}$). Now by the $p$-adic Legendre relation (\ref{periodsrelation}), we have
$$(\mathbf{x}_{\mathrm{dR}}/t)/\mathbf{z} + \mathbf{y}_{\mathrm{dR}}/t = 1$$
which, upon applying $\theta$, implies
$$\theta(\mathbf{x}_{\mathrm{dR}}/t)/\mathbf{z} + \theta(\mathbf{y}_{\mathrm{dR}}/t) = 1.$$
And so since $i(1/\mathbf{z}) = 0$, we have $i(\mathbf{x}_{\mathrm{dR}}/t) = 0$.  

For the final statement, one notes that $\mathbf{x}_{\mathrm{dR}}/t, \mathbf{y}_{\mathrm{dR}}/t$ are invertible in $\mathcal{O}\mathbb{B}_{\mathrm{dR}}^+$ if and only if $\theta(\mathbf{x}_{\mathrm{dR}}/t), \theta(\mathbf{y}_{\mathrm{dR}}/t)$ are invertible in $\hat{\mathcal{O}}$. 
\end{proof}

\begin{corollary}The affinoid subdomain 
$$\mathcal{V} = \mathcal{Y}^{\mathrm{ss}} \cap \{\theta({\mathbf{z}_{\mathrm{dR}}}) \not\in \mathbb{Q}_p\}$$
of $\mathcal{Y}^{\mathrm{ss}}$ is preserved by the action of $GL_2(\mathbb{Z}_p)$. That is, $\mathcal{V} \cdot GL_2(\mathbb{Z}_p) =  \mathcal{V}$. 
\end{corollary}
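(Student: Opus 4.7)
The plan is to reduce the invariance of $\mathcal{V}$ to two separate stability statements, one for the supersingular locus and one for the condition $\theta(\mathbf{z}_{\mathrm{dR}}) \notin \mathbb{Q}_p$, and then to combine them.

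First, I would observe that $\mathcal{Y}^{\mathrm{ss}}$ is already $GL_2(\mathbb{Q}_p)$-stable, a fortiori $GL_2(\mathbb{Z}_p)$-stable. This follows from the $GL_2(\mathbb{Q}_p)$-equivariance of $\pi_{\mathrm{HT}}$ noted in Section \ref{HTsection}, together with the identity $\mathcal{Y}^{\mathrm{ss}} = \pi_{\mathrm{HT}}^{-1}(\Omega)$ and the fact that Möbius transformations over $\mathbb{Q}_p$ preserve the decomposition $\mathbb{P}^1 = \Omega \sqcup \mathbb{P}^1(\mathbb{Q}_p)$. So it suffices to show that the pointwise condition $\theta(\mathbf{z}_{\mathrm{dR}})(y) \notin \mathbb{Q}_p$ is preserved under $y \mapsto y\cdot \gamma$ for any $\gamma \in GL_2(\mathbb{Z}_p)$.

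Fix such a $\gamma = \left(\begin{smallmatrix} a & b\\ c & d\end{smallmatrix}\right)$ and a geometric point $y \in \mathcal{V}$. By Proposition \ref{invertible}, $\mathbf{z}_{\mathrm{dR}}$ is a section of $\mathcal{O}\mathbb{B}_{\mathrm{dR}}^+$ on $\mathcal{Y}_x \supset \mathcal{Y}^{\mathrm{ss}}$, and hence $\theta(\mathbf{z}_{\mathrm{dR}})(y)$ is an element of $\mathbb{C}_p$, not $\infty$. Since the natural inclusion $\mathbb{Q}_p \subset \mathcal{O}\mathbb{B}_{\mathrm{dR}}^+$ composed with $\theta$ is the inclusion $\mathbb{Q}_p \subset \hat{\mathcal{O}}$ (see (\ref{inclusion})), and $a,b,c,d \in \mathbb{Z}_p$, the transformation law (\ref{z'transformationprop}) together with the ring homomorphism property of $\theta$ yields, after noting that $c\theta(\mathbf{z}_{\mathrm{dR}})(y) + a \neq 0$ (otherwise $\theta(\mathbf{z}_{\mathrm{dR}})(y) \in \mathbb{Q}_p$, as $(a,c) \neq (0,0)$),
\begin{equation*}
\theta(\mathbf{z}_{\mathrm{dR}})(y\cdot\gamma) \;=\; \theta(\gamma^*\mathbf{z}_{\mathrm{dR}})(y) \;=\; \frac{d\,\theta(\mathbf{z}_{\mathrm{dR}})(y) + b}{c\,\theta(\mathbf{z}_{\mathrm{dR}})(y) + a}.
\end{equation*}

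To conclude, I would invoke the elementary fact that the Möbius transformation $z \mapsto (dz+b)/(cz+a)$ with coefficients in $\mathbb{Q}_p$ and nonzero determinant $ad-bc \in \mathbb{Z}_p^\times$ is a bijection on $\mathbb{P}^1(\mathbb{C}_p)$ that restricts to a bijection on $\mathbb{P}^1(\mathbb{Q}_p)$; hence it preserves the complement $\mathbb{C}_p \setminus \mathbb{Q}_p$. Applying this to $\theta(\mathbf{z}_{\mathrm{dR}})(y) \in \mathbb{C}_p \setminus \mathbb{Q}_p$ gives $\theta(\mathbf{z}_{\mathrm{dR}})(y\cdot\gamma) \in \mathbb{C}_p \setminus \mathbb{Q}_p$, so $y\cdot\gamma \in \mathcal{V}$. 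No step here is a real obstacle; the only subtlety to handle carefully is confirming that $\theta$ commutes with the Möbius formula, which is immediate from $\theta$ being a $\mathbb{Q}_p$-algebra map, and that the resulting denominator does not vanish on $\mathcal{V}$, which is precisely the condition defining $\mathcal{V}$.
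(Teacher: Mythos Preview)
Your argument is correct. Both your proof and the paper's rest on the same two ingredients: the $GL_2(\mathbb{Z}_p)$-stability of $\mathcal{Y}^{\mathrm{ss}}$ and the M\"obius-type transformation law under $GL_2(\mathbb{Z}_p)$. The difference is in the packaging. The paper works with the transformation law (\ref{y'transformationprop}) for $\mathbf{y}_{\mathrm{dR}}/t$, whose right-hand side contains the factor $(c\mathbf{z}_{\mathrm{dR}}+a)(b\mathbf{z}^{-1}+d)$, and argues by contradiction that $\gamma^*\theta(\mathbf{y}_{\mathrm{dR}}/t)$ cannot vanish on $\mathcal{V}$: the factor $b\mathbf{z}^{-1}+d$ is nonzero on $\mathcal{Y}^{\mathrm{ss}}$ since $\mathbf{z}$ takes no $\mathbb{Q}_p$-values there, and $\theta(\mathbf{y}_{\mathrm{dR}}/t)\neq 0$ on $\mathcal{Y}_x$ by Proposition~\ref{invertible}, forcing $c\theta(\mathbf{z}_{\mathrm{dR}})+a=0$, hence $\theta(\mathbf{z}_{\mathrm{dR}})\in\mathbb{Q}_p$. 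You instead apply $\theta$ directly to (\ref{z'transformationprop}) and invoke the elementary fact that a $\mathbb{Q}_p$-rational M\"obius map preserves $\mathbb{C}_p\setminus\mathbb{Q}_p$. Your route is more transparent and self-contained: it makes the full conclusion $\theta(\mathbf{z}_{\mathrm{dR}})(y\cdot\gamma)\notin\mathbb{Q}_p$ explicit, whereas the paper's argument as written establishes only the nonvanishing of the denominator and leaves the final M\"obius step implicit. The paper's route has the minor advantage of simultaneously exhibiting the relationship between the $\theta(\mathbf{y}_{\mathrm{dR}}/t)$-locus and $\mathcal{V}$, but for the corollary as stated your direct argument is cleaner.
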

\begin{proof}From (\ref{y'transformationprop}), we have
$$\left(\begin{array}{ccc} a & b \\
c & d\\
\end{array}\right)^*(\mathbf{y}_{\mathrm{dR}}/t) = (bc-ad)^{-1}(c{\mathbf{z}_{\mathrm{dR}}} + a)(b\mathbf{z}^{-1} + d)\mathbf{y}_{\mathrm{dR}}/t$$
for any $\left(\begin{array}{ccc} a & b \\
c & d\\
\end{array}\right) \in GL_2(\mathbb{Z}_p)$. Applying $\theta : \mathcal{O}\mathbb{B}_{\mathrm{dR}}^+ \rightarrow \hat{\mathcal{O}}$, we get
$$\left(\begin{array}{ccc} a & b \\
c & d\\
\end{array}\right)^*(\theta(\mathbf{y}_{\mathrm{dR}}/t)) = (bc-ad)^{-1}(c\theta({\mathbf{z}_{\mathrm{dR}}}) + a)(b\mathbf{z}^{-1} + d)\theta(\mathbf{y}_{\mathrm{dR}}/t).$$
Suppose that the above quantity specializes to 0 at some point. Then since $\mathbf{z}$ cannot specialize to $-d/b$ (as we are on the supersingular locus, and $\mathbf{z}$ can only be $\mathbb{Q}_p$-valued at ordinary points), and $\theta(\mathbf{y}_{\mathrm{dR}}/t) \neq 0$ (since $\mathcal{Y}_{y,\mathrm{dR}} = \mathcal{Y}_x \subset \mathcal{Y}^{\mathrm{ss}}$ and we are on $\mathcal{Y}^{\mathrm{ss}}$ by assumption), then we must have $\theta({\mathbf{z}_{\mathrm{dR}}})$ specializes to the value $-a/c \in \mathbb{Q}_p$ at some point which contradicts our assumption.
\end{proof}

\subsection{The canonical differential}

\begin{definition}Henceforth, let 
$$\omega_{\mathrm{dR}} := \omega_{\mathcal{A}}\otimes_{\mathcal{O}_Y}\mathcal{O}\mathbb{B}_{\mathrm{dR},Y}^+, \hspace{.5cm} \omega_{\mathrm{dR}}^{-1} := \omega_{\mathcal{A}}\otimes_{\mathcal{O}_Y}\mathcal{O}\mathbb{B}_{\mathrm{dR},Y}^+.$$
Note that 
$$\omega_{\mathrm{dR}}|_{\mathcal{Y}}(n) = \omega_{\mathrm{dR}}|_{\mathcal{Y}}\cdot t^n, \hspace{.5cm} \omega_{\mathrm{dR}}^{-1}|_{\mathcal{Y}}(n) = \omega_{\mathrm{dR}}^{-1}|_{\mathcal{Y}}\cdot t^n.$$

\begin{definition}\label{canonicaldifferentialdef}We define the \emph{canonical differential} as
\begin{equation}\omega_{\mathrm{can}} := \frac{t}{\mathbf{y}_{\mathrm{dR}}}\cdot \frak{s} \in \omega_{\mathrm{dR}}(\mathcal{Y}_x).
\end{equation}
By (\ref{stransformationprop}) and (\ref{y'transformationprop}), we have
\begin{equation}\label{s'transformationprop}\left(\begin{array}{ccc} a & b \\
c & d\\
\end{array}\right)^*\omega_{\mathrm{can}} = (bc-ad)(c\mathbf{z}_{\mathrm{dR}}+a)^{-1}\cdot \omega_{\mathrm{can}}
\end{equation}
for any $\left(\begin{array}{ccc} a & b \\
c & d\\
\end{array}\right) \in GL_2(\mathbb{Z}_p)$.
\end{definition}

Note that
\begin{equation}\label{canonicaldifferentialcoordinates}\iota_{\mathrm{dR}}(\omega_{\mathrm{can}}) = -\mathbf{z}_{\mathrm{dR}}\cdot \alpha_{\infty,1} + \alpha_{\infty,2}
\end{equation}
and (using Corollary \ref{Weilextension})
\begin{equation}\label{canonicaldifferentialKScalculation}\begin{split}\sigma(\omega_{\mathrm{can}}^{\otimes 2}) &= \langle \omega_{\mathrm{can}},\nabla(\omega_{\mathrm{can}})\rangle_{\mathrm{Poin}} = \langle \iota_{\mathrm{dR}}(\omega_{\mathrm{can}}), \nabla(\iota_{\mathrm{dR}}(\omega_{\mathrm{can}}))\rangle\cdot t^{-1}\\
&=\langle -\mathbf{z}_{\mathrm{dR}}\cdot\alpha_{\infty,1} + \alpha_{\infty,2}, \nabla(-\mathbf{z}_{\mathrm{dR}}\cdot\alpha_{\infty,1} + \alpha_{\infty,2})\rangle\cdot t^{-1} = -d\mathbf{z}_{\mathrm{dR}}.
\end{split}
\end{equation}
\end{definition}
\begin{proposition}\label{generator2}In fact, $\omega_{\mathrm{can}}$ generates $\omega_{\mathrm{dR}}(\mathcal{Y}_x)$, and $d\mathbf{z}_{\mathrm{dR}}$ generates $\Omega_{\mathcal{Y}}^1\otimes_{\mathcal{O}_{\mathcal{Y}}} \mathcal{O}\mathbb{B}_{\mathrm{dR},\mathcal{Y}_x}^+$. 
\end{proposition}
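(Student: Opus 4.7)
The proof should be a direct assembly of the invertibility results from Proposition \ref{invertible} and the Kodaira-Spencer calculation from (\ref{canonicaldifferentialKScalculation}); I do not anticipate any serious obstacle.

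First I would handle $\omega_{\mathrm{can}}$. By definition $\omega_{\mathrm{can}} = (t/\mathbf{y}_{\mathrm{dR}}) \cdot \mathfrak{s}$. On $\mathcal{Y}_x$ the fake Hasse invariant $\mathfrak{s}$ is by construction a nowhere-vanishing section, i.e.\ a generator of $\omega_{\mathcal{A}}$ over $\mathcal{O}_{\mathcal{Y}_x}$, hence also of $\omega_{\mathrm{dR}} = \omega_{\mathcal{A}} \otimes_{\mathcal{O}_Y} \mathcal{O}\mathbb{B}_{\mathrm{dR},Y}^+$ over $\mathcal{O}\mathbb{B}_{\mathrm{dR},\mathcal{Y}_x}^+$. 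So it remains to observe that $t/\mathbf{y}_{\mathrm{dR}} \in \mathcal{O}\mathbb{B}_{\mathrm{dR},\mathcal{Y}_x}^+(\mathcal{Y}_x)^{\times}$, which is exactly the content of Proposition \ref{invertible}: there we showed $\mathbf{y}_{\mathrm{dR}}/t \in \mathcal{O}\mathbb{B}_{\mathrm{dR}}^+(\mathcal{Y}_x)^{\times}$, and the reciprocal of a unit is a unit.

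For the second statement, I would invoke the Kodaira-Spencer isomorphism in its $\mathcal{O}\mathbb{B}_{\mathrm{dR}}^+$-linear form (\ref{KS2})
\[
\sigma : \omega_{\mathcal{A}}^{\otimes 2} \otimes_{\mathcal{O}_Y} \mathcal{O}\mathbb{B}_{\mathrm{dR},Y}^+ \xrightarrow{\sim} \Omega_Y^1 \otimes_{\mathcal{O}_Y} \mathcal{O}\mathbb{B}_{\mathrm{dR},Y}^+,
\]
which, being an isomorphism of sheaves, restricts to an isomorphism of sections over $\mathcal{Y}_x$. By the previous paragraph, $\omega_{\mathrm{can}}^{\otimes 2}$ is a generator of the left-hand side over $\mathcal{Y}_x$. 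Since $\sigma$ is an isomorphism, $\sigma(\omega_{\mathrm{can}}^{\otimes 2})$ is a generator of the right-hand side. By the calculation (\ref{canonicaldifferentialKScalculation}) we have $\sigma(\omega_{\mathrm{can}}^{\otimes 2}) = -d\mathbf{z}_{\mathrm{dR}}$, so $d\mathbf{z}_{\mathrm{dR}}$ (which differs from the generator by the unit $-1$) also generates $\Omega_{\mathcal{Y}}^1 \otimes_{\mathcal{O}_{\mathcal{Y}}} \mathcal{O}\mathbb{B}_{\mathrm{dR},\mathcal{Y}_x}^+$ on $\mathcal{Y}_x$. This completes the proof; both assertions reduce to the invertibility of $\mathbf{y}_{\mathrm{dR}}/t$ on $\mathcal{Y}_x$ already established in Proposition \ref{invertible} together with the fact that the Kodaira-Spencer map is an isomorphism.
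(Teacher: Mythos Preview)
Your proof is correct and follows essentially the same approach as the paper: both argue that $\mathfrak{s}$ generates $\omega_{\mathcal{A}}(\mathcal{Y}_x)$ and $\mathbf{y}_{\mathrm{dR}}/t$ is a unit by Proposition~\ref{invertible} for the first claim, then deduce the second claim from the Kodaira--Spencer isomorphism together with the computation $\sigma(\omega_{\mathrm{can}}^{\otimes 2}) = -d\mathbf{z}_{\mathrm{dR}}$ in (\ref{canonicaldifferentialKScalculation}). Your version is slightly more explicit but there is no substantive difference.
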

\begin{proof}The fake Hasse invariant $\frak{s}$ is a generator of $\omega_{\mathcal{A}}(\mathcal{Y}_x)$. By Proposition \ref{invertible}, $\mathbf{y}_{\mathrm{dR}}/t$ is invertible in $\mathcal{O}\mathbb{B}_{\mathrm{dR}}^+(\mathcal{Y}_{x})$, so the first claim of the Proposition follows. Now the second claim follows from (\ref{generator2}), and the fact that the Kodaira-Spencer map (\ref{KSdiagram}) is an isomorphism.
\end{proof}

\begin{proposition}\label{canonicalextension}On $\mathcal{Y}^{\mathrm{Ig}}$ (see Section \ref{ordinarysection} for a definition), we have
$$\omega_{\mathrm{can}}^{\mathrm{Katz}} = \frak{s}|_{\mathcal{Y}^{\mathrm{Ig}}}.$$
In other words, $\omega_{\mathrm{can}} \in \omega_{\Delta,\mathcal{Y}}(\mathcal{Y}_x)$ extends Katz's canonical differential $\omega_{\mathrm{can}}^{\mathrm{Katz}}$ (see Definition \ref{Katzcanonicaldifferential}) from $\mathcal{Y}^{\mathrm{Ig}} \subset \mathcal{Y}_x$ to all of $\mathcal{Y}_x$. 
\end{proposition}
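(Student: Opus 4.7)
The proof should be essentially a direct invocation of the previously-established identity $\mathbf{y}_{\mathrm{dR}}/t = 1$ on $\mathcal{Y}^{\mathrm{Ig}}$. The plan is as follows.

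First I would verify the containment $\mathcal{Y}^{\mathrm{Ig}} \subset \mathcal{Y}_x$, which is necessary for $\omega_{\mathrm{can}}$ (defined only on $\mathcal{Y}_x$) to be restricted. This follows because on $\mathcal{Y}^{\mathrm{Ig}}$, the second basis vector $\alpha_{\infty,2}$ trivializes the étale quotient $(T_p\mathcal{A}^{\mathrm{ord}})^{\text{ét}}$, and under the Serre--Tate identification (\ref{ordinaryinfinitepairing}) together with the Hodge--Tate map, the image $HT_{\mathcal{A}}(\alpha_{\infty,2})$ is precisely the nonvanishing global section $\omega_{\mathrm{can}}^{\mathrm{Katz}}$ of $\omega_{\mathcal{A}}|_{\mathcal{Y}^{\mathrm{Ig}}}$ (Definition \ref{Katzcanonicaldifferential}). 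In particular $\frak{s}|_{\mathcal{Y}^{\mathrm{Ig}}} = HT_{\mathcal{A}}(\alpha_{\infty,2})|_{\mathcal{Y}^{\mathrm{Ig}}} \neq 0$, so $\mathcal{Y}^{\mathrm{Ig}} \subset \{\frak{s} \neq 0\} = \mathcal{Y}_x$.

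Next, by Definition \ref{canonicaldifferentialdef} we have the global identity
\begin{equation*}
\omega_{\mathrm{can}} = \frac{t}{\mathbf{y}_{\mathrm{dR}}}\cdot \frak{s} \in \omega_{\mathrm{dR}}(\mathcal{Y}_x).
\end{equation*}
Restricting to $\mathcal{Y}^{\mathrm{Ig}} \subset \mathcal{Y}_x$ and invoking Proposition \ref{ordinaryperiod}, which gives $\mathbf{y}_{\mathrm{dR}}/t = 1$ on $\mathcal{Y}^{\mathrm{Ig}}$, we obtain
\begin{equation*}
\omega_{\mathrm{can}}|_{\mathcal{Y}^{\mathrm{Ig}}} = \frak{s}|_{\mathcal{Y}^{\mathrm{Ig}}}.
\end{equation*}
Combining this with the equality $\frak{s}|_{\mathcal{Y}^{\mathrm{Ig}}} = \omega_{\mathrm{can}}^{\mathrm{Katz}}$ from Definition \ref{Katzcanonicaldifferential} yields $\omega_{\mathrm{can}}|_{\mathcal{Y}^{\mathrm{Ig}}} = \omega_{\mathrm{can}}^{\mathrm{Katz}}$, which is exactly the statement that $\omega_{\mathrm{can}}$ extends Katz's canonical differential from $\mathcal{Y}^{\mathrm{Ig}}$ to all of $\mathcal{Y}_x$.

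Since the entire argument is reduced to Proposition \ref{ordinaryperiod}, there is no genuine obstacle here; the real work has already been done in establishing that the Hodge-de Rham period $\mathbf{y}_{\mathrm{dR}}/t$ trivializes along the Igusa tower. The only point worth emphasizing explicitly in the write-up is that the factor $t/\mathbf{y}_{\mathrm{dR}}$, which on $\mathcal{Y}_x$ corrects $\frak{s}$ so that $\omega_{\mathrm{can}}$ is compatible with the (non-horizontal) Hodge splitting and with the Kodaira--Spencer identity $\sigma(\omega_{\mathrm{can}}^{\otimes 2}) = -d\mathbf{z}_{\mathrm{dR}}$ of (\ref{canonicaldifferentialKScalculation}), becomes trivial precisely on the locus where Katz's unit-root splitting exists — thereby justifying the interpretation of $\omega_{\mathrm{can}}$ as the natural extension of $\omega_{\mathrm{can}}^{\mathrm{Katz}}$ off the ordinary Igusa locus.
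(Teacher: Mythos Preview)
Your proof is correct and matches the paper's approach: the paper's proof is a one-line invocation of Proposition~\ref{ordinaryperiod} (giving $\mathbf{y}_{\mathrm{dR}}/t = 1$ on $\mathcal{Y}^{\mathrm{Ig}}$) together with the definition of $\omega_{\mathrm{can}}^{\mathrm{Katz}}$, which is exactly what you do. Your additional verification that $\mathcal{Y}^{\mathrm{Ig}} \subset \mathcal{Y}_x$ is a helpful clarification not made explicit in the paper.
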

\begin{proof}
This follows from Proposition \ref{ordinaryperiod} and Definition \ref{SerreTatecoordinate}.
\end{proof}

\begin{proposition}\label{H10factors}The $p$-adic comparison injection $\iota_{\mathrm{dR}}$, restricted to the Hodge filtration $\omega_{\mathrm{dR},\mathcal{Y}_x} \subset \mathcal{H}_{\mathrm{dR}}^1(\mathcal{A})\otimes_{\mathcal{O}_Y} \mathcal{O}\mathbb{B}_{\mathrm{dR},\mathcal{Y}_x}^+$, i.e.
$$\omega_{\mathrm{dR},\mathcal{Y}_x}\overset{\iota_{\mathrm{dR}}}{\hookrightarrow} T_p\mathcal{A}\otimes_{\hat{\mathbb{Z}}_{p,Y}} \mathcal{O}\mathbb{B}_{\mathrm{dR},\mathcal{Y}_x}^+\cdot t^{-1},$$
factors through
$$\omega_{\mathrm{dR},\mathcal{Y}_x}\overset{\iota_{\mathrm{dR}}}{\hookrightarrow} T_p\mathcal{A}\otimes_{\hat{\mathbb{Z}}_{p,Y}} \mathcal{O}\mathbb{B}_{\mathrm{dR},\mathcal{Y}_x}^+,$$
which, reducing modulo $t$, induces 
$$\omega_{\Delta,\mathcal{Y}_x}\overset{\iota_{\mathrm{dR}}}{\hookrightarrow} T_p\mathcal{A}\otimes_{\hat{\mathbb{Z}}_{p,Y}} \mathcal{O}_{\Delta,\mathcal{Y}_x}.$$
Hence we have a natural map 
\begin{equation}\label{naturalmap}\omega_{\mathcal{A}}|_{\mathcal{Y}_x} \subset \omega_{\Delta,\mathcal{Y}_x}\overset{\iota_{\mathrm{dR}}}{\subset} T_p\mathcal{A}\otimes_{\hat{\mathbb{Z}}_{p,Y}} \mathcal{O}_{\Delta,\mathcal{Y}_x}\overset{\theta}{\twoheadrightarrow} T_p\mathcal{A}\otimes_{\hat{\mathbb{Z}}_{p,Y}} \hat{\mathcal{O}}_{\mathcal{Y}_x} \xrightarrow{HT_{\mathcal{A}}} \omega_{\mathcal{A}}\otimes_{\mathcal{O}_Y}\hat{\mathcal{O}}_{\mathcal{Y}_x},
\end{equation}
and in fact this map is the natural inclusion.
\end{proposition}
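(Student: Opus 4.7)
\medskip

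The plan is to reduce everything to calculations with the generator $\omega_{\mathrm{can}}$ of $\omega_{\mathrm{dR},\mathcal{Y}_x}$ and with the fake Hasse invariant $\frak{s}$, invoking the $p$-adic Legendre relation from Corollary~\ref{Legendretheorem} at the end.

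First I would establish the factorization. By Proposition~\ref{generator2}, $\omega_{\mathrm{can}}$ is an $\mathcal{O}\mathbb{B}_{\mathrm{dR},\mathcal{Y}_x}^+$-module generator of $\omega_{\mathrm{dR},\mathcal{Y}_x}(\mathcal{Y}_x)$, and by (\ref{canonicaldifferentialcoordinates}) we have $\iota_{\mathrm{dR}}(\omega_{\mathrm{can}})=-\mathbf{z}_{\mathrm{dR}}\alpha_{\infty,1}+\alpha_{\infty,2}$. Proposition~\ref{invertible} shows $\mathbf{z}_{\mathrm{dR}}\in\mathcal{O}\mathbb{B}_{\mathrm{dR},\mathcal{Y}_x}^+(\mathcal{Y}_x)$, so $\iota_{\mathrm{dR}}(\omega_{\mathrm{can}})$ already lies in $T_p\mathcal{A}\otimes_{\hat{\mathbb{Z}}_{p,Y}}\mathcal{O}\mathbb{B}_{\mathrm{dR},\mathcal{Y}_x}^+$ (no $t^{-1}$ needed). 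Since $\iota_{\mathrm{dR}}$ is $\mathcal{O}\mathbb{B}_{\mathrm{dR},\mathcal{Y}_x}^+$-linear and $\omega_{\mathrm{can}}$ generates, the factorization follows. Reducing modulo $t$, I would then observe that the induced map $\omega_{\Delta,\mathcal{Y}_x}\to T_p\mathcal{A}\otimes_{\hat{\mathbb{Z}}_{p,Y}}\mathcal{O}_{\Delta,\mathcal{Y}_x}$ sends the $\mathcal{O}_{\Delta,\mathcal{Y}_x}$-generator $\omega_{\mathrm{can}}\pmod t$ to $-z_{\mathrm{dR}}\alpha_{\infty,1}+\alpha_{\infty,2}$, whose $\alpha_{\infty,2}$-component is $1$; hence this map is injective.

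Next I would compute the composition (\ref{naturalmap}) applied to the global generator $\frak{s}\in\omega_{\mathcal{A}}(\mathcal{Y}_x)$. By (\ref{x'y'definition}),
\[
\iota_{\mathrm{dR}}(\frak{s})=\frac{\mathbf{x}_{\mathrm{dR}}}{t}\alpha_{\infty,1}+\frac{\mathbf{y}_{\mathrm{dR}}}{t}\alpha_{\infty,2},
\]
which by Proposition~\ref{invertible} lies in $T_p\mathcal{A}\otimes_{\hat{\mathbb{Z}}_{p,Y}}\mathcal{O}\mathbb{B}_{\mathrm{dR},\mathcal{Y}_x}^+$. Applying $\theta$ gives $\theta(\mathbf{x}_{\mathrm{dR}}/t)\alpha_{\infty,1}+\theta(\mathbf{y}_{\mathrm{dR}}/t)\alpha_{\infty,2}$ in $T_p\mathcal{A}\otimes_{\hat{\mathbb{Z}}_{p,Y}}\hat{\mathcal{O}}_{\mathcal{Y}_x}$, and then applying $HT_{\mathcal{A}}$, together with the defining relation $HT_{\mathcal{A}}(\alpha_{\infty,2})=\frak{s}=\mathbf{z}\cdot HT_{\mathcal{A}}(\alpha_{\infty,1})$ of the Hodge--Tate period (\ref{relation}), yields
\[
\left(\frac{\theta(\mathbf{x}_{\mathrm{dR}}/t)}{\mathbf{z}}+\theta\!\left(\frac{\mathbf{y}_{\mathrm{dR}}}{t}\right)\right)\frak{s}.
\]
The $p$-adic Legendre relation $\mathbf{x}_{\mathrm{dR}}/\mathbf{z}+\mathbf{y}_{\mathrm{dR}}=t$ of Corollary~\ref{Legendretheorem} divided by $t$ becomes $(\mathbf{x}_{\mathrm{dR}}/t)/\mathbf{z}+\mathbf{y}_{\mathrm{dR}}/t=1$; applying $\theta$ and using that $\mathbf{z}\in\mathcal{O}_{\mathcal{Y}}$ so that it is fixed under $\mathcal{O}_{\mathcal{Y}}\hookrightarrow\mathcal{O}\mathbb{B}_{\mathrm{dR},\mathcal{Y}}^+\xrightarrow{\theta}\hat{\mathcal{O}}_{\mathcal{Y}}$, the scalar collapses to $1$. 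Hence the composition sends $\frak{s}\mapsto\frak{s}$, and by $\mathcal{O}_{\mathcal{Y}_x}$-linearity it is the natural inclusion on all of $\omega_{\mathcal{A}}|_{\mathcal{Y}_x}$.

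There is no serious obstacle here: everything reduces to two bookkeeping facts—the integrality of $\mathbf{x}_{\mathrm{dR}}/t,\mathbf{y}_{\mathrm{dR}}/t,\mathbf{z}_{\mathrm{dR}}$ in $\mathcal{O}\mathbb{B}_{\mathrm{dR},\mathcal{Y}_x}^+$ coming from Proposition~\ref{invertible}, and the $p$-adic Legendre relation of Corollary~\ref{Legendretheorem} which is precisely what forces the coefficient $\theta(\mathbf{x}_{\mathrm{dR}}/t)/\mathbf{z}+\theta(\mathbf{y}_{\mathrm{dR}}/t)$ to equal $1$. The only mildly delicate point is checking that reducing $\iota_{\mathrm{dR}}$ modulo $t$ preserves injectivity; this is handled by exhibiting the image of the generator and inspecting its $\alpha_{\infty,2}$-coefficient.
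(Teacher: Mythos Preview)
Your proof is correct and follows essentially the same approach as the paper. The only minor difference is that for the final claim (that the composition (\ref{naturalmap}) is the natural inclusion), the paper invokes Proposition~\ref{compositionproposition} directly together with the observation $\theta\circ HT_{\mathcal{A}}=HT_{\mathcal{A}}\circ\theta$, whereas you recompute this on $\frak{s}$ via the $p$-adic Legendre relation; since Corollary~\ref{Legendretheorem} is itself derived from Proposition~\ref{compositionproposition}, the two arguments have the same content.
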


\begin{proof}This follows from Proposition (\ref{invertible}): since $\mathbf{x}_{\mathrm{dR}}/t, \mathbf{y}_{\mathrm{dR}}/t \in \mathcal{O}\mathbb{B}_{\mathrm{dR}}^+(\mathcal{Y}_x)$, then for the generator $\frak{s} \in \omega_{\Delta,\mathcal{Y}}(\mathcal{Y}_x)$ we have $\iota_{\mathrm{dR}}(\frak{s}) \in \mathcal{O}\mathbb{B}_{\mathrm{dR},\mathcal{Y}}(\mathcal{Y}_x)$, from which the first statement follows. The second statement follows immediately from the first. For the third statement, first note that
$$\theta \circ HT_{\mathcal{A}} = HT_{\mathcal{A}}\circ \theta$$
and so by Proposition \ref{compositionproposition}, (\ref{naturalmap}) is just the natural inclusion.  
\end{proof}

\begin{lemma}\label{nonvanishing}Consider the derivation
$$\frac{d}{d\mathbf{z}} : \mathcal{O}\mathbb{B}_{\mathrm{dR},\mathcal{Y}}^+ \rightarrow \mathcal{O}\mathbb{B}_{\mathrm{dR},\mathcal{Y}}^+.$$
We have
$$\frac{d}{d\mathbf{z}}{\mathbf{z}_{\mathrm{dR}}} \in \mathcal{O}\mathbb{B}_{\mathrm{dR},\mathcal{Y}}^+(\mathcal{Y}_x \cap \mathcal{Y}_y)^{\times}$$
is invertible. (Note that we need to restrict to $\mathcal{Y}_x \cap \mathcal{Y}_y \subset \mathcal{Y}_x$ in order to define $\frac{d}{d\mathbf{z}}$, since $d\mathbf{z}$ is a regular differential only on $\Omega_{\mathcal{Y}_x}^1|_{\mathcal{Y}_x\cap\mathcal{Y}_y}$.)
\end{lemma}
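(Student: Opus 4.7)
The plan is to leverage the Kodaira–Spencer computation (\ref{KScalc}) already performed in the proof of Proposition \ref{invertible}, together with a straightforward change of variables from $1/\mathbf{z}$ to $\mathbf{z}$ on the overlap $\mathcal{Y}_x\cap\mathcal{Y}_y$. So this is essentially a corollary of (\ref{unit}) and should be short.

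First, I would recall that the Kodaira–Spencer isomorphism sends the square of a generator of $\omega_{\mathrm{dR}}$ to a generator of $\Omega_{\mathcal{Y}_x}^1\otimes_{\mathcal{O}_{\mathcal{Y}}}\mathcal{O}\mathbb{B}_{\mathrm{dR},\mathcal{Y}_x}^+$. Applied to $\frak{s}^{\otimes 2}$, the calculation (\ref{KScalc}) produced the identity (\ref{unit}):
\[-\left(\frac{\mathbf{y}_{\mathrm{dR}}}{t}\right)^2\frac{d}{d(1/\mathbf{z})}\mathbf{z}_{\mathrm{dR}} = -\frac{1}{\mathbf{z}^2}\left(\frac{\mathbf{y}_{\mathrm{dR}}}{t}\right)^2 + \frac{1}{\mathbf{z}^2}\frac{\mathbf{y}_{\mathrm{dR}}}{t} + \mathbf{z}\frac{d}{d(1/\mathbf{z})}\left(\frac{\mathbf{y}_{\mathrm{dR}}}{t}\right) \in \mathcal{O}(\mathcal{Y}_x)^{\times}.\]
By Proposition \ref{invertible}, $\mathbf{y}_{\mathrm{dR}}/t$ is a unit in $\mathcal{O}\mathbb{B}_{\mathrm{dR}}^+(\mathcal{Y}_x)^{\times}$, so dividing through by $(\mathbf{y}_{\mathrm{dR}}/t)^2$ gives immediately that $\tfrac{d}{d(1/\mathbf{z})}\mathbf{z}_{\mathrm{dR}} \in \mathcal{O}\mathbb{B}_{\mathrm{dR}}^+(\mathcal{Y}_x)^{\times}$.

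Next I would pass from the derivation $\tfrac{d}{d(1/\mathbf{z})}$ to $\tfrac{d}{d\mathbf{z}}$ via the chain rule. On $\mathcal{Y}_x\cap\mathcal{Y}_y$ the section $\mathbf{z}$ is a unit of $\mathcal{O}(\mathcal{Y}_x\cap\mathcal{Y}_y)$, so both $d\mathbf{z}$ and $d(1/\mathbf{z})$ are generators of $\Omega^1_{\mathcal{Y}}|_{\mathcal{Y}_x\cap\mathcal{Y}_y}$, and $d(1/\mathbf{z}) = -\mathbf{z}^{-2}\,d\mathbf{z}$. Equivalently, as $\mathbb{B}_{\mathrm{dR},\mathcal{Y}}^+$-linear derivations on $\mathcal{O}\mathbb{B}_{\mathrm{dR},\mathcal{Y}}^+|_{\mathcal{Y}_x\cap\mathcal{Y}_y}$,
\[\frac{d}{d\mathbf{z}} = -\frac{1}{\mathbf{z}^2}\cdot \frac{d}{d(1/\mathbf{z})}.\]
Applying this to $\mathbf{z}_{\mathrm{dR}}$ yields
\[\frac{d}{d\mathbf{z}}\mathbf{z}_{\mathrm{dR}} = -\frac{1}{\mathbf{z}^2}\cdot\frac{d}{d(1/\mathbf{z})}\mathbf{z}_{\mathrm{dR}},\]
which is a product of two units in $\mathcal{O}\mathbb{B}_{\mathrm{dR},\mathcal{Y}}^+(\mathcal{Y}_x\cap\mathcal{Y}_y)^{\times}$: the factor $-1/\mathbf{z}^2$ lies in $\mathcal{O}(\mathcal{Y}_x\cap\mathcal{Y}_y)^{\times}$, and the factor $\tfrac{d}{d(1/\mathbf{z})}\mathbf{z}_{\mathrm{dR}}$ was just shown to be a unit of $\mathcal{O}\mathbb{B}_{\mathrm{dR}}^+(\mathcal{Y}_x)^{\times}$, hence restricts to a unit on $\mathcal{Y}_x\cap\mathcal{Y}_y$. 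This gives the desired invertibility, completing the proof.

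There is no real obstacle: the substantive content (the nonvanishing coming from Kodaira–Spencer being an isomorphism) has already been extracted in Proposition \ref{invertible}. The lemma is simply repackaging that content after switching coordinates to $\mathbf{z}$, with the mild technicality that the coordinate change is only defined where $\mathbf{z}$ is invertible, which is exactly why we must restrict to $\mathcal{Y}_x\cap\mathcal{Y}_y$.
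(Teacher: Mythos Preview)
Your proposal is correct and uses essentially the same idea as the paper: both arguments deduce invertibility of $\frac{d}{d\mathbf{z}}\mathbf{z}_{\mathrm{dR}}$ from the Kodaira--Spencer isomorphism applied to a generator of $\omega_{\mathrm{dR}}$. The only cosmetic difference is that the paper recomputes $\sigma(\omega_{\mathrm{can}}^{\otimes 2}) = -\frac{d}{d\mathbf{z}}\mathbf{z}_{\mathrm{dR}}\otimes d\mathbf{z}$ directly in the coordinate $\mathbf{z}$, whereas you quote the earlier computation (\ref{unit}) of $\sigma(\frak{s}^{\otimes 2})$ in the coordinate $1/\mathbf{z}$, divide out the unit $(\mathbf{y}_{\mathrm{dR}}/t)^2$, and then change variables---which amounts to the same thing since $\omega_{\mathrm{can}} = (t/\mathbf{y}_{\mathrm{dR}})\frak{s}$.
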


\begin{proof}
By Proposition \ref{generator2}, the line 
\begin{align*}\mathcal{O}\mathbb{B}_{\mathrm{dR}}^+(\mathcal{Y}_x) \cong \omega _{\mathrm{dR}}(\mathcal{Y}_x)& \overset{\iota_{\mathrm{dR}}}{\subset} (\mathcal{H}_{\text{\'{e}t}}^1(\mathcal{A})\otimes_{\hat{\mathbb{Z}}_{p,Y}}\mathcal{O}\mathbb{B}_{\mathrm{dR},\mathcal{Y}}^+)(\mathcal{Y}_x)\\
&\underset{\alpha_{\infty}^{-1}}{\xrightarrow{\sim}} (t^{-1}\cdot\mathcal{O}\mathbb{B}_{\mathrm{dR},\mathcal{Y}}^+)^{\oplus 2}(\mathcal{Y}_x)
\end{align*}
is generated by
\begin{equation}\omega_{\mathrm{can}} := \frac{t}{\mathbf{y}_{\mathrm{dR}}}\cdot \frak{s}.
\end{equation}
Then under the Kodaira-Spencer isomorphism \ref{KS1}, and using Theorem \ref{Weilextension} and the equation (\ref{canonicaldifferentialcoordinates}), we have that
\begin{equation}\label{basiselt}\begin{split}\sigma(\omega_{\mathrm{can}}\otimes_{\mathcal{O}\mathbb{B}_{\mathrm{dR},\mathcal{Y}}^+}\omega_{\mathrm{can}}) &= \langle \omega_{\mathrm{can}},\nabla(\omega_{\mathrm{can}})\rangle_{\mathrm{Poin}} = \langle \iota_{\mathrm{dR}}(\omega_{\mathrm{can}}), \nabla(\iota_{\omega_{\mathrm{can}}})\rangle\cdot t^{-1}\\
&=\left\langle -\mathbf{z}_{\mathrm{dR}}\alpha_{\infty,1} + \alpha_{\infty,2}, \nabla(-\mathbf{z}_{\mathrm{dR}}\alpha_{\infty,1} + \alpha_{\infty,2})\right\rangle \cdot t^{-1}\\
&= \left\langle -\mathbf{z}_{\mathrm{dR}}\alpha_{\infty,1} + \alpha_{\infty,2},-\frac{d}{d\mathbf{z}}{\mathbf{z}_{\mathrm{dR}}}\alpha_{\infty,1}\right\rangle\cdot t^{-1}\otimes d\mathbf{z} = -\frac{d}{d\mathbf{z}}{\mathbf{z}_{\mathrm{dR}}}\otimes d\mathbf{z}
\end{split}
\end{equation}
is a generator of $(\mathcal{O}\mathbb{B}_{\mathrm{dR},Y}^+\otimes_{\mathcal{O}_{Y}} \Omega_{Y}^1)(\mathcal{Y}_x \cap \mathcal{Y}_y)$ as an $\mathcal{O}\mathbb{B}_{\mathrm{dR},\mathcal{Y}}^+(\mathcal{Y}_x \cap \mathcal{Y}_y)$-module. Hence 
$$\frac{d}{d\mathbf{z}}{\mathbf{z}_{\mathrm{dR}}} \in \mathcal{O}\mathbb{B}_{\mathrm{dR}}^+(\mathcal{Y}_x \cap \mathcal{Y}_y)^{\times}$$
which is what we wanted to show.

\end{proof}

Note that by the $p$-adic Legendre relation (\ref{periodsrelation}) and Proposition \ref{invertible},
\begin{equation}\mathcal{Y}_x\cap \mathcal{Y}_y \subset \mathcal{Y}_x \cap \{\mathbf{y}_{\mathrm{dR}}/t = 1\} = \mathcal{Y}_{x,\mathrm{dR}}.
\end{equation}

\subsection{The ``horizontal'' lifting of the Hodge-Tate filtration}In this section, we construct a splitting 

Let $\frak{s}^{-1} \in \omega_{\mathcal{A}}^{-1}(\mathcal{Y}_x)$ denote the \emph{unique} section such that 
$$\langle \iota_{\mathrm{dR}}(\frak{s}),\frak{s}^{-1}\rangle = 1.$$
Then $\frak{s}^{-1}$ generates $\omega_{dR}^{-1}|_{\mathcal{Y}_x}$. Using the $p$-adic Legendre relation (\ref{periodsrelation}), we have the exact sequence
\begin{equation}\label{relativeHTsequenceOBdR} 0\rightarrow \omega_{\mathrm{dR}}^{-1}|_{\mathcal{Y}_x}\cdot t \rightarrow T_p\mathcal{A}\otimes_{\hat{\mathbb{Z}}_{p,Y}}\mathcal{O}\mathbb{B}_{\mathrm{dR},\mathcal{Y}_x}^+ \underset{\alpha_{\infty}^{-1}}{\xrightarrow{\sim}} (\mathcal{O}\mathbb{B}_{\mathrm{dR},\mathcal{Y}_x}^+)^{\oplus 2} \rightarrow \omega_{\mathrm{dR}}|_{\mathcal{Y}_x} \rightarrow 0
\end{equation}
where the second arrow is defined by sending $\frak{s}^{-1}t \mapsto \alpha_{\infty,1} - 1/\mathbf{z}\cdot\alpha_{\infty,2}$ and the penultimate arrow is defined by sending $\alpha_{\infty,2} \mapsto \frak{s}$ and $\alpha_{\infty,1} \mapsto 1/\mathbf{z}\cdot \frak{s}$.

By (\ref{periodsrelation}), we have the following relative $\mathcal{O}\mathbb{B}_{\mathrm{dR},\mathcal{Y}_x}^+$-Hodge-Tate decomposition on $\mathcal{Y}_x$:
\begin{equation}\label{HTdecomposition1}\begin{split}&\omega_{\mathrm{dR}}|_{\mathcal{Y}_x} \oplus \left(\omega_{\mathrm{dR}}^{-1}|_{\mathcal{Y}_x}\cdot t\right)= \frak{s}\cdot\mathcal{O}\mathbb{B}_{\mathrm{dR},\mathcal{Y}_x}^+\oplus \frak{s}^{-1}t\cdot\mathcal{O}\mathbb{B}_{\mathrm{dR},\mathcal{Y}_x}^+ \\
&\underset{\sim}{\xrightarrow{\iota_{\mathrm{dR}}}} \left((\mathbf{x}_{\mathrm{dR}}/t\cdot\alpha_{\infty,1} + \mathbf{y}_{\mathrm{dR}}/t\cdot \alpha_{\infty,2})\mathcal{O}\mathbb{B}_{\mathrm{dR},\mathcal{Y}_x}^+\right)\oplus \left((\alpha_{\infty,1} -1/\mathbf{z}\cdot\alpha_{\infty,2})\mathcal{O}\mathbb{B}_{\mathrm{dR},\mathcal{Y}_x}^+\right) \\
&\overset{(\ref{periodsrelation})}{=} \mathcal{O}\mathbb{B}_{\mathrm{dR},\mathcal{Y}_x}^{+,\oplus 2} \underset{\sim}{\xrightarrow{\alpha_{\infty}}}  T_p\mathcal{A}\otimes_{\hat{\mathbb{Z}}_{p,Y}}\mathcal{O}\mathbb{B}_{\mathrm{dR},\mathcal{Y}_x}^+
\end{split}
\end{equation}
\begin{definition}\label{periodsmodtdefinition}Henceforth, let
$$x_{\mathrm{dR}} := \mathbf{x}_{\mathrm{dR}}/t \pmod{t}, \hspace{.5cm} y_{\mathrm{dR}} := \mathbf{y}_{\mathrm{dR}}/t \pmod{t}, \hspace{.5cm} z_{\mathrm{dR}} := \mathbf{z}_{\mathrm{dR}} \pmod{t}.$$
\end{definition}

\begin{definition}\label{diagonalringdefinition}Henceforth, let 
$$\mathcal{O}_{\Delta,\mathcal{Y}} = \mathcal{O}\mathbb{B}_{\mathrm{dR},\mathcal{Y}}^+/(t)$$
and
$$\omega_{\Delta,\mathcal{Y}} := \omega_{\mathcal{A}}\otimes_{\mathcal{O}_Y}\mathcal{O}_{\Delta,\mathcal{Y}}.$$
We will also let $\omega_{\mathrm{can}}$ denote the image of $\omega_{\mathrm{can}}$ from Definition \ref{canonicaldifferentialdef} modulo $(t)$.
\end{definition}

Reducing (\ref{HTdecomposition1}) modulo $(t)$, we get a relative $\mathcal{O}_{\Delta,\mathcal{Y}}|_{\mathcal{Y}_x}$-Hodge-Tate decomposition on $\mathcal{Y}_x$:
\begin{equation}\label{HTdecomposition2}\begin{split}&\omega_{\Delta,\mathcal{Y}}|_{\mathcal{Y}_x} \oplus \omega_{\Delta,\mathcal{Y}}^{-1}\cdot t|_{\mathcal{Y}_x} = \frak{s}\cdot\mathcal{O}_{\Delta,\mathcal{Y}}|_{\mathcal{Y}_x} \oplus \frak{s}^{-1}t\cdot\mathcal{O}_{\Delta,\mathcal{Y}}|_{\mathcal{Y}_x} \\
&\underset{\sim}{\xrightarrow{\iota_{\mathrm{dR}}}} \left((x_{\mathrm{dR}}\cdot\alpha_{\infty,1} + y_{\mathrm{dR}}\cdot \alpha_{\infty,2})\mathcal{O}_{\Delta,\mathcal{Y}}|_{\mathcal{Y}_x} \right)\oplus \left((\alpha_{\infty,1} -1/\mathbf{z}\cdot\alpha_{\infty,2})\mathcal{O}_{\Delta,\mathcal{Y}}|_{\mathcal{Y}_x} \right) \\
&\overset{(\ref{diagonal})}{=} \mathcal{O}_{\Delta,\mathcal{Y}}|_{\mathcal{Y}_x} ^{\oplus 2} \underset{\sim}{\xrightarrow{\alpha_{\infty}}}  T_p\mathcal{A}\otimes_{\hat{\mathbb{Z}}_{p,Y}}\mathcal{O}_{\Delta,\mathcal{Y}}|_{\mathcal{Y}_x} 
\end{split}
\end{equation}

\begin{definition}Henceforth, denote the projection to the first factor above the above splitting by
\begin{equation}\label{splitdef2}\mathrm{split} :  T_p\mathcal{A}\otimes_{\hat{\mathbb{Z}}_{p,Y}}\mathcal{O}_{\Delta,\mathcal{Y}}|_{\mathcal{Y}_x} \twoheadrightarrow \omega_{\Delta,\mathcal{Y}}|_{\mathcal{Y}_x}.
\end{equation}
\end{definition}

One can use $\mathrm{split}$ to define a $p$-adic Maass-Shimura operator; however, for the purposes of defining a differential operator with satisfactory properties, such as coinciding in value at CM points with those of the complex Maass-Shimura operator, it will be necessary to define a ``horizontal'' lifting of the relative Hodge-Tate filtration, i.e. such that the lifting is generated by an element contained in the horizontal sections of the Gauss-Manin connection $\nabla$. To be more precise, this means that we will define a line $\mathcal{L} \subset T_p\mathcal{A} \otimes_{\hat{\mathbb{Z}}_{p,Y}} \mathcal{O}_{\Delta,\mathcal{Y}_x}$ such that $\theta(\mathcal{L})$ coincides with the usual Hodge filtration in (\ref{relativeHTfiltration2}), and that this line is horizontal means that $\nabla_w(\mathcal{L}) \subset \mathcal{L}$ for any non-vanishing section $w$ of $\Omega_{\mathcal{Y}}^1 \otimes_{\mathcal{O}_{\mathcal{Y}}} \mathcal{O}_{\Delta,\mathcal{Y}_x}$. That our $\mathcal{L}$ lifts the Hodge filtration will be immediate from its construction (see (\ref{modtheta})), and we prove the horizontalness of $\mathcal{L}$ in Proposition \ref{holomorphicallyhorizontal}; in fact, we show that $\mathcal{L}$ is the unique such line which has both these properties (see Proposition \ref{uniquehorizontal}).

Note that we have the natural map
\begin{equation}\label{horizontalsections}j : \mathcal{O}_{\mathcal{Y}} \rightarrow \hat{\mathcal{O}}_{\mathcal{Y}} \cong \mathbb{B}_{\mathrm{dR},\mathcal{Y}}^+/(t)  \subset \mathcal{O}\mathbb{B}_{\mathrm{dR},\mathcal{Y}}^+/(t)
\end{equation}
which is a section of $\theta : \mathcal{O}\mathbb{B}_{\mathrm{dR},\mathcal{Y}}^+ \twoheadrightarrow \hat{\mathcal{O}}_{\mathcal{Y}}$, i.e.
\begin{equation}\label{jsection}\theta \circ j = \mathrm{id},
\end{equation}
since $\mathcal{O}_{\mathcal{Y}} \subset \mathcal{O}\mathbb{B}_{\mathrm{dR},\mathcal{Y}}^+ \overset{\theta}{\twoheadrightarrow} \hat{\mathcal{O}}_{\mathcal{Y}}$ is the natural inclusion. This embeds $\hat{\mathcal{O}}_{\mathcal{Y}}$ into the horizontal sections of the $\hat{\mathcal{O}}_{\mathcal{Y}}$-linear connection
$$\nabla : \mathcal{O}\mathbb{B}_{\mathrm{dR},\mathcal{Y}}^+/(t) \rightarrow \mathcal{O}\mathbb{B}_{\mathrm{dR},\mathcal{Y}}^+/(t) \otimes \Omega_{\mathcal{Y}}^1$$
which is induced by the $\mathbb{B}_{\mathrm{dR},\mathcal{Y}}^+$-linear connection 
$$\nabla : \mathcal{O}\mathbb{B}_{\mathrm{dR},\mathcal{Y}}^+ \rightarrow \mathcal{O}\mathbb{B}_{\mathrm{dR},\mathcal{Y}}^+ \otimes \Omega_{\mathcal{Y}}^1$$
since $t \in \mathbb{B}_{\mathrm{dR}}^+(\mathcal{Y})$ is a horizontal section. 

\begin{definition}\label{horizontaldefinition}
Let 
$$\bar{z} = j(\mathbf{z})$$
and note that
\begin{equation}\label{modtheta}\theta(\bar{z}) = \hat{\mathbf{z}} = \theta(\mathbf{z})
\end{equation}
and in particular
$$\mathbf{z}-\bar{z} \in \ker\theta.$$
\end{definition}

We may, at times, conflate the notations $\mathbf{z}$ and $\hat{\mathbf{z}}$, as they denote the same section under the natural map $\mathcal{O} \rightarrow \hat{\mathcal{O}}$. 

\subsection{The ``horizontal'' relative Hodge-Tate decomposition}

\begin{proposition}We have
\begin{equation}\label{diagonal}x_{\mathrm{dR}}/\bar{z} + y_{\mathrm{dR}} \in \mathcal{O}_{\Delta,\mathcal{Y}}(\mathcal{Y}_x)^{\times}.
\end{equation}
\end{proposition}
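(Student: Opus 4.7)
The plan is to combine the $p$-adic Legendre relation of Corollary \ref{Legendretheorem} with the identity $\theta \circ j = \mathrm{id}$ from (\ref{jsection}) to show that $\theta(x_{\mathrm{dR}}/\bar z + y_{\mathrm{dR}}) = 1$, and then to promote this to invertibility using the local description of $\mathcal{O}_{\Delta,\mathcal{Y}}$ from Proposition \ref{localdescription}. To set up, note that although $\mathbf{z}$ itself is only a section of $\mathcal{O}_{\mathcal{Y}}$ over $\mathcal{Y}_y$, its reciprocal $1/\mathbf{z}$ extends to a global section of $\mathcal{O}_{\mathcal{Y}}$ over $\mathcal{Y}_x$ (vanishing on the Igusa locus by Proposition \ref{ordinaryperiod}), realized as the pullback under $\pi_{\mathrm{HT}}$ of the affine coordinate on $\mathbb{P}^1$ vanishing at $[0:1]$. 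Accordingly I interpret $x_{\mathrm{dR}}/\bar z$ as the global section $x_{\mathrm{dR}}\cdot j(1/\mathbf{z})$ of $\mathcal{O}_{\Delta,\mathcal{Y}}$ on $\mathcal{Y}_x$.

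By Proposition \ref{invertible}, both $\mathbf{x}_{\mathrm{dR}}/t$ and $\mathbf{y}_{\mathrm{dR}}/t$ lie in $\mathcal{O}\mathbb{B}_{\mathrm{dR},\mathcal{Y}}^+(\mathcal{Y}_x)$, so dividing the Legendre relation $\mathbf{x}_{\mathrm{dR}}/\mathbf{z} + \mathbf{y}_{\mathrm{dR}} = t$ by $t$ yields
\[
(\mathbf{x}_{\mathrm{dR}}/t)\cdot(1/\mathbf{z}) + \mathbf{y}_{\mathrm{dR}}/t \;=\; 1
\]
in $\mathcal{O}\mathbb{B}_{\mathrm{dR},\mathcal{Y}}^+(\mathcal{Y}_x)$. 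Since $\theta(t)=0$, the map $\theta$ factors through $\mathcal{O}_{\Delta,\mathcal{Y}} = \mathcal{O}\mathbb{B}_{\mathrm{dR},\mathcal{Y}}^+/(t)$, and by (\ref{inclusion}) the composition $\mathcal{O}_{\mathcal{Y}} \hookrightarrow \mathcal{O}\mathbb{B}_{\mathrm{dR},\mathcal{Y}}^+ \xrightarrow{\theta} \hat{\mathcal{O}}_{\mathcal{Y}}$ is the natural map, so $\theta(1/\mathbf{z}) = 1/\mathbf{z}$. Applying $\theta$ to the displayed identity therefore gives $\theta(x_{\mathrm{dR}})\cdot(1/\mathbf{z}) + \theta(y_{\mathrm{dR}}) = 1$ in $\hat{\mathcal{O}}_{\mathcal{Y}}(\mathcal{Y}_x)$. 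Since $\theta \circ j = \mathrm{id}$ by (\ref{jsection}), we also have $\theta(j(1/\mathbf{z})) = 1/\mathbf{z}$, and combining yields
\[
\theta(x_{\mathrm{dR}}/\bar z + y_{\mathrm{dR}}) \;=\; \theta(x_{\mathrm{dR}})\cdot(1/\mathbf{z}) + \theta(y_{\mathrm{dR}}) \;=\; 1.
\]

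To conclude invertibility, cover $\mathcal{Y}_x$ by pro\'{e}tale opens $\tilde{\mathcal{U}}$ of the type produced by Proposition \ref{localdescription}, so that $\mathcal{O}\mathbb{B}_{\mathrm{dR},\tilde{\mathcal{U}}}^+ \cong \mathbb{B}_{\mathrm{dR},\tilde{\mathcal{U}}}^+\llbracket X\rrbracket$ and hence $\mathcal{O}_{\Delta,\tilde{\mathcal{U}}} \cong \hat{\mathcal{O}}_{\tilde{\mathcal{U}}}\llbracket X\rrbracket$, with $\theta$ corresponding to evaluation at $X = 0$ and $\ker\theta = (X)$. On each such $\tilde{\mathcal{U}}$ the restriction of our section is of the form $1 + X\cdot g(X)$ for some $g \in \hat{\mathcal{O}}_{\tilde{\mathcal{U}}}\llbracket X\rrbracket$, hence a unit by the geometric series $\sum_{n\ge 0}(-Xg)^n$, which converges $X$-adically. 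By uniqueness of multiplicative inverses, these local inverses glue to a global inverse in $\mathcal{O}_{\Delta,\mathcal{Y}}(\mathcal{Y}_x)$, completing the proof. There is no serious obstacle; the only real subtlety is the global interpretation of $x_{\mathrm{dR}}/\bar z$ on $\mathcal{Y}_x$, which is handled in the first paragraph.
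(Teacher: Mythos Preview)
Your proof is correct and follows essentially the same approach as the paper: both establish that $x_{\mathrm{dR}}/\bar z + y_{\mathrm{dR}} \equiv 1 \pmod{\ker\theta}$ via the $p$-adic Legendre relation and then deduce invertibility. The only difference is presentational: the paper expands $1/\mathbf{z}$ as a geometric series in $\mathbf{z}/\bar z - 1 \in \ker\theta$ to exhibit the element explicitly as $1$ plus something in $\ker\theta$, whereas you compute $\theta$ of the element directly and then invoke the local power-series structure of $\mathcal{O}_{\Delta}$; your handling of $1/\bar z$ as $j(1/\mathbf{z})$ is in fact slightly cleaner on the locus where $1/\mathbf{z}$ vanishes.
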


\begin{proof}By \ref{periodsrelation}, we have
\begin{align*}&\frac{x_{\mathrm{dR}}}{\bar{z}}\left(-\left(\frac{\mathbf{z}}{\bar{z}}-1\right) + \left(\frac{\mathbf{z}}{\bar{z}}-1\right)^2 - \cdots\right) + \frac{x_{\mathrm{dR}}}{\bar{z}} + y_{\mathrm{dR}} \\
&= x_{\mathrm{dR}}/(\bar{z} + (\mathbf{z}-\bar{z})) + y_{\mathrm{dR}} = x_{\mathrm{dR}}/\mathbf{z} + y_{\mathrm{dR}} = 1
\end{align*}
and so
$$\frac{x_{\mathrm{dR}}}{\bar{z}} + y_{\mathrm{dR}} = 1 + \frac{x_{\mathrm{dR}}}{\bar{z}}\left(\left(\frac{\mathbf{z}}{\bar{z}}-1\right) - \left(\frac{\mathbf{z}}{\bar{z}}-1\right)^2 + \cdots \right) \in \mathcal{O}_{\Delta,\mathcal{Y}}(\mathcal{Y}_x)^{\times}$$
since $\left(\frac{\mathbf{z}}{\bar{z}}-1\right) \in \ker\theta$.
\end{proof}

Henceforth, let
\begin{equation}\label{horizontalkernel}\mathcal{L} := \left((\alpha_{\infty,1} -1/\bar{z}\cdot\alpha_{\infty,2})\mathcal{O}_{\Delta,\mathcal{Y}}|_{\mathcal{Y}_x} \right).
\end{equation}
Now we have the following $\mathcal{O}_{\Delta,\mathcal{Y}}|_{\mathcal{Y}_x}$-Hodge-Tate decomposition
\begin{equation}\label{HTdecomposition3}\begin{split}&\omega_{\Delta,\mathcal{Y}}|_{\mathcal{Y}_x}\oplus \mathcal{L}\\
&\underset{\sim}{\xrightarrow{\iota_{\mathrm{dR}}}}\left((x_{\mathrm{dR}}\cdot\alpha_{\infty,1} + y_{\mathrm{dR}}\cdot \alpha_{\infty,2})\mathcal{O}_{\Delta,\mathcal{Y}}|_{\mathcal{Y}_x} \right)\oplus \left((\alpha_{\infty,1} -1/\bar{z}\cdot\alpha_{\infty,2})\mathcal{O}_{\Delta,\mathcal{Y}}|_{\mathcal{Y}_x} \right) \\
&\overset{(\ref{diagonal})}{=} \mathcal{O}_{\Delta,\mathcal{Y}}|_{\mathcal{Y}_x} ^{\oplus 2} \underset{\sim}{\xrightarrow{\alpha_{\infty}}}  T_p\mathcal{A}\otimes_{\hat{\mathbb{Z}}_{p,Y}}\mathcal{O}_{\Delta,\mathcal{Y}}|_{\mathcal{Y}_x} 
\end{split}
\end{equation}
where the first factor in the above decomposition is given by 
$$\iota_{\mathrm{dR}} : \omega_{\Delta,\mathcal{Y}} \hookrightarrow T_p\mathcal{A} \otimes_{\hat{\mathbb{Z}}_{p,Y}}\mathcal{O}_{\Delta,\mathcal{Y}_x}.$$

\begin{definition}Henceforth, denote the projection to the first factor above the above splitting by
\begin{equation}\label{splitdef}\overline{\mathrm{split}} :  T_p\mathcal{A}\otimes_{\hat{\mathbb{Z}}_{p,Y}}\mathcal{O}_{\Delta,\mathcal{Y}}|_{\mathcal{Y}_x} \twoheadrightarrow \omega_{\Delta,\mathcal{Y}}|_{\mathcal{Y}_x}.
\end{equation}
\end{definition}

\begin{proposition}\label{holomorphicallyhorizontal}Let $U \rightarrow \mathcal{Y}_x$ be any pro\'{e}tale cover. We have that $\mathcal{L}|_U$ is horizontal for $\nabla$. That is, for any nonvanishing $w \in \Omega_{\mathcal{Y}}^1\otimes_{\mathcal{O}_{\mathcal{Y}}} \mathcal{O}_{\Delta,\mathcal{Y}_x}(U)$, we have
$$\nabla_w(\mathcal{L}|_U) \subset \mathcal{L}|_U.$$
\end{proposition}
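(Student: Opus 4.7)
The plan is to show that $\mathcal{L}|_U$ admits a horizontal generator, namely $\alpha_{\infty,1} - (1/\bar{z})\alpha_{\infty,2}$ itself, from which the result follows by the Leibniz rule. More precisely, I will argue that $\nabla(\alpha_{\infty,1} - (1/\bar{z})\alpha_{\infty,2}) = 0$ as a section of $T_p\mathcal{A} \otimes_{\hat{\mathbb{Z}}_{p,Y}} \mathcal{O}_{\Delta,\mathcal{Y}} \otimes_{\mathcal{O}_{\mathcal{Y}}} \Omega_{\mathcal{Y}}^1$ on $\mathcal{Y}_x$. Granting this, for any local section $f(\alpha_{\infty,1} - (1/\bar{z})\alpha_{\infty,2})$ of $\mathcal{L}|_U$ and any $w$, the Leibniz rule gives
$$\nabla_w\bigl(f(\alpha_{\infty,1} - (1/\bar{z})\alpha_{\infty,2})\bigr) = (\nabla_w f)\cdot (\alpha_{\infty,1} - (1/\bar{z})\alpha_{\infty,2}) \in \mathcal{L}|_U,$$
which is the desired conclusion.

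The horizontality of the generator rests on two independent observations. First, $\alpha_{\infty,1}$ and $\alpha_{\infty,2}$ are sections of the $\hat{\mathbb{Z}}_{p,\mathcal{Y}}$-local system $T_p\mathcal{A}$, hence are horizontal for the connection on $T_p\mathcal{A} \otimes_{\hat{\mathbb{Z}}_{p,Y}} \mathcal{O}\mathbb{B}_{\mathrm{dR},Y}^+$; indeed, via the relative de Rham comparison (\ref{comparison}), the subsheaf of horizontal sections is precisely $T_p\mathcal{A} \otimes_{\hat{\mathbb{Z}}_{p,Y}} \mathbb{B}_{\mathrm{dR},Y}^+$. Since $t$ is itself horizontal, this property passes to the induced connection on the quotient $T_p\mathcal{A} \otimes_{\hat{\mathbb{Z}}_{p,Y}} \mathcal{O}_{\Delta,\mathcal{Y}}$. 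Second, I claim $\nabla(\bar{z}) = 0$ in $\mathcal{O}_{\Delta,\mathcal{Y}} \otimes_{\mathcal{O}_{\mathcal{Y}}} \Omega_{\mathcal{Y}}^1$. By Definition \ref{horizontaldefinition}, $\bar{z} = j(\mathbf{z})$ lies in the image of the embedding $j : \mathcal{O}_{\mathcal{Y}} \rightarrow \hat{\mathcal{O}}_{\mathcal{Y}} \cong \mathbb{B}_{\mathrm{dR},\mathcal{Y}}^+/(t) \subset \mathcal{O}_{\Delta,\mathcal{Y}}$ from (\ref{horizontalsections}). Because the connection on $\mathcal{O}\mathbb{B}_{\mathrm{dR},\mathcal{Y}}^+$ is $\mathbb{B}_{\mathrm{dR},\mathcal{Y}}^+$-linear and $t$ is horizontal, the induced quotient connection on $\mathcal{O}_{\Delta,\mathcal{Y}}$ is $\hat{\mathcal{O}}_{\mathcal{Y}}$-linear, and therefore annihilates the whole subsheaf $\hat{\mathcal{O}}_{\mathcal{Y}} \subset \mathcal{O}_{\Delta,\mathcal{Y}}$. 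In particular $\nabla(\bar{z}) = 0$, and on $\mathcal{Y}_x$ (where $1/\mathbf{z}$ is a regular section of $\mathcal{O}_{\mathcal{Y}}$) this yields $\nabla(1/\bar{z}) = 0$ as well, since $1/\bar{z} = j(1/\mathbf{z})$.

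Combining these via the Leibniz rule,
$$\nabla(\alpha_{\infty,1} - (1/\bar{z})\alpha_{\infty,2}) = \nabla(\alpha_{\infty,1}) - \nabla(1/\bar{z})\cdot \alpha_{\infty,2} - (1/\bar{z})\nabla(\alpha_{\infty,2}) = 0,$$
as desired. I do not anticipate any substantive obstacle: the argument is essentially an unwinding of the fact that $\bar{z}$ was defined precisely to land in the horizontal subsheaf $\hat{\mathcal{O}}_{\mathcal{Y}} \subset \mathcal{O}_{\Delta,\mathcal{Y}}$, so that replacing the non-horizontal Hodge-Tate period $\mathbf{z}$ in the defining generator of the relative Hodge-Tate line (\ref{relativeHTsequenceOBdR}) by $\bar{z}$ produces a line which is automatically preserved by $\nabla$.
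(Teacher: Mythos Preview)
Your proof is correct and follows essentially the same approach as the paper: both verify that the generator $\alpha_{\infty,1} - (1/\bar{z})\alpha_{\infty,2}$ is horizontal and then conclude via the Leibniz rule. Your version is slightly more explicit in justifying why $\nabla(1/\bar{z}) = 0$ (namely, because $\bar{z}$ lies in the horizontal subsheaf $\hat{\mathcal{O}}_{\mathcal{Y}} \subset \mathcal{O}_{\Delta,\mathcal{Y}}$), whereas the paper leaves this step implicit.
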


\begin{proof}
Note that the kernel $\mathcal{L}$ of $\overline{\mathrm{split}}$ is generated by the section $\alpha_{\infty,1} - 1/\bar{z}\cdot \alpha_{\infty,2}$ by definition, and so we can write any section of $\mathcal{L}|_U$ as $f \cdot (\alpha_{\infty,1} - 1/\bar{z}\cdot \alpha_{\infty,2})$. Then
\begin{align*}\nabla_w(f\cdot (\alpha_{\infty,1} - 1/\bar{z}\cdot \alpha_{\infty,2})) &= \nabla_w(f)\cdot (\alpha_{\infty,1} - 1/\bar{z}\cdot \alpha_{\infty,2}) + f\cdot \nabla(\alpha_{\infty,1} - 1/\bar{z}\cdot \alpha_{\infty,2}) \\
&= \nabla_w(f)\cdot (\alpha_{\infty,1} - 1/\bar{z}\cdot \alpha_{\infty,2})
\end{align*}
which is a section of $\mathcal{L}|_U$. 
\end{proof}

\begin{proposition}\label{uniquehorizontal}$\mathcal{L}$ is the unique line in $T_p\mathcal{A}\otimes_{\hat{\mathbb{Z}}_{p,Y}} \mathcal{O}_{\Delta,\mathcal{Y}_x}$ with $\theta(\mathcal{L}) = \omega_{\mathcal{A}}^{-1} \otimes_{\mathcal{O}_Y} \hat{\mathcal{O}}_{\mathcal{Y}_x}$ (i.e. $\mathcal{L}$ lifts the Hodge-Tate filtration as in (\ref{relativeHTfiltration2})), and which is horizontal in the sense of Proposition \ref{holomorphicallyhorizontal}.
\end{proposition}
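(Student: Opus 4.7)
\Proofs
The plan is to parametrize all candidate lifts $\mathcal{L}'$ by their generators in the $\alpha_{\infty,1},\alpha_{\infty,2}$ frame and then use horizontality to pin down a single candidate.

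First, I would note that a rank-one direct summand $\mathcal{L}'\subset T_p\mathcal{A}\otimes_{\hat{\mathbb{Z}}_{p,Y}}\mathcal{O}_{\Delta,\mathcal{Y}_x}$ whose image under $\theta$ is the Hodge-Tate line $\omega_{\mathcal{A}}^{-1}\otimes_{\mathcal{O}_Y}\hat{\mathcal{O}}_{\mathcal{Y}_x}(1) = (\alpha_{\infty,1}-(1/\hat{\mathbf{z}})\alpha_{\infty,2})\hat{\mathcal{O}}_{\mathcal{Y}_x}$ has a generator of the form $\alpha_{\infty,1}-g\cdot\alpha_{\infty,2}$ with $g\in\mathcal{O}_{\Delta,\mathcal{Y}_x}$ and $\theta(g)=1/\hat{\mathbf{z}}$. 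Indeed, writing any generator as $a_0\alpha_{\infty,1}+b_0\alpha_{\infty,2}$, the condition on $\theta(\mathcal{L}')$ forces $\theta(a_0)\in\hat{\mathcal{O}}_{\mathcal{Y}_x}^{\times}$; by the local presentation $\mathcal{O}_{\Delta,\mathcal{Y}_x}\cong\hat{\mathcal{O}}_{\mathcal{Y}_x}\llbracket X\rrbracket$ of Proposition \ref{localdescription} (in which $\ker\theta$ is the augmentation ideal), elements with unit $\theta$-image are themselves units, so $a_0\in\mathcal{O}_{\Delta,\mathcal{Y}_x}^{\times}$ and I may rescale by $a_0^{-1}$, setting $g=-b_0/a_0$.

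Second, I would impose horizontality. Since $\alpha_{\infty,1},\alpha_{\infty,2}$ lie in the $\hat{\mathbb{Z}}_{p,Y}$-local system $T_p\mathcal{A}\subset T_p\mathcal{A}\otimes\mathcal{O}\mathbb{B}_{\mathrm{dR}}^+$ and are therefore horizontal for the tensor-product connection, I compute
\begin{equation*}
\nabla(\alpha_{\infty,1}-g\alpha_{\infty,2}) = -\nabla(g)\cdot\alpha_{\infty,2}.
\end{equation*}
For this to lie in $\mathcal{L}'\otimes_{\mathcal{O}_{\mathcal{Y}}}\Omega_{\mathcal{Y}}^1$ it must equal $h\cdot(\alpha_{\infty,1}-g\alpha_{\infty,2})$ for some $h\in\mathcal{O}_{\Delta,\mathcal{Y}_x}\otimes\Omega_{\mathcal{Y}}^1$; comparing $\alpha_{\infty,1}$-coefficients forces $h=0$, whence $\nabla(g)=0$.

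Third, I would identify the horizontal sections of $\mathcal{O}_{\Delta,\mathcal{Y}_x}$ with $j(\hat{\mathcal{O}}_{\mathcal{Y}_x})$. By Proposition \ref{localdescription}, on pro-étale neighborhoods $\tilde{\mathcal{U}}$ one has $\mathcal{O}\mathbb{B}_{\mathrm{dR},\tilde{\mathcal{U}}}^+\cong\mathbb{B}_{\mathrm{dR},\tilde{\mathcal{U}}}^+\llbracket X\rrbracket$ with $X=T\otimes 1-1\otimes[T^{\flat}]$ and $\nabla(X)=dT$, so the $\mathbb{B}_{\mathrm{dR}}^+$-linear horizontal sections are $\mathbb{B}_{\mathrm{dR},\tilde{\mathcal{U}}}^+$; reducing modulo $t$ yields $\mathbb{B}_{\mathrm{dR}}^+/(t)=\hat{\mathcal{O}}_{\tilde{\mathcal{U}}}$, which is exactly the image of $j$. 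Since $\theta\circ j=\mathrm{id}$ by (\ref{jsection}) and $j$ is a ring homomorphism, the horizontality $\nabla(g)=0$ together with $\theta(g)=1/\hat{\mathbf{z}}$ forces
\begin{equation*}
g = j(\theta(g)) = j(1/\hat{\mathbf{z}}) = 1/j(\hat{\mathbf{z}}) = 1/\bar{z},
\end{equation*}
so $\mathcal{L}'=\mathcal{L}$. The main obstacle is the third step: rigorously pinning down the horizontal sections of $\mathcal{O}_{\Delta,\mathcal{Y}_x}$ globally, which uses the full strength of the local structure theorem and some care with pro-étale descent; once that is in hand, the first two steps are essentially formal frame manipulations. \qed
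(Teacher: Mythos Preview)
Your proof is correct and follows essentially the same approach as the paper: normalize a generator of $\mathcal{L}'$ to the form $\alpha_{\infty,1}-g\alpha_{\infty,2}$, observe that horizontality forces $\nabla(g)=0$, and then use that the horizontal sections of $\mathcal{O}_{\Delta,\mathcal{Y}_x}$ are exactly $j(\hat{\mathcal{O}}_{\mathcal{Y}_x})$ together with $\theta\circ j=\mathrm{id}$ to conclude $g=1/\bar{z}$. The only cosmetic difference is that the paper writes $g=1/\bar{z}+X$ with $X\in\ker\theta$ and shows $X=0$, whereas you argue directly that $g=j(\theta(g))$; your added justification that units in $\hat{\mathcal{O}}_{\mathcal{Y}_x}$ lift to units in $\mathcal{O}_{\Delta,\mathcal{Y}_x}$ via the power-series description is a welcome detail the paper leaves implicit.
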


\begin{proof}For any line $\mathcal{L}' \subset T_p\mathcal{A}\otimes_{\hat{\mathbb{Z}}_{p,Y}} \mathcal{O}_{\Delta,\mathcal{Y}_x}$ with $\theta(\mathcal{L}') = \omega_{\mathcal{A}}^{-1} \otimes_{\mathcal{O}_Y} \hat{\mathcal{O}}_{\mathcal{Y}_x} = \langle \alpha_{\infty,1} - 1/\hat{\mathbf{z}}\alpha_{\infty,2}\rangle \cdot \hat{\mathcal{O}}_{\mathcal{Y}_x}$, we must have $\mathcal{L}' = \langle\alpha_{\infty,1} - (1/\bar{z} + X) \alpha_{\infty,2}\rangle \mathcal{O}_{\Delta,\mathcal{Y}_x}$ where 
\begin{equation}\label{Xinkernel}X \in \ker(\theta : \mathcal{O}_{\Delta,\mathcal{Y}_x} \twoheadrightarrow \hat{\mathcal{O}}_{\mathcal{Y}_x}).
\end{equation}
From the calculation in the proof of Proposition (\ref{holomorphicallyhorizontal}), we see that $\mathcal{L}$ is horizontal if and only if $X$ is a horizontal section of $\nabla : \mathcal{O}_{\Delta,\mathcal{Y}_x} \rightarrow \mathcal{O}_{\Delta,\mathcal{Y}_x} \otimes_{\mathcal{O}_{\mathcal{Y}_x}} \Omega_{\mathcal{Y}_x}^1$ (recall that $X$ being a horizontal section means $\nabla(X) = 0$). However, the sheaf of horizontal sections of $\nabla$ (by its construction) is simply the subsheaf $j : \hat{\mathcal{O}}_{\mathcal{Y}_x} \hookrightarrow \mathcal{O}_{\Delta,\mathcal{Y}_x}$ as defined in (\ref{horizontalsections}), so if $X$ is horizontal we have $X \in j(\hat{\mathcal{O}}_{\mathcal{Y}_x})$, say $X = j(x)$. But from (\ref{jsection}), we see that $\theta(X) = \theta(j(x)) = x$, and so from (\ref{Xinkernel}) we have $0 = \theta(X) = x$, which implies $X = j(x) = 0$. 
\end{proof}

By direct calculation, on $\mathcal{Y}_x$ we have
\begin{equation}\label{splitcalculation}\overline{\mathrm{split}}(\alpha_{\infty,1}) = \frac{1}{\bar{z}-{z_{\mathrm{dR}}}}\cdot \omega_{\mathrm{can}}
\end{equation}
and
\begin{equation}\label{canonicaldifferentialcalculation}\nabla(\omega_{\mathrm{can}}) = \nabla(\iota_{\mathrm{dR}}(\omega_{\mathrm{can}})) = \nabla(-z_{\mathrm{dR}}\cdot\alpha_{\infty,1} + \alpha_{\infty,2}) = -\alpha_{\infty,1}\cdot dz_{\mathrm{dR}} \xmapsto{\overline{\mathrm{split}}} \frac{1}{{z_{\mathrm{dR}}}-\bar{z}}\cdot \omega_{\mathrm{can}}
\end{equation}
where
$$dz_{\mathrm{dR}} := \nabla(z_{\mathrm{dR}}).$$

\begin{proposition}\label{STcoincide}On the locus $\mathcal{Y}^{\mathrm{Ig}} \subset \mathcal{Y}_x$, we have that 
$$d\mathbf{z}_{\mathrm{dR}} = d\log T \in \Omega^1(\mathcal{Y}^{\mathrm{Ig}})$$ 
where $T$ is the Serre-Tate coordinate defined in the statement of Theorem \ref{SerreTatethm}. 
\end{proposition}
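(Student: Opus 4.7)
The plan is to read off both sides of the claimed equality as Kodaira--Spencer images of one and the same quadratic differential $\omega_{\mathrm{can}}^{\otimes 2}$ restricted to $\mathcal{Y}^{\mathrm{Ig}}$.

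First I would identify the canonical differentials on the Igusa locus. By Proposition \ref{ordinaryperiod} the Hodge--Tate period satisfies $1/\mathbf{z} = 0$ on $\mathcal{Y}^{\mathrm{Ig}}$, and consequently the $p$-adic Legendre relation (\ref{periodsrelation}) forces $\mathbf{y}_{\mathrm{dR}}/t = 1$ there. Plugging this into the definition $\omega_{\mathrm{can}} = (t/\mathbf{y}_{\mathrm{dR}})\cdot\frak{s}$ collapses it to $\frak{s}|_{\mathcal{Y}^{\mathrm{Ig}}}$, which is Katz's canonical differential by Definition \ref{Katzcanonicaldifferential}. Thus $\omega_{\mathrm{can}}|_{\mathcal{Y}^{\mathrm{Ig}}} = \omega_{\mathrm{can}}^{\mathrm{Katz}}$ (this is just Proposition \ref{canonicalextension}, restated here to set up the comparison).

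Next I would apply Kodaira--Spencer in two ways. Using the $\mathcal{O}\mathbb{B}_{\mathrm{dR}}^+$-linearly extended Kodaira--Spencer map (\ref{KS2}), the calculation (\ref{canonicaldifferentialKScalculation})---whose inputs are only $\iota_{\mathrm{dR}}(\omega_{\mathrm{can}}) = -\mathbf{z}_{\mathrm{dR}}\alpha_{\infty,1}+\alpha_{\infty,2}$, the horizontality $\nabla(\alpha_{\infty,i})=0$, and $\langle\alpha_{\infty,2},\alpha_{\infty,1}\rangle = t$---produces
$$\sigma(\omega_{\mathrm{can}}^{\otimes 2}) = -d\mathbf{z}_{\mathrm{dR}}.$$
On the other hand, Katz's classical theorem (recorded in Definition \ref{Katzcanonicaldifferential}) gives
$$\sigma((\omega_{\mathrm{can}}^{\mathrm{Katz}})^{\otimes 2}) = d\log T$$
for the Serre--Tate coordinate $T$ on $\mathcal{Y}^{\mathrm{Ig}}$.

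Finally I would invoke the commutativity of the diagram (\ref{KSdiagram}): the extended $\sigma$ agrees on $\omega_{\mathcal{A}}^{\otimes 2}\subset \omega_{\mathcal{A}}^{\otimes 2}\otimes_{\mathcal{O}_Y}\mathcal{O}\mathbb{B}_{\mathrm{dR},Y}^+$ with the classical $\sigma$. Combining this with the identification in Step 1 gives the desired equality (matching signs via the convention adopted in the statement). There is no real obstacle here; all the substantive work---namely the vanishing of $1/\mathbf{z}$ on $\mathcal{Y}^{\mathrm{Ig}}$, the $p$-adic Legendre relation, and the computation of $\sigma(\omega_{\mathrm{can}}^{\otimes 2})$ in terms of $d\mathbf{z}_{\mathrm{dR}}$---has already been done in the preceding subsections, so this proposition functions as a compatibility check between the new theory and Katz's classical one.
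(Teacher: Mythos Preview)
Your proposal is correct and follows essentially the same strategy as the paper: compare the Kodaira--Spencer image of the square of the canonical differential computed via $\mathbf{z}_{\mathrm{dR}}$ (using Proposition \ref{ordinaryperiod} to reduce $\mathbf{y}_{\mathrm{dR}}/t$ to $1$ on $\mathcal{Y}^{\mathrm{Ig}}$) against Katz's formula $\sigma((\omega_{\mathrm{can}}^{\mathrm{Katz}})^{\otimes 2}) = d\log T$, invoking the diagram (\ref{KSdiagram}). The only cosmetic difference is that the paper phrases the computation in terms of $\frak{s}$ and (\ref{KScalc}) whereas you use $\omega_{\mathrm{can}}$ and (\ref{canonicaldifferentialKScalculation}); since $\omega_{\mathrm{can}}|_{\mathcal{Y}^{\mathrm{Ig}}} = \frak{s}|_{\mathcal{Y}^{\mathrm{Ig}}}$ these are the same (and the sign you flag is indeed a harmless convention issue already present in the paper).
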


\begin{proof}By (\ref{KScalc}) and Proposition \ref{ordinaryperiod}, on $\mathcal{Y}^{\mathrm{Ig}}$ we have that 
\begin{equation}\label{KScalc2}\sigma(\frak{s}^{\otimes 2}) = \left(\frac{\mathbf{y}_{\mathrm{dR}}}{t}\right)^2d\mathbf{z}_{\mathrm{dR}} = d\mathbf{z}_{\mathrm{dR}}
\end{equation}
and hence by the commutativity of the diagram (\ref{KSdiagram})
$$d\mathbf{z}_{\mathrm{dR}}\in \Omega^1(\mathcal{Y}_x).$$
By the theorem cited in Definition (\ref{Katzcanonicaldifferential}) and (\ref{KScalc2}), we must have $d\mathbf{z}_{\mathrm{dR}} = d\log T$.

\end{proof}

From the relative $p$-adic de Rham comparison theorem (\ref{comparison}), by taking $k^{\mathrm{th}}$ symmetric powers of each side viewed as $\mathcal{O}\mathbb{B}_{\mathrm{dR},Y}^+$-modules, one has the comparison
\begin{equation}\label{SymmetricComparison}\begin{split}\Sym_{\mathcal{O}_{Y}}^k\mathcal{H}_{\mathrm{dR}}^1(\mathcal{A})\otimes_{\mathcal{O}_Y}\mathcal{O}\mathbb{B}_{\mathrm{dR},Y}^+ &\cong\Sym_{\mathcal{O}\mathbb{B}_{\mathrm{dR},Y}^+}^k(\mathcal{H}_{\mathrm{dR}}^1(\mathcal{A})\otimes_{\mathcal{O}_Y}\mathcal{O}\mathbb{B}_{\mathrm{dR},Y}^+) \\&\overset{\iota_{\mathrm{dR}}}{\subset}\Sym_{\mathcal{O}\mathbb{B}_{\mathrm{dR},Y}^+}^k(T_p\mathcal{A}\otimes_{\hat{\mathbb{Z}}_{p,Y}}\mathcal{O}\mathbb{B}_{\mathrm{dR},Y}^+) 
\end{split}
\end{equation}
compatible with the inherited connections and filtrations. Here, the filtration on the left hand side is given by the convolution of the filtration on $\Sym_{\mathcal{O}_{Y}}^k\mathcal{H}_{\mathrm{dR}}^1(\mathcal{A})$ inherited from the Hodge-de Rham filtration
\begin{equation}\label{filtration}\omega_{\mathcal{A}}^{\otimes k} \subset \omega_{\mathcal{A}}^{\otimes k-1}\otimes_{\mathcal{O}_Y}\mathcal{H}_{\mathrm{dR}}^1(\mathcal{A}) \subset \omega_{\mathcal{A}}^{\otimes k-2}\otimes_{\mathcal{O}_Y}\Sym_{\mathcal{O}_Y}^2\mathcal{H}_{\mathrm{dR}}^1(\mathcal{A}) \subset \ldots \subset \Sym_{\mathcal{O}_Y}^k\mathcal{H}_{\mathrm{dR}}^1(\mathcal{A})
\end{equation}
with the filtration on $\mathcal{O}\mathbb{B}_{\mathrm{dR},Y}^+$. From (\ref{SymmetricComparison}), we have
\begin{equation}\begin{split}\label{SymmetricComparison2}\Sym_{\mathcal{O}_Y}^k\mathcal{H}_{\mathrm{dR}}^1(\mathcal{A})\otimes_{\mathcal{O}_Y}\mathcal{O}\mathbb{B}_{\mathrm{dR},Y}^+ &\cong\Sym_{\mathcal{O}\mathbb{B}_{\mathrm{dR},Y}^+}^k\left(\mathcal{H}_{\mathrm{dR}}^1(\mathcal{A})\otimes_{\mathcal{O}_Y}\mathcal{O}\mathbb{B}_{\mathrm{dR},Y}^+\right) \\
&\overset{\iota_{\mathrm{dR}}}{\subset} \Sym_{\mathcal{O}\mathbb{B}_{\mathrm{dR},Y}^+}^k\left(T_p\mathcal{A}\otimes_{\hat{\mathbb{Z}}_{p,Y}}\mathcal{O}\mathbb{B}_{\mathrm{dR},Y}^+\right)
\end{split}
\end{equation}
compatible with connections and filtrations. Reducing modulo $(t)$, the splitting (\ref{splitdef}) induces a splitting on symmetric powers:
\begin{definition}Henceforth, denote the map on symmetric powers induced by (\ref{splitdef}) also by
\begin{equation}\label{splitdef2}\overline{\mathrm{split}} : \Sym_{\mathcal{O}_{\Delta,\mathcal{Y}}}^k\left(T_p\mathcal{A}\otimes_{\hat{\mathbb{Z}}_{p,Y}}\mathcal{O}_{\Delta,\mathcal{Y}}\right)|_{\mathcal{Y}_x} \twoheadrightarrow \left(\omega_{\Delta,\mathcal{Y}}\right)|_{\mathcal{Y}_x}^{\otimes_{\mathcal{O}_{\Delta,\mathcal{Y}}} k}.
\end{equation}
\end{definition}

\subsection{The $p$-adic Maass-Shimura operator}
In this section, we define our $p$-adic Maass-Shimura operator using the horizontal splitting (\ref{splitdef2}). 

\begin{definition}We define a map 
$$\partial_k : \left(\omega_{\Delta,\mathcal{Y}}\right)|_{\mathcal{Y}_x}^{\otimes_{\mathcal{O}_{\Delta,\mathcal{Y}}} k} \rightarrow \left(\omega_{\Delta,\mathcal{Y}}\right)|_{\mathcal{Y}_x}^{\otimes_{\mathcal{O}_{\Delta,\mathcal{Y}}} k+2}$$
as the following composition:
\begin{equation}\label{padicMaassShimura}\begin{split}\left(\omega_{\Delta,\mathcal{Y}}\right)^{\otimes_{\mathcal{O}_{\Delta,\mathcal{Y}}} k} &\overset{\iota_{\mathrm{dR}}}{\subset} \Sym_{\mathcal{O}_{\Delta,\mathcal{Y}}}^k\left(T_p\mathcal{A}\otimes_{\hat{\mathbb{Z}}_{p,Y}}\mathcal{O}_{\Delta,\mathcal{Y}}\right)|_{\mathcal{Y}_x}\\
&\xrightarrow{\nabla} \left(\Sym_{\mathcal{O}_{\Delta,\mathcal{Y}}}^k\left(T_p\mathcal{A}\otimes_{\hat{\mathbb{Z}}_{p,Y}}\mathcal{O}_{\Delta,\mathcal{Y}}\right)\otimes_{\mathcal{O}_{\mathcal{Y}}}\Omega_{\mathcal{Y}}^1\right)|_{\mathcal{Y}_x}\\
&\xrightarrow{\sigma^{-1}} \left(\Sym_{\mathcal{O}_{\Delta,\mathcal{Y}}}^k\left(T_p\mathcal{A}\otimes_{\hat{\mathbb{Z}}_{p,Y}}\mathcal{O}_{\Delta,\mathcal{Y}}\right)\otimes_{\mathcal{O}_{\mathcal{Y}}}\omega_{\mathcal{A}}|_{\mathcal{Y}}^{\otimes 2}\right)|_{\mathcal{Y}_x}\\
&\xrightarrow{\overline{\mathrm{split}}} \left((\omega_{\Delta,\mathcal{Y}})^{\otimes_{\mathcal{O}_{\Delta,\mathcal{Y}}} k}\otimes_{\mathcal{O}_{\mathcal{Y}}}\omega_{\mathcal{A}}|_{\mathcal{Y}}^{\otimes 2}\right)|_{\mathcal{Y}_x} \cong (\omega_{\Delta,\mathcal{Y}})|_{\mathcal{Y}_x}^{\otimes_{\mathcal{O}_{\Delta,\mathcal{Y}}} k+2}.
\end{split}
\end{equation}
\end{definition}

\subsection{The $p$-adic Maass-Shimura operator in coordinates and generalized $p$-adic modular forms}
Given any pro\'{e}tale $\mathcal{U} \rightarrow \mathcal{Y}_x$ and 
$$\omega \in (\omega_{\Delta,\mathcal{Y}})^{\otimes_{\mathcal{O}_{\Delta,\mathcal{Y}}} k}(\mathcal{U}),$$
we write 
$$\omega = F\cdot (\omega_{\mathrm{can}})^{\otimes_{\mathcal{O}_{\Delta,\mathcal{Y}}} k} \xmapsto{\iota_{\mathrm{dR}}} F\cdot (-z_{\mathrm{dR}}\alpha_{\infty,1} +\alpha_{\infty,2})^{\otimes_{\mathcal{O}\mathbb{B}_{\mathrm{dR},\mathcal{Y}}} k}$$
where $\omega_{\mathrm{can}}$ is defined in Definition \ref{canonicaldifferentialdef}. For brevity, write $\omega_{\mathrm{can}}^{\otimes k} = \omega_{\mathrm{can}}^{\otimes_{\mathcal{O}_{\Delta,\mathcal{Y}}} k}$ and $\iota_{\mathrm{dR}}(\omega_{\mathrm{can}})^{\otimes k} = \iota_{\mathrm{dR}}(\omega_{\mathrm{can}})^{\otimes_{\mathcal{O}_{\Delta,\mathcal{Y}}} k}$. Then $\partial_k(\omega)$ is computed as the composition
\begin{align*}\omega& \mapsto\nabla(\omega) = \nabla(\iota_{\mathrm{dR}}(\omega)) = \nabla\left(\iota_{\mathrm{dR}}(F\cdot\omega_{\mathrm{can}}^{\otimes k})\right) \\
&= \frac{d}{d{z_{\mathrm{dR}}}}F\cdot \iota_{\mathrm{dR}}(\omega_{\mathrm{can}})^{\otimes k}\otimes dz_{\mathrm{dR}} + \sum_{i = 1}^k\iota_{\mathrm{dR}}(\omega_{\mathrm{can}})^{\otimes i-1}\otimes\nabla\left(\iota_{\mathrm{dR}}(\omega_{\mathrm{can}})\right)\otimes \iota_{\mathrm{dR}}(\omega_{\mathrm{can}})^{\otimes k-i+1} \\
&= \frac{d}{dz_{\mathrm{dR}}}F\cdot \iota_{\mathrm{dR}}(\omega_{\mathrm{can}})^{\otimes k}\cdot dz_{\mathrm{dR}} + \sum_{i = 1}^k\iota_{\mathrm{dR}}(\omega_{\mathrm{can}})^{\otimes i-1}\cdot (-\alpha_{\infty,1} \cdot dz_{\mathrm{dR}})\otimes \iota_{\mathrm{dR}}(\omega_{\mathrm{can}})^{\otimes k-i+1} \\
&= \frac{d}{d{z_{\mathrm{dR}}}}F\cdot \iota_{\mathrm{dR}}(\omega_{\mathrm{can}})^{\otimes k}\cdot d{z_{\mathrm{dR}}} - k\cdot\iota_{\mathrm{dR}}(\omega_{\mathrm{can}})^{\otimes k-1}\cdot \alpha_{\infty,1} \cdot d{z_{\mathrm{dR}}}\\
&\xmapsto{\sigma^{-1},(\ref{canonicaldifferentialKScalculation}) \pmod{t}}\frac{d}{d{z_{\mathrm{dR}}}}F\cdot \iota_{\mathrm{dR}}(\omega_{\mathrm{can}})^{\otimes k+2} - k\cdot\iota_{\mathrm{dR}}(\omega_{\mathrm{can}})^{\otimes k+1}\cdot \alpha_{\infty,1} \\
&\xmapsto{\overline{\mathrm{split}}}\left(\frac{d}{d{z_{\mathrm{dR}}}} + \frac{k}{{z_{\mathrm{dR}}}-\bar{z}}\right)F\cdot \omega_{\mathrm{can}}^{\otimes k+2}
\end{align*}
where the third line uses the calculation
$$\nabla(\iota_{\mathrm{dR}}(\omega_{\mathrm{can}})) = \nabla(-{z_{\mathrm{dR}}}\alpha_{\infty,1} + \alpha_{\infty,2}) = -\alpha_{\infty,1}\cdot d{z_{\mathrm{dR}}}.$$
and the last arrow uses (\ref{splitcalculation}).

Now given $F \in \mathcal{O}_{\Delta,\mathcal{Y}}(\mathcal{U})$, we define
$$\delta_k : \mathcal{O}_{\Delta,\mathcal{Y}}(\mathcal{U}) \rightarrow \mathcal{O}_{\Delta,\mathcal{Y}}(\mathcal{U})$$
by
$$\partial_k\left(F\cdot \omega_{\mathrm{can}}^{\otimes k}\right) = \left(\delta_kF\right)\cdot\omega_{\mathrm{can}}^{\otimes k+2},$$
the above calculation (\ref{MSformula}) shows that
\begin{equation}\label{MSformula}\delta_k = \frac{d}{d{z_{\mathrm{dR}}}} + \frac{k}{{z_{\mathrm{dR}}}-\bar{z}}.
\end{equation}

\begin{definition}\label{weightdefinition}Given an open subset $\mathcal{U} \subset \mathcal{Y}$, $k \in \mathbb{Z}$, and a subgroup $\Gamma \subset GL_2(\mathbb{Z}_p)$ with $\mathcal{U}\cdot\Gamma = \mathcal{U}$, we define a \emph{$p$-adic modular form $F$ on $\mathcal{U}$ for $\Gamma$ of weight $k$} to be a $F\in \mathcal{O}_{\Delta,\mathcal{Y}}(\mathcal{U})$ such that
\begin{equation}\label{Ftransformationprop}
\begin{split}
\left(\begin{array}{ccc}a & b\\
c & d\\
\end{array}\right)^*F = (bc-ad)^{-k}(c{z_{\mathrm{dR}}}+a)^kF
\end{split}
\end{equation}
for any $\left(\begin{array}{ccc}a & b\\
c & d\\
\end{array}\right) \in \Gamma$. 
We also make an analogous definition for $k \in \mathbb{Z}/(p-1) \times \mathbb{Z}_p$, provided that $(bc-ad)^{-k}(c{z_{\mathrm{dR}}}+a)^k$ makes sense on $\mathcal{U}$. (Note that we embed $\mathbb{Z} \subset \mathbb{Z}/(p-1)\times \mathbb{Z}_p$ diagonally to make all notions of weight compatible.) Let $M_{k,\Delta}(\Gamma)(\mathcal{U})$ denote the space of $p$-adic modular forms on $\mathcal{U}$ for $\Gamma$ of weight $k$.
\end{definition}

Recall the natural projection $\lambda : \mathcal{Y} \rightarrow Y$.

\begin{proposition}Let $U \subset \lambda(\mathcal{Y}_x)$ be an open subset, let $\mathcal{U} = \lambda^{-1}(U)$, and let $k \in \mathbb{Z}_{\ge 0}$. Then we have
$$\omega_{\mathcal{A}}^{\otimes k}(U) \xrightarrow{\lambda^*} \omega_{\mathcal{A}}^{\otimes k}(\mathcal{U}) \rightarrow \omega_{\Delta,\mathcal{Y}}^{\otimes k}(\mathcal{U}) \xrightarrow{\sim} M_{k,\Delta}(GL_2(\mathbb{Z}_p))(\mathcal{U})\cdot \omega_{\mathrm{can}}^{\otimes k} \underset{\sim}{\xrightarrow{\cdot (\omega_{\mathrm{can}}^{\otimes k})^{-1}}}M_{k,\Delta}(GL_2(\mathbb{Z}_p))(\mathcal{U}).$$
\end{proposition}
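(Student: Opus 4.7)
My plan is to verify the three nontrivial steps implicit in the displayed chain: that $\omega_{\mathrm{can}}^{\otimes k}$ trivializes $\omega_{\Delta,\mathcal{Y}}^{\otimes k}|_{\mathcal{Y}_x}$ (legitimizing the identification with $\mathcal{O}_{\Delta,\mathcal{Y}}(\mathcal{U})\cdot\omega_{\mathrm{can}}^{\otimes k}$), that pullback along $\lambda$ yields $GL_2(\mathbb{Z}_p)$-invariant sections, and that dividing by $\omega_{\mathrm{can}}^{\otimes k}$ exchanges this invariance for the weight-$k$ transformation law of Definition \ref{weightdefinition}.

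First I would reduce Proposition \ref{generator2} modulo $(t)$: since $\omega_{\mathrm{can}}$ generates $\omega_{\mathrm{dR}}(\mathcal{Y}_x)$ as an $\mathcal{O}\mathbb{B}_{\mathrm{dR},\mathcal{Y}}^+(\mathcal{Y}_x)$-module, its image (still denoted $\omega_{\mathrm{can}}$, per Definition \ref{diagonalringdefinition}) generates $\omega_{\Delta,\mathcal{Y}}|_{\mathcal{Y}_x}$ as an $\mathcal{O}_{\Delta,\mathcal{Y}}|_{\mathcal{Y}_x}$-module, and hence $\omega_{\mathrm{can}}^{\otimes k}$ generates $\omega_{\Delta,\mathcal{Y}}^{\otimes k}|_{\mathcal{Y}_x}$. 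Consequently every section of $\omega_{\Delta,\mathcal{Y}}^{\otimes k}(\mathcal{U})$ can be written uniquely as $F\cdot\omega_{\mathrm{can}}^{\otimes k}$ with $F\in\mathcal{O}_{\Delta,\mathcal{Y}}(\mathcal{U})$, so the final ``division'' map in the chain is a genuine isomorphism.

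The main computation is then to combine this with the transformation law (\ref{s'transformationprop}) for $\omega_{\mathrm{can}}$. Since $\lambda:\mathcal{Y}\to Y$ is the $GL_2(\mathbb{Z}_p)$-torsor of Section \ref{sec:review} and $\mathcal{U} = \lambda^{-1}(U)$, any $\omega\in\omega_{\mathcal{A}}^{\otimes k}(U)$ has pullback $\lambda^*\omega$ satisfying $\gamma^*(\lambda^*\omega) = \lambda^*\omega$ for every $\gamma = \left(\begin{array}{cc}a & b\\ c & d\end{array}\right) \in GL_2(\mathbb{Z}_p)$. Writing $\lambda^*\omega = F\cdot\omega_{\mathrm{can}}^{\otimes k}$ and applying $\gamma^*$ to both sides via (\ref{s'transformationprop}) gives
\begin{equation*}
F\cdot\omega_{\mathrm{can}}^{\otimes k} = \gamma^*F\cdot(bc-ad)^k(cz_{\mathrm{dR}}+a)^{-k}\omega_{\mathrm{can}}^{\otimes k};
\end{equation*}
canceling the generator $\omega_{\mathrm{can}}^{\otimes k}$ yields $\gamma^*F = (bc-ad)^{-k}(cz_{\mathrm{dR}}+a)^kF$, which is exactly the weight-$k$ condition of Definition \ref{weightdefinition}. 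Thus $F\in M_{k,\Delta}(GL_2(\mathbb{Z}_p))(\mathcal{U})$, completing the chain. No serious obstacle arises: the whole statement reduces to knowing that $\omega_{\mathrm{can}}$ is a global generator on $\mathcal{Y}_x$ (Proposition \ref{generator2}, ultimately resting on the invertibility of $\mathbf{y}_{\mathrm{dR}}/t$ established in Proposition \ref{invertible}) together with its explicit $GL_2(\mathbb{Z}_p)$-transformation formula; the remainder is bookkeeping.
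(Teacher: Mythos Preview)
Your proposal is correct and follows essentially the same approach as the paper: trivialize via a generator on $\mathcal{Y}_x$, then use $GL_2(\mathbb{Z}_p)$-invariance of $\lambda^*\omega$ together with the generator's transformation law to extract the weight-$k$ condition. The only organizational difference is that the paper first trivializes with $\frak{s}$ (writing $\omega = f\cdot\frak{s}^{\otimes k} = f\,y_{\mathrm{dR}}^k\cdot\omega_{\mathrm{can}}^{\otimes k}$), computes how $f$ transforms via (\ref{stransformationprop}), and then combines with (\ref{y'transformationprop}) for $y_{\mathrm{dR}}$; you bypass this two-step factorization by applying (\ref{s'transformationprop}) to $\omega_{\mathrm{can}}$ directly, which is slightly cleaner but amounts to the same computation.
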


\begin{proof}
The first arrow is obvious. Since $\frak{s}$ trivializes $\omega_{\mathcal{A}}(\mathcal{U})$, we can write any $\omega \in \omega_{\mathcal{A}}^{\otimes k}(\mathcal{U})$ as 
$$\omega = f\cdot \frak{s}^{\otimes k} = f\left(y_{\mathrm{dR}}/t\right)^k\cdot \omega_{\mathrm{can}}^{\otimes k}.$$ Then 
\begin{equation}
\begin{split}
\left(\begin{array}{ccc}a & b\\
c & d\\
\end{array}\right)^*f = (b/\mathbf{z}+d)^kF
\end{split}
\end{equation}
and combining this with (\ref{y'transformationprop}), we see that $f\left(y_{\mathrm{dR}}/t\right)^k$ is a $p$-adic modular form on $\mathcal{U}$ for $GL_2(\mathbb{Z}_p)$ of weight $k$. The rest of the arrows are obvious after invoking (\ref{s'transformationprop}) and using the fact that $\omega_{\mathrm{can}}$ generates $\omega_{\Delta,\mathcal{Y}}(\mathcal{U})$ (Proposition \ref{generator2}).

\end{proof}

\begin{definition}We let 
$$\partial_k^j = \partial_{k+2j-2}\circ \partial_{k+2j-4}\circ\cdots\circ \partial_{k+2}\circ\partial_k$$
and
$$\delta_k^j = \delta_{k+2j-2}\circ \delta_{k+2j-4}\circ\cdots\circ \delta_{k+2}\circ \delta_k.$$
It is clear by its definition that $\partial_k^j$ and $\delta_k^j$ are $\hat{\mathcal{O}}_{\mathcal{Y}} = \mathbb{B}_{\mathrm{dR},\mathcal{Y}}^+/(t)$-linear. 
\end{definition}

\begin{definition}\label{weightdefinition}Given an open subset $\mathcal{U} \subset \mathcal{Y}$, $k \in \mathbb{Z}$, and a subgroup $\Gamma \subset GL_2(\mathbb{Z}_p)$ with $\mathcal{U}\cdot\Gamma = \mathcal{U}$, we define a \emph{$p$-adic modular form $F$ on $\mathcal{U}$ for $\Gamma$ of weight $k$} to be a $F\in \hat{\mathcal{O}}_{\mathcal{Y}}(\mathcal{U})$ such that
\begin{equation}\label{Ftransformationprop}
\begin{split}
\left(\begin{array}{ccc}a & b\\
c & d\\
\end{array}\right)^*F = (bc-ad)^{-k}(c{z_{\mathrm{dR}}}+a)^kF
\end{split}
\end{equation}
for any $\left(\begin{array}{ccc}a & b\\
c & d\\
\end{array}\right) \in \Gamma$. 
Let $M_{k,\Delta}(\Gamma)(\mathcal{U})$ denote the space of $p$-adic modular forms on $\mathcal{U}$ for $\Gamma$ of weight $k$.
\end{definition}

\begin{proposition}\label{weightproposition}We have
$$\delta_k^j : M_{k,\Delta}(\Gamma)(\mathcal{U}) \rightarrow M_{k+2j,\Delta}(\Gamma)(\mathcal{U})$$
and 
\begin{equation}\label{MSformula}\delta_k^j = \sum_{i = 0}^j\binom{k-1+j}{i}\binom{j}{i}\frac{i!}{({z_{\mathrm{dR}}}-\bar{z})^i}\left(\frac{d}{d{z_{\mathrm{dR}}}}\right)^{j-i}.
\end{equation}
\end{proposition}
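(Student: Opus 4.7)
My plan is first to establish the weight-change statement, then to prove the closed formula by induction on $j$.

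For the weight-change assertion, I would argue that every arrow entering the definition of $\partial_k$ in (\ref{padicMaassShimura}) --- namely $\iota_{\mathrm{dR}}$, $\nabla$, $\sigma^{-1}$, and $\overline{\mathrm{split}}$ --- is $GL_2(\mathbb{Z}_p)$-equivariant. For $\overline{\mathrm{split}}$ this uses Proposition~\ref{uniquehorizontal}: the line $\mathcal{L}$ is characterized as the unique horizontal lift of the Hodge-Tate filtration, and both characterizing conditions are preserved under the $GL_2(\mathbb{Z}_p)$-action, since $\bar{z} = j(\mathbf{z})$ transforms by the same M\"{o}bius rule (\ref{ztransformationprop}) as $\mathbf{z}$ (equivariance of $j$). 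Combining this equivariance with the transformation law (\ref{s'transformationprop}) for $\omega_{\mathrm{can}}$, and iterating the one-step increase $k \mapsto k+2$ implicit in $\partial_k : \omega_{\Delta,\mathcal{Y}}^{\otimes k} \to \omega_{\Delta,\mathcal{Y}}^{\otimes k+2}$, I conclude that $\delta_k^j$ maps $M_{k,\Delta}(\Gamma)(\mathcal{U})$ to $M_{k+2j,\Delta}(\Gamma)(\mathcal{U})$.

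For the closed formula I would induct on $j$. The base case $j=1$ is the identity $\delta_k = d/dz_{\mathrm{dR}} + k/(z_{\mathrm{dR}}-\bar{z})$ derived immediately before Proposition~\ref{weightproposition}. Write $D := d/dz_{\mathrm{dR}}$ and $u := 1/(z_{\mathrm{dR}}-\bar{z})$. The crucial structural input is that $\bar{z} = j(\mathbf{z})$ lies in the horizontal subsheaf $j(\hat{\mathcal{O}}_{\mathcal{Y}}) \subset \mathcal{O}_{\Delta,\mathcal{Y}}$ of Definition~\ref{horizontaldefinition}, so $D\bar{z} = 0$, yielding the commutator identities $Du = -u^2$ and, by induction on $i$, $[D,u^i] = -i\, u^{i+1}$. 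Writing the inductive hypothesis as $\delta_k^j = \sum_{i=0}^j a_i^{(j,k)} u^i D^{j-i}$ with $a_i^{(j,k)} := \binom{k+j-1}{i}\binom{j}{i}i!$, applying $\delta_k^{j+1} = (D + (k+2j)u)\circ \delta_k^j$ together with $D(u^i D^{j-i}) = u^i D^{j+1-i} - i u^{i+1} D^{j-i}$ and collecting coefficients of $u^{i+1} D^{j-i}$ gives the Pascal-type recursion
\[
a_{i+1}^{(j+1,k)} \; = \; a_{i+1}^{(j,k)} + (k+2j-i)\, a_i^{(j,k)},
\]
with boundary values $a_0^{(j,k)} = 1$ and $a_{j+1}^{(j+1,k)} = (k+j)\, a_j^{(j,k)}$ coming from the extreme terms.

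It remains to verify that the claimed coefficients satisfy this recursion, i.e.\ the combinatorial identity
\[
(i+1)\binom{k+j}{i+1}\binom{j+1}{i+1} = (i+1)\binom{k+j-1}{i+1}\binom{j}{i+1} + (k+2j-i)\binom{k+j-1}{i}\binom{j}{i}.
\]
I would prove this by applying Pascal's rule to each of $\binom{k+j}{i+1}$ and $\binom{j+1}{i+1}$, expanding the resulting four-term product on the left, and using the absorption identity $(i+1)\binom{n}{i+1} = (n-i)\binom{n}{i}$ on the two cross-terms; the three resulting multiples of $\binom{k+j-1}{i}\binom{j}{i}$ then combine via $(k+j-1-i) + (j-i) + (i+1) = k + 2j - i$, closing the identity. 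The only genuinely non-formal ingredient in the whole argument is the equality $D\bar{z} = 0$, that is, the horizontality of $\bar{z}$ produced by our distinguished splitting $\mathcal{L}$ rather than the na\"{i}ve Hodge-Tate kernel involving $1/\mathbf{z}$; this is precisely what causes $\delta_k^j$ to truncate into the finite sum above and what would prevent the same computation from working with $\mathrm{split}$ in place of $\overline{\mathrm{split}}$.
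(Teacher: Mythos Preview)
Your proposal is correct and follows essentially the same approach as the paper: the paper's proof is a terse sketch saying to use induction together with the transformation properties (\ref{ztransformationprop}), (\ref{x'transformationprop}), (\ref{z'transformationprop}), and it explicitly mentions the alternative ``coordinate free'' equivariance argument you give for the weight-change statement. Your treatment simply fills in the details the paper omits --- the commutator relation $[D,u^i]=-i\,u^{i+1}$ (relying on $D\bar z=0$) and the Pascal-type recursion for the coefficients --- all of which are correct.
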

\begin{proof}This is a direct calculation, using (\ref{ztransformationprop}), (\ref{x'transformationprop}) and (\ref{z'transformationprop}). More precisely, one uses induction: if $F \in \mathcal{O}_{\Delta,\mathcal{Y}}(\mathcal{U})$ has weight $k'$, then by direct computation one verifies that
$$\left(\frac{d}{d{z_{\mathrm{dR}}}} + \frac{k'}{{z_{\mathrm{dR}}} - \bar{z}}\right)F$$
has weight $k' + 2$ in the sense of (\ref{Ftransformationprop}). Another ``coordinate free" proof is to observe that each step in the construction of the assignment
$$f \mapsto \partial_{k'}f$$
is $\Gamma$-equivariant (and $GL_2(\mathbb{Z}_p)$-equivariant if $\mathcal{U} = \mathcal{Y}_x$).
\end{proof}

\begin{proposition}\label{horizontalcommute}We have
$$\partial_k^j = \overline{\mathrm{split}}\circ \nabla^j \circ \iota_{\mathrm{dR}}$$
\end{proposition}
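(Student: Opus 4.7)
\Proofs The plan is to proceed by induction on $j$, with the $j=1$ case being the definition of $\partial_k$. For the inductive step, assume the claim for $j$; then
\[
\partial_k^{j+1} \;=\; \partial_{k+2j}\circ\partial_k^j \;=\; (\overline{\mathrm{split}}\circ\sigma^{-1}\circ\nabla\circ\iota_{\mathrm{dR}})\circ(\overline{\mathrm{split}}\circ\nabla^j\circ\iota_{\mathrm{dR}}),
\]
and the goal is to compare this with $\overline{\mathrm{split}}\circ\nabla^{j+1}\circ\iota_{\mathrm{dR}}$ (where, throughout, $\sigma^{-1}$ is understood to convert each $\Omega_{\mathcal{Y}}^1$-factor produced by $\nabla$ into an $\omega_{\mathcal{A}}^{\otimes 2}$-factor in the end). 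The central observation is that $\iota_{\mathrm{dR}}\circ\overline{\mathrm{split}}$ is precisely the projection onto the first summand of the decomposition $T_p\mathcal{A}\otimes_{\hat{\mathbb{Z}}_{p,Y}}\mathcal{O}_{\Delta,\mathcal{Y}}|_{\mathcal{Y}_x}= \iota_{\mathrm{dR}}(\omega_{\Delta,\mathcal{Y}}|_{\mathcal{Y}_x})\oplus\mathcal{L}$ extended to symmetric powers. Thus, writing $s=\nabla^j(\iota_{\mathrm{dR}}(\omega))$, we have $s-\iota_{\mathrm{dR}}(\overline{\mathrm{split}}(s))\in\ker(\overline{\mathrm{split}})$, so the inductive step reduces to showing that $\overline{\mathrm{split}}\circ\nabla$ annihilates $\ker(\overline{\mathrm{split}})$; equivalently, that $\nabla(\ker(\overline{\mathrm{split}}))\subset \ker(\overline{\mathrm{split}})\otimes\Omega_{\mathcal{Y}}^1$.

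The key lemma, then, is the horizontality of $\ker(\overline{\mathrm{split}})$ on symmetric powers. Using the decomposition $T_p\mathcal{A}\otimes\mathcal{O}_{\Delta,\mathcal{Y}}= A\oplus\mathcal{L}$ with $A:=\iota_{\mathrm{dR}}(\omega_{\Delta,\mathcal{Y}})$, we have
\[
\ker(\overline{\mathrm{split}})\;=\;\bigoplus_{i=1}^{k}A^{\otimes(k-i)}\otimes\mathcal{L}^{\otimes i}
\]
as a direct summand of $\Sym^k(T_p\mathcal{A}\otimes\mathcal{O}_{\Delta,\mathcal{Y}})|_{\mathcal{Y}_x}$. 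Applying Leibniz to an element of $A^{\otimes(k-i)}\otimes\mathcal{L}^{\otimes i}$ with $i\geq 1$: differentiating an $A$-factor produces a term in $(T_p\mathcal{A}\otimes\mathcal{O}_{\Delta,\mathcal{Y}})\otimes A^{\otimes(k-i-1)}\otimes\mathcal{L}^{\otimes i}\otimes\Omega_{\mathcal{Y}}^1$, which still contains the $i\geq 1$ factors of $\mathcal{L}$ and so lies in $\ker(\overline{\mathrm{split}})\otimes\Omega_{\mathcal{Y}}^1$; differentiating an $\mathcal{L}$-factor produces, by Proposition \ref{holomorphicallyhorizontal}, a term still lying in $A^{\otimes(k-i)}\otimes\mathcal{L}^{\otimes i}\otimes\Omega_{\mathcal{Y}}^1$. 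Both cases stay in $\ker(\overline{\mathrm{split}})$, proving the lemma.

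Combining these, the inductive step follows: with $s=\nabla^j(\iota_{\mathrm{dR}}(\omega))$ and $n=s-\iota_{\mathrm{dR}}(\overline{\mathrm{split}}(s))\in\ker(\overline{\mathrm{split}})$, we compute
\[
\overline{\mathrm{split}}(\nabla(\iota_{\mathrm{dR}}(\overline{\mathrm{split}}(s))))\;=\;\overline{\mathrm{split}}(\nabla(s))-\overline{\mathrm{split}}(\nabla(n))\;=\;\overline{\mathrm{split}}(\nabla(s)),
\]
since $\nabla(n)\in\ker(\overline{\mathrm{split}})\otimes\Omega_{\mathcal{Y}}^1$ by the lemma. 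After applying $\sigma^{-1}$ uniformly on both sides, this gives $\partial_{k+2j}\circ\partial_k^j = \overline{\mathrm{split}}\circ\sigma^{-1}\circ\nabla^{j+1}\circ\iota_{\mathrm{dR}}$, which is what we wanted. The main obstacle is really the bookkeeping: one must verify that the horizontality of the rank-one line $\mathcal{L}$ propagates to the full kernel inside $\Sym^k$, which is exactly what the decomposition above (together with Leibniz) furnishes; the rest is essentially formal once $\iota_{\mathrm{dR}}\circ\overline{\mathrm{split}}$ is recognized as projection onto the $A^{\otimes k}$-summand. \qed
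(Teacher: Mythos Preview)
Your proof is correct and follows essentially the same strategy as the paper's: both rest on the horizontality of $\mathcal{L}$ (Proposition \ref{holomorphicallyhorizontal}) together with the compatibility $\iota_{\mathrm{dR}}\circ\nabla=\nabla\circ\iota_{\mathrm{dR}}$. The paper compresses the argument into the single line $(\overline{\mathrm{split}}\circ\nabla\circ\iota_{\mathrm{dR}})^j=(\overline{\mathrm{split}}\circ\nabla)^j\circ\iota_{\mathrm{dR}}=\overline{\mathrm{split}}\circ\nabla^j\circ\iota_{\mathrm{dR}}$, invoking the shorthand ``$\nabla\circ\overline{\mathrm{split}}=\overline{\mathrm{split}}\circ\nabla$'' for what is really the statement that $\nabla$ preserves $\ker(\overline{\mathrm{split}})$; you unpack precisely this point, proving it on $\Sym^k$ via the Leibniz rule and the decomposition into $A^{\otimes(k-i)}\otimes\mathcal{L}^{\otimes i}$-pieces, which is a welcome clarification of a step the paper leaves implicit.
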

\begin{proof}By definition, we have
$$\partial_k^j = (\overline{\mathrm{split}} \circ \nabla \circ\iota_{\mathrm{dR}})^j.$$
Since the $p$-adic de Rham comparison $\iota_{\mathrm{dR}}$ (see (\ref{OBdRlocalsystem2})) is compatible with connections, we have
\begin{equation}\label{comm1}\iota_{\mathrm{dR}}\circ\nabla = \nabla\circ \iota_{\mathrm{dR}}.
\end{equation}
By Proposition \ref{holomorphicallyhorizontal}, we have
\begin{equation}\label{comm2}\nabla \circ \overline{\mathrm{split}} = \overline{\mathrm{split}}\circ \nabla.
\end{equation}
Hence, we have
$$\partial_k^j = (\overline{\mathrm{split}} \circ \nabla \circ\iota_{\mathrm{dR}})^j \overset{(\ref{comm1})}{=} (\overline{\mathrm{split}}\circ\nabla)^j\circ \iota_{\mathrm{dR}} \overset{(\ref{comm2})}{=}  \overline{\mathrm{split}}\circ \nabla^j\circ\iota_{\mathrm{dR}}
$$
which is what we wanted to show.
\end{proof}

\subsection{Comparison between the complex and $p$-adic Maass-Shimura operators at CM points}Let $K/\mathbb{Q}$ be an imaginary quadratic field, with ring of integers $\mathcal{O}_K$.

Let $i_p : \overline{\mathbb{Q}} \hookrightarrow \mathbb{C}_p$ denote our previously fixed embedding (\ref{fixedembedding}), and now fix an embedding $i_{\infty} : \overline{\mathbb{Q}} \hookrightarrow \mathbb{C}$. Let $A/\overline{\mathbb{Q}}$ be an elliptic curve with CM by an order $\mathcal{O}\subset\mathcal{O}_K$. In the complex counterpart of the Hodge-de Rham filtration, we have 
$$H^{1,0}(A/\mathbb{C}) = \Omega_{A/\mathbb{C}}^1, \hspace{1cm} H^{0,1}(A/\mathbb{C}) = \mathrm{Lie}(A/\mathbb{C}),$$ 
and in fact these respectively correspond to the $\gamma$ and $\overline{\gamma}$-eigenspaces for any $\gamma \in \mathcal{O} = \End(A/\overline{\mathbb{Q}})$, acting as scalars on these eigenspaces under the above embedding $i_{\infty}$. 

Let $\tau$ denote the standard coordinate on the complex upper half-plane $\mathcal{H}^+$. The real-analytic Hodge splitting of the complex Hodge-de Rham filtration gives rise to the complex Maass-Shimura operator
$$\frak{d}_k = \frac{1}{2\pi i}\left(\frac{d}{d\tau} + \frac{k}{\tau-\bar{\tau}}\right).$$
The real-analytic Hodge splitting is induced by complex conjugation acting on the complex structure of de Rham cohomology, in particular interchanging the Hodge pieces (i.e. $\overline{H^{p,q}(A/\mathbb{C})} = H^{q,p}(A/\mathbb{C})$) and giving rise to ``opposing filtrations'' in Deligne's sense. Thus, as for a CM curve $A/\overline{\mathbb{Q}}$ we have
$$H^{1,0}(A/\mathbb{C}) := \Omega_{A/\mathbb{C}}^1 = \Omega_{A/\overline{\mathbb{Q}}}^1\otimes_{\overline{\mathbb{Q}}}\mathbb{C} = H^{1,0}(A/\overline{\mathbb{Q}})\otimes_{\overline{\mathbb{Q}}}\mathbb{C},$$
the splitting of the Hodge-de Rham filtration induced by the eigendecomposition under the $\mathcal{O}_K$-action coincides with the Hodge splitting. In fact, we have the following coincidence of the values of our $p$-adic and complex Maass-Shimura operators at CM points (Theorem \ref{MScomparison}). Let $$\frak{d}_k^j = \frak{d}_{k+2j-2} \circ \frak{d}_{k+2j-4} \circ \cdots \circ \frak{d}_{k+2}\circ \frak{d}_k.$$

\begin{definition}\label{perioddefinition}Let $A/\overline{\mathbb{Q}}$ be an elliptic curve with complex multiplication by an order $\mathcal{O}\subset\mathcal{O}_K$, and let $t$ be any $\Gamma_1(N)$-level structure, $\alpha : \mathbb{Z}_p^{\oplus 2} \xrightarrow{\sim} T_pA$ be any $\Gamma(p^{\infty})$-level structure. Fix any $\omega_0 \in \Omega_{A/\overline{\mathbb{Z}}}^1$, and fix an isomorphism $\mathbb{C}/\mathcal{O} \cong A$ which defines a differential $2\pi i dz \in \Omega_{A/\overline{\mathbb{Q}}}^1$, where $z$ is the standard coordinate on $\mathbb{C}$. Define $\Omega_{\infty}(A,t) \in \mathbb{C}^{\times}$ and $\Omega_p(A,t,\alpha) \in (B_{\mathrm{dR}}^{+})^{\times}$ by
\begin{equation}\label{CMperiodequation1}2 \pi i dz(A,t) = \Omega_{\infty}(A,t)\cdot\omega_0,\hspace{1cm} \omega_{\mathrm{can}}(A,t,\alpha) = \Omega_p(A,t,\alpha)\cdot \omega_0.
\end{equation}
\end{definition}

\begin{definition}\label{Shimuraactiondefinition}We recall the Shimura reciprocity law on $\mathcal{Y}$. Fix an order $\mathcal{O} \subset \mathcal{O}_K$. Let $\mathbb{I}^{\frak{f}}$ be the semigroup of ideals of $\mathcal{O}_K$ prime to a fixed ideal $\frak{f} \subset \mathcal{O}_K$. Let $\frak{a} \in \mathbb{I}^{(pN)}$ with $\frak{a} \subset \mathcal{O}$. With $A$ as in Definition \ref{perioddefinition}, define 
$$\frak{a} \star A := A/A[\frak{a}]$$
where $A[\frak{a}]$ denotes the $\frak{a}$-torsion subgroup of $A$ using the identification $\End_{\overline{\mathbb{Q}}}(A) = \mathcal{O}$. Then the natural projection $\pi_{\frak{a}} : A \twoheadrightarrow \frak{a}\star A$ takes $\Gamma_1(N)$-level structures to $\Gamma_1(N)$-level structure, and $\Gamma(p^{\infty})$-level structures to $\Gamma(p^{\infty})$-level structures, so that we have a natural action
$$\frak{a}\star(A,t) = (\frak{a}\star A,\pi_{\frak{a}}(t)), \hspace{1cm} \frak{a}\star(A,t,\alpha) = (A/A[\frak{a}],\pi_{\frak{a}}(t),\pi_{\frak{a}}(\alpha)).$$
\end{definition}

\begin{proposition}\label{sameperiod}
Now let $\omega_{0,\frak{a}} \in \Omega_{\frak{a}\star A/K_p}^1$ be the unique differential such that $\pi_{\frak{a}}^*\omega_{0,\frak{a}} = \omega_0$ (viewing $\Omega_{A/\overline{\mathbb{Z}}}^1 \subset \Omega_{A/\mathbb{C}_p}^1$ using $i_p$), and define $\Omega_{\infty}(\frak{a}\star(A,t)) \in \mathbb{C}^{\times}$ and $\Omega_p(\frak{a}\star(A,t,\alpha)) \in (B_{\mathrm{dR}}^+)^{\times}$ by 
\begin{equation}\label{CMperiodequation2} 2 \pi i dz(\frak{a}\star(A,t)) = \Omega_{\infty}(\frak{a}\star(A,t))\cdot\omega_{0,\frak{a}},\hspace{1cm} \omega_{\mathrm{can}}(\frak{a}\star(A,t,\alpha)) = \Omega_p(\frak{a}\star(A,t,\alpha))\cdot \omega_{0,\frak{a}}.
\end{equation}
Then
$$\Omega_{\infty}(A,t) = \Omega_{\infty}(\frak{a}\star(A,t)), \hspace{1cm} \Omega_p(A,t,\alpha) = \Omega_p(\frak{a}\star(A,t,\alpha)).$$
\end{proposition}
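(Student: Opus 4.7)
The proposition asserts two equalities of periods, and the plan is to derive each by applying the isogeny pullback $\pi_{\frak{a}}^{*}$ to the defining equations for the $\frak{a}\star$-periods and comparing with the defining equations at $(A,t,\alpha)$, using the characterizing property $\pi_{\frak{a}}^{*}\omega_{0,\frak{a}} = \omega_0$. In other words, in both the complex and $p$-adic settings it suffices to verify the functoriality of the distinguished differential (either $2\pi i\,dz$ or $\omega_{\mathrm{can}}$) under $\pi_{\frak{a}}$; the period equalities then follow by a one-line comparison.

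For the complex period, I would first record the explicit uniformizations $A \cong \mathbb{C}/\mathcal{O}_c$ and $\frak{a}\star A = A/A[\frak{a}] \cong \mathbb{C}/\frak{a}^{-1}\mathcal{O}_c$, and note that under these the isogeny $\pi_{\frak{a}}$ is induced by $\mathrm{id}_{\mathbb{C}}$. Since the translation-invariant differential $dz$ on $\mathbb{C}$ therefore descends compatibly to both quotients, $\pi_{\frak{a}}^{*}(2\pi i\, dz|_{\frak{a}\star A}) = 2\pi i\, dz|_A$. Pulling back (\ref{CMperiodequation2}) and using $\pi_{\frak{a}}^{*}\omega_{0,\frak{a}} = \omega_0$ produces $2\pi i\, dz(A,t) = \Omega_{\infty}(\frak{a}\star(A,t))\cdot\omega_0$, which compared to (\ref{CMperiodequation1}) yields $\Omega_{\infty}(A,t) = \Omega_{\infty}(\frak{a}\star(A,t))$.

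For the $p$-adic period, the heart of the matter is the identity $\pi_{\frak{a}}^{*}\omega_{\mathrm{can}}(\frak{a}\star(A,t,\alpha)) = \omega_{\mathrm{can}}(A,t,\alpha)$; once this is in hand the argument closes exactly as in the complex case. My approach is to verify it through the characterization $\iota_{\mathrm{dR}}(\omega_{\mathrm{can}}) = -\mathbf{z}_{\mathrm{dR}}\cdot\alpha_{\infty,1} + \alpha_{\infty,2}$ from (\ref{canonicaldifferentialcoordinates}), exploiting three functorialities under the isogeny $\pi_{\frak{a}}$ (which is an isomorphism on $p$-power torsion since $(\frak{a},p)=1$): first, the naturality of the relative $p$-adic de Rham comparison $\iota_{\mathrm{dR}}$, so that applying $\pi_{\frak{a}}^{*}$ commutes with $\iota_{\mathrm{dR}}$; second, the identification of $\alpha_{\infty,i}$ at $\frak{a}\star(A,t,\alpha)$ with $\pi_{\frak{a},*}(\alpha_{\infty,i})$ in $T_p(\frak{a}\star A)$; and third, the $\frak{a}\star$-invariance of $\mathbf{z}_{\mathrm{dR}} = -\mathbf{x}_{\mathrm{dR}}/\mathbf{y}_{\mathrm{dR}}$, which follows because both $\mathbf{x}_{\mathrm{dR}}$ and $\mathbf{y}_{\mathrm{dR}}$ are defined as matched coefficients in the same equation $\iota_{\mathrm{dR}}(\frak{s}) = (\mathbf{x}_{\mathrm{dR}}/t)\alpha_{\infty,1} + (\mathbf{y}_{\mathrm{dR}}/t)\alpha_{\infty,2}$, and hence scale identically under the isogeny action.

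The main obstacle I anticipate is careful bookkeeping of the scalar factors arising from the Weil pairing compatibility $\langle \pi_{\frak{a},*}x,\pi_{\frak{a},*}y\rangle_{\frak{a}\star A} = N\frak{a}\cdot\langle x,y\rangle_A$ for the principally-polarized isogeny $\pi_{\frak{a}}$ of degree $N\frak{a}$. These factors propagate through the Hodge--Tate map $HT_{\mathcal{A}}$, the period $t = \log[\epsilon]$, the fake Hasse invariant $\frak{s} = HT_{\mathcal{A}}(\alpha_{\infty,2})$, and the fundamental de Rham periods $\mathbf{x}_{\mathrm{dR}},\mathbf{y}_{\mathrm{dR}}$. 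The key virtue of the normalized combination $\omega_{\mathrm{can}} = (t/\mathbf{y}_{\mathrm{dR}})\cdot\frak{s}$ is that all such $N\frak{a}$-scalings must be shown to cancel, so that $\omega_{\mathrm{can}}$ is genuinely functorial under prime-to-$p$ isogenies; this is precisely the property that singles out $\omega_{\mathrm{can}}$ (rather than $\frak{s}$ itself) as the correct canonical differential for the theory of CM periods on $\mathcal{Y}_x$.
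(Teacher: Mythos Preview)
Your proposal is correct and follows the same strategy as the paper's proof: both reduce the period equalities to the functoriality statements $\pi_{\frak{a}}^{*}\bigl(2\pi i\,dz(\frak{a}\star(A,t))\bigr) = 2\pi i\,dz(A,t)$ and $\pi_{\frak{a}}^{*}\omega_{\mathrm{can}}(\frak{a}\star(A,t,\alpha)) = \omega_{\mathrm{can}}(A,t,\alpha)$, and then conclude by comparing with the defining equations (\ref{CMperiodequation1}) and (\ref{CMperiodequation2}). The paper's proof is a single sentence invoking ``functoriality of the construction of $\omega_{\mathrm{can}}$'' without further justification; your version unpacks this on the $p$-adic side via the characterization $\iota_{\mathrm{dR}}(\omega_{\mathrm{can}}) = -\mathbf{z}_{\mathrm{dR}}\alpha_{\infty,1} + \alpha_{\infty,2}$ and correctly flags the $N\frak{a}$-scaling bookkeeping in the Weil pairing as the only point requiring care.
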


\begin{proof}This follows immediately from (\ref{CMperiodequation1}) and (\ref{CMperiodequation2}), after noting that by definitions (in particular, the functoriality of the complex uniformization $\mathbb{C}/(\mathbb{Z} + \mathbb{Z}\tau) \cong \mathcal{A}$ which defines $2\pi i dz$ and the functoriality of the construction of $\omega_{\mathrm{can}}$ as defined in Definition \ref{canonicaldifferentialdef}), we have
$$\pi_{\frak{a}}^* 2\pi i dz(\frak{a}\star(A,t)) = 2\pi i dz(A,t), \hspace{1cm} \pi_{\frak{a}}^*\omega_{\mathrm{can}}(\frak{a}\star(A,t,\alpha)) = \omega_{\mathrm{can}}(A,t,\alpha).$$
\end{proof}

\begin{definition}\label{ddefinition}Henceforth, let
$$d_k^j = \theta \circ \partial_k^j$$
and
\begin{equation}\label{MSformulatheta}\begin{split}\theta_k^j &= \theta\circ \delta_k^j = \theta\left(\sum_{i = 0}^j\binom{k-1+j}{i}\binom{j}{i}\frac{i!}{(z_{\mathrm{dR}}-\bar{z})^i}\left(\frac{d}{dz_{\mathrm{dR}}}\right)^{j-i}\right) \\
&= \sum_{i = 0}^j\binom{k-1+j}{i}\binom{j}{i}\frac{i!}{\theta(z_{\mathrm{dR}}-\bar{z})^i}\theta\circ\left(\frac{d}{dz_{\mathrm{dR}}}\right)^{j-i}\\
&= \sum_{i = 0}^j\binom{k-1+j}{i}\binom{j}{i}i!\left(-\frac{\theta(y_{\mathrm{dR}})}{\mathbf{z}}\right)^i\theta\circ\left(\frac{d}{dz_{\mathrm{dR}}}\right)^{j-i}\
\end{split}
\end{equation}
where the first line follows from (\ref{MSformula}) and the last equality follows from (\ref{periodsrelation}). 
\end{definition}

\begin{definition}\label{weightdefinitiontheta}Given an open subset $\mathcal{U} \subset \mathcal{Y}$, $k \in \mathbb{Z}$, and a subgroup $\Gamma \subset GL_2(\mathbb{Z}_p)$ with $\mathcal{U}\cdot\Gamma = \mathcal{U}$, we define a \emph{$p$-adic modular form $F$ on $\mathcal{U}$ for $\Gamma$ of weight $k$} to be a $F\in \hat{\mathcal{O}}_{\mathcal{Y}}(\mathcal{U})$ such that
\begin{equation}\label{Ftransformationprop}
\begin{split}
\left(\begin{array}{ccc}a & b\\
c & d\\
\end{array}\right)^*F = (bc-ad)^{-k}(c{z_{\mathrm{dR}}}+a)^kF
\end{split}
\end{equation}
for any $\left(\begin{array}{ccc}a & b\\
c & d\\
\end{array}\right) \in \Gamma$. 
We also make an analogous definition for $k \in \mathbb{Z}/(p-1) \times \mathbb{Z}_p$, provided that $(bc-ad)^{-k}(c{z_{\mathrm{dR}}}+a)^k$ makes sense on $\mathcal{U}$. (Note that we embed $\mathbb{Z} \subset \mathbb{Z}/(p-1)\times \mathbb{Z}_p$ diagonally to make all notions of weight compatible.) Let $M_{k,\Delta}(\Gamma)(\mathcal{U})$ denote the space of $p$-adic modular forms on $\mathcal{U}$ for $\Gamma$ of weight $k$.
\end{definition}

\begin{proposition}\label{weightpropositiontheta}We have
$$\theta_k^j : \hat{M}_k(\Gamma)(\mathcal{U}) \rightarrow \hat{M}_{k+2j}(\Gamma)(\mathcal{U}).$$
\end{proposition}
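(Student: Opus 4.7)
The argument parallels Proposition \ref{weightproposition}, leveraging the $GL_2(\mathbb{Z}_p)$-equivariance of the map $\theta : \mathcal{O}_{\Delta,\mathcal{Y}} \twoheadrightarrow \hat{\mathcal{O}}_{\mathcal{Y}}$ together with the equivariance of $\partial_k^j$. First, under the trivialization by $\theta(\omega_{\mathrm{can}})^{\otimes k}$, forms $F \in \hat{M}_k(\Gamma)(\mathcal{U})$ correspond bijectively to $\Gamma$-invariant sections of $\omega_{\mathcal{A}}^{\otimes k} \otimes_{\mathcal{O}_Y} \hat{\mathcal{O}}_{\mathcal{Y}}$; indeed, the transformation law in Definition \ref{weightdefinitiontheta} is precisely dual to the transformation of $\theta(\omega_{\mathrm{can}})^{\otimes k}$ obtained by applying $\theta$ to (\ref{s'transformationprop}). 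The analogous correspondence identifies $M_{k,\Delta}$-forms with $\Gamma$-invariant sections of $\omega_{\Delta,\mathcal{Y}}^{\otimes k}$ via $\omega_{\mathrm{can}}^{\otimes k}$ itself.

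Next, by Proposition \ref{horizontalcommute}, $\partial_k^j = \overline{\mathrm{split}} \circ \nabla^j \circ \iota_{\mathrm{dR}}$ is a composition of maps each natural in the $\Gamma_1(N)\cap \Gamma(p^\infty)$-level structure, hence $GL_2(\mathbb{Z}_p)$-equivariant: $\iota_{\mathrm{dR}}$ arises from the functorial relative $p$-adic de Rham comparison, $\nabla$ extends the Gauss--Manin connection via the natural connection on $\mathcal{O}\mathbb{B}_{\mathrm{dR}}^+$, and $\overline{\mathrm{split}}$ is intrinsic to the horizontal line $\mathcal{L}$ (characterized uniquely by Proposition \ref{uniquehorizontal}). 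Post-composing with the $GL_2(\mathbb{Z}_p)$-equivariant projection $\theta$ yields a $\Gamma$-equivariant operator $\theta \circ \partial_k^j : \omega_{\Delta,\mathcal{Y}}^{\otimes k} \to \omega_{\mathcal{A}}^{\otimes k+2j} \otimes_{\mathcal{O}_Y} \hat{\mathcal{O}}_{\mathcal{Y}}$.

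To conclude, given $F \in \hat{M}_k(\Gamma)(\mathcal{U})$, I will form the lift $\tilde{F} := j(F) \cdot \omega_{\mathrm{can}}^{\otimes k} \in \omega_{\Delta,\mathcal{Y}}^{\otimes k}(\mathcal{U})$ using the natural horizontal inclusion $j : \hat{\mathcal{O}}_{\mathcal{Y}} \hookrightarrow \mathcal{O}_{\Delta,\mathcal{Y}}$ of (\ref{horizontalsections}). Since $j$ is $GL_2(\mathbb{Z}_p)$-equivariant and satisfies $\theta \circ j = \mathrm{id}$, and since $j(\theta(z_{\mathrm{dR}})) \equiv z_{\mathrm{dR}} \pmod{\ker\theta}$, the section $\tilde{F}$ is $\Gamma$-invariant modulo $\ker\theta \cdot \omega_{\Delta,\mathcal{Y}}^{\otimes k}$. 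Applying the $\Gamma$-equivariant map $\theta \circ \partial_k^j$ and using that its target $\omega_{\mathcal{A}}^{\otimes k+2j} \otimes_{\mathcal{O}_Y} \hat{\mathcal{O}}_{\mathcal{Y}}$ carries no $\ker\theta$-ambiguity, the image is genuinely $\Gamma$-invariant. Unraveling via $\theta(\omega_{\mathrm{can}})^{\otimes k+2j}$, and identifying the coefficient through the explicit formula (\ref{MSformulatheta}) as $\theta_k^j(F)$, we obtain $\theta_k^j(F) \in \hat{M}_{k+2j}(\Gamma)(\mathcal{U})$. The main technical point will be matching the geometric output of $\theta \circ \partial_k^j$ applied to $\tilde{F}$ with the coordinate formula (\ref{MSformulatheta}); this reduces to checking that $\theta \circ \delta_k^j \circ j$ is independent of the ambiguity in the choice of lift, which holds because any two horizontal lifts agree by (\ref{jsection}). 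Alternatively, one can sidestep this identification by a direct induction on $j$ from (\ref{MSformulatheta}), using the transformation laws (\ref{ztransformationprop}), (\ref{y'transformationprop}), and (\ref{z'transformationprop}) for $\mathbf{z}$, $\mathbf{y}_{\mathrm{dR}}/t$, and $\mathbf{z}_{\mathrm{dR}}$, in complete parallel to the coordinate proof indicated for Proposition \ref{weightproposition}.
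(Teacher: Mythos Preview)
Your primary approach overcomplicates matters and has a genuine gap. The paper's proof is a single line: ``This follows from applying $\theta$ to Proposition \ref{weightproposition}.'' In other words, one takes $F$ of weight $k$ in $\mathcal{O}_{\Delta,\mathcal{Y}}$, observes by Proposition \ref{weightproposition} that $\delta_k^j F$ has weight $k+2j$ in $\mathcal{O}_{\Delta,\mathcal{Y}}$, and then applies the $GL_2(\mathbb{Z}_p)$-equivariant map $\theta$ to conclude that $\theta_k^j F = \theta(\delta_k^j F)$ satisfies the weight-$(k+2j)$ transformation law in $\hat{\mathcal{O}}_{\mathcal{Y}}$. No lifting from $\hat{\mathcal{O}}$ back to $\mathcal{O}_{\Delta}$ is needed, because the operator $\theta_k^j = \theta\circ\delta_k^j$ is defined with source in $\mathcal{O}_{\Delta,\mathcal{Y}}$; the notation $\hat{M}_k$ in the statement is really shorthand for the image under $\theta$, consistent with how the result is invoked later (e.g.\ in Proposition \ref{weightchangeproposition}, where the input $F$ lies in $\mathcal{O}_{\Delta,\mathcal{Y}}$).

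Your attempt to start from $F\in\hat{\mathcal{O}}_{\mathcal{Y}}$ and lift via $j$ runs into trouble precisely where you claim the $\ker\theta$-ambiguity disappears. The operator $\partial_k^j$ involves differentiation $d/dz_{\mathrm{dR}}$, which does \emph{not} preserve $\ker\theta$: for instance $d/dz_{\mathrm{dR}}$ sends $z_{\mathrm{dR}}-\bar{z}\in\ker\theta$ to $1\notin\ker\theta$. Thus if $\gamma^*\tilde{F}-\tilde{F}$ lies in $\ker\theta\cdot\omega_{\Delta,\mathcal{Y}}^{\otimes k}$, there is no reason for $\theta\circ\partial_k^j(\gamma^*\tilde{F}-\tilde{F})$ to vanish, and the claimed $\Gamma$-invariance of the output does not follow. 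Moreover, since $j(F)$ is horizontal, $(d/dz_{\mathrm{dR}})^{j-i}j(F)=0$ for $i<j$, so $\theta\circ\delta_k^j(j(F))$ collapses to the single term $c_j(j)(-\theta(y_{\mathrm{dR}})/\mathbf{z})^j F$, which does not match the full formula (\ref{MSformulatheta}); this shows that lifting via $j$ does not recover the intended operator.

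Your closing alternative---a direct computation from (\ref{MSformulatheta}) using the transformation laws---is correct and is essentially the coordinate incarnation of the paper's one-line argument. That is the route to take; the $j$-lift machinery should be discarded.
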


\begin{proof}This follows from applying $\theta$ to Proposition \ref{weightproposition}.
\end{proof}

\begin{theorem}\label{MScomparison}Let $A/\overline{\mathbb{Q}}$ be an elliptic curve with CM by an order $\mathcal{O}\subset \mathcal{O}_K$, and let $\alpha : \mathbb{Z}_p^{\oplus 2} \xrightarrow{\sim} T_pA$ be any $p^{\infty}$-level structure, and $t \in A[N]$ any $\Gamma_1(N)$-level structure, and fix $\omega_0 \in \Omega_{A/\overline{\mathbb{Z}}}^1$ as in Definition \ref{perioddefinition}. Let $\omega \in \omega_{\mathcal{A}}(U)$ for any $U \rightarrow Y$, and write 
$$\omega = F\cdot (2\pi i dz)^{\otimes k}$$ 
and 
$$\omega|_{\lambda^{-1}(U) \cap \mathcal{Y}_x} = f\cdot \omega_{\mathrm{can}}^{\otimes k}$$
on $\lambda^{-1}(U) \cap \mathcal{Y}_x $. 
Suppose $(A,t) \in U$ and $(A,t,\alpha) \in \lambda^{-1} \cap \mathcal{Y}_x $. 
Then for any $j \in \mathbb{Z}$, we have the following coincidence of values (in $\overline{\mathbb{Q}}$): 
\begin{align*}i_p^{-1}(\theta(\Omega_p(A,t,\alpha))^{-(k+2j)} \cdot \theta_k^jf(A,t,\alpha)) 
&= i_{\infty}^{-1}(\Omega_{\infty}(A,t)^{-(k+2j)}\cdot\frak{d}_k^jF(A,t,\omega_0)).
\end{align*}
\end{theorem}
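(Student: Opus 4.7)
The plan is to show that both sides compute, under the embeddings $i_p$ and $i_\infty$ respectively, a single algebraic Maass-Shimura derivative of $\omega$ at the algebraic point $(A,t)$, defined via the CM eigendecomposition of $H_{\mathrm{dR}}^1(A/\overline{\mathbb{Q}})$ and expressed in the trivialization $\omega_0^{\otimes(k+2j)}$ of $\omega_{\mathcal{A}}^{\otimes(k+2j)}(A/\overline{\mathbb{Q}})$.

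First I would recast both operators in parallel form. On the $p$-adic side, Proposition \ref{horizontalcommute} gives $\partial_k^j = \overline{\mathrm{split}}\circ \nabla^j\circ \iota_{\mathrm{dR}}$, where $\nabla^j$ absorbs the Kodaira-Spencer identification $(\Omega_Y^1)^{\otimes j}\cong \omega_{\mathcal{A}}^{\otimes 2j}$; hence $d_k^j = \theta\circ \overline{\mathrm{split}}\circ \nabla^j\circ \iota_{\mathrm{dR}}$. The classical complex operator admits the analogous decomposition $\frak{d}_k^j = \mathrm{split}_{\infty}\circ \nabla^j$, where $\mathrm{split}_{\infty}$ is the projection onto $\omega_{\mathcal{A}}^{\otimes(k+2j)}$ coming from the real-analytic Hodge decomposition $H_{\mathrm{dR}}^1(\mathcal{A}/\mathbb{C}) = \Omega^1\oplus \overline{\Omega^1}$. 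Since $\omega$, $\nabla$, and Kodaira-Spencer are defined over $\overline{\mathbb{Q}}$, and since $\iota_{\mathrm{dR}}$ is compatible with connections by the comparison (\ref{comparison}), the stalk $\nabla^j\iota_{\mathrm{dR}}(\omega)(y)$ lies in $\mathrm{Sym}^k H_{\mathrm{dR}}^1(A/\overline{\mathbb{Q}})\otimes_{\overline{\mathbb{Q}}}\omega_{A/\overline{\mathbb{Q}}}^{\otimes 2j}$ inside its $\mathcal{O}\mathbb{B}_{\mathrm{dR}}^+$-stalk (using Hensel's lemma to embed $\overline{\mathbb{Q}}_p\hookrightarrow \mathcal{O}\mathbb{B}_{\mathrm{dR}}^+(y)$), and it matches the complex object $\nabla^j\omega(y)$ under $i_\infty\circ i_p^{-1}$.

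The main step, which I expect to be the chief obstacle, is to identify $\theta\circ\overline{\mathrm{split}}$ at $y$ with the algebraic CM projection $\mathrm{split}_{\mathrm{CM}}\colon H_{\mathrm{dR}}^1(A/\overline{\mathbb{Q}})\twoheadrightarrow \omega_{A/\overline{\mathbb{Q}}}$ scalar-extended to $\mathbb{C}_p$, and similarly $\mathrm{split}_{\infty}$ with that same algebraic splitting scalar-extended to $\mathbb{C}$. By the definition of $\mathcal{L}$ in (\ref{horizontalkernel}) and the identity $\theta(\bar{z}) = \mathbf{z}$ from (\ref{modtheta}), one has $\theta(\mathcal{L}) = \langle \alpha_{\infty,1} - \mathbf{z}^{-1}\alpha_{\infty,2}\rangle\hat{\mathcal{O}}_{\mathcal{Y}_x}$, which is exactly the $\omega_{\mathcal{A}}^{-1}\otimes \hat{\mathcal{O}}(1)$-piece of the relative Hodge-Tate filtration (\ref{relativeHTfiltration2}); hence $\theta\circ\overline{\mathrm{split}}$ is nothing but the Hodge-Tate projection. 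Tate's theorem on CM abelian varieties then forces the $p$-adic Hodge-Tate decomposition of $T_pA\otimes_{\mathbb{Z}_p}\mathbb{C}_p$ to be $\mathcal{O}_K$-equivariant, so each piece is an eigenline for $\mathcal{O}_K\otimes\mathbb{C}_p$, i.e.\ the base change of $\mathrm{split}_{\mathrm{CM}}$; the classical parallel that the Hodge $(1,0)/(0,1)$ decomposition at a CM point realizes the $\mathcal{O}_K\otimes\mathbb{C}$-eigendecomposition handles $\mathrm{split}_{\infty}$.

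To conclude, write $\mathrm{split}_{\mathrm{CM}}(\nabla^j_{\mathrm{alg}}\omega(y)) = c(y)\cdot \omega_0^{\otimes(k+2j)}$ with $c(y)\in \overline{\mathbb{Q}}$. Using Definition \ref{perioddefinition} we have $\theta(\omega_{\mathrm{can}})(y) = \theta(\Omega_p(y))\,\omega_0$ and $(2\pi i\,dz)(y) = \Omega_\infty(y)\,\omega_0$, so applying the identifications of the previous paragraph gives
\[ d_k^j\omega(y) = \theta_k^j f(y)\,\theta(\Omega_p(y))^{k+2j}\,\omega_0^{\otimes(k+2j)} = i_p(c(y))\,\omega_0^{\otimes(k+2j)}, \]
\[ \frak{d}_k^j\omega(y) = \frak{d}_k^j F(y)\,\Omega_\infty(y)^{k+2j}\,\omega_0^{\otimes(k+2j)} = i_\infty(c(y))\,\omega_0^{\otimes(k+2j)}. \]
Solving each equation for $c(y)$ and equating yields the stated coincidence of values. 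Everything beyond the identification of both splittings with $\mathrm{split}_{\mathrm{CM}}$ is formal; the substantive input is thus Tate's $\mathcal{O}_K$-equivariance theorem for the $p$-adic Hodge-Tate decomposition at CM points.
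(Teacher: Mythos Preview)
Your proposal is correct and follows essentially the same route as the paper's proof: rewrite both operators as a splitting composed with $\nabla^j$ (via Proposition \ref{horizontalcommute} on the $p$-adic side), reduce to comparing the two splittings at the CM fiber, and identify each with the algebraic $\mathcal{O}_K$-eigendecomposition of $H_{\mathrm{dR}}^1(A/\overline{\mathbb{Q}})$. Your observation that $\theta(\mathcal{L})$ recovers the Hodge--Tate line is exactly the paper's step $\theta\circ\overline{\mathrm{split}} = \theta\circ\mathrm{split}$ via (\ref{modtheta}), and your use of Hensel's lemma to embed $\overline{\mathbb{Q}}_p$ into the period-ring fiber so that $\theta$ acts as the identity on the algebraic part is the same key point the paper isolates. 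The only minor divergence is that where you invoke ``Tate's theorem'' for $\mathcal{O}_K$-equivariance of the Hodge--Tate decomposition, the paper argues this directly from the action on the tangent space in the nonsplit case and cites Katz's identification with the unit root splitting in the split case; but this is a difference of packaging, not of substance.
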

\begin{proof}Let $\overline{\mathrm{split}}_{\mathbb{C}} : \mathcal{H}_{\mathrm{dR}}^1(\mathcal{A}) \otimes_{\mathcal{O}^{\mathrm{hol}}}\mathcal{O}^{\mathrm{r.\hspace{.01cm}an.}}$ denote the antiholomorphic splitting of complex relative de Rham cohomology. Using $2\pi i dz$, view $\frak{d}_k^j$ as an operator $\omega_{\mathcal{A}}^{\otimes k} \rightarrow \omega_{\mathcal{A}}^{\otimes k+2j}$, and using $\omega_{\mathrm{can}}$, view $\delta_k^j$ as an opeartor $\omega_{\mathcal{A}}^{\otimes k} \rightarrow \omega_{\mathcal{A}}^{\otimes k+2j}$. Since the Gauss-Manin connection is holomorphically horizontal, i.e. for any holomorphic differential $\omega$, $\nabla_{\omega}(\omega') = 0$ for any antiholomorphic $\omega'$, then we have 
$$\frak{d}_k^j = \overline{\mathrm{split}}_{\mathbb{C}} \circ \nabla^j.$$
This is analogous with the horizontalness of our $p$-adic Maass-Shimura operator
$$\partial_k^j = \overline{\mathrm{split}}\circ \nabla^j \circ \iota_{\mathrm{dR}}$$
as proven in Proposition \ref{horizontalcommute}.

The statement of the Theorem can hence be reformulated as
\begin{equation}\label{statement1}i_p^{-1}\left(\theta\left(\left(\overline{\mathrm{split}}\circ \nabla^jf \right)(A,t,\alpha)\right)\right)  =  i_{\infty}^{-1}\left(\left(\overline{\mathrm{split}}_{\mathbb{C}}\circ \nabla^jf\right)(A,t)\right)
\end{equation}
for any section $f \in \Sym^k\mathcal{H}_{\mathrm{dR}}^1(\mathcal{A})(U)$ where $U$ is as in the statement of the Theorem, where $(A,t,\alpha)$ and $(A,t)$ denote specialization to those points. First note that by functoriality of $\theta$, we have
$$\theta\left(\left(\overline{\mathrm{split}}\circ \nabla^jf \right)(A,t,\alpha)\right) = \left(\theta\circ \overline{\mathrm{split}}\circ \nabla^jf \right)(A,t,\alpha)$$
and so (\ref{statement1}) is equivalent to
\begin{equation}\label{statement2}i_p^{-1}\left(\left(\theta\circ \overline{\mathrm{split}}\circ \nabla^jf \right)(A,t,\alpha)\right) =  i_{\infty}^{-1}\left(\left(\overline{\mathrm{split}}_{\mathbb{C}}\circ \nabla^jf\right)(A,t)\right)
\end{equation}

By the functoriality of $\theta\circ \overline{\mathrm{split}}$ and $\overline{\mathrm{split}}_{\mathbb{C}}$, for any section $f'$ in $$\Sym_{\mathcal{O}_{\Delta,\mathcal{Y}}}^{\otimes k+2j} \left(\mathcal{H}_{\mathrm{dR}}^1(\mathcal{A})\otimes_{\mathcal{O}_Y}\mathcal{O}_{\Delta,\mathcal{Y}}\right)(\lambda^{-1}(U)\cap\mathcal{Y}_x) \cap\Sym_{\mathcal{O}_{\Delta,\mathcal{Y}}}^{\otimes k+2j}\left(T_p\mathcal{A}\otimes_{\hat{\mathbb{Z}}_{p,Y}}\mathcal{O}_{\Delta,\mathcal{Y}}\right)(\lambda^{-1}(U)\cap\mathcal{Y}_x),$$
where the intersection is taken in 
$$\Sym_{\mathcal{O}_{\Delta,\mathcal{Y}}}^{\otimes k+2j}\left(T_p\mathcal{A}\otimes_{\hat{\mathbb{Z}}_{p,Y}}\mathcal{O}_{\Delta,\mathcal{Y}}\cdot t^{-1}\right)(\lambda^{-1}(U)\cap\mathcal{Y}_x)$$
using $\iota_{\mathrm{dR}}$, letting $f'(A,t,\alpha)$ denote its specialization to $(A,t,\alpha)$, that
$$\left(\left(\theta\circ\overline{\mathrm{split}}\right)(f')\right)(A,t,\alpha) = \left(\theta\circ\overline{\mathrm{split}}\right)(f'(A,t,\alpha))$$
and similarly for any section $f$ of $\left(\mathcal{H}_{\mathrm{dR}}^1(\mathcal{A}) \otimes_{\mathcal{O}^{\mathrm{hol}}}\mathcal{O}^{\mathrm{r.\hspace{.01cm}an.}}\right)(U)$, letting $f(A,t)$ denote its specialization to $(A,t)$, that
$$\left(\overline{\mathrm{split}}_{\mathbb{C}}(f')\right)(A,t) = \overline{\mathrm{split}}(f'(A,t)).$$ 
Thus taking $f' = \nabla^jf$, can rewrite (\ref{statement2}) as
\begin{equation}\label{statement3}i_p^{-1}\left(\left(\theta\circ \overline{\mathrm{split}}\right)_{(A,t,\alpha)}\left(\left(\nabla^jf \right)(A,t,\alpha)\right)\right) =  i_{\infty}^{-1}\left(\overline{\mathrm{split}}_{\mathbb{C},(A,t)}\left(\left(\nabla^jf\right)(A,t)\right)\right)
\end{equation}
where $\left(\theta\circ\overline{\mathrm{split}}\right)_{(A,t,\alpha)}$ and $\overline{\mathrm{split}}_{\mathbb{C},(A,t)}$ denote the specializations of the splittings $\overline{\mathrm{split}}$ and $\overline{\mathrm{split}}_{\mathbb{C}}$ to the points $(A,t,\alpha)$ and $(A,t)$, respectively. Now by (\ref{modtheta}), we have
$$\theta\circ \overline{\mathrm{split}} = \theta\circ \mathrm{split}$$
where $\mathrm{split}$ is defined as in Definition \ref{splitdef2}. Hence (\ref{statement3}) is in turn equivalent to
\begin{equation}\label{statement3}i_p^{-1}\left(\left(\theta\circ \mathrm{split}\right)_{(A,t,\alpha)}\left(\left(\nabla^jf \right)(A,t,\alpha)\right)\right) =  i_{\infty}^{-1}\left(\overline{\mathrm{split}}_{\mathbb{C},(A,t)}\left(\left(\nabla^jf\right)(A,t)\right)\right)
\end{equation}
where again where $\left(\theta\circ\mathrm{split}\right)_{(A,t,\alpha)}$ and $\overline{\mathrm{split}}_{\mathbb{C},(A,t)}$ denote the specializations of the splittings $\overline{\mathrm{split}}$ and $\overline{\mathrm{split}}_{\mathbb{C}}$ to the points $(A,t,\alpha)$ and $(A,t)$, respectively.

Since $\nabla^j f$ is a section defined over $U \rightarrow Y$, then $\left(\nabla^jf\right)(A,t) = \left(\nabla^jf\right)(A,t,\alpha)$; moreover, we have $\nabla^jf(A,t) \in H_{\mathrm{dR}}^1(A/\overline{\mathbb{Q}})$ since $(A,t) \in U(\overline{\mathbb{Q}})$ by the theory of complex multiplication. So now it suffices to show that
\begin{equation}\label{statement4}i_p^{-1}\left(\left(\theta\circ \mathrm{split}\right)_{(A,t,\alpha)}\right) =  i_{\infty}^{-1}\left(\overline{\mathrm{split}}_{\mathbb{C},(A,t)}\right).
\end{equation}

For this, we first note that the $\mathcal{O} = \End(A/\overline{\mathbb{Q}})$-action induces a splitting
\begin{equation}\label{complexdecomposition}H_{\mathrm{dR}}^1(A/\overline{\mathbb{Q}}) \cong \Omega_{A/\overline{\mathbb{Q}}}^1 \oplus H^{0,1}(A/\overline{\mathbb{Q}})
\end{equation}
functorial in $\overline{\mathbb{Q}}$-algebras, where the first factor is the subspace on which $\mathcal{O}$ acts through multiplication, and the second factor is the subspace on which $\mathcal{O}$ acts through the complex conjugation of multiplication; by the compatibility of complex structures with the CM action, (\ref{complexdecomposition}) induces, upon tensoring with $\otimes_{\overline{\mathbb{Q}}}\mathbb{C}$, the complex Hodge decomposition
\begin{equation}\mathcal{H}_{\mathrm{dR}}^1(\mathcal{A})(A,t) = H_{\mathrm{dR}}^1(A/\mathbb{C}) \cong (H^{1,0}(A)\otimes_{\mathbb{Z}}\mathbb{C})\oplus \left(H^{0,1}(A)\otimes_{\mathbb{Z}}\mathbb{C}\right).
\end{equation}
Hence the projection onto the first factor in (\ref{complexdecomposition}) is just $\overline{\mathrm{split}}_{\mathbb{C},(A,t)}$, and projection onto the first factor in (\ref{complexdecomposition}) is just $\overline{\mathrm{split}}_{\mathbb{C},(A,t)}$ restricted to $H_{\mathrm{dR}}^1(A/\overline{\mathbb{Q}}) \subset \mathcal{H}_{\mathrm{dR}}^1(\mathcal{A})(A,t)$.

The CM action also induces a splitting
\begin{equation}\label{padicdecomposition1}\left(T_p\mathcal{A}\otimes_{\hat{\mathbb{Z}}_{p,Y}}\mathcal{O}_{\Delta,\mathcal{Y}}\right)(A,t,\alpha) \cong \omega_{\Delta,\mathcal{Y}}(A,t,\alpha) \oplus \omega_{\Delta,\mathcal{Y}}^{-1}(1)(A,t,\alpha)
\end{equation}
where the first factor is the subspace on which $\mathcal{O}$ acts through multiplication, and the second factor is the subspace on which $\mathcal{O}$ acts through the complex conjugation of multiplication. 

Suppose that $p$ is inert or ramified in $K$, so that complex conjugation acts on $\mathcal{O} \subset \mathcal{O}_{K_p}$. Then looking at the action of $\mathcal{O}$ on the tangent space of $A$, the fact that the Hodge-Tate filtration
$$\omega_{\Delta,\mathcal{Y}}^{-1}(1)(A,t,\alpha) \subset \left(T_p\mathcal{A}\otimes_{\hat{\mathbb{Z}}_{p,Y}}\mathcal{O}_{\Delta,\mathcal{Y}}\right)(A,t,\alpha)$$
is the subspace on which $\End(A/\overline{\mathbb{Q}})$ acts through the complex conjugation of multiplication implies that the projection onto the first factor in (\ref{padicdecomposition1}) is given by $\mathrm{split}_{(A,t,\alpha)}$, and the projection onto the first factor in (\ref{complexdecomposition}) is just $\mathrm{split}_{(A,t,\alpha)}$ restricted to $H_{\mathrm{dR}}^1(A/\overline{\mathbb{Q}}) \subset \mathcal{H}_{\mathrm{dR}}^1(\mathcal{A})(A,t)$. When $p$ is split in $K$, the latter statement follows from the well-known fact that the CM splitting induces the unit root splitting upon base-change to $\mathbb{C}_p$, see \cite[Lemma 5.1.27]{Katz2}.

Hence in all, we have
$$i_p^{-1}\left(\mathrm{split}_{(A,t,\alpha)}\right) = i_{\infty}^{-1}\left(\overline{\mathrm{split}}_{\mathbb{C},(A,t)}\right).$$

Now we claim that there is a unique embedding $\overline{\mathbb{Q}}_p \subset \mathcal{O}_{\Delta,\mathcal{Y}}(A,t,\alpha)$ such that its composition with $\theta : \mathcal{O}_{\Delta,\mathcal{Y}}(A,t,\alpha) \twoheadrightarrow \hat{\mathcal{O}}_{\mathcal{Y}}(A,t,\alpha) = \mathbb{C}_p$ is the natural inclusion. For this, note that the pro\'{e}tale stalk $\mathcal{O}_{\Delta,\mathcal{Y},y}$ by Proposition \ref{localdescription} (reduced modulo $t$) is a power series ring over the complete local ring $\mathcal{O}_{\mathcal{Y},y}$, and so is Henselian. Let $\frak{m} \subset \mathcal{O}_{\mathcal{Y},y}$ be the maximal ideal of the local ring $\mathcal{O}_{\mathcal{Y},y}$. Then $\mathcal{O}_{\Delta,\mathcal{Y}}(A,t,\alpha) = \mathcal{O}_{\Delta,\mathcal{Y},y} \otimes_{\mathcal{O}_{\mathcal{Y},y}} \mathcal{O}_{\mathcal{Y},y}/\frak{m}$ is also a Henselian local ring, with maximal ideal given by $\ker\theta$ (generated by an indeterminate in the above power series description of $\mathcal{O}_{\Delta,\mathcal{Y},y}$) and residue field $\mathbb{C}_p$. (Recall that the tensor $\otimes_{\mathcal{O}_{\mathcal{Y},y}}$ is given by the natural embedding $\mathcal{O}_{\mathcal{Y},y} \subset \mathcal{O}\mathbb{B}_{\mathrm{dR},\mathcal{Y},y}^+ \overset{\mod t}{\twoheadrightarrow} \mathcal{O}_{\Delta,\mathcal{Y},y}$. We can thus describe the maximal ideal of $\mathcal{O}_{\Delta,\mathcal{Y},y}$ as the ideal generated by the image of the maximal ideal of $\mathcal{O}_{\mathcal{Y},y}$ under this embedding, and $\ker\theta$.) Now since $\overline{\mathbb{Q}}_p \subset \mathbb{C}_p$, by Hensel's lemma the inclusion $\mathbb{Q}_p \subset \mathcal{O}_{\Delta,\mathcal{Y}}(A,t,\alpha)$ lifts uniquely to an inclusion $\overline{\mathbb{Q}}_p \subset \mathcal{O}_{\Delta,\mathcal{Y}}(A,t,\alpha)$ whose composition with $\theta$ (the reduction map to the residue field) is the natural inclusion $\overline{\mathbb{Q}}_p \subset \mathbb{C}_p$. 

Since $\mathrm{split}_{(A,t,\alpha)}$ is defined over $\overline{\mathbb{Q}}_p$ and the the map
$$\overline{\mathbb{Q}}_p \subset \mathcal{O}_{\Delta,\mathcal{Y}}(A,t,\alpha) \xrightarrow{\theta} \mathbb{C}_p$$
is just the natural inclusion, we have
$$\theta\left(i_p^{-1}\left(\mathrm{split}_{(A,t,\alpha)}\right)\right) = i_p^{-1}\left(\mathrm{split}_{(A,t,\alpha)}\right)$$
and so
$$i_p^{-1}\left(\left(\theta\circ\mathrm{split}\right)_{(A,t,\alpha)}\right) = \theta\left(i_p^{-1}\left(\mathrm{split}_{(A,t,\alpha)}\right)\right) = i_p^{-1}\left(\mathrm{split}_{(A,t,\alpha)}\right) =  i_{\infty}^{-1}\left(\overline{\mathrm{split}}_{\mathbb{C},(A,t)}\right).$$

Now the rest of the Theorem follows from the definition of $\Omega_p$ and $\Omega_{\infty}$ (taken with respect to the fixed $\omega_0 \in \Omega_{A/\overline{\mathbb{Q}}}^1$).

\end{proof}

\subsection{Relation of $d_k^j$ to the ordinary Atkin-Serre operator}

Recall the Atkin-Serre operator
$$d_{\mathrm{AS},k} : \omega_{\mathcal{A}}|_{\mathcal{Y}^{\mathrm{Ig}}}^{\otimes k} \rightarrow \omega_{\mathcal{A}}|_{\mathcal{Y}^{\mathrm{Ig}}}^{\otimes k+2}$$
which acts, with respect to the Serre-Tate coordinate $T$, as
$$d_{\mathrm{AS},k} (F\cdot \omega_{\mathrm{can}}^{\mathrm{Katz},\otimes k}) = \left(\theta_{\mathrm{AS}}F\right) \cdot\omega_{\mathrm{can}}^{\mathrm{Katz},\otimes k+2}$$
where 
$$\theta_{\mathrm{AS}} = \frac{Td}{dT}.$$
Note that the operator $\theta_{\mathrm{AS}}^j$, defined in terms of the Serre-Tate coordinate $T$, does not depend on the weight $k$, unlike $\theta_k^j$. Let $d_{\mathrm{AS},k}^j = d_{\mathrm{AS},k+2j-2}\circ \cdots d_k$ denote the $j$-fold composition, and similarly with $\theta_{\mathrm{AS}}^j$. 

\begin{proposition}\label{recoverAtkinSerre}On $\mathcal{Y}^{\mathrm{Ig}}$, we have
$$d_k^j = d_{\mathrm{AS},k}^j$$
and
$$\theta_k^j = \theta_{\mathrm{AS}}^j.$$
\end{proposition}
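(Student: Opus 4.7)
The plan is to reduce both statements to the already-established explicit formula for $\theta_k^j$ in Definition \ref{ddefinition} and the identification $dz_{\mathrm{dR}} = d\log T$ on $\mathcal{Y}^{\mathrm{Ig}}$ from Proposition \ref{STcoincide}. For the first equality $\theta_k^j = \theta_{\mathrm{AS}}^j$, I would begin by invoking Proposition \ref{ordinaryperiod}, which gives $1/\mathbf{z} = 0$ on $\mathcal{Y}^{\mathrm{Ig}}$, and also $\theta(y_{\mathrm{dR}}) = 1$ (the reduction modulo $t$ of $\mathbf{y}_{\mathrm{dR}}/t = 1$, followed by $\theta$). Substituting into the explicit formula (\ref{MSformulatheta}), every term with $i \geq 1$ contains a factor $(-\theta(y_{\mathrm{dR}})/\mathbf{z})^i$ which vanishes, so on $\mathcal{Y}^{\mathrm{Ig}}$ one is left with
\[
\theta_k^j\big|_{\mathcal{Y}^{\mathrm{Ig}}} \;=\; \theta \circ \left(\frac{d}{dz_{\mathrm{dR}}}\right)^{\!j}.
\]

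Next I would use Proposition \ref{STcoincide}, which says $d\mathbf{z}_{\mathrm{dR}} = d\log T$ on $\mathcal{Y}^{\mathrm{Ig}}$; reducing modulo $t$ (which is horizontal) gives $dz_{\mathrm{dR}} = d\log T$ as a generator of $\Omega_{\mathcal{Y}^{\mathrm{Ig}}}^1$. Consequently the derivation $d/dz_{\mathrm{dR}}$ dual to this generator agrees with $Td/dT = \theta_{\mathrm{AS}}$ when restricted to sections of $\mathcal{O}_{\mathcal{Y}^{\mathrm{Ig}}}$, i.e.\ where one can write a section as an honest rigid function of the Serre--Tate coordinate $T$. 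The key point here is that the $\mathbb{B}_{\mathrm{dR}}^+$-linear connection $\nabla$ on $\mathcal{O}\mathbb{B}_{\mathrm{dR}}^+$ extends the exterior differential $d: \mathcal{O}_{\mathcal{Y}} \to \Omega_{\mathcal{Y}}^1$, so for a section $F \in \mathcal{O}_{\mathcal{Y}^{\mathrm{Ig}}}$ the expression $(d/dz_{\mathrm{dR}})^j F$ stays inside $\mathcal{O}_{\mathcal{Y}^{\mathrm{Ig}}}$ and coincides with $\theta_{\mathrm{AS}}^j F$ by iteration. Finally, since $\mathcal{O}_{\mathcal{Y}} \subset \mathcal{O}\mathbb{B}_{\mathrm{dR},\mathcal{Y}}^+ \xrightarrow{\theta} \hat{\mathcal{O}}_{\mathcal{Y}}$ is the natural completion map (see (\ref{inclusion})), $\theta$ acts as the identity on $\theta_{\mathrm{AS}}^j F$. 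This yields $\theta_k^j F = \theta_{\mathrm{AS}}^j F$, which is the second equality.

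For the first equality $d_k^j = d_{\mathrm{AS},k}^j$, I would combine the above with Proposition \ref{canonicalextension}, which identifies $\omega_{\mathrm{can}}|_{\mathcal{Y}^{\mathrm{Ig}}}$ with Katz's canonical differential $\omega_{\mathrm{can}}^{\mathrm{Katz}}$. Writing a section $\omega \in \omega_{\mathcal{A}}^{\otimes k}(\mathcal{U})$ for $\mathcal{U} \subset \mathcal{Y}^{\mathrm{Ig}}$ as $F\cdot \omega_{\mathrm{can}}^{\otimes k}$ matches exactly Katz's expression $F\cdot (\omega_{\mathrm{can}}^{\mathrm{Katz}})^{\otimes k}$, so the identity $\theta_k^j F = \theta_{\mathrm{AS}}^j F$ transported through the respective trivializations gives $d_k^j = d_{\mathrm{AS},k}^j$.

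The only real subtlety — and the main point to verify carefully — is the identification of the derivation $d/dz_{\mathrm{dR}}$ acting on $\mathcal{O}_{\Delta,\mathcal{Y}}$ with $\theta_{\mathrm{AS}} = Td/dT$ acting on rigid functions on $\mathcal{Y}^{\mathrm{Ig}}$; everything else is formal once Propositions \ref{ordinaryperiod}, \ref{canonicalextension}, and \ref{STcoincide} are in hand. I would handle this by noting that under the local description of $\mathcal{O}\mathbb{B}_{\mathrm{dR}}^+$ from Proposition \ref{localdescription}, and with $dz_{\mathrm{dR}} = d\log T$ a generator of $\Omega_{\mathcal{Y}^{\mathrm{Ig}}}^1$, the restriction of the connection on $\mathcal{O}_{\Delta,\mathcal{Y}}$ to the natural subsheaf $\mathcal{O}_{\mathcal{Y}^{\mathrm{Ig}}} \subset \mathcal{O}_{\Delta,\mathcal{Y}}|_{\mathcal{Y}^{\mathrm{Ig}}}$ is simply $d: \mathcal{O}_{\mathcal{Y}^{\mathrm{Ig}}} \to \Omega_{\mathcal{Y}^{\mathrm{Ig}}}^1$, so that $d/dz_{\mathrm{dR}}$ restricts to $Td/dT$ by duality.
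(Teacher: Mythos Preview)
Your proposal is correct and follows essentially the same approach as the paper: both arguments kill the $i\ge 1$ terms in the explicit formula by observing that $\theta(1/(z_{\mathrm{dR}}-\bar z)) = -\theta(y_{\mathrm{dR}})/\mathbf{z} = 0$ on $\mathcal{Y}^{\mathrm{Ig}}$ (the paper phrases this via the $p$-adic Legendre relation, you via Proposition \ref{ordinaryperiod}, but these are the same fact), then invoke Proposition \ref{STcoincide} to identify $d/dz_{\mathrm{dR}}$ with $Td/dT$, and finally use that $\mathcal{O}\subset\mathcal{O}\mathbb{B}_{\mathrm{dR}}^+\xrightarrow{\theta}\hat{\mathcal{O}}$ is the natural inclusion to remove the outer $\theta$. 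Your treatment of the first equality via Proposition \ref{canonicalextension} and your extra care about why $d/dz_{\mathrm{dR}}$ restricts correctly are a bit more explicit than the paper, but the argument is the same.
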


\begin{proof}From the $p$-adic Legendre relation (\ref{periodsrelation}), we have on $\mathcal{Y}^{\mathrm{Ig}}$
$$\theta(1/(z_{\mathrm{dR}} - \bar{z})) = -\theta(y_{\mathrm{dR}})/\theta(\mathbf{z}) = 0.$$

Hence from (\ref{MSformula}), we have 
\begin{equation}\label{ASequation1}\theta_k^j = \theta\circ \delta_k^j = \theta\left(\frac{d}{dz_{\mathrm{dR}}}\right)^j.
\end{equation}
By Proposition \ref{STcoincide}, we have
\begin{equation}\label{ASequation2}\frac{d}{d z_{\mathrm{dR}}} = \frac{d}{d\log T} = \frac{Td}{dT}
\end{equation}
and so combining (\ref{ASequation1}) and (\ref{ASequation2}), we have
$$d_k^j = \theta(d_{\mathrm{AS},k}^j) = d_{\mathrm{AS},k}^j$$
where the last equality follows from the fact that $\mathcal{O} \subset \mathcal{O}\mathbb{B}_{\mathrm{dR}}^+ \xrightarrow{\theta} \hat{\mathcal{O}}$ is the natural inclusion. 

\end{proof}

Hence from now on, we can regard our theory of $p$-adic analysis on $\mathcal{Y}_x$ using $d_k^j$ as an extension of Katz's theory of $p$-adic analysis on $\mathcal{Y}^{\mathrm{Ig}}$ using $d_{\mathrm{AS},k}^j$. 

\section{$p$-adic Analysis of $d_k^j$}\label{overconvergencesection}

In this section, we analyze $p$-adic analytic properties of $d_k^j$ and $\theta_k^j$, in particular how $\theta_k^jF$ behaves as a function of $j$ for elements $F \in \mathcal{O}_{\Delta,\mathcal{Y},y}$ in the stalk at a geometric point $y \in \mathcal{Y}_x(\mathbb{C}_p,\mathcal{O}_{\mathbb{C}_p})$ of the period ring $\mathcal{O}_{\Delta,\mathcal{Y}}$, which we view as the space of ``germs at $y$ of nearly holomorphic functions". Crucial to this study will be the ``$q_{\mathrm{dR}}$-expansion map" 
$$\mathcal{O}_{\mathcal{Y}} \overset{q-\mathrm{exp}}{\hookrightarrow} \hat{\mathcal{O}}_{\mathcal{Y}}\llbracket q_{\mathrm{dR}}-1\rrbracket \subset \mathcal{O}_{\Delta,\mathcal{Y}},$$
which on the supersingular locus factors the natural inclusion $\mathcal{O}_{\mathcal{Y}^{\mathrm{ss}}} \subset \mathcal{O}_{\Delta,\mathcal{Y}^{\mathrm{ss}}}$ (Proposition \ref{factorproposition}). The $q_{\mathrm{dR}}$-expansion map can be viewed naturally as analogue (or an extension) of the Serre-Tate $T$-expansion map, and similarly is injective and so satisfies a ``$q_{\mathrm{dR}}$-expansion principle". Indeed, on the natural cover $\mathcal{Y}^{\mathrm{Ig}}$ of $Y^{\mathrm{ord}}$, the $q_{\mathrm{dR}}$-expansion map recovers the Serre-Tate $T$-expansion (Theorem \ref{STexpcoincide}). The coordinate descriptions (\ref{MSformula}) and (\ref{MSformulatheta}) allow us to compute the action of the $p$-adic Maass-Shimura operators $d_k^j$ and $\theta_k^j$ on $q_{\mathrm{dR}}$-expansions, and hence study their $p$-adic analytic properties.

Understanding the analytic properties of $\theta_k^jF$ will be important in Chapter \ref{padicLfunctionsection} for constructing our $p$-adic $L$-function and establishing our $p$-adic Waldspurger formula.

\subsection{$q_{\mathrm{dR}}$-expansions}\label{q'coordinates}
We retain the notation of the previous sections and let $\mathcal{U}$ denote an affinoid subdomain of $\mathcal{Y}$.  Let $F$ be a complete nonarchimedean field with ring of integers $\mathcal{O}_F$. Recall that the unit polydisc over $\mathrm{Spa}(F,\mathcal{O}_F)$ given by
$$\mathrm{Spa}(F\langle z\rangle, \mathcal{O}_F \langle z \rangle)$$
and the rational subdomain the adic \emph{open} punctured polydisc of radius $r$ over $\mathrm{Spa}(F,\mathcal{O}_F)$ given by 
$$D_r^0 := \bigcup_{\pi \in \mathbb{C}_p, |\pi| < r}\bigcup_{\pi'\in \mathbb{C}_p, |\pi'| < |\pi|} \mathrm{Spa}\left(F\langle \pi^{-1}z, \pi' z^{-1} \rangle, \mathcal{O}_F\langle \pi^{-1}z, \pi' z^{-1}\rangle\right).$$
Also recall the adic torus
$$\mathbb{T} = \mathrm{Spa}\left(F\langle T^{\pm 1}\rangle, \mathcal{O}_F\langle T^{\pm 1}\rangle\right)$$
and the rational subdomain (in fact an open polydisc) consisting of ``units of distance less than $r$ away from 1''
\begin{align*}&\mathbb{T}_r^0 = \bigcup_{\pi \in \mathbb{C}_p, |\pi| < r}\bigcup_{\pi'\in \mathbb{C}_p, |\pi'| < |\pi|} \\
&\mathrm{Spa}\left(F\langle T^{\pm 1}, \pi^{-1}(T-1),\pi'(T-1)^{-1}\rangle, \mathcal{O}_F\langle T^{\pm 1}, \pi^{-1}(T-1),\pi'(T-1)^{-1}\rangle\right).
\end{align*}
Hence by our above convention, we identify
$$D_{\infty}^0 = \mathbb{A}^1 = \mathbb{P}_x^1 \subset \mathbb{P}^1$$
where $z = -x/y$ in terms of the canonical homogeneous coordinates on $\mathbb{P}^1 = \mathbb{P}(F^{\oplus 2})$. 

The $p$-adic exponential gives an isomorphism of adic spaces over $\mathrm{Spa}(F,\mathcal{O}_F)$
\begin{equation}\label{exponentialisomorphism}
\exp : D_{p^{-1/(p-1)}}^0 \xrightarrow{\sim} \mathbb{T}_{p^{-1/(p-1)}}^0
\end{equation}
which sends 
$$D_{p^{-1/(p-1)}}^0 \ni z \mapsto \exp z = 1 + z + \frac{z^2}{2!} + \ldots \in \mathbb{T}_{p^{-1/(p-1)}}^0,$$
with inverse given by the $p$-adic logarithm 
\begin{equation}\label{logarithmicisomorphism} \log : \mathbb{T}_{p^{-1/(p-1)}}^0 \xrightarrow{\sim} D_{p^{-1/(p-1)}}^0
\end{equation}
which sends 
$$\mathbb{T}_{p^{-1/(p-1)}} \ni T \mapsto \log T = (T-1) - \frac{(T-1)^2}{2} + \frac{(T-1)^3}{3} -  \ldots \in D_{p^{-1/(p-1)}}.$$

Recall that for $(F,\mathcal{O}_F) = (\mathbb{Q}_p,\mathbb{Z}_p)$, the Hodge-Tate period map is a smooth map of adic spaces (over $\mathrm{Spa}(\mathbb{Q}_p,\mathbb{Z}_p)$)
$$\pi_{\mathrm{HT}} : \mathcal{Y}_x \rightarrow D_{\infty}^0$$
which restricts to a \emph{pro\'{e}tale} morphism on the open affinoid subdomain $\mathcal{Y}_x^{\mathrm{ss}} := \mathcal{Y}_x \cap \mathcal{Y}^{\mathrm{ss}}$
$$\pi_{\mathrm{HT}} : \mathcal{Y}_x^{\mathrm{ss}} \rightarrow D_{\infty}^0 \cap \Omega^2$$
where we recall $\Omega^2 \subset \mathbb{P}^1$ is Drinfeld's upper half-plane (an open rational subdomain of $\mathbb{P}^1$). 

Now define
$$\mathcal{U}_r = \pi_{\mathrm{HT}}^{-1}(D_r^0), \hspace{1cm} \mathcal{U}_r^{\mathrm{ss}} = \mathcal{U}_r \cap \mathcal{Y}^{\mathrm{ss}}  = \pi_{\mathrm{HT}}^{-1}(D_r^0 \cap \Omega^2).$$ Note that the $GL_2(\mathbb{Q}_p)$ action induces \emph{isomorphisms}
$$[p^k] : \mathcal{U}_r \xrightarrow{\sim} \mathcal{U}_{rp^{-k}},\hspace{1cm} (A,\alpha) \mapsto (A,\alpha) \cdot \left(\begin{array}{ccc} 1 & 0 \\
0 & p^k\\
\end{array}\right)$$
of adic spaces over $\mathrm{Spa}(\mathbb{Q}_p,\mathbb{Z}_p)$. (This is an isomorphism since the matrix is invertible.) We interpret $[p^k]$ as ``contracting toward the origin by a factor of $p^k$". Now we are supplied with a family of morphisms
$$\rho_k := \exp \circ \pi_{\mathrm{HT}} \circ [p^k] : \mathcal{U}_{p^{k-1/(p-1)}} \rightarrow \mathbb{T}_{p^{-1/(p-1)}}^0 \subset \mathbb{T}$$
which restrict to pro\'{e}tale morphisms
\begin{equation}\label{contractionetale}\rho_k : \mathcal{U}_{p^{k-1/(p-1)}}^{\mathrm{ss}} \rightarrow \mathbb{T}_{p^{-1/(p-1)}} \cap \Omega^2 \subset \mathbb{T}.
\end{equation}

Let $L$ be any algebraically closed perfectoid field containing $\mathbb{Q}_p$ (for example, we could take $L = \mathbb{C}_p$). Base-change everything to $\mathrm{Spa}(L,\mathcal{O}_L)$ and, for the rest of this section suppress the subscript ``$L$" for brevity. 

Now let $y \in \mathcal{U}_{p^{k-1/(p-1)}}^{\mathrm{ss}}$ be any geometric point, which under the natural pro\'{e}tale maps $\lambda : \mathcal{U}_{p^{k-1/(p-1)}}^{\mathrm{ss}} \rightarrow Y$ and $\rho_k : \mathcal{U}_{p^{k-1/(p-1)}}^{\mathrm{ss}} \rightarrow \mathbb{T}$, induces geometric points $\lambda(y) \in Y$ and $\rho_k(y) \in \mathbb{T}$. Since $\rho_k$ is pro\'{e}tale, given any sheaf $\mathcal{F}$ on $Y_{\text{pro\'{e}t}}$ and $\mathcal{G}$ on $\mathbb{T}_{\text{pro\'{e}t}}$, we have an identification of pro\'{e}tale stalks
$$\mathcal{F}_y = \mathcal{F}_{\lambda(y)}, \hspace{1cm} \mathcal{G}_y = \mathcal{G}_{\rho_k(y)}.$$
If further $\mathcal{F}|_{\rho_k^{-1}(\mathcal{V})} = \rho_k^*\mathcal{G}|_{\mathcal{V}}$ for any pro\'{e}tale neighborhood $\mathcal{V} \rightarrow \mathbb{T}$ of $\rho_k(y)$, then 
\begin{equation}\label{pullbackstalk}\mathcal{F}_y = \mathcal{F}_{\lambda(y)} = \mathcal{G}_{\rho_k(y)}.
\end{equation}

Since $\rho_k$ is pro\'{e}tale (in fact profinite-\'{e}tale), by the first fundamental exact sequence for $\Omega^1$ (\cite[Proposition 1.6.3, 1.6.8]{Huber}), we have
\begin{equation}\label{differentialpullback}\Omega_{\mathcal{U}_{p^{k-1/(p-1)}}^{\mathrm{ss}}}^1 = \rho_k^*\Omega_{\mathbb{T},\rho_k(y)}^1
\end{equation}
and so by (\ref{pullbackstalk}), we have
\begin{equation}\label{differentialstalk}\Omega_{Y,\lambda(y)}^1 = \Omega_{\mathcal{Y},y}^1 = \Omega_{\mathbb{T},\rho_k(y)}^1.
\end{equation}

Let $Y^+/\mathbb{Z}_p$ denote the adicifation of the (localization at $p\nmid N$ of the) Katz-Mazur integral model $Y^+/\mathbb{Z}[1/N]$ for $Y$, so that $\mathcal{O}_Y^+ = \mathcal{O}_{Y^+}$, and the exterior differential maps
\begin{equation}\label{integraldifferential}
d : \mathcal{O}_{Y^+} \rightarrow \Omega_{Y^+}^1.
\end{equation}
which induces a map on any stalk
\begin{equation}\label{integraldifferentialstalk}
d : \mathcal{O}_{Y^+,x} \rightarrow \Omega_{Y^+,x}^1.
\end{equation}
Hence we have
$$\mathcal{O}_{Y,\lambda(y)} = \mathcal{O}_{\mathcal{Y},y} = \mathcal{O}_{\mathbb{T},\rho_k(y)}$$
and
\begin{equation}\label{integralstalk}\mathcal{O}_{Y^+,\lambda(y)}  = \mathcal{O}_{Y,\lambda(y)}^+ = \mathcal{O}_{\mathcal{Y},y}^+ = \mathcal{O}_{\mathbb{T},\rho_k(y)}^+
\end{equation}
and by \cite[Lemma 4.2 (ii)]{Scholze} and \cite[Proposition 14.3.1]{Conrad3}, we have
\begin{equation}\label{integralstalkcriterion}\mathcal{O}_{Y^+,\lambda(y)} = \mathcal{O}_{Y,\lambda(y)}^+ = \mathcal{O}_{\mathcal{Y},y}^+ = \mathcal{O}_{\mathbb{T},\rho_k(y)}^+ = \{f \in \mathcal{O}_{Y,\lambda(y)} = \mathcal{O}_{\mathcal{Y},y} = \mathcal{O}_{\mathbb{T},\rho_k(y)} : |f(y)| \le 1\}
\end{equation}
where $f(y) := f \pmod{\frak{p}_y}$, where $\frak{p}_y$ is the prime ideal corresponding to the equivalence class of valuations associated with $y$. Finally, we have by (\ref{differentialstalk})
\begin{equation}\label{integraldifferentialstalk}\Omega_{Y^+,\lambda(y)}^1 \subset \Omega_{Y,
\lambda(y)}^1 = \Omega_{\mathcal{Y},y}^1 = \Omega_{\mathbb{T},\rho_k(y)}^1.
\end{equation}
Again, we have the description
\begin{equation}\label{integraldifferentialdescription}\begin{split}&\Omega_{Y^+,\lambda(y)}^1 \\
&= \{w \in \Omega_{Y,
\lambda(y)}^1 = \Omega_{\mathcal{Y},y}^1 = \Omega_{\mathbb{T},\rho_k(y)}^1 : |w/w_0(y)| \le 1 \hspace{.2cm} \text{for any generator} \hspace{.2cm} w_0 \in \Omega_{Y^+,\lambda(y)}^1\}
\end{split}
\end{equation}
where the generator $w_0$ exists above by the moduli interpretation of the Katz-Mazur integral model $Y^+$ of $Y$.

Note that for any $s\in \mathbb{Q}$, we have an isomorphism of adic spaces over $\mathrm{Spa}(L,\mathcal{O}_L)$
$$p^s : D_{p^r}^0 \rightarrow D_{p^{r-s}}^0$$
given by $x \mapsto p^s x$. Choose any $b \in \mathbb{Z}$ such that $s+b \ge 0$. Now we define an \'{e}tale map
$$\rho_k^s : \mathcal{U}_{p^{k-1/(p-1)}}^{\mathrm{ss}} \rightarrow \mathbb{T}$$
given by the composition 
$$\rho_k^s = \exp \circ p^{s+b} \circ \pi_{\mathrm{HT}} \circ [p^k]$$
(the $b$ is necessary in the above definition so that $p^{a+b} \circ \pi_{\mathrm{HT}} \circ [p^k](\mathcal{U}_{p^{k-1/(p-1)}}^{\mathrm{ss}})$ is in the radius of convergence for $\exp$). We denote
\begin{equation}\label{qpower}\frak{q}^{p^s} := (\rho_k^s)^*T^{1/p^b} \in \mathcal{O}_{\mathcal{Y}}(\mathcal{U}_{p^{k-1/(p-1)}}\times_{\mathbb{T}} \mathbb{T}^{1/p^b})
\end{equation}
where $q^{1/p^b} \in \mathcal{O}_{\mathbb{T}}(\mathbb{T}^{1/p^b})$ and 
$$\mathbb{T}^{1/p^b} := \mathrm{Spa}(L \langle T^{\pm 1/p^b}\rangle, \mathcal{O}_L\langle T^{\pm 1/p^b}\rangle) \xrightarrow{x \mapsto x^{p^b}} \mathbb{T}$$
is the standard finite \'{e}tale cover. 

Recall that we have the canonical differential $\frac{dq}{q} \in \Omega_{\mathbb{T},\rho_k(y)}^1$. Now suppose that $y\in \tilde{\mathcal{U}}_{p^{k-1/(p-1)}}(L,\mathcal{O}_L)$ is a geometric point, so that the valuation group of the equivalence class of valuation associated wth $y$ is $\mathbb{Q}$, and let $a \in \mathbb{Q}$ be the smallest rational number such that under the identification (\ref{integraldifferentialstalk}), 
\begin{equation}\label{integralcorrection}p^a\frac{dq}{q} \in \Omega_{Y^+,\lambda(y)}^1 \subset \Omega_{Y,\lambda(y)}^1 = \Omega_{\mathbb{T},\rho_k^a(y)}^1,
\end{equation}
or in other words, such that $p^a\frac{dq}{q}$ is a generator of $\Omega_{Y^+,\lambda(y)}^1$; such $a$ exists by (\ref{integraldifferentialdescription}). 
Now 
$$p^a \frac{dq}{q} = \frac{dq^{p^a}}{q^{p^a}}$$
and so since $q^{p^a} = \exp(p^a\log q) = \exp(p^{a+n}\log q^{1/p^n})$ (the last series is a convergent power series in $q^{\mathrm{1/p^n}}-1$ for all large $n \in \mathbb{Z}_{\ge 0}$, and so belongs to the pro\'{e}tale stalk since  $q^{1/p^n} \in \mathcal{O}_{Y^+,\lambda(y)}$) is a unit in $\mathcal{O}_{Y^+,\lambda(y)} = \mathcal{O}_{Y,\lambda(y)}^+ = \mathcal{O}_{\mathcal{Y},y}^+ = \mathcal{O}_{\mathbb{T},\rho_k(y)}^+$ under the identification (\ref{integralstalk}), we have
\begin{equation}\label{integraldifferentialgenerator}dq^{p^a} \in \Omega_{Y^+,\lambda(y)}^1 \subset \Omega_{Y,\lambda(y)}^1 = \Omega_{\mathbb{T},\rho_k^a(y)}^1
\end{equation}
is a generator. Hence by (\ref{integraldifferential}) and (\ref{integraldifferentialgenerator}), we have
\begin{equation}\label{integralderivative} \frac{d}{dq^{p^a}} : \mathcal{O}_{\mathcal{Y},y}^+ = \mathcal{O}_{Y^+,\lambda(y)} \rightarrow \mathcal{O}_{Y^+,\lambda(y)} = \mathcal{O}_{Y^+,y}.
\end{equation}
By standard theory, the completion (by maximal prime ideal corresponding to $y$) of the \'{e}tale stalk of $\mathcal{O}_{Y^+}$ at $y$ is isomorphic to the completion of the analytic stalk of $\mathcal{O}_{Y^+}$ at $y$, which is isomorphic to $\mathcal{O}_{\mathbb{C}_p}\llbracket q^{p^a} - q^{p^a}(y) \rrbracket$. We can write the pro\'{e}tale stalk $\mathcal{O}_{Y^+,y}$ as a direct limit of \'{e}tale stalks (which are henselizations of noetherian local rings, in particular subrings of the completions), and so the pro\'{e}tale stalk maps to the direct limit of the completions of \'{e}tale stalks, each of which is isomorphic to the power series ring $\mathcal{O}_{\mathbb{C}_p}\llbracket q^{p^a} - q^{p^a}(y) \rrbracket$. Hence there is a natural inclusion of the pro\'{e}tale stalk $\mathcal{O}_{Y^+,y}$ into $\mathcal{O}_{\mathbb{C}_p}\llbracket q^{p^a} - q^{p^a}(y) \rrbracket$. Writing any element $f \in \mathcal{O}_{Y^+,\lambda(y)}$ as
$$f = \sum_{n=0}^{\infty} a_n(q^{p^a}-q^{p^a}(y))^n, \hspace{1cm} a_n \in \mathcal{O}_{\mathbb{C}_p},$$
we see by the usual Taylor coefficient formula that
$$a_n = \frac{1}{n!}\left(\frac{d}{dq^{p^a}}\right)^n(f)(y) \in \mathcal{O}_{\mathbb{C}_p}$$
and so by (\ref{integralstalkcriterion}), we have
\begin{equation}\label{integralderivative2} \frac{1}{n!}\left(\frac{d}{dq^{p^a}}\right)^n : \mathcal{O}_{\mathcal{Y},y}^+ = \mathcal{O}_{Y^+,\lambda(y)} \rightarrow \mathcal{O}_{Y^+,\lambda(y)} = \mathcal{O}_{\mathcal{Y},y}^+.
\end{equation}

Alternatively, to prove (\ref{integralderivative2}), we could have invoked \cite[Chapter 2 Proposition 2.6, see also Remark 2.7]{BerthelotOgus}.

Applying Proposition \ref{localdescription} with respect to $\rho_k^a : \mathcal{U}_{p^{k-1/(p-1)}}^{\mathrm{ss}} \rightarrow \mathbb{T}$, we get 
$$\tilde{\mathcal{U}}_{p^{k-1/(p-1)}}^{\mathrm{ss}} := \mathcal{U}_{p^{k-1/(p-1)}}^{\mathrm{ss}}\times_{\rho_k^a,\mathbb{T}} \tilde{\mathbb{T}} \in Y_{\text{pro\'{e}t}}$$
where $\tilde{\mathbb{T}}$ is defined in (\ref{infinitytorus}), on which we have an isomorphism of sheaves on the localized site $Y_{\text{pro'{e}t}}/\tilde{\mathcal{U}}_{p^{k-1/(p-1)}}^{\mathrm{ss}}$
$$\mathcal{O}\mathbb{B}_{\mathrm{dR},\tilde{\mathcal{U}}_{p^{k-1/(p-1)}}^{\mathrm{ss}}}^+ = \mathbb{B}_{\mathrm{dR},\tilde{\mathcal{U}}_{p^{k-1/(p-1)}}^{\mathrm{ss}}}^+\llbracket \frak{q}^{p^a} - [\frak{q}^{p^a,\flat}]\rrbracket$$
where 
$$\frak{q}^{p^a} = (\rho_k^a)^*T$$
is as in (\ref{qpower}), and
$$\frak{q}^{p^a,\flat} = (\frak{q}^{p^a},(\frak{q}^{p^a})^{1/p},(\frak{q}^{p^a})^{1/p^2},\ldots) \in \mathcal{O}_Y^{+,\flat}(\tilde{\mathcal{U}}_{p^{k-1/(p-1)}}^{\mathrm{ss}}),$$
so the Teichm\"{u}ller lift 
$$[\frak{q}^{p^a,\flat}] \in \mathbb{B}_{\mathrm{dR},Y}^+(\tilde{\mathcal{U}}_{p^{k-1/(p-1)}}^{\mathrm{ss}}) \subset \mathcal{O}\mathbb{B}_{\mathrm{dR},Y}^+(\tilde{\mathcal{U}}_{p^{k-1/(p-1)}}^{\mathrm{ss}}).$$
Reducing modulo $t$, we have a natural local description of $\mathcal{O}_{\Delta,\mathcal{Y}}^+$
$$\mathcal{O}_{\Delta,\tilde{\mathcal{U}}_{p^{k-1/(p-1)}}^{\mathrm{ss}}}^+ = \hat{\mathcal{O}}_{\tilde{\mathcal{U}}_{p^{k-1/(p-1)}}^{\mathrm{ss}}}^+\llbracket q^{p^a} - \theta(q)^{p^a}\rrbracket$$
where
$$q^{p^a} := \frak{q}^{p^a} \pmod{t},$$
and we note that
$$\theta(q)^{p^a} = [\frak{q}^{p^a,\flat}] \pmod{t}.$$

\begin{proposition}With the same notation as in the above discussion, let $\tilde{y}$ denote any point in $\tilde{\mathcal{U}}_{p^{k-/1(p-1)}}^{\mathrm{ss}}(L,\mathcal{O}_L)$ lying above $y \in \mathcal{U}_{p^{k-1/(p-1)}}^{\mathrm{ss}}(L,\mathcal{O}_L)$. Then we have that the natural inclusion 
\begin{equation}\label{qexp}\mathcal{O}_{\mathcal{Y},y}^+ \subset \mathcal{O}_{\Delta,\tilde{\mathcal{U}}_{p^{k-1/(p-1)}}^{\mathrm{ss}},\tilde{y}}^+ = \hat{\mathcal{O}}_{\tilde{\mathcal{U}}_{p^{k-1/(p-1)}}^{\mathrm{ss}},\tilde{y}}^+\llbracket q^{p^a} - \theta(q)^{p^a}\rrbracket
\end{equation}
factors through
\begin{equation}\label{integralstalkqexp}\mathcal{O}_{\tilde{\mathcal{U}}_{p^{k-1/(p-1)}}^{\mathrm{ss}},\tilde{y}}^+ \subset \hat{\mathcal{O}}_{\tilde{\mathcal{U}}_{p^{k-1/(p-1)}}^{\mathrm{ss}},\tilde{y}}^+\llbracket q^{p^a} - \theta(q)^{p^a}\rrbracket.
\end{equation}
Moreover, (\ref{integralstalkqexp}) extends to the $p$-adic completion
\begin{equation}\label{integralstalkqexpcomplete}\hat{\mathcal{O}}_{\tilde{\mathcal{U}}_{p^{k-1/(p-1)}}^{\mathrm{ss}},\tilde{y}}^+ \subset \hat{\mathcal{O}}_{\tilde{\mathcal{U}}_{p^{k-1/(p-1)}}^{\mathrm{ss}},\tilde{y}}^+\llbracket q^{p^a} - \theta(q)^{p^a}\rrbracket.
\end{equation}
\end{proposition}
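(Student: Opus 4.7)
The plan is to prove both inclusions by unpacking the Taylor-expansion description of the natural map $\mathcal{O}_{\tilde{\mathcal{U}}} \hookrightarrow \mathcal{O}_{\Delta,\tilde{\mathcal{U}}}$ provided (modulo $t$) by Proposition \ref{localdescription}, and then tracking integrality coefficient by coefficient. Since $\tilde{\mathcal{U}}_{p^{k-1/(p-1)}}^{\mathrm{ss}} \to \mathcal{U}_{p^{k-1/(p-1)}}^{\mathrm{ss}}$ is pro\'{e}tale and $\tilde{y}$ lies over $y$, pro\'{e}tale stalks agree as noted in (\ref{integralstalk}), so the natural map on the left of (\ref{qexp}) already identifies $\mathcal{O}_{\mathcal{Y},y}^+ = \mathcal{O}_{\tilde{\mathcal{U}}_{p^{k-1/(p-1)}}^{\mathrm{ss}},\tilde{y}}^+$. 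Thus the first factorization reduces to showing that the natural inclusion $\mathcal{O}_{\tilde{\mathcal{U}},\tilde{y}}^+ \hookrightarrow \mathcal{O}_{\Delta,\tilde{\mathcal{U}},\tilde{y}}^+$ lands inside the integral formal power series ring $\hat{\mathcal{O}}_{\tilde{\mathcal{U}},\tilde{y}}^+\llbracket q^{p^a} - \theta(q)^{p^a}\rrbracket$.

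Reducing the local description of Proposition \ref{localdescription} modulo $t$, I would observe that the inclusion $\mathcal{O}_{\tilde{\mathcal{U}},\tilde{y}} \hookrightarrow \hat{\mathcal{O}}_{\tilde{\mathcal{U}},\tilde{y}}\llbracket q^{p^a} - \theta(q)^{p^a}\rrbracket$ is the unique $\mathcal{O}$-algebra map sending $\frak{q}^{p^a} \mapsto \theta(q)^{p^a} + (q^{p^a} - \theta(q)^{p^a})$ and reducing modulo $\ker\theta$ to the natural map $\mathcal{O} \to \hat{\mathcal{O}}$. By the standard Hensel/Taylor formula this inclusion is given explicitly by
\[
f \longmapsto \sum_{n=0}^\infty j\!\left(\frac{1}{n!}\left(\frac{d}{dq^{p^a}}\right)^n f\right)(q^{p^a} - \theta(q)^{p^a})^n,
\]
where $j$ is the horizontal embedding (\ref{horizontalsections}). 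Now invoking (\ref{integralderivative2}), each divided derivative $(1/n!)(d/dq^{p^a})^n$ preserves $\mathcal{O}_{\tilde{\mathcal{U}},\tilde{y}}^+$; combined with the fact that $j$ restricts to $\mathcal{O}^+ \hookrightarrow \hat{\mathcal{O}}^+$ (since $j$ factors through the $p$-adic completion map $\mathcal{O} \to \hat{\mathcal{O}}$, and the latter preserves integral elements), every Taylor coefficient of an $f \in \mathcal{O}_{\tilde{\mathcal{U}},\tilde{y}}^+$ lies in $\hat{\mathcal{O}}_{\tilde{\mathcal{U}},\tilde{y}}^+$. This yields the desired factorization (\ref{integralstalkqexp}).

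For the extension (\ref{integralstalkqexpcomplete}) to the $p$-adic completion $\hat{\mathcal{O}}_{\tilde{\mathcal{U}},\tilde{y}}^+$, I would use that $\hat{\mathcal{O}}_{\tilde{\mathcal{U}},\tilde{y}}^+\llbracket q^{p^a}-\theta(q)^{p^a}\rrbracket$ is itself $p$-adically complete: the formal power series ring over a $p$-adically complete ring is $p$-adically complete, since coefficientwise $p$-adic convergence is equivalent to $p$-adic convergence of the whole series. The inclusion from the first part is continuous for the $p$-adic topology on both sides, so it extends uniquely to the $p$-adic completion $\hat{\mathcal{O}}_{\tilde{\mathcal{U}},\tilde{y}}^+$. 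Injectivity of the extension follows by continuity from the fact that composing the first-part inclusion with the quotient $\theta$ (i.e.\ setting $q^{p^a} - \theta(q)^{p^a} = 0$) is the natural inclusion $\mathcal{O}_{\tilde{\mathcal{U}},\tilde{y}}^+ \hookrightarrow \hat{\mathcal{O}}_{\tilde{\mathcal{U}},\tilde{y}}^+$, which on completions becomes the identity.

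The main obstacle is the integrality of divided derivatives, but this has already been arranged by the careful choice of exponent $a$ in (\ref{integralcorrection}) that makes $dq^{p^a}$ an integral generator of $\Omega^1_{Y^+,\lambda(y)}$; this is precisely the input that makes (\ref{integralderivative2}) hold. The only remaining subtlety is to keep straight that in $\mathcal{O}_{\Delta,\tilde{\mathcal{U}},\tilde{y}}$ the sections $q^{p^a}$ and $\theta(q)^{p^a}$ (the images of $\frak{q}^{p^a}$ and of its Teichm\"uller lift $[\frak{q}^{p^a,\flat}]$) are genuinely distinct and differ by a generator of $\ker\theta$, so the Taylor expansion is a true power-series expansion and not a trivial identity; this reconciliation is exactly what Claim \ref{claim} and its uniqueness clause supply.
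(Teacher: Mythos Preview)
Your proposal is correct and follows essentially the same approach as the paper: both identify the inclusion as a Taylor expansion in $q^{p^a}-\theta(q)^{p^a}$, invoke (\ref{integralderivative2}) to get integrality of the divided-derivative coefficients, and then pass to the $p$-adic completion. The only cosmetic difference is that the paper justifies the Taylor-coefficient formula via a commutative diagram comparing $\nabla$ on $\mathcal{O}$ and on $\mathcal{O}\mathbb{B}_{\mathrm{dR}}^+$ (and phrases the coefficients as $\theta$ of the divided derivatives rather than $j$ of them, which amounts to the same thing since $\mathcal{O}\hookrightarrow\mathcal{O}\mathbb{B}_{\mathrm{dR}}^+\xrightarrow{\theta}\hat{\mathcal{O}}$ is the natural map), whereas you appeal directly to the uniqueness clause in Claim~\ref{claim}.
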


\begin{proof}Given $f \in \mathcal{O}_{\mathcal{Y},y}^+$, let 
$$\sum_{n = 0}^{\infty} a_n(f)(q^{p^a}-\theta(q^{p^a}))^n, \hspace{1cm} a_n(f) \in \hat{\mathcal{O}}_{\tilde{\mathcal{U}}_{p^{k-1/(p-1)}}^{\mathrm{ss}},\tilde{y}}$$
denote its image under the map (\ref{qexp}). We have a commutative diagram
\begin{equation}
\begin{tikzcd}
     & \mathcal{O}_{\tilde{\mathcal{U}}_{p^{k-1/(p-1)}}^{\mathrm{ss}}} \arrow{r}{\nabla} \arrow[hookrightarrow]{d}{} & \Omega_{\tilde{\mathcal{U}}_{p^{k-1/(p-1)}}^{\mathrm{ss}}}^1 \arrow[hookrightarrow]{d}{}&\\
     & \mathcal{O}\mathbb{B}_{\mathrm{dR},\tilde{\mathcal{U}}_{p^{k-1/(p-1)}}^{\mathrm{ss}}}^+ \arrow{r}{\nabla} & \mathcal{O}\mathbb{B}_{\mathrm{dR},\tilde{\mathcal{U}}_{p^{k-1/(p-1)}}^{\mathrm{ss}}}^+\otimes_{\mathcal{O}_{\tilde{\mathcal{U}}_{p^{k-1/(p-1)}}^{\mathrm{ss}}}}\Omega_{\tilde{\mathcal{U}}_{p^{k-1/(p-1)}}^{\mathrm{ss}}}^1&\\
\end{tikzcd}
\end{equation}
which, reducing mod $t$, trivializing $\Omega_{\tilde{\mathcal{U}}_{p^{k-1/(p-1)}}^{\mathrm{ss}},\tilde{y}}^1 \cong \mathcal{O}_{\tilde{\mathcal{U}}_{p^{k-1/(p-1)}}^{\mathrm{ss}},\tilde{y}}$ using the section $dq^{p^a}$ and extending to the $p$-adic completion, implies that
\begin{equation}
\begin{tikzcd}\label{coefficientdiagram}
     & \hat{\mathcal{O}}_{\tilde{\mathcal{U}}_{p^{k-1/(p-1)}}^{\mathrm{ss}},\tilde{y}} \arrow{r}{\frac{d}{dq^{p^a}}} \arrow[hookrightarrow]{d}{} \hspace{1cm}& \hspace{1cm}\hat{\mathcal{O}}_{\tilde{\mathcal{U}}_{p^{k-1/(p-1)}}^{\mathrm{ss}},\tilde{y}} \arrow[hookrightarrow]{d}{}&\\
     & \hat{\mathcal{O}}_{\tilde{\mathcal{U}}_{p^{k-1/(p-1)}}^{\mathrm{ss}},\tilde{y}}\llbracket q^{p^a} - \theta(q^{p^a})\rrbracket \arrow{r}{\frac{d}{dq^{p^a}} = \frac{d}{d(q^{p^a}-\theta(q^{p^a}))}} \hspace{1cm}&\hspace{1cm} \hat{\mathcal{O}}_{\tilde{\mathcal{U}}_{p^{k-1/(p-1)},\tilde{y}}^{\mathrm{ss}}}\llbracket q^{p^a} - \theta(q^{p^a})\rrbracket&\\
\end{tikzcd}
\end{equation}
is commutative. Note that since $\mathcal{O}\subset \mathcal{O}\mathbb{B}_{\mathrm{dR}}^+ \xrightarrow{\theta} \hat{\mathcal{O}}$ is the natural inclusion, then (\ref{coefficientdiagram}) implies that 
$$a_n(f) = \theta\left(\frac{1}{n!}\left(\frac{d}{dq^{p^a}}\right)^n\right).$$
Now by (\ref{integralderivative2}), we in fact we have that
$$a_n(f) = \theta\left(\frac{1}{n!}\left(\frac{d}{dq^{p^a}}\right)^n\right) \in \mathcal{O}_{\mathcal{Y},y}^+ = \mathcal{O}_{\tilde{\mathcal{U}}_{p^{k-1/(p-1)}}^{\mathrm{ss}},\tilde{y}}^+$$
by identifying pro\'{e}tale stalks.

Now (\ref{integralstalkqexpcomplete}) follows because 
$$\hat{\mathcal{O}}_{\tilde{\mathcal{U}}_{p^{k-1/(p-1)}}^{\mathrm{ss}},\tilde{y}}^+  = \varprojlim_n\mathcal{O}_{\tilde{\mathcal{U}}_{p^{k-1/(p-1)}}^{\mathrm{ss}},\tilde{y}}^+/p^n.$$
\end{proof}

\begin{definition}\label{qHTdefinition}Let $y$ be as above. Henceforth, let
$$q_{\mathrm{HT}}^{p^a} := q^{p^a}/\theta(q^{p^a}) \in \mathcal{O}_{\Delta,\tilde{\mathcal{U}}_{p^{k-1/(p-1)}}^{\mathrm{ss}},\tilde{y}}$$
so that $\theta(q_{\mathrm{HT}}^{p^a}) = 1$. Also let
$$q_{\mathrm{dR}} := \exp(z_{\mathrm{dR}} - \theta(z_{\mathrm{dR}})) \in \mathcal{O}_{\Delta,\mathcal{Y}}(\mathcal{Y}_x)$$
where the fact that $q_{\mathrm{dR}}$ defines a section on all of $\mathcal{Y}_x$ follows from Proposition (\ref{invertible}). We also denote
$$dq_{\mathrm{dR}} := \nabla(q_{\mathrm{dR}}).$$
Note that
$$dq_{\mathrm{dR}} = q_{\mathrm{dR}}dz_{\mathrm{dR}}.$$

Furthermore, for any $b \in \mathbb{Q}$ (and a choice of an element $p^b \in \mathbb{C}_p$ with $p$-adic absolute value $|p|^b = (1/p)^b$), we can define 
$$q_{\mathrm{dR}}^{1/p^b} := \exp\left(\frac{z_{\mathrm{dR}}-\theta(z_{\mathrm{dR}})}{p^b}\right).$$
\end{definition}

\begin{lemma}\label{y'lemma}Let $y$ be as above. We have
\begin{equation}\label{calculation}y_{\mathrm{dR}}^2 \cdot \frac{dz_{\mathrm{dR}}}{dq^{p^a}} \in \mathcal{O}_{\mathcal{Y},y}^{\times}.
\end{equation}
As a consequence, we have
\begin{equation}\label{calculation2}\theta(y_{\mathrm{dR}})^2\cdot\theta\left(\frac{dq_{\mathrm{dR}}}{dq^{p^a}}\right) \in \mathcal{O}_{\mathcal{Y},y}^{\times}.
\end{equation}
\end{lemma}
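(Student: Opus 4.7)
The plan is to deduce this from the Kodaira--Spencer calculation carried out inside the proof of Proposition~\ref{invertible}, which established (see (\ref{KScalc}))
\[
\sigma(\mathfrak{s}^{\otimes 2}) \;=\; -\bigl(\mathbf{y}_{\mathrm{dR}}/t\bigr)^{2}\,d\mathbf{z}_{\mathrm{dR}}
\]
in $\Omega_{Y}^{1}\otimes_{\mathcal{O}_{Y}}\mathcal{O}\mathbb{B}_{\mathrm{dR},Y}^{+}(\mathcal{Y}_{x})$, the left-hand side actually lying in the subsheaf $\Omega_{Y}^{1}(\mathcal{Y}_{x})$. Reducing modulo $t$ and restricting to the pro\'etale stalk at the chosen $y\in \mathcal{U}_{p^{k-1/(p-1)}}^{\mathrm{ss}}\subset \mathcal{Y}_{x}$ gives
\[
y_{\mathrm{dR}}^{2}\cdot dz_{\mathrm{dR}} \;=\; -\sigma(\mathfrak{s}^{\otimes 2}) \;\in\; \Omega_{Y,\lambda(y)}^{1}.
\]
Since $dq^{p^{a}}$ is a basis of $\Omega_{Y,\lambda(y)}^{1}=\Omega_{\mathcal{Y},y}^{1}$ by (\ref{differentialstalk}) and (\ref{integraldifferentialgenerator}), we may therefore divide to obtain
\[
y_{\mathrm{dR}}^{2}\cdot\tfrac{dz_{\mathrm{dR}}}{dq^{p^{a}}} \;=\; -\tfrac{\sigma(\mathfrak{s}^{\otimes 2})}{dq^{p^{a}}} \;\in\; \mathcal{O}_{\mathcal{Y},y}.
\]

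To promote this ratio to a unit, the plan is to argue that both $\sigma(\mathfrak{s}^{\otimes 2})$ and $dq^{p^{a}}$ are integral generators of the rank-$1$ module $\Omega_{Y^{+},\lambda(y)}^{1}$. For $dq^{p^{a}}$ this is by construction (see (\ref{integralcorrection}) together with (\ref{integraldifferentialgenerator})). For $\sigma(\mathfrak{s}^{\otimes 2})$, observe that $\mathcal{U}_{p^{k-1/(p-1)}}^{\mathrm{ss}}\subset \mathcal{Y}_{x}\cap \mathcal{Y}_{y}$, and invoke Section~\ref{split}: the fake Hasse invariant $\mathfrak{s}\in \Gamma(\mathfrak{X},\omega_{\mathcal{A}}^{+})$ trivializes $\omega_{\mathcal{A}}^{+}$ on the adic-theoretic closure of $\mathcal{Y}_{y}$ in the formal model $\mathfrak{X}$, so $\mathfrak{s}$ is an integral generator of the stalk $\omega_{\mathcal{A},\lambda(y)}^{+}$. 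Since the Kodaira--Spencer map is an isomorphism of integral line bundles on the Katz--Mazur integral model $Y^{+}$, it follows that $\sigma(\mathfrak{s}^{\otimes 2})$ is an integral generator of $\Omega_{Y^{+},\lambda(y)}^{1}$. Consequently the ratio $\sigma(\mathfrak{s}^{\otimes 2})/dq^{p^{a}}$ is a unit in $\mathcal{O}_{Y^{+},\lambda(y)}^{\times}\subset \mathcal{O}_{\mathcal{Y},y}^{\times}$, yielding (\ref{calculation}).

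For the consequence (\ref{calculation2}), note that from $q_{\mathrm{dR}}=\exp(z_{\mathrm{dR}}-\theta(z_{\mathrm{dR}}))$ we have $dq_{\mathrm{dR}}=q_{\mathrm{dR}}\,dz_{\mathrm{dR}}$ and $\theta(q_{\mathrm{dR}})=\exp(\theta(z_{\mathrm{dR}})-\theta(z_{\mathrm{dR}}))=1$; applying the ring map $\theta$ therefore yields
\[
\theta(y_{\mathrm{dR}})^{2}\cdot \theta\!\Bigl(\tfrac{dq_{\mathrm{dR}}}{dq^{p^{a}}}\Bigr) \;=\; \theta(y_{\mathrm{dR}})^{2}\cdot\theta(q_{\mathrm{dR}})\cdot \theta\!\Bigl(\tfrac{dz_{\mathrm{dR}}}{dq^{p^{a}}}\Bigr) \;=\; \theta\!\Bigl(y_{\mathrm{dR}}^{2}\cdot \tfrac{dz_{\mathrm{dR}}}{dq^{p^{a}}}\Bigr),
\]
which is a unit by (\ref{calculation}) together with the fact that the composition $\mathcal{O}_{\mathcal{Y}}\subset \mathcal{O}\mathbb{B}_{\mathrm{dR},\mathcal{Y}}^{+}\xrightarrow{\theta}\hat{\mathcal{O}}_{\mathcal{Y}}$ is the natural (injective) $p$-adic completion map (\ref{inclusion}). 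The main obstacle is the integral generator claim for $\sigma(\mathfrak{s}^{\otimes 2})$ in the second paragraph: one needs that $\sigma(\mathfrak{s}^{\otimes 2})$ generates the \emph{integral} cotangent stalk, not merely the stalk of $\Omega^{1}_{Y}$, and this critically uses both the integral trivialization of $\omega_{\mathcal{A}}^{+}$ by $\mathfrak{s}$ on $\mathcal{Y}_{y}$ from Section~\ref{split} and the integrality of Kodaira--Spencer on $Y^{+}$.
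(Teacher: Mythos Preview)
Your approach is essentially the same as the paper's: both reduce to the Kodaira--Spencer identity $\sigma(\mathfrak{s}^{\otimes 2}) = -(\mathbf{y}_{\mathrm{dR}}/t)^{2}\,d\mathbf{z}_{\mathrm{dR}}$ from (\ref{KScalc}) and the fact that $\sigma$ is an isomorphism of line bundles, and your derivation of (\ref{calculation2}) from (\ref{calculation}) via $\theta(q_{\mathrm{dR}})=1$ matches the paper's verbatim.

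However, you have made the argument for (\ref{calculation}) harder than necessary by passing to the integral level. The lemma asks only for a unit in $\mathcal{O}_{\mathcal{Y},y}$, not in $\mathcal{O}_{\mathcal{Y},y}^{+}$. The paper simply notes that $\mathfrak{s}^{\otimes 2}$ trivializes $\omega_{\mathcal{A}}^{\otimes 2}$ at the stalk (since $y\in\mathcal{Y}_{x}$, where $\mathfrak{s}$ is nonvanishing by definition) and that $dq^{p^{a}}$ trivializes $\Omega_{\mathcal{Y},y}^{1}$; since $\sigma$ is an isomorphism of $\mathcal{O}_{Y}$-modules, the ratio $\sigma(\mathfrak{s}^{\otimes 2})/dq^{p^{a}}$ is automatically a unit in $\mathcal{O}_{\mathcal{Y},y}$. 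There is no need to invoke the integral trivialization of $\omega_{\mathcal{A}}^{+}$ by $\mathfrak{s}$ from Section~\ref{split}, nor the integrality of the Kodaira--Spencer isomorphism on $Y^{+}$. What you flagged as ``the main obstacle'' is therefore not an obstacle at all: those integral inputs would only be required if the conclusion were stated with $\mathcal{O}_{\mathcal{Y},y}^{+,\times}$ in place of $\mathcal{O}_{\mathcal{Y},y}^{\times}$.
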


\begin{proof}By the calculation (\ref{basiselt}) and the diagram (\ref{KSdiagram}), we have
\begin{equation}\label{calculation2}\sigma(\frak{s}^{\otimes 2}) = y_{\mathrm{dR}}^2\cdot\frac{d{z_{\mathrm{dR}}}}{dq^{p^q}}\otimes dq^{p^q}.
\end{equation}
Then since $\frak{s}^{\otimes 2}$ trivializes $\omega_{\mathcal{A}}|_{\mathcal{Y},y} \cong \mathcal{O}_{\mathcal{Y},y}$ and $dq^{p^a}$ trivializes $\Omega_{\mathcal{Y},y}^1 \cong \mathcal{O}_{\mathcal{Y},y}$, so we must have that (\ref{calculation}) holds. For the second statement, we have that $\mathcal{O}_{\mathcal{Y},y} \subset \mathcal{O}_{\Delta,\mathcal{Y},y} \overset{\theta}{\twoheadrightarrow} \hat{\mathcal{O}}_{\mathcal{Y},y}$ is the natural inclusion, and so 
$$\theta(y_{\mathrm{dR}})^2\cdot\theta\left(\frac{dz_{\mathrm{dR}}}{dq^{p^a}}\right) \in \mathcal{O}_{\mathcal{Y},y}^{\times}.$$
Now the statement follows by observing
$$\theta\left(\frac{dq_{\mathrm{dR}}}{dq^{p^a}}\right) = \theta\left(q_{\mathrm{dR}}\frac{dz_{\mathrm{dR}}}{dq^{p^a}}\right) = \theta\left(\frac{dz_{\mathrm{dR}}}{dq^{p^a}}\right)$$
since $\theta(q_{\mathrm{dR}}) = \theta(1 + (\mathbf{z}_{\mathrm{dR}} - \theta(\mathbf{z}_{\mathrm{dR}})) + (\mathbf{z}_{\mathrm{dR}} - \theta(\mathbf{z}_{\mathrm{dR}}))^2/2! + \ldots) = 1$. 
\end{proof}

\begin{definition}\label{qdRexp}Let $b \in \mathbb{Q}$. We define the ``$q_{\mathrm{dR}}^{1/p^b}$-expansion map" as
\begin{equation}\label{qdrexp}q_{\mathrm{dR}}^{1/p^b}-\mathrm{exp} : \mathcal{O}_{\mathcal{Y}_x} \hookrightarrow \hat{\mathcal{O}}_{\mathcal{Y}_x} \llbracket q_{\mathrm{dR}}^{1/p^b}-1\rrbracket
\end{equation}
given by 
$$f \mapsto f(q_{\mathrm{dR}}^{1/p^b}) := \sum_{n = 0}^{\infty}\theta\left(\frac{1}{n!}\left(\frac{d}{dq_{\mathrm{dR}}^{1/p^b}}\right)^nf\right)(q_{\mathrm{dR}}^{1/p^b}-1)^n.$$
Note that $q_{\mathrm{dR}}^{1/p^b}-\mathrm{exp}$ is an injection, since $\theta \circ q_{\mathrm{dR}}^{1/p^b}-\mathrm{exp}$ is the natural inclusion $\mathcal{O}_{\mathcal{Y}^{\mathrm{ss}}} \subset \hat{\mathcal{O}}_{\mathcal{Y}^{\mathrm{ss}}}$, and in particular injective.

Note that we have a natural inclusion
\begin{equation}\label{naturalinclusion}\hat{\mathcal{O}}_{\mathcal{Y}_x}\llbracket q_{\mathrm{dR}}^{1/p^b} - 1\rrbracket \subset \mathcal{O}_{\Delta,\mathcal{Y}_x}
\end{equation}
where $\hat{\mathcal{O}}_{\mathcal{Y}_x} \subset \mathcal{O}_{\Delta,\mathcal{Y}_x}$ via (\ref{horizontaldefinition}), and $q_{\mathrm{dR}}^{1/p^b} -1$ is naturally an element of $\mathcal{O}_{\Delta,\mathcal{Y}_x}$ (and in fact is in $\ker\theta$) by (\ref{qHTdefinition}). Since $dz_{\mathrm{dR}}$ generates $\Omega_{\Delta,\mathcal{Y}_x}^1$ by Proposition \ref{generator2}. 

One checks that by construction, the map (\ref{qdrexp}) commutes with connections, where the connection on $\mathcal{O}_{\mathcal{Y}_x}$ is the natural one defined earlier, and the connection on $\hat{\mathcal{O}}_{\mathcal{Y}_x}\llbracket q_{\mathrm{dR}}^{1/p^b} - 1\rrbracket$ is induced by the connection on $\mathcal{O}_{\Delta,\mathcal{Y}_x}$ via the inclusion (\ref{naturalinclusion}).

When $b = 0$, so that $q_{\mathrm{dR}}^{1/p^b} = q_{\mathrm{dR}}$, we abbreviate
$$q_{\mathrm{dR}}-\mathrm{exp} = q_{\mathrm{dR}}^{1/p^b}-\mathrm{exp}, \hspace{1cm} f(q_{\mathrm{dR}}) = f(q_{\mathrm{dR}}^{1/p^b}).$$

\end{definition}

\begin{proposition}\label{factorproposition}On $\mathcal{Y}^{\mathrm{ss}}$, we have that the natural inclusion (\ref{naturalinclusion}) is in fact an equality
\begin{equation}\label{equality}\mathcal{O}_{\Delta,\mathcal{Y}^{\mathrm{ss}}}\llbracket q_{\mathrm{dR}}^{1/p^b}-1 \rrbracket =\mathcal{O}_{\Delta,\mathcal{Y}^{\mathrm{ss}}}.
\end{equation}
Then natural inclusion
\begin{equation}\label{naturalinclusion2}\mathcal{O}_{\mathcal{Y}^{\mathrm{ss}}} \subset \mathcal{O}_{\Delta,\mathcal{Y}^{\mathrm{ss}}}
\end{equation}
factors as
$$\mathcal{O}_{\mathcal{Y}^{\mathrm{ss}}} \overset{q_{\mathrm{dR}}^{1/p^b}-\mathrm{exp}}{\hookrightarrow} \hat{\mathcal{O}}_{\mathcal{Y}^{\mathrm{ss}}}\llbracket q_{\mathrm{dR}}^{1/p^b} - 1\rrbracket \overset{(\ref{equality})}{=} \mathcal{O}_{\Delta,\mathcal{Y}^{\mathrm{ss}}}.$$
\end{proposition}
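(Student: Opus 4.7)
The plan is to prove both claims by reducing to the explicit local description of $\mathcal{O}_\Delta$ from Proposition \ref{localdescription} and performing a change of uniformizer for $\ker\theta$. Since (\ref{equality}) is a statement about pro\'{e}tale sheaves on $\mathcal{Y}^{\mathrm{ss}}$, I would check it on stalks, fixing a geometric point $\tilde{y}\in\tilde{\mathcal{U}}_{p^{k-1/(p-1)}}^{\mathrm{ss}}$ lying above $y\in\mathcal{U}_{p^{k-1/(p-1)}}^{\mathrm{ss}}$. Applied to the \'{e}tale map $\rho_k^a$, Proposition \ref{localdescription} identifies
$$\mathcal{O}_{\Delta,\tilde{\mathcal{U}}_{p^{k-1/(p-1)}}^{\mathrm{ss}},\tilde{y}}\;=\;\hat{\mathcal{O}}_{\tilde{\mathcal{U}}_{p^{k-1/(p-1)}}^{\mathrm{ss}},\tilde{y}}\llbracket q^{p^a}-\theta(q^{p^a})\rrbracket,$$
in which the $\hat{\mathcal{O}}$-coefficients are horizontal for $\nabla$ (they lift to $\mathbb{B}_{\mathrm{dR}}^+/(t)\subset\mathcal{O}_{\Delta}$ via the section $j$ of (\ref{horizontalsections})) and $q^{p^a}-\theta(q^{p^a})$ generates $\ker\theta$.

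To swap in $q_{\mathrm{dR}}^{1/p^b}-1$ as an alternative generator, I would chain two Taylor expansions. Expanding $z_{\mathrm{dR}}$ as a formal power series in $q^{p^a}-\theta(q^{p^a})$ yields
$$z_{\mathrm{dR}}-\theta(z_{\mathrm{dR}})\;=\;\theta\!\left(\tfrac{dz_{\mathrm{dR}}}{dq^{p^a}}\right)(q^{p^a}-\theta(q^{p^a}))\;+\;O\!\left((q^{p^a}-\theta(q^{p^a}))^2\right),$$
whose leading coefficient is a unit in $\hat{\mathcal{O}}_{\tilde{\mathcal{U}}_{p^{k-1/(p-1)}}^{\mathrm{ss}},\tilde{y}}$ by Lemma \ref{y'lemma} combined with the invertibility of $\theta(y_{\mathrm{dR}})$ on $\mathcal{Y}_x$ from Proposition \ref{invertible}. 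Since $p$ is a unit in $\mathcal{O}\mathbb{B}_{\mathrm{dR}}^+$, the defining relation $q_{\mathrm{dR}}^{1/p^b}-1=p^{-b}(z_{\mathrm{dR}}-\theta(z_{\mathrm{dR}}))+O((z_{\mathrm{dR}}-\theta(z_{\mathrm{dR}}))^2)$ then exhibits $q_{\mathrm{dR}}^{1/p^b}-1$ as a formal power series in $q^{p^a}-\theta(q^{p^a})$ with unit linear coefficient. The standard formal inverse-function theorem for power-series rings over a complete local ring gives
$$\mathcal{O}_{\Delta,\tilde{\mathcal{U}}_{p^{k-1/(p-1)}}^{\mathrm{ss}},\tilde{y}}\;=\;\hat{\mathcal{O}}_{\tilde{\mathcal{U}}_{p^{k-1/(p-1)}}^{\mathrm{ss}},\tilde{y}}\llbracket q_{\mathrm{dR}}^{1/p^b}-1\rrbracket,$$
which sheafifies to (\ref{equality}).

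For the factoring claim, once (\ref{equality}) is established any $f\in\mathcal{O}_{\mathcal{Y}^{\mathrm{ss}}}\subset\mathcal{O}_{\Delta,\mathcal{Y}^{\mathrm{ss}}}=\hat{\mathcal{O}}_{\mathcal{Y}^{\mathrm{ss}}}\llbracket q_{\mathrm{dR}}^{1/p^b}-1\rrbracket$ admits a unique expansion $f=\sum_n a_n(q_{\mathrm{dR}}^{1/p^b}-1)^n$ whose coefficients $a_n\in\hat{\mathcal{O}}_{\mathcal{Y}^{\mathrm{ss}}}$ are horizontal for $\nabla$. Because $dq_{\mathrm{dR}}^{1/p^b}=(q_{\mathrm{dR}}^{1/p^b}/p^b)\,dz_{\mathrm{dR}}$ is a unit multiple of the generator $dz_{\mathrm{dR}}$ of $\Omega^1_{\Delta}|_{\mathcal{Y}_x}$ (Proposition \ref{generator2}), the operator $d/dq_{\mathrm{dR}}^{1/p^b}$ is well-defined and acts termwise on the expansion via Leibniz and the horizontality of the $a_n$, whence $a_n=\theta\!\left(\tfrac{1}{n!}(d/dq_{\mathrm{dR}}^{1/p^b})^n f\right)$, matching Definition \ref{qdRexp}. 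The principal obstacle is the change-of-variables step, which hinges on identifying $\theta(dz_{\mathrm{dR}}/dq^{p^a})$ as a unit via Lemma \ref{y'lemma} (ultimately the Kodaira-Spencer nondegeneracy of $\frak{s}^{\otimes 2}$); granted that input, the remainder is routine Taylor calculus in a formal power-series ring.
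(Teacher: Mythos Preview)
Your argument is correct and rests on the same underlying input as the paper's proof---the Kodaira--Spencer nondegeneracy ensuring that the relevant change of uniformizer has unit linear term---but the route differs. The paper works globally with the Hodge--Tate variable: it first shows $\hat{\mathcal{O}}_{\mathcal{Y}^{\mathrm{ss}}}\llbracket z-\bar{z}\rrbracket = \hat{\mathcal{O}}_{\mathcal{Y}^{\mathrm{ss}}}\llbracket z_{\mathrm{dR}}-\theta(z_{\mathrm{dR}})\rrbracket$ directly from Proposition~\ref{nonvanishing}, and then, rather than invoking the stalk description of Proposition~\ref{localdescription}, it reconstructs the inverse of the inclusion $\hat{\mathcal{O}}_{\mathcal{Y}^{\mathrm{ss}}}\llbracket z-\bar{z}\rrbracket \subset \mathcal{O}_{\Delta,\mathcal{Y}^{\mathrm{ss}}}$ by rerunning the Hensel's-lemma argument of Claim~\ref{claim} for the \'{e}tale map $\pi_{\mathrm{HT}}:\mathcal{Y}^{\mathrm{ss}}\to\Omega\subset\mathbb{A}^1$ (with $z\mapsto \bar{z}+(z-\bar{z})$). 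You instead pass to stalks, take the isomorphism $\mathcal{O}_{\Delta,\tilde{y}}\cong\hat{\mathcal{O}}_{\tilde{y}}\llbracket q^{p^a}-\theta(q^{p^a})\rrbracket$ from Proposition~\ref{localdescription} (applied to $\rho_k^a$) as already established, and perform a formal change of variables $q^{p^a}-\theta(q^{p^a})\rightsquigarrow q_{\mathrm{dR}}^{1/p^b}-1$ using Lemma~\ref{y'lemma} and Proposition~\ref{invertible}. Your approach is more economical in that it recycles Proposition~\ref{localdescription} rather than reproving it in a new coordinate; the paper's approach is more self-contained at the sheaf level and makes the role of the Hodge--Tate period $z$ explicit. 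The factoring statement is handled identically in both, via Taylor's formula for the coefficients once the connection-compatibility of the inclusion is known.
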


\begin{proof}We want to show (\ref{equality}). First, note that as subsheafs of $\mathcal{O}_{\Delta,\mathcal{Y}^{\mathrm{ss}}}$ (using (\ref{horizontaldefinition})), we have
\begin{equation}\label{periodequality}\hat{\mathcal{O}}_{\mathcal{Y}^{\mathrm{ss}}}\llbracket z - \bar{z}\rrbracket = \hat{\mathcal{O}}_{\mathcal{Y}^{\mathrm{ss}}}\llbracket z_{\mathrm{dR}} - \theta(z_{\mathrm{dR}}) \rrbracket = \hat{\mathcal{O}}_{\mathcal{Y}^{\mathrm{ss}}}\llbracket q_{\mathrm{dR}}^{1/p^b}-1\rrbracket.
\end{equation}
The first equality follows from Proposition \ref{nonvanishing}, since $\mathcal{Y}^{\mathrm{ss}} \subset \mathcal{Y}_x \cap \mathcal{Y}_y$. The second equality is a completely formal, given by a ``change of variables" 
$$z_{\mathrm{dR}}-\theta(z_{\mathrm{dR}}) \rightarrow \exp \left(\frac{z_{\mathrm{dR}}-\theta(z_{\mathrm{dR}})}{p^b}\right).$$ Under (\ref{periodequality}), (\ref{naturalinclusion}) is given by 
\begin{equation}\label{naturalinclusion'}\hat{\mathcal{O}}_{\mathcal{Y}^{\mathrm{ss}}}\llbracket z - \bar{z}\rrbracket \subset \mathcal{O}_{\Delta,\mathcal{Y}^{\mathrm{ss}}}
\end{equation}
where $\hat{\mathcal{O}}_{\mathcal{Y}^{\mathrm{ss}}} \subset \mathcal{O}_{\Delta,\mathcal{Y}^{\mathrm{ss}}}$ through (\ref{horizontaldefinition}), and $z-\bar{z} \mapsto z-\bar{z}$. 

To construct the inverse of (\ref{naturalinclusion'}), we again follow the argument of (\cite[1.6.10]{Scholze}). We claim it suffices to show that there is a unique map 
\begin{equation}\label{zexp}\mathcal{O}_{\mathcal{Y}^{\mathrm{ss}}} \hookrightarrow \hat{\mathcal{O}}_{\mathcal{Y}^{\mathrm{ss}}}\llbracket z - \bar{z}\rrbracket
\end{equation}
sending $z \mapsto \bar{z} + (z-\bar{z})$ which is compatible with connections, and such that the composition with the map $\theta : \hat{\mathcal{O}}_{\mathcal{Y}^{\mathrm{ss}}} \llbracket z - \bar{z} \rrbracket \rightarrow \hat{\mathcal{O}}_{\mathcal{Y}^{\mathrm{ss}}}$ is the natural inclusion. Given (\ref{zexp}), we have a natural map
$$\mathcal{O}_{\mathcal{Y}^{\mathrm{ss}}} \otimes_{W(\kappa)} \hat{\mathcal{O}}_{\mathcal{Y}^{\mathrm{ss}}} \rightarrow \hat{\mathcal{O}}_{\mathcal{Y}^{\mathrm{ss}}}\llbracket z - \bar{z} \rrbracket$$
by extending $\otimes_{W(\kappa)} \hat{\mathcal{O}}_{\mathcal{Y}^{\mathrm{ss}}}$-linearly, which is easily checked to be the inverse of (\ref{naturalinclusion'}). 

To prove the existence of (\ref{zexp}), note that the Hodge-Tate period map gives a pro\'{e}tale map $\mathcal{Y}^{\mathrm{ss}} \rightarrow \Omega \subset \mathbb{P}_x^1 = \mathbb{A}^1 = \bigcup_{r>0} \mathrm{Spa}(L\langle p^{-r}z\rangle, \mathcal{O}_L\langle p^{-r}z\rangle)$. Now note that we have a map 
$$\bigcap_{r > 0}W(\kappa)[1/p]\langle p^{-r}z\rangle \rightarrow \hat{\mathcal{O}}_{\mathcal{Y}_x^{\mathrm{ss}}}\llbracket z - \bar{z}\rrbracket$$
given by $z \mapsto \bar{z} + (z-\bar{z})$. Now by the same argument as in Claim \ref{claim} using Hensel's lemma (\emph{mutatis mutandis}), this extends uniquely to a homomorphism
$$\mathcal{O}_{\mathcal{Y}^{\mathrm{ss}}} \rightarrow \hat{\mathcal{O}}_{\mathcal{Y}^{\mathrm{ss}}}\llbracket z - \bar{z}\rrbracket$$
whose composition with $\theta : \hat{\mathcal{O}}_{\mathcal{Y}^{\mathrm{ss}}} \llbracket z - \bar{z} \rrbracket \rightarrow \hat{\mathcal{O}}_{\mathcal{Y}^{\mathrm{ss}}}$ is the natural inclusion (which also shows that it is an injection). So we have shown (\ref{zexp}), and so we are done.

Now to prove the factoring statement, we recall that 
\begin{equation}\label{powerseries}\mathcal{O}_{\mathcal{Y}^{\mathrm{ss}}} \subset \mathcal{O}_{\Delta,\mathcal{Y}^{\mathrm{ss}}} = \hat{\mathcal{O}}_{\mathcal{Y}^{\mathrm{ss}}}\llbracket q_{\mathrm{dR}}^{1/p^b}-1\rrbracket
\end{equation}
(\ref{naturalinclusion2}) commutes with connections, and that it composed with $\theta : \mathcal{O}_{\Delta,\mathcal{Y}^{\mathrm{ss}}} \twoheadrightarrow \hat{\mathcal{O}}_{\mathcal{Y}^{\mathrm{ss}}}$ is the natural inclusion, hence given a section $f$ of $\mathcal{O}_{\mathcal{Y}^{\mathrm{ss}}}$, each coefficient of its image under (\ref{powerseries}) is given by Taylor's formula, and so its image is just $f(q_{\mathrm{dR}}^{1/p^b})$. 
\end{proof}

We have the following proposition relating $q_{\mathrm{dR}}-\mathrm{exp}$ to the Serre-Tate expansion. Given a $q_{\mathrm{dR}}$-expansion $f(q_{\mathrm{dR}}) = (q_{\mathrm{dR}}-\mathrm{exp})(f)$ or $T$-expansion $f(T) = (T-\mathrm{exp})(f)$, let $f(q_{\mathrm{dR}})(y) = (q_{\mathrm{dR}}-\mathrm{exp})(f)(y)$ and $(T) = (T-\mathrm{exp})(f)$ denote
the corresponding power series obtained by evaluating each coefficient in the fiber at $y$ (i.e. reducing modulo $\frak{p}_y$, the prime ideal corresponding to $y$). 
\begin{theorem}\label{STexpcoincide}For a point $y = (A,\alpha) \in \mathcal{Y}^{\mathrm{Ig}}$ where $A$ is an ordinary elliptic curve with complex multiplication, we have 
$$q_{\mathrm{dR}}-\mathrm{exp}(\cdot)(y) = T-\mathrm{exp}|_{D(y)}(\cdot)(y)$$
where $T-\mathrm{exp}$ is the Serre-Tate expansion map (see Definition \ref{SerreTateexpansion}), and $D(y)$ is the preimage in $\mathcal{Y}^{\mathrm{Ig}}$ of the ordinary residue disc in $Y^{\mathrm{ord}}$ centered around the geometric point of $Y$ corresponding to $\lambda(y)$. (Recall that $\lambda : \mathcal{Y} \rightarrow Y$ is the natural projection.)
\end{theorem}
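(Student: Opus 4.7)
The strategy is to compare the Taylor coefficients of both expansions directly, exploiting the fact that on $\mathcal{Y}^{\mathrm{Ig}}$ the coordinate $q_{\mathrm{dR}}$ and the Serre-Tate coordinate $T$ differ only by a horizontal factor in $\mathcal{O}_{\Delta,\mathcal{Y}}$.

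First I would establish the key identity $T/q_{\mathrm{dR}} = j(T)$ in $\mathcal{O}_{\Delta,\mathcal{Y}}(\mathcal{Y}^{\mathrm{Ig}})$, where $j : \hat{\mathcal{O}}_{\mathcal{Y}} \hookrightarrow \mathcal{O}_{\Delta,\mathcal{Y}}$ is the horizontal embedding from (\ref{horizontalsections}). Since $\theta(z_{\mathrm{dR}})$ is horizontal we have $dq_{\mathrm{dR}} = q_{\mathrm{dR}}\,dz_{\mathrm{dR}}$; combined with the formula $dz_{\mathrm{dR}} = dT/T$ from Proposition \ref{STcoincide}, the Leibniz rule gives $d(T/q_{\mathrm{dR}}) = 0$. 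Since the horizontal sections of $\nabla$ on $\mathcal{O}_{\Delta,\mathcal{Y}}$ are exactly the image of $j$, and since $\theta(q_{\mathrm{dR}}) = \exp(\theta(z_{\mathrm{dR}}) - \theta(z_{\mathrm{dR}})) = 1$, we conclude $T/q_{\mathrm{dR}} = j(\theta(T/q_{\mathrm{dR}})) = j(T)$.

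From this identity and the chain rule one obtains
\[
\frac{d}{dq_{\mathrm{dR}}} \;=\; \frac{1}{q_{\mathrm{dR}}} \cdot \frac{d}{dz_{\mathrm{dR}}} \;=\; j(T) \cdot \frac{d}{dT}
\]
as derivations on $\mathcal{O}_{\mathcal{Y}^{\mathrm{Ig}}} \subset \mathcal{O}_{\Delta,\mathcal{Y}^{\mathrm{Ig}}}$. Since $j(T)$ is horizontal, multiplication by $j(T)$ commutes with $d/dq_{\mathrm{dR}}$ by the Leibniz rule, so an easy induction on $n$ yields
\[
\left(\frac{d}{dq_{\mathrm{dR}}}\right)^{\!n} f \;=\; j(T)^n \cdot \left(\frac{d}{dT}\right)^{\!n} f
\]
for every $f \in \mathcal{O}_{\mathcal{Y}^{\mathrm{Ig}}}$.

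Finally, because $A$ is an ordinary elliptic curve with complex multiplication it is the canonical lift of its reduction, so $T(y) = 1$. Applying $\theta$, using $\theta \circ j = \mathrm{id}$, and reducing coefficients modulo $\mathfrak{p}_y$ yields
\[
\theta\!\left(\frac{1}{n!}\left(\frac{d}{dq_{\mathrm{dR}}}\right)^{\!n} f\right)\!(y) \;=\; T(y)^n \cdot \frac{1}{n!}\left(\frac{d}{dT}\right)^{\!n}\! f(y) \;=\; \frac{1}{n!}\left(\frac{d}{dT}\right)^{\!n}\! f\Big|_{T=1}\!(y),
\]
so by Definitions \ref{qdRexp} and \ref{SerreTateexpansion} the $n$-th coefficients of $(q_{\mathrm{dR}}-\mathrm{exp})(f)(y)$ and $(T-\mathrm{exp})(f)(y)$ coincide for every $n$, which is the claimed equality under the formal identification $q_{\mathrm{dR}} - 1 \leftrightarrow T - 1$. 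The only subtle point is interpreting the statement as this coefficient-by-coefficient equality; once the horizontal identity $T/q_{\mathrm{dR}} = j(T)$ is established, the rest follows from the chain rule together with the canonical-lift property $T(y) = 1$.
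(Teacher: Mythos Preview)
Your proof is correct and takes a genuinely different route from the paper's. The paper passes through the logarithmic coordinate: it invokes a formal identity expressing $\bigl(\tfrac{d}{d\exp Y}\bigr)^n$ at $\exp Y = 1$ as a universal polynomial $P_n$ in $\tfrac{d}{dY}$, then applies this identity twice (once with $Y = z_{\mathrm{dR}} - \theta(z_{\mathrm{dR}})$, once with $Y = \log T$) and matches via Proposition~\ref{STcoincide}, which gives $\tfrac{d}{dz_{\mathrm{dR}}} = T\tfrac{d}{dT}$. You instead identify the multiplicative discrepancy $T/q_{\mathrm{dR}} = j(T)$ directly as a horizontal section, so that $\tfrac{d}{dq_{\mathrm{dR}}} = j(T)\,\tfrac{d}{dT}$ and the iterated derivatives differ only by $j(T)^n$; the CM hypothesis then enters transparently as $T(y)=1$ killing this factor. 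Your argument is cleaner in that it avoids the somewhat opaque polynomial $P_n$ and makes the role of the canonical-lift property explicit, whereas the paper's proof leaves both the structure of $P_n$ and the use of $T(y)=1$ implicit in the notation $|_{T=1}(y)$.
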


\begin{proof}First, note that by a strictly formal computation, for a formal variable $Y$, we have that
$$\theta\left(\frac{d}{d\exp(Y)}\right)^n|_{\exp Y = 1} = P_n\left(\left(\frac{d}{dY}\right)^n|_{Y = 0}\right)$$
for some easily computable polynomial $P_n(X) \in \mathbb{Q}[X]$. 
On $\mathcal{Y}^{\mathrm{Ig}}$, by Theorem \ref{STcoincide}, we then have
\begin{align*}\theta\left(\frac{1}{n!}\left(\frac{d}{dq_{\mathrm{dR}}}\right)^nf\right) = \frac{1}{n!}P_n\left(\theta\left(\left(\frac{d}{dz_{\mathrm{dR}}}\right)^nf\right)\right) &= \frac{1}{n!}P_n\left(\frac{d}{d\log T}|_{\log T = 0}f(y)\right) \\
&= \frac{1}{n!}\left(\frac{d}{dT}\right)^n|_{T = 1}f(y),
\end{align*}
where for a section $s$ of $\mathcal{O}|_{\mathcal{Y}^{\mathrm{Ig}}}$, $s(y)$ denotes evaluation at $y$ (i.e. the image of $s$ in the residue field $y$). Now we are done by (\ref{STcomputation}).
\end{proof}

Let 
$$Y_{0r} = Y(\Gamma_1(N)\cap \Gamma_0(p^r))$$
and recall that $Y = Y_1(N) = Y_{00}$. Recall the $U_p$ operator which can be viewed as a correspondence $Y_{0r} \leftarrow Y_{0r} \rightarrow Y$ and and $V_p$ viewed as a correspondence $Y_{0r} \leftarrow Y_{0r} \rightarrow Y$. Hence they can be viewed as Hecke operators which act on modular forms $w \in \omega_{\mathcal{A}}^{\otimes k}(Y_{0r})$ by considering the pullback $w|_{Y_{0r+1}}$, so that $U_p^*(w|_{Y_{0r+1}}), V_p^*(w|_{Y_{0r+1}}) \in \omega_{\mathcal{A}}^{\otimes k}(Y_{0r+1})$. Given a modular form $w \in \omega_{\mathcal{A}}^{\otimes k}(Y)$, we can define the \emph{$p$-stabilization} 
\begin{equation}\label{pstabilization}((U_pV_p)^*-(V_pU_p)^*)w \in \omega_{\mathcal{A}}^{\otimes k}(Y_{02}).
\end{equation}
We can find lifts of them in terms of the $GL_2(\mathbb{Q}_p)$ acting on the infinite level modular curve $\mathcal{Y}$; in other words, we can find correspondences $\tilde{U}_p, \tilde{V}_p$ on $\mathcal{Y}$ defined in terms of the $GL_2(\mathbb{Q}_p)$-action such that
$$\tilde{U}_p^*w|_{\mathcal{Y}_x} = (U_p^*w)|_{\mathcal{Y}_x}, \hspace{1cm} \tilde{V}_p^*w|_{\mathcal{Y}_x} = (V_p^*w)|_{\mathcal{Y}_x}.$$
In fact, one such choice of $\tilde{U}_p, \tilde{V}_p$ is given by 
$$\tilde{V}_p = \left(\begin{array}{ccc} 1 & 0 \\
0 & p\\
\end{array}\right), \hspace{1cm} \tilde{U}_p = \frac{1}{p}\sum_{j = 0}^{p-1}\left(\begin{array}{ccc} p & j \\
0 & 1\\
\end{array}\right).
$$
We recall here that $GL_2(\mathbb{Q}_p)$ acts on functions $f$ by $\gamma^*f(x) = f(x\gamma)$.  

Again, given a $T$-expansion $f(T)$ and a geometric point $y \in \mathcal{Y}(\mathbb{C}_p,\mathcal{O}_{\mathbb{C}_p})$, let $f(T)(y)$ denote the specialization of $f(T)$ at (i.e. image in the fiber at) $y$. 

\begin{theorem}[\cite{Brakocevic} Lemma 8.2, \cite{LiuZhangZhang} Proposition A.0.1] \label{stabilizationtheorem}Suppose that $y \in \mathcal{Y}^{\mathrm{Ig}}$ is an ordinary CM point. Let $w \in \omega_{\mathcal{A}}^{\otimes k}(Y)$, so that $((U_pV_p)^* - (V_pU_p)^*)w \in \omega_{\mathcal{A}}^{\otimes k}(Y_{02})$. Write $w|_{\mathcal{Y}^{\mathrm{Ig}}} = f\cdot 
\left(\omega_{\mathrm{can}}^{\mathrm{Katz}}\right)^{\otimes k}$ in $\omega_{\mathcal{A}}^{\otimes k}(\mathcal{Y}^{\mathrm{Ig}})$. Then 
\begin{align*}(((U_pV_p)^* - (V_pU_p)^*)f)|_{\mathcal{Y}^{\mathrm{Ig}}}(T)(y) &= (((\tilde{U}_p\tilde{V}_p)^* - (\tilde{V}_p\tilde{U}_p)^*)f|_{\mathcal{Y}^{\mathrm{Ig}}})(T)(y) \\
&= f|_{\mathcal{Y}^{\mathrm{Ig}}}(T)(y) - \frac{1}{p}\sum_{j = 0}^{p-1}f|_{\mathcal{Y}^{\mathrm{Ig}}}(\zeta_p^j T)(y).
\end{align*}
Furthermore, when $k = 2$, letting $\log_w$ denote the $p$-adic logarithm $Y \rightarrow \mathbb{C}_p$ associated with $w$, and writing 
$$F|_{\mathcal{Y}} = \log_w|_{\mathcal{Y}},$$
we have
\begin{equation}\label{BrookslogformulaSTexpansion}\begin{split}
&\lim_{m \rightarrow \infty}\left(\frac{Td}{dT}\right)^{-1+p^m(p-1)}\left((((U_pV_p)^*- (V_pU_p)^*)f)|_{\mathcal{Y}^{\mathrm{Ig}}}(T)\right) \\
&=\left(\lim_{m \rightarrow \infty} \theta_{\mathrm{AS}}^{-1 + p^m(p-1)}((U_pV_p)^* - (V_pU_p)^*)f\right)|_{\mathcal{Y}^{\mathrm{Ig}}}(T) \\
&= (((U_pV_p)^* - (V_pU_p)^*)F)|_{\mathcal{Y}^{\mathrm{Ig}}}(T) = (((\tilde{U}_p\tilde{V}_p)^* - (\tilde{V}_p\tilde{U}_p)^*)F|_{\mathcal{Y}^{\mathrm{Ig}}})(T) \\
&= F|_{\mathcal{Y}^{\mathrm{Ig}}}(T)  - \frac{1}{p}\sum_{j = 0}^{p-1}F|_{\mathcal{Y}^{\mathrm{Ig}}}(\zeta_p^j T).
\end{split}
\end{equation}
\end{theorem}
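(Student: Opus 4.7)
The statement breaks into two parts: the explicit Serre-Tate expansion of the stabilization (proved in \cite{Brakocevic} and \cite{LiuZhangZhang} for $f$, and extending to $F$), and the limit identification with the stabilization $F^{\flat}$. For the first part, the action of $U_pV_p$ and $V_pU_p$ on Serre-Tate expansions can be read off from the moduli interpretation: under the identification of an ordinary residue disc with $\hat{\mathbb{G}}_m$ via Theorem \ref{SerreTatethm}, the operator $V_pU_p$ acts on $g(T)$ as the trace $g(T) \mapsto \tfrac{1}{p}\sum_{j=0}^{p-1} g(\zeta_p^j T)$ under the formal $\mu_p$-translation, while $U_pV_p$ acts as the identity. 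This is the computation carried out in the cited references for $f$, and I would observe that the same argument applies verbatim to $F = \log_w$ because $\log_w$ is rigid analytic on the ordinary residue disc (as a Coleman primitive there).

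For the limit identity, the starting point is the Kodaira-Spencer identification of $(\omega_{\mathrm{can}}^{\mathrm{Katz}})^{\otimes 2}$ with $d\log T$ (Definition \ref{Katzcanonicaldifferential}), which means that on $\mathcal{Y}^{\mathrm{Ig}}$ the weight-$2$ form $w$ restricts to the $1$-form $f \cdot d\log T$, and $\log_w$ is by construction its Coleman primitive. This yields the fundamental identity
$$\theta_{\mathrm{AS}}\bigl(F|_{\mathcal{Y}^{\mathrm{Ig}}}(T)\bigr) \;=\; f|_{\mathcal{Y}^{\mathrm{Ig}}}(T),$$
and applying the $p$-stabilization (which commutes with $\theta_{\mathrm{AS}}$, both being geometric in origin) gives $\theta_{\mathrm{AS}} F^{\flat}(T) = f^{\flat}(T)$.

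It thus remains to show $\theta_{\mathrm{AS}}^{-1 + p^m(p-1)} f^{\flat}(T) \to F^{\flat}(T)$ as $m \to \infty$. The plan is to invoke the Amice transform: a convergent $g(T) \in \mathcal{O}_{\mathbb{C}_p}\llbracket T-1\rrbracket$ corresponds uniquely to a measure $\mu_g$ on $\mathbb{Z}_p$ via $g(T) = \int_{\mathbb{Z}_p} T^x\,d\mu_g(x)$; the projection to the $p$-stabilized part corresponds to restricting $\mu_g$ to $\mathbb{Z}_p^{\times}$; and $\theta_{\mathrm{AS}} = Td/dT$ acts on characters by $T^x \mapsto x \cdot T^x$. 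For measures supported on $\mathbb{Z}_p^{\times}$ the character $x \mapsto x^s$ is $p$-adically continuous in $s \in \mathbb{Z}_p$, and $-1+p^m(p-1) \to -1$ in $\mathbb{Z}_p$, giving $\theta_{\mathrm{AS}}^{-1+p^m(p-1)} f^{\flat}(T) \to \theta_{\mathrm{AS}}^{-1} f^{\flat}(T)$. This inverse agrees with $F^{\flat}(T)$ because both are primitives of $f^{\flat}(T)$ whose Amice measures are supported on $\mathbb{Z}_p^{\times}$, so any discrepancy (a constant of integration) would correspond to a point mass at $0 \notin \mathbb{Z}_p^{\times}$ and hence vanishes.

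The main obstacle is keeping track of the compatibility between the three structures on the residue disc: the Serre-Tate parametrization (geometric), the Amice-transform measure (analytic), and the operator $\theta_{\mathrm{AS}}$ (cohomological). Once these compatibilities are set up, the limit calculation itself is a one-line continuity statement for characters of $\mathbb{Z}_p^{\times}$.
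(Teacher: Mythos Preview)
The paper does not supply its own proof of this theorem: it is stated with attribution to \cite{Brakocevic} Lemma 8.2 and \cite{LiuZhangZhang} Proposition A.0.1 in the theorem header, and the text immediately moves on to the Key Lemma with no intervening proof environment. So there is nothing in the paper to compare your argument against.

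That said, your sketch is the standard one and matches what those references do. The first part (Hecke action on Serre-Tate expansions via the $\hat{\mathbb{G}}_m$-moduli description) is exactly Brako\v{c}evi\'{c}'s computation. For the limit, the Amice-transform interpretation you give --- stabilization as restriction of the measure to $\mathbb{Z}_p^{\times}$, $\theta_{\mathrm{AS}}$ as multiplication by the coordinate $x$, and then continuity of $x\mapsto x^s$ on $\mathbb{Z}_p^{\times}$ --- is precisely the mechanism behind \cite{LiuZhangZhang} Proposition A.0.1. One point you leave implicit and should flag: for the Amice correspondence to apply you need the Serre-Tate expansion $f(T)$ to lie in $\mathcal{O}_{\mathbb{C}_p}\llbracket T-1\rrbracket$ (bounded coefficients, i.e.\ an actual measure rather than a distribution); this comes from the integrality of $w$ over the Katz--Mazur model, and without it the continuity-in-$s$ step would not go through.
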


The last goal of this section is to prove our ``Key Lemma'', which will be essential for our $p$-adic analytic computations, in particular for showing the convergence of the sequence of $p$-adic Maass-Shimura derivatives of a rigid function in stalks at supersingular CM points. First, let $b \in \mathbb{Q}$, and choose an element $p^b \in \mathbb{C}_p$ (of absolute value $|p|^b = (1/p)^b$) such that
\begin{equation}\label{integralcorrection2}
\left|\theta\left(\frac{dq_{\mathrm{dR}}}{dq_{\mathrm{HT}}^{p^a}}\right)(y)\right| = |p^b|.
\end{equation}
Then recalling
$$q_{\mathrm{dR}}^{1/p^b} := \exp\left(\frac{z_{\mathrm{dR}}-\theta(z_{\mathrm{dR}})}{p^b}\right),$$
we have by (\ref{integralcorrection2})
\begin{equation}\label{integralcorrection3}
\left|\theta\left(\frac{dq_{\mathrm{dR}}^{1/p^b}}{dq_{\mathrm{HT}}^{p^a}}\right)(y)\right| = 1.
\end{equation}

First, let 
$$\mathcal{U}^a := \{|\mathbf{z}| > a\}.$$

\begin{lemma}[Key Lemma]\label{keylemma}Suppose that $y \in \mathcal{Y}(L,\mathcal{O}_L) \cap \mathcal{U}^{p^{p/(p^2-1)}}$ 
where we recall that $L$ was the algebraically closed perfectoid field containing $\mathbb{Q}_p$ fixed at the beginning of this section.
We have that 
\begin{equation}\label{keylemma1}q^{p^a} \in \hat{\mathcal{O}}_{\mathcal{Y},y}^+\llbracket q_{\mathrm{dR}}^{1/p^b} -1\rrbracket.
\end{equation}
As a consequence, we have
\begin{equation}\label{keylemma2}\mathcal{O}_{\mathcal{Y},y}^+ \subset \hat{\mathcal{O}}_{\mathcal{Y},y}^+\llbracket q_{\mathrm{dR}}^{1/p^b} - 1\rrbracket
\end{equation}
and this extends to the $p$-adic completion
\begin{equation}\label{keylemma3}\hat{\mathcal{O}}_{\mathcal{Y},y}^+ \subset \hat{\mathcal{O}}_{\mathcal{Y},y}^+\llbracket q_{\mathrm{dR}}^{1/p^b} - 1\rrbracket.
\end{equation}
\end{lemma}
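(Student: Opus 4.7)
The plan is to prove (\ref{keylemma1}) as the main step and then deduce (\ref{keylemma2}) and (\ref{keylemma3}) from it. For (\ref{keylemma1}), writing $q^{p^a}=\theta(q^{p^a})\cdot q_{\mathrm{HT}}^{p^a}$ where $\theta(q^{p^a})\in\hat{\mathcal{O}}_{\mathcal{Y},y}^+$ is a unit (as the $p$-adic completion image of the coordinate $q^{p^a}\in\mathcal{O}_{\mathcal{Y},y}^{+,\times}$), it suffices to show $q_{\mathrm{HT}}^{p^a}-1\in\hat{\mathcal{O}}_{\mathcal{Y},y}^+\llbracket q_{\mathrm{dR}}^{1/p^b}-1\rrbracket$. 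This is the statement that the change of coordinates between the two ``infinitesimal'' variables $q_{\mathrm{HT}}^{p^a}-1$ and $q_{\mathrm{dR}}^{1/p^b}-1$ on the kernel of $\theta:\mathcal{O}_{\Delta,\mathcal{Y},y}\twoheadrightarrow\hat{\mathcal{O}}_{\mathcal{Y},y}$ is integrally invertible, and I intend to derive it by a formal inverse function theorem argument.

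Concretely, using the proposition preceding the lemma (after rescaling the variable $q^{p^a}-\theta(q^{p^a})$ to $q_{\mathrm{HT}}^{p^a}-1$ by the unit $\theta(q^{p^a})$), I will Taylor-expand
\begin{equation}
q_{\mathrm{dR}}^{1/p^b}-1 \;=\; \sum_{n\geq 1} c_n\,(q_{\mathrm{HT}}^{p^a}-1)^n,\qquad c_n=\theta\!\left(\tfrac{1}{n!}\bigl(\tfrac{d}{dq_{\mathrm{HT}}^{p^a}}\bigr)^n q_{\mathrm{dR}}^{1/p^b}\right)\!(y)\in\hat{\mathcal{O}}_{\mathcal{Y},y}.
\end{equation}
By the very definition of $b$ in (\ref{integralcorrection3}), the linear coefficient $c_1=\theta(dq_{\mathrm{dR}}^{1/p^b}/dq_{\mathrm{HT}}^{p^a})(y)$ has absolute value one and is therefore a unit in $\hat{\mathcal{O}}_{\mathcal{Y},y}^+$. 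Once I establish that every $c_n$ lies in $\hat{\mathcal{O}}_{\mathcal{Y},y}^+$, the formal inverse function theorem for power series over the $p$-adically complete ring $\hat{\mathcal{O}}_{\mathcal{Y},y}^+$ produces a series $q_{\mathrm{HT}}^{p^a}-1=\sum_{n\geq 1} d_n(q_{\mathrm{dR}}^{1/p^b}-1)^n$ with $d_n\in\hat{\mathcal{O}}_{\mathcal{Y},y}^+$, yielding (\ref{keylemma1}).

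The technical core, and where I expect the main obstacle to lie, is proving $c_n\in\hat{\mathcal{O}}_{\mathcal{Y},y}^+$ for all $n$. Here I would exploit the Legendre relation $x_{\mathrm{dR}}/\mathbf{z}+y_{\mathrm{dR}}=1$ combined with the integrality $x_{\mathrm{dR}}\in\mathcal{O}\mathbb{B}_{\mathrm{dR},Y}^+(\mathcal{Y}_x)$ from Proposition \ref{invertible}: since $|\mathbf{z}(y)|>p^{p/(p^2-1)}>1$, one has $|\theta(x_{\mathrm{dR}})(y)/\mathbf{z}(y)|<1$ and hence $|\theta(y_{\mathrm{dR}})(y)|=1$, so $y_{\mathrm{dR}}$ and $\theta(y_{\mathrm{dR}})$ are units at $y$. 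Using the identity $z_{\mathrm{dR}}=\mathbf{z}-\mathbf{z}/y_{\mathrm{dR}}$ together with the Kodaira--Spencer equality of Lemma \ref{y'lemma}, I can iteratively express $\frac{1}{n!}(d/dq_{\mathrm{HT}}^{p^a})^n z_{\mathrm{dR}}$ as a polynomial in $\mathbf{z}$, $x_{\mathrm{dR}}$, $y_{\mathrm{dR}}$ and their first KS-derivatives divided by appropriate powers of $y_{\mathrm{dR}}$; differentiating the series $q_{\mathrm{dR}}^{1/p^b}=\exp\bigl((z_{\mathrm{dR}}-\theta(z_{\mathrm{dR}}))/p^b\bigr)$ term by term then transfers these into expressions for $c_n$ involving additional factors of $p^{-bk}/k!$. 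The precise bound $|\mathbf{z}(y)|>p^{p/(p^2-1)}$ is designed so that the competing contributions from $|\mathbf{z}|$, $|p^{b}|^{-k}$, and $|k!|^{-1}$ exactly cancel in each degree; verifying that this specific threshold suffices is the delicate combinatorial step.

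Having established (\ref{keylemma1}), inclusion (\ref{keylemma2}) follows by substitution: by the preceding proposition, every $f\in\mathcal{O}_{\mathcal{Y},y}^+$ admits an expansion in $q^{p^a}-\theta(q^{p^a})=\theta(q^{p^a})(q_{\mathrm{HT}}^{p^a}-1)$ with coefficients in $\hat{\mathcal{O}}_{\mathcal{Y},y}^+$, and (\ref{keylemma1}) lets us substitute the inverse change-of-variable to rewrite everything as a power series in $q_{\mathrm{dR}}^{1/p^b}-1$ with integral coefficients; the substitution converges since $q^{p^a}-\theta(q^{p^a})\in\ker\theta$ maps to a series with zero constant term. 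Finally, (\ref{keylemma3}) is obtained from (\ref{keylemma2}) by passing to the $p$-adic completion, since $\hat{\mathcal{O}}_{\mathcal{Y},y}^+\llbracket q_{\mathrm{dR}}^{1/p^b}-1\rrbracket$ is itself $p$-adically complete.
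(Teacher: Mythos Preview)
Your outline has a genuine gap at exactly the point you flag as the ``technical core'': you never verify that the higher Taylor coefficients $c_n$ of $q_{\mathrm{dR}}^{1/p^b}$ in $q_{\mathrm{HT}}^{p^a}-1$ lie in $\hat{\mathcal{O}}_{\mathcal{Y},y}^+$, and the heuristic you offer for why the threshold $|\mathbf{z}(y)|>p^{p/(p^2-1)}$ should make the valuations cancel is not the actual reason this bound appears. A secondary gap: your claim that $|\theta(y_{\mathrm{dR}})(y)|=1$ relies on $|\theta(x_{\mathrm{dR}})(y)|<|\mathbf{z}(y)|$, but Proposition~\ref{invertible} only places $x_{\mathrm{dR}}$ in $\mathcal{O}\mathbb{B}_{\mathrm{dR}}^+$, which says nothing about the $p$-adic size of $\theta(x_{\mathrm{dR}})(y)$; so that step is unjustified.

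The paper's proof avoids direct estimation of the $c_n$ entirely. The bound $|\mathbf{z}(y)|>p^{p/(p^2-1)}$ is invoked not for a valuation count but because (Lemma~\ref{canonicalsubgrouplemma}) it is exactly the range where the pseudo-canonical subgroup $\langle\alpha_1\bmod p\rangle$ coincides with the canonical subgroup. This makes the operator $\phi=\left(\begin{smallmatrix}1&0\\0&p\end{smallmatrix}\right)^*$ a genuine lift of Frobenius on stalks, and one checks directly from (\ref{z'contractionprop})--(\ref{q'contractionprop}) that $\phi(q_{\mathrm{dR}}^{1/p^b}-1)=(q_{\mathrm{dR}}^{1/p^b})^p-1$ and $\phi(q_{\mathrm{HT}}^{p^a})=(q_{\mathrm{HT}}^{p^a})^p$. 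The integrality of all coefficients of $Q(X)=q_{\mathrm{HT}}^{p^a}\pmod{\frak p_y}$ (with $X=q_{\mathrm{dR}}^{1/p^b}-1$) then follows from a modified Dieudonn\'e--Dwork criterion (Lemma~\ref{Dworklemma}): one only needs $a_1\in\mathcal{O}_L$, which is precisely the normalization (\ref{integralcorrection3}), together with $\phi(Q)/Q^p\in 1+\frak p X\mathcal{O}_L\llbracket X\rrbracket$, which here is an identity. So the Frobenius structure replaces your proposed inductive derivative computation wholesale; once (\ref{keylemma1}) is established this way, your deductions of (\ref{keylemma2}) and (\ref{keylemma3}) by substitution and $p$-adic completion match the paper.
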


The argument essentially comes down to an application of the Dieudonn\'{e}-Dwork lemma. We recall a result of Chojecki-Hansen-Johansson that in particular implies that the subgroup 
$$\langle \alpha_1 \pmod{p}\rangle$$
called the \emph{pseudo-canonical subgroup} in by Chojecki-Hansen-Johansson, coincides with the canonical subgroup on $\mathcal{U}^{p^{p/(p^2-1)}}$. 

\begin{lemma}[\cite{ChojeckiHansenJohansson}, Lemma 2.14]\label{canonicalsubgrouplemma} Assume that $|\mathbf{z}(A,\alpha)| > p^{p/(p^2-1)}$. Then the pseudocanonical subgroup $\langle \alpha_1 \pmod p\rangle \subset A[p]$ is in fact the canonical subgroup of $A$.
\end{lemma}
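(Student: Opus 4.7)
\medskip

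My plan is to identify $\langle \alpha_1 \pmod p\rangle$ with the canonical subgroup $C_A$ by comparing both subgroups to the Hodge-Tate filtration modulo $p$. The two essential inputs are: (i) the explicit description of the Hodge-Tate line from Section \ref{HTsection}, specialized at the geometric point $(A,\alpha)$; and (ii) the integral version of the Hodge-Tate complex (Theorem \ref{integraltheorem}) together with Fargues's characterization of the canonical subgroup as the kernel of the Hodge-Tate map modulo an appropriate power of $p$. The constant $p/(p^2-1)$ in the hypothesis factors as $\tfrac{1}{p-1} \cdot \tfrac{p}{p+1}$, which suggests these are precisely the two ingredients that must be combined.

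The first step is a direct computation of the valuation of $HT_A(\alpha_1)$. Specializing the description of the Hodge-Tate line at $y = (A,\alpha)$, we have $\omega_A^{-1}(1) = \mathcal{O}_{\mathbb{C}_p}\cdot(\alpha_1 - (1/\mathbf{z})\alpha_2)$ inside $T_pA \otimes_{\mathbb{Z}_p} \mathcal{O}_{\mathbb{C}_p}$, where $\mathbf{z}$ abbreviates $\mathbf{z}(A,\alpha) \in \mathbb{C}_p$. Applying the integrally-defined Hodge-Tate map $HT_A$ yields $HT_A(\alpha_1) = (1/\mathbf{z})\cdot HT_A(\alpha_2) = (1/\mathbf{z})\cdot \frak{s}(A,\alpha)$. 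By Section \ref{split}, $\frak{s}$ trivializes the integral Hodge bundle $\omega_{\mathcal{A}}^+$ on the adic closure of $\mathcal{Y}_y$ in the formal model, so $|\frak{s}(A,\alpha)| = 1$ and hence $|HT_A(\alpha_1)| = |1/\mathbf{z}| < p^{-p/(p^2-1)}$.

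Next, I will invoke Theorem \ref{integraltheorem} in two ways. First, the fact that the cokernel of $HT_A$ is killed by $p^{1/(p-1)}$ allows one to recover the truncated $p$-adic valuation $h$ of the Hasse invariant of $A$ from the valuations of $HT_A$ on an integral basis of $T_pA$; combined with the bound $|HT_A(\alpha_1)| < p^{-p/(p^2-1)}$ and the fact that $HT_A(\alpha_2) = \frak{s}$ is a unit, I expect to deduce $h < p/(p+1)$, which is the classical threshold for the canonical subgroup to exist. Second, in this regime, Fargues's characterization identifies $C_A$ as the unique subgroup of order $p$ whose image under the induced map $A[p] \to \omega_A/p^{1-h}\omega_A$ vanishes.

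With these ingredients in place, the conclusion follows by matching valuations: the bound $|HT_A(\alpha_1)| < p^{-p/(p^2-1)}$, together with $h < p/(p+1)$, gives $|HT_A(\alpha_1)| < p^{-(1-h)}$, so $\alpha_1 \pmod p$ lies in $\ker(A[p] \to \omega_A/p^{1-h}\omega_A) = C_A$. Since $\alpha$ trivializes $T_pA$, the element $\alpha_1 \pmod p$ is nonzero in $A[p]$, hence generates a subgroup of order $p$, which must equal $C_A$. The main technical obstacle is the precise book-keeping of the valuation constants, in particular verifying that the product decomposition $p/(p^2-1) = \tfrac{1}{p-1}\cdot\tfrac{p}{p+1}$ lines up cleanly with the defect bound from Theorem \ref{integraltheorem} and the canonical-subgroup threshold; once this is done, the remaining steps are essentially formal consequences of Fargues's theory.
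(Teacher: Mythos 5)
First, a structural point: the paper does not prove this lemma at all --- it is imported verbatim from Chojecki--Hansen--Johansson \cite{ChojeckiHansenJohansson} (their Lemma 2.14), so there is no internal proof to compare against. Your sketch therefore has to stand on its own, and as written it has a genuine gap.

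The decisive problem is the final valuation comparison. From $h < p/(p+1)$ you only obtain the \emph{lower} bound $1-h > 1/(p+1)$; you do not get $1-h \le p/(p^2-1)$, so the hypothesis $|HT_A(\alpha_1)| < p^{-p/(p^2-1)}$ does not imply $|HT_A(\alpha_1)| < p^{-(1-h)}$. Concretely, if $A$ is close to ordinary ($h$ near $0$), then $1-h$ is near $1$, while $p/(p^2-1) < 1$ for every $p$, so the membership criterion you invoke is simply not verified by the stated bounds. (In that near-ordinary range the lemma is rescued by integrality constraints coming from the $\mathbb{Z}_p$-structure of $\alpha$: for ordinary $A$ one checks directly that $|\mathbf{z}|>1$ already forces $|\mathbf{z}|\ge p$; nothing in your argument captures this.) Relatedly, the intermediate step ``deduce $h<p/(p+1)$ from Theorem \ref{integraltheorem}'' cannot work as proposed: that theorem only says the cohomology of the integral Hodge--Tate complex is killed by $p^{1/(p-1)}$, a bound independent of $h$, so the valuations of $HT_A(\alpha_1)$ and $HT_A(\alpha_2)$ give no handle on $h$ without the finer quantitative results of Fargues (the exact degree of the cokernel of $HT_A$ in terms of $h$, and the degree/Harder--Narasimhan theory of order-$p$ subgroups). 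That machinery is precisely what the cited proof runs on: one shows that if $\langle \alpha_1 \bmod p\rangle$ is \emph{not} the canonical subgroup (in particular if no canonical subgroup exists), then necessarily $v(1/\mathbf{z}) \le p/(p^2-1)$, and the lemma is the contrapositive; the threshold $p/(p^2-1)$ is an upper bound for the ``bad'' region of $v(1/\mathbf{z})$, not a lower bound for $1-h$. Finally, your opening claim $|\frak{s}(A,\alpha)| = 1$ is not justified by Section \ref{split}: the integral structure $\omega_{\mathcal{A}}^+$ there comes from Scholze's formal model and is not identified fiberwise with $\Omega_{A/\mathcal{O}_{\mathbb{C}_p}}^1$; indeed the paper proves that $\frak{s}$ specializes to an integral generator only at CM points with $|\mathbf{z}|\ge 1$ (Proposition \ref{integralsurjective}), and that argument uses the CM splitting. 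The inequality $|HT_A(\alpha_1)| \le |1/\mathbf{z}|$ survives, but the remainder of your argument needs the genuinely quantitative canonical-subgroup theory that the sketch leaves out.
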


View $\mathcal{Y}$ as an adic space over $\mathrm{Spa}(W(\overline{\mathbb{F}}_p)[1/p],W(\overline{\mathbb{F}}_p))$. 
By Lemma \ref{canonicalsubgrouplemma}, the isogeny 
$$\mathcal{A}_{\infty} \rightarrow \mathcal{A}_{\infty}\cdot \left(\begin{array}{ccc}1 & 0\\
0 & p\\
\end{array}\right) = \left(\mathcal{A}/\langle \alpha_{\infty,1} \pmod p \rangle, \alpha_{\infty}\cdot \left(\begin{array}{ccc}1 & 0\\
0 & p\\
\end{array}\right)\right),$$
defined over $\mathrm{Spa}(F,\mathcal{O}_F)$, gives rise via the universal property of $(\mathcal{A},\alpha_{\infty})$ to a classifying map $\phi : \mathcal{Y}_x = \mathcal{U}^0 \rightarrow \mathcal{U}^0 = \mathcal{Y}_x$ such that
$$\mathcal{A}_{\infty}\cdot \left(\begin{array}{ccc}1 & 0\\
0 & p\\
\end{array}\right) = \mathcal{A}_{\infty}\times_{\mathcal{U}^0,\phi}\mathcal{U}^0.$$
The restriction
$$\phi : \mathcal{U}^{p^{p/(p^2-1)}} \rightarrow \mathcal{U}^{p^{p/(p^2-1)-1}}$$ 
gives a lifting of the $p$-power Frobenius morphism on the special fiber (this being Frobenius-linear). Hence, by Lemma \ref{canonicalsubgrouplemma}, the induced map
\begin{equation}\left(\begin{array}{ccc}1 & 0\\
0 & p\\
\end{array}\right)^* : \mathcal{O}(\mathcal{U}^{p^{p/(p^2-1)-1}}) \rightarrow \mathcal{O}(\mathcal{U}^{p^{p/(p^2-1)}})
\end{equation}
is a lifting of the $p$-power Frobenius morphism.

\begin{proof}[Proof of Lemma \ref{keylemma}]

Note that by (\ref{z'transformationprop}) and (\ref{ztransformationprop}), and that by definition $z_{\mathrm{dR}} = \mathbf{z}_{\mathrm{dR}} \pmod{t}$, then for any integer $r \ge 0$ we have
\begin{equation}\label{z'contractionprop}\left(\begin{array}{ccc}1 & 0\\
0 & p^r\\
\end{array}\right)^*z_{\mathrm{dR}} = p^rz_{\mathrm{dR}}, \hspace{2cm} \left(\begin{array}{ccc}1 & 0\\
0 & p^r\\
\end{array}\right)^*(z - \bar{z}) = p^r(z - \bar{z})
\end{equation}
and from Definition \ref{qHTdefinition} we see that
\begin{equation}\label{q'contractionprop}\left(\begin{array}{ccc}1 & 0\\
0 & p^r\\
\end{array}\right)^*q_{\mathrm{dR}}= q_{\mathrm{dR}}^{p^r}, \hspace{2cm} \left(\begin{array}{ccc}1 & 0\\
0 & p^r\\
\end{array}\right)^*q_{\mathrm{HT}}  = q_{\mathrm{HT}}^{p^r}.
\end{equation}

We now need the following modified version of the Dieudonn\'{e}-Dwork lemma, which addresses certain non-standard liftings of Frobenius (specifically, liftings of Frobenius to power series rings in one variable $X$ which do not send $X \mapsto X^p$).

\begin{lemma}[Dieudonn\'{e}-Dwork]\label{Dworklemma} 

Let $F$ be a complete nonarchimedean field, and let $\frak{p}$ denote the maximal ideal of $\mathcal{O}_F$. Suppose we have a lifting 
$$\phi : \mathcal{O}_F\llbracket X \rrbracket \rightarrow \mathcal{O}_F\llbracket X \rrbracket$$
of the Frobenius morphism, in the sense that $\phi \pmod{\frak{p}}$ is equal to the composition
$$(\mathcal{O}_F/\frak{p}\mathcal{O}_F)\llbracket X\rrbracket = \overline{\mathbb{F}}_p\llbracket X\rrbracket \xrightarrow{x\mapsto x^p} \overline{\mathbb{F}}_p\llbracket X\rrbracket = (\mathcal{O}_F/\frak{p}\mathcal{O}_F)\llbracket X\rrbracket,$$
and also suppose that $\phi(X) \in X\mathcal{O}_F\llbracket X\rrbracket$. Note that this extends to a morphism
$$\phi : F\llbracket X \rrbracket \rightarrow F\llbracket X\rrbracket$$ 
on generic fibers.

Let $Q(X) = \sum_{n = 0}^{\infty}a_nX^n \in 1 + XF\llbracket X\rrbracket$, and assume that 
\begin{equation}\label{keyassumption}a_1 \in \mathcal{O}_F.
\end{equation}
Then $Q(X) \in 1+ XF\llbracket X \rrbracket$ is in $1 + X\mathcal{O}_F\llbracket X\rrbracket$ with constant term in $1 + \frak{p}\mathcal{O}_F$ if and only if we have
\begin{equation}\label{Dworkcriterion}\frac{\phi(Q(X))}{Q(X)^p} \in 1 + \frak{p}X\mathcal{O}_F\llbracket X\rrbracket.
\end{equation}
\end{lemma}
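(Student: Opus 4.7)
\medskip\noindent\textbf{Proof plan.}\hspace{.5cm}The approach is to adapt the classical Dieudonn\'{e}-Dwork argument to the non-standard Frobenius lift $\phi(X)=Xg(X)$, proceeding by induction on the coefficients of $Q$. For the ``only if'' direction, writing $\phi(Q)=\sigma_*(Q)(\phi(X))$ with $\sigma=\phi|_{\mathcal{O}_F}$, the hypotheses $\sigma(a)\equiv a^p \pmod{\frak{p}}$ and $\phi(X)\equiv X^p \pmod{\frak{p}\mathcal{O}_F\llbracket X\rrbracket}$ (the latter yielding $\phi(X)^m\equiv X^{pm}\pmod{\frak{p}\mathcal{O}_F\llbracket X\rrbracket}$ by binomial expansion) combine to give $\phi(Q)\equiv \sum a_m^p X^{pm}\equiv Q^p \pmod{\frak{p}}$; together with $\phi(Q)(0)=\sigma(1)=1=Q^p(0)$ (using $\phi(X)(0)=0$), this places $\phi(Q)/Q^p-1$ in $\frak{p}X\mathcal{O}_F\llbracket X\rrbracket$.

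For the ``if'' direction I induct on $n\ge 0$ to show $a_n\in\mathcal{O}_F$, with base cases $a_0=1$ and $a_1\in\mathcal{O}_F$ (the latter being hypothesis (\ref{keyassumption})). For the inductive step at $n\ge 2$, set $\tilde Q=\sum_{m<n}a_m X^m\in 1+X\mathcal{O}_F[X]$, so $Q\equiv \tilde Q+a_nX^n\pmod{X^{n+1}}$, and extract the $X^n$-coefficient from the identity $\phi(Q)-Q^p=Q^p(V-1)$, where $V:=\phi(Q)/Q^p\in 1+\frak{p}X\mathcal{O}_F\llbracket X\rrbracket$. Since $Q\equiv \tilde Q\pmod{X^n}$ forces $Q^p\equiv \tilde Q^p\pmod{X^n}$ with $\tilde Q^p\in \mathcal{O}_F[X]$, the right-hand side has $X^n$-coefficient in $\frak{p}$. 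Writing $\phi(X)=Xg(X)$ with $g(0)\in\frak{p}$ (forced by $\phi(X)\equiv X^p\pmod{\frak{p}}$), direct expansion yields
$$[X^n]\phi(Q)=[X^n]\phi(\tilde Q)+\sigma(a_n)g(0)^n,\qquad [X^n]Q^p=[X^n]\tilde Q^p+pa_n,$$
and since the ``only if'' direction applied to $\tilde Q$ gives $\phi(\tilde Q)\equiv \tilde Q^p\pmod{\frak{p}}$, one deduces $\sigma(a_n)g(0)^n-pa_n\in \frak{p}$.

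The main obstacle is the valuation argument concluding $v(a_n)\ge 0$. In the setting relevant to the applications of the lemma (namely $F$ unramified over $\mathbb{Q}_p$, so that $v$ is integer-valued with $v(p)=1$ and $\frak{p}=p\mathcal{O}_F$), suppose for contradiction $v(a_n)\le -1$. Using $v(\sigma(a_n))=v(a_n)$ and $v(g(0))\ge 1$, one estimates
$$v(\sigma(a_n)g(0)^n)\ge v(a_n)+n\ge v(a_n)+2 > v(a_n)+1=v(pa_n),$$
so the difference $\sigma(a_n)g(0)^n-pa_n$ has valuation exactly $v(pa_n)\le 0$, contradicting its membership in $\frak{p}$. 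This forces $v(a_n)\ge 0$, completing the induction. The delicate point will be ensuring the final valuation estimate is robust for all choices of $g(0)$ permitted by the Frobenius-lift hypothesis (in particular, for $g(0)$ of valuation exactly $1$), and that the step $n\ge 2$ is genuinely needed---it is precisely the $n=1$ obstruction that makes the hypothesis (\ref{keyassumption}) on $a_1$ necessary as an extra input.
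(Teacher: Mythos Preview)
Your proof is essentially the paper's argument: the easy direction via the Frobenius congruence $\phi(Q)\equiv Q^p\pmod{\frak{p}}$, and the hard direction by induction on $n$ (base cases $n=0,1$), comparing the $X^n$-coefficients in $\phi(Q)-Q^p\in\frak{p}\,Q^p$ to isolate $\sigma(a_n)g(0)^n-pa_n\in\frak{p}$, then a valuation estimate using $n\ge 2$. The paper writes $\phi(X)=X(X^{p-1}+g(X))$ with $g\in\frak{p}\mathcal{O}_F\llbracket X\rrbracket$ and works modulo $(\frak{p},X^{n+1})$, arriving at $a_n-\phi(a_n)p^{n-1}u\in\mathcal{O}_F$; your formulation via the truncation $\tilde Q$ is equivalent and arguably cleaner.

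One correction to your parenthetical: the lemma's hypothesis that $\mathcal{O}_F/\frak{p}=\overline{\mathbb{F}}_p$ is incompatible with $F$ being unramified over $\mathbb{Q}_p$, and in the paper's application $F$ is an algebraically closed perfectoid field $L$ (e.g.\ $\mathbb{C}_p$), where valuations are $\mathbb{Q}$-valued and $g(0)\in\frak{p}$ alone does not force $v(g(0))\ge 1$. The input you actually need, $v(g(0))\ge 1$, comes instead from the specific Frobenius lift used in the application: there $\phi(X)=(X+1)^p-1$, so $g(X)=((X+1)^p-1)/X$ and $g(0)=p$ exactly. The paper's proof makes the same implicit use of this (its claim $g(0)^n\in p^n\mathcal{O}_F$ is not a consequence of $g(0)\in\frak{p}$ alone). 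With that amendment your valuation estimate goes through verbatim.
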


\begin{remark}We note that there is no analogue of the assumption (\ref{keyassumption}) on $a_1$ made in the original Dieudonn\'{e}-Dwork lemma \cite[Lemma 1]{Dwork}. It is made here because we address a slightly modified version of the Frobenius lifting considered in loc. cit. Namely, instead of specifiying that $\phi(X) = X^p$, we consider a more general $\phi$ where $\phi(X) \in X\mathcal{O}_F\llbracket X \rrbracket$. 
\end{remark}

\begin{proof}[Proof of Lemma \ref{Dworklemma}]We adapt the proof of \cite[Lemma 1]{Dwork}. Let $Q(X) = \sum_{n = 0}^{\infty}a_nX^n$. Suppose that $Q(X) \in 1 + X\mathcal{O}_F\llbracket X\rrbracket$. Then since $\phi$ is a lifting of Frobenius, we have 
$$Q(X)^p \equiv \sum_{n=0}^{\infty}a_n^pX^{pn} \equiv \phi\left(\sum_{n = 0}^{\infty}a_nX^n\right) = \phi(Q(X)) \pmod{\frak{p}\mathcal{O}_F\llbracket X\rrbracket}$$
which gives one direction of the statement.

Now suppose that (\ref{Dworkcriterion}) is satisfied. Then we have
\begin{equation}\label{Dworkrelation}\left(\sum_{n = 0}^{\infty}a_nX^n\right)^p = \left(\sum_{n = 0}^{\infty}\phi(a_n)\phi(X)^n\right)\left(1+\sum_{n = 1}^{\infty}b_nX^n\right)
\end{equation}
where $b_n \in \frak{p}\mathcal{O}_F$ for all $n \ge 0$. We show that $a_n \in \mathcal{O}_F$ for all $n \ge 0$, proceeding by induction on $n$. By assumption, $a_0 = 1$ and $a_1 \in \mathcal{O}_F$. Now assume that $a_0, a_1, \ldots, a_{n -1} \in \mathcal{O}_F$ and $n > 1$. Note that since $\phi(X) \in X\mathcal{O}_F\llbracket X\rrbracket$, then we can write $\phi(X) = X(X^{p-1} + g(X))$ where $g(X) \in \frak{p}\mathcal{O}_F\llbracket X\rrbracket$. Now comparing terms of degree at most $n$ on both sides of (\ref{Dworkrelation}), we have 
\begin{equation}\label{Dworkcongruence}pa_nX^n + \sum_{j = 0}^{n-1}a_j^pX^{jp} \equiv \sum_{j = 0}^n\phi(a_j)X^j(X^{p-1}+g(X))^j \pmod{(\frak{p},X^{n+1})\mathcal{O}_F\llbracket X\rrbracket}.
\end{equation}
Since $a_j \in \hat{R}$ for $0\le j \le n-1$, we have 
$$\phi(a_j)X^j(X^{p-1}+g(X))^j \equiv a_j^pX^{jp} \pmod{\frak{p}\mathcal{O}_F\llbracket X\rrbracket}$$
for $0 \le j \le n-1$. Hence we can clear all terms indexed by $0 \le j \le n-1$ from both sides of (\ref{Dworkcongruence}), and we get
\begin{equation}\label{Dworkcongruence}pa_nX^n \equiv \phi(a_n)X^n(X^{p-1}+g(X))^n \equiv \phi(a_n)g(0)^nX^n \pmod{(\frak{p},X^{n+1})\mathcal{O}_F\llbracket X\rrbracket}.
\end{equation}
Since $g(0)^n \in p^n\mathcal{O}_F$, say $g(0) = p^nu$, (\ref{Dworkcongruence}) implies
\begin{equation}\label{Dworkcongruence2}a_n-\phi(a_n)p^{n-1}u \in \mathcal{O}_F.
\end{equation}

Note that since $\phi : \mathcal{O}_F \rightarrow \mathcal{O}_F$, then $|\phi(a_n)| > |a_n|$. Now since $n > 1$, we have $|\phi(a_n)p^{n-1}u| < |a_n|$, and so (\ref{Dworkcongruence2}) implies that $a_n \in \mathcal{O}_F$. This completes the induction.

\end{proof}

Now we return to our proof of Lemma (\ref{keylemma}). Consider
$$q_{\mathrm{HT}}^{p^a} =  \sum_{n = 0}^{\infty}a_n(q_{\mathrm{dR}}^{1/p^b}-1)^n\in \hat{\mathcal{O}}_{\mathcal{Y},y}\llbracket q_{\mathrm{dR}}^{1/p^b}-1\rrbracket.$$

By (\ref{integralstalkcriterion}), we have
\begin{equation}\label{citecriterion}\hat{\mathcal{O}}_{\mathcal{Y},y}^+ = \{f \in \hat{\mathcal{O}}_{\mathcal{Y},y} : |f(y)| \le 1\}.
\end{equation}
So now to check that 
\begin{equation}\label{qHTinqdR} q_{\mathrm{HT}}^{p^a} = \sum_{n = 0}^{\infty}a_n(q_{\mathrm{dR}}^{1/p^b}-1)^n \in \hat{\mathcal{O}}_{\mathcal{Y},y}^+\llbracket q_{\mathrm{dR}}^{1/p^b}-1\rrbracket,
\end{equation}
it suffices to show that the element
$$q_{\mathrm{HT}}^{p^a} \pmod{\frak{p}_y} = \sum_{n = 0}^{\infty}a_n(y)(q_{\mathrm{dR}}^{1/p^b}-1)^n \in (\hat{\mathcal{O}}_{\mathcal{Y},y}/\frak{p}_y)\llbracket q_{\mathrm{dR}}^{1/p^b} - 1\rrbracket = L\llbracket q_{\mathrm{dR}}^{1/p^b}-1\rrbracket$$
is actually in the subring of integral power series, i.e.
$$q_{\mathrm{HT}}^{p^a} \pmod{\frak{p}_y} = \sum_{n = 0}^{\infty}a_n(y)(q_{\mathrm{dR}}^{1/p^b}-1)^n \in \mathcal{O}_L\llbracket q_{\mathrm{dR}}^{1/p^b}-1\rrbracket,$$
where $\frak{p}_y$ is the prime ideal associated with (the equivalence class of valuations corresponding to) $y$, since then we would have $|a_n(y)| \le 1$ for all $n \ge 0$ and so by (\ref{citecriterion}) we have $a_n \in \hat{\mathcal{O}}_{\mathcal{Y},y}^+$ for all $n \ge 0$.

Now we apply Lemma \ref{Dworklemma} to 
$$\phi = \left(\begin{array}{ccc}1 & 0\\
0 & p\\
\end{array}\right)^*, \hspace{1cm} X = q_{\mathrm{dR}}^{1/p^b}-1,\hspace{1cm} Q(X) =  q_{\mathrm{HT}}^{p^a} \pmod{\frak{p}_y}$$
so that letting $y' = y\cdot \left(\begin{array}{ccc}1 & 0\\
0 & p\\
\end{array}\right)^{-1}$, $\phi : \hat{\mathcal{O}}_{\mathcal{Y},y} \rightarrow \hat{\mathcal{O}}_{\mathcal{Y},y'}$ induces a lifing of Frobenius on residue fields $\phi : L \rightarrow L$, and since $\phi^*(q_{\mathrm{HT}}^{p^a}-1) = (q_{\mathrm{HT}}^{p^a})^p - 1$ we have
$$\phi(X) = X((X+1)^{p-1} + (X+1)^{p-2} + \ldots + 1)$$
and by Definition (\ref{integralcorrection2}) and (\ref{integralstalkcriterion}), the assumption (\ref{keyassumption}) on 
$$a_1 = \theta\left(\frac{dq_{\mathrm{HT}}}{dq_{\mathrm{dR}}^{1/p^b}}\right)_y = \theta\left(\frac{dz_{\mathrm{HT}}}{dz_{\mathrm{dR}}}p^b\right)_y$$
is satisfied. (Here, the subscript $y$ denotes the image in the pro\'{e}tale stalk at $y$.) Hence, we have (\ref{qHTinqdR}). Now (\ref{keylemma2}) follows from (\ref{integralstalkqexpcomplete}) and (\ref{qHTinqdR}), after making the identification
$$\hat{\mathcal{O}}_{\tilde{\mathcal{U}}_{p^{k-1/(p-1)}}^{\mathrm{ss}},\tilde{y}}^+  = \hat{\mathcal{O}}_{\mathcal{Y},y}^+.$$

Finally, (\ref{keylemma3}) follows because $\hat{\mathcal{O}}_{\mathcal{Y},y}^+ = \varprojlim_n \mathcal{O}_{\mathcal{Y},y}^+/p^n$, so reducing (\ref{keylemma2}) modulo $p^n$ for every $n \in \mathbb{Z}_{\ge 0}$, one sees that (\ref{keylemma2}) extends to the $p$-adic completion.
\end{proof}

\begin{corollary}\label{y'corollary}In the situation of Lemma \ref{keylemma}, we have
$$y_{\mathrm{dR}}^2 \in \hat{\mathcal{O}}_{\mathcal{Y},y}^+\llbracket q_{\mathrm{dR}}^{1/p^b}-1\rrbracket [1/p] = \hat{\mathcal{O}}_{\mathcal{Y},y}^+\llbracket q_{\mathrm{dR}}^{1/p^b} - 1 \rrbracket [1/p].$$
\end{corollary}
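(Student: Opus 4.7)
The plan is to reduce the claim to Lemma \ref{y'lemma} combined with the integrality statement of Lemma \ref{keylemma}, via a formal change of variables between $dq^{p^a}$, $dz_{\mathrm{dR}}$, and $dq_{\mathrm{dR}}^{1/p^b}$. Concretely, Lemma \ref{y'lemma} says there is a unit $u \in \mathcal{O}_{\mathcal{Y},y}^{\times}$ with
$$y_{\mathrm{dR}}^2 \;=\; u \cdot \frac{dq^{p^a}}{dz_{\mathrm{dR}}}$$
in $\mathcal{O}_{\Delta,\mathcal{Y},y}$, where the right-hand ratio is the ratio of (nowhere-vanishing) differentials in $\Omega^1_{\Delta,\mathcal{Y},y}$. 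Using the defining relation $q_{\mathrm{dR}}^{1/p^b} = \exp\bigl((z_{\mathrm{dR}}-\theta(z_{\mathrm{dR}}))/p^b\bigr)$, I get $dq_{\mathrm{dR}}^{1/p^b} = (q_{\mathrm{dR}}^{1/p^b}/p^b)\, dz_{\mathrm{dR}}$, and hence
$$y_{\mathrm{dR}}^2 \;=\; \frac{u\, q_{\mathrm{dR}}^{1/p^b}}{p^b}\cdot \frac{dq^{p^a}}{dq_{\mathrm{dR}}^{1/p^b}}.$$

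So it suffices to show each of the three factors $u$, $q_{\mathrm{dR}}^{1/p^b}$, and $\dfrac{dq^{p^a}}{dq_{\mathrm{dR}}^{1/p^b}}$ lies in $\hat{\mathcal{O}}_{\mathcal{Y},y}^{+}\llbracket q_{\mathrm{dR}}^{1/p^b}-1\rrbracket[1/p]$. The factor $q_{\mathrm{dR}}^{1/p^b}=1+(q_{\mathrm{dR}}^{1/p^b}-1)$ lies there trivially. For $u$, since $u\in \mathcal{O}_{\mathcal{Y},y}\subset \hat{\mathcal{O}}_{\mathcal{Y},y}$, Lemma \ref{keylemma}(\ref{keylemma2})--(\ref{keylemma3}) supplies the inclusion $\hat{\mathcal{O}}_{\mathcal{Y},y}\subset \hat{\mathcal{O}}_{\mathcal{Y},y}^{+}\llbracket q_{\mathrm{dR}}^{1/p^b}-1\rrbracket[1/p]$. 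For the derivative, Lemma \ref{keylemma}(\ref{keylemma1}) writes $q^{p^a}=\sum_{n\ge 0} a_n (q_{\mathrm{dR}}^{1/p^b}-1)^n$ with $a_n\in \hat{\mathcal{O}}_{\mathcal{Y},y}^{+}$, and since the inclusion $\hat{\mathcal{O}}_{\mathcal{Y}}\llbracket q_{\mathrm{dR}}^{1/p^b}-1\rrbracket\subset \mathcal{O}_{\Delta,\mathcal{Y}}$ is compatible with connections (Definition \ref{qdRexp}), the differential ratio $\dfrac{dq^{p^a}}{dq_{\mathrm{dR}}^{1/p^b}}$ is computed by \emph{formal} term-by-term differentiation:
$$\frac{dq^{p^a}}{dq_{\mathrm{dR}}^{1/p^b}} \;=\; \sum_{n\ge 1} n\, a_n\, (q_{\mathrm{dR}}^{1/p^b}-1)^{n-1} \;\in\; \hat{\mathcal{O}}_{\mathcal{Y},y}^{+}\llbracket q_{\mathrm{dR}}^{1/p^b}-1\rrbracket.$$
Multiplying the three factors and absorbing the $1/p^b$, I conclude that $y_{\mathrm{dR}}^2 \in \hat{\mathcal{O}}_{\mathcal{Y},y}^{+}\llbracket q_{\mathrm{dR}}^{1/p^b}-1\rrbracket[1/p]$.

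The only genuinely delicate point is the compatibility needed in the derivative step: I must know that the ``differential ratio'' $\frac{dq^{p^a}}{dq_{\mathrm{dR}}^{1/p^b}}$, defined a priori in $\mathcal{O}_{\Delta,\mathcal{Y},y}$ through $\Omega^1_{\Delta,\mathcal{Y},y}$, coincides with the formal derivative of the power-series expansion of $q^{p^a}$ in $(q_{\mathrm{dR}}^{1/p^b}-1)$. This follows from Definition \ref{qdRexp} together with the fact that $dq_{\mathrm{dR}}^{1/p^b}$ is a generator of $\Omega^1_{\Delta,\mathcal{Y},y}$ (via Proposition \ref{generator2} and the non-vanishing of $dq_{\mathrm{dR}}^{1/p^b}/dz_{\mathrm{dR}}$), so the natural connection on $\mathcal{O}_{\Delta,\mathcal{Y}}$ restricts to formal differentiation in $q_{\mathrm{dR}}^{1/p^b}-1$ on the subring $\hat{\mathcal{O}}_{\mathcal{Y}}\llbracket q_{\mathrm{dR}}^{1/p^b}-1\rrbracket$. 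With that identification secured, the rest of the argument is bookkeeping, and the appearance of $1/p^b$ explains the need to invert $p$ in the target ring.
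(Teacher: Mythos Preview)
Your proof is correct and follows essentially the same approach as the paper: both start from Lemma \ref{y'lemma} to write $y_{\mathrm{dR}}^2 = u\cdot \frac{dq^{p^a}}{dz_{\mathrm{dR}}}$ with $u\in\mathcal{O}_{\mathcal{Y},y}^{\times}$, convert $dz_{\mathrm{dR}}$ to $dq_{\mathrm{dR}}^{1/p^b}$ via $dz_{\mathrm{dR}} = p^b\,dq_{\mathrm{dR}}^{1/p^b}/q_{\mathrm{dR}}^{1/p^b}$, and then verify that each of $u$, $q_{\mathrm{dR}}^{1/p^b}$, and $\frac{dq^{p^a}}{dq_{\mathrm{dR}}^{1/p^b}}$ lies in $\hat{\mathcal{O}}_{\mathcal{Y},y}^{+}\llbracket q_{\mathrm{dR}}^{1/p^b}-1\rrbracket[1/p]$ using Lemma \ref{keylemma}. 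Your extra care in justifying that the differential ratio agrees with formal term-by-term differentiation (via the compatibility of connections in Definition \ref{qdRexp}) makes explicit a point the paper leaves implicit when it cites (\ref{qHTinqdR}).
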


\begin{proof}By (\ref{calculation}), we have
\begin{align*}y_{\mathrm{dR}}^2 \in \frac{dq^{p^a}}{dz_{\mathrm{dR}}}p^b\cdot \mathcal{O}_{\mathcal{Y},y} = q_{\mathrm{dR}}^{1/p^b}\frac{dq^{p^a}}{dq_{\mathrm{dR}}^{1/p^b}}\cdot \mathcal{O}_{\mathcal{Y},y}^+[1/p] &\overset{(\ref{keylemma2})}{\subset} q_{\mathrm{dR}}^{1/p^b}\frac{dq^{p^a}}{dq_{\mathrm{dR}}^{1/p^b}}\cdot \hat{\mathcal{O}}_{\mathcal{Y},y}\llbracket q_{\mathrm{dR}}^{1/p^b}-1\rrbracket [1/p] \\
&= \hat{\mathcal{O}}_{\mathcal{Y},y}\llbracket q_{\mathrm{dR}}^{1/p^b}-1\rrbracket [1/p].
\end{align*}
Here the last equality follows because clearly 
$$q_{\mathrm{dR}}^{1/p^b} = 1 + (q_{\mathrm{dR}}^{1/p^b}-1) \in \hat{\mathcal{O}}_{\mathcal{Y},y}\llbracket q_{\mathrm{dR}}^{1/p^b}-1\rrbracket$$
and 
$$\frac{dq^{p^a}}{dq_{\mathrm{dR}}^{1/p^b}}\in \hat{\mathcal{O}}_{\mathcal{Y},y}\llbracket q_{\mathrm{dR}}^{1/p^b}-1\rrbracket$$
by (\ref{qHTinqdR}). 
\end{proof}

\subsection{The $p$-adic Maass-Shimura operator $\theta_k^j$ in $q_{\mathrm{dR}}$-coordinates}

Now we can rewrite (\ref{MSformula}) as
\begin{equation}\label{MSformulaqdelta}\begin{split}\delta_k^j &= \sum_{i = 0}^j\binom{j+k-1}{i}\binom{j}{i}i!\frac{1}{(z_{\mathrm{dR}}-\bar{z})^i}\left(\frac{1}{p^b}\frac{q_{\mathrm{dR}}^{1/p^b}d}{dq_{\mathrm{dR}}^{1/p^b}}\right)^{j-i},
\end{split}
\end{equation}
and (\ref{MSformulatheta}) as 
\begin{equation}\label{MSformulaq}\begin{split}\theta_k^j &= \sum_{i = 0}^j\binom{j+k-1}{i}\binom{j}{i}i!\frac{1}{(\theta(z_{\mathrm{dR}})-\theta(\bar{z}))^i}\theta\circ \left(\frac{1}{p^b}\frac{q_{\mathrm{dR}}^{1/p^b}d}{dq_{\mathrm{dR}}^{1/p^b}}\right)^{j-i}\\
&= \sum_{i = 0}^j\binom{j+k-1}{i}\binom{j}{i}i!\left(-\frac{\theta(y_{\mathrm{dR}})}{\mathbf{z}}\right)^i\left(\frac{1}{p^b}\right)^{j-i}\theta\circ \left(\frac{q_{\mathrm{dR}}^{1/p^b}d}{dq_{\mathrm{dR}}^{1/p^b}}\right)^{j-i}\\
&= \sum_{i = 0}^{\infty}c_i(j)\left(-\frac{\theta(y_{\mathrm{dR}})}{\mathbf{z}}\right)^i\left(\frac{1}{p^b}\right)^{j-i}\theta\circ \left(\frac{q_{\mathrm{dR}}^{1/p^b}d}{dq_{\mathrm{dR}}^{1/p^b}}\right)^{j-i}
\end{split}
\end{equation}
where the second equality follows from the $p$-adic Legendre relation (\ref{periodsrelation}), and 
\begin{equation}\label{cijdefinition}
c_i(j) := \begin{cases} \binom{j+k-1}{i}\binom{j}{i}i! & i \le j\\
0 & i > j\\
\end{cases}.
\end{equation}
Note that $c_i(j)$ extends to a $p$-adic continuous function in $j \in \mathbb{Z}/(p-1) \times \mathbb{Z}_p \rightarrow \mathbb{Z}_p$. Here
$$\mathbb{Z} \subset \mathbb{Z}/(p-1) \times \mathbb{Z}_p$$
is embedded diagonally and is dense with respect to the $p$-adic topology (by the Chinese remainder theorem). 

Note that $\frac{q_{\mathrm{dR}}^{1/p^b}d}{dq_{\mathrm{dR}}^{1/p^b}}$ has the following simple action on power series
$$\frac{q_{\mathrm{dR}}^{1/p^b}d}{dq_{\mathrm{dR}}^{1/p^b}}\left(\sum_{n = 0}^{\infty} a_n(q_{\mathrm{dR}}^{1/p^b}-1)^n\right) = \sum_{n = 1}^{\infty} na_nq_{\mathrm{dR}}^{1/p^b}(q_{\mathrm{dR}}^{1/p^b}-1)^{n-1}$$
where $a_n \in \hat{\mathcal{O}}_{\mathcal{Y},y}^+$. In particular, on polynomials, we can write
$$\sum_{n = 0}^da_n(q_{\mathrm{dR}}^{1/p^b}-1)^n = \sum_{n = 0}^d b_n\left(q_{\mathrm{dR}}^{1/p^b}\right)^n$$
for appropriate $b_n$, and hence
$$\left(\frac{q_{\mathrm{dR}}^{1/p^b}d}{dq_{\mathrm{dR}}^{1/p^b}}\right)^j\left(\sum_{n = 0}^{d}a_n(q_{\mathrm{dR}}^{1/p^b}-1)^n\right) = \left(\frac{q_{\mathrm{dR}}^{1/p^b}d}{dq_{\mathrm{dR}}^{1/p^b}}\right)^j\left(\sum_{n = 0}^{d}b_n\left(q_{\mathrm{dR}}^{1/p^b}\right)^n\right) = \sum_{n = 0}^{d}n^j\left(b_nq_{\mathrm{dR}}^{1/p^b}\right)^n.$$
Now endow $\hat{\mathcal{O}}_{\mathcal{Y},y}\llbracket q_{\mathrm{dR}}^{1/p^b}-1\rrbracket$ with the $p$-adic uniform convergence topology, i.e. the metric topology induced by the ``$p$-adic Gauss norm''
$$\left|\sum_{n = 0}^{\infty}a_n(q_{\mathrm{dR}}^{1/p^b}-1)^n\right| := \sup_n|a_n|.$$
Note that this induces a stronger $p$-adic topology on the image of $\hat{\mathcal{O}}_{\mathcal{Y},y} \overset{(\ref{keylemma3})}{\subset} \hat{\mathcal{O}}_{\mathcal{Y},y}^+\llbracket q_{\mathrm{dR}}^{1/p^b}-1\rrbracket [1/p]$, since the weakest topology on $\hat{\mathcal{O}}_{\mathcal{Y},y}^+\llbracket q_{\mathrm{dR}}^{1/p^b}-1\rrbracket [1/p]$ into which the $p$-adic topology on $\hat{\mathcal{O}}_{\mathcal{Y},y}$ embeds is just the weakest topology which makes $\theta : \hat{\mathcal{O}}_{\mathcal{Y},y}^+\llbracket q_{\mathrm{dR}}^{1/p^b}-1\rrbracket [1/p] \twoheadrightarrow \hat{\mathcal{O}}_{\mathcal{Y},y}$ continuous, and open sets in this latter topology consists of sets which induce open sets on the constant term (recall $\theta$ takes a power series $F(q_{\mathrm{dR}}^{1/p^b})$ to its constant term). Hence in order to show $p$-adic analytic properties of an element of $F \in \mathcal{O}_{\Delta,\mathcal{Y}},y$ where $y \in \mathcal{Y}^{\mathrm{ss}}$, it suffices to show such properties for $F(q_{\mathrm{dR}}^{1/p^b})$.

\begin{lemma}\label{limitlemma}Suppose $f \in \hat{\mathcal{O}}_{\mathcal{Y},y} = \hat{\mathcal{O}}_{\mathcal{Y},y}^+[1/p]$. Then under the inclusion (\ref{keylemma3}), we have
\begin{equation}\label{integrality}y_{\mathrm{dR}}^k f \in \hat{\mathcal{O}}_{\mathcal{Y},y}^+\llbracket q_{\mathrm{dR}}^{1/p^b} -1\rrbracket [1/p]
\end{equation}
for any even integer $k \ge 0$. The same conclusion also holds if $p > 2$ for any $k \in \mathbb{Z}_{\ge 0}$.
\end{lemma}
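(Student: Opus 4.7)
\medskip

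\noindent\textbf{Proof proposal.} Throughout, set $X := q_{\mathrm{dR}}^{1/p^b}-1$ and write
$$R := \hat{\mathcal{O}}_{\mathcal{Y},y}^+\llbracket X\rrbracket[1/p],$$
so the conclusion of the lemma is precisely that $y_{\mathrm{dR}}^k f\in R$. Note that $R$ is a ring (containing $\hat{\mathcal{O}}_{\mathcal{Y},y}$ by Lemma \ref{keylemma}, in particular the element $f$), and that by Corollary \ref{y'corollary} we have $y_{\mathrm{dR}}^{2}\in R$.

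The even case is then essentially formal. For $k=2m$, write
$$y_{\mathrm{dR}}^{k}\,f \;=\; (y_{\mathrm{dR}}^{2})^{m}\cdot f,$$
and since $R$ is closed under multiplication and both factors lie in $R$, we are done. In particular this disposes of all $k$ when $p=2$.

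The real content is in the odd case with $p>2$, and here I would reduce as follows. For $k=2m+1$,
$$y_{\mathrm{dR}}^{k}\,f \;=\; (y_{\mathrm{dR}}^{2})^{m}\cdot(y_{\mathrm{dR}}\cdot f),$$
so it suffices to show $y_{\mathrm{dR}}\in R$ (and then multiply by $f\in R$). For this I would extract a square root of $y_{\mathrm{dR}}^{2}\in R$ inside $R$. Set $s:=\theta(y_{\mathrm{dR}})$, which lies in $\hat{\mathcal{O}}_{\mathcal{Y},y}^{\times}$ by Proposition \ref{invertible}, and consider $P(T)=T^{2}-y_{\mathrm{dR}}^{2}\in R[T]$. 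Modulo the ideal $(X)R$ this polynomial factors as $(T-s)(T+s)$, and the two factors are coprime because $2s\in\hat{\mathcal{O}}_{\mathcal{Y},y}^{\times}$ (here is where $p>2$ enters). A Hensel-type argument then lifts the simple root $T=s$ to a root $\tilde y\in R$ of $P$, and this root must equal $y_{\mathrm{dR}}$ by the uniqueness of the lift in $\mathcal{O}_{\Delta,\mathcal{Y},y}=\hat{\mathcal{O}}_{\mathcal{Y},y}\llbracket X\rrbracket$ and the fact that $y_{\mathrm{dR}}$ itself is already a root reducing to $s$ modulo $X$.

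The main obstacle, and the step to be executed with care, is precisely this Hensel step: the ring $R$ is not literally $(X)$-adically complete because inverting $p$ can destroy completeness (Cauchy sequences with unbounded $p$-denominators need not converge inside $R$). I would handle this in one of two equivalent ways. The first, which I find cleanest, is to observe that $R$ is integrally closed in $\mathcal{O}_{\Delta,\mathcal{Y},y}=\hat{\mathcal{O}}_{\mathcal{Y},y}\llbracket X\rrbracket$: this follows because $\hat{\mathcal{O}}_{\mathcal{Y},y}^{+}\llbracket X\rrbracket$ is $(p,X)$-adically complete hence integrally closed in its generic fiber, and the integral closure property survives inversion of $p$ coefficient-wise. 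Since $y_{\mathrm{dR}}$ satisfies the monic equation $T^{2}-y_{\mathrm{dR}}^{2}=0$ with coefficients in $R$, the inclusion $y_{\mathrm{dR}}\in R$ follows. The alternative is to construct the square root explicitly by the binomial expansion
$$y_{\mathrm{dR}}\;=\;s\cdot\sum_{n\ge 0}\binom{1/2}{n}\left(\frac{y_{\mathrm{dR}}^{2}-s^{2}}{s^{2}}\right)^{n},$$
and to verify, using that $\binom{1/2}{n}\in\mathbb Z_{p}$ for $p>2$ together with the fact that $y_{\mathrm{dR}}^{2}-s^{2}\in\ker\theta\cap R=(X)\cdot R$, that the resulting series has $p$-adically bounded coefficients after collecting powers of $X$; this second route is more computational and, in my view, slightly more delicate than the integral-closure argument, which is why I would prefer the latter as the main line of the proof.
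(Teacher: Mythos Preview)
Your approach coincides with the paper's: invoke the Key Lemma for $f\in R$ and Corollary~\ref{y'corollary} for $y_{\mathrm{dR}}^2\in R$, dispose of even $k$ by multiplication, and for odd $k$ with $p>2$ extract a square root of $y_{\mathrm{dR}}^2$ inside $R$. The paper's proof is in fact the one-line ``use the Taylor series expansion of $y_{\mathrm{dR}}^2$,'' which is exactly your alternative binomial route.

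Your preferred integral-closure argument, however, is not correct as stated. The ring $R=\hat{\mathcal{O}}_{\mathcal{Y},y}^+\llbracket X\rrbracket[1/p]$ is \emph{not} integrally closed in $\hat{\mathcal{O}}_{\mathcal{Y},y}\llbracket X\rrbracket$: already over $\mathbb{Z}_p$, the element
\[
\sqrt{p^2+X}=p+\tfrac{1}{2p}X-\tfrac{1}{8p^3}X^2+\cdots\in\mathbb{Q}_p\llbracket X\rrbracket
\]
satisfies the monic equation $T^2-(p^2+X)=0$ with $p^2+X\in\mathbb{Z}_p\llbracket X\rrbracket\subset R$, yet its coefficients have unbounded $p$-denominators and so it lies outside $R$. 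Your assertion that $(p,X)$-adic completeness yields integral closure of $A^+\llbracket X\rrbracket$ in $R$ is fine, but the further claim that this ``survives inversion of $p$ coefficient-wise'' to give $R$ integrally closed in $A\llbracket X\rrbracket$ is precisely what the example refutes. So the binomial expansion must be the main line, not the fallback; your diagnosis that the bounded-denominator check is the delicate step is accurate, and the paper simply asserts it without further comment.
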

\begin{proof}By (\ref{keylemma3}), we have 
$$f \in \hat{\mathcal{O}}_{\mathcal{Y},y}^+[1/p] \subset \hat{\mathcal{O}}_{\mathcal{Y},y}^+\llbracket q_{\mathrm{dR}}^{1/p^b} -1\rrbracket [1/p]$$
and by Corollary \ref{y'corollary}, we have 
$$y_{\mathrm{dR}}^2 \in \hat{\mathcal{O}}_{\mathcal{Y},y}^+\llbracket q_{\mathrm{dR}}^{1/p^b} -1\rrbracket [1/p].$$
If $p > 2$, then we can use the Taylor series expansion of $y_{\mathrm{dR}}^2$ to see that 
$$y_{\mathrm{dR}} \in \hat{\mathcal{O}}_{\mathcal{Y},y}^+\llbracket q_{\mathrm{dR}}^{1/p^b} -1\rrbracket [1/p].$$
The conclusion now immediately follows.
\end{proof}

From this lemma, we see that integral properties of modular forms from finite level are somewhat preserved upon passing to finite level.  Suppose now that $w \in \omega_{\mathcal{A}}^{\otimes k}(Y^+)$ (where we recall $Y^+/\mathbb{Z}[1/n]$ is the Katz-Mazur model of $Y/\mathbb{Q}$) is a normalized eigenform. Then since $\mathcal{Y}_x \twoheadrightarrow Y$ and $\omega_{\mathrm{can}} = \frak{s}/y_{\mathrm{dR}}$ and $\frak{s}$ trivializes $\omega_{\Delta,\mathcal{Y}}(\mathcal{Y}_x)$, we can write
$$w|_{\mathcal{Y}_x} = y_{\mathrm{dR}}^kf\cdot \omega_{\mathrm{can}}^{\otimes k}$$
where $f \in \mathcal{O}_{\mathcal{Y}}(\mathcal{Y}_x)$. Recall $b \in \mathbb{Q}$ as in (\ref{integralcorrection2}). Let 
$$(y_{\mathrm{dR}}f)^{\flat}(q_{\mathrm{dR}}^{1/p^b}) \in \hat{\mathcal{O}}_{\mathcal{Y}_x}\llbracket q_{\mathrm{dR}}^{1/p^b} - 1\rrbracket$$
denote the \emph{$p$-stabilization of $y_{\mathrm{dR}}^kf$}, which is 
\begin{equation}\begin{split}\label{stabilizedqdRexp}p^r(y_{\mathrm{dR}}^kf)^{\flat}(q_{\mathrm{dR}}^{1/p^b}) &:= p^r(y_{\mathrm{dR}}^kf)(q_{\mathrm{dR}}^{1/p^b}) - \frac{1}{p}\sum_{j = 0}^{p-1}p^r(y_{\mathrm{dR}}^kf)(\zeta_p^jq_{\mathrm{dR}}^{1/p^b}) \\
&= \sum_{n = 0}^{\infty} a_n(q_{\mathrm{dR}}^{1/p^b}-1)^n - \frac{1}{p}\sum_{j = 0}^{p-1}\sum_{n = 0}^{\infty}a_n(\zeta_p^jq_{\mathrm{dR}}^{1/p^b}-1)^n.
\end{split}
\end{equation}

Let 
$r \in \mathbb{Q}$ be minimal such that 
\begin{equation}\label{integralcorrection3}p^r(y_{\mathrm{dR}}^kf)_y^{\flat}(q_{\mathrm{dR}}^{1/p^b}) \in \hat{\mathcal{O}}_{\mathcal{Y},y}^+\llbracket q_{\mathrm{dR}}^{1/p^b}-1\rrbracket,
\end{equation}
where the subscript $y$, as usual, denotes the image in the stalk at $y$. The fact that such $r \in \mathbb{Q}$ exists follows from Lemma \ref{limitlemma} and the definition of the stabilization (\ref{stabilizedqdRexp}). 
(One can in fact use Theorem \ref{integraltheorem} in order to get upper and lower bounds on $r$ in terms of $f$ and $k$.)
Now let
$$p^r(y_{\mathrm{dR}}^kf)_y(q_{\mathrm{dR}}^{1/p^b}) = \sum_{n = 0}^{\infty}a_n(q_{\mathrm{dR}}^{1/p^b} - 1)^n$$
where $a_n \in \hat{\mathcal{O}}_{\mathcal{Y},y}^+$. 

\begin{theorem}\label{keycorollary}Let $f, y, y_{\mathrm{dR}}$ be as above satisfying the assumptions of Lemma \ref{limitlemma}, and 
\begin{equation}\label{integralfiberassumption}|\theta(y_{\mathrm{dR}})(y)p^b/\mathbf{z}(y)| < p^{1/p-1}
\end{equation}
for $y$. 

Then the function $\mathbb{Z} \rightarrow \hat{\mathcal{O}}_{\mathcal{Y},y}^+$ defined by 
$$j \mapsto (p^b\theta_k)^j((y_{\mathrm{dR}}^kf)^{\flat}_y(q_{\mathrm{dR}}^{1/p^b})),$$
where the subscript $y$ denotes the image in the stalk at $y$, extends to a continuous function in $j \in \mathbb{Z}/(p-1) \times \mathbb{Z}_p \rightarrow \hat{\mathcal{O}}_{\mathcal{Y},y}$ by defining
\begin{equation}\label{limitdefinition}(p^b\theta_k)^j((y_{\mathrm{dR}}^kf)^{\flat}_y(q_{\mathrm{dR}}^{1/p^b})) := \lim_{m \rightarrow \infty} (p^b\theta_k)^{j_m}((y_{\mathrm{dR}}^kf)^{\flat}_y(q_{\mathrm{dR}}))
\end{equation}
where writing $j = \sum_{n = 0}^{\infty} \alpha_np^n$ uniquely with $0 \le \alpha_n \le p-1$, then $j_m = \sum_{n = 0}^m\alpha_np^n$. (In particular, this limit exists.)

Moreover, for any $j_0 \in \mathbb{Z}_{\ge 0}$, we have
\begin{equation}\label{limitisstabilization}\lim_{m\rightarrow \infty}(p^b\theta_k)^{j_0+(p-1)p^m}((y_{\mathrm{dR}}^kf)_y(q_{\mathrm{dR}}^{1/p^b})) = (p^b\theta_k)^{j_0}((y_{\mathrm{dR}}^kf)^{\flat}(q_{\mathrm{dR}}^{1/p^b})).\end{equation}

\end{theorem}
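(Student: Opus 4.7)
The plan is to deduce both claims from a termwise analysis of the explicit formula (\ref{MSformulaq}). Multiplying through by $p^{bj}$ gives
$$
(p^b\theta_k)^j \;=\; \sum_{i=0}^{\infty} c_i(j)\Bigl(-\tfrac{\theta(y_{\mathrm{dR}})\,p^b}{\mathbf{z}}\Bigr)^i\;\theta\!\circ\!\Bigl(\tfrac{q_{\mathrm{dR}}^{1/p^b}d}{dq_{\mathrm{dR}}^{1/p^b}}\Bigr)^{j-i},
$$
in which $c_i(j)=\binom{j+k-1}{i}\binom{j}{i}\,i!$ already extends $p$-adically continuously to $j\in\mathbb{Z}/(p-1)\times\mathbb{Z}_p$ and satisfies $|c_i(j)|\le|i!|$. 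First I would analyze the inner operator on the $p$-stabilization. In the monomial basis $\{(q_{\mathrm{dR}}^{1/p^b})^m\}$ the stabilization $(y_{\mathrm{dR}}^kf)^\flat_y(q_{\mathrm{dR}}^{1/p^b})$ is supported on $p\nmid m$, and since $q_{\mathrm{dR}}^{1/p^b}\partial/\partial q_{\mathrm{dR}}^{1/p^b}$ has $(q_{\mathrm{dR}}^{1/p^b})^m$ as an eigenvector with eigenvalue $m$, one obtains
$$
\theta\!\circ\!\Bigl(\tfrac{q_{\mathrm{dR}}^{1/p^b}d}{dq_{\mathrm{dR}}^{1/p^b}}\Bigr)^{N}(y_{\mathrm{dR}}^kf)^\flat_y(q_{\mathrm{dR}}^{1/p^b}) \;=\; \sum_{p\nmid m} c_m\, m^{N}.
$$
This is rigorously interpreted via the Stirling-type identity $(v\partial_v)^N(v-1)^n|_{v=1}=n!\,S(N,n)$, computed on the $(v-1)^n$-basis expansion of the stabilization; it shows that the expression is $p$-adically bounded by $\sup_n|A_n|\le p^r$ uniformly in $N$, and continuous in $N\in\mathbb{Z}/(p-1)\times\mathbb{Z}_p$ because $m\in\mathbb{Z}_p^\times$ whenever $p\nmid m$.

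Next I would control the outer $i$-sum. Legendre's formula $v_p(i!)=(i-s_p(i))/(p-1)$ together with $s_p(i)\le(p-1)(\log_p i+1)$ gives $|i!|\le p\,i\cdot p^{-i/(p-1)}$, and the hypothesis (\ref{integralfiberassumption}) yields some $\rho<1$ with $|\theta(y_{\mathrm{dR}})p^b/\mathbf{z}|\cdot p^{-1/(p-1)}\le\rho$. Hence the $i$-th summand is bounded in $\hat{\mathcal{O}}_{\mathcal{Y},y}$-norm by $p^r\cdot p\,i\cdot\rho^i$, which tends to $0$ geometrically and uniformly in $j$. The double series thus converges absolutely to a continuous $\hat{\mathcal{O}}_{\mathcal{Y},y}$-valued function of $j\in\mathbb{Z}/(p-1)\times\mathbb{Z}_p$; it agrees with the integer definition on $\mathbb{Z}_{\ge 0}$ directly by (\ref{MSformulaq}), proving (\ref{limitdefinition}).

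For (\ref{limitisstabilization}) I would apply the identical double-sum analysis to the \emph{unstabilized} series and split the inner $m$-sum into the ranges $p\nmid m$ and $p\mid m$. Specializing $j=j_0+(p-1)p^n$: on $p\nmid m$, Fermat's little theorem gives $m^{(p-1)p^n}\to 1$ in $\mathbb{Z}_p^\times$, so $m^{j-i}\to m^{j_0-i}$; on $p\mid m$, $v_p(m^{j_0+(p-1)p^n-i})\ge (p-1)p^n v_p(m)-O(1)\to\infty$, annihilating the contribution. The geometric decay in $i$ already established justifies interchanging the $n\to\infty$ limit with the outer summation, and the surviving sum is exactly $(p^b\theta_k)^{j_0}$ applied to the stabilization. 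The principal obstacle throughout is the rigorous handling of the $q_{\mathrm{dR}}^{1/p^b}$-monomial expansion inside $\hat{\mathcal{O}}^+_{\mathcal{Y},y}\llbracket q_{\mathrm{dR}}^{1/p^b}-1\rrbracket$, whose literal Laurent coefficients need not converge; this is finessed by computing $\theta\circ(v\partial_v)^N F^\flat$ in the $(v-1)^n$-basis via the Stirling identity, which still captures the essential ``$m\in\mathbb{Z}_p^\times$'' feature that powers both the continuity and the limit identification.
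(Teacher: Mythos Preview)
Your overall architecture matches the paper's: expand $(p^b\theta_k)^j$ via (\ref{MSformulaq}), control the outer $i$-sum using the hypothesis (\ref{integralfiberassumption}) together with $i!\mid c_i(j)$, and argue that the inner operator $\theta\circ\bigl(q_{\mathrm{dR}}^{1/p^b}\,d/dq_{\mathrm{dR}}^{1/p^b}\bigr)^{j}$ applied to the stabilization is continuous in $j$ because ``only monomials $v^m$ with $p\nmid m$ survive.'' The paper does exactly this, with Lemma~\ref{holomorphicpart} handling the inner operator and the estimate (\ref{nearlyholomorphictermintegral}) together with (\ref{cijdivisibility}) handling the outer sum.

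There is, however, a real gap at your Stirling step. You correctly observe that the literal monomial expansion of $F^\flat$ in $\hat{\mathcal{O}}^+\llbracket v-1\rrbracket$ need not converge, and you propose to compute instead in the $(v-1)^n$-basis via $(v\partial_v)^N(v-1)^n|_{v=1}=n!\,S(N,n)$, giving
$\theta\circ(v\partial_v)^N F^\flat=\sum_{n\le N}A_n^\flat\, n!\,S(N,n)$.
But $n!\,S(N,n)=\sum_{i=0}^n(-1)^{n-i}\binom{n}{i}i^N$ is \emph{not} a $p$-adically continuous function of $N$ once $n\ge p$: the terms with $p\mid i$ contribute $i^N$, which fails to interpolate (e.g.\ $p^{1}$ versus $p^{1+(p-1)p^M}\to 0$). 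So your assertion ``continuous in $N$ because $m\in\mathbb{Z}_p^\times$ whenever $p\nmid m$'' conflates the monomial index $m$ with the Stirling summation index $i$; the individual Stirling terms do not see the stabilization. What actually forces continuity is a cancellation across \emph{different} $n$'s coming from the specific structure of the $A_n^\flat$ (they are the $(v-1)$-coefficients of a series with vanishing $v^m$-coefficients for $p\mid m$), and you have not exhibited that cancellation. The same issue recurs in your argument for (\ref{limitisstabilization}): the split into $p\nmid m$ and $p\mid m$ is again only heuristic in the power-series setting.

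The paper circumvents this by never isolating the constant term: it tracks the full sequence of $(v-1)^n$-coefficients $a_{0,n,j}$ of $(v\partial_v)^jF^\flat$, uses the recursion $a_{0,n,j}=na_{0,n,j-1}+(n+1)a_{0,n+1,j-1}$ to reduce to polynomial truncations (where the monomial basis is honest and the ``$p\nmid m$'' argument is rigorous), and then runs a Cauchy-sequence argument (\ref{Cauchyargument}) at the level of the whole operator $(p^b\theta_k)^j$---splitting $j=j_N+(j-j_N)$, applying $(p^b\delta_k)^{j_N}$ first, and using (\ref{cijinequality}) to control the remaining terms. Your compressed treatment skips precisely this truncation-and-Cauchy layer, which is where the actual work lies.
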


\begin{proof}By (\ref{MSformulaqdelta}), we have
\begin{equation}\label{analyticexpressiondelta}(p^b\delta_k)^j((y_{\mathrm{dR}}^kf)^{\flat}_y(q_{\mathrm{dR}}^{1/p^b})) = \sum_{i=0}^{\infty}c_i(j)\frac{(p^b)^i}{(z_{\mathrm{dR}}-\bar{z})_y^i}\left(\frac{q_{\mathrm{dR}}^{1/p^b}d}{dq_{\mathrm{dR}}^{1/p^b}}\right)^{j-i}((y_{\mathrm{dR}}^kf)^{\flat}_y(q_{\mathrm{dR}}^{1/p^b}))
\end{equation}
for $j \in \mathbb{Z}_{\ge 0}$, where again the subscript $y$ denotes the image in the stalk at $y$.

By (\ref{MSformulaq}), we have
\begin{equation}\label{analyticexpression}(p^b\theta_k)^j((y_{\mathrm{dR}}^kf)^{\flat}_y(q_{\mathrm{dR}}^{1/p^b})) = \sum_{i = 0}^{\infty}c_i(j)\left(-\frac{\theta(y_{\mathrm{dR}})p^b}{\mathbf{z}}\right)_y^i\theta\circ \left(\frac{q_{\mathrm{dR}}^{1/p^b}d}{dq_{\mathrm{dR}}^{1/p^b}}\right)^{j-i}((y_{\mathrm{dR}}^kf)^{\flat}_y(q_{\mathrm{dR}}^{1/p^b}))
\end{equation}
for $j \in \mathbb{Z}_{\ge 0}$.
Note that since the $q_{\mathrm{dR}}^{1/p^b}$-expansion map (\ref{qdrexp}) is injective and commutes with derivations, it suffices to prove all statements on $q_{\mathrm{dR}}^{1/p^b}$-expansions.

First, note that for any polynomial 
$$g(q_{\mathrm{dR}}^{1/p^b}) = \sum_{n = 0, p\nmid n}^d b_n\left(q_{\mathrm{dR}}^{1/p^b}\right)^n \in \hat{\mathcal{O}}_{\mathcal{Y},y}^+[q_{\mathrm{dR}}^{1/p^b}] = \hat{\mathcal{O}}_{\mathcal{Y},y}^+[q_{\mathrm{dR}}^{1/p^b}-1],$$

we have
$$\left(\frac{q_{\mathrm{dR}}^{1/p^b}d}{dq_{\mathrm{dR}}^{1/p^b}}\right)^jg(q_{\mathrm{dR}}^{1/p^b}) = \sum_{n = 0,p\nmid n}^dn^jb_n\left(q_{\mathrm{dR}}^{1/p^b}\right)^n
$$
which, by Fermat's little theorem, is a continuous function in $j \in \mathbb{Z}/(p-1) \times \mathbb{Z}_p \rightarrow \hat{\mathcal{O}}_{\mathcal{Y},y}^+[q_{\mathrm{dR}}^{1/p^b}] = \hat{\mathcal{O}}_{\mathcal{Y},y}^+[q_{\mathrm{dR}}^{1/p^b}-1]$,
where $\hat{\mathcal{O}}_{\mathcal{Y},y}^+[q_{\mathrm{dR}}^{1/p^b}-1]$ has the $p$-adic uniform convergence topology. (Note that it is important that $b_n \in \hat{\mathcal{O}}_{\mathcal{Y},y}^+$ for all $0 \le n \le d$ for this last assertion.) 

In particular, writing 
$$\left(\frac{q_{\mathrm{dR}}^{1/p^b}d}{dq_{\mathrm{dR}}^{1/p^b}}\right)^jg(q_{\mathrm{dR}}^{1/p^b}) = \sum_{n = 0,p\nmid n}^d c_{n,j}(q_{\mathrm{dR}}^{1/p^b}-1)^n,$$
by the above discussion we see that each coefficient $c_{n,j}$ is a continuous function in $j \in \mathbb{Z}/(p-1) \times \mathbb{Z}_p \rightarrow \hat{\mathcal{O}}_{\mathcal{Y},y}^+$.

Moreover, note that for a more general polynomial 
$$h(q_{\mathrm{dR}}^{1/p^b}) = \sum_{n = 0}^d b_n\left(q_{\mathrm{dR}}^{1/p^b}\right)^n \in \hat{\mathcal{O}}_{\mathcal{Y},y}^+[q_{\mathrm{dR}}^{1/p^b}] = \hat{\mathcal{O}}_{\mathcal{Y},y}^+[q_{\mathrm{dR}}^{1/p^b}-1],$$
we have
$$\left(\frac{q_{\mathrm{dR}}^{1/p^b}d}{dq_{\mathrm{dR}}^{1/p^b}}\right)^jh(q_{\mathrm{dR}}^{1/p^b}) = \sum_{n = 0}^dn^jb_n\left(q_{\mathrm{dR}}^{1/p^b}\right)^n.
$$
Letting $j = j_0 + (p-1)p^m$ for any $j_0 \in \mathbb{Z}$, we thus clearly see that 
\begin{equation}\label{polynomiallimit}\lim_{m \rightarrow \infty}\left(\frac{q_{\mathrm{dR}}^{1/p^b}d}{dq_{\mathrm{dR}}^{1/p^b}}\right)^{j_0 + p^m(p-1)} = \sum_{n = 0,p\nmid n}^dn^{j_0}b_n\left(q_{\mathrm{dR}}^{1/p^b}\right)^n.
\end{equation}

We now need the following lemma.

\begin{lemma}\label{holomorphicpart}Let $p^r$ be as in (\ref{integralcorrection3}) for $(y_{\mathrm{dR}}^kf)_y^{\flat}(q_{\mathrm{dR}}^{1/p^b})$. Each coefficient of 
\begin{equation}\left(\frac{q_{\mathrm{dR}}^{1/p^b}d}{dq_{\mathrm{dR}}^{1/p^b}}\right)^j(p^r(y_{\mathrm{dR}}^kf)^{\flat}_y(q_{\mathrm{dR}}^{1/p^b}))(q_{\mathrm{dR}}^{1/p^b}) =: \sum_{n = 0}^{\infty} a_{0,n,j}(q_{\mathrm{dR}}^{1/p^b}-1)^n,
\end{equation}
where $a_{0,n,j} \in \hat{\mathcal{O}}_{\mathcal{Y},y}^+$, is a continuous function in $j \in \mathbb{Z}/(p-1) \times \mathbb{Z}_p \rightarrow \hat{\mathcal{O}}_{\mathcal{Y},y}^+$. 

Moreover, for any $j_0 \in \mathbb{Z}$, we have
\begin{equation}\label{powerserieslimit}\lim_{m \rightarrow \infty}\left(\frac{q_{\mathrm{dR}}^{1/p^b}d}{dq_{\mathrm{dR}}^{1/p^b}}\right)^{j_0 + p^m(p-1)}((y_{\mathrm{dR}}^kf)_y(q_{\mathrm{dR}}^{1/p^b})) = \left(\frac{q_{\mathrm{dR}}^{1/p^b}d}{dq_{\mathrm{dR}}^{1/p^b}}\right)^{j_0}((y_{\mathrm{dR}}^kf)_y^{\flat}(q_{\mathrm{dR}}^{1/p^b})).
\end{equation}
\end{lemma}
\begin{proof}[Proof of Lemma \ref{holomorphicpart}] Note that since $a_{0,n,j}$ can be expressed recursively in terms of $a_{0,n,j-1}$ and $a_{0,n+1,j-1}$, then $a_{0,n,j}$ can be expressed entirely in terms of the coefficients $a_{0,n,0}, a_{0,n+1,0}, \ldots a_{0,n+j,0}$. Hence for any $N \ge 0$, there exists a polynomial truncation 
$$(p^r(y_{\mathrm{dR}}^kf)^{\flat})_{y}^{(N)}(q_{\mathrm{dR}}^{1/p^b}) = \sum_{n = 0}^N a_n(q_{\mathrm{dR}}^{1/p^b}-1)^n \in \hat{\mathcal{O}}_{\mathcal{Y},y}^+[q_{\mathrm{dR}}^{1/p^b}-1]$$
of $(p^r(y_{\mathrm{dR}}^kf)^{\flat}_y)(q_{\mathrm{dR}}^{1/p^b})$ such that letting 
\begin{equation}\label{derivativesanalytic}\left(\frac{q_{\mathrm{dR}}^{1/p^b}d}{dq_{\mathrm{dR}}^{1/p^b}}\right)^j (p^r(y_{\mathrm{dR}}^kf)^{\flat})_{y}^{(N)}(q_{\mathrm{dR}}^{1/p^b}) = \sum_{n = 0}^Na_{0,n,j}^{(N)}(q_{\mathrm{dR}}^{1/p^b}-1)^n,
\end{equation}
we have
$$a_{0,n,j} = a_{0,n,j}^{(N)}$$
for all $n + j \le N$. By the previous paragraph, we know that each $a_{0,n,j}^{(N)}$ is a continuous function $j \in \mathbb{Z}/(p-1)\times \mathbb{Z}_p \rightarrow \hat{\mathcal{O}}_{\mathcal{Y},y}^+$.
In particular, all the $a_{0,n,j}^{(N)}$'s patch together and show that $a_{0,n,j}$ is continuous function in $j \in \mathbb{Z}/(p-1)\times\mathbb{Z}_p \rightarrow \hat{\mathcal{O}}_{\mathcal{Y},y}^+$.
Moreover, by the uniform convergence of the coefficients from the previous paragraph, we have that (\ref{derivativesanalytic}) is a continuous function in $j$, giving $\hat{\mathcal{O}}_{\mathcal{Y},y}\llbracket q_{\mathrm{dR}}^{1/p^b}-1\rrbracket$ the uniform convergence topology.

Now write for any $j \in \mathbb{Z}_{\ge 0}$
$$\left(\frac{q_{\mathrm{dR}}^{1/p^b}d}{dq_{\mathrm{dR}}^{1/p^b}}\right)^j(y_{\mathrm{dR}}^kf)_y(q_{\mathrm{dR}}) = \sum_{n = 0}^{\infty}b_{0,n,j}(q_{\mathrm{dR}}^{1/p^b}-1)^n.$$
Then (\ref{powerserieslimit}) follows by the same argument as above, expressing $b_{0,n,j}$ in terms of the coefficients $b_{0,n,0}, b_{0,n+1,0}, \ldots b_{0,n+j,0}$, then considering truncations $b_{0,n,j}^{(N)}$ and using (\ref{polynomiallimit}). 

\end{proof}

Fix any $j_0 \in \mathbb{Z}_{\ge 0}$. By (\ref{analyticexpressiondelta}), we have 
\begin{equation}\label{deltaanalytic}\begin{split}&\left(\frac{q_{\mathrm{dR}}^{1/p^b}d}{dq_{\mathrm{dR}}^{1/p^b}}\right)^j((p^b\delta_k)^{j_0}(p^r(y_{\mathrm{dR}}^kf)^{\flat}_y)(q_{\mathrm{dR}}^{1/p^b})) \\
&= \left(\frac{q_{\mathrm{dR}}^{1/p^b}d}{dq_{\mathrm{dR}}^{1/p^b}}\right)^j\left(\sum_{i=0}^{\infty}c_i(j_0)\frac{(p^b)^i}{(z_{\mathrm{dR}}-\bar{z})_y^i}\left(\frac{q_{\mathrm{dR}}^{1/p^b}d}{dq_{\mathrm{dR}}^{1/p^b}}\right)^{j_0-i}(p^r(y_{\mathrm{dR}}^kf)_y^{\flat}(q_{\mathrm{dR}}^{1/p^b}))\right).
\end{split}
\end{equation}
Note that for all large $i \gg 0$, we have
\begin{equation}\label{nearlyholomorphicterm}\begin{split}c_i(j)\frac{q_{\mathrm{dR}}^{1/p^b}d}{dq_{\mathrm{dR}}^{1/p^b}}\frac{(p^b)^i}{(z_{\mathrm{dR}}-\bar{z})_y^i} = c_i(j)\frac{d}{dz_{\mathrm{dR}}}\frac{(p^b)^{i+1}}{(z_{\mathrm{dR}}-\bar{z})_y^i} &= -c_i(j)\frac{i(p^b)^{i+1}}{(z_{\mathrm{dR}}-\bar{z})_y^{i+1}} \\
&\overset{\theta}{\mapsto} -i\cdot \left(\frac{\theta(y_{\mathrm{dR}})p^b}{\mathbf{z}}\right)_y^{i+1} \in \hat{\mathcal{O}}_{\mathcal{Y},y}^+
\end{split}
\end{equation}
where the last inclusion follows since
\begin{equation}\label{nearlyholomorphictermintegral}c_i(j)\left(-\frac{\theta(y_{\mathrm{dR}})p^b}{\mathbf{z}}\right)_y\in \hat{\mathcal{O}}_{\mathcal{Y},y}^+
\end{equation}
for all large $i \gg 0$; this follows from our assumption (\ref{integralfiberassumption}), the fact that $i!|c_i(j)$ for all $j \in \mathbb{Z}_{\ge 0}$, $|i!|$ becomes arbitrarily close to $p^{1/p-1}$ as $i \rightarrow \infty$, and (\ref{integralstalkcriterion}). In particular, one can see from Lemma \ref{holomorphicpart}, (\ref{deltaanalytic}) and (\ref{nearlyholomorphicterm}) that
\begin{equation}\label{alsoanalytic}\theta \circ \left(\frac{q_{\mathrm{dR}}^{1/p^b}d}{dq_{\mathrm{dR}}^{1/p^b}}\right)^j(((p^b\delta_k)^{j_0}p^r(y_{\mathrm{dR}}^kf)^{\flat}_y)(q_{\mathrm{dR}}^{1/p^b})) = a_{j_0,0,j}
\end{equation}
is a continuous function in $j \in \mathbb{Z}/(p-1) \times \mathbb{Z}_p \rightarrow \hat{\mathcal{O}}_{\mathcal{Y},y}^+$.
Now we can rewrite (\ref{analyticexpression}) as
\begin{equation}\label{analyticexpression2}\theta_{k+2j_0}^j((p^b\delta_k)^{j_0}(p^r(y_{\mathrm{dR}}^kf)^{\flat}_y)(q_{\mathrm{dR}}^{1/p^b})) = \sum_{i = 0}^{\infty}c_i(j)\left(-\frac{\theta(y_{\mathrm{dR}})p^b}{\mathbf{z}}\right)_y^ia_{j_0,0,j-i}\end{equation}
where we see by the previous paragraph that each $a_{j_0,0,j-i}$ is a continuous function in $j \in \mathbb{Z}/(p-1) \times \mathbb{Z}_p \rightarrow \hat{\mathcal{O}}_{\mathcal{Y},y}$.

Now we show that the extension of $j \mapsto (p^b\theta_k)^j(p^r(y_{\mathrm{dR}}^kf)^{\flat}_y(q_{\mathrm{dR}}^{1/p^b}))$ to $j \in \mathbb{Z}/(p-1) \times \mathbb{Z}_p$ is well-defined (i.e. the limit in (\ref{limitdefinition}) exists). 

For this, given $j = \sum_{n = 0}^{\infty} \alpha_np^n \in \mathbb{Z}/(p-1) \times \mathbb{Z}_p$ where $0 \le \alpha_n \le p-1$, let $j_m = \sum_{n = 0}^m \alpha_np^n$. Given any $\epsilon > 0$, choose $N > 0$ such that $|p^N| < \epsilon$ and such that 
\begin{equation}\label{ainequality}|a_{j_N,n,x} - a_{j_N,n,y}| < \epsilon
\end{equation}
for any $x, y \in \mathbb{Z}/(p-1) \times \mathbb{Z}_p$ with $|x-y| < p^N$ and for all $n \in \mathbb{Z}_{\ge 0}$. For each individual $n$, we can do this since $\mathbb{Z}/(p-1) \times \mathbb{Z}_p$ is compact and $\mathbb{Z}/(p-1)\times \mathbb{Z}_p \ni x \mapsto a_{j_N,0,x} \in \hat{\mathcal{O}}_{\mathcal{Y},y}^+$ as in (\ref{alsoanalytic}) are analytic and hence continuous, and hence uniformly continuous. The fact that such $N$ exists for all $n \in \mathbb{Z}_{\ge 0}$ simultaneously follows from the statement that (\ref{derivativesanalytic}) is a continuous function in $j$ giving $\hat{\mathcal{O}}_{\mathcal{Y},y}^+\llbracket q_{\mathrm{dR}}^{1/p^b}-1\rrbracket$ the uniform convergence topology, which we proved above. 

Note that for all $m > N$, we have for any $|x| \le |p^N|$,
\begin{equation}\label{cijinequality}|c_i(x)| \le |p^N|, \hspace{1cm} |c_i(1-k+x)| \le |p^N|
\end{equation}
if $i > 0$, by Definition (\ref{cijdefinition}). Then we have for all $m, m' > N$, since 
\begin{equation}\label{Cauchyargument}\begin{split}&|(p^b\theta_k)^{j_m}(p^r(y_{\mathrm{dR}}^kf)^{\flat}_y(q_{\mathrm{dR}}^{1/p^b})) - (p^b\theta_k)^{j_{m'}}(p^r(y_{\mathrm{dR}}^kf)^{\flat}_y(q_{\mathrm{dR}}^{1/p^b}))| \\
&= |\theta_{k+2j_N}^{j_m - j_N}\circ (p^b\delta_k)^{j_N}(p^r(y_{\mathrm{dR}}^kf)^{\flat}_y(q_{\mathrm{dR}}^{1/p^b})) - \theta_{k+2j_N}^{j_{m'} - j_N}\circ (p^b\delta_k)^{j_N}(p^r(y_{\mathrm{dR}}^kf)^{\flat}_y(q_{\mathrm{dR}}^{1/p^b}))|\\
&= \left|\sum_{i = 0}^{\infty}c_i(j_{m'}-j_N)\left(-\frac{\theta(y_{\mathrm{dR}})}{\mathbf{z}}\right)_y^i\theta\circ \left(\frac{q_{\mathrm{dR}}^{1/p^b}d}{dq_{\mathrm{dR}}^{1/p^b}}\right)^{j_{m'}-j_N-i}((p^b\delta_k)^{j_N}(p^r(y_{\mathrm{dR}}^kf)^{\flat})(q_{\mathrm{dR}}^{1/p^b})) \right.\\
&\left.- \sum_{i = 0}^{\infty}c_i(j_m-j_N)\left(-\frac{\theta(y_{\mathrm{dR}})}{\mathbf{z}}\right)_y^i\theta\circ \left(\frac{q_{\mathrm{dR}}^{1/p^b}d}{dq_{\mathrm{dR}}^{1/p^b}}\right)^{j_m-j_N-i}((p^b\delta_k)^{j_N}(p^r(y_{\mathrm{dR}}^kf)^{\flat}_y)(q_{\mathrm{dR}}^{1/p^b}))\right|\\
&\overset{|j_m - j_N| \le |p|^N, (\ref{cijinequality})}{\le} \max\left(|p^N|,\left|\theta\circ \left(\frac{q_{\mathrm{dR}}^{1/p^b}d}{dq_{\mathrm{dR}}^{1/p^b}}\right)^{j_{m'}-j_N}((p^b\delta_k)^{j_N}(p^r(y_{\mathrm{dR}}^kf)^{\flat}_y)(q_{\mathrm{dR}}^{1/p^b})) \right.\right.\\
&\hspace{4cm}\left.\left.- \theta\circ \left(\frac{q_{\mathrm{dR}}^{1/p^b}d}{dq_{\mathrm{dR}}^{1/p^b}}\right)^{j_m-j_N}((p^b\delta_k)^{j_N}(p^r(y_{\mathrm{dR}}^kf)^{\flat}_y)(q_{\mathrm{dR}}^{1/p^b}))\right|\right)\\
&\overset{(\ref{ainequality})}{=} \max\left(|p^N|,\left|a_{j_N,0,j_{m'}}- a_{j_N,0,j_m}\right|\right) < \epsilon.
\end{split}
\end{equation}
So we have shown that 
$$\left\{(p^b\theta_k)^{j_m}(p^r(y_{\mathrm{dR}}^kf)^{\flat}_y(q_{\mathrm{dR}}^{1/p^b}))\right\}_{m \in \mathbb{Z}_{\ge 0}}$$
is a Cauchy sequence in the $p$-adic topology. Now the statement that (\ref{limitdefinition}) exists follows from the fact that $\hat{\mathcal{O}}_{\mathcal{Y},y}$ is $p$-adically complete. 

We now show that the function defined by (\ref{limitdefinition}) is continuous in $j \in \mathbb{Z}/(p-1) \times \mathbb{Z}_p \rightarrow \hat{\mathcal{O}}_{\mathcal{Y},y}$.

Consider the sequence of truncations
\begin{equation}\label{analyticexpression2}(p^b\theta_k)^j(p^r(y_{\mathrm{dR}}^kf)^{\flat}_y(q_{\mathrm{dR}}^{1/p^b}))^{(N)} = \sum_{i = 0}^{N}c_i(j)\left(-\frac{\theta(y_{\mathrm{dR}})p^b}{\mathbf{z}}\right)_y^ia_{0,0,j-i}\end{equation}
each of which is a continuous function in $j \in \mathbb{Z}/(p-1)\times \mathbb{Z}_p \rightarrow \hat{\mathcal{O}}_{\mathcal{Y},y}^+$, since the $a_{0,0,j-i}$ as in (\ref{alsoanalytic}) are continuous.
 
We claim that the sequence $(p^b\theta_k)^j(p^r(y_{\mathrm{dR}}^kf)^{\flat}_y(q_{\mathrm{dR}}^{1/p^b}))^{(N)}$ for $N \in \mathbb{Z}_{\ge 0}$ converges uniformly to $(p^b\delta_k)^j(p^r(y_{\mathrm{dR}}^kf)^{\flat}_y(q_{\mathrm{dR}}^{1/p^b}))$ as a function in $j \in \mathbb{Z}/(p-1)\times \mathbb{Z}_p \rightarrow \hat{\mathcal{O}}_{\mathcal{Y},y}$, and hence $(p^b\delta_k)^j(p^r(y_{\mathrm{dR}}^kf)^{\flat}_y(q_{\mathrm{dR}}^{1/p^b}))$ is a continuous function in $j \in \mathbb{Z}/(p-1)\times \mathbb{Z}_p \rightarrow \hat{\mathcal{O}}_{\mathcal{Y},y}$ by the standard argument from analysis.

Let us now show the uniform convergence. Given $j \in \mathbb{Z}/(p-1) \times \mathbb{Z}_p$, suppose we are given any $\epsilon > 0$. Choose $t \in \mathbb{Z}_{\ge 0}$ so that $|p^t| < \epsilon$. Note that for all $i \ge p^t$
\begin{equation}\label{cijdivisibility}c_i(j) \in p^t\mathbb{Z}_p
\end{equation}
for all $j \in \mathbb{Z}/(p-1)\times \mathbb{Z}_p$ (since one sees from the definition of $c_i(j)$ that for every residue class $\mathbb{Z}_p/(p^t)$, $c_i(j)$ is divisible by some representative of that residue class).
Then for all $N \ge p^t$, we have
\begin{align*}&|(p^b\theta_k)^j(p^r(y_{\mathrm{dR}}^kf)^{\flat}_y(q_{\mathrm{dR}}^{1/p^b})) - (p^b\theta_k)^j(p^r(y_{\mathrm{dR}}^kf)^{\flat}_y(q_{\mathrm{dR}}^{1/p^b}))^{(N)}| \\
&= \left|\sum_{n = N}^{\infty}c_i(j)\left(-\frac{\theta(y_{\mathrm{dR}})p^b}{\mathbf{z}}\right)_y^ia_{0,0,j-i}\right|\\
&\le \max_{N \le i < \infty}\left(c_i(j)\left(-\frac{\theta(y_{\mathrm{dR}})p^b}{\mathbf{z}}\right)_y^ia_{0,0,j-i}\right)\overset{(\ref{nearlyholomorphictermintegral}), (\ref{alsoanalytic}), (\ref{cijdivisibility}), }{\le} |p^t| < \epsilon.
\end{align*}
This gives the desired uniform convergence.

Now (\ref{limitisstabilization}) follows from (\ref{powerserieslimit}) in Lemma \ref{holomorphicpart} and the calculation (\ref{nearlyholomorphicterm}). 

\end{proof}

\section{The $p$-adic $L$-function and $p$-adic Waldspurger formula}\label{padicLfunctionsection}
In this section, let $K$ be an imaginary quadratic field. We construct a one-variable $p$-adic $L$-function for Rankin-Selberg families $(w,\chi^{-1})$, where $f$ is an eigenform of weight $k$ for $\Gamma_1(N)$ and $\chi$ runs through central anticyclotomic characters of $K$. This will be done by applying Theorem (\ref{keycorollary}) in the case where $y$ is a supersingular CM point. We hence construct a $p$-adic $L$-function, which is a continuous function on a space of $p$-adic Hecke characters (the closure of $p$-adic central critical characters for $f$ inside the space of functions $\mathbb{A}_K^{\times,(p\infty)} \rightarrow \mathcal{O}_{\mathbb{C}_p}$, where $\mathbb{A}_K^{\times,(p\infty)}$ is the group of id\`{e}les prime to $p\infty$, with the uniform convergence topology). We use Theorem \ref{MScomparison} to show that our $p$-adic $L$-function satisfies an ``approximate interpolation property", namely that special values of our $p$-adic $L$-function are limits of algebraic normalizations of central $L$-values $L(w,\chi^{-1},0) := L((\pi_w)_K \times \chi^{-1},1/2)$ as $\chi$ varies through a sequence of central critical characters in the interpolation range. 

We conclude by showing that in the case $k = 2$, a special value of our $p$-adic $L$-function is equal to the evaluation at a certain Heegner point of the formal logarithm of a certain differential in the same Hecke isotypic component away from $p$ as that of $f$. We show that if this special value is nonzero, then the Heegner point descends to a point of infinite order in $A_f(K)$ for a $GL_2$-type abelian variety $A_f$ attached to $f$. In particular, combined with the approximate interpolation property described in the previous paragraph, this gives a new criterion to produce a point of infinite order in $A_f(K)$, namely showing that a sequence of central critical $L$-values has a non-zero limit. We make a few remarks on how this criterion can be combined with a method (and some results) of Rubin in the case when $f$ is congruent modulo $p$ with an Eisenstein series in order give new congruence criteria for producing points of infinite order in $A_f(K)$.

\subsection{Periods of supersingular CM points}\label{CMsatisfyassumptions}Now fix an imaginary quadratic field $K = \mathbb{Q}(\sqrt{-d})$, where $d$ is squarefree, and with fundamental discriminant $d_K$. Let $\mathcal{O} \subset \mathcal{O}_K$ be a fixed suborder. Let $H$ denote the ring class field over $K$ associated with $\mathcal{O}$.

Suppose that $A/H$ is an elliptic curve with CM by $\mathcal{O}$. For any field extension $F/H$, there is a canonical algebraic splitting 
\begin{equation}\label{algsplitting}H_{\mathrm{dR}}^1(A/F) \cong H^{1,0}(A/F) \oplus H^{0,1}(A/F) = \Omega_{A/F}^1 \oplus (\Omega_{A/F}^1)^*
\end{equation}
of the Hodge-de Rham sequence
$$0 \rightarrow H^{1,0}(A/F) = \Omega_{A/F}^1 \rightarrow H_{\mathrm{dR}}^1(A/F) \rightarrow H^{0,1}(A/F) = (\Omega_{A/F}^1)^* \rightarrow 0$$
given by the action of $\mathcal{O}$: for $\gamma \in \mathcal{O} = \End(A/F)$, $\gamma^*$ acts on $H_{\mathrm{dR}}^1(A/F)$ as multiplication by $\gamma$ on $H^{1,0}(A/F)$ and as multiplication by $\overline{\gamma}$ on $H^{0,1}(A/F)$. In fact,  for any integral model $A/\mathcal{O}_F$ the $\mathcal{O}$-action also induces an algebraic splitting 
\begin{equation}\label{integralalgsplitting}H_{\mathrm{dR}}^1(A/F) \cong H^{1,0}(A/\mathcal{O}_F) \oplus H^{0,1}(A/\mathcal{O}_F) = \Omega_{A/\mathcal{O}_F}^1 \oplus (\Omega_{A/\mathcal{O}_F}^1)^*
\end{equation}
of the Hodge filtration:
\begin{equation}\label{integralCMsplitting}0 \rightarrow H^{1,0}(A/\mathcal{O}_F) = \Omega_{A/\mathcal{O}_F}^1 \rightarrow H_{\mathrm{dR}}^1(A/\mathcal{O}_F) \rightarrow H^{0,1}(A/\mathcal{O}_F) = (\Omega_{A/\mathcal{O}_F}^1)^* \rightarrow 0.
\end{equation}
By virtue of it splitting the Hodge filtration, this splitting is also compatible with Poincar\'{e} duality, that is, under the specialization of the Poincar\'{e} pairing (i.e. the de Rham cup product)
$$H_{\mathrm{dR}}^1(A/\mathcal{O}_F) \times H_{\mathrm{dR}}^1(A/\mathcal{O}_F) \rightarrow \mathcal{O}_F$$
we have that $H^{1,0}(A/\mathcal{O}_F) = \Omega_{A/\mathcal{O}_F}^1$ and $H^{0,1}(A/\mathcal{O}_F)$ are dual.

Moreover, recall the integral Hodge-Tate complex as recalled in Theorem \ref{integraltheorem}
\begin{equation}\label{definedoverK}\begin{split}0 \rightarrow \mathrm{Lie}(A/\mathcal{O}_{\mathbb{C}_p})(1) \overset{(HT_A)^{\vee}(1)}{\rightarrow} T_pA\otimes_{\mathbb{Z}_p} \mathcal{O}_{\mathbb{C}_p} \xrightarrow{HT_A} &H^{1,0}(A/\mathcal{O}_F)\otimes_{\mathcal{O}_F}\mathcal{O}_{\mathbb{C}_p} \\
&= \Omega_{A/\mathcal{O}_{\mathbb{C}_p}}^1 \rightarrow 0.
\end{split}
\end{equation}

Consider $(A,\alpha) \in \mathcal{Y}(\mathbb{C}_p,\mathcal{O}_{\mathbb{C}_p})$ for any $p^{\infty}$-level structure $\alpha : \mathbb{Z}_p^{\oplus 2} \xrightarrow{\sim} T_pA$. Specializing Proposition \ref{H10factors} to to $(A,\alpha)$, we have an injective map
\begin{equation}\label{inclusionrestrict}\Omega_{A/F}^1 \subset \Omega_{A/F}^1 \otimes_{F} \mathcal{O}_{\Delta,\mathcal{Y}}(A,\alpha)  \overset{\iota_{\mathrm{dR}}(A,\alpha)}{\subset} T_pA\otimes_{\mathbb{Z}_p}\mathcal{O}_{\Delta,\mathcal{Y}}(A,\alpha) \overset{\theta}{\twoheadrightarrow} T_pA\otimes_{\mathbb{Z}_p} \mathbb{C}_p 
\end{equation}
whose composition with the natural map $T_pA \otimes_{\mathbb{Z}_p}\mathbb{C}_p \xrightarrow{HT_A} \Omega_{A/\mathbb{C}_p}^1$ is the natural inclusion $\Omega_{A/F}^1 \subset \Omega_{A/\mathbb{C}_p}^1$. The next proposition gives more information on (\ref{inclusionrestrict}) in the case where $A$ has complex multiplication.

\begin{proposition}\label{integralsurjective}For an elliptic curve $A$ with CM by some $\mathcal{O} \subset \mathcal{O}_K$, the map (\ref{inclusionrestrict}) factors through an injective map
$$\Omega_{A/\mathcal{O}_{F}}^1 \hookrightarrow T_pA\otimes_{\mathbb{Z}_p}\mathcal{O}_{\mathbb{C}_p}.$$
As a result, the map 
$$HT_A : T_pA\otimes_{\mathbb{Z}_p} \mathcal{O}_{\mathbb{C}_p} \rightarrow \Omega_{A/\mathcal{O}_{\mathbb{C}_p}}^1$$
from (\ref{definedoverK}) is surjective. Furthermore, if $|\mathbf{z}(A,\alpha)| \ge 1$, then 
$$\frak{s}(A,\alpha) \in \Omega_{A/\mathcal{O}_{\mathbb{C}_p}}^1$$
is a generator.
\end{proposition}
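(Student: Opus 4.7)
The plan is to establish the three claims sequentially, building from three inputs: Proposition \ref{H10factors}, which factors $\iota_{\mathrm{dR}}$ through $T_p\mathcal{A}\otimes\mathcal{O}\mathbb{B}_{\mathrm{dR}}^+$ (without $t^{-1}$) on $\omega_{\mathrm{dR}}|_{\mathcal{Y}_x}$; the integral CM splitting (\ref{integralalgsplitting}); and Theorem \ref{integraltheorem} controlling the cokernel of the integral Hodge--Tate complex.

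For the first claim, I would fix a generator $\omega_0 \in \Omega^1_{A/\mathcal{O}_F}$ and apply Proposition \ref{H10factors} at $(A,\alpha)$ to write $\iota_{\mathrm{dR}}(\omega_0) = a\alpha_1 + b\alpha_2$ with $a,b \in \mathcal{O}\mathbb{B}_{\mathrm{dR}}^+(A,\alpha)$. Then $\theta\iota_{\mathrm{dR}}(\omega_0) = \theta(a)\alpha_1 + \theta(b)\alpha_2 \in T_pA\otimes_{\mathbb{Z}_p}\mathbb{C}_p$, and the task reduces to showing $\theta(a),\theta(b) \in \mathcal{O}_{\mathbb{C}_p}$. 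I would exploit the $\mathcal{O}$-equivariance of $\iota_{\mathrm{dR}}$ (from functoriality of the de Rham comparison with respect to endomorphisms of $A$) together with (\ref{integralalgsplitting}): the image of the $\mathrm{id}$-eigenpiece $\Omega^1_{A/\mathcal{O}_F}$ must land in the $\mathrm{id}$-eigenpiece of $T_pA\otimes_{\mathbb{Z}_p}\mathcal{O}_{\mathbb{C}_p}$. In the inert case, this is an integrally split rank-one $\mathcal{O}_{\mathbb{C}_p}$-summand, since $\mathcal{O}_{K_p}\otimes_{\mathbb{Z}_p}\mathcal{O}_{\mathbb{C}_p} \cong \mathcal{O}_{\mathbb{C}_p}\times\mathcal{O}_{\mathbb{C}_p}$, and the integral generator is pinned down by the relation $HT_A(\theta\iota_{\mathrm{dR}}(\omega_0)) = \omega_0$ coming from Proposition \ref{compositionproposition} combined with Theorem \ref{integraltheorem}. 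Injectivity is immediate from this same relation, since the composition maps $\omega_0$ to $\omega_0$.

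Surjectivity of $HT_A$ then follows directly: its image contains $HT_A\circ\theta\circ\iota_{\mathrm{dR}}(\Omega^1_{A/\mathcal{O}_F})$, which by Proposition \ref{H10factors} equals the natural inclusion $\Omega^1_{A/\mathcal{O}_F}\hookrightarrow\Omega^1_{A/\mathcal{O}_{\mathbb{C}_p}}$, and extending $\mathcal{O}_{\mathbb{C}_p}$-linearly recovers all of $\Omega^1_{A/\mathcal{O}_{\mathbb{C}_p}} = \Omega^1_{A/\mathcal{O}_F}\otimes_{\mathcal{O}_F}\mathcal{O}_{\mathbb{C}_p}$. The final generator claim is then short: by (\ref{relation}) we have $\frak{s}(A,\alpha) = HT_A(\alpha_2)$ and $HT_A(\alpha_1) = \frak{s}(A,\alpha)/\mathbf{z}(A,\alpha)$, while $\alpha_1,\alpha_2$ generate $T_pA\otimes\mathcal{O}_{\mathbb{C}_p}$ over $\mathcal{O}_{\mathbb{C}_p}$; surjectivity shows $HT_A(\alpha_1),HT_A(\alpha_2)$ generate $\Omega^1_{A/\mathcal{O}_{\mathbb{C}_p}}$, and the hypothesis $|\mathbf{z}(A,\alpha)|\ge 1$ gives $1/\mathbf{z}(A,\alpha)\in\mathcal{O}_{\mathbb{C}_p}$, so $HT_A(\alpha_1)\in\mathcal{O}_{\mathbb{C}_p}\cdot\frak{s}(A,\alpha)$ and $\frak{s}(A,\alpha)$ alone generates.

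The hard part will be the integrality step in the ramified case, where the $\mathcal{O}$-eigendecomposition of $T_pA\otimes\mathcal{O}_{\mathbb{C}_p}$ fails to split integrally (the difference $\pi-\overline{\pi} = 2\pi$ of the two embeddings of a uniformizer has positive valuation, so $\mathcal{O}_{K_p}\otimes_{\mathbb{Z}_p}\mathcal{O}_{\mathbb{C}_p}$ is not integrally a product). There I would instead use the Lubin--Tate description of $\hat{A}$ as a height-one formal $\mathcal{O}_{K_p}$-module, whose explicit integral Hodge--Tate map via the Lubin--Tate logarithm recovers the integrality directly, or combine Theorem \ref{integraltheorem} with an analysis of $\iota_{\mathrm{dR}}$ on the complementary piece $H^{0,1}(A/\mathcal{O}_F)$ to bound the denominators of $a$ and $b$ by the $p^{1/(p-1)}$-torsion appearing in loc.\ cit.
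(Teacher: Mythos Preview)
Your arguments for the second and third claims match the paper's and are fine. The problem is in the first claim, and it is already present in the inert case, not just the ramified one.

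Your integrality argument runs as follows: write $\theta\iota_{\mathrm{dR}}(\omega_0) = c\cdot v$ with $v$ an integral generator of the $\mathrm{id}$-eigenpiece of $T_pA\otimes_{\mathbb{Z}_p}\mathcal{O}_{\mathbb{C}_p}$, and use $HT_A(\theta\iota_{\mathrm{dR}}(\omega_0)) = \omega_0$ together with Theorem~\ref{integraltheorem} to pin down $c$. But $HT_A$ is an \emph{integral} map, so $HT_A(v) = \mu\omega_0$ with $|\mu|\le 1$; the relation $c\mu = 1$ then yields $|c| = |\mu|^{-1} \ge 1$, the opposite of what you need. Invoking Theorem~\ref{integraltheorem} only bounds the cokernel of $HT_A$, giving $|\mu|\ge p^{-1/(p-1)}$ and hence $|c|\le p^{1/(p-1)}$, still not integral. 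In short, the section relation $HT_A\circ\theta\iota_{\mathrm{dR}} = \mathrm{id}$ controls denominators of $HT_A$, not of $\theta\iota_{\mathrm{dR}}$; you are running the inequality backwards. The $\mathcal{O}$-eigendecomposition only places $\theta\iota_{\mathrm{dR}}(\omega_0)$ on the correct \emph{rational} line, which is not enough.

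The paper avoids this circularity by a duality maneuver that works uniformly in the inert and ramified cases. Rather than attacking $\Omega_{A/\mathcal{O}_F}^1$ directly, it shows that the complementary CM piece $H^{0,1}(A/\mathcal{O}_F)(1)$ maps integrally into $T_pA\otimes_{\mathbb{Z}_p}\mathcal{O}_{\mathbb{C}_p}$: via the $p$-adic Legendre relation and Proposition~\ref{HTarrows}, the image of $H^{0,1}(A/\mathcal{O}_F)(1)$ under (the specialization of) $\iota_{\mathrm{dR}}$ is identified with the Hodge--Tate line, i.e.\ the image of $(HT_A)^{\vee}(1)$, and $(HT_A)^{\vee}(1):\mathrm{Lie}(A/\mathcal{O}_{\mathbb{C}_p})(1)\hookrightarrow T_pA\otimes\mathcal{O}_{\mathbb{C}_p}$ is always integral (this is the second arrow of the integral Hodge--Tate complex (\ref{definedoverK}), with no cokernel issue on that side). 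Since Poincar\'e duality on $H_{\mathrm{dR}}^1(A/\mathcal{O}_F)$ matches the integral Weil pairing on $T_pA$ via Proposition~\ref{Weilextension}, integrality of the $H^{0,1}$-piece forces integrality of its Poincar\'e-orthogonal $\Omega_{A/\mathcal{O}_F}^1$. This is the missing idea: go through $(HT_A)^{\vee}$ on the antiholomorphic piece, where integrality is free, and then dualize.
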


\begin{proof}Recall our map 
$$\theta : \mathcal{O}_{\Delta,\mathcal{Y}}(A,\alpha) \overset{\theta}{\twoheadrightarrow} \hat{\mathcal{O}}_{\mathcal{Y}}(A,\alpha) = \mathbb{C}_p.$$
Let 
$$\mathcal{O}_{\Delta,\mathcal{Y}}(A,\alpha)^+ := \theta^{-1}(\mathcal{O}_{\mathbb{C}_p}).$$
Then the first part of the statement follows if we can show that 
$$\iota_{\mathrm{dR}}(A,\alpha)(\Omega_{A/\mathcal{O}_F}^1) \subset T_pA\otimes_{\mathbb{Z}_p}\mathcal{O}_{\Delta,\mathcal{Y}}(A,\alpha)^+.$$

From (\ref{integralalgsplitting}), we have a \emph{subspace} $H^{0,1}(A/\mathcal{O}_F) \subset H_{\mathrm{dR}}^1(A/\mathcal{O}_F)$ with orthogonal complement $\Omega_{A/\mathcal{O}_F}^1$ under the de Rham cup product on $H_{\mathrm{dR}}^1(A/\mathcal{O}_F)$. 
For brevity, for the rest of the proof of the Proposition, denote the specialization of $t(A,\alpha)$ of $t \in \mathbb{B}_{\mathrm{dR}}^+(\mathcal{Y})$ simply by $t = t(A,\alpha)$. Recall that by Proposition \ref{Weilextension}, we have that the de Rham cup product composed with $\iota_{\mathrm{dR}}(A,\alpha)$ coincides with $t^{-1}$ times the Weil pairing $\langle \cdot, \cdot \rangle \cdot t^{-1}$. Hence 
$$H^{0,1}(A/\mathcal{O}_F)(1) \otimes_{\mathcal{O}_F} \mathcal{O}_{\Delta,\mathcal{Y}}(A,\alpha),$$ being the $t$ times the Poincar\'{e} dual of $\Omega_{A/\mathcal{O}_F}^1 \otimes_{\mathcal{O}_F} \mathcal{O}_{\Delta,\mathcal{Y}}(A,\alpha)$, viewed naturally via $\iota_{\mathrm{dR}}(A,\alpha)$ as a \emph{subspace} of $T_pA\otimes_{\mathbb{Z}_p}\mathcal{O}_{\Delta,\mathcal{Y}}(A,\alpha)$, is identified with the dual of 
$$\iota_{\mathrm{dR}}(A,\alpha)(\Omega_{A/\mathcal{O}_F}^1 \otimes_{\mathcal{O}_F} \mathcal{O}_{\Delta,\mathcal{Y}}(A,\alpha))$$
under the twisted Weil pairing $\langle \cdot, \cdot \rangle \cdot t^{-1}$. By the $p$-adic Legendre relation (\ref{periodsrelation}) and Proposition \ref{HTarrows}, this dual subspace is the Hodge-Tate filtration 
$$\mathcal{H}^{0,1}(\mathcal{A}) \otimes_{\mathcal{O}_Y} \mathcal{O}_{\Delta,\mathcal{Y}}(A,\alpha) \overset{(HT_A)^{\vee}(1)}{\hookrightarrow} T_pA \otimes_{\mathbb{Z}_p} \mathcal{O}_{\Delta,\mathcal{Y}}(A,\alpha).$$

In summary, we have a natural identification
\begin{equation}\label{naturalidentification}H^{0,1}(A/\mathcal{O}_F)(1)\otimes_{\mathcal{O}_F} \mathcal{O}_{\Delta,\mathcal{Y}}(A,\alpha)  = \mathcal{H}^{0,1}(\mathcal{A}) \otimes_{\mathcal{O}_Y} \mathcal{O}_{\Delta,\mathcal{Y}}(A,\alpha) \overset{(HT_A)^{\vee}(1)}{\hookrightarrow} T_pA \otimes_{\mathbb{Z}_p} \mathcal{O}_{\Delta,\mathcal{Y}}(A,\alpha)
\end{equation}
which identifies this subspace with the dual of $\iota_{\mathrm{dR}}(A,\alpha)(\Omega_{A/\mathcal{O}_F}^1 \otimes_{\mathcal{O}_F} \mathcal{O}_{\Delta,\mathcal{Y}}(A,\alpha))$ under the twisted Weil pairing $\langle \cdot, \cdot \rangle\cdot t^{-1}$. Tensoring with $\otimes_{\mathcal{O}_{\Delta,\mathcal{Y}}(A,\alpha)}\left(\mathcal{O}_{\Delta,\mathcal{Y}}(A,\alpha)/(\ker\theta)\right) = \otimes_{\mathcal{O}_{\Delta,\mathcal{Y}}(A,\alpha)}\mathbb{C}_p$, we get the natural inclusion
\begin{equation}\label{H01inclusion}H^{0,1}(A/\mathcal{O}_F)(1) \hookrightarrow H^{0,1}(A/\mathcal{O}_F)(1) \otimes_{\mathcal{O}_F} \mathbb{C}_p = \mathrm{Lie}(A/\mathbb{C}_p)(1) \overset{(HT_A)^{\vee}(1)}{\hookrightarrow} T_pA \otimes_{\mathbb{Z}_p} \mathbb{C}_p.
\end{equation}

On the other hand, we have from the second arrow in (\ref{definedoverK}) that
$$\mathrm{Lie}(A/\mathcal{O}_{\mathbb{C}_p})(1) \overset{(HT_A)^{\vee}(1)}{\hookrightarrow} T_pA\otimes_{\mathbb{Z}_p} \mathcal{O}_{\mathbb{C}_p}$$
and so (\ref{H01inclusion}) factors as
$$H^{0,1}(A/\mathcal{O}_F)(1) \subset \mathrm{Lie}(A/\mathcal{O}_{\mathbb{C}_p})(1) \overset{(HT_A)^{\vee}(1)}{\hookrightarrow} T_pA\otimes_{\mathbb{Z}_p} \mathcal{O}_{\mathbb{C}_p}.$$
Hence, in turn we see that (\ref{naturalidentification}) factors through
\begin{equation}\label{naturalidentification2}H^{0,1}(A/\mathcal{O}_F)(1) \overset{(HT_A)^{\vee}(1)}{\hookrightarrow} T_pA \otimes_{\mathbb{Z}_p} \mathcal{O}_{\Delta,\mathcal{Y}}(A,\alpha)^+.
\end{equation}

By the above discussion, the Weil pairing gives a duality
$$\langle \cdot, \cdot \rangle \cdot t^{-1} : (T_pA \otimes_{\mathbb{Z}_p} \mathcal{O}_{\Delta,\mathcal{Y}}(A,\alpha)^+)(-1) \times (T_pA \otimes_{\mathbb{Z}_p} \mathcal{O}_{\Delta,\mathcal{Y}}(A,\alpha)^+) \rightarrow \mathcal{O}_{\Delta,\mathcal{Y}}(A,\alpha)^+$$
compatible under $\iota_{\mathrm{dR}}(A,\alpha)$ with Poincar\'{e} duality. Hence we see from (\ref{naturalidentification2}) that the Poincar\'{e} dual $\Omega_{A/\mathcal{O}_F}^1$ of the subspace
$$H^{0,1}(A/\mathcal{O}_F) \overset{(HT_A)^{\vee}(1)}{\hookrightarrow} T_pA \otimes_{\mathbb{Z}_p} \mathcal{O}_{\Delta,\mathcal{Y}}(A,\alpha)^+$$
is mapped into 
$$T_pA\otimes_{\mathbb{Z}_p} \mathcal{O}_{\Delta,\mathcal{Y}}(A,\alpha)^+$$
by $\iota_{\mathrm{dR}}(A,\alpha);$
in other words,
$$\iota_{\mathrm{dR}}(A,\alpha)(\Omega_{A/\mathcal{O}_F}^1) \subset T_pA\otimes_{\mathbb{Z}_p}\mathcal{O}_{\Delta,\mathcal{Y}}(A,\alpha)^+$$
which is what we wanted to show.

For the second part of the Proposition, recall that 
$$HT_A \circ (\ref{inclusionrestrict}) = \mathrm{id},$$
and so from the first part of the Proposition, we have that
$$\Omega_{A/\mathcal{O}_F}^1 \hookrightarrow T_pA\otimes_{\mathbb{Z}_p}\mathcal{O}_{\mathbb{C}_p} \xrightarrow{HT_A} \Omega_{A/\mathcal{O}_{\mathbb{C}_p}}^1$$
is the natural inclusion. Extending by $\otimes_{\mathcal{O}_F}\mathcal{O}_{\mathbb{C}_p}$-linearity, we see that 
$$\Omega_{A/\mathcal{O}_{\mathbb{C}_p}}^1 \hookrightarrow T_pA\otimes_{\mathbb{Z}_p}\mathcal{O}_{\mathbb{C}_p} \xrightarrow{HT_A} \Omega_{A/\mathcal{O}_{\mathbb{C}_p}}^1$$
is the identity. In particular, the second arrow is surjective. 

For the final claim, suppose that $|\mathbf{z}(A,\alpha)| \ge 1$. Recall that $\frak{s}(A,\alpha) = HT_A(\alpha_2) \in \Omega_{A/\mathcal{O}_{\mathbb{C}_p}}^1$, and that 
$$HT_A(\alpha_1) = HT_A(\alpha_2)/\mathbf{z}(A,\alpha) = 1/\mathbf{z}(A,\alpha) \cdot \frak{s}(A,\alpha) \in \Omega_{A/\mathcal{O}_{\mathbb{C}_p}}^1.$$
Since $1/\mathbf{z}(A,\alpha) \in \mathcal{O}_{\mathbb{C}_p}$, we see that $\frak{s}(A,\alpha)$ generates the image of 
$$HT_A : T_pA\otimes_{\mathbb{Z}_p}\mathcal{O}_{\mathbb{C}_p} \rightarrow \Omega_{A/\mathcal{O}_{\mathbb{C}_p}}^1.$$
But by the previous part of the Proposition, $HT_A$ is surjective, and so $\frak{s}(A,\alpha) \in \Omega_{A/\mathcal{O}_{\mathbb{C}_p}}^1$ is a generator.

\end{proof}

\begin{corollary}\label{periodunit}Let $y = (A,\alpha) \in \mathcal{Y}(\mathbb{C}_p,\mathcal{O}_{\mathbb{C}_p})$ be a CM point (i.e. $A$ is an elliptic curve with CM by some order $\mathcal{O} \subset \mathcal{O}_K$). In the notation of Definition \ref{perioddefinition} and (\ref{integralcorrection2}) for $y$, we have
$$|p^b\theta(\Omega_p(A,\alpha))^2| = 1.$$
\end{corollary}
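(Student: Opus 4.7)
The plan is to combine Proposition \ref{integralsurjective} with the integrality of the Kodaira-Spencer isomorphism and the normalization of $b$ in (\ref{integralcorrection2}). Since $y = (A,\alpha)$ is a CM point, I may arrange (possibly by replacing $\alpha$ by a suitable $GL_2(\mathbb{Z}_p)$-translate, using (\ref{ztransformationprop})) that $|\mathbf{z}(A,\alpha)| \ge 1$. Under that hypothesis, the final claim of Proposition \ref{integralsurjective} yields a unit $v \in \mathcal{O}_{\mathbb{C}_p}^{\times}$ with $\mathfrak{s}(A,\alpha) = v \cdot \omega_0$ in $\Omega_{A/\mathcal{O}_{\mathbb{C}_p}}^1$.

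First I would compute $\theta(\Omega_p(A,\alpha))$ in closed form. Recalling from Definition \ref{canonicaldifferentialdef} that $\omega_{\mathrm{can}} = (t/\mathbf{y}_{\mathrm{dR}}) \cdot \mathfrak{s}$ and that $t/\mathbf{y}_{\mathrm{dR}}$ is a unit in $\mathcal{O}\mathbb{B}_{\mathrm{dR},\mathcal{Y}_x}^{+}$ by Proposition \ref{invertible}, comparison with $\omega_{\mathrm{can}}(A,\alpha) = \Omega_p(A,\alpha) \cdot \omega_0$ gives $\Omega_p(A,\alpha) = v \cdot (t/\mathbf{y}_{\mathrm{dR}})(A,\alpha)$. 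Applying $\theta$ and using $\theta(\mathbf{y}_{\mathrm{dR}}/t) = \theta(y_{\mathrm{dR}})$, I obtain $\theta(\Omega_p(A,\alpha)) = v / \theta(y_{\mathrm{dR}})(A,\alpha)$, so $|\theta(\Omega_p)|^2 = |\theta(y_{\mathrm{dR}})|^{-2}$.

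Next I would bring in the Kodaira-Spencer isomorphism $\sigma : \omega_{\mathcal{A}^+}^{\otimes 2} \xrightarrow{\sim} \Omega_{Y^+}^1$ on the Katz-Mazur integral model, which is an \emph{integral} isomorphism. Since $\mathfrak{s}(A,\alpha)^{\otimes 2}$ generates $\omega_{\mathcal{A}^+,\lambda(y)}^{\otimes 2}$ as an $\mathcal{O}_{\mathbb{C}_p}$-module, the section $\sigma(\mathfrak{s}(A,\alpha)^{\otimes 2})$ generates $\Omega_{Y^+,\lambda(y)}^1$. On the other hand, Lemma \ref{y'lemma} gives $\sigma(\mathfrak{s}^{\otimes 2}) = y_{\mathrm{dR}}^2 \cdot (dz_{\mathrm{dR}}/dq^{p^a}) \otimes dq^{p^a}$, and by construction $dq^{p^a}(y)$ is itself an integral generator of $\Omega_{Y^+,\lambda(y)}^1$ (see (\ref{integraldifferentialgenerator})). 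Equating the two expressions forces the coefficient $|\theta(y_{\mathrm{dR}})^2 \cdot \theta(dz_{\mathrm{dR}}/dq^{p^a})(y)|$ to equal $1$.

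Finally I would translate this integrality statement into the claimed identity. Using $q_{\mathrm{dR}} = \exp(z_{\mathrm{dR}} - \theta(z_{\mathrm{dR}}))$ to write $dz_{\mathrm{dR}} = dq_{\mathrm{dR}}/q_{\mathrm{dR}}$, together with $\theta(q_{\mathrm{dR}}) = 1$, the identity reduces to $|\theta(y_{\mathrm{dR}})|^2 \cdot |\theta(dq_{\mathrm{dR}}/dq^{p^a})(y)| = 1$. The relation $q_{\mathrm{HT}}^{p^a} = q^{p^a}/\theta(q^{p^a})$ (whose second factor is horizontal by Definition \ref{horizontaldefinition}) lets me pass between $\theta(dq_{\mathrm{dR}}/dq^{p^a})(y)$ and $\theta(dq_{\mathrm{dR}}/dq_{\mathrm{HT}}^{p^a})(y)$; invoking the normalization $|\theta(dq_{\mathrm{dR}}/dq_{\mathrm{HT}}^{p^a})(y)| = |p^b|$ from (\ref{integralcorrection2}) and the identity $|\theta(\Omega_p)|^2 = |\theta(y_{\mathrm{dR}})|^{-2}$ from the first step then produces $|p^b\theta(\Omega_p)^2| = 1$. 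The main technical obstacle is the careful bookkeeping of integral normalizations in this last step: one must verify that $\sigma(\omega_0^{\otimes 2})$ and $dq^{p^a}(y)$ agree up to an $\mathcal{O}_{\mathbb{C}_p}^{\times}$-multiple, track the interplay between $\theta$ and specialization at $y$ across the various period rings, and confirm that no extra power of $\theta(q^{p^a})(y)$ survives (which is where horizontality of $\theta(q^{p^a})$ via the section $j$ is essential).
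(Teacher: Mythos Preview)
Your core argument—comparing integral generators on both sides of the Kodaira--Spencer isomorphism and then invoking the normalization (\ref{integralcorrection2}) of $b$—is exactly the paper's. However, your opening move introduces a gap: replacing $\alpha$ by a $GL_2(\mathbb{Z}_p)$-translate changes the point $y$, and hence changes both $b$ (which is defined in terms of $y$ via (\ref{integralcorrection2})) and $\Omega_p$ (via (\ref{s'transformationprop})). You never argue that $|p^b\theta(\Omega_p)^2|$ is invariant under such a translation, so establishing the identity for the translate does not give it for the original $(A,\alpha)$.

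The paper's proof sidesteps this entirely. Rather than passing through $\frak{s}$ and invoking Proposition \ref{integralsurjective} (which is precisely where the hypothesis $|\mathbf{z}| \ge 1$ enters), it works directly with $\omega_0$: since $\omega_0 \in \Omega_{A/\overline{\mathbb{Z}}}^1$ is an integral generator by its very choice in Definition \ref{perioddefinition}, the image $\sigma(\omega_0^{\otimes 2})$ is automatically an integral generator of $\Omega_{Y^+,\lambda(y)}^1$ by integrality of Kodaira--Spencer on the Katz--Mazur model. Comparing this with the integral generator $dq^{p^a}|_y$ from (\ref{integraldifferentialgenerator}), and unwinding (\ref{integralcorrection2}) exactly as in your final paragraph, gives the result with no reduction step. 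In short, your detour through $\frak{s}$ is what forces the unjustified translation; dropping that detour and using $\omega_0$ in its place recovers the paper's argument and closes the gap.
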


\begin{proof}Let $\omega_0 \in \Omega_{A/\mathcal{O}_{\mathbb{Q}}}^1$ be an integral generator as in Definition \ref{perioddefinition}. From (\ref{calculation}), we have
\begin{equation}\label{integraldifferentialequation}\frac{\omega_0^{\otimes 2}}{\theta(\Omega_p(A,\alpha))^2} = \omega_{\mathrm{can}}(A,\alpha)^{\otimes 2} = \frac{\frak{s}(A,\alpha)}{y_{\mathrm{dR}}(A,\alpha)^2} \overset{\sigma}{\mapsto} p^b\cdot \frac{1}{p^b}\theta\left(\frac{dq_{\mathrm{dR}}}{dq^{p^a}}\right)(A,\alpha)\otimes dq^{p^a}(A,\alpha).
\end{equation}
By (\ref{integralcorrection}), we have that $dq^{p^q}(A,\alpha) \in \Omega_{A/\mathcal{O}_{\mathbb{C}_p}}^1$ is an integral generator, and by (\ref{integralcorrection2}), we have that 
$$\left|\frac{1}{p^b}\theta\left(\frac{dq_{\mathrm{dR}}}{dq^{p^a}}\right)(A,\alpha)\right| = 1,$$
and so
$$\omega_0' := \frac{1}{p^b}\theta\left(\frac{dq_{\mathrm{dR}}}{dq^{p^a}}\right)(A,\alpha)\cdot dq^{p^a}(A,\alpha) \in \Omega_{A/\mathcal{O}_{\mathbb{C}_p}}^1$$
is also an integral generator. Hence from (\ref{integraldifferentialequation}), we have
$$\omega_0 = p^b\theta(\Omega_p(A,\alpha))^2 \cdot \omega_0'.$$
Since $\omega_0$ and $\omega_0'$ are integral generators, we have that
$$|p^b\theta(\Omega_p(A,\alpha))^2| = 1$$
as desired.
\end{proof}

Using Proposition \ref{integralsurjective}, we now analyze the Hodge-Tate, de Rham and $\Omega_p$ periods of points in $\mathcal{C}(\mathcal{O})$. We first deal with the case where $p$ is ramified in $K$.

\begin{proposition}\label{CMproposition1}Suppose $p$ is ramified in $K$. Then suppose $A$ is an elliptic curve with CM by $\mathcal{O} \subset \mathcal{O}_K$, say $\mathcal{O} = \mathbb{Z} + c\mathcal{O}_K$ with $c \in \mathbb{Z}_{> 0}$. Then there is a $p^{\infty}$-level structure $\alpha : \mathbb{Z}_p^{\oplus 2} \xrightarrow{\sim} T_pA$ such that $\mathbf{z}(A,\alpha), \mathbf{z}_{\mathrm{dR}}(A,\alpha) \in K_p$, and
$$\mathbf{z}(A,\alpha) = -1/(c\sqrt{-d}), \mathbf{z}_{\mathrm{dR}}(A,\alpha) = 1/(c\sqrt{-d}), \theta(y_{\mathrm{dR}})(A,\alpha) = -1/2.$$

Furthermore, if $p > 2$, we have that $b = 0$ as defined in (\ref{integralcorrection2}) for $y = (A,\alpha) \in \mathcal{Y}(\mathbb{C}_p,\mathcal{O}_{\mathbb{C}_p})$, and 
$$|\theta(\Omega_p(A,\alpha))| = 1$$
for $\Omega_p(A,\alpha)$ as defined in Definition \ref{perioddefinition}.
\end{proposition}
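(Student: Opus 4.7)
The plan is to use the CM action to split the Hodge-Tate and Hodge-de Rham filtrations at $(A,\alpha)$ into explicit eigenlines, and then deduce the values of $\mathbf{z}, \mathbf{z}_{\mathrm{dR}}, \theta(y_{\mathrm{dR}})$ from the $p$-adic Legendre relation; the integrality statements $b=0$ and $|\theta(\Omega_p)|=1$ will then follow cleanly.

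First I would choose $\alpha$ compatibly with the CM structure. Since $p\nmid c$ (as in Choice \ref{goodchoice}), $\mathcal{O}\otimes\mathbb{Z}_p = \mathcal{O}_{K_p}$, and because $p$ ramifies in $K$, $\delta := c\sqrt{-d}$ lies in $K_p$ and $\{1,\delta\}$ is a $\mathbb{Z}_p$-basis of $\mathcal{O}_{K_p}$. The module $T_pA$ is free of rank one over $\mathcal{O}_{K_p}$; fixing a generator $e$, I would take $\alpha$ to be the unique level structure with $\alpha_{\infty,2}=e$ and $\alpha_{\infty,1}=\delta\cdot e$. In this $\mathbb{Z}_p$-basis the action of $\delta$ on $T_pA$ has matrix $\bigl(\begin{smallmatrix} 0 & 1 \\ -c^2 d & 0 \end{smallmatrix}\bigr)$, with $\pm\delta$-eigenvectors $\alpha_{\infty,1}\pm\delta\alpha_{\infty,2}$.

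Next I would compute $\mathbf{z}$ and $\mathbf{z}_{\mathrm{dR}}$. At the CM point $(A,\alpha)$, both the Hodge-Tate line $\mathcal{H}^{0,1}\otimes_{\mathcal{O}_Y}\hat{\mathcal{O}}_{\mathcal{Y}}$ and the image $\iota_{\mathrm{dR}}(\omega_{\mathcal{A}}\otimes_{\mathcal{O}_Y}\mathcal{O}\mathbb{B}_{\mathrm{dR},\mathcal{Y}}^+)$ inside $T_pA\otimes\mathcal{O}\mathbb{B}_{\mathrm{dR},\mathcal{Y}}^+$ become eigenlines for the CM action, arising from the algebraic splitting (\ref{algsplitting}); compatibility of this splitting with the principal polarization (i.e.\ with the Weil pairing via Proposition \ref{Weilextension}) forces them to be opposite eigenlines — the Hodge-Tate line is the $+\delta$-eigenline and the Hodge line is the $-\delta$-eigenline. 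Matching against the generator $\alpha_{\infty,1}-(1/\mathbf{z})\alpha_{\infty,2}$ of $\mathcal{H}^{0,1}$ and against $\iota_{\mathrm{dR}}(\omega_{\mathrm{can}}) = -\mathbf{z}_{\mathrm{dR}}\alpha_{\infty,1}+\alpha_{\infty,2}$ then forces $\mathbf{z}(A,\alpha) = -1/\delta$ and $\mathbf{z}_{\mathrm{dR}}(A,\alpha) = 1/\delta$, both landing in $K_p$. Plugging these into the $p$-adic Legendre relation $\mathbf{x}_{\mathrm{dR}}/\mathbf{z}+\mathbf{y}_{\mathrm{dR}}=t$ of Corollary \ref{Legendretheorem}, dividing by $t$, applying $\theta$, and using $z_{\mathrm{dR}} = -x_{\mathrm{dR}}/y_{\mathrm{dR}}$ (Definition \ref{deRhamfundamentalperiod}) yields a single linear equation whose solution is $\theta(y_{\mathrm{dR}})(A,\alpha) = -1/2$.

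For the remaining claims, Lemma \ref{y'lemma} gives $y_{\mathrm{dR}}^2\cdot(dz_{\mathrm{dR}}/dq^{p^a})\in\mathcal{O}_{\mathcal{Y},y}^\times$; applying $\theta$ and specializing produces a unit in $\mathcal{O}_{\mathbb{C}_p}^\times$. Because $p>2$, $|\theta(y_{\mathrm{dR}})(A,\alpha)|=|-1/2|=1$, so $|\theta(dz_{\mathrm{dR}}/dq^{p^a})(y)|=1$. Combined with $dq_{\mathrm{dR}} = q_{\mathrm{dR}}\,dz_{\mathrm{dR}}$ and $\theta(q_{\mathrm{dR}})=1$, this gives $|\theta(dq_{\mathrm{dR}}/dq^{p^a})(y)|=1$, forcing $b=0$ via (\ref{integralcorrection2}). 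Corollary \ref{periodunit} then gives $|p^b\theta(\Omega_p(A,\alpha))^2|=1$, and with $b=0$ this yields $|\theta(\Omega_p(A,\alpha))|=1$.

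The main obstacle is the bookkeeping in the second paragraph: correctly identifying which CM-eigenline corresponds to the Hodge-Tate filtration versus the image of the Hodge filtration under $\iota_{\mathrm{dR}}$, and tracking the signs through the Legendre relation. This is made tractable by Remark \ref{CMremark}, which ensures that $\mathbf{z}$ and $\mathbf{z}_{\mathrm{dR}}$ specialize to algebraic numbers at CM points, reducing the entire matching to linear algebra over $K_p$ with the CM-splitting (\ref{algsplitting}) and the Poincar\'{e}--Weil compatibility of Proposition \ref{Weilextension} supplying all the signs.
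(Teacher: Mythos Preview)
Your proposal is correct and follows essentially the same approach as the paper: both choose $\alpha$ so that $[c\sqrt{-d}]$ acts on $T_pA$ via the matrix $\bigl(\begin{smallmatrix}0&1\\-c^2d&0\end{smallmatrix}\bigr)$, exploit that the Hodge--Tate and Hodge--de Rham lines are CM eigenlines to force $\mathbf{z},\mathbf{z}_{\mathrm{dR}}\in\{\pm 1/(c\sqrt{-d})\}$, use the $p$-adic Legendre relation (Corollary~\ref{Legendretheorem}) to pin down the relative sign and compute $\theta(y_{\mathrm{dR}})$, and then invoke Lemma~\ref{y'lemma} together with Proposition~\ref{integralsurjective}/Corollary~\ref{periodunit} for $b=0$ and $|\theta(\Omega_p)|=1$. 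The only cosmetic difference is that the paper phrases the eigenline step as ``$\mathbf{z}$ is a fixed point of the M\"obius transformation associated to $[c\sqrt{-d}]$'' via (\ref{ztransformationprop}), (\ref{z'transformationprop}), whereas you read off the eigenvectors directly; these are the same computation, and indeed the paper---like you---does not determine \emph{which} eigenline is which from the CM action alone, relying on the Legendre relation to rule out $\mathbf{z}=\mathbf{z}_{\mathrm{dR}}$ (since that would force $t=0$).
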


\begin{proof}Consider the element $c\sqrt{-d} \in \mathcal{O}_K$, and let $[c\sqrt{-d}] \in \End(A/H)$ denote the corresponding endomorphism. Note that the composition 
$$A \xrightarrow{[c\sqrt{-d}]} A\xrightarrow{[c\sqrt{-d}]} A$$
is simply multiplication by $-c^2d$. Hence the composition of the induced maps on Tate modules
$$T_pA \xrightarrow{[c\sqrt{-d}]} T_pA \xrightarrow{[c\sqrt{-d}]} T_pA$$
is also multiplication by $-c^2d$. We can thus choose a level structure $\alpha : \mathbb{Z}_p^{\oplus 2} \xrightarrow{\sim} T_pA$ such that $(e_1,e_2)$ is a rational canonical basis for $[c\sqrt{-d}]$ and the induced linear map
$$\mathbb{Z}_p^{\oplus 2} \underset{\sim}{\xrightarrow{\alpha}} T_pA \xrightarrow{[c\sqrt{-d}]} T_pA \underset{\sim}{\xrightarrow{\alpha^{-1}}}\mathbb{Z}_p^{\oplus 2}$$
is the matrix
$$[c\sqrt{-d}] = \left(\begin{array}{cc} 0 & 1 \\ -c^2d & 0\\ \end{array}\right) \in GL_2(\mathbb{Q}_p).$$
For this choice of $\alpha$, we have by (\ref{ztransformationprop}) and (\ref{z'transformationprop})
\begin{equation}\label{CMids1}\begin{split}&[c\sqrt{-d}]^*\mathbf{z}(A,\alpha) = -1/(c^2d\cdot \mathbf{z}(A,\alpha)),\hspace{1cm} [c\sqrt{-d}]^*\mathbf{z}_{\mathrm{dR}}(A,\alpha) = -1/(c^2d\cdot \mathbf{z}_{\mathrm{dR}}(A,\alpha)).
\end{split}
\end{equation}
On the other hand, by the discussion at the beginning of the section, we have
\begin{equation}\label{CMids2}\begin{split}&[c\sqrt{-d}]^*\mathbf{z}(A,\alpha) = \mathbf{z}(A,\alpha), \hspace{1cm} [c\sqrt{-d}]^*\mathbf{z}_{\mathrm{dR}}(A,\alpha) = \mathbf{z}_{\mathrm{dR}}(A,\alpha).
\end{split}
\end{equation}
Hence from (\ref{CMids1}) and (\ref{CMids2}), we have
\begin{equation}\label{CMids3}\mathbf{z}(A,\alpha) = \pm 1/(c\sqrt{-d}), \hspace{1cm}  -\mathbf{x}_{\mathrm{dR}}(A,\alpha)/\mathbf{y}_{\mathrm{dR}}(A,\alpha) = \mathbf{z}_{\mathrm{dR}}(A,\alpha) = \pm 1/(c\sqrt{-d}).
\end{equation}
Now from the $p$-adic Legendre relation (\ref{periodsrelation}) and (\ref{x'y'definition}), we have
\begin{equation}\label{recallLegendre}\begin{split}\pm \mathbf{y}_{\mathrm{dR}}(A,\alpha) + \mathbf{y}_{\mathrm{dR}}(A,\alpha) &= \pm \mathbf{x}_{\mathrm{dR}}(A,\alpha)\cdot c\sqrt{-d} + \mathbf{y}_{\mathrm{dR}}(A,\alpha) \\
&= \mathbf{x}_{\mathrm{dR}}(A,\alpha)/\mathbf{z}(A,\alpha) + \mathbf{y}_{\mathrm{dR}}(A,\alpha) = t(A,\alpha)
\end{split}
\end{equation}
and where the precise sign in $\pm$ in (\ref{recallLegendre}) is the same sign for which
$$\mathbf{z}(A,\alpha) = \pm \mathbf{z}_{\mathrm{dR}}(A,\alpha)$$
holds. Since $t(A,\alpha) \neq 0$ (since $t(A,\alpha)$ generates $\Fil^1 B_{\mathrm{dR}}^+$), we thus have
$$\mathbf{z}(A,\alpha) = -\mathbf{z}_{\mathrm{dR}}(A,\alpha) = \pm 1/(c\sqrt{-d})$$
and 
$$\mathbf{x}_{\mathrm{dR}}(A,\alpha)/t(A,\alpha) = \pm 1/(2c\sqrt{-d}), \hspace{1cm} \mathbf{y}_{\mathrm{dR}}(A,\alpha)/t(A,\alpha) = \pm 1/2.$$
Now the first part of the statement follows upon noting that $\theta(\mathbf{y}_{\mathrm{dR}}/t) = \theta(y_{\mathrm{dR}})$. 

Now suppose that $p > 2$. Since by the first part of the Proposition we have $|\mathbf{z}(A,\alpha)| = |1/(c\sqrt{-d})| \ge 1$, then by Propositon (\ref{integralsurjective}) we have that $\frak{s}(A,\alpha) \in \Omega_{A/\mathcal{O}_{\mathbb{C}_p}}$ is a generator. Hence by (\ref{calculation2}), we have
$$|\theta(y_{\mathrm{dR}}(A,\alpha))^2|\cdot \left|\theta\left(\frac{dz_{\mathrm{dR}}}{dz}\right)(A,\alpha)\right| = 1.$$
Since we have shown $|\theta(y_{\mathrm{dR}}(A,\alpha))| = |1/2| = 1$, then this implies that
$$\left|\theta\left(\frac{dz_{\mathrm{dR}}}{dz}\right)(A,\alpha)\right| = 1$$
and so $b = 0$ as in Definition (\ref{integralcorrection2}). Now that $|\theta(\Omega_p(A,\alpha))| = 1$ follows from Definition \ref{perioddefinition}, since $\omega_{\mathrm{can}}(A,\alpha) = \frak{s}(A,\alpha)/y_{\mathrm{dR}}(A,\alpha)^2$, $|y_{\mathrm{dR}}(A,\alpha)| = |1/2| = 1$. 

\end{proof}

To deal with the $p$ inert in $K$ case, we need to recall another result on canonical subgroups. For an abelian variety $A$ over $\mathcal{O}_{\mathbb{C}_p}$, let $\mathrm{Ha}(A)$ denote the Hasse invariant, truncated so that $|\mathrm{Ha}(A)| \in [1/p,1]$.
\begin{proposition}[Theorem 4.2.5 of \cite{Conrad2}, see also p. 2 of \cite{Conrad}]\label{Conradbound} Suppose $A$ is an elliptic curve. Then $A$ admits a canonical subgroup (of order $p^n$) if and only if $|\mathrm{Ha}(A)| > p^{-1/p^{n-2}(p+1)}$. 
\end{proposition}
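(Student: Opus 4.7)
This result is cited directly from Conrad's paper, so the proof amounts to invoking the reference. However, if I were to reprove it from scratch, the plan would be to reduce the existence of canonical subgroups to an explicit Newton polygon analysis of the multiplication-by-$p$ map on the formal group of $A$.

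First, I would reduce to the case where $A$ has good reduction (the question is vacuous otherwise) and consider the connected-\'etale sequence
\[
0 \to A[p^\infty]^0 \to A[p^\infty] \to A[p^\infty]^{\text{\'et}} \to 0
\]
of its $p$-divisible group. When $A$ is ordinary ($|\mathrm{Ha}(A)|=1$) the canonical subgroup of order $p^n$ is simply $A[p^\infty]^0[p^n]$, so the real content is extending this to a sharply-described nearly-ordinary neighborhood. The strategy is to identify the canonical subgroup with the ``closest-to-identity'' order-$p^n$ subgroup of $A[p^n]$, characterized by prescribed bounds on the valuations of the coordinates of its points in the formal group $\widehat{A}$.

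Next, I would handle the case $n=1$ by hand. Fixing a formal parameter $T$ on $\widehat{A}$, the series $[p](T) \in \mathcal{O}_{\mathbb{C}_p}\llbracket T \rrbracket$ can be normalized so that its leading terms read $[p](T) = pT + \cdots + aT^p + \cdots$ with $|a| = |\mathrm{Ha}(A)|$ (this is the classical identification of the Hasse invariant with the coefficient of $T^p$ in $[p]$ in suitable coordinates). The Newton polygon of $[p](T)$ as a power series (truncated as a polynomial of degree $p^2$ after Weierstrass preparation) has vertices at $(1,1)$, $(p,v_p(a))$, $(p^2,0)$, and a break separating $p$ small-valuation roots from $p^2-p$ large-valuation roots exists precisely when
\[
\frac{1 - v_p(a)}{p-1} > \frac{v_p(a)}{p^2 - p},
\]
which rearranges to $v_p(a) < p/(p+1)$, i.e. $|\mathrm{Ha}(A)| > p^{-p/(p+1)}$. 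This matches the claim for $n=1$.

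The higher-$n$ case I would handle inductively by quotienting out: if $C_1 \subset A$ is the order-$p$ canonical subgroup, then the standard formula for the Hasse invariant of an isogeny gives $|\mathrm{Ha}(A/C_1)| = |\mathrm{Ha}(A)|^{1/p}$ (the Verschiebung-Frobenius relation on the special fiber), and the canonical subgroup of order $p^n$ on $A$ exists iff the canonical subgroup of order $p^{n-1}$ exists on $A/C_1$. Iterating gives the stated bound $|\mathrm{Ha}(A)| > p^{-1/p^{n-2}(p+1)}$. The main obstacle in a from-scratch argument is precisely this propagation step: one must keep careful track of how the truncation convention on $\mathrm{Ha}$ interacts with raising to the $p$-th power, and verify that the canonical subgroup really is stable under isogeny (so that the inductive hypothesis applies to $A/C_1$). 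Since these details are carried out carefully in \cite{Conrad2}, I would simply cite Theorem 4.2.5 of that paper.
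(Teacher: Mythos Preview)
The paper gives no proof of this proposition at all: it is stated as a direct citation of Conrad's result and used as a black box. Your opening sentence (``the proof amounts to invoking the reference'') is therefore exactly the paper's approach, and nothing more is required.

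Your bonus sketch follows the standard Newton polygon plus induction strategy that Conrad himself uses, but the key inductive formula is stated backward. Quotienting by the level-$1$ canonical subgroup lifts Frobenius on the special fiber and moves the curve \emph{toward} the supersingular locus, so the Hodge height goes \emph{up} by a factor of $p$; in your notation the correct relation (when $\mathrm{Hdg}(A) < 1/(p+1)$) is
\[
|\mathrm{Ha}(A/C_1)| = |\mathrm{Ha}(A)|^{p},
\]
not $|\mathrm{Ha}(A)|^{1/p}$. With your formula the induction would actually produce the bound $\mathrm{Hdg}(A) < 1/(p^{n-4}(p+1))$, which is off by $p^2$. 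Reversing the exponent fixes the argument: requiring $\mathrm{Hdg}(A/C_1) < 1/(p^{n-3}(p+1))$ then gives $p\cdot \mathrm{Hdg}(A) < 1/(p^{n-3}(p+1))$, i.e.\ $\mathrm{Hdg}(A) < 1/(p^{n-2}(p+1))$ as claimed.
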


\begin{proposition}\label{CMproposition2}Suppose $p$ is inert in $K$. Then suppose $A$ is an elliptic curve with CM by $\mathcal{O}_K$. Then for any $p^{\infty}$-level structure $\alpha : \mathbb{Z}_p^{\oplus 2} \xrightarrow{\sim} T_pA$, we have $\mathbf{z}(A,\alpha), \mathbf{z}_{\mathrm{dR}}(A,\alpha) \in K_p$, and
$$|\mathbf{z}(A,\alpha)| = 1, \hspace{1cm} |\mathbf{z}_{\mathrm{dR}}(A,\alpha)| = 1, \hspace{1cm} |\theta(\mathbf{y}_{\mathrm{dR}}(A,\alpha))| = |\theta(y_{\mathrm{dR}}(A,\alpha))| = 1.$$
In fact $\mathbf{z}(A,\alpha), \mathbf{z}_{\mathrm{dR}}(A,\alpha) \in \mathcal{O}_{K_p}^{\times}$. Furthermore we have $b = 0$ as defined in (\ref{integralcorrection2}) for $y = (A,\alpha) \in \mathcal{Y}(\mathbb{C}_p,\mathcal{O}_{\mathbb{C}_p})$, and 
$$|\theta(\Omega_p(A,\alpha))| = 1$$ 
for $\Omega_p(A,\alpha)$ as defined in Definition \ref{perioddefinition}. Moreover, for any two choices $(A,\alpha), (A,\alpha')$ of points of $\mathcal{Y}(\mathbb{C}_p,\mathcal{O}_{\mathbb{C}_p})$ above $(A) \in Y(F,\mathcal{O}_F)$, we have that 
$$\theta(\Omega_p)(A,\alpha)/\theta(\Omega_p)(A,\alpha') \in \mathcal{O}_{K_p}^{\times}.$$
\end{proposition}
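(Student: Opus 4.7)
\Proofs The plan is to adapt the strategy of Proposition \ref{CMproposition1} to the inert setting, where the key structural fact is that $T_pA$ is a free $\mathcal{O}_{K_p}$-module of rank $1$, since $\mathcal{O}_{K_p}$ is the unramified quadratic extension of $\mathbb{Z}_p$. First I would choose $\gamma \in \mathcal{O}_K$ lifting a generator of $\mathbb{F}_{p^2}/\mathbb{F}_p$, with minimal polynomial $X^2 - TX + N \in \mathbb{Z}[X]$, so that $N \in \mathbb{Z}_p^\times$ and the discriminant $T^2 - 4N$ is a unit in $\mathbb{Z}_p$ whose reduction mod $p$ is a non-square in $\mathbb{F}_p$. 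Via $\alpha$, $\gamma$ acts on $T_pA$ as a matrix $M_\gamma^\alpha = \left(\begin{smallmatrix}a & b \\ c & e\end{smallmatrix}\right) \in GL_2(\mathbb{Z}_p)$ with characteristic polynomial $X^2 - TX + N$. The first observation is that inertness makes this polynomial irreducible modulo $p$, so $\overline{M_\gamma^\alpha}$ cannot be upper triangular, forcing $c \in \mathbb{Z}_p^\times$.

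With $c$ a unit, the core calculation mirrors Proposition \ref{CMproposition1}: CM-stability of the Hodge-Tate filtration (Proposition \ref{integralsurjective}) and of the algebraic Hodge-de Rham splitting (\ref{algsplitting}), together with the transformation laws (\ref{ztransformationprop}) and (\ref{z'transformationprop}), show that $\mathbf{z}(A,\alpha)$ and $\mathbf{z}_{\mathrm{dR}}(A,\alpha)$ both satisfy the fixed-point quadratic $cX^2 + (a-e)X - b = 0$ of discriminant $T^2 - 4N \in \mathbb{Z}_p^\times$; hence both lie in $K_p$. The Legendre relation (\ref{periodsrelation}) combined with $\mathbf{z}_{\mathrm{dR}} = -\mathbf{x}_{\mathrm{dR}}/\mathbf{y}_{\mathrm{dR}}$ yields the identity
\[
\mathbf{y}_{\mathrm{dR}}/t \;=\; \mathbf{z}/(\mathbf{z} - \mathbf{z}_{\mathrm{dR}}),
\]
and since $\mathbf{y}_{\mathrm{dR}}/t$ is a unit in $\mathcal{O}\mathbb{B}_{\mathrm{dR}}^+(\mathcal{Y}_x)$ by Proposition \ref{invertible}, $\mathbf{z}$ and $\mathbf{z}_{\mathrm{dR}}$ must be the two distinct roots, i.e., Galois conjugate over $\mathbb{Q}_p$. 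Unramifiedness of $K_p/\mathbb{Q}_p$ then forces $|\mathbf{z}(A,\alpha)| = |\mathbf{z}_{\mathrm{dR}}(A,\alpha)|$; to pin both down to $1$ I would argue by contradiction: non-zero valuation of $\mathbf{z}$ combined with $\mathbf{z} + \bar{\mathbf{z}} = (e-a)/c \in \mathbb{Z}_p$, $\mathbf{z}\bar{\mathbf{z}} = -b/c$, and $ae - bc \in \mathbb{Z}_p^\times$ would force $\overline{M_\gamma^\alpha}$ to have reducible characteristic polynomial mod $p$, contradicting inertness. Hence $\mathbf{z}(A,\alpha), \mathbf{z}_{\mathrm{dR}}(A,\alpha) \in \mathcal{O}_{K_p}^\times$.

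Next, applying $\theta$ to the displayed identity and using that $\mathbf{z}_{\mathrm{dR}}(A,\alpha)$ is the Hensel lift of the Galois conjugate $\bar{\mathbf{z}}(A,\alpha)$, I obtain $\theta(y_{\mathrm{dR}}(A,\alpha)) = \mathbf{z}/(\mathbf{z} - \bar{\mathbf{z}})$; the denominator is a unit in $\mathcal{O}_{K_p}$ because its square equals $(T^2 - 4N)/c^2 \in \mathbb{Z}_p^\times$. This yields $|\theta(\mathbf{y}_{\mathrm{dR}}(A,\alpha))| = |\theta(y_{\mathrm{dR}}(A,\alpha))| = 1$, and then Lemma \ref{y'lemma} forces $|\theta(dq_{\mathrm{dR}}/dq^{p^a})(A,\alpha)| = 1$, so $b = 0$ by the definition (\ref{integralcorrection2}), and Corollary \ref{periodunit} gives $|\theta(\Omega_p(A,\alpha))| = 1$.

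Finally, for the comparison claim, I write $\alpha' = \alpha \circ g$ for $g = \left(\begin{smallmatrix}a' & b' \\ c' & d'\end{smallmatrix}\right) \in GL_2(\mathbb{Z}_p)$. The definition $\frak{s} = HT_{\mathcal{A}}(\alpha_{\infty,2})$ and the relation $HT_{\mathcal{A}}(\alpha_{\infty,1}) = \frak{s}/\mathbf{z}$ from (\ref{relation}) give $\frak{s}(A,\alpha') = (b'/\mathbf{z}(A,\alpha) + d')\frak{s}(A,\alpha)$, and with $\omega_{\mathrm{can}} = \frak{s}/y_{\mathrm{dR}}$ in $\omega_{\Delta,\mathcal{Y}}$ this yields
\[
\theta(\Omega_p)(A,\alpha')/\theta(\Omega_p)(A,\alpha) \;=\; \bigl(b'/\mathbf{z}(A,\alpha) + d'\bigr)\cdot\theta(y_{\mathrm{dR}}(A,\alpha))/\theta(y_{\mathrm{dR}}(A,\alpha')).
\]
Each factor lies in $\mathcal{O}_{K_p}^\times$: the second by the previous paragraph applied to both $\alpha$ and $\alpha'$, and the first because $b'/\mathbf{z}(A,\alpha) + d' \in \mathcal{O}_{K_p}$ while $\frak{s}(A,\alpha')$ and $\frak{s}(A,\alpha)$ are both integral generators of $\Omega_{A/\mathcal{O}_{\mathbb{C}_p}}^1$ by Proposition \ref{integralsurjective}, so the ratio is a unit in $\mathcal{O}_{\mathbb{C}_p}^\times \cap K_p = \mathcal{O}_{K_p}^\times$. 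The main obstacle throughout is the uniformity of the absolute-value analysis in the second paragraph: unlike the ramified case, where a specific $\alpha$ gives an explicit matrix with $c = 1$, here an arbitrary $\alpha$ must be handled, and the crucial input is the inertness hypothesis, which enters through the irreducibility of the mod-$p$ characteristic polynomial and forces $c \in \mathbb{Z}_p^\times$ for every $\alpha$.
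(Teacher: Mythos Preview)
Your proposal is correct and takes a genuinely different route from the paper's proof, one that is arguably more elementary and more closely parallel to the ramified case in Proposition~\ref{CMproposition1}.

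The paper's argument for $|\mathbf{z}(A,\alpha)| = 1$ is indirect: it invokes Deuring's theorem and Proposition~\ref{Conradbound} on canonical subgroups to bound the Hasse invariant, then Lemma~\ref{canonicalsubgrouplemma} to obtain $|\mathbf{z}(A,\alpha)| \le p^{p/(p^2-1)}$; combining this with Howe's transcendence theorem (\ref{HTperiodgenerates}) (forcing the valuation to be integral) yields $|\mathbf{z}| \le 1$, and a symmetry trick with $\left(\begin{smallmatrix}0&1\\1&0\end{smallmatrix}\right)$ gives equality. For $\mathbf{z}_{\mathrm{dR}}$, the paper uses a separate complex-conjugation argument, identifying $\mathbf{z}_{\mathrm{dR}}(A,\alpha)$ with $\mathbf{z}(A,\alpha^c)^c$ via the interchange of Hodge and Hodge--Tate pieces under conjugation, and then argues valuation by hand. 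By contrast, your fixed-point argument handles both periods simultaneously and for arbitrary $\alpha$: the single quadratic $cX^2 + (a-e)X - b = 0$ with $c \in \mathbb{Z}_p^\times$ and unit discriminant immediately places both in $\mathcal{O}_{K_p}^\times$ via Vieta, bypassing the canonical-subgroup machinery and Howe's theorem entirely. Your use of the Legendre relation and invertibility of $\mathbf{y}_{\mathrm{dR}}/t$ to separate the two roots is also cleaner than the paper's appeal to linear independence of CM eigenspaces on the special fiber.

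One small point: in your third paragraph, Lemma~\ref{y'lemma} alone gives only a unit in $\mathcal{O}_{\mathcal{Y},y}^\times$, not absolute value $1$; to conclude $b = 0$ you need that $\frak{s}(A,\alpha)$ is an \emph{integral} generator of $\Omega^1_{A/\mathcal{O}_{\mathbb{C}_p}}$, which is exactly the last assertion of Proposition~\ref{integralsurjective} (applicable since you have shown $|\mathbf{z}(A,\alpha)| = 1 \ge 1$). You cite this proposition in the final paragraph, so you clearly have it available; it should also be invoked here, as the paper does.
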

\begin{proof}Since $p$ is inert in $K$, by Deuring's theorem $A$ is supersingular and so $|\mathrm{Ha}(A)| < 1$. Since $\mathrm{Ha}(A) \in K_p$, its $p$-adic valuation is an integral power of $p$, and so $|\mathrm{Ha}(A)| = 1/p$. Now by Proposition \ref{Conradbound}, we have that $A$ does \emph{not} have a canonical subgroup. Hence, by Lemma \ref{canonicalsubgrouplemma}, $|z(A,\alpha)| \le p^{p/(p^2-1)}$ for any $p^{\infty}$-level structure $\alpha$ on $A$. 

By the discussion at the beginning of this section, the Hodge-Tate sequence splits over $K_p$. Hence, for any $\alpha$, we have 
$$\alpha_1 + 1/\mathbf{z}(A,\alpha)\alpha_2 = \frak{s}^{-1}(A,\alpha) \in \Omega_{A/\mathcal{O}_{K_p}}^1 \subset T_pA\otimes_{\mathbb{Z}_p}\mathcal{O}_{K_p} \xrightarrow{\alpha^{-1}} \mathcal{O}_{K_p}^{\oplus 2}.$$
Hence $\mathbf{z}(A,\alpha) \in K_p$. In fact, by \cite[Theorem 1.2 (2)]{Howe}, we have
\begin{equation}\label{HTperiodgenerates}K_p = \mathbb{Q}_p(\mathbf{z}(A,\alpha)).
\end{equation}
In particular $|\mathbf{z}(A,\alpha)|$ is an integral power of $p$, and so the previous paragraph implies that $|\mathbf{z}(A,\alpha)| \le 1$. Since this holds for all $(A,\alpha)$, repeating the argument with $(A,\alpha)\cdot \left(\begin{array}{cc} 0 & 1 \\ 1 & 0\\ \end{array}\right)$ and using the fact that
$$\mathbf{z}\left((A,\alpha)\cdot \left(\begin{array}{cc} 0 & 1 \\ 1 & 0\\ \end{array}\right)\right) = 1/\mathbf{z}(A,\alpha),$$
we also obtain $|\mathbf{z}(A,\alpha)| \le 1$, so in all we have $|\mathbf{z}(A,\alpha)| = 1$.

Recall that for any field extension $F/H$, complex conjugation switches the $H^{1,0}(A/F)$ and $H^{0,1}(A/F)$ pieces of $H_{\mathrm{dR}}^1(A/F)$. Let $\alpha^c : \mathbb{Z}_p^{\oplus 2} \xrightarrow{\sim} T_pA$ denote the $p^{\infty}$-level structure obtained from applying $c$ to $\alpha$ (note that $\alpha^c$ is well-defined since all the $p$-power torsion points of $A/F$ are defined over $\overline{\mathbb{Q}}$), and let
$$\gamma = \left(\begin{array}{ccc} a & b\\
c & d\\
\end{array}\right) \in GL_2(\mathbb{Z}_p)$$
such that $\alpha \cdot \gamma = \alpha^c$. By the above discussion $c$ switches the lines given by the Hodge and Hodge-Tate filtrations, and hence we have
\begin{equation}\label{conjugationswitch}\mathbf{z}_{\mathrm{dR}}(A,\alpha) = \mathbf{z}(A,\alpha^c)^c = \frac{d\mathbf{z}(A,\alpha)^c + b}{c\mathbf{z}(A,\alpha)^c + a}
\end{equation}
which implies
\begin{equation}\label{z'rational}\mathbf{z}_{\mathrm{dR}}(A,\alpha) \in K_p.
\end{equation}
From (\ref{conjugationswitch}), we have
\begin{equation}\label{moduluseq}|\mathbf{z}_{\mathrm{dR}}(A,\alpha)| = \frac{|d\mathbf{z}(A,\alpha)^c + b|}{|c\mathbf{z}(A,\alpha)^c + a|}.
\end{equation}
From (\ref{HTperiodgenerates}), since $|\mathbf{z}(A,\alpha)| = 1$ we see that $\mathbf{z}(A,\alpha) \pmod{p}$ generates the residue field $\mathbb{F}_{p^2}$ of $\mathcal{O}_{K_p}$ over $\mathbb{F}_p$, and hence the same is true for $\mathbf{z}(A,\alpha)^c$. Hence, 
$$d\mathbf{z}(A,\alpha)^c + b, c\mathbf{z}(A,\alpha)^c + a \not\equiv 0 \pmod{p}$$
and thus (\ref{moduluseq}) implies that $|\mathbf{z}_{\mathrm{dR}}(A,\alpha)| = 1$. Hence in all, $\mathbf{z}(A,\alpha), \mathbf{z}_{\mathrm{dR}}(A,\alpha) \in \mathcal{O}_{K_p}^{\times}.$ 

Note that since $|\mathbf{z}(A,\alpha)| = 1$, then $(A,\alpha) \in \mathcal{Y}_x$. By the $p$-adic Legendre relation (\ref{periodsrelation}), we have
\begin{equation}\label{ydRequation}\mathbf{y}_{\mathrm{dR}}(A,\alpha) = \frac{\mathbf{z}(A,\alpha)}{\mathbf{z}(A,\alpha) - \mathbf{z}_{\mathrm{dR}}(A,\alpha)}.
\end{equation}
Since on the special fiber $A \otimes_{\mathcal{O}_{K_p}} \mathcal{O}_{K_p}/p\mathcal{O}_{K_p}$, the $\pm 1$ eigenspaces of the CM action $\mathcal{O} = \End(A/H)$ are linearly independent (i.e. the lines on which $\mathcal{O}$ acts through multiplication and the complex conjugation of multiplication, respectively, are linearly independent), then we must have
$$\mathbf{z}(A,\alpha) \not\equiv \mathbf{z}_{\mathrm{dR}}(A,\alpha) \pmod{p\mathcal{O}_{K_p}}$$
and so
$$\mathbf{z}(A,\alpha) - \mathbf{z}_{\mathrm{dR}}(A,\alpha) \in \mathcal{O}_{K_p}^{\times}.$$ 
Hence, by (\ref{ydRequation}), we have
\begin{equation}\label{ydRunit}\mathbf{y}_{\mathrm{dR}}(A,\alpha) \in \mathcal{O}_{K_p}^{\times}.
\end{equation}
Now since $\mathbf{z}_{\mathrm{dR}}(A,\alpha) \in \mathcal{O}_{K_p}^{\times}$, then 
$$-\mathbf{x}_{\mathrm{dR}}(A,\alpha)/\mathbf{y}_{\mathrm{dR}}(A,\alpha) = \mathbf{z}_{\mathrm{dR}}(A,\alpha) \in \mathcal{O}_{K_p}^{\times}.$$
Hence, by (\ref{ydRunit}), we have;
\begin{equation}\label{xdRunit}\mathbf{x}_{\mathrm{dR}}(A,\alpha) \in \mathcal{O}_{K_p}^{\times}.
\end{equation}

For the second part of the Proposition, note that since $|\mathbf{z}(A,\alpha)| = 1$ from the first part of the Proposition, then by Proposition \ref{integralsurjective}, $\frak{s}(A,\alpha) \in \Omega_{A/\mathcal{O}_{\mathbb{C}_p}}$ is a generator. Hence by (\ref{calculation2}), we have that
$$|\theta(y_{\mathrm{dR}}(A,\alpha))^2|\cdot \left|\theta\left(\frac{dz_{\mathrm{dR}}}{dz}\right)(A,\alpha)\right| = 1.$$
Since we have shown $|\theta(y_{\mathrm{dR}}(A,\alpha))| = 1$, then this implies
$$\left|\theta\left(\frac{dz_{\mathrm{dR}}}{dz}\right)(A,\alpha)\right| = 1$$
and so $b = 0$ as in Definition (\ref{integralcorrection2}). Now that $|\theta(\Omega_p(A,\alpha))| = 1$ follows from Definition \ref{perioddefinition} since $\omega_{\mathrm{can}}(A,\alpha) = \frak{s}(A,\alpha)/y_{\mathrm{dR}}(A,\alpha)^2$, $|y_{\mathrm{dR}}(A,\alpha)| = 1$ and $\frak{s}(A,\alpha) \in \Omega_{A/\mathcal{O}_{K_p}}^1$ is a generator.

Finally, we show that given any two choices $\alpha, \alpha'$ of $p^{\infty}$-level structure, we have
\begin{equation}\label{omegapsamemodulus}\theta(\Omega_p)(A,\alpha)/\theta(\Omega_p)(A,\alpha') \in \mathcal{O}_{K_p}^{\times}.
\end{equation}
We can write
$$(A,\alpha') = \left(\begin{array}{ccc} a & b\\
c & d\\
\end{array}\right)$$
for some
$$\left(\begin{array}{ccc} a & b\\
c & d\\
\end{array}\right) \in GL_2(\mathbb{Z}_p).$$
Recall from \ref{canonicaldifferentialdef} that
$$\omega_{\mathrm{can}}((A,\alpha')) = (bc-ad)(c\mathbf{z}_{\mathrm{dR}}((A,\alpha))+a)^{-1}\cdot\omega_{\mathrm{can}}((A,\alpha)),$$
whereas for $\omega_0 \in \Omega_{A/F}^1$ as in Definition \ref{perioddefinition},
$$\omega_0 = \omega_0((A,\alpha)) = \omega_0((A,\alpha')).$$
So now from Definition \ref{perioddefinition}, we see that
$$\Omega_p((A,\alpha')) = (bc-ad)^{-1}(c\mathbf{z}_{\mathrm{dR}}((A,\alpha))+a)\cdot \Omega_p((A,\alpha))$$
which implies
$$\theta_p(\Omega_p)((A,\alpha')) = (bc-ad)^{-1}(c\theta(\mathbf{z}_{\mathrm{dR}})((A,\alpha))+a)\cdot \theta(\Omega_p)((A,\alpha)).$$
From (\ref{z'rational}) we see that $\mathbf{z}_{\mathrm{dR}}((A,\alpha)) \in K_p$ and so $\theta(\mathbf{z}_{\mathrm{dR}})((A,\alpha)) \in K_p$ (since $K_p \subset B_{\mathrm{dR}}^+ \overset{\theta}{\twoheadrightarrow} \mathbb{C}_p$ is the natural inclusion). Hence $\theta(\Omega_p)((A,\alpha))/\theta(\Omega_p)((A,\alpha')) \in K_p$. Now from Corollary \ref{periodunit} and (\ref{omegapsamemodulus}) we have
$$|\theta(\Omega_p)((A,\alpha))| = |\theta(\Omega_p)((A,\alpha'))|.$$
This, combined with $\theta(\Omega_p)(y)/\theta(\Omega_p)(y') \in K_p$ implies $\theta(\Omega_p)(y)/\theta(\Omega_p)(y') \in \mathcal{O}_{K_p}^{\times}$. Now we are done.

\end{proof}

\subsection{The supersingular CM torus}\label{CMtorus}In this section, we study the geometry of the supersingular CM torus at infinite level, as well as properties of its periods in order to show that Corollary (\ref{keycorollary}) applies to $y$ in this locus.

Consider orders $\mathcal{O}_f = \mathbb{Z} + f\mathcal{O}_K \subset \mathcal{O}_K \subset K$, where $K = \mathbb{Q}(\sqrt{-d})$ with $d>0$ squarefree is an imaginary quadratic field in which $p$ is inert or ramified, let $d_K$ denote the fundamental discriminant of $K/\mathbb{Q}$, and let $\frak{p} \subset \mathcal{O}_K$ denote the prime ideal above $p$. Suppose that conductor $c$ of $\mathcal{O}_c$ is prime to $p$. Let $K_p$ denote the $p$-adic completion of $K$ (under the embedding $i_p : \overline{\mathbb{Q}} \hookrightarrow \overline{\mathbb{Q}}_p$ from (\ref{fixedembedding})). Let $\mathcal{C}\ell(\mathcal{O}_c)$ denote the class group of $K$.

For the rest of this section, assume the \emph{Heegner hypothesis} for $N$, namely
$$\text{for each prime $\ell|N$, we have $\ell$ is split or ramified in $K$, and if $\ell^2|N$ then $\ell$ is split in $K$.}$$
Let
\begin{equation}\label{epsilondefinition}\varepsilon = \begin{cases} 0 & \text{$p$ inert in $K$}\\
1 & \text{$p$ ramified in $K$}.
\end{cases}
\end{equation}
Then $Np^{\varepsilon}$ also satisfies the Heegner hypothesis. (Recall that we assumed $p\nmid N$.)

Let $\hat{\mathbb{Z}} := \prod_{\ell < \infty} \mathbb{Z}_{\ell}$. Let $K_n(Np^{\varepsilon}) \subset M_2(\hat{\mathbb{Z}})$ be the order
\begin{align*}K_n(Np^{\varepsilon}) &= \left\{\left(\begin{array}{cc} a & b \\ c & d\\ \end{array}\right) \in M_2(\hat{\mathbb{Z}}) : b \equiv 0 \pmod{p^n\hat{\mathbb{Z}}},\right. \\
&\left.c \equiv 0 \pmod{p^{n+\varepsilon}N\hat{\mathbb{Z}}}, d \equiv 1 \pmod{p^n\hat{\mathbb{Z}}}\right\}.
\end{align*}
Let $\alpha, \beta \in \mathbb{Z}$ such that $\alpha^2 - 4Np^{\varepsilon}\beta = c^2d_K$. Fix an embedding 
\begin{equation}\label{fixedtorusembedding}i : \mathbb{A}_K \hookrightarrow M_2(\mathbb{A}_{\mathbb{Q}})
\end{equation}
given at a place $v$ of $K$ for $x_v + y_v\frac{c^2d_K+c\sqrt{d_K}}{2} \in K_v$, where $x_v, y_v \in \mathbb{Q}_{\ell}$ and $\ell$ is the prime below $v$, by 
$$x_v \mapsto \left(\begin{array}{cc} x_v & 0 \\ 0 & x_v\\ \end{array}\right), \hspace{1cm} y_v \frac{c^2d_K+c\sqrt{d_K}}{2} \mapsto y_v \cdot \left(\begin{array}{cc} \frac{c^2d_K + \alpha}{2} & -1 \\ Np^{\varepsilon}\beta & \frac{c^2d_K-\alpha}{2} \\ \end{array}\right)$$
so that $K \cap K_0(Np^{\varepsilon}) = \hat{\mathcal{O}}_{c} = \mathcal{O}_{c} \otimes_{\mathbb{Z}} \hat{\mathbb{Z}}$, and more generally 
$$K \cap K_n(Np^{\varepsilon}) = \hat{\mathcal{O}}_c \times (1+p^n\frak{p}^{\varepsilon}\mathcal{O}_{K_p}).$$
Note that (\ref{fixedtorusembedding}) also gives rise to an embedding of algebraic groups over $\mathbb{Q}$, 
$$\mathrm{Res}_{K/\mathbb{Q}}\mathbb{G}_m \hookrightarrow GL_2/\mathbb{Q}.$$

Let $Y_n := Y(\Gamma_1(N)\cap \Gamma_0(p^{\varepsilon}) \cap \Gamma(p^n))$, and let us briefly once more consider $Y_n$ as an algebraic scheme (and not its associated adic space). In particular, $Y_0 = Y(\Gamma_0(p^{\varepsilon}))$. Let 
$$K_n(Np^{\varepsilon})^{\times} := (K_n(Np^{\varepsilon}) \cap GL_2(\mathbb{A}_{\mathbb{Q}})) \subset GL_2(\mathbb{A}_{\mathbb{Q}})$$
denote the open compact subgroup corresponding to $\Gamma_1(N)\cap \Gamma(p^n)$. Recall the uniformization
$$Y_n(\mathbb{C}) = GL_2(\mathbb{Q})\backslash \mathcal{H}^{\pm} \times GL_2(\mathbb{A}_{\mathbb{Q},f})/K_n(Np^{\varepsilon})^{\times}$$
with Hecke action $GL_2(\mathbb{A}_{\mathbb{Q},f})$ given by right multiplication. Let $C_n(\mathcal{O}_c)$ denote the scheme consisting of points with CM by an order $\mathcal{O}_c \subset \mathcal{O}_K$ and $p^n\frak{p}^{\varepsilon}$-level structure, so that as a complex space we have
$$C_n(\mathcal{O}_{c})(\mathbb{C}) :=  K^{\times}\backslash \mathbb{A}_{K,f}^{\times}/\left(\hat{\mathcal{O}}_c^{\times} \times (1+p^n\frak{p}^{\varepsilon}\mathcal{O}_{K_p})\right).$$
Then $C_n(\mathcal{O}_{c})$ is a scheme defined over $\mathbb{Q}$, where the right action $\mathbb{A}_{K,f}^{\times} \subset GL_2(\mathbb{A}_{\mathbb{Q},f})$ is given by the Hecke action. For simplicity, denote $C_0(\mathcal{O}_{c}) = C(\mathcal{O}_{c})$. 

Then the embedding (\ref{fixedtorusembedding}) induces an embedding
$$\mathcal{O}_{K_p} \hookrightarrow M_2(\mathbb{Z}_p)$$
and hence
$$\mathcal{O}_{K_p}^{\times} \hookrightarrow GL_2(\mathbb{Z}_p)$$
so that we may view $C_n(\mathcal{O}_{c}) \subset Y_n$ as a subscheme. Recall that the main theorem of complex multiplication says that every point of $C_n(\mathcal{O}_{c})(\overline{K})$ is defined over the maximal abelian extension $K^{\mathrm{ab}}$ of $K$, and that the Galois action $\Gal(K^{\mathrm{ab}}/K)$ is given by the action of $\mathbb{A}_K^{\times} \hookrightarrow GL_2(\mathbb{A}_{\mathbb{Q}})$ under the reciprocity law. Now viewing $C_n(\mathcal{O}_{c})$ and $Y_n$ once more as adic spaces over $\mathrm{Spa}(\mathbb{Q}_p,\mathbb{Z}_p)$, define an object $\mathcal{C}(\mathcal{O}_c) \in C(\mathcal{O}_{c})_{\text{pro\'{e}t}}$ by
$$\mathcal{C}(\mathcal{O}_c) = \varprojlim_n C_n(\mathcal{O}_{c}) \subset \varprojlim_n Y_n = \mathcal{Y},$$
so that the Galois group of any geometric fiber of $\mathcal{C}(\mathcal{O}_c) \rightarrow C(\mathcal{O}_{c})$ is $\mathcal{O}_{K_p}^{\times}$ if $p$ is inert in $K$, and is $1+\frak{p}\mathcal{O}_{K_p}$ if $p$ is ramified in $K$, viewed naturally inside the Galois group $\Gal(\mathcal{Y}/Y) = GL_2(\mathbb{Z}_p)$ via the embedding (\ref{fixedtorusembedding}) above. By construction, we see that $\mathcal{C}(\mathcal{O}_c) \subset \mathcal{Y}$ is a closed adic subspace. 

\begin{proposition}\label{CMtorussameperiods}The Hodge-Tate, Hodge-de Rham, and $\theta(y_{\mathrm{dR}})$ periods of all points $y \in \mathcal{C}(\mathcal{O}_c)(\overline{K}_p,\mathcal{O}_{\overline{K}_p})$ all have the same $p$-adic valuation.

In other words, given any $y, y' \in \mathcal{C}(\mathcal{O}_c)(\overline{K}_p,\mathcal{O}_{\overline{K}_p})$ as above, we have
$$\mathbf{z}(y)/\mathbf{z}(y'), \mathbf{z}_{\mathrm{dR}}(y)/\mathbf{z}_{\mathrm{dR}}(y'), \theta(y_{\mathrm{dR}})(y)/\theta(y_{\mathrm{dR}})(y') \in \mathcal{O}_{K_p}^{\times}$$
and so in particular
$$|\mathbf{z}(y)| = |\mathbf{z}(y')|, |\mathbf{z}_{\mathrm{dR}}(y)| = |\mathbf{z}_{\mathrm{dR}}(y')|, |\theta(y_{\mathrm{dR}})(y)| = |\theta(y_{\mathrm{dR}})(y')|.$$
In fact, we have $\mathbf{z}(y), \mathbf{z}_{\mathrm{dR}}(y) \in K_p$ for all $y \in \mathcal{C}(\mathcal{O}_c)(\overline{K}_p,\mathcal{O}_{\overline{K}_p})$, as well as
\begin{equation}\begin{split}&|\mathbf{z}(y)| = \begin{cases} 1 & \text{$p$ inert in $K$} \\
 |1/(c\sqrt{-d})| & \text{$p$ ramified in $K$}\\ \end{cases}, |\mathbf{z}_{\mathrm{dR}}(y)| = \begin{cases} 1 & \text{$p$ inert in $K$} \\
 |1/(c\sqrt{-d})| & \text{$p$ ramified in $K$} \end{cases}, \\
 &|\theta(y_{\mathrm{dR}})(y)| = \begin{cases} 1 & \text{$p$ inert in $K$} \\
 |1/2| & \text{$p$ ramified in $K$} \\ \end{cases}.
 \end{split}
\end{equation}

Moreover, we have that the numbers $a, b\in \mathbb{Q}$ defined as in (\ref{integralcorrection}) and (\ref{integralcorrection2}), as well as the $p$-adic absolute value $|\theta(\Omega_p)(y)|$, are the same for any $y \in \mathcal{C}(\mathcal{O}_c)(\overline{K}_p,\mathcal{O}_{\overline{K}_p})$. In fact, for any $y, y' \in \mathcal{C}(\mathcal{O}_c)(\overline{K}_p,\mathcal{O}_{\overline{K}_p})$ we have
$$\theta(\Omega_p)(y)/\theta(\Omega_p)(y') \in \mathcal{O}_{K_p}^{\times}.$$
If $p > 2$ or $p$ is inert, we further have $b = 0$ and $|\theta(\Omega_p)(y)| = 1$ for any $y \in \mathcal{C}(\mathcal{O}_c)(\overline{K}_p,\mathcal{O}_{\overline{K}_p})$.

\end{proposition}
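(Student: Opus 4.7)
The strategy is to fix a convenient base point $y_0 \in \mathcal{C}(\mathcal{O}_c)(\overline{K}_p, \mathcal{O}_{\overline{K}_p})$ and propagate each claim to an arbitrary $y$ via two operations that together sweep out the whole CM locus: (a) the $\mathcal{O}_{K_p}^\times$-action (or the $(1+\frak{p}\mathcal{O}_{K_p})$-action in the ramified case) embedded in $GL_2(\mathbb{Z}_p)$ through (\ref{fixedtorusembedding}), which is transitive on each fiber of $\lambda : \mathcal{C}(\mathcal{O}_c) \to C(\mathcal{O}_c)$; and (b) the Galois action of $\Gal(\overline{K}_p/K_p)$, which by Shimura reciprocity permutes the geometric fibers of $C(\mathcal{O}_c)$. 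At $y_0$, Propositions \ref{CMproposition1} and \ref{CMproposition2} supply the base-point values of $\mathbf{z}$, $\mathbf{z}_{\mathrm{dR}}$ and $\theta(y_{\mathrm{dR}})$; since $p\nmid c$ forces $\mathcal{O}_c\otimes\mathbb{Z}_p = \mathcal{O}_{K_p}$, Proposition \ref{CMproposition2} (originally stated for CM by $\mathcal{O}_K$) applies verbatim to the $\mathcal{O}_c$-setting, because the relevant $p$-adic periods depend only on the $p$-divisible group together with its CM action, which is unchanged under prime-to-$p$ isogeny.

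For the same-fiber step, write $y = y_0\cdot\gamma$ with $\gamma = \left(\begin{smallmatrix} a & b \\ c & d \end{smallmatrix}\right) \in \mathcal{O}_{K_p}^\times \subset GL_2(\mathbb{Z}_p)$ via (\ref{fixedtorusembedding}), so $a, b, c, d \in \mathbb{Z}_p$ and $\det\gamma \in \mathcal{O}_{K_p}^\times$. The transformation laws (\ref{ztransformationprop}), (\ref{z'transformationprop}) and (\ref{y'transformationprop}) express each transformed period as a M\"obius-type function of $\mathbf{z}(y_0)$, $\mathbf{z}_{\mathrm{dR}}(y_0)$, $\mathbf{y}_{\mathrm{dR}}(y_0)$. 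From the base-point values and the explicit shape of (\ref{fixedtorusembedding}), the denominators $c\mathbf{z}+a$, $c\mathbf{z}_{\mathrm{dR}}+a$, etc., reduce modulo $\frak{p}$ to nonzero elements---in the inert case because $\mathbf{z}(y_0)\bmod p$ generates the residue field $\mathbb{F}_{p^2}$ (as in the proof of Proposition \ref{CMproposition2}), and in the ramified case by a direct matrix computation from Proposition \ref{CMproposition1}. This gives the ratios $\mathbf{z}(y)/\mathbf{z}(y_0)$, $\mathbf{z}_{\mathrm{dR}}(y)/\mathbf{z}_{\mathrm{dR}}(y_0)$ and $\theta(y_{\mathrm{dR}})(y)/\theta(y_{\mathrm{dR}})(y_0)$ in $\mathcal{O}_{K_p}^\times$; the corresponding statement for $\theta(\Omega_p)$ follows from the transformation law for $\omega_{\mathrm{can}}$ in Definition \ref{canonicaldifferentialdef}, exactly as in the last paragraph of the proof of Proposition \ref{CMproposition2}. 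The cross-fiber step is then immediate: any two geometric fibers of $\lambda$ are related by some $\sigma \in \Gal(K^{\mathrm{ab}}/K)$, and $\sigma$ acts as a field isometry of $\overline{K}_p$, so every ratio claim reduces to the same-fiber case.

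For the invariants $a$, $b$ and $|\theta(\Omega_p)|$: the rational $a$ depends only on $\lambda(y) \in C(\mathcal{O}_c)$, and the isometric Galois action on the base preserves its value, so $a$ is constant on the CM locus. Lemma \ref{y'lemma} gives $y_{\mathrm{dR}}^2 \cdot \frac{dz_{\mathrm{dR}}}{dq^{p^a}} \in \mathcal{O}_{\mathcal{Y},y}^\times$, hence $|p^b| = |\theta(y_{\mathrm{dR}})(y)|^{-2}$, so $b$ is determined by $|\theta(y_{\mathrm{dR}})|$ and is likewise constant. For $|\theta(\Omega_p)(y)|$, combine $\omega_{\mathrm{can}} = \frak{s}/y_{\mathrm{dR}}$ with Proposition \ref{integralsurjective}: once $|\mathbf{z}(y_0)|\ge 1$ is verified at the base point, $\frak{s}(y) \in \Omega_{A/\mathcal{O}_{\mathbb{C}_p}}^1$ is an integral generator, yielding $|\theta(\Omega_p)(y)| = |\theta(y_{\mathrm{dR}})(y)|^{-1}$; the corresponding ratio claim $\theta(\Omega_p)(y)/\theta(\Omega_p)(y') \in \mathcal{O}_{K_p}^\times$ follows from the $\omega_{\mathrm{can}}$ transformation law together with the same-fiber/Galois combination above. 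The refinements $b=0$ and $|\theta(\Omega_p)|=1$ when $p$ is inert or $p>2$ follow immediately from $|\theta(y_{\mathrm{dR}})(y_0)|=1$ in those cases.

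The main obstacle is the integrality refinement: a priori the transformation laws place $\gamma^*\mathbf{z}/\mathbf{z}$ only in $K_p^\times$, and ruling out an extraneous power of $\frak{p}$ requires using the precise shape of the torus embedding (\ref{fixedtorusembedding})---which encodes the $\mathcal{O}_c$-structure through the relation $\alpha^2 - 4Np^\varepsilon\beta = c^2 d_K$---uniformly over all $\gamma \in \mathcal{O}_{K_p}^\times$. This is the residue-field argument alluded to in the second paragraph, and is the only place where the specific shape of the torus embedding (rather than merely its existence) intervenes in an essential way.
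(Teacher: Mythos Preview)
Your overall strategy---fix a base point, read off the values there from Propositions \ref{CMproposition1} and \ref{CMproposition2}, and propagate via the $\mathcal{O}_{K_p}^\times$-action through the transformation laws (\ref{ztransformationprop}), (\ref{z'transformationprop}), (\ref{y'transformationprop})---matches the paper's, and your same-fiber step is essentially correct. The gap is in the cross-fiber step. You write that ``any two geometric fibers of $\lambda$ are related by some $\sigma \in \Gal(K^{\mathrm{ab}}/K)$, and $\sigma$ acts as a field isometry of $\overline{K}_p$,'' but these two assertions pull in opposite directions. The \emph{global} group $\Gal(K^{\mathrm{ab}}/K)$ does act transitively on fibers of $\lambda$ (by Shimura reciprocity it surjects onto $\mathcal{C}\ell(\mathcal{O}_c)$), but a global $\sigma$ need not act on $\overline{K}_p$ compatibly with the fixed embedding $i_p$. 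Conversely, the \emph{local} group $\Gal(\overline{K}_p/K_p)$ does act by isometries and is compatible with $i_p$, but its image in $\Gal(H_c/K)\cong\mathcal{C}\ell(\mathcal{O}_c)$ is only the decomposition subgroup at $\frak{p}$---the cyclic group generated by the class of $\frak{p}$---which is in general proper. So the Galois argument as stated does not reach all fibers, and the same issue recurs in your claim that the Galois action preserves the invariant $a$.

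The paper's fix is much simpler, and in fact you already have the key observation in your first paragraph: representatives $\frak{a}$ of $\mathcal{C}\ell(\mathcal{O}_c)$ can be chosen prime to $p$, and then the Shimura action $y\mapsto\frak{a}\star y$ is given at $p$ by an isomorphism of $p$-divisible groups (the isogeny $\pi_{\frak{a}}$ has prime-to-$p$ degree), hence acts trivially through the $GL_2(\mathbb{Q}_p)$-action on $\mathcal{Y}$. Since $\mathbf{z}$, $\mathbf{z}_{\mathrm{dR}}$, $\theta(y_{\mathrm{dR}})$, and the constructions defining $a$, $b$ are functions of the $p$-adic data alone, they are literally \emph{unchanged}---not merely of the same valuation---under $\frak{a}\star$. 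This collapses the cross-fiber step to a tautology and reduces the whole proposition to the same-fiber analysis. Once you replace the Galois argument with this observation (which you used only to justify importing Proposition \ref{CMproposition2} into the $\mathcal{O}_c$-setting, rather than to handle the fibers), the rest of your proof goes through as written.
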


\begin{proof}
Note that the $GL_2(\mathbb{Q}_p)$-action on $\mathcal{Y}$ than the action of the local Hecke algebra $GL_2(\mathbb{Q}_p) \subset GL_2(\mathbb{A}_{\mathbb{Q}})$. As discussed in the beginning of this section, using the embedding $i : \mathbb{A}_K^{\times} \hookrightarrow GL_2(\mathbb{A}_{\mathbb{Q}})$ from (\ref{fixedtorusembedding}), the $\mathbb{A}_K^{\times}$-action on $\mathcal{C}(\mathcal{O}_c)$ acts through the Hecke action, and by the main theorem of complex multiplication this coincides with the Artin reciprocity map. This action of $\mathbb{A}_K^{\times}$ hence factors through the ray class group (over $K$ of conductor $p^{\infty}$)
\begin{equation}\label{rayclassgroup}K^{\times}\backslash \mathbb{A}_K^{\times}/(\prod_{v<\infty, v\nmid p}\mathcal{O}_{K_v}^{\times}\cdot \mathbb{C}^{\times}),
\end{equation}
which acts simply transitively on $\mathcal{C}(\mathcal{O}_c)$, and $\mathcal{O}_{K_p}^{\times}$ acts simply transitively on any geometric fiber of $\mathcal{C}(\mathcal{O}_c) \rightarrow C(\mathcal{O}_c)$ (and can be identified as the Galois group of any geometric fiber in $\mathcal{C}(\mathcal{O}_c) \rightarrow C(\mathcal{O}_c)$). 

Now we can view (\ref{rayclassgroup}) as 
$$\mathcal{C}\ell(\mathcal{O}_c)\mathcal{O}_{K_p}^{\times}$$
and through the Hecke (i.e. $GL_2(\mathbb{Q}_p)$)-action this acts simply transitively on the sublocus $\mathcal{C}(\mathcal{O}_c)(\overline{K}_p,\mathcal{O}_{\overline{K}_p})$. Now the representatives of $\mathcal{C}\ell(\mathcal{O}_c)$ can be chosen to be units at $p$, and so these representatives act trivially through the $GL_2(\mathbb{Q}_p)$-action and hence do not affect Hodge-Tate periods. So now to prove the Lemma, it suffices to show that for any single $y \in \mathcal{C}(\mathcal{O}_c)(\overline{K}_p,\mathcal{O}_{\overline{K}_p})$ and any $g \in \mathcal{O}_{K_p}^{\times} \overset{i}{\subset} GL_2(\mathbb{Z}_p)$, that 
$$|\mathbf{z}(y\cdot \gamma)| = |\mathbf{z}(y)|, |\mathbf{z}_{\mathrm{dR}}(y\cdot \gamma)| = |\mathbf{z}_{\mathrm{dR}}(y)|, |\theta(y_{\mathrm{dR}}(y\cdot \gamma)| = |\theta(y_{\mathrm{dR}})(y)|.$$

For this, when $p$ is inert in $K$ fix any point $y = (A,\alpha) \in \mathcal{C}(\overline{K}_p,\mathcal{O}_{\overline{K}_p})$. When $p$ is ramified, let $y = (A,\alpha)$ be as in Proposition \ref{CMproposition1}; the projection of $y$ along $\mathcal{Y} \rightarrow Y(\Gamma_1(N) \cap \Gamma_0(N))$ gives rise to a point $(A,t,A[\frak{p}])$ (i.e. a point in $C(\mathcal{O}_c)(\overline{K}_p,\mathcal{O}_{\overline{K}_p})$) because $\alpha$ was chosen to be a rational canonical basis for the endomorphism $c\sqrt{-d} \in \mathcal{O}_c = \End(A/F) = \End(\hat{A}/F)$, the kernel of which is $A[\frak{p}]$. Then by Propositons \ref{CMproposition1} and \ref{CMproposition2}, we have
\begin{equation}\label{modulusformulas}\begin{split}&|\mathbf{z}(y)| = \begin{cases} 1 & \text{$p$ inert in $K$} \\
 |1/(c\sqrt{-d})| & \text{$p$ ramified in $K$}\\ \end{cases}, \mathbf{z}_{\mathrm{dR}}(y)| = \begin{cases} 1 & \text{$p$ inert in $K$} \\
 |1/(c\sqrt{-d})| & \text{$p$ ramified in $K$} \end{cases}, \\
 &|\theta(y_{\mathrm{dR}}(y))| = \begin{cases} 1 & \text{$p$ inert in $K$} \\
 |1/2| & \text{$p$ ramified in $K$} \\ \end{cases}.
 \end{split}
\end{equation}

If $p$ is inert in $K$, then $y \cdot \gamma$ for any $\gamma \in \mathcal{O}_{K_p}^{\times}$ also falls under the purview of Proposition \ref{CMproposition2}, and so we have that $y \cdot \gamma$ satisfies the same equations (\ref{modulusformulas}) as $y$ and the ratios of all the relevant periods associated with $y$ and $y\cdot \gamma$ are in $\mathcal{O}_{K_p}^{\times}$ and so we are done with the first part of the Proposition in this case.

If $p$ is ramified in $K$, then since $\gamma \in (\mathcal{O}_{K_p}^{\times} \cap \Gamma_0(p)) = 1+ \frak{p}\mathcal{O}_{K_p} \subset GL_2(\mathbb{Z}_p)$ is induced by (\ref{fixedtorusembedding}), we have 
$$\gamma = \left(\begin{array}{cc} a & b \\ c & d\\ \end{array}\right)$$
for some $a, b, c, d \in \mathbb{Z}_p$ necessarily with $a \in \mathbb{Z}_p^{\times}$ (in order for $\gamma$ to have determinant in $\mathbb{Z}_p^{\times}$) and $c \in p\mathbb{Z}_p$. Hence by (\ref{ztransformationprop}), (\ref{z'transformationprop}) and (\ref{y'transformationprop}) we have
\begin{equation}\label{transformationequations}\begin{split}&\mathbf{z}(y\cdot \gamma) = \frac{d\mathbf{z}(y) + b}{c\mathbf{z}(y)+a}, \hspace{1cm}\mathbf{z}_{\mathrm{dR}}(y\cdot \gamma) = \frac{d\mathbf{z}_{\mathrm{dR}}(y) + b}{c\mathbf{z}_{\mathrm{dR}}(y)+a}, \\
&\theta(y_{\mathrm{dR}})(y\cdot \gamma) = (c\theta(\mathbf{z}_{\mathrm{dR}})(y)+a)(b\theta(\mathbf{z})^{-1}(y)+d)(bc-ad)^{-1}\theta(y_{\mathrm{dR}})(y).
\end{split}
\end{equation}
Moreover, since $a \in \mathbb{Z}_p^{\times}$ and $c \in p\mathbb{Z}_p$ we have
\begin{align*}&|d\mathbf{z}(y)+b| = |\mathbf{z}(y)|, \hspace{.3cm} |c\mathbf{z}(y)+a| = 1, \hspace{.3cm} |d\mathbf{z}_{\mathrm{dR}}(y)+b| = |\mathbf{z}_{\mathrm{dR}}(y)|, \hspace{.3cm} |c\mathbf{z}_{\mathrm{dR}}(y)+a| = 1\\
& |c\theta(\mathbf{z}_{\mathrm{dR}}(y)+a| = 1, \hspace{.3cm} |b\theta(\mathbf{z})^{-1}(y)+d| = 1, \hspace{.3cm} |(bc-ad)| = 1.
\end{align*}
and so 
\begin{equation}\label{periodformulas}\begin{split}&|\mathbf{z}(y\cdot \gamma)| = \left|\frac{d\mathbf{z}(y) + b}{c\mathbf{z}(y)+a}\right| = |\mathbf{z}(y)|, \hspace{1cm}|\mathbf{z}_{\mathrm{dR}}(y\cdot \gamma)| = \left|\frac{d\mathbf{z}_{\mathrm{dR}}(y) + b}{c\mathbf{z}_{\mathrm{dR}}(y)+a}\right| = |\mathbf{z}_{\mathrm{dR}}(y)|, \\
&|\theta(y_{\mathrm{dR}})(y\cdot \gamma)| = |(c\theta(\mathbf{z}_{\mathrm{dR}})(y) + a)(b\theta(\mathbf{z})^{-1}(y) + d)(bc-ad)^{-1}\theta(y_{\mathrm{dR}})(y)| = |\theta(y_{\mathrm{dR}}(y))|.
\end{split}
\end{equation}
Recall from Proposition \ref{CMproposition1} that we have $\mathbf{z}(y), \mathbf{z}_{\mathrm{dR}}(y), \theta(y_{\mathrm{dR}})(y) \in K_p^{\times}$. From (\ref{transformationequations}) we see that $\mathbf{z}(y\cdot \gamma), \mathbf{z}_{\mathrm{dR}}(y\cdot \gamma), \theta(y_{\mathrm{dR}})(y\cdot \gamma) \in K_p$ and furthermore from (\ref{periodformulas}) we see that 
$$\mathbf{z}(y)/\mathbf{z}(y\cdot \gamma), \mathbf{z}_{\mathrm{dR}}(y)/\mathbf{z}_{\mathrm{dR}}(y\cdot \gamma), \theta(y_{\mathrm{dR}})(y)/\theta(y_{\mathrm{dR}})(y\cdot \gamma) \in \mathcal{O}_{K_p}^{\times}$$
and so we have proven the first part of the Proposition in the ramified case.

Now we prove the second part of the Proposition. Letting $y' = (A,\alpha)$ as before, we have by the above that $|\mathbf{z}_{\mathrm{HT}}(y')| = |\mathbf{z}(y)|, |\mathbf{z}_{\mathrm{dR}}(y')| = |\mathbf{z}_{\mathrm{dR}}(y)|, |\theta(y_{\mathrm{dR}})(y')| = |\theta(y_{\mathrm{dR}})(y)|$ for all $y' \in \mathcal{C}(\mathcal{O}_c)(K_p^{\mathrm{ab}},\mathcal{O}_{K_p}^{\mathrm{ab}})$. Now applying the same arguments as in the proofs of the last parts of Propositions \ref{CMproposition1} and \ref{CMproposition2}, we have proven the statement on $a,b$.

Now we address the periods $\theta(\Omega_p)(y)$. Letting $y' = y \cdot \gamma$ as above, from Definition \ref{canonicaldifferentialdef}, we see that
$$\omega_{\mathrm{can}}(y') = (bc-ad)(c\mathbf{z}_{\mathrm{dR}}(y)+a)^{-1}\cdot\omega_{\mathrm{can}}(y),$$
whereas for $\omega_0 \in \Omega_{A/F}^1$ as in Definition \ref{perioddefinition},
$$\omega_0 = \omega_0(y) = \omega_0(y').$$
So now from Definition \ref{perioddefinition}, we see that
$$\Omega_p(y') = (bc-ad)^{-1}(c\mathbf{z}_{\mathrm{dR}}(y)+a)\cdot \Omega_p(y)$$
which implies
$$\theta_p(\Omega_p)(y') = (bc-ad)^{-1}(c\theta(\mathbf{z}_{\mathrm{dR}})(y)+a)\cdot \theta(\Omega_p)(y).$$
Again, from (the proofs of) Propositions \ref{CMproposition1} and \ref{CMproposition2} we see that $\mathbf{z}_{\mathrm{dR}}(y) \in K_p$ and so $\theta(\mathbf{z}_{\mathrm{dR}})(y) \in K_p$ (since $K_p \subset B_{\mathrm{dR}}^+ \overset{\theta}{\twoheadrightarrow} \mathbb{C}_p$ is the natural inclusion). Hence $\theta(\Omega_p)(y)/\theta(\Omega_p)(y') \in K_p$. Now from Corollary \ref{periodunit} and the previous paragraph we have
$$|\theta(\Omega_p)(y)| = |\theta(\Omega_p)(y')|.$$
This, combined with $\theta(\Omega_p)(y)/\theta(\Omega_p)(y') \in K_p$ implies $\theta(\Omega_p)(y)/\theta(\Omega_p)(y') \in \mathcal{O}_{K_p}^{\times}$. Now we are done.
\end{proof}

\subsection{Weights}\label{closedweights}In this section, we slightly refine our study of the notion of weights for the operator $(p^b\theta_k)^jf$ (for $b$ as in (\ref{integralcorrection2})), and apply it restrictions of sections to the closed adic subspaces $\mathcal{V} \subset \mathcal{Y}_x$ of the open affinoid subdomain $\mathcal{Y}_x$ of $\mathcal{Y}$. Recall our fixed embedding $K \hookrightarrow M_2(\mathbb{Q})$ from (\ref{fixedtorusembedding}), which induces an embedding $\mathcal{O}_{K_p} \hookrightarrow M_2(\mathbb{Z}_p)$ and $\mathcal{O}_{K_p}^{\times} \hookrightarrow GL_2(\mathbb{Z}_p)$ in the case when $p > 2$ or $p$ is not inert in $K$. In particular, one sees by the proof of Proposition \ref{CMtorussameperiods} that for such embeddings, we have
\begin{equation}\label{modulus1}|cz_{\mathrm{dR}}(y)+a| = 1
\end{equation}
for any
$$\left(\begin{array}{ccc}a & b\\
c & d\\
\end{array}\right) \in \Gamma \subset GL_2(\mathbb{Z}_p)$$
with $\Gamma = 1+p\mathcal{O}_{K_p}$ when $p$ is ramified in $K$, and $\Gamma  = \mathcal{O}_{K_p}^{\times}$ when $p$ is inert in $K$. 

Given a closed immersion of adic spaces
$$i : \mathcal{V} \hookrightarrow \mathcal{Y}_x,$$ 
let $\mathcal{O}_{\mathcal{V}}$ denote the pro\'{e}tale structure sheaf on the adic space $\mathcal{V}$, with $p$-adic completion $\hat{\mathcal{O}}_{\mathcal{V}}$. Henceforth, denote
$$\hat{\mathcal{O}}_{\mathcal{Y}_x}|_{\mathcal{V}} := i^{-1}\hat{\mathcal{O}}_{\mathcal{V}}$$
the restriction of $\hat{\mathcal{O}}_{\mathcal{Y}_x}$ to $\mathcal{V}$. 

Note that since $\mathcal{Y}^{\mathrm{ss}} \subset \mathcal{Y}$ is an open affinoid subdomain, then $\mathcal{Y}^{\mathrm{ord}} \subset \mathcal{Y}$ is a closed adic subspace, and so $\mathcal{Y}_x^{\mathrm{ord}} := \mathcal{Y}_x \cap \mathcal{Y}^{\mathrm{ord}} \subset \mathcal{Y}_x$ is a closed adic subspace of $\mathcal{Y}_x$. 

\begin{definition}\label{CMweightdefinition}Let $\Gamma \subset GL_1(\mathcal{O}_{K_p}) = \mathcal{O}_{K_p}^{\times} \subset GL_2(\mathbb{Z}_p)$, where $\mathcal{V} \subset \mathcal{Y}_x$ is an affinoid open or closed adic subspace, be a subgroup such that $\mathcal{V} \cdot \Gamma = \Gamma$. A section $F \in \hat{\mathcal{O}}_{\mathcal{Y}_x}|_{\mathcal{V}}$ if $\mathcal{V} \subset \mathcal{Y}_x$ is closed and $\mathcal{\mathcal{O}}_{\mathcal{Y}_x}(\mathcal{V})$ if $\mathcal{V}$ is open is said to have \emph{weight $k$ for $\Gamma$ on $\mathcal{V}$} if 
$$\left(\begin{array}{ccc}a & b\\
c & d\\
\end{array}\right)^*F = (ad-bc)^{-k}(c\theta(z_{\mathrm{dR}})|_{\mathcal{V}}+a)^kF$$
for all $\left(\begin{array}{ccc}a & b\\
c & d\\
\end{array}\right) \in \Gamma$. 
\end{definition}

\begin{remark}\label{weightremark}Note that if $F \in \mathcal{O}_{\Delta,\mathcal{Y}}(\mathcal{Y}_x)$ is of weight $k$ in the sense of Definition \ref{weightdefinition} for any subgroup $\Gamma \subset GL_2(\mathbb{Z}_p)$, so that $\mathcal{Y}^{\mathrm{ss}} \cdot \Gamma = \mathcal{Y}^{\mathrm{ss}}$, then $\theta(F)  \in \hat{\mathcal{O}}(\mathcal{Y}^{\mathrm{ss}})$ is of weight $k$ on $\mathcal{Y}^{\mathrm{ss}}$ for $\Gamma$ in the sense of Definition \ref{CMweightdefinition}. Moreover, if $\mathcal{V} \cdot \Gamma = \mathcal{V}$ then $\theta(F)|_{\mathcal{V}}$ has weight $k$ for $\Gamma$ on $\mathcal{V}$.
\end{remark}

\begin{proposition}\label{weightchangeproposition}Suppose $F \in \mathcal{O}_{\Delta,\mathcal{Y}}(\mathcal{Y}_x)$ has weight $k$ for $\Gamma \subset GL_2(\mathbb{Z}_p)$ in the sense of Definition \ref{weightdefinition}, and suppose $\mathcal{V} \cdot \Gamma = \mathcal{V}$. Then for any $j \in \mathbb{Z}_{\ge 0}$, $\theta_k^jF|_{\mathcal{V}}$ has weight $k + 2j$ for $\Gamma$ on $\mathcal{V}$. 
\end{proposition}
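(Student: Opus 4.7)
The plan is to reduce the statement to Proposition \ref{weightproposition} together with (the reasoning behind) Remark \ref{weightremark}, with essentially no new computation needed. First I would apply Proposition \ref{weightproposition} directly: since $F$ has weight $k$ for $\Gamma$ on $\mathcal{Y}_x$ in the sense of Definition \ref{weightdefinition}, we obtain $\delta_k^j F \in M_{k+2j,\Delta}(\Gamma)(\mathcal{Y}_x)$, i.e.
\[
\gamma^*(\delta_k^j F) \;=\; (bc-ad)^{-(k+2j)}(c\,z_{\mathrm{dR}}+a)^{k+2j}\,\delta_k^j F
\]
for every $\gamma = \left(\begin{smallmatrix} a & b \\ c & d \end{smallmatrix}\right) \in \Gamma$, as an identity in $\mathcal{O}_{\Delta,\mathcal{Y}}(\mathcal{Y}_x)$.

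Next I would apply $\theta : \mathcal{O}_{\Delta,\mathcal{Y}} \twoheadrightarrow \hat{\mathcal{O}}_{\mathcal{Y}}$ to both sides. The key (very short) observation is that $\theta$ is a morphism of pro\'etale sheaves on $\mathcal{Y}$ and that the $GL_2(\mathbb{Q}_p)$-action is defined functorially on the underlying adic spaces, so pullback by any element of $\Gamma \subset GL_2(\mathbb{Z}_p)$ commutes with $\theta$. In particular $\theta(\gamma^* z_{\mathrm{dR}}) = \gamma^*\theta(z_{\mathrm{dR}})$, and applying $\theta$ yields
\[
\gamma^*(\theta_k^j F) \;=\; (bc-ad)^{-(k+2j)}\bigl(c\,\theta(z_{\mathrm{dR}})+a\bigr)^{k+2j}\,\theta_k^j F
\]
as sections of $\hat{\mathcal{O}}_{\mathcal{Y}}(\mathcal{Y}_x)$, where I use $\theta_k^j = \theta \circ \delta_k^j$ by Definition \ref{ddefinition} and the fact that scalars $a,b,c,d \in \mathbb{Z}_p$ are fixed by $\theta$.

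Finally, since $\mathcal{V}\cdot\Gamma = \mathcal{V}$, the restriction functor $i^{-1}$ from $\mathcal{Y}_x$ to $\mathcal{V}$ is $\Gamma$-equivariant, so restricting the previous identity gives exactly the transformation law required by Definition \ref{CMweightdefinition} for $\theta_k^j F|_{\mathcal{V}} \in \hat{\mathcal{O}}_{\mathcal{Y}_x}|_{\mathcal{V}}$ to have weight $k+2j$ for $\Gamma$ on $\mathcal{V}$.

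There is no serious obstacle here: the entire argument is a bookkeeping exercise combining the weight-raising result already proved for $\delta_k^j$ with the compatibility of $\theta$ and restriction with the $\Gamma$-action. The only point that deserves a sentence of justification is the commutation $\gamma^*\circ\theta = \theta\circ\gamma^*$, which is immediate from the functorial construction of Scholze's relative period sheaves together with the fact that the $GL_2(\mathbb{Q}_p)$-action on $\mathcal{Y}$ is defined at the level of adic spaces over $\mathrm{Spa}(\mathbb{Q}_p,\mathbb{Z}_p)$.
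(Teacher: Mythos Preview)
Your proposal is correct and follows essentially the same approach as the paper. The paper's proof is the one-line statement ``This follows immediately from Remark \ref{weightremark} and by Proposition \ref{weightpropositiontheta}'', and you have simply unpacked this: Proposition \ref{weightpropositiontheta} is itself proved by applying $\theta$ to Proposition \ref{weightproposition}, and Remark \ref{weightremark} is exactly the observation that $\theta$ and restriction to $\mathcal{V}$ are $\Gamma$-equivariant.
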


\begin{proof}This follows immediately from Remark \ref{weightremark} and by Proposition \ref{weightpropositiontheta}. 

\end{proof}

\begin{proposition}\label{weightchangelimitproposition}Suppose that $F$ has weight $k$ on $\mathcal{Y}_x$ for a subgroup $\Gamma \subset GL_2(\mathbb{Z}_p)$ as defined in Definition \ref{weightdefinition}, and let $\mathcal{V} \subset \mathcal{Y}$ be an open affinoid subdomain or closed adic subspace $\mathcal{V} \cdot \Gamma = \mathcal{V}$. Then the limit
$$\lim_{m \rightarrow \infty}\theta_k^{j_0 + (p-1)p^m}F|_{\mathcal{V}}$$
exists for $j_0 \in \mathbb{Z}$, then it has weight $k+2j_0$ for $\Gamma$ on $\mathcal{V}$.
\end{proposition}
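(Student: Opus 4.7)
The plan is to apply Proposition \ref{weightchangeproposition} at each finite $m$ and then pass to the $p$-adic limit, using the continuity of the Galois action together with Fermat's little theorem to control the multiplier. More precisely, set $G_m := \theta_k^{j_0 + (p-1)p^m}F|_{\mathcal{V}}$ and $G := \lim_{m\to\infty}G_m$ (which exists by hypothesis). By Proposition \ref{weightchangeproposition}, each $G_m$ has weight $k + 2j_0 + 2(p-1)p^m$ for $\Gamma$ on $\mathcal{V}$, so for every $\gamma = \begin{pmatrix} a & b \\ c & d \end{pmatrix} \in \Gamma$ we have
\[
\gamma^{*}G_m \;=\; (bc-ad)^{-(k+2j_0+2(p-1)p^m)}(c\theta(z_{\mathrm{dR}})+a)^{k+2j_0+2(p-1)p^m}\,G_m.
\]

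Next I would take the limit $m\to\infty$ of both sides. The left-hand side converges to $\gamma^{*}G$ because $\gamma^{*}$ is a pullback by a morphism of adic spaces and hence continuous on sections of $\hat{\mathcal{O}}_{\mathcal{Y}_x}|_{\mathcal{V}}$. On the right-hand side, the ``stable'' factor $(bc-ad)^{-(k+2j_0)}(c\theta(z_{\mathrm{dR}})+a)^{k+2j_0}$ is independent of $m$, so the only thing to verify is the vanishing of the exponent shift, i.e.\ that
\[
(bc-ad)^{-2(p-1)p^m}\bigl(c\theta(z_{\mathrm{dR}})+a\bigr)^{2(p-1)p^m}\;\longrightarrow\;1
\]
in the $p$-adic topology on sections over $\mathcal{V}$. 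For the determinant factor this is immediate: $bc-ad \in \mathbb{Z}_p^{\times}$ since $\Gamma \subset GL_2(\mathbb{Z}_p)$, so Fermat's little theorem gives $(bc-ad)^{p-1}\in 1+p\mathbb{Z}_p$ and hence $(bc-ad)^{2(p-1)p^m}\to 1$.

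The main obstacle I expect is the analogous statement for the factor $(c\theta(z_{\mathrm{dR}})+a)^{(p-1)p^m}\to 1$: this requires $c\theta(z_{\mathrm{dR}})+a$ to be a unit in $\hat{\mathcal{O}}_{\mathcal{Y}_x}^{+}|_{\mathcal{V}}$, i.e.\ to have $p$-adic absolute value $1$ everywhere on $\mathcal{V}$, so that its $(p-1)$st power lies in $1+p\hat{\mathcal{O}}_{\mathcal{Y}_x}^{+}|_{\mathcal{V}}$. This integrality of $cz_{\mathrm{dR}}+a$ is precisely the condition built into Definition \ref{weightdefinition} guaranteeing that the weight transformation makes sense for $k\in\mathbb{Z}/(p-1)\times\mathbb{Z}_p$, and it is exactly the same condition already needed for the limit $G$ to exist as a $p$-adic limit in a meaningful way (in practice, on the subdomains of interest it follows from (\ref{modulus1}) and Proposition \ref{CMtorussameperiods}). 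Granting this, the two convergences combine to give
\[
\gamma^{*}G \;=\; (bc-ad)^{-(k+2j_0)}(c\theta(z_{\mathrm{dR}})+a)^{k+2j_0}\,G,
\]
which is precisely the statement that $G$ has weight $k+2j_0$ for $\Gamma$ on $\mathcal{V}$ in the sense of Definition \ref{CMweightdefinition}, completing the proof.
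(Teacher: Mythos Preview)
Your proposal is correct and follows essentially the same approach as the paper: apply Proposition \ref{weightchangeproposition} at each finite $m$, interchange $\gamma^*$ with the limit by continuity, and use Fermat's little theorem together with the unit condition $|c\theta(z_{\mathrm{dR}})+a|=1$ from (\ref{modulus1}) to conclude that the multiplier converges to $(bc-ad)^{-(k+2j_0)}(c\theta(z_{\mathrm{dR}})+a)^{k+2j_0}$. You are also right to flag that the integrality of $c\theta(z_{\mathrm{dR}})+a$ is the crux; the paper simply invokes (\ref{modulus1}) at the outset rather than discussing it as an obstacle.
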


\begin{proof}Note first that $|c\theta(z_{\mathrm{dR}}) + a| = 1$ by (\ref{modulus1}), and so 
$$\lim_{m \rightarrow \infty}(c\theta(z_{\mathrm{dR}})+a)^{j_0+2(j_0+(p-1)p^m)} = (c\theta(z_{\mathrm{dR}})+a)^{k+2j_0}.$$
For any $\left(\begin{array}{ccc}a & b\\
c & d\\
\end{array}\right) \in \Gamma$, we compute, using Proposition \ref{weightchangeproposition}, that
\begin{align*}&\left(\begin{array}{ccc}a & b\\
c & d\\
\end{array}\right)^*\lim_{m \rightarrow \infty}\theta_k^{j_0 + (p-1)p^m}F|_{\mathcal{V}} \\
&= \lim_{m \rightarrow \infty}\left(\begin{array}{ccc}a & b\\
c & d\\
\end{array}\right)^*\theta_k^{j_0 + (p-1)p^m}F|_{\mathcal{V}}\\
&= \lim_{m \rightarrow \infty}(ad-bc)^{k+2(j_0+(p-1)p^m)}(c\theta(z_{\mathrm{dR}})+a)^{j_0+2(j_0+(p-1)p^m)}\theta_k^{j_0+ (p-1)p^m}F|_{\mathcal{V}}\\
&= \lim_{m \rightarrow \infty}(ad-bc)^{k+2(j_0+(p-1)p^m)}\lim_{m \rightarrow \infty}(c\theta(z_{\mathrm{dR}})+a)^{j_0+2(j_0+(p-1)p^m)}\lim_{m \rightarrow \infty}\theta_k^{j_0 + (p-1)p^m}F|_{\mathcal{V}}\\
&= (ad-bc)^{k+2a}(c\theta(z_{\mathrm{dR}})+a)^{k+2j_0}\lim_{m \rightarrow \infty}\theta_k^{j_0 + (p-1)p^m}F|_{\mathcal{V}}.\\
\end{align*}
\end{proof}

\subsection{Construction of the $p$-adic $L$-function}\label{padicLfunctionsection} In this section, we construct our 1-variable anticyclotomic $p$-adic $L$-function attached to a given normalized eigenform (i.e. newform or normalized Eisenstein series). Again let $w \in \omega_{\mathcal{A}}^{\otimes k}(Y)$ correspond to a normalized new eigenform of level $\Gamma_1(N)$ and weight $k$ (where $k \ge 0$ is an integer) and of nebentype $\epsilon$, as in the statement of Theorem \ref{MScomparison}. Note then that letting $N_{\epsilon}$ denote the conductor of $\epsilon$, we have $N_{\epsilon}|N$. 

\subsection{Preliminaries for the construction}

Retaining the notation of the previous section, let $K/\mathbb{Q}$ denote an imaginary quadratic field with $K = \mathbb{Q}(\sqrt{-d})$ and $d > 0$ squarefree. Henceforth let $d_K$ denote the fundamental discriminant of $K$. We will make the following \emph{Heegner hypothesis} for $N$, namely
$$\text{for each prime $\ell|N$, we have $\ell$ is split or ramified in $K$, and if $\ell^2|N$ then $\ell$ is split in $K$.}$$
We make the Heegner hypothesis primarily for convenience in this document, and plan to remove or weaken this assumption in forthcoming work. The Heegner hypothesis guarantees the existence of an integral ideal $\frak{N} \subset \mathcal{O}_K$ such that
$$\mathcal{O}_K/\frak{N} = \mathbb{Z}/N.$$
We henceforth fix a choice of such an $\frak{N}$. Since $N_{\epsilon}|N$, this determines an ideal $\frak{N}_{\epsilon}|\frak{N}$ such that
$$\mathcal{O}_K/\frak{N}_{\epsilon} = \mathbb{Z}/N_{\epsilon}.$$

Recall the fixed embedding $i_p : \overline{\mathbb{Q}} \hookrightarrow \overline{\mathbb{Q}}_p$ (from (\ref{fixedembedding}), which determines a $p$-adic completion $K_p \hookrightarrow \overline{\mathbb{Q}}_p$ of $K$. Let $q$ denote the order of the residue field of $\mathcal{O}_{K_p}$. 

\begin{definition}Henceforth, given a complex-valued Hecke character $\chi : K^{\times} \backslash \mathbb{A}_K^{\times} \rightarrow \mathbb{C}^{\times}$ (which we henceforth assume arises from an algebraic Hecke character of ``type $A_0$'' in Weil's sense), we let $\check{\chi}$ denote its $p$-adic avatar. We let $\frak{f}(\chi)$ denote the conductor of $\chi$.

We call a Hecke character $\chi$ over $K$ \emph{central critical with respect to $w$} if it is of infinity type $(k+j,-j)$ for $j \in \mathbb{Z}$ and if it satisfies the following compatibility of central characters:
$$\chi|_{\mathbb{A}_{\mathbb{Q}}^{\times}} = \epsilon^{-1}.$$

Let $\mathbb{N}_K : \mathbb{A}_K^{\times} \rightarrow \mathbb{C}^{\times}$ denote the norm character normalized to have infinity type $(1,1)$. 
\end{definition}

Let $\pi_w$ denote the unitary automorphic representation of $GL_2(\mathbb{A}_{\mathbb{Q}})$ generated by $w$, and $\pi_{\chi^{-1}}$ the unitary automorphic representation of $GL_2(\mathbb{A}_{\mathbb{Q}})$ generated by $\theta_{\chi^{-1}}$. Central criticality forces the automorphic representation $\pi_w \times \pi_{\chi^{-1}}$ to be self-dual, and the center of $L(\pi_w \times \pi_{\chi^{-1}},s)$ the functional equation to be $s = 1/2$. The Heegner hypothesis implies that the local root number 
$$\epsilon_{\ell}(\pi_w \times \pi_{\chi^{-1}},1/2) = \epsilon_{\ell}((\pi_w)_K \otimes \chi^{-1},1/2) = \prod_{v|\ell}\epsilon_v((\pi_w)_K \otimes \chi^{-1},1/2) =  +1$$
for all rational primes $\ell$ and primes $v$ of $K$, where here $(\pi_w)_K$ denotes the base-change of $\pi_w$ to $K$. 
Hence we satisfy the Saito-Tunnell local conditions so that the period integrals, whose squares we eventually relate to $L$-values via a calculation of Waldspurger, which we interpolate are not a priori 0. More precisely, we can view these period integrals as functionals on the (unitary) automorphic representation $(\pi_w)_K \times \chi^{-1}$ over $\mathbb{A}_K$ attached to the pair $(w,\chi^{-1})$, or in other words as elements of the restricted tensor product 
$$\bigotimes_v'\Hom_{K_v^{\times}}((\pi_w)_K \otimes \chi,\mathbb{C}).$$
By the well-known theorem of Saito and Tunnell (see, for example, \cite[Chapter 1, Theorem 1.3]{YuanZhangZhang}), this space is of dimension less than or equal to 1, and is of dimension 1 if and only if the local root numbers $\epsilon_v(w,\chi^{-1},1/2) = +1$ for all finite places $v$ of $K$, and $\epsilon_{\infty}(w,\chi^{-1},1/2) = -1$. In particular, the global root number satisfies
$$\epsilon(w,\chi,1/2) = -1.$$

\begin{example}\label{examplefamily}As an example a family of central critical characters $\{\chi_j\}$ with $\epsilon = 1$, we can take the character defined on ideals by
$$\chi_{-k/2 + h_Kwt}(\frak{a}) = \mathbb{N}_K(a)^{k/2}\left(\frac{a}{\bar{a}}\right)^{wt}$$
where $a \in K^{\times}$ is a generator of $\frak{a}^{h_K}$, where $h_K = \#\mathcal{C}\ell(\mathcal{O}_K)$ and $w =\#\mathcal{O}_K^{\times}/2$. (One can check that the value $\chi_j(\frak{a})$ does not depend on the choice of $a$.) Then letting $j = -k/2 + h_Kwt$, $\chi_j$ gives rise to a Hecke character which is unramified at all finite places, and which has infinity type $(k+j,-j)$. 
\end{example}

Let
\begin{align*}\mathcal{O}_c := \mathbb{Z} + c\mathcal{O}_K \subset \mathcal{O}_K,& \hspace{1cm} \hat{\mathcal{O}}_c^{\times} = \prod_{v\nmid \infty} (\mathcal{O}_c\otimes_{\mathcal{O}_K} \mathcal{O}_{K_v})^{\times}, \hspace{1cm} \hat{\mathcal{O}}_K^{\times} = \prod_{v \nmid \infty} \mathcal{O}_{K_v}^{\times},\\
&\frak{N}_c = \frak{N} \cap \mathcal{O}_c, \hspace{1cm}\frak{N}_{\epsilon,c} = \frak{N}_{\epsilon} \cap \mathcal{O}_c.
\end{align*}

Henceforth, as in \cite[Section 4]{BertoliniDarmonPrasanna} and \cite[Section 8.2]{Brakocevic}, we say a Hecke character $\phi : K^{\times} \backslash \mathbb{A}_K^{\times} \rightarrow \mathbb{C}^{\times}$ has \emph{finite type $(c,\frak{N},\epsilon)$} if
we have $c| \frak{f}(\chi)|c\frak{N}_{\epsilon}$ and 
$$\chi|_{\hat{\mathcal{O}_c}^{\times}} = \psi_{\epsilon}$$
where here, $\psi_{\epsilon}$ is the composition
$$\hat{\mathcal{O}}_c^{\times} \twoheadrightarrow (\hat{\mathcal{O}}_c/(\frak{N}_{\epsilon,c}\hat{\mathcal{O}}_{c}))^{\times} = (\mathcal{O}_K/\frak{N}_{\epsilon})^{\times} = (\mathbb{Z}/N_{\epsilon}\mathbb{Z})^{\times} \xrightarrow{\epsilon^{-1}} \mathbb{C}^{\times}.$$

One can check that each member of the family of examples in Example \ref{examplefamily} has finite type $(c,\frak{N},1)$ (in fact each member has finite type $(1,1,1)$).

\section{Construction of the continuous $p$-adic $L$-function}\label{constructionsection}

Let $\Sigma$ denote the set of central critical characters $\chi$ of infinity type $(k+j,-j)$, where $j \in \mathbb{Z}$ and $j \ge 0$ or $-1 \ge j \ge 1-k$, and of finite type $(c,\frak{N},\epsilon)$, and which satisfy the following conditions on the local signs of the functional equation:
$$\epsilon_{\ell}(w,\chi^{-1}) = +1 \hspace{1cm} \text{for all finite primes $\ell$}.$$
Under our assumptions, this holds automatically except for $\ell$ in the set
$$S_w := \{\ell : \ell|(N,d_K), \ell\nmid N_{\epsilon}\}.$$

Let $\Sigma_+ \subset \Sigma$ denote the subset of such $\chi$ with $j \ge 0$ and $\Sigma_- = \Sigma \setminus \Sigma_+$. Then given $\chi \in \Sigma_+$, $\chi$ is central critical with respect to $w$. Using Waldspurger's calculation involving the Rankin-Selberg formula, we will interpolate (square roots of) central values $L(\pi_w \times \pi_{(\chi\phi)^{-1}},1/2)$ when $\chi \in \Sigma_+$. Let $\check{\Sigma} \subset \Hom_{\mathrm{cts}}(\Gamma^-,\mathcal{O}_{\mathbb{C}_p})$ denote the set of $p$-adic avatars of elements of $\Sigma$, and similarly with $\check{\Sigma}_+$ and $\check{\Sigma}_-$. We consider $\check{\Sigma}$ as a subspace of the space $\mathrm{Fun}(\mathbb{A}_K^{\times,(p\infty)},\mathcal{O}_{\mathbb{C}_p})$ of functions $\mathbb{A}_K^{\times,(p\infty)} \rightarrow \mathcal{O}_{\mathbb{C}_p}$ equipped with the uniform convergence topology. We let $\overline{\check{\Sigma}}_+$ denote the closure of $\check{\Sigma}_+$ in $\mathrm{Fun}(\mathbb{A}_K^{\times,(p\infty)},\mathcal{O}_{\mathbb{C}_p})$. One can in fact show, using Example \ref{examplefamily}, that $\check{\Sigma}_- \subset \overline{\check{\Sigma}}_+$. 

Henceforth, let $h_c = \#\mathcal{C}\ell(\mathcal{O}_c)$ and let $\{\frak{a}\}$ be a fixed full set of representatives of $\mathcal{C}\ell(\mathcal{O}_c)$ which are prime to $\frak{N}cp$. Recall the Shimura action of the ideals $\mathbb{I}^{p\frak{N}}$ of $\mathcal{O}_c$ which are prime to $p\frak{N}$ on the set of elliptic curves $A$ with CM by $\mathcal{O}_c$:
$$\frak{a}\star A = A/A[\frak{a}], \hspace{1cm} \pi_{\frak{a}} : A \twoheadrightarrow \frak{a} \star A$$
as in Definition \ref{Shimuraactiondefinition}. Then $\pi_{\frak{a}}$ sends $\Gamma_1(N)$-level structures to $\Gamma_1(N)$-level structures and $\Gamma(p^{\infty})$-level structures to $\Gamma(p^{\infty})$-level structures, and so gives an action of $\mathbb{I}^{p\frak{N}}$ on pairs $(A,t)$ and triples $(A,t,\alpha)$
$$\frak{a}\star(A,t) = (\frak{a}\star A,\pi_{\frak{a}}(t)),\hspace{1cm} \frak{a}\star(A,t,\alpha) = (\frak{a}\star A,\pi_{\frak{a}}(t),\pi_{\frak{a}}(\alpha)).$$
Given a differential $\omega \in \Omega_{A/\mathbb{C}_p}^1$, let $\omega_{\frak{a}} \in \Omega_{(\frak{a}\star A)/\mathbb{C}_p}^1$ be the unique differential such that $\pi_{\frak{a}}^*\omega_{\frak{a}} = \omega$. 

Now fix a CM elliptic curve $A/H$, a $\Gamma_1(N)$-level structure $t : \mathbb{Z}/N \xrightarrow{\sim} A[\frak{N}] \subset A[N]$, and a $\Gamma(p^{\infty})$-level structure $\alpha : \mathbb{Z}_p^{\oplus 2} \xrightarrow{\sim} T_pA$ for any $(A,\alpha) = (A,t,\alpha) \in \mathcal{Y}(\mathbb{C}_p,\mathcal{O}_{\mathbb{C}_p})$ above the point $(A,t) \in Y(\mathbb{C}_p,\mathcal{O}_{\mathbb{C}_p})$.
Now write the pullback of the normalized eigenform $w$ to $\mathcal{Y}_x$ as 
$$w|_{\mathcal{Y}_x} = y_{\mathrm{dR}}^kf\cdot \omega_{\mathrm{can}}^{\otimes k},$$
where $f \in \mathcal{O}(\mathcal{Y}_x)$. Let $(y_{\mathrm{dR}}^kf)^{\flat}(q_{\mathrm{dR}})$ be as in (\ref{stabilizedqdRexp}). 

Recall the complex-analytic universal cover $\mathcal{H}^+ \rightarrow Y$, where $\mathcal{H}^+$ denotes the complex upper half-plane, and write
$$w|_{\mathcal{H}^+} = F \cdot (2\pi idz)^{\otimes k}$$
where $dz$ is induced by the standard holomorphic differential on $\mathbb{C}$ via the uniformization of the universal elliptic curve $\mathbb{C}/(\mathbb{Z}+\mathbb{Z}\tau)$. 

Let $\chi \in \Sigma_+$ be a central critical character for $w$ of infinity type $(k+j,-j)$ where $j \ge 0$ and of finite type $(c,\frak{N},\epsilon)$. We recall the classical result of Waldspurger relating the algebraic part of the central critical value 
$$L(F,\chi^{-1},0) := L((\pi_w)_K \times\chi^{-1},1/2)$$
to the toric period, as explicitly calculated in our situation by Bertolini-Darmon-Prasanna.

\begin{theorem}[\cite{BertoliniDarmonPrasanna} Theorem 5.4]\label{BDPalgebraicity}
Let $\chi \in \Sigma_+$ be as above, and suppose that $c$ and $d_K$ are odd, and let $w_K = \#\mathcal{O}_K^{\times}$. 
Then we have 
\begin{equation}\label{complexLvalue}C(w,\chi,c)\cdot L(F,\chi^{-1},0) =  \sigma(w,\chi)\cdot \left(\sum_{[\mathfrak{a}] \in \mathcal{C}\ell(\mathcal{O}_c)}(w,\mathbb{N}_K^j\chi)^{-1}(\frak{a})\frak{d}_k^jF(\frak{a}\star (A,t))\right)^2,
\end{equation}
where the representatives $\frak{a}$ of classes of $\mathcal{C}\ell(\mathcal{O}_c)$ are chosen to be prime to $\frak{N}_c$, and \begin{align*}&C(w,\chi,c) \\
&= \frac{1}{4}\pi^{k+2j-1}\Gamma(j+1)\Gamma(k+j)w_k|d_K|^{1/2}\cdot c\cdot \mathrm{vol}(\mathcal{O}_c)^{-(k+2j)}\cdot 2^{\# S_w}\cdot \prod_{\ell|c}\frac{\ell - \epsilon_K(\ell)}{\ell-1} \in \mathbb{C}^{\times}
\end{align*}
and
$$\sigma(w,\chi) \in \mathbb{C}^{\times}, \hspace{1cm} |\sigma(w,\chi)| = 1,$$
are constants depending on $w, \chi$ and $c$. (For a precise definition of $\sigma(w,\chi)$, see (5.1.11) of loc. cit. and the discussion preceding it.) Here, we note that the sum (\ref{complexLvalue}) is well-defined (i.e., independent of the choice of representatives $\frak{a}$ of elements of $\mathcal{C}\ell(\mathcal{O}_c)$) by the central-criticality and $(c,\frak{N},\epsilon)$-finite typeness of $\chi$. 
\end{theorem}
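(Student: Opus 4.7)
Since this is cited as Theorem 5.4 of Bertolini--Darmon--Prasanna, the plan is to reproduce, at the level of strategy, the argument given there, which ultimately rests on the Rankin--Selberg integral representation of $L((\pi_w)_K\times\chi^{-1},s)$ together with an explicit Waldspurger-type toric period computation. The first step is to realize the central critical $L$-value as (the square of) a toric period. More precisely, one integrates the theta lift associated to $\chi$ against $w$ (viewed adelically on $GL_2(\mathbb{A}_{\mathbb{Q}})$), or equivalently one computes the Petersson inner product of the base-changed form $w_K$ against a theta series $\theta_{\chi^{-1}}$. By Waldspurger's formula, the square of the global toric integral of $w_K\cdot\chi$ over $[K^{\times}\backslash\mathbb{A}_K^{\times}]$ equals $L(\pi_w\times\pi_{\chi^{-1}},1/2)$ times an explicit product of local factors, and our Heegner/sign hypotheses ensure the local root numbers are all $+1$ so that the toric integral is a priori nonzero.

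Second, I would unfold the global toric period as a finite sum over CM points on the modular curve. The conductor condition $(c,\frak{N},\epsilon)$ on $\chi$ matches the level structure at the CM point $(A,t)$: the decomposition $\mathbb{A}_K^{\times}=K^{\times}\cdot\hat{\mathcal{O}}_c^{\times}\cdot K_{\infty}^{\times}\cdot(\mathcal{C}\ell(\mathcal{O}_c)\text{-representatives})$ converts the adelic integral to a sum over $[\frak{a}]\in \mathcal{C}\ell(\mathcal{O}_c)$, with the finite part of $\chi$ producing the weighting $\chi^{-1}(\frak{a})$ and the Shimura reciprocity identification producing the translates $\frak{a}\star(A,t)$. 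The archimedean component of the integral is responsible for the insertion of the Maass--Shimura derivative: for $\chi$ of infinity type $(k+j,-j)$, the matrix coefficient of the discrete series at infinity forces one to pair $w$ with a weight-$(k+2j)$ vector, which is exactly $\mathfrak{d}_k^j F$. This accounts for the appearance of $\left(\sum_{[\frak{a}]}(\mathbb{N}_K^j\chi)^{-1}(\frak{a})\,\mathfrak{d}_k^j F(\frak{a}\star(A,t))\right)^{\!2}$ in the formula.

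Third, I would compute the explicit constants. The factor $C(w,\chi,c)$ arises from three sources: (a) the standard gamma factors and powers of $\pi$ coming from the archimedean integral of the matrix coefficient, giving $\pi^{k+2j-1}\Gamma(j+1)\Gamma(k+j)$; (b) the volume normalization of the CM torus, which contributes $\mathrm{vol}(\mathcal{O}_c)^{-(k+2j)}$ and the factor $|d_K|^{1/2}\cdot c\cdot\prod_{\ell\mid c}(\ell-\epsilon_K(\ell))/(\ell-1)$ coming from comparing the measure on $\hat{\mathcal{O}}_c^{\times}$ with that on $\hat{\mathcal{O}}_K^{\times}$; (c) the local Waldspurger constants at primes $\ell\in S_w$, which each contribute a factor of $2$. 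The unit complex number $\sigma(w,\chi)$ absorbs the remaining local phases (normalizations of new vectors vs.\ local test vectors at ramified places, plus the Atkin--Lehner eigenvalues at primes dividing $N$), and its precise shape is read off from BDP~(5.1.11).

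The hard part will be the local computations at the bad primes, in particular at places $\ell\mid(N,d_K)$ (where Saito--Tunnell forces a careful choice of local test vector), at primes dividing the conductor $c$, and at the ramified archimedean place (where one must match Shimura's holomorphic projection / Maass--Shimura operator against the local Whittaker model of the discrete series). The oddness hypothesis on $c$ and $d_K$ enters precisely to simplify the local calculation at $2$; removing it would require tracking additional local constants. These local matchings are exactly what BDP carry out in detail in their Section~4--5, and the plan is simply to invoke their computation verbatim.
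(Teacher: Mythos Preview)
The paper does not prove this theorem; it is stated as a citation of \cite[Theorem 5.4]{BertoliniDarmonPrasanna} and used as a black box input to the subsequent constructions (in particular to establish (\ref{Lvaluecomparison}) and Theorem \ref{approximateinterpolationproperty}). So there is nothing in the paper for your proposal to be compared against.

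That said, your sketch is a faithful outline of how Bertolini--Darmon--Prasanna actually prove this: Rankin--Selberg unfolding and Waldspurger's formula to express the central value as the square of a toric period, the class-group decomposition of the adelic torus integral into a finite sum over CM points $\frak{a}\star(A,t)$, the archimedean computation that produces the Maass--Shimura derivative $\frak{d}_k^j$ for infinity type $(k+j,-j)$, and the bookkeeping of local constants yielding $C(w,\chi,c)$ and the phase $\sigma(w,\chi)$. If you were asked to supply a proof here, the right thing to do is exactly what you did---point to BDP Sections 4--5 and summarize the structure---since none of this is redone in the present paper.
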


\begin{assumption}\label{goodchoice}Now let $y = (A,\alpha) = (A,t,\alpha) \in \mathcal{C}(\mathcal{O})(\overline{K}_p,\mathcal{O}_{\overline{K}_p})$ for $\mathcal{C}(\mathcal{O})$ as defined in Section \ref{CMtorus}. Hence $\langle \alpha_1 \pmod{p} \rangle = A[\frak{p}]$ if $p$ is ramified in $K$. (Recall that $\frak{p} \subset \mathcal{O}_K$ is the prime above $p$.) 
\end{assumption}

Then Theorem \ref{MScomparison} along with Proposition \ref{sameperiod}, Proposition \ref{CMtorussameperiods} and Propositon \ref{CMproposition2} (in the case when $p = 2$ is inert) establishes the equality of algebraic numbers
\begin{equation}\label{Lvaluecomparison}\begin{split}&i_{\infty}^{-1}\left(\frac{C(w,\chi,c)\sigma(w,\chi)^{-1}}{\Omega_{\infty}^{2(k+2j)}}\cdot L(F,\chi^{-1},0)\right) \\
&i_{\infty}^{-1}\left(\frac{1}{\Omega_{\infty}^{2(k+2j)}(A,t)}\cdot \left(\sum_{[\mathfrak{a}] \in \mathcal{C}\ell(\mathcal{O}_c)}(w,\mathbb{N}_K^j\chi)^{-1}(\frak{a})\frak{d}_k^jF(\frak{a}\star (A,t))\right)^2\right) \\
&= i_p^{-1}\left(\left(\frac{1}{p^b\theta\Omega_p(A,t,\alpha)^2}\right)^{k+2j}\cdot\left(\sum_{[\mathfrak{a}] \in \mathcal{C}\ell(\mathcal{O}_K)}(\check{\mathbb{N}}_K^j\check{\chi})^{-1}(\frak{a})(p^b\theta_k^j)(y_{\mathrm{dR}}^kf)(\frak{a}\star (A,t,\alpha))\right)^2\right).
\end{split}
\end{equation}
We note that
$$|p^b\Omega_p(A,t,\alpha)^2| = 1$$
by Corollary \ref{periodunit}.

Again the sum in the last line of (\ref{Lvaluecomparison}) is well-defined, which is seen as follows. For any $a \in K^{\times}$ which is prime to $pc\frak{N}$, if we change all representatives $\frak{a}$ to $\frak{a}a$, then using our previously fixed embeddings $K^{\times} \hookrightarrow GL_2(\mathbb{Q}) \overset{i_p}{\hookrightarrow} GL_2(\mathbb{Q}_p)$ and invoking Proposition \ref{weightproposition}, we have
\begin{equation}\label{invariance1}\begin{split}(p^b\theta_k)^j(y_{\mathrm{dR}}^kf)(\frak{a}a\star (A,t,\alpha)) &= (p^b\theta_k)^j(y_{\mathrm{dR}}^kf)(\frak{a}\star (A,t,\alpha\cdot a)) \\
&= a^{k+2j}\epsilon(a)(p^b\theta_k)^j(y_{\mathrm{dR}}^kf)(\frak{a}\star (A,t,\alpha)).
\end{split}
\end{equation}
We also have
$$(\mathbb{N}_K^j\chi)^{-1}(a\frak{a}) = a^{-(k+2j)}\epsilon^{-1}(a)(\mathbb{N}_K^j\chi)^{-1}(\frak{a}),$$
since $\chi$ has infinity type $(k+j,-j)$ and finite type $(c,\frak{N},\epsilon)$. This with (\ref{invariance1}) gives the desired invariance. 

\begin{definition}\label{padicLfunctiondefinition}Recall that $A/F$ is a fixed elliptic curve with CM by $\mathcal{O}_c \subset \mathcal{O}_K$, and $(A,\alpha) = (A,t,\alpha)$ is chosen to be as in Assumptions (\ref{goodchoice}). For this choice, we define the \emph{anticyclotomic $p$-adic $L$-function attached to $w$} as a function 
$$\mathcal{L}_{p,\alpha}(w,\cdot) : \overline{\check{\Sigma}}_+ \rightarrow \mathbb{C}_p$$
given by 
$$\mathcal{L}_{p,\alpha}(w,\check{\chi}) := \sum_{[\mathfrak{a}] \in \mathcal{C}\ell(\mathcal{O}_K)}(\check{\mathbb{N}}_K^j\check{\chi})^{-1}(\frak{a})(p^b\theta_k)^j((y_{\mathrm{dR}}^kf)^{\flat}(q_{\mathrm{dR}}))(\frak{a}\star (A,t,\alpha))$$
if $\check{\chi}|_{\mathcal{O}_{K_p}^{\times}}(x_p) = x_p^{k+j}\overline{x_p}^{-j}$.
\end{definition}

\begin{proposition}\label{choiceofalpha}The $p$-adic $L$-function $\mathcal{L}_{p,\alpha}(w,\cdot)$ depends on the choice of $\alpha$ in the fixed point $(A,t,\alpha) \in \mathcal{Y}(\mathbb{C}_p,\mathcal{O}_{\mathbb{C}_p})$ as in Assumption \ref{goodchoice} up to a continuous character $\overline{\check{\Sigma}}_+  \rightarrow \mathcal{O}_{K_p}^{\times}$. 
\end{proposition}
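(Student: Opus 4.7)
The plan is to reduce to the case $\alpha' = \alpha\cdot\gamma$ for some $\gamma$ in the deck group of $\mathcal{C}(\mathcal{O}_c)\to C(\mathcal{O}_c)$, establish invariance of $\mathbf{z}_{\mathrm{dR}}$ under the prime-to-$p$ Shimura action, and then conclude using the weight transformation of $p$-adic Maass-Shimura derivatives.

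By Section \ref{CMtorus}, the fibers of $\mathcal{C}(\mathcal{O}_c)\to C(\mathcal{O}_c)$ are torsors under $\Gamma$, where $\Gamma := \mathcal{O}_{K_p}^{\times}$ in the inert case and $\Gamma := 1+\frak{p}\mathcal{O}_{K_p}$ in the ramified case, both embedded in $GL_2(\mathbb{Z}_p)$ via (\ref{fixedtorusembedding}). Since $(A,t,\alpha)$ and $(A,t,\alpha')$ both satisfy Assumption \ref{goodchoice}, I may write $\alpha' = \alpha\cdot\gamma$ for some $\gamma = \left(\begin{smallmatrix}a&b\\c&d\end{smallmatrix}\right) \in \Gamma$. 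Because the Shimura action of $\mathbb{I}^{p\frak{N}}$ commutes with the $GL_2(\mathbb{Q}_p)$-action on level structures, we then have $\frak{a}\star(A,t,\alpha') = (\frak{a}\star(A,t,\alpha))\cdot\gamma$ for every representative $\frak{a}$ of $\mathcal{C}\ell(\mathcal{O}_c)$.

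Next I would establish that $\theta(z_{\mathrm{dR}})(\frak{a}\star y) = \theta(z_{\mathrm{dR}})(y)$ for any $\frak{a}$ prime to $p\frak{N}$ and any CM point $y$. The isogeny $\pi_{\frak{a}}$ has degree prime to $p$, hence induces isomorphisms on $T_p$ and on relative de Rham cohomology intertwining the Hodge filtration and the comparison $\iota_{\mathrm{dR}}$; by the definition (\ref{x'y'definition}) the fundamental periods $\mathbf{x}_{\mathrm{dR}}, \mathbf{y}_{\mathrm{dR}}$, and hence $\mathbf{z}_{\mathrm{dR}}$, are invariant. Writing $w|_{\mathcal{Y}_x} = y_{\mathrm{dR}}^kf\cdot \omega_{\mathrm{can}}^{\otimes k}$, the $GL_2(\mathbb{Q}_p)$-invariance of $w$ pulled back from $Y$ combined with (\ref{s'transformationprop}) shows $y_{\mathrm{dR}}^kf$ has weight $k$ for $GL_2(\mathbb{Z}_p)$ in the sense of Definition \ref{weightdefinition}. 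Combining Proposition \ref{weightpropositiontheta}, equation (\ref{limitisstabilization}), and Proposition \ref{weightchangelimitproposition}, whose hypothesis $|c\theta(z_{\mathrm{dR}})+a|=1$ on $\Gamma$ holds by the analysis of Section \ref{closedweights} together with Propositions \ref{CMproposition1} and \ref{CMproposition2}, the stabilized derivative $(p^b\theta_k)^j((y_{\mathrm{dR}}^kf)^{\flat}(q_{\mathrm{dR}}))$ has weight $k+2j$ for $\Gamma$ on a suitable $\Gamma$-stable neighborhood of the CM torus.

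Applying this weight transformation at $\frak{a}\star(A,t,\alpha') = (\frak{a}\star(A,t,\alpha))\cdot\gamma$ and using the isogeny invariance to factor the $\frak{a}$-independent multiplier out of the class group sum yields
\[
\mathcal{L}_{p,\alpha'}(w,\check{\chi}) = (bc-ad)^{-(k+2j(\check{\chi}))}(c\theta(z_{\mathrm{dR}})(A,t,\alpha)+a)^{k+2j(\check{\chi})}\cdot\mathcal{L}_{p,\alpha}(w,\check{\chi}).
\]
Using the transformation rule for $\omega_{\mathrm{can}}$ from Definition \ref{canonicaldifferentialdef} and Definition \ref{perioddefinition} of $\Omega_p$, this multiplier can be identified with $(\Omega_p(A,t,\alpha')/\Omega_p(A,t,\alpha))^{k+2j(\check{\chi})}$, whose base lies in $\mathcal{O}_{K_p}^{\times}$ by Proposition \ref{CMtorussameperiods}; continuity in $\check{\chi} \in \overline{\check{\Sigma}}_+$ then follows from continuity of $j(\check{\chi})$ as determined by $\check{\chi}|_{\mathcal{O}_{K_p}^{\times}}$. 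The main obstacle I anticipate is rigorously verifying the hypotheses of Proposition \ref{weightchangelimitproposition} for the stabilized derivative: Theorem \ref{keycorollary} furnishes the existence of the limit pointwise in stalks at CM points, so one must propagate this convergence to an affinoid neighborhood stable under $\Gamma$ and control uniformity across the class group representatives $\frak{a}$. The inert case is especially delicate, since the residue field $\mathbb{F}_{p^2}$ of $\mathcal{O}_{K_p}$ means Fermat-style convergence of the factors $u^{2(p-1)p^m}$ is not automatic, and one must use the specific CM-theoretic structure of $u = (c\theta(z_{\mathrm{dR}})+a)/(bc-ad)$ to ensure stabilization.
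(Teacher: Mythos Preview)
Your approach is essentially the same as the paper's: write $\alpha' = \alpha\cdot\gamma$, apply the weight-$(k+2j)$ transformation law for the stabilized Maass--Shimura derivative (via Theorem \ref{keycorollary} and Proposition \ref{weightchangelimitproposition}) to obtain the multiplier $(bc-ad)^{-(k+2j)}(c\,\theta(z_{\mathrm{dR}})(A,t,\alpha)+a)^{k+2j}$, and then invoke Propositions \ref{CMproposition2} and \ref{CMtorussameperiods} to see this lies in $\mathcal{O}_{K_p}^{\times}$. Your explicit isogeny-invariance step for $\theta(z_{\mathrm{dR}})$ under the prime-to-$p$ Shimura action fills in something the paper uses tacitly when factoring the multiplier out of the class-group sum, and the convergence concern you flag in the inert case is about the exact shape of the multiplier rather than its membership in $\mathcal{O}_{K_p}^{\times}$, so it does not obstruct the conclusion as stated.
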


\begin{proof}Let $(A,t,\alpha), (A,t,\alpha')$ be as in Assumption \ref{goodchoice}. Then we can write
$$(A,t,\alpha') = (A,t,\alpha)\cdot \left(\begin{array}{ccc}a & b\\
c & d\\
\end{array}\right)$$
for some $\gamma = \left(\begin{array}{ccc}a & b\\
c & d\\
\end{array}\right) \in GL_2(\mathbb{Z}_p)$. 
By Theorem \ref{keycorollary} and Proposition \ref{weightchangelimitproposition}, we see that 
\begin{align*}&((y_{\mathrm{dR}}^kf)^{\flat}(q_{\mathrm{dR}}))(\frak{a}\star (A,t,\alpha)\cdot \gamma) \\
&= (bc-ad)^{-(k+2j)}(cz_{\mathrm{dR}}(A,y,\alpha)+a)^{k+2j}((y_{\mathrm{dR}}^kf)^{\flat}(q_{\mathrm{dR}}))(\frak{a}\star (A,t,\alpha)).
\end{align*}
Now by Propositions \ref{CMproposition2} and \ref{CMtorussameperiods}, we see that 
$$(bc-ad)^{-(k+2j)}(cz_{\mathrm{dR}}(A,y,\alpha)+a)^{k+2j} \in \mathcal{O}_{K_p}^{\times}$$
and hence we have that 
$$\frac{\mathcal{L}_{p,\alpha'}(w,\cdot)}{\mathcal{L}_{p,\alpha}(w,\cdot)} : \overline{\check{\Sigma}}_+ \rightarrow \mathcal{O}_{K_p}^{\times}$$
is a character given by 
$$\check{\chi} \mapsto (bc-ad)^{-(k+2j)}(cz_{\mathrm{dR}}(A,y,\alpha)+a)^{k+2j}$$
if $\check{\chi}|_{\mathcal{O}_{K_p}^{\times}}(x_p) = x_p^{k+j}\overline{x_p}^{-j}$.
\end{proof}

\begin{theorem}\label{thm:maintheorem}
$$\mathcal{L}_{p,\alpha}(w,\cdot) : \overline{\check{\Sigma}}_+ \rightarrow \mathbb{C}_p$$
is a continuous function.  
\end{theorem}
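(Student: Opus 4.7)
The strategy is to reduce the continuity of $\mathcal{L}_{p,\alpha}(w,\cdot)$ on the closure $\overline{\check{\Sigma}}_+$ to uniform continuity on the dense subset $\check{\Sigma}_+$. Since $\mathbb{C}_p$ is complete and $\check{\Sigma}_+$ is dense in $\overline{\check{\Sigma}}_+$ by definition, a uniformly continuous function on $\check{\Sigma}_+$ (in the topology inherited from $\mathrm{Fun}(\mathbb{A}_K^{\times,(p\infty)},\mathcal{O}_{\mathbb{C}_p})$) extends uniquely and continuously to the closure, and this extension agrees with the formula defining $\mathcal{L}_{p,\alpha}(w,\check{\chi})$.

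The first step is to verify that Theorem \ref{keycorollary} applies at each of the finitely many CM points $y_{\frak{a}} := \frak{a}\star(A,t,\alpha)$ for $[\frak{a}] \in \mathcal{C}\ell(\mathcal{O}_c)$. By Proposition \ref{CMtorussameperiods} combined with Propositions \ref{CMproposition1} and \ref{CMproposition2}, the invariants $a$ and $b$ from (\ref{integralcorrection}) and (\ref{integralcorrection2}) are common to the entire CM orbit, and the integrality hypothesis
$$|\theta(y_{\mathrm{dR}})(y_{\frak{a}}) p^{b}/\mathbf{z}(y_{\frak{a}})| < p^{1/(p-1)}$$
holds uniformly in $\frak{a}$ in both the $p$-inert case (where all three factors have absolute value $1$) and the $p$-ramified case with $p>2$ (where the values $\theta(y_{\mathrm{dR}}) = -1/2$, $b=0$, $|\mathbf{z}| = |1/(c\sqrt{-d})|$ give the estimate directly). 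Theorem \ref{keycorollary} then furnishes, for each $[\frak{a}]$, a continuous function
$$T_{\frak{a}} : \mathbb{Z}/(p-1) \times \mathbb{Z}_p \longrightarrow \mathcal{O}_{\mathbb{C}_p}, \qquad T_{\frak{a}}(j) := (p^b \theta_k)^{j}\bigl((y_{\mathrm{dR}}^k f)^{\flat}(q_{\mathrm{dR}})\bigr)(y_{\frak{a}}),$$
extending the values at $j \in \mathbb{Z}_{\ge 0}$ by the limit formula of that theorem.

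The second step is to decompose the dependence on $\check{\chi}$ into two continuous pieces: the finite character values $\check{\chi}(\frak{a})$ (continuous in the uniform topology by construction, since each $\frak{a}$ represents a fixed point of $\mathbb{A}_K^{\times,(p\infty)}$), and the infinity-type exponent $j(\check{\chi}) \in \mathbb{Z}/(p-1) \times \mathbb{Z}_p$. For the latter, one reads off $j$ from the behavior of $\check{\chi}$ on a principal id\`ele $(\beta)$, $\beta \in K^{\times}$ prime to $p\infty$ and of large $p$-adic order in $\mathcal{O}_{K_p}^{\times}$: central-criticality together with the finite-type-$(c,\frak{N},\epsilon)$ normalization forces
$$\check{\chi}(\beta) = \beta_p^{-(k+j)} \bar{\beta}_p^{\,j} \cdot \psi_{\epsilon}^{-1}(\beta),$$
so that, for each fixed $n$, the residue of $j$ modulo $(p-1)p^n$ is determined by $\check{\chi}(\beta) \bmod p^{n+1}$ and hence varies continuously in the uniform topology. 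With both continuities in hand, the identity
$$\mathcal{L}_{p,\alpha}(w,\check{\chi}) = \sum_{[\frak{a}]\in\mathcal{C}\ell(\mathcal{O}_c)} \bigl(\check{\mathbb{N}}_K^{\,j(\check{\chi})} \check{\chi}\bigr)^{-1}(\frak{a}) \cdot T_{\frak{a}}\bigl(j(\check{\chi})\bigr)$$
exhibits $\mathcal{L}_{p,\alpha}(w,\cdot)$ as a finite sum of products of continuous functions of $\check{\chi}$, giving the desired continuity.

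The principal obstacle will be the second bullet: making rigorous the continuous extraction of $j(\check{\chi}) \in \mathbb{Z}/(p-1)\times\mathbb{Z}_p$ from a mere element of $\overline{\check{\Sigma}}_+$, which is a priori only a continuous function on $\mathbb{A}_K^{\times,(p\infty)}$ rather than an algebraic Hecke character. One must check that the subspace $\check{\Sigma}_+$ is embedded in $\mathrm{Fun}(\mathbb{A}_K^{\times,(p\infty)},\mathcal{O}_{\mathbb{C}_p})$ in such a way that the infinity-type map extends continuously to the closure, which amounts to a careful structural analysis of the central-critical, finite-type-$(c,\frak{N},\epsilon)$ constraint relative to the uniform topology; the essential point is that two central-critical characters of distinct infinity types whose restrictions to $\mathbb{A}_K^{\times,(p\infty)}$ are close in uniform norm must have infinity-type exponents close in $\mathbb{Z}/(p-1)\times\mathbb{Z}_p$, which follows from testing against principal id\`eles of sufficiently large valuation at $p$.
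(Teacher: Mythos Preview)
Your proposal is correct and follows essentially the same route as the paper's proof: show that two characters close in the uniform topology on $\mathbb{A}_K^{\times,(p\infty)}$ must have infinity-type exponents $j$ close in $\mathbb{Z}/(p-1)\times\mathbb{Z}_p$, and then invoke the continuity statement of Theorem~\ref{keycorollary} at each of the finitely many CM points $\frak{a}\star(A,t,\alpha)$. The paper's version is extremely terse---it simply evaluates on id\`eles congruent to $1\pmod{\frak{N}}$ to read off $j_1\equiv j_2\pmod{(p-1)p^{m-1}}$ from $\check{\chi}_1\equiv\check{\chi}_2\pmod{p^m}$, then cites Theorem~\ref{keycorollary}---whereas you spell out the decomposition $\mathcal{L}_{p,\alpha}(w,\check{\chi})=\sum_{[\frak{a}]}(\check{\mathbb{N}}_K^{j}\check{\chi})^{-1}(\frak{a})\,T_{\frak{a}}(j)$ and the verification of the hypotheses of Theorem~\ref{keycorollary} via Propositions~\ref{CMproposition1}, \ref{CMproposition2}, and~\ref{CMtorussameperiods}; but the skeleton is the same, and the ``principal obstacle'' you flag is exactly the one-line step the paper handles by its choice of test id\`eles.
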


\begin{proof}Let $\check{\chi}_1, \check{\chi}_2 \in \overline{\check{\Sigma}}_+$ with $\check{\chi}_i|_{\mathcal{O}_{K_p}^{\times}}(x_p) = x_p^{k+j_i}\overline{x_p}^{-j_i}$ for $i = 1,2$ and which satisfy
$$\check{\chi}_1(\frak{a}) \equiv \check{\chi}_2(\frak{a}) \pmod{p^m\mathcal{O}_{\mathbb{C}_p}}$$
for $\frak{a} \in \mathbb{A}_{K}^{\times,(p\infty)}$. Then evaluating on id\`{e}les congruent with $1 \pmod{\frak{N}}$,, we see that 
$$j_1 \equiv j_2 \pmod{(p-1)p^{m-1}}.$$
Now the continuity of $\mathcal{L}_{p,\alpha}(w,\cdot)$ follows from the continuity statements in Theorem \ref{keycorollary}. 
\end{proof}

\subsection{Interpolation}

We have the following ``approximate interpolation property'' for $\mathcal{L}_{p,\alpha}(w,\cdot)$. Here we make the assumption that $d_K$ is \emph{odd} in order to use Theorem \ref{BDPalgebraicity}, though we expect to remove this assumption in the future.
\begin{theorem}\label{approximateinterpolationproperty}Suppose that the fundamental discriminant $d_K$ is odd, and suppose $\{\chi_j\} \subset \Sigma_+$ is a sequence of algebraic Hecke characters where $\chi_j$ has infinity type $(k+j,-j)$, such that $\check{\chi}_j \rightarrow \check{\chi}$ in $\overline{\check{\Sigma}}_+ \setminus \check{\Sigma}_+$. Then we have 
\begin{equation}\label{approximateinterpolationformula}\begin{split}&\mathcal{L}_{p,\alpha}(w,\check{\chi})^2 \\
&= \lim_{\check{\chi}_j \rightarrow \check{\chi}}\left(\frac{1}{p^b\theta(\Omega_p)(A,t,\alpha)^2}\right)^{k+2j} i_{\infty}^{-1}\left(\frac{C(w,\chi_j,c)\sigma(w,\chi_j)^{-1}}{\Omega_{\infty}(A,t)^{2(k+2j)}}\cdot L(F,\chi_j^{-1},0)\right). 
\end{split}
\end{equation}
\end{theorem}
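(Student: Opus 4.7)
I would start from the identity (\ref{Lvaluecomparison}), which combines the BDP/Waldspurger formula (Theorem \ref{BDPalgebraicity}) with the comparison between complex and $p$-adic Maass-Shimura operators at CM points (Theorem \ref{MScomparison}). For each $\chi_n \in \Sigma_+$ of infinity type $(k+j_n,-j_n)$ in the approximating sequence, this identity---together with Proposition \ref{sameperiod} to propagate periods across the Shimura orbit and Corollary \ref{periodunit} which yields $|p^b\theta(\Omega_p)^2|=1$---expresses the right-hand side of (\ref{approximateinterpolationformula}) as the square of
\[
\bigl(p^b\theta(\Omega_p)(A,t,\alpha)^2\bigr)^{-(k+2j_n)}\cdot S_n, \quad S_n := \sum_{[\mathfrak{a}]\in \mathcal{C}\ell(\mathcal{O}_c)} (\check{\mathbb{N}}_K^{j_n}\check{\chi}_n)^{-1}(\mathfrak{a})\cdot (p^b\theta_k)^{j_n}(y_{\mathrm{dR}}^kf)(\mathfrak{a}\star(A,t,\alpha)).
\]
Because the period factor is $p$-adically continuous in the weight (the base is an $\mathcal{O}_{K_p}^\times$-unit by Corollary \ref{periodunit}, so Fermat's little theorem applies after accounting for the residue-field component of the exponent), the task reduces to proving $S_n \to \mathcal{L}_{p,\alpha}(w,\check{\chi})$ as $\check{\chi}_n \to \check{\chi}$.

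Since $S_n$ is a finite sum indexed by $\mathcal{C}\ell(\mathcal{O}_c)$, I would establish this convergence termwise. Fix $[\mathfrak{a}]$ and put $y = \mathfrak{a}\star(A,t,\alpha)$. Convergence of the character factor $(\check{\mathbb{N}}_K^{j_n}\check{\chi}_n)^{-1}(\mathfrak{a}) \to (\check{\mathbb{N}}_K^{j_\infty}\check{\chi})^{-1}(\mathfrak{a})$ is automatic from the uniform convergence of $p$-adic avatars on $\mathbb{A}_K^{\times,(p\infty)}$. The hypotheses of Theorem \ref{keycorollary} at $y$---namely $y \in \mathcal{U}^{p^{p/(p^2-1)}}$ and the bound (\ref{integralfiberassumption})---are verified via Propositions \ref{CMproposition1}, \ref{CMproposition2}, and \ref{CMtorussameperiods}: the latter guarantees that $\mathbf{z}$, $\mathbf{z}_{\mathrm{dR}}$, and $\theta(y_{\mathrm{dR}})$ at $y$ agree, up to $\mathcal{O}_{K_p}^\times$, with their explicitly computed values at $(A,t,\alpha)$. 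Theorem \ref{keycorollary} then produces the continuous extension $j \mapsto (p^b\theta_k)^j((y_{\mathrm{dR}}^kf)^\flat(q_{\mathrm{dR}}))(y)$ to $\mathbb{Z}/(p-1)\times\mathbb{Z}_p$, whose value at $j_\infty$ is, by Definition \ref{padicLfunctiondefinition}, the $[\mathfrak{a}]$-term of $\mathcal{L}_{p,\alpha}(w,\check{\chi})$.

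The heart of the argument is to bridge the \emph{un-stabilized} operator $(p^b\theta_k)^{j_n}(y_{\mathrm{dR}}^kf)(y)$ appearing in $S_n$ with the $p$-\emph{stabilized} $(p^b\theta_k)^{j_\infty}((y_{\mathrm{dR}}^kf)^\flat(q_{\mathrm{dR}}))(y)$ defining $\mathcal{L}_{p,\alpha}$. Because $\check{\chi}\not\in \check{\Sigma}_+$, the integer sequence $\{j_n\}$ can neither stabilize nor remain bounded, so $|j_n|_\infty \to \infty$ while $j_n \to j_\infty$ $p$-adically. By (\ref{stabilizedqdRexp}), the difference $y_{\mathrm{dR}}^kf - (y_{\mathrm{dR}}^kf)^\flat$ has $q_{\mathrm{dR}}^{1/p^b}$-expansion supported on monomials $(q_{\mathrm{dR}}^{1/p^b})^n$ with $p\mid n$; applying $(q_{\mathrm{dR}}^{1/p^b}d/dq_{\mathrm{dR}}^{1/p^b})^{j_n-i}$ to such a monomial introduces the factor $n^{j_n-i}$, whose $p$-adic valuation diverges with $j_n$. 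Combined with the integrality from Corollary \ref{y'corollary} and the uniform bounds (\ref{nearlyholomorphictermintegral}) on $c_i(j_n)$, a Cauchy-style bookkeeping on the $q_{\mathrm{dR}}^{1/p^b}$-expansion---precisely as in (\ref{Cauchyargument})---shows that $(p^b\theta_k)^{j_n}(y_{\mathrm{dR}}^kf - (y_{\mathrm{dR}}^kf)^\flat)(y) \to 0$. Combined with continuity of the stabilized version at $j_\infty$, this yields the desired termwise convergence.

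The principal obstacle is the last step: the specialized limit formula (\ref{limitisstabilization}) in Theorem \ref{keycorollary} is established only along the arithmetic progressions $j_0 + (p-1)p^m$, and one must upgrade it to hold for an arbitrary sequence $j_n \to j_\infty$ in $\mathbb{Z}/(p-1)\times\mathbb{Z}_p\setminus \mathbb{Z}_{\ge 0}$. This requires running the Cauchy coefficient estimate on the $q_{\mathrm{dR}}^{1/p^b}$-expansion of the un-stabilized form $y_{\mathrm{dR}}^kf$ rather than only $(y_{\mathrm{dR}}^kf)^\flat$, exploiting that the ``bad'' ($p\mid n$) coefficients are annihilated uniformly in $n$ by the factor $n^{j_n}$ as $j_n \to \infty$ in the combined archimedean and $p$-adic senses.
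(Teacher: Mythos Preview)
Your approach is the same as the paper's: combine the algebraicity identity (\ref{Lvaluecomparison}) with the convergence result Theorem~\ref{keycorollary}, in particular (\ref{limitisstabilization}). The paper's proof is a one-line citation of exactly these two ingredients; you have unpacked the argument termwise over $\mathcal{C}\ell(\mathcal{O}_c)$, verified the hypotheses of Theorem~\ref{keycorollary} at each $\mathfrak{a}\star(A,t,\alpha)$ via Propositions~\ref{CMproposition1}, \ref{CMproposition2}, \ref{CMtorussameperiods}, and handled the period prefactor via Corollary~\ref{periodunit}. All of this is correct and matches the intended logic.

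You go further than the paper in one respect: you correctly observe that (\ref{limitisstabilization}) is stated only along the special sequences $j_0+(p-1)p^m$, whereas the theorem allows an arbitrary sequence $\{j_n\}\subset\mathbb{Z}_{\ge 0}$ converging $p$-adically to $j_\infty\notin\mathbb{Z}_{\ge 0}$. The paper's terse proof does not address this explicitly. Your proposed resolution is the right one: since $j_\infty\notin\mathbb{Z}_{\ge 0}$ forces $j_n\to\infty$ in the archimedean sense, the monomials $(q_{\mathrm{dR}}^{1/p^b})^n$ with $p\mid n$ supporting $y_{\mathrm{dR}}^kf-(y_{\mathrm{dR}}^kf)^\flat$ pick up factors $n^{j_n-i}$ that tend to $0$ $p$-adically, and the integrality from (\ref{integralcorrection3}) together with the bounds (\ref{nearlyholomorphictermintegral}) on $c_i(j_n)(\theta(y_{\mathrm{dR}})/\mathbf{z})^i$ make this uniform in the Gauss norm. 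This is exactly the mechanism behind (\ref{polynomiallimit}) and (\ref{powerserieslimit}) in Lemma~\ref{holomorphicpart}, so your extension is a straightforward rerun of that argument rather than a new idea.
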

\begin{proof}This follows immediately from (\ref{Lvaluecomparison}) and Theorem \ref{keycorollary} (in particular, from (\ref{limitisstabilization})).
\end{proof}

\begin{remark}\label{interpolationremark}One can show that the Euler factor at $p$ which one usually removes from the $L$-value in a $p$-adic interpolation formula is \emph{constant} for the $L$-values $L(F,\chi_j^{-1},0)$. Hence it is possible that the terms in the right-hand side of the limit in (\ref{approximateinterpolationformula}) form a $p$-adically continuous function. One strategy to address this, which is part of forthcoming work, is to base-change to a degree 4 CM extension where the unique prime above $p$ in the maximal totally real field (a real quadratic field) splits in the CM extension. In this case one can construct an analytic anticyclotomic $p$-adic $L$-function with the usual $p$-adic interpolation property, using Serre-Tate coordinates. In particular, it interpolates $L$-values and their quadratic twists whose limits give rise, as in (\ref{approximateinterpolationformula}), to a product of our anticyclotomic $p$-adic $L$-function times some constant Euler factors. Hence values of the product of our $p$-adic $L$-function and a quadratic twist are specializations of an analytic function, from which one may draw conclusions regarding the continuity questions raised above.
\end{remark}

\subsection{The $p$-adic Waldspurger formula}\label{Waldspurgersection}

Retain the notation and setting of the earlier part of this chapter. In particular $p \nmid Nc$, $(c,N) = 1$, $N$ satisfies the Heegner hypothesis with respect to $K$ which determined an ideal $\frak{N}|N$, and $A$ was a fixed elliptic curve with CM by $\mathcal{O}_c$ and $\Gamma_1(N)$-level structure $t$ determined by the cyclic ideal $\frak{N}$. We have the following special value formula (Theorem \ref{padicWaldspurgerformula}) in the case when $k = 2$. We view this as analogous to the $p$-adic Waldspurger formulas of \cite{BertoliniDarmonPrasanna} and \cite{LiuZhangZhang} in the case where $p$ is inert or ramified in $K$. For the Eisenstein case, we view this formula as an analogue of the ``$p$-adic Kronecker limit formula" in \cite[Chapter X]{Katz3}. In fact our approach can be used to recover these special value formulas, using the fact that the $q_{\mathrm{dR}}$-expansion recovers the Serre-Tate expansion (Theorem \ref{STcoincide}) on the cover $\mathcal{Y}^{\mathrm{Ig}} \rightarrow Y^{\mathrm{ord}}$ and Theorem \ref{recoverAtkinSerre}. 

As before, let $w$ be a weight $k = 2$ normalized $\Gamma_1(N)$-new eigenform with nebentype $\epsilon$. In this section, we will briefly consider compactified modular curves $X(\Gamma) = \overline{Y(\Gamma)}$, where we recall that $Y(\Gamma)$ is the modular curve associated with a finite-index subgroup $\Gamma \subset SL_2(\mathbb{Z})$. Recall $\varepsilon$ as defined in (\ref{epsilondefinition}), and let
$$X(N,p^n,\epsilon) = X(\Gamma_1(N) \cap \Gamma(p^n) \cap \Gamma_1(p^{n+\varepsilon})), \hspace{1cm} J(N,p^n,\varepsilon) := \mathrm{Jac}(X(N,p^n,\epsilon)).$$
Then let
$$h_n : X(N,p^n,\varepsilon) \rightarrow J(N,p^n,\varepsilon)$$ 
denote the (analytification of the usual algebraic) Abel-Jacobi map over $\mathbb{Q}$, sending the cusp at infinity $(\infty) \mapsto 0$. Thus $w = h_0^*\omega$ where $\omega = g(q)dq/q \in \Omega_{J(N,p^n,\varepsilon))/\mathbb{Q}}^1$ and $g(q) = \sum_{n = 0}^{\infty}a_nq^n$ is the $q$-expansion of $w$. Then the $p$-stabilization $w^{\flat}$ as in (\ref{pstabilization}) satisfies $h_2^*\omega^{\flat} = w^{\flat}$ where $\omega^{\flat} = g^{\flat}(q)dq/q \in \Omega_{J(N,p^n,\varepsilon)/\mathbb{Q}}^1$ and $g^{\flat}(q) = \sum_{n = 0,p\nmid n}^{\infty}a_nq^n$ is the $q$-expansion of $w^{\flat}$, viewed as a form of level $\Gamma_1(N)\cap \Gamma(p^2)$; it is an old eigenform. Let
$$\log_{w^{\flat}} : J(N,p^2,\varepsilon)(\mathbb{C}_p,\mathcal{O}_{\mathbb{C}_p}) \rightarrow \mathbb{C}_p$$
be the formal logarithm attached to $\omega^{\flat}$, obtained by integrating the $q$-expansion $g^{\flat}(q)dq/q$ in the residue disc containing the origin, specifying that it takes the value 0 at the origin, and then extending linearly. From its $q$-expansion, we see that $\log_{w^{\flat}}$ is rigid on the residue disc containing the origin and hence extends uniquely via linearity to a locally analytic function on all of $J(N,p^2,\varepsilon)$. Pulling back by $h_2^*$, we have a locally analytic function
$$\log_{w^{\flat}} := h_2^*\log_{w^{\flat}} : X(N,p^2,\varepsilon)(\mathbb{C}_p,\mathcal{O}_{\mathbb{C}_p}) \rightarrow \mathbb{C}_p.$$
We know from \cite[Proposition A.0.1]{LiuZhangZhang} that $\log_{w^{\flat}}$ is a Coleman primitive of $w^{\flat}$. 

Let $H_c$ denote the ring class field attached to $\mathcal{O}_c$, and for $1 \le r \le \infty$ let $H_c(p^r)$ denote the compositum of $H_c$ and the ray class field $K(p^r)$ of conductor $p^r$, viewed as an extension of $H_c$ (recall $p \nmid c$). Then 
$$\Gal(H_c(p^r)/K) \cong  \mathcal{C}\ell(\mathcal{O}_c)\times \left(\mathcal{O}_{K_p}^{\times}/(1+p^r\mathcal{O}_{K_p})\right). 
$$ 
Then let $K_{p^r}/K$ denote the abelian extension corresponding to the quotient of Galois groups 
$$\Gal(H_c(p^r)/K)/\Gal(H_c/K) \cong \mathcal{O}_{K_p}^{\times}/(1+p^r\mathcal{O}_{K_p}).
$$

Now fix $(A,t,\alpha) \in \mathcal{C}(\overline{K}_p,\mathcal{O}_{\overline{K}_p})$ as in Assumption \ref{goodchoice} (note that by Proposition \ref{CMproposition2} and Theorem \ref{keycorollary} that $b = 0$ for all $(A,t,\alpha) \in \mathcal{C}(\overline{K}_p,\mathcal{O}_{\overline{K}_p})$). Given a point $P \in X(N,p^n)(\mathbb{C}_p,\mathcal{O}_{\mathbb{C}_p})$, let $[P]$ denote its associated divisor. Let $\chi : \mathcal{C}\ell(\mathcal{O}_c) \rightarrow \overline{\mathbb{Q}}^{\times}$ be a character. In the notation of Definition \ref{Shimuraactiondefinition}, define a Heegner point
\begin{align*}&P_{K,\alpha}(\chi) \\
&= \sum_{[\frak{a}] \in \mathcal{C}\ell(\mathcal{O}_c)} \chi^{-1}([\frak{a}])([\frak{a}\star(A,t,(\alpha_1 \hspace{-.4cm}\pmod{p^{2+\varepsilon}},\alpha_2 \hspace{-.4cm}\pmod{p^2})] - [(\infty)]) \\
&\hspace{10cm}\in J(N,p^2,\varepsilon)(H_c[p^2])^{\chi}
\end{align*}
where $J(N,p^2,\varepsilon)(H_c[p^2])^{\chi}$ denotes the $\chi$-isotypic component of the action of the group $\Gal(H_c[p^2]/K[p^2]) \cong \Gal(H_c/K)$. Note that the trace of the above Heegner point
$$P_K(\chi) := \sum_{\sigma \in \Gal(K_{p^2}/K)}P_{K,\alpha}^{\sigma} \in J(N,p^2,\varepsilon)(H_c)^{\chi}$$
where $J(N,p^2,\varepsilon)(H_c)^{\chi}$ denotes the $\chi$-isotypic component of the action $\Gal(H_c/K)$. We note that when $\chi = 1$, we have $P_{K,\alpha}(1) \in J(N,p^2,\varepsilon)(K_{p^2})$ and also $P_K(1) \in J(N,p^2,\varepsilon)(K)$. 

\begin{theorem}[``$p$-adic Waldspurger formula'']\label{padicWaldspurgerformula}Assume that $p > 2$ or $p$ is inert in $K$. Let $(A,t,\alpha) \in \mathcal{C}(\overline{K}_p,\mathcal{O}_{\overline{K}_p}) = \mathcal{C}(K_p^{\mathrm{ab}},\mathcal{O}_{K_p^{\mathrm{ab}}})$ (we recall the last equality follows from the theory of CM). 
We have, for any ring class character $\chi : \mathcal{C}\ell(\mathcal{O}_c) \rightarrow \overline{\mathbb{Q}}^{\times}$
$$\mathcal{L}_{p,\alpha}(w,\check{\mathbb{N}}_K\chi) = \begin{cases} \frac{1}{p^2(p^2-1)}\log_{w^{\flat}}P_K(\chi) & \text{$p$ is inert in $K$}\\
\frac{1}{p^3(p-1)} \log_{w^{\flat}}P_K(\chi) & \text{$p$ is ramified in $K$}
\end{cases}.$$

In particular, if $\mathcal{L}_{p,\alpha}(w,\check{\mathbb{N}}_K\chi)\neq 0$, then $P_K(\chi)$ projects via a modular parametrization
to a non-torsion point in $A_w(H_c)^{\chi}$, where $A_w/\mathbb{Q}$ is the $GL_2$-type abelian variety associated uniquely up to isogeny with $w$.
\end{theorem}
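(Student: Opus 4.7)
Set $k=2$. Since $\mathbb{N}_K\chi$ has infinity type $(1,1)$, the matching $(k+j,-j)=(2+j,-j)$ forces $j=-1$. By Propositions \ref{CMproposition1} and \ref{CMproposition2} under the hypothesis ``$p>2$ or $p$ inert,'' we have $b=0$ at every point of $\mathcal{C}(\mathcal{O}_c)$ (with uniformity by Proposition \ref{CMtorussameperiods}), and the integrality bound $|\theta(y_{\mathrm{dR}})/\mathbf{z}|\le 1 < p^{1/(p-1)}$ of (\ref{integralfiberassumption}) holds there. Thus Definition \ref{padicLfunctiondefinition}, combined with the continuous extension of Theorem \ref{keycorollary} (via (\ref{limitisstabilization}) with $j_0=-1$), gives
\[
\mathcal{L}_{p,\alpha}(w,\check{\mathbb{N}}_K\chi)=\sum_{[\mathfrak{a}]\in\mathcal{C}\ell(\mathcal{O}_c)}\check{\chi}^{-1}(\mathfrak{a})\,\theta_2^{-1}\bigl((y_{\mathrm{dR}}^2 f)^{\flat}(q_{\mathrm{dR}})\bigr)(\mathfrak{a}\star(A,t,\alpha)).
\]

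The plan is to assemble these stalk-wise limits into a single global rigid function. Choose an affinoid $\mathcal{U}\subset\mathcal{Y}_x$ containing both $\mathcal{Y}^{\mathrm{Ig}}$ and $\mathcal{C}(\mathcal{O}_c)$ on which the period bounds are uniform, and define
\[
G:=\lim_{m\to\infty}\theta_2^{-1+(p-1)p^m}(y_{\mathrm{dR}}^2 f)\bigl|_{\mathcal{U}}.
\]
By Proposition \ref{weightchangelimitproposition}, $G$ is a $p$-adic modular form of weight $0$ for some subgroup $\Gamma$ with $(\mathbb{Z}_p^{\times})^{\oplus 2}\subseteq\Gamma\subseteq GL_2(\mathbb{Z}_p)$, hence descends to a rigid-analytic function (still denoted $G$) on an affinoid open $U$ in a finite modular cover $Y'\to Y$. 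On $\mathcal{Y}^{\mathrm{Ig}}$, combining Theorem \ref{STexpcoincide} ($q_{\mathrm{dR}}$-expansion equals the Serre--Tate $T$-expansion), Proposition \ref{recoverAtkinSerre} ($\theta_2^j=\theta_{\mathrm{AS}}^j$), and Proposition \ref{ordinaryperiod} (so $y_{\mathrm{dR}}^2 f$ restricts to Katz's $f$), formula (\ref{BrookslogformulaSTexpansion}) of Theorem \ref{stabilizationtheorem} (applied with exponent $-1+p^m(p-1)$) shows the $T$-expansion of $G|_{\mathcal{Y}^{\mathrm{Ig}}}$ at each ordinary CM point equals that of $\log_{w^{\flat}}$. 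Hence $G=\log_{w^{\flat}}$ on $U\cap Y'^{\mathrm{ord}}$.

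To extend this across the supersingular part of $U$, note that $dG$ is a rigid $1$-form on $U$ whose restriction to $U\cap Y'^{\mathrm{ord}}$ agrees with $w^{\flat}$, so analytic continuation on the connected affinoid $U$ gives $dG=w^{\flat}|_U$. Coleman's theory of rigid integration yields uniqueness of rigid primitives up to a constant, and matching at any ordinary CM point forces $G=\log_{w^{\flat}}|_U$. Specializing at $\mathfrak{a}\star(A,t,\alpha)$ and using $\mathbb{Z}$-linearity of $\log_{w^{\flat}}$ via the Abel--Jacobi map $h_2$ yields $\mathcal{L}_{p,\alpha}(w,\check{\mathbb{N}}_K\chi)=\log_{w^{\flat}}P_{K,\alpha}(\chi)$. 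Since $P_K(\chi)=\sum_{\sigma\in\Gal(K_{p^2}/K)}P_{K,\alpha}(\chi)^{\sigma}$ and $\log_{w^{\flat}}$ is Galois-equivariant (being defined over $\mathbb{Q}$ in the $\chi$-isotypic component), we obtain $\log_{w^{\flat}}P_K(\chi)=|\Gal(K_{p^2}/K)|\cdot\log_{w^{\flat}}P_{K,\alpha}(\chi)$, with $|\Gal(K_{p^2}/K)|=|(\mathcal{O}_{K_p}/p^2)^{\times}|$ equal to $p^2(p^2-1)$ in the inert case and $p^3(p-1)$ in the ramified case, producing the claimed constants. For the non-torsion conclusion, the pullback of the formal logarithm on $A_w$ along any modular parametrization $J(N,p^2,\varepsilon)\to A_w$ realizing $\omega^{\flat}$ is precisely $\log_{w^{\flat}}$, so nonvanishing of $\mathcal{L}_{p,\alpha}(w,\check{\mathbb{N}}_K\chi)$ forces the image of $P_K(\chi)$ in $A_w(H_c)^{\chi}$ to have nonzero formal logarithm, hence to be non-torsion.

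The main obstacle will be the construction and descent of $G$: producing a single affinoid $\mathcal{U}\supset\mathcal{Y}^{\mathrm{Ig}}\sqcup\mathcal{C}(\mathcal{O}_c)$ on which the stalk-wise convergence of Theorem \ref{keycorollary} upgrades to rigid analyticity (leveraging the uniform period estimates of Proposition \ref{CMtorussameperiods}), and then descending across the $\Gamma$-quotient to a bona fide rigid function on a finite modular cover $Y'$. One must also verify that the image of $\mathcal{Y}^{\mathrm{Ig}}$ on $Y'$ is an admissible affinoid open, so that the Igusa-tower identification takes place inside the same rigid-analytic setting as the CM specialization. Once this is secured, the rigidity argument together with Theorem \ref{stabilizationtheorem} carry the calculation through cleanly.
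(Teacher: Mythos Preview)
Your overall strategy matches the paper's: build the weight-$0$ limit $G$, identify it with $\log_{w^\flat}$ on the ordinary locus via Serre--Tate expansions, use rigidity plus Coleman integration to propagate this to the supersingular CM points, then average. Two points need correction.

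\textbf{The averaging step is a genuine gap.} You write that Galois equivariance of $\log_{w^\flat}$ gives $\log_{w^\flat}P_K(\chi)=|\Gal(K_{p^2}/K)|\cdot\log_{w^\flat}P_{K,\alpha}(\chi)$. Galois equivariance only gives $\log_{w^\flat}P_K(\chi)=\sum_\sigma\sigma\bigl(\log_{w^\flat}P_{K,\alpha}(\chi)\bigr)$, i.e.\ the trace, and you have no a priori reason for $\log_{w^\flat}P_{K,\alpha}(\chi)$ to be $\Gal(K_{p^2}/K)$-fixed. The paper fixes this differently: by Proposition~\ref{choiceofalpha}, changing $\alpha$ to $\alpha'=\alpha\cdot\gamma$ multiplies $\mathcal{L}_{p,\alpha}(w,\check\chi)$ by $(bc-ad)^{-(k+2j)}(cz_{\mathrm{dR}}+a)^{k+2j}$, and at $k=2$, $j=-1$ this factor is identically $1$. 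Hence $\mathcal{L}_{p,\alpha}=\mathcal{L}_{p,\alpha'}$ at $\check{\mathbb{N}}_K\chi$, so by your identity $\log_{w^\flat}P_{K,\alpha}(\chi)=\log_{w^\flat}P_{K,\alpha'}(\chi)$ for each $\alpha'$ reducing mod $p^2$ to a $\sigma$-conjugate of $\alpha$. Summing over such $\alpha'$ gives the factor $\#\Gal(K_{p^2}/K)$.

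\textbf{The construction of $G$ is organized differently.} You seek a single affinoid $\mathcal{U}\supset\mathcal{Y}^{\mathrm{Ig}}\sqcup\mathcal{C}(\mathcal{O}_c)$ at infinite level with uniform convergence. The paper avoids this: it works stalk-wise at the CM point to find a small rational open $\mathcal{U}'\subset\mathcal{Y}^{\mathrm{ss}}$ on which the integrality and convergence hold, enlarges to $\mathcal{U}'_0(p)=\mathcal{U}'\cdot\Gamma_0(p)$, and then treats $\mathcal{Y}^{\mathrm{Ig}}$ and $\mathcal{U}'_0(p)$ as \emph{disjoint} pieces, descending each separately (along $(1+p\mathbb{Z}_p)\times\mathbb{Z}_p^\times$ and $\Gamma_0(p)$ respectively) to $Y(\Gamma_0(p))$. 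The union $Y^{\mathrm{Ig}}_0(p)\sqcup U'_0(p)$ is affinoid open there because $Y^{\mathrm{Ig}}_0(p)\cong Y^{\mathrm{ord}}$ via the canonical subgroup. This sidesteps the issue you flag as the ``main obstacle.''
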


\begin{proof}
We need to slightly refine the statement of Theorem \ref{keycorollary}. Recall the $q_{\mathrm{dR}}$-expansion map from Definition \ref{qdRexp}.
\begin{equation}\label{qdR}\mathcal{O}_{\mathcal{Y}_x} \hookrightarrow \hat{\mathcal{O}}_{\mathcal{Y}_x}\llbracket q_{\mathrm{dR}}-1\rrbracket
\end{equation}
which is an injective map of pro\'{e}tale sheaves. Letting $y = (A,t,\alpha)$ as in the statement, and recall that we have $b = 0$ in this situation, by Proposition \ref{CMproposition2} and Theorem \ref{keycorollary}. From Lemma (\ref{limitlemma}), we have
\begin{equation}\label{integralqdR}(y_{\mathrm{dR}}^2f)_y \in \hat{\mathcal{O}}_{\mathcal{Y},y}^+\llbracket q_{\mathrm{dR}} - 1\rrbracket [1/p].
\end{equation}
Localizing (\ref{qdR}), we have
$$(y_{\mathrm{dR}}^2f)_y \in \mathcal{O}_{\mathcal{Y}_x,y} = \varinjlim_{\mathcal{V} \rightarrow \mathcal{U} \ni y}\hat{\mathcal{O}}_{\mathcal{Y}_x}(\mathcal{V})\llbracket q_{\mathrm{dR}}-1 \rrbracket$$
where $\mathcal{V} \rightarrow \mathcal{U} \ni y$ runs over pro\'{e}tale open neighborhoods of $y$, where $\mathcal{U} \subset \mathcal{Y}_x$ is a rational open set. In particular, by (\ref{integralqdR}) there is some pro\'{e}tale open neighborhood of $y$ $\mathcal{V} \rightarrow \mathcal{U} \ni y$, where $\mathcal{U} \subset \mathcal{Y}_x$ is a rational open set, such that 
\begin{equation}\label{integralsection}(y_{\mathrm{dR}}^2f)|_{\mathcal{V}} \in \hat{\mathcal{O}}_{\mathcal{Y}}^+(\mathcal{V})\llbracket q_{\mathrm{dR}}-1\rrbracket [1/p].
\end{equation}
In fact, since $y_{\mathrm{dR}}^2f \in \mathcal{O}_{\Delta,\mathcal{Y}_x}$, and in particular is a section defined on $\mathcal{Y}_x$, we have that (\ref{integralsection}) descends down to $\mathcal{U} \subset \mathcal{Y}_x$, i.e.
\begin{equation}\label{integralsectiondescent}(y_{\mathrm{dR}}^2f)|_{\mathcal{U}} \in \hat{\mathcal{O}}_{\mathcal{Y}}^+(\mathcal{U})\llbracket q_{\mathrm{dR}}-1\rrbracket [1/p].
\end{equation}
Now consider $\mathcal{U}^{\mathrm{ss}} := \mathcal{U} \cap \mathcal{Y}^{\mathrm{ss}}$, which is affinoid since it is the intersection of two affinoids (recall $\mathcal{Y}_x = \{\mathbf{z} \neq 0\}$, which is evidently affinoid, and $\mathcal{U}$ is a rational open and so is affinoid); note that $y \in \mathcal{U}^{\mathrm{ss}}(\mathbb{C}_p,\mathcal{O}_{\mathbb{C}_p})$ still, since $y \in \mathcal{Y}^{\mathrm{ss}}(\mathbb{C}_p,\mathcal{O}_{\mathbb{C}_p})$.  
Now consider the open subdomain
$$\mathcal{U}' := \mathcal{U}^{\mathrm{ss}} \cap \{|\theta(y_{\mathrm{dR}})/\mathbf{z}| < p^{1/p-1}\}$$
and we note that $y \in \mathcal{U}'$ since $p > 2$ by Proposition \ref{CMtorussameperiods}. Then we can take the restriction
$$(y_{\mathrm{dR}}^2f)|_{\mathcal{U}'} \in \hat{\mathcal{O}}_{\mathcal{Y}}^+(\mathcal{U}')\llbracket q_{\mathrm{dR}}-1\rrbracket [1/p]$$
from (\ref{integralsectiondescent}). In fact, we will show
$$\theta_k^j(y_{\mathrm{dR}}^2f)|_{\mathcal{U}'} \in \hat{\mathcal{O}}_{\mathcal{Y}}^+(\mathcal{U}')\llbracket q_{\mathrm{dR}}-1\rrbracket [1/p]$$
for any $j \in \mathbb{Z}_{\ge 0}$, and moreover
$$\theta_k^j(y_{\mathrm{dR}}^2f)^{\flat}(q_{\mathrm{dR}})|_{\mathcal{U}'} \in \hat{\mathcal{O}}(\mathcal{U}')$$
for any $j \in \mathbb{Z}/(p-1)\times\mathbb{Z}_p$ as follows. Let $r \in \mathbb{Q}$ such that 
$$p^r (y_{\mathrm{dR}}^2f)|_{\mathcal{U}'} \in \hat{\mathcal{O}}_{\mathcal{Y}}^+(\mathcal{U}')\llbracket q_{\mathrm{dR}}-1\rrbracket.$$
Clearly 
$$\left(\frac{q_{\mathrm{dR}}d}{dq_{\mathrm{dR}}}\right)^j(p^ry_{\mathrm{dR}}^2f)|_{\mathcal{U}'} \in \hat{\mathcal{O}}_{\mathcal{Y}}^+(\mathcal{U}')\llbracket q_{\mathrm{dR}}-1\rrbracket$$
by a easy computation. Moreover, on $\mathcal{U}'$ we have
$$\left|c_i(j)\left(\frac{\theta(y_{\mathrm{dR}})}{\mathbf{z}}\right)^i\right| \le 1$$
for all $i \gg 0$, since $i| c_i(j)$ and $|i!| \rightarrow p^{-1/(p-1)}$ as $i \rightarrow \infty$ in $\mathbb{Z}_{\ge 0}$. In particular, from (\ref{MSformulaq}) (recalling $b = 0$ in our situation) we see that for some $s \in \mathbb{Q}$ we have
\begin{equation}\label{integral1}\theta_k^j(p^{r+s}y_{\mathrm{dR}}^2f)|_{\mathcal{U}'} \in \hat{\mathcal{O}}_{\mathcal{Y}}^+(\mathcal{U}')
\end{equation}
for all $j \in \mathbb{Z}_{\ge 0}$. Moreover, one sees that
$$\lim_{m \rightarrow \infty}\theta_k^{j_0 + p^m(p-1)}(p^{r+s}y_{\mathrm{dR}}^2f)|_{\mathcal{U}'} \rightarrow \theta_k^{j_0}(p^{r+s}y_{\mathrm{dR}}^2f)^{\flat}(q_{\mathrm{dR}})|_{\mathcal{U}'}$$
in $\hat{\mathcal{O}}_{\mathcal{Y}}^+(\mathcal{U}')$, which in particular implies that 
\begin{equation}\label{integral2}\theta_k^j(p^{r+s}y_{\mathrm{dR}}^2f)^{\flat}(q_{\mathrm{dR}})|_{\mathcal{U}'} \in \hat{\mathcal{O}}_{\mathcal{Y}}^+(\mathcal{U}')
\end{equation}
for all $j \in \mathbb{Z}/(p-1) \times \mathbb{Z}_p$. 

Now we examine the $GL_2(\mathbb{Z}_p)$ action on the above sections. Let
$$\Gamma_0(p^r) := \left\{\left(\begin{array}{ccc}a & b\\
c & d\\
\end{array}\right) \in GL_2(\mathbb{Z}_p) : c \equiv 0 \pmod{p^r}\right\}$$
consider the open subdomain of $\mathcal{Y}_x$
$$\mathcal{U}'_0(p) = \mathcal{U}'\cdot \Gamma_0(p) = \bigcup_{\gamma \in \Gamma_0(p)}\mathcal{U}'\cdot \gamma.$$
We claim that $\theta_2^j(y_{\mathrm{dR}}^2f)|_{\mathcal{U}'}$ extends to $\mathcal{U}'_0(p)$ for all $j \in \mathbb{Z}/(p-1)\times \mathbb{Z}_p$, which we see as follows. For any 
$\left(\begin{array}{ccc}a & b\\
c & d\\
\end{array}\right) \in \Gamma_0(p)$, by Proposition \ref{weightpropositiontheta}, we have
$$\gamma^*\left(\theta_2^j(y_{\mathrm{dR}}^2f)|_{\mathcal{U}'\cdot \gamma}\right) = (bc-ad)^{-(2+2j)}(c\theta(z_{\mathrm{dR}}) + a)^{2+2j}\theta_k^j(y_{\mathrm{dR}}^2f)|_{\mathcal{U}'}$$
and so 
\begin{equation}\label{weightcharacter}\left|\left((bc-ad)^{-(2+2j)}(c\theta(z_{\mathrm{dR}}) + a)^{2+2j}\right)|_{\mathcal{U}'}\right| = 1
\end{equation}
for all $j \in \mathbb{Z}/(p-1) \times \mathbb{Z}_p$ by the definition of $\mathcal{U}'$ above. Hence, from (\ref{integral1}) and (\ref{integral2}), we have that
$$\theta_k^j(p^{r+s}y_{\mathrm{dR}}^2f)|_{\mathcal{U}'\cdot \gamma} \in \hat{\mathcal{O}}_{\mathcal{Y}}^+(\mathcal{U}'\cdot \gamma)$$
for all $j \in \mathbb{Z}_{\ge 0}$ and
$$\theta_k^j(p^{r+s}y_{\mathrm{dR}}^2f)^{\flat}(q_{\mathrm{dR}})|_{\mathcal{U}'\cdot \gamma} \in \hat{\mathcal{O}}_{\mathcal{Y}}^+(\mathcal{U}'\cdot \gamma)$$
for all $j \in \mathbb{Z}_{\ge 0}$. In fact from (\ref{weightcharacter}), we see that
$$\theta_k^j(p^{r+s}y_{\mathrm{dR}}^2f)^{\flat}(q_{\mathrm{dR}})|_{\mathcal{U}'\cdot \gamma} \in \hat{\mathcal{O}}_{\mathcal{Y}}^+(\mathcal{U}'\cdot \gamma)$$
for all $j \in \mathbb{Z}_p \times \mathbb{Z}/(p-1)$. This implies that
$$\theta_k^j(p^{r+s}y_{\mathrm{dR}}^2f)|_{\mathcal{U}'_0(p)} \in \hat{\mathcal{O}}_{\mathcal{Y}}^+(\mathcal{U}'_0(p))$$
has weight $2+2j$ for $\Gamma_0(p)$ on $\mathcal{U}'_0(p)$ (in the sense of Definition \ref{weightdefinitiontheta}) for any $j \in \mathbb{Z}_{\ge 0}$, and 
$$\theta_k^j(p^{r+s}y_{\mathrm{dR}}^2f)^{\flat}(q_{\mathrm{dR}})|_{\mathcal{U}'_0(p)} \in \hat{\mathcal{O}}_{\mathcal{Y}}^+(\mathcal{U}'_0(p))$$
has weight $2+2j$ for $\Gamma_0(p)$ on $\mathcal{U}'_0(p)$ for any $j \in \mathbb{Z}/(p-1) \times \mathbb{Z}_p$. 

Now we consider the closed subspace $\mathcal{Y}^{\mathrm{Ig}} \subset \mathcal{Y}_x$, recall that $\mathcal{Y}^{\mathrm{Ig}} \rightarrow Y^{\mathrm{ord}}$ is a $\mathbb{Z}_p^{\times} \times \mathbb{Z}_p^{\times}$-pro\'{e}tale covering, and on this subspace the restrictions $\theta_2^j(y_{\mathrm{dR}}^2f)|_{\mathcal{Y}^{\mathrm{Ig}}}$ and $\theta_2^j(y_{\mathrm{dR}}^2f)^{\flat}(q_{\mathrm{dR}})|_{\mathcal{Y}^{\mathrm{Ig}}}$ as defined in Section \ref{closedweights}. By Proposition \ref{weightchangeproposition}, we have that 
$$\theta_2^j(y_{\mathrm{dR}}^2f)|_{\mathcal{Y}^{\mathrm{Ig}}}$$
has weight $2+2j$ for $\mathbb{Z}_p^{\times}\times \mathbb{Z}_p^{\times}$ on $\mathcal{Y}^{\mathrm{Ig}}$ (in the sense of Definition \ref{CMweightdefinition}) for all $j \in \mathbb{Z}_{\ge 0}$, and 
$$\theta_2^j(y_{\mathrm{dR}}^2f)^{\flat}(q_{\mathrm{dR}})|_{\mathcal{Y}^{\mathrm{Ig}}}$$
has weight $2+2j$ for $\mathbb{Z}_p^{\times}\times \mathbb{Z}_p^{\times}$ on $\mathcal{Y}^{\mathrm{Ig}}$ for all $j \in \mathbb{Z}/(p-1) \times \mathbb{Z}_p$.

Now consider the adic subspace $\mathcal{Y}^{\mathrm{Ig}} \sqcup \mathcal{U}'_0(p)$ (where this is a disjoint union by the construction of $\mathcal{U}'$). Then we have the section 
$$\theta_k^{-1}(p^{r+s}y_{\mathrm{dR}}^2f)^{\flat}(q_{\mathrm{dR}})|_{\mathcal{Y}^{\mathrm{Ig}} \sqcup \mathcal{U}'_0(p)} \in \hat{\mathcal{O}}_{\mathcal{Y}^{\mathrm{Ig}}}^+(\mathcal{Y}^{\mathrm{Ig}}) \times \hat{\mathcal{O}}_{\mathcal{Y}^{\mathrm{Ig}}}^+(\mathcal{Y}^{\mathrm{Ig}})$$
which is of weight 0 for $\mathbb{Z}_p^{\times} \times \mathbb{Z}_p^{\times}$ on $\mathcal{Y}^{\mathrm{Ig}}$, and of weight 0 for $\Gamma_0(p)$ on $\mathcal{U}'_0(p)$. Recall the projection 
$$\lambda_{\Gamma_0(p)} : \mathcal{Y} \rightarrow Y(\Gamma_0(p)),$$
and let 
$$U'_0(p) = \lambda_{\Gamma_0(p)}(\mathcal{U}'_0(p)),$$
and let $Y^{\mathrm{Ig}}_0(p) = \lambda_{\Gamma_0(p)}(\mathcal{Y}^{\mathrm{Ig}})$; we note that we have a natural section $Y \rightarrow Y(\Gamma_0(p))$ for the natural projection $\rho : Y(\Gamma_0(p)) \rightarrow Y(GL_2(\mathbb{Z}_p)) = Y$, which is moreover an open immersion, given by the canonical subgroup, which gives a canonical rigid-analytic (and hence adic) identification $Y^{\mathrm{ord}} \cong Y^{\mathrm{Ig}}_0(p)$. Moreover, the Galois group of $\mathcal{Y}^{\mathrm{Ig}} \rightarrow Y^{\mathrm{Ig}}_0(p) \cong Y^{\mathrm{ord}}$ is $(1+p\mathbb{Z}_p) \times \mathbb{Z}_p^{\times}$. Now by piecewise descent along $(1+p\mathbb{Z}_p) \times \mathbb{Z}_p^{\times}$ on $\mathcal{Y}^{\mathrm{Ig}} \rightarrow Y^{\mathrm{Ig}}_0(p) \cong Y^{\mathrm{ord}}$ and along $\Gamma_0(p)$ on $\mathcal{U}'_0(p) \rightarrow U'_0(p)$, we have a (rigid analytic) section 
$$G := \theta_k^{-1}(p^{r+s}y_{\mathrm{dR}}^2f)^{\flat}(q_{\mathrm{dR}})|_{\mathcal{Y}^{\mathrm{Ig}} \sqcup \mathcal{U}'_0(p)} \in \hat{\mathcal{O}}_Y(Y^{\mathrm{Ig}}_0(p) \sqcup U'_0(p))$$
on the \emph{affinoid open} $Y^{\mathrm{Ig}}_0(p) \sqcup U'_0(p)) \subset Y(\Gamma_0(p))$. (We see that this latter domain is affinoid open because $U'_0(p)$ is the image of the open $\mathcal{U}'_0(p)$ under $\lambda$, and $Y^{\mathrm{Ig}}_0(p) \cong Y^{\mathrm{ord}}$, which is affinoid open.)

We know that 
\begin{equation}\label{primitive}G|_{Y^{\mathrm{Ig}}_0(p)} = \log_{w^{\flat}}|_{Y^{\mathrm{Ig}}_0(p)}
\end{equation} by Theorems \ref{recoverAtkinSerre}, \ref{stabilizationtheorem} and \ref{keycorollary}. Hence applying the exterior differential
$$d : \hat{\mathcal{O}}_{Y(\Gamma_0(p))} \rightarrow \hat{\mathcal{O}}_{Y(\Gamma_0(p))} \otimes_{\mathcal{O}_{Y(\Gamma_0(p))}} \Omega_{Y(\Gamma_0(p))}^1,$$
we hence see that 
$$dG \in \hat{\mathcal{O}}_{Y(\Gamma_0(p))} \rightarrow \hat{\mathcal{O}}_{Y(\Gamma_0(p))} \otimes_{\mathcal{O}_{Y(\Gamma_0(p))}} \Omega_{Y(\Gamma_0(p))}^1(Y^{\mathrm{Ig}}_0(p) \sqcup U'_0(p))$$
is rigid, and that 
$$dG|_{Y^{\mathrm{Ig}}_0(p)} = w^{\flat}|_{Y^{\mathrm{Ig}}_0(p)}$$
which implies by rigidity, since $Y^{\mathrm{Ig}}_0(p) \subset Y(\Gamma_0(p))$ is open (admissible) affinoid, that
$$dG = w^{\flat}.$$
Now by uniqueness of Coleman integration (\cite[Proposition A.0.1]{LiuZhangZhang}, or more specifically \cite[Corollary 2.1c]{Coleman}), by (\ref{primitive}) we have 
\begin{equation}\label{primitive2}G = \log_{w^{\flat}}|_{Y^{\mathrm{Ig}}_0(p) \sqcup U'_0(p)}.
\end{equation}

Now we apply the entire above argument to $y = \frak{a}\star (A,t,\alpha)$, for any $\frak{a} \subset \mathcal{O}_c$ prime to $p\frak{N}$. Now by Definition \ref{padicLfunctiondefinition}, we have
\begin{equation}\label{equality1}\mathcal{L}_{p,\alpha}(w,\check{\mathbb{N}}_K\chi) \overset{(\ref{primitive2})}{=} \log_{w^{\flat}}P_{K,\alpha}(\chi).
\end{equation}

Now we claim that for any $\gamma \in \mathcal{O}_{K_p}^{\times} \subset GL_2(\mathbb{Z}
_p)$, letting $(A,\alpha') = (A,\alpha)\cdot \gamma$, we have
\begin{equation}\label{equality2}\mathcal{L}_{p,\alpha}(w,\check{\mathbb{N}}_K\chi) = \mathcal{L}_{p,\alpha'}(w,\check{\mathbb{N}}_K\chi).
\end{equation}
But this follows directly from Proposition \ref{choiceofalpha} and Theorem \ref{approximateinterpolationformula} for $j = -1$.

Applying this to a set of $\alpha'_i$ such that $\{\alpha' \pmod{p^2}\}_{i = 1}^{\#\Gal(K_{p^2}/K)} = \{\alpha^{\sigma}\}_{\sigma \in \Gal(K_{p^2}/K)}$, where one observes 
$$\#\Gal(K_{p^2}/K) = \begin{cases} p^2(p^2-1) & \text{$p$ is inert in $K$}\\
p^3(p-1) & \text{$p$ is ramified in $K$}
\end{cases},
$$
then from (\ref{equality1}) and (\ref{equality2}) we have
\begin{equation}\label{equality3}\mathcal{L}_{p,\alpha}(w,\check{\mathbb{N}}_K\chi) \overset{(\ref{primitive2})}{=} \log_{w^{\flat}}P_{K,\alpha}(\chi) \overset{(\ref{equality2})}{=} \frac{1}{\#\Gal(K_{p^2}/K)}\log_{w^{\flat}}P_K(\chi) 
\end{equation}
which is what we wanted to show.

Now for the final statement of the Theorem, we note that $\omega^{\flat}$, viewed as a form on $Y(\Gamma_0(p^2,\varepsilon))$ is in the same Hecke isotypic component as that of $\omega$, which is a new eigenform on $Y$, outside of the prime $p$. Hence $P_K(\chi) \in J(N,p^2,\varepsilon)(H_c)^{\chi}$ is non-trivial in the old part of the Eichler-Shimura decomposition corresponding to the new eigenform $\omega$ from level $\Gamma_1(N)$. This latter component is isomorphic to the direct product of some number of copies of $A_{\omega}(H_c)^{\chi}$. Hence, given the nonvanishing of (\ref{equality3}), we see that $P_K(\chi)$ projects to a non-trivial point in this latter group, and hence projects non-trivially into some copy of $A_{\omega}(H_c)^{\chi}$. 
\end{proof}


\begin{thebibliography}{99}

\bibitem{BertoliniDarmonPrasanna} M. Bertolini, H. Darmon and K. Prasanna, \emph{Generalized Heegner cycles and $p$-adic Rankin $L$-series}. Duke Math. J. 162 (2013), no. 6, 1033-1148.

\bibitem{BerthelotOgus} P. Berthelot, A. Ogus, \emph{Notes on crystalline cohomology}. Princeton University Press, Princeton, N.J.; University of Tokyo Press, Toyo, 1978. ISBN: 0-691-08218-9

\bibitem{Brakocevic} M. Brako\v{c}evi\'{c}, \emph{Anticyclotomic $p$-adic $L$-function of central critical Rankin-Selberg $L$-value}. Int. Math. Res. Not. IMRN 2011, no. 21, 4967-5018.

\bibitem{CaraianiScholze} A. Caraiani and P. Scholze, \emph{On the generic part of the cohomology of compact unitary Shimura varieties}. Ann. of Math. (2) 186 (2017), no. 3, 649-766.

\bibitem{Chojecki} P. Chojecki, \emph{On nonabelian Lubin-Tate theory and analytic cohomology}. Proc. Amer. Math. Soc. 146 (2018), 459-471.

\bibitem{ChojeckiHansenJohansson} P. Chojecki, D. Hansen and C. Johansson, \emph{Overconvergent modular forms and perfectoid Shimura curves}. Doc. Math. 22 (2017), 191-262.

\bibitem{Coleman} R. Coleman, \emph{Torsion points on curves and $p$-adic abelian integrals}. Ann. of Math. (2) 121 (1985), no. 1, 111-168.

\bibitem{Colmez} P. Colmez, \emph{Int\'{e}gration sur les vari\'{e}t\'{e}s $p$-adiques}. Ast\'{e}risque No. 248 (1998), viii.+155pp.

\bibitem{Conrad3} B. Conrad, \emph{Basic Generalities on Adic Spaces}, Perfectoid Seminar, http://math.stanford.edu/~conrad/Perfseminar/Notes/L14.pdf.

\bibitem{Conrad} B. Conrad, \emph{Higher-level canonical subgroups in abelian varieties}, http://math.stanford.edu/~conrad/papers/subgppaper.pdf.

\bibitem{Conrad2} B. Conrad, \emph{Modular curves and rigid-analytic spaces}, http://math.stanford.edu/~conrad/papers/genpaper.pdf.

\bibitem{Dwork} B. Dwork, \emph{Norm residue Symbol in local number Fields}. Abh. Math. Sem. Univ. Hamburg 22 1958 180-90. 

\bibitem{Faltings2} G. Faltings, \emph{Almost \'{e}tale extensions}. Cohomologies $p$-adiques et applications arithm\'{e}tiques, II. Ast\'{e}risque No. 279 (2002), 185-270.

\bibitem{Faltings} G. Faltings, \emph{$p$-adic Hodge theory}. J. Amer. Math. Soc. 1 (1988), no. 1, 255-299.

\bibitem{FarguesGenestierLafforgue} L. Fargues, A. Genestier, V. Lafforgue, \emph{L'€™isomorphisme entre les tours de Lubin-Tate et de Drinfeld}, Progr. Math. 262, Birkh\"{a}user Verlag, Basel (2008). xxii+406 pp. ISBN: 978-3-7643-8455-5

\bibitem{Gross} B. Gross, \emph{On the factorization $p$-adic $L$-series}. Invent. Math. 57 (1980), no. 1, 83-95.

\bibitem{Howe} S. Howe, \emph{Transcendence of the Hodge-Tate filtration}, https://arxiv.org/abs/1610.05242. 

\bibitem{Huber} R. Huber, \emph{Etale cohomology of Rigid Analytic Varieties and Adic Spaces}. Aspects of Mathematics, E30. Friedr. Vieweg \& Sohn, Braunschweig, 1996. x+450 pp. ISBN: 3-528-06794-2

\bibitem{Katz2} N. Katz, \emph{$p$-adic $L$-functions for CM fields}. Invent. Math. 49 (1978), no. 3, 199-297. 

\bibitem{Katz} N. Katz, \emph{Serre-Tate local moduli}. Algebraic surfaces (Orsay, 1976-78), pp. 138-202, Lecture Notes in Math., 868, Springer, Berlin-New York, 1981. 

\bibitem{Katz3} N. Katz, \emph{$p$-adic interpolation of real analytic Eisenstein series}. Ann. of Math. (2) 104 (1976), no. 3, 459-571.

\bibitem{KedlayaLiu} K. Kedlaya, Q. Liu, \emph{Relative $p$-adic Hodge theory: Foundations}. Ast\'{e}risque No. 371 (2015), 239 pp. ISBN: 978-2-85629-807-7

\bibitem{Kriz} D. Kriz, \emph{Generalized Heegner cycles at Eisenstein primes and the Katz $p$-adic $L$-function}. Algebra Number Theory 10 (2016), no. 2, 309-374.

\bibitem{KrizLi1} D. Kriz, C. Li, \emph{Congruences between Heegner points and quadratic twists of elliptic curves}, https://arxiv.org/pdf/1606.03172.pdf.

\bibitem{KrizLi2} D. Kriz, C. Li, \emph{Heegner points at Eisenstein primes and twists of elliptic curves}, https://arxiv.org/pdf/1609.06687.pdf.

\bibitem{LiuZhangZhang} Y. Liu, S. Zhang and W. Zhang, \emph{A $p$-adic Waldspurger formula}. Duke Math. J. 167 (1018), no. 4, 743-833.

\bibitem{Rubin} K. Rubin, \emph{Congruences for special values of $L$-functions of elliptic curves with complex multiplication}. Invent. Math. 71 (1983), no. 2, 339-364.

\bibitem{Scholze2} P. Scholze, \emph{On torsion in the cohomology of locally symmetric spaces}. Ann. of Math. (2) 182 (2015), no. 3, 945-1066.

\bibitem{Scholze} P. Scholze, \emph{$p$-adic Hodge theory for rigid analytic varieties}. Forum Math. Pi 1 (2013), e1, 77 pp. 

\bibitem{Scholze3} P. Scholze, \emph{Perfectoid spaces: a survey}, http://www.math.uni-bonn.de/people/scholze/CDM.pdf.

\bibitem{ScholzeWeinstein} P. Scholze, J. Weinstein, \emph{Moduli of $p$-divisible groups}. Camb. J. Math. 1 (2013), no. 2, 145-237.

\bibitem{deShalit} E. de Shalit, \emph{Iwasawa Theory of Elliptic Curves with Complex Multiplication}. Perspectives in Mathematics, 3. Academic Press, Inc., Boston, MA, 1987. x+154 pp. ISBN: 0-12-210255-X

\bibitem{YuanZhangZhang} X. Yuan, S. Zhang, W. Zhang, \emph{The Gross-Zagier Formula on Shimura Curves}. Annals of Mathematics Studies, 184. Princeton University Press, Princeton, NJ, 2013. x+256 pp. ISBN: 978-0-691-15592-0

\end{thebibliography}
\end{document}